\documentclass[11pt,a4paper]{amsart}
\usepackage[T1]{fontenc}
\usepackage[utf8]{inputenc}
\usepackage{lmodern}
\usepackage[a4paper,margin=28mm]{geometry}
\usepackage{amsmath,amssymb,amsfonts,mathtools}
\usepackage{enumitem,booktabs,array,longtable,microtype,needspace}
\usepackage{hyperref}
\hypersetup{hidelinks,pdftitle={Quantitative homogenization and large-scale regularity for nondivergence-form equations under a critical ellipticity moment},pdfauthor={Jizu Huang and Yong Ma}}
\theoremstyle{plain}
\newtheorem{theorem}{Theorem}[section]
\newtheorem{lemma}[theorem]{Lemma}
\newtheorem{proposition}[theorem]{Proposition}
\newtheorem{corollary}[theorem]{Corollary}
\theoremstyle{definition}
\newtheorem{definition}[theorem]{Definition}
\newtheorem{assumption}[theorem]{Assumption}
\theoremstyle{remark}
\newtheorem{remark}[theorem]{Remark}

\newcommand{\fint}{\mathop{\mathchoice{\displaystyle\int}{\textstyle\int}{\scriptstyle\int}{\scriptscriptstyle\int}}\mkern-14mu-\mkern8mu}
\numberwithin{equation}{section}
\allowdisplaybreaks[2]
\setlist{itemsep=.2em,topsep=.3em}
\title[Critical-moment nondivergence homogenization]{Quantitative homogenization and large-scale regularity for nondivergence-form equations under a critical ellipticity moment}
\author[J. Huang]{Jizu Huang}
\author[Y. Ma]{Yong Ma}
\address[Jizu Huang and Yong Ma]{SKLMS, Academy of Mathematics and Systems Science, Chinese Academy of Sciences, Beijing 100190, P.R. China}
\address[Jizu Huang and Yong Ma]{School of Mathematical Sciences, University of Chinese Academy of Sciences, Beijing 100190, P.R. China}
\email[Yong Ma]{mayong@amss.ac.cn}
\thanks{Corresponding author: Yong Ma (\texttt{mayong@amss.ac.cn}).}
\date{September 30, 2026}
\subjclass[2020]{35B27, 35B65, 35J70, 60K37}
\keywords{stochastic homogenization, degenerate elliptic equations, nondivergence form, large-scale regularity, invariant density}

\begin{document}
\begin{abstract}
We prove quantitative homogenization estimates for linear elliptic equations
in nondivergence form with stationary, symmetric coefficients, finite range
of dependence, and a deterministic upper ellipticity bound. No deterministic
positive lower bound is imposed. We assume that the reciprocal of the
infimum of the smallest eigenvalue on a unit ball has a finite moment of
order $d$, together with the common continuity condition of Armstrong and
Smart. Under these assumptions, we obtain algebraic probability bounds for
finite-cell errors and for Dirichlet homogenization errors with nonzero
sources. We construct quadratic correctors on the whole space, modulo
 affine functions, and prove first- and second-order large-scale regularity
and the corresponding Liouville theorems. We also identify the effective
matrix through the invariant density and quantify smooth spatial averages
of the density and the weighted coefficients. The proof separates a
unit-trace diffusion from its physical clock. A reverse H\"older estimate
for the Green function of a stopped coarse process yields the integrability
gain needed to control the clock at the critical moment. Applications
include weighted gradient convergence and a finite-domain approximation of
the effective matrix.
\end{abstract}
\maketitle
\tableofcontents
\medskip

\section{Introduction}\label{sec:introduction}

We consider linear elliptic equations in nondivergence form,
\begin{equation}\label{eq:model}
 -A(x,\omega):D^2u=0\qquad\text{in }\mathbb R^d,
\end{equation}
with stationary, symmetric coefficients satisfying $0<A\le I$. The
ellipticity lower bound may approach zero, and the eigendirections may vary
with position. We assume finite range of dependence and the critical
moment condition
\begin{equation}\label{intro:critical-moment}
 \mathbb E\left[\left(\inf_{x\in B_1}\lambda_{\min}A(x)\right)^{-d}\right]
 <\infty.
\end{equation}
Our aim is to quantify homogenization under this condition and to describe
the large-scale structure of solutions. The main results are algebraic
homogenization errors, whole-space quadratic correctors, first- and
second-order excess decay, and quantitative spatial averages of the
invariant density.

Qualitative homogenization for linear uniformly elliptic equations in
stationary ergodic media was established by Papanicolaou and
Varadhan~\cite{PV82} and Yurinski\u\i~\cite{Yur82}. The associated diffusion
is a martingale, and an invariant measure for the environment seen from
the particle identifies its limiting covariance. For fully nonlinear
equations, where this linear invariant-measure construction is
unavailable, Caffarelli, Souganidis and Wang~\cite{CSW05} introduced an
approach based on an obstacle problem and a subadditive quantity.

The quantitative question requires additional control of the dependence
of the environment. Caffarelli and Souganidis~\cite{CS10} obtained
subalgebraic error estimates under quantitative mixing assumptions.
Armstrong and Smart~\cite{ASquant} subsequently obtained algebraic rates
under finite range of dependence, using a monotone quantity defined by
convex envelopes. Both results assume deterministic uniform ellipticity.

Armstrong and Smart~\cite{ASdeg} removed the deterministic lower
ellipticity bound in the qualitative theory. Their theorem applies to
fully nonlinear equations under the critical moment
\eqref{intro:critical-moment} and supplies a uniformly elliptic effective
operator. Its regularity estimates control microscopic degeneracy through
spatial averages. The question addressed here is whether finite range of
dependence yields an algebraic rate at this same moment threshold, and
whether it yields quadratic correctors, large-scale regularity, and
quantitative estimates for the invariant density.

The principal estimate is a finite-cell error bound, obtained by separating
the directional diffusion from its physical clock. We first state the
results, then describe the estimates and their order of proof.

\subsection{Assumptions and notation}

Let $d\ge2$, let $\mathbb S^d$ be the real symmetric $d\times d$ matrices,
and use the Frobenius norm and pairing $M:N=\operatorname{tr}(M^{\mathsf T}N)$.
We work on the canonical space of continuous coefficient fields, equipped
with the locally uniform topology and its Borel sigma-algebra. Translation
is denoted by $\tau_z$, so that $A(x,\tau_z\omega)=A(x+z,\omega)$.
The probability measure is denoted by $\mathbb P$ and expectation by
$\mathbb E$. We impose the following assumptions.

\begin{assumption}\label{ass:model}
\begin{enumerate}[label=\textnormal{(H\arabic*)},leftmargin=3.7em]
\item Almost surely, for every $x\in\mathbb R^d$,
$A(x)=A(x)^{\mathsf T}$ and $0<A(x)\le I$.
\item The translations $\tau_z$ preserve $\mathbb P$.
\item There is $\ell<\infty$ such that $\sigma(A|_U)$ and $\sigma(A|_V)$
are independent whenever $U,V$ are deterministic Borel sets with
$\operatorname{dist}(U,V)>\ell$.
\item For $F(M,x,\omega)=-A(x,\omega):M$, the family
$\{F(\cdot,\cdot,\omega)\}_\omega$ is uniformly equicontinuous on
$\{|M|\le K\}\times\mathbb R^d$ for every $K<\infty$. There are a
deterministic continuous modulus $\rho$ and $\sigma>1/2$ such that
\[
 |F(M,x,\omega)-F(M,y,\omega)|
 \le\rho\big((1+|M|)|x-y|^\sigma\big).
\]
\item Set $\lambda(x)=\lambda_{\min}A(x)$ and
$\lambda_*(z)=\inf_{B_1(z)}\lambda$. Then
$\mathbb E[\lambda_*(0)^{-d}]<\infty$.
\end{enumerate}
\end{assumption}

All constants and exponents are deterministic and may depend on the fixed
coefficient law, as well as on parameters displayed in the statement. We
write $B_R(z)$ for the open ball, $B_R=B_R(0)$, $\mathcal P_1$ for the affine
functions, and $\|\cdot\|_V$ for the uniform norm on $V$. A probability
estimate at $z$ is understood for each fixed deterministic center. The
exceptional event may depend on $z$; the simultaneous quantifiers over
solutions and radii are specified in each theorem.

Solutions are understood in the viscosity sense; we use the comparison
and stability framework of Crandall, Ishii and Lions~\cite{CIL92}.
Lemma~\ref{lem:local-regularity}
shows that H4 gives a deterministic H\"older modulus for $A$, and hence
classical local regularity in each realization. H3 implies ergodicity.
The qualitative theorem of Armstrong and Smart~\cite[Theorem~1]{ASdeg}
therefore supplies a deterministic matrix $0<\bar A\le I$, whose
invariant-measure representation is proved in Section~\ref{sec:density}.

\subsection{Main results}

For $M\in\mathbb S^d$, define the zero-boundary cell solution by
\begin{equation}\label{intro:cell-equation}
 -A:D^2w_{M,R}=(A-\bar A):M\quad\text{in }B_R,
 \qquad w_{M,R}=0\quad\text{on }\partial B_R,
\end{equation}
and set $X_R=R^{-2}\sup_{|M|\le1}\|w_{M,R}\|_{B_R}$.

\begin{theorem}[Algebraic homogenization errors]
\label{thm:intro:homogenization}\label{thm:intro:dirichlet}
Under H1--H5 there exist $\delta_2\in(0,1)$, $\gamma_2>0$ and
$C,R_0<\infty$ such that
\begin{equation}\label{intro:cell-rate}
 \mathbb P(X_R>CR^{-\delta_2})\le CR^{-\gamma_2}
 \qquad(R\ge R_0).
\end{equation}
Fix $\eta\in(0,1)$, a bounded $C^{2,\eta}$ domain $U$, and deterministic
$f\in C^\eta(\overline U)$, $g\in C^{2,\eta}(\overline U)$. Let
\[
 \begin{cases}-A(x/\varepsilon):D^2u_\varepsilon=f&\text{in }U,\\
 u_\varepsilon=g&\text{on }\partial U,\end{cases}
 \qquad
 \begin{cases}-\bar A:D^2u_0=f&\text{in }U,\\
 u_0=g&\text{on }\partial U.\end{cases}
\]
There exist $\beta_4,\gamma_4>0$ and $C<\infty$ such that, for all
sufficiently small $\varepsilon$,
\begin{equation}\label{intro:dirichlet-rate}
 \mathbb P\big(\|u_\varepsilon-u_0\|_U>
 C N(f,g)\varepsilon^{\beta_4}\big)\le C\varepsilon^{\gamma_4},
 \quad N(f,g)=\|f\|_{C^\eta}+\|g\|_{C^{2,\eta}}.
\end{equation}
The latter estimate is for each fixed pair of deterministic data.
\end{theorem}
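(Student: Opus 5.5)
The proof proceeds in two stages. The cell estimate \eqref{intro:cell-rate} comes first, and the Dirichlet estimate \eqref{intro:dirichlet-rate} is then deduced from it by a two-scale (corrector) argument.

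\emph{The cell estimate.} Following the strategy announced in the introduction, I adapt the Armstrong--Smart subadditive scheme of~\cite{ASquant} to the degenerate setting.

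\emph{Step 1: the monotone quantity.} For $M\in\mathbb S^d$ and a ball $B_R$, consider the convex-envelope (Monge--Ampère measure of the lower envelope) quantity $\mu(B_R,M)$. It is subadditive, stationary, and, by H3, has finite range of dependence at scales $\gg\ell$. Its expectation converges to a limit, which identifies $\bar A$ through the qualitative theorem~\cite[Theorem~1]{ASdeg}.

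\emph{Step 2: the algebraic rate.} The algebraic rate in $\mathbb E\mu$ in the uniform case comes from a ``flatness'' improvement: if $\mathbb E\mu(B_R)-\mathbb E\mu(B_{3R})$ is small, then the envelope is close to a quadratic. The ABP estimate converts Monge--Ampère measure into sup bounds, with the ellipticity constant entering as $\lambda^{-d}$. Here I replace it by the weighted ABP bound $\|w\|\lesssim R\,\|\lambda^{-1}f\|_{L^d}$. By H5, stationarity, and Chebyshev, the random factor $\fint_{B_R}\lambda_*^{-d}$ is bounded with high probability.

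\emph{Step 3: the main obstacle.} The difficulty is that each iteration loses a power: one needs slightly more than $d$ moments to make the ABP error algebraically small. The plan is to write $A=\operatorname{tr}(A)\,\hat A$ with $\operatorname{tr}\hat A=1$. The diffusion of $\hat A$ is then ``uniformly elliptic in trace'', and the clock $\operatorname{tr}A^{-1}$ is handled separately. For the coarse process stopped at exit from $B_R$, I prove a reverse Hölder inequality for its Green function $G$ (a Gehring-type self-improvement from the Krylov--Safonov Harnack inequality for $\hat A$). This gives $G\in L^{d/(d-1)+\epsilon}$. By Hölder, $\int G\,\lambda^{-1}$ is then controlled using only $\lambda_*^{-d}$ moments of order slightly below $d$ in averaged form, which is the required integrability gain.

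\emph{Step 4: probability bounds.} Concentration of $\mu$ comes from finite range of dependence: a sum of nearly independent blocks, with a variance or Efron--Stein bound. Combined with the deterministic consequences of the previous steps, this yields $\mathbb P(X_R>CR^{-\delta_2})\le CR^{-\gamma_2}$. Here the contribution of $\sup_{|M|\le1}$ is handled by linearity in $M$, using a basis of $\mathbb S^d$.

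\emph{The Dirichlet estimate.} Let $w_M$ be the cell solutions at scale $R=\varepsilon^{-\theta}$ localized on a cover of $U$ by mesoscopic cubes of side $r$, with $\varepsilon\ll r\ll1$, and union-bound over the $O(r^{-d})$ cells. Since $f\in C^\eta$, $g\in C^{2,\eta}$ and $U$ is $C^{2,\eta}$, Schauder theory gives $u_0\in C^{2,\eta}(\overline U)$, with norm controlled by $N(f,g)$.

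Build the two-scale expansion $v=u_0+\varepsilon^2\sum_j\chi_j\,w_{D^2u_0(x_j)}(x/\varepsilon)$ using a partition of unity $\{\chi_j\}$. Then $-A(x/\varepsilon):D^2v-f$ is bounded by a sum of three terms:
\begin{itemize}
\item the oscillation of $D^2u_0$ over a cell, of order $r^\eta$;
\item cutoff derivatives of the corrector, of order $r^{-2}\varepsilon^2R^2X_R$;
\item a boundary layer near $\partial U$ of width $r$.
\end{itemize}
The boundary layer is handled by the global barrier for $C^{2}$ domains together with $g\in C^{2,\eta}$. Comparison in its weighted form is needed, since $A$ is degenerate: I use the weighted ABP bound again, which is controlled with high probability by the ergodic bound on $\fint\lambda_*^{-d}$ over $U/\varepsilon$. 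This gives $\|u_\varepsilon-v\|_U$ small. Choosing $r$ and $\theta$ as powers of $\varepsilon$ to balance the three errors gives $\beta_4$. The failure probability is at most $r^{-d}R^{-\gamma_2}$ plus the ABP-weight deviation, which gives $\gamma_4$.
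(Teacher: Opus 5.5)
There is a genuine gap in the cell estimate: the concentration and Harnack inputs you invoke are not available under H5.

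\textbf{Concentration.} Your Step~4 (variance or Efron--Stein) requires $\mu\in L^2$. For the original operator the contact mass obeys only $\mu(Q)\le d^{-d}\fint_Q(f_+)^d(\det A)^{-1}$ with $(\det A)^{-1}\le\lambda^{-d}$, which in general has just a first moment. With one moment there is no algebraic concentration. The missing idea is that trace normalization is needed already for the monotone quantity, not only to split off the clock. Indeed $(\det B)^{-1}\le d^d\lambda^{-(d-1)}\in L^q$ with $q=d/(d-1)>1$, still below $L^2$ when $d\ge3$. So the variance step must be replaced by an $L^q$ inequality for independent block sums, $\mathbb E\mu(Q_{m+n})^q\le(\mathbb E\mu(Q_m))^q+C3^{-n}\mathbb E\mu(Q_m)^q$, together with a relative $L^q$-flatness lemma.

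\textbf{Harnack.} $B=A/\operatorname{tr}A$ is not uniformly elliptic, since $\lambda_{\min}(B)=\lambda/\operatorname{tr}A$ still degenerates, so there is no Krylov--Safonov Harnack inequality for it. The paper needs three substitutes:
\begin{itemize}
\item a large-scale Harnack inequality, built from a coarse stopping-time process and a multiscale coupling;
\item a thin-layer lemma driven by smallness of $\fint(\det B)^{-1}\mathbf 1_{\{\lambda_{\min}(B)<\kappa\}}$, replacing the uniformly elliptic stripe exclusion in the Armstrong--Smart contraction;
\item a reverse H\"older inequality for the coarse Green measure at exponent $d/(d-1)$. This comes not from Harnack but from Monge--Amp\`ere duality against the convex-order property of the coarse kernel, plus hole filling under a $4d$th moment of the cluster weights. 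Gehring is applied only afterwards.
\end{itemize}

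\textbf{Clock.} A H\"older bound on $\int G\,h$, with clock density $h=(\operatorname{tr}A)^{-1}$, only bounds the physical torsion $U_R$. It gives no rate for $U_R-\mu\Psi_R$, where $\Psi_R=(R^2-|x|^2)/2$ and $\mu=1/\operatorname{tr}\bar A$. For a rate you must:
\begin{itemize}
\item truncate, $h_T=\min\{h,T\}$, and apply the bounded-source estimate with error proportional to $T$;
\item bound $U_R-U_R^T$ through the Green estimate and $\mathbb E[Z^p\mathbf 1_{\{Z>T\}}]\le T^{p-d}\mathbb EZ^d$ with $p<d$;
\item prove a deterministic rate $\mu-\mu_T\le CT^{-s}$, which integrability alone does not give. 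The paper obtains it by comparing both estimates at $R=T^b$ on an event of positive probability.
\end{itemize}

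\textbf{Dirichlet step.} This also fails as written. In $D^2[\varepsilon^2\chi_jw(x/\varepsilon)]$ there is a cross term $2\varepsilon\,D\chi_j\otimes(Dw)(x/\varepsilon)$, of size $\varepsilon r^{-1}\|Dw\|_\infty$, which your residual list omits. The cell estimate controls only $\|w\|_\infty$. No pointwise gradient bound for cell solutions is available: local Schauder constants are random with no moment bounds, and large-scale regularity stops at a random scale.

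The paper avoids derivatives of cell solutions entirely. At a maximum point $x_0$ of $u_\varepsilon-u_0-\kappa T_\varepsilon$, it subtracts the single local cell $c_{i,M}$ with $M=D^2u_0(x_0)$ and a quadratic localizer on $B_h(x_0)$, and contradicts the maximum principle. Only $\|c_{i,M}\|_\infty$ enters, uniformly in $M$ by linearity, which handles the random point. No boundary barrier for the degenerate operator is needed, since $u_\varepsilon-u_0=0$ on $\partial U$ and the cells may cross $\partial U$. The torsion function $T_\varepsilon$ absorbs the residual. Its ABP bound gives $\mathbb E\|T_\varepsilon\|_\infty^d\le C$, and Markov's inequality at a growing threshold $\varepsilon^{-s}$ with small $s>0$ yields the algebraic probability. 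An ergodic ``bounded with high probability'' statement carries no rate and would not suffice.
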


The finite-cell estimate is proved in Theorem~\ref{thm:cell}. The passage
to general data, including the source and boundary error, is proved in
Theorem~\ref{down:dirichlet-main}, where explicit choices of
$\beta_4,\gamma_4$ in terms of $\delta_2,\gamma_2,d,\eta$ are given.

\begin{theorem}[Whole-space quadratic correctors]
\label{thm:intro:correctors}
There is a measurable family $\phi_M$, linear in $M\in\mathbb S^d$,
such that, almost surely and simultaneously for all $M$,
\begin{equation}\label{intro:corrector-equation}
 -A:D^2\phi_M=(A-\bar A):M\quad\text{in }\mathbb R^d.
\end{equation}
For each fixed $z$, almost surely and simultaneously for all $M$,
\[
 \phi_M(\cdot+z,\omega)-\phi_M(\cdot,\tau_z\omega)\in\mathcal P_1.
\]
Set
\[
 Y_R(z)=R^{-2}\sup_{|M|\le1}\inf_{l\in\mathcal P_1}
             \|\phi_M-l\|_{B_R(z)},\qquad
 \gamma_1=\frac d{4d-2},\quad\gamma_\phi=\min\{\gamma_1,\gamma_2\}.
\]
For every fixed $z$ and all sufficiently large $R$,
\begin{equation}\label{intro:corrector-rate}
 \mathbb P(Y_R(z)>CR^{-\delta_2})\le CR^{-\gamma_\phi}.
\end{equation}
\end{theorem}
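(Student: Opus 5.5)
The plan is to build $\phi_M$ as a limit of the finite-cell solutions $w_{M,R}$ along dyadic radii $R_k=2^k$, normalized modulo affine functions. The estimate of Theorem~\ref{thm:intro:homogenization} then controls the increments, and a first-order large-scale regularity (excess-decay) argument upgrades this control to the whole space. Linearity in $M$ is automatic: $w_{M,R}$ is linear in $M$, and we take the limits on a basis of $\mathbb S^d$ and extend linearly.

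\textbf{Step 1 (comparison of consecutive cells).} On $B_{R_k}$ the difference $v=w_{M,R_{k+1}}-w_{M,R_k}$ solves the homogeneous equation $-A:D^2v=0$. By \eqref{intro:cell-rate}, outside an event of probability $CR_k^{-\gamma_2}$, its boundary values satisfy $\|v\|_{\partial B_{R_k}}\le CR_k^{2-\delta_2}$. The maximum principle then gives $\|v\|_{B_{R_k}}\le CR_k^{2-\delta_2}$.

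\textbf{Step 2 (first-order excess decay).} For solutions of $-A:D^2v=0$, I use a $C^{1,\alpha}$-type large-scale estimate: $\inf_{l\in\mathcal P_1}\|v-l\|_{B_r}\le C(r/R)^{1+\alpha}\|v\|_{B_R}$ for $r$ between a random minimal scale $\mathcal X$ and $R$. This is proved by the usual Campanato iteration. At each scale we compare with the $\bar A$-harmonic replacement using the Dirichlet rate of Theorem~\ref{thm:intro:homogenization} with $f=0$, rescaled to $B_1$. We sum failure probabilities over dyadic scales. The minimal scale $\mathcal X$ then has algebraic tails.

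Applying this to $v$ gives, for fixed $r$ and $R_k\ge r$,
\[
 \inf_{l}\|w_{M,R_{k+1}}-w_{M,R_k}-l\|_{B_r}\le Cr^{1+\alpha}R_k^{1-\alpha-\delta_2}.
\]
Choose $\alpha$ with $1-\alpha<\delta_2$. Then the increments are summable modulo $\mathcal P_1$ on every fixed ball. Borel--Cantelli, using $\sum R_k^{-\gamma_2}<\infty$, makes the failure events eventually absent almost surely. We subtract the optimal affine functions cumulatively and obtain a locally uniform limit $\phi_M$. Stability of viscosity solutions \cite{CIL92} gives \eqref{intro:corrector-equation}.

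\textbf{Step 3 (stationarity modulo affine functions).} The shifted cell $w_{M,R}(\cdot+z,\omega)$ solves the cell problem for $\tau_z\omega$ on $B_R(-z)$ rather than on $B_R$. The two cell problems differ by a homogeneous solution whose boundary data on $B_{R-|z|}$ are $O(R^{2-\delta_2})$ with high probability. The same excess-decay argument shows that the difference converges to an affine function. Since the construction is canonical up to $\mathcal P_1$, we get $\phi_M(\cdot+z,\omega)-\phi_M(\cdot,\tau_z\omega)\in\mathcal P_1$. Measurability follows because $\phi_M$ is a pointwise limit of measurable maps.

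\textbf{Step 4 (rate \eqref{intro:corrector-rate}).} Write $\phi_M$ on $B_R(z)$ as $w_{M,R}$ on a cell centered at $z$ (via stationarity) plus the telescoping tail $\sum_{k\ge k_R}$. The first term is $\le CR^{2-\delta_2}$ off probability $CR^{-\gamma_2}$. Each tail term is $\le CR^{1+\alpha}R_k^{1-\alpha-\delta_2}$, and the tail sums to $CR^{2-\delta_2}$ provided $\mathcal X\le R$.

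The exponent $\gamma_1=d/(4d-2)$ should come from the tail of the minimal scale $\mathcal X$ in Step 2. The tail of $\mathcal X$ is governed by the moment H5: controlling the clock requires the reverse H\"older gain for the stopped coarse Green function, and the resulting Chebyshev bound yields $\mathbb P(\mathcal X>R)\le CR^{-\gamma_1}$. Together these give \eqref{intro:corrector-rate} with $\gamma_\phi=\min\{\gamma_1,\gamma_2\}$.

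\textbf{Main obstacle.} I expect the main difficulty to be Step 2: a large-scale $C^{1,\alpha}$ estimate for degenerate coefficients with a quantified minimal scale. I would first try the Armstrong--Smart harmonic-approximation iteration, feeding in the Dirichlet rate. The missing microscopic regularity would be handled through the clock/Green-function integrability, which is where the critical moment and the specific exponent $\gamma_1$ enter.
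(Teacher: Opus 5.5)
Your Steps~1, 3 and~4 match the paper's construction. These are the dyadic cell increments, Borel--Cantelli for the cell bounds, the choice $\alpha>1-\delta_2$ so that $\sum_k 2^{k(1-\alpha-\delta_2)}$ converges, an affine normalization (the paper fixes the projection $\Pi$), and the union bound $\mathbb P(r_*>r)+\sum_{k}C2^{-k\gamma_2}\le Cr^{-\min\{\gamma_1,\gamma_2\}}$. Your direct comparison of shifted cells for covariance is an acceptable substitute for the paper's Liouville-uniqueness argument.

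The gap is Step~2. You need a first-order excess decay that holds on one coefficient event simultaneously for \emph{all} $A$-harmonic functions in $B_R$, with a minimal scale whose tail is $t^{-\gamma_1}$. The route you sketch does not supply this.
\begin{itemize}
\item The Dirichlet estimate of Theorem~\ref{thm:intro:homogenization} is stated for fixed deterministic data in $C^{2,\eta}$. The Campanato step must instead approximate environment-dependent solutions whose boundary values are merely continuous.
\item To regularize those boundary values you need a mesoscopic modulus of continuity that is uniform over all harmonic functions. In this degenerate setting no Krylov--Safonov estimate provides it.
\item Your suggestion to extract this regularity from the physical clock and the Green reverse H\"older inequality cannot work. $A$ and $B=A/\operatorname{tr}A$ have the same harmonic functions, so the clock plays no role in their regularity.
\item Even granting a data-uniform event, feeding the iteration with the Dirichlet rate gives per-scale failure bounds of order $R^{-\gamma_4}$ with $\gamma_4\le\gamma_2/2$. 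That does not yield the exponent $\gamma_\phi=\min\{\gamma_1,\gamma_2\}$ in general.
\end{itemize}

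In the paper this input is Theorem~\ref{thm:reg:q1}, proved in Section~\ref{sec:regularity} before and independently of the cell estimate. It has two ingredients.
\begin{itemize}
\item The fixed-tolerance approximation event of Proposition~\ref{prop:reg:fixed-tolerance}. It holds simultaneously for every $C^3$ $\bar A$-harmonic $F$, is built from qualitative homogenization and the coarse ABP estimate, and fails only with stretched-exponential probability.
\item The mesoscopic H\"older modulus of Lemma~\ref{lem:reg:coarse-holder}. It comes from Bowman's oscillation estimate under bounded averages of $\lambda^{-p}$ with $d-1<p<d$.
\end{itemize}
The exponent $\gamma_1=d/(4d-2)=d(a_0q-1)$ comes from the union bound in Lemma~\ref{lem:reg:ellipticity-averages}, with $p=d-\tfrac12$, $q=d/p$ and $a_0=1-1/(4d)$. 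That bound controls these averages on all balls of radius at least $R^{a_0}$. If you use Theorem~\ref{thm:reg:q1} as a black box in place of your Step~2, the rest of your argument goes through and gives \eqref{intro:corrector-rate}.
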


The corrector is an equivalence class modulo affine functions.
Section~\ref{down:sec-corrector} constructs a representative by a fixed
affine projection and a convergent sum of cell increments.

Define
\[
 \mathcal H_2^A=\left\{a+b\cdot x+\tfrac12x\cdot Mx+\phi_M(x):
       a\in\mathbb R,\ b\in\mathbb R^d,\ M\in\mathbb S^d,
       \ \bar A:M=0\right\}.
\]
Every member of this space is exactly $A$-harmonic. For a function $u$, let
\begin{equation}\label{intro:excesses}
 \mathcal E(u;r,z)=r^{-1}\inf_{l\in\mathcal P_1}\|u-l\|_{B_r(z)},
 \qquad E_2(u;r,z)=r^{-2}\inf_{H\in\mathcal H_2^A}\|u-H\|_{B_r(z)}.
\end{equation}

\begin{theorem}[Large-scale regularity]
\label{thm:intro:regularity}
For each prescribed $\alpha\in(0,1)$ there are translation-covariant
random scales $r_{1,\alpha},r_{2,\alpha}\ge1$ satisfying, at each fixed $z$,
\[
 \mathbb P(r_{1,\alpha}(z)>t)\le C_\alpha t^{-\gamma_1},\qquad
 \mathbb P(r_{2,\alpha}(z)>t)\le C_\alpha t^{-\gamma_\phi}
 \quad(t\ge1).
\]
For each fixed deterministic $z$, almost surely, simultaneously for all
$A$-harmonic $u$ in $B_R(z)$ and all the indicated radii,
\begin{align}
 \mathcal E(u;r,z)&\le C_\alpha(r/R)^\alpha\mathcal E(u;R,z),
       &&r_{1,\alpha}(z)\le r\le R/2,\label{intro:c1}\\
 E_2(u;r,z)&\le C_\alpha(r/R)^\alpha E_2(u;R,z),
       &&r_{2,\alpha}(z)\le r\le R/2.\label{intro:c2}
\end{align}
\end{theorem}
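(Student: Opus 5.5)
We follow the Campanato iteration scheme of Armstrong--Smart, with the algebraic rate of Theorem~\ref{thm:intro:homogenization} (finite-cell estimate \eqref{intro:cell-rate}) and the corrector bound \eqref{intro:corrector-rate} of Theorem~\ref{thm:intro:correctors} as inputs. The core is a one-step improvement lemma. Fix $\theta\in(0,1/4)$ small depending on $\alpha$ and on the $C^{2,1}$ (resp.\ $C^{3,1}$) estimates for $\bar A$-harmonic functions; recall $\bar A$ is uniformly elliptic. On a good event $G_r(z)$, every $A$-harmonic $u$ in $B_r(z)$ admits an $\bar A$-harmonic $\bar u$ with
\[
 \|u-\bar u\|_{B_{r/2}(z)}\le Cr^{-\delta'}\,\inf_{l\in\mathcal P_1}\|u-l\|_{B_r(z)},
\]
so that $\mathcal E(u;\theta r,z)\le \theta^{\alpha}\mathcal E(u;r,z)+Cr^{-\delta'}\mathcal E(u;r,z)$, with an analogous statement for $E_2$.

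To obtain $\bar u$, solve the $\bar A$-Dirichlet problem on $B_{r/2}(z)$ with boundary data a mollification of $u$. Interior $C^{1,\sigma}$-type control of $u$ at scale $r$ is available only through the averaged regularity of \cite{ASdeg}, so we use instead the Dirichlet comparison of Theorem~\ref{down:dirichlet-main}. Rescale $B_r(z)$ to the unit ball and set $\varepsilon=1/r$. The data are random, so they are not deterministic as that theorem requires. I would handle this in the standard way: the comparison is really proved through the two-scale expansion $\bar u+\varepsilon^2\phi$ combined with ABP/comparison, using only the cell bound $X_r$. It is therefore uniform over data in a fixed compact set of $C^{2,\eta}$, and this is exactly the ``simultaneously for all $u$'' claim.

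For first order, set
\[
 G_r(z)=\{X_{r'}(z')\le Cr'^{-\delta_2}\ \text{for a polynomial net of } z'\in B_r(z),\ r'\in[r^{1/2},r]\}.
\]
By \eqref{intro:cell-rate} and a union bound, $\mathbb P(G_r^c)\le Cr^{-\gamma}$ for some $\gamma>0$. Define
\[
 r_{1,\alpha}(z)=\inf\{s\ge1: G_{\theta^{-k}s}(z)\text{ holds for all }k\ge0\}.
\]
Summing over dyadic scales then yields a power tail. Reaching exactly $\gamma_1=d/(4d-2)$ needs a sharper event. Here I would take the ABP/moment bound at the critical moment H5, where the worst case is controlled via Chebyshev on $\lambda_*^{-d}$ summed over a grid, and optimize the grid size against $r$. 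This calculation is what presumably produces $d/(4d-2)$.

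Iteration proceeds as follows. For $r_{1,\alpha}\le r\le R/2$, every dyadic step from $R$ down to $r$ satisfies the improvement inequality. Since $\sum_k(\theta^{-k}r)^{-\delta'}<\infty$ with bounded product, iterating gives \eqref{intro:c1}. For intermediate radii not on the $\theta$-grid, we pass to neighbouring radii at the cost of a constant.

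For second order, repeat the argument with excess modulo $\mathcal H_2^A$. The additional ingredient is that $\bar u$ is approximated on $B_{\theta r}$ by a quadratic $q=a+b\cdot x+\tfrac12x\cdot Mx$ with $\bar A:M=0$ to cubic order, and that $q$ is replaced by $q+\phi_M\in\mathcal H_2^A$. This replacement costs $|M|\,Y_{\theta r}(z)(\theta r)^2\le C|M|r^{2-\delta_2}$ on the event where \eqref{intro:corrector-rate} holds. We must also bound $|M|$ by $C\,E_2$ at scale $r$, and linear-in-$M$ corrector growth does this. Hence the tail exponent is $\min\{\gamma_1,\gamma_2\}=\gamma_\phi$, and $r_{2,\alpha}\ge r_{1,\alpha}$.

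Translation covariance follows because all events are defined through translation-covariant quantities; the corrector is covariant modulo $\mathcal P_1$, which the excesses ignore. I expect the main obstacle to be the uniformity of the homogenization comparison over all $A$-harmonic $u$ on a single event without a lower ellipticity bound. Any needed microscopic regularity of $u$ must be replaced by ABP with the weight $\lambda_*^{-d}$ averaged over the cell, using H5, rather than by pointwise $C^{1,\sigma}$ bounds.
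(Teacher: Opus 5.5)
Your architecture (one-step improvement, Campanato iteration over dyadic radii, a Borel--Cantelli scale, and at second order a Taylor quadratic corrected by $\phi_M$) matches the paper's, but the approximation step has a genuine gap. You are right that the Dirichlet comparison is uniform in the data: Lemma~\ref{down:cell-dirichlet-comparison} is deterministic once the local cells and the torsion are controlled. Its error, however, carries the factor $\|g\|_{C^{2,\eta}}$, so you must replace $u$ by a mollification $g$ and then bound $u-g$. That requires a modulus of continuity of $u$ down to a mesoscopic scale, simultaneously for all $A$-harmonic $u$ on one event, and your proposal supplies none. An ABP bound, with weight $\lambda_*^{-d}$ or any other, gives homogeneous solutions only the maximum principle, not oscillation decay. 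The paper obtains the modulus from Bowman's oscillation estimate \cite[Proposition~1.2]{Bowman26}, which needs only $\fint_{B_{5s}}\lambda^{-p}\le C_0$ with $p>d-1$ (Lemma~\ref{lem:reg:coarse-holder}). Lemma~\ref{lem:reg:ellipticity-averages} then imposes this bound on all balls of radius $s\ge R^{a_0}$ in $B_R(z)$ at once. The choice $p<d$ is essential: H5 then gives $\lambda_*^{-p}$ a moment of order $q=d/p>1$, hence $L^q$ concentration of the averages, whereas at $p=d$ there is only a first moment and no rate. With $p=d-\tfrac12$ and $a_0=1-\tfrac1{4d}$, the union bound over nets gives failure probability $CR^{-d(a_0q-1)}=CR^{-d/(4d-2)}$. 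This, not an ABP or Chebyshev bound on $\lambda_*^{-d}$, is the origin of $\gamma_1$.

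Even with this modulus added, routing the one-step improvement through the algebraic cell and Dirichlet estimates makes the tails of your scales depend on their probability exponents (for the paper's Dirichlet theorem, $\gamma_4<\gamma_2/4$) rather than on $\gamma_1$. So the stated tails $t^{-\gamma_1}$ and $t^{-\gamma_\phi}$ do not follow. Your event $G_r$ is not controlled either. At $r'=r^{1/2}$ the net has about $r^{d/2}$ centres, while the cell estimate bounds each failure only by $Cr^{-\gamma_2/2}$, and nothing gives $\gamma_2>d$ (the explicit choice has $\gamma_2<d/16$). The missing observation is that Campanato iteration needs only a \emph{fixed} tolerance, $\|u-v\|_{B_{1/2}}\le\theta^{1+\alpha}/4$. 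The paper obtains this from Proposition~\ref{prop:reg:fixed-tolerance}, which combines qualitative homogenization on good blocks with the coarse ABP estimate for the stopped kernel. It holds for all $\bar A$-harmonic $F\in C^3$ on one event with stretched-exponential failure probability, so in Theorem~\ref{thm:reg:q1} only the ellipticity-average event limits the tail. At second order, Lemma~\ref{lem:second-uniform-approximation} together with $\{Y_R(z)\le\kappa\}$ for a fixed small $\kappa$ gives the tail $\min\{\gamma_H,\gamma_\phi\}=\gamma_\phi$. Using the cell theorem at first order is not circular, but it is unnecessary and costs the exponent. In the paper the first-order estimate, with $\alpha>1-\delta_2$, is instead an input to the corrector construction that you invoke at second order.
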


The affine estimate is proved before the cell theorem and is used in the
corrector construction. The second-order estimate follows after that
construction; see Theorems~\ref{thm:reg:q1} and
\ref{thm:second-regularity}.

\begin{corollary}[Liouville classification]\label{intro:liouville}
Almost surely, every entire $A$-harmonic function with
$\|u\|_{B_R}=O(R^{2-\varepsilon})$ for some $\varepsilon>0$ is affine.
Every entire $A$-harmonic function with
$\|u\|_{B_R}=O(R^{3-\varepsilon})$ for some $\varepsilon>0$ belongs to
$\mathcal H_2^A$. Moreover,
$\dim\mathcal H_2^A=d(d+3)/2$.
\end{corollary}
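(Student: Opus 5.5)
The plan is to derive both Liouville statements from Theorem~\ref{thm:intro:regularity} by letting $R\to\infty$ at the fixed center $z=0$, and then to count dimensions using the corrector properties in Theorem~\ref{thm:intro:correctors}. Since the random scales $r_{1,\alpha}(0)$ and $r_{2,\alpha}(0)$ have algebraic tails, they are finite almost surely. Fix a countable sequence $\alpha_k\uparrow1$ and take the intersection of the corresponding full-measure events. On that event every $\alpha<1$ is available, and the excess decay holds simultaneously for all $A$-harmonic $u$ and all radii.

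\emph{Subquadratic growth.} Let $u$ be entire, $A$-harmonic, with $\|u\|_{B_R}\le KR^{2-\varepsilon}$. Pick $\alpha_k>1-\varepsilon$. Then $\mathcal E(u;R,0)\le R^{-1}\|u\|_{B_R}\le KR^{1-\varepsilon}$. By \eqref{intro:c1}, for fixed $r\ge r_{1,\alpha_k}(0)$,
\[
\mathcal E(u;r,0)\le C r^{\alpha_k}R^{-\alpha_k}KR^{1-\varepsilon}\to0\quad(R\to\infty).
\]
Hence $u$ is affine on $B_r$ for every large $r$, and so $u$ is affine on $\mathbb R^d$.

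\emph{Subcubic growth.} The argument is identical with $E_2$ and \eqref{intro:c2}: take $\alpha_k>1-\varepsilon$, use $E_2(u;R,0)\le R^{-2}\|u\|_{B_R}\le KR^{1-\varepsilon}$, and conclude that $E_2(u;r,0)=0$ for each large $r$. This gives $H_r\in\mathcal H_2^A$ with $u=H_r$ on $B_r$. To pass to a single global $H$, I need a closedness/uniqueness property of $\mathcal H_2^A$. I expect this to be the main obstacle. The infimum in $E_2$ has to be attained (compactness in the finite-dimensional parameters), and members of $\mathcal H_2^A$ that agree on a large ball must agree globally.

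Both points follow from a nondegeneracy estimate: for $r\ge r_2$, the map $(a,b,M)\mapsto H|_{B_r}$ is injective with a quantitative lower bound. This uses the sublinear-in-$r^2$ growth of $\phi_M$ from \eqref{intro:corrector-rate} and Borel--Cantelli along dyadic radii, so that $Y_{2^j}(0)\le C2^{-j\delta_2}$ eventually almost surely. It gives
\[
\inf_{l}\|\tfrac12x\cdot Mx+\phi_M-l\|_{B_r}\ge (c-C r^{-\delta_2})r^2|M| .
\]
This pins down $M$, and then $a,b$ are fixed by the affine part. The family $H_r$ is therefore eventually constant, equal to some $H$, and $u=H$ everywhere.

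\emph{Dimension.} The map $(a,b,M)\mapsto H$ is linear from $\mathbb R\times\mathbb R^d\times\{M\in\mathbb S^d:\bar A:M=0\}$. The constraint $\bar A:M=0$ is one nontrivial linear condition because $\bar A\neq0$, so the domain has dimension $1+d+\tfrac{d(d+1)}2-1=d(d+3)/2$. It remains to show injectivity, which is exactly the nondegeneracy estimate above. If $H\equiv0$, then dividing by $r^2$ and letting $r\to\infty$ gives $M=0$, since the corrector is $o(r^2)$ modulo affine functions. Then $H$ is affine and zero, so $a=b=0$. This proves $\dim\mathcal H_2^A=d(d+3)/2$. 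Exact $A$-harmonicity of each member follows from \eqref{intro:corrector-equation}: $-A:D^2H=-A:M+(A-\bar A):M=-\bar A:M=0$.
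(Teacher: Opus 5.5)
Your proof is correct and follows essentially the same route as the paper. Both arguments use excess decay with $\alpha>1-\varepsilon$ on a countable intersection of events, attainment of the infimum by finite-dimensional closedness, and the coercivity bound $[q_M+\phi_M]_{r,0}\ge(c_d-Y_r(0))r^2|M|$; this bound is used both to glue the local representatives $H_r$ and to prove injectivity of $(a,b,M)\mapsto H$ for the dimension count. The only cosmetic difference is where the smallness of $Y_r(0)$ comes from: the paper builds $Y_{r_0}(0)\le\kappa<c_d$ into the definition of the scale $r_{2,\alpha}$, whereas you obtain it by a separate Borel--Cantelli argument along dyadic radii.
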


The first conclusion follows by sending $R$ to infinity in
\eqref{intro:c1} with $\alpha>1-\varepsilon$. The second, including the
dimension count and the identification of local approximants without
unique continuation, is proved in Corollary~\ref{cor:second-liouville}.
The strict power gap in the growth assumption is used in this passage.

\begin{theorem}[Invariant density and quantitative averages]
\label{thm:intro:density}
There is a positive $m\in L^1_{\mathrm{loc}}$, covariant almost surely
under each fixed translation, with $\mathbb E[m(0)]=1$ such that
\begin{equation}\label{intro:density-identification}
 \int_{\mathbb R^d}mA:D^2\zeta=0\quad(\zeta\in C_c^\infty),
 \qquad\bar A=\mathbb E[m(0)A(0)].
\end{equation}
For fixed $\psi\in C_c^\infty(\mathbb R^d)$ let
$\psi_R(x)=R^{-d}\psi(x/R)$ and
\[
 Z_R(\psi)=\left|\int\psi_R(m-1)\right|
        +\left|\int\psi_R(mA-\bar A)\right|.
\]
Then, for all sufficiently large $R$,
\begin{equation}\label{intro:density-rate}
 \mathbb P(Z_R(\psi)>C_\psi R^{-\beta_3})
 \le C_\psi R^{-\gamma_3},\qquad
 \beta_3=\delta_2/4,\quad\gamma_3=\min\{\gamma_2,\delta_2/4\}.
\end{equation}
Also $\mathbb E Z_R(\psi)\le
C_\psi R^{-\min\{\beta_3,\gamma_3\}}$.
\end{theorem}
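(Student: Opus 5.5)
The plan has three stages: construct the density as a limit of normalized invariant densities on large periodic or Dirichlet-type approximations, identify $\bar A$ through the corrector, and quantify averages by duality with the finite-cell estimate \eqref{intro:cell-rate}.

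\emph{Construction of $m$.} I would take $m$ from the adjoint (Fokker--Planck) equation $D^2:(mA)=0$ in the distributional sense. On a finite torus or box of side $L$, the stopped unit-trace diffusion from the clock decomposition has an invariant probability. Converting back to physical time divides the density by the clock rate, which is comparable to $\operatorname{tr}A$. This gives a finite-volume density $m_L\ge0$ with $\fint m_L=1$. The key uniform bound is $\sup_L\mathbb E\big[\fint_{B_1} m_L^{p}\big]<\infty$ for some $p>1$. I would obtain it from the reverse H\"older estimate for the Green function of the stopped coarse process, together with H5 at the critical moment $d$. That equi-integrability lets me pass to a weak $L^1_{\rm loc}$ limit in the stationary (Palm) sense. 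The limit is stationary, nonnegative, satisfies $\mathbb E m(0)=1$, and satisfies $\int mA:D^2\zeta=0$. Positivity would follow from a Harnack-type estimate for the adjoint equation, or from ergodicity plus the fact that $\{m=0\}$ is translation invariant. Translation covariance holds because the construction commutes with $\tau_z$.

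\emph{Identification $\bar A=\mathbb E[mA]$.} I would test the corrector equation \eqref{intro:corrector-equation} against $m\zeta_R$, where $\zeta_R(x)=R^{-d}\zeta(x/R)$ with $\int\zeta=1$. Using the adjoint equation,
\[
\int m\zeta_R (A-\bar A):M=-\int m\zeta_R A:D^2\phi_M=-\int mA:\big(2D\phi_M\otimes D\zeta_R+\phi_M D^2\zeta_R\big).
\]
Subtracting the best affine function does not change the left side. The sublinear growth $Y_R\to0$ from Theorem~\ref{thm:intro:correctors} makes the right side $O(R^{-\delta_2})$ times local averages of $m$; the gradient term is handled by integrating by parts once more against $m$. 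Taking expectation, or applying the ergodic theorem, gives $\mathbb E[m(A-\bar A)]:M=0$.

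\emph{Quantitative averages.} The same identity with general $\psi$ in place of $\zeta$ gives
\[
\Big|\int\psi_R\,m(A-\bar A):M\Big|\lesssim R^{-2}Y_R\cdot\fint_{B_{CR}}m \cdot C_\psi.
\]
This controls $\int\psi_R(mA-\bar A)$ up to the term $\bar A\int\psi_R(m-1)$. So the real task is the density fluctuation $\int\psi_R(m-1)$.

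\emph{Density fluctuation.} For this term I would use duality. Solve $-A:D^2v=\psi_R-c$ on a large ball with the homogenized comparison $\bar v$ solving $-\bar A:D^2\bar v=\psi_R$. The homogenization error from Theorem~\ref{thm:intro:dirichlet} is then tested against $m$, and the constant equal to the $m$-mass is recovered from $\int m\,\psi_R$. Interpolating between the $\varepsilon^{\beta}$ error and the $C^{2}$ norms of $\psi$ at scale $R$ produces a loss. I expect this to give exponent $\delta_2/4$, after a truncation of $\fint m$ at level $R^{\delta_2/4}$ whose failure probability is controlled by Markov's inequality. That truncation is the source of $\gamma_3=\min\{\gamma_2,\delta_2/4\}$.

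The expectation bound follows by splitting on the good event. On the bad event I would use the $L^p$ bound on $m$ with H\"older's inequality.

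\emph{Main obstacle.} The main obstacle is the uniform higher integrability of $m_L$ (equivalently, controlling $\fint_{B_R}m$ with good probability) at the critical moment. I would first try a Gehring argument based on the Green-function reverse H\"older inequality.
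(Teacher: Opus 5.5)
\textbf{The flux estimate.} Your reduction $mA-\bar A=m(A-\bar A)+\bar A(m-1)$ is the paper's, but there are genuine gaps in both halves of the quantitative argument. The term $\int mA:(D\phi_M\otimes D\zeta_R)$ cannot be removed by ``integrating by parts once more against $m$'': $m$ only solves $D^2:(mA)=0$ and has no derivative control. The paper instead tests the adjoint equation with $\chi^2w^2$, which gives the weighted Caccioppoli inequality of Lemma~\ref{down:adjoint-energy}, and then applies Cauchy--Schwarz. This costs a square root: the flux error is $r^{-\delta_2/2}\fint_{B_{2r}}m$, not $r^{-\delta_2}$ (Lemma~\ref{down:cell-to-weighted-error}). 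It also uses only the finite cells $w_{M,2r}$, so no whole-space corrector is needed.

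\textbf{The density fluctuation.} More seriously, your density step yields no estimate. Testing $m$ against a Dirichlet solution on a ball leaves a boundary flux term involving the values of $m$ on the sphere, and nothing controls it. The boundary identity of Proposition~\ref{em:energy} removes boundary terms only for tests vanishing to second order, such as $e^2$. An $L^\infty$ homogenization error also says nothing about $\int m\,A:D^2v$, and the exponent $\delta_2/4$ is asserted rather than derived.

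The missing idea is to recover the mass from the flux along the homogenized heat flow. Set $\varphi_t=\bar p_t*\psi_R$ and $F(t)=\int m\varphi_t$. The adjoint equation gives $F'(t)=-\int m(A-\bar A):D^2\varphi_t$. Splitting $D^2\varphi_t$ over dyadic annuli and applying the flux bound at each scale gives $|F'(t)|\le C(R^2+t)^{-1-\kappa/2}$, and $F(\infty)=\int\psi$ follows from the qualitative ergodic limit. This needs a good event at all scales $r_n=2^nR$ simultaneously: the cell bounds together with $\fint_{B_{2r_n}}m\le r_n^{\delta_2/4}$, obtained by Markov's inequality and a union bound. That is exactly where $\beta_3=\delta_2/2-\delta_2/4$ and $\gamma_3$ come from.

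\textbf{Higher integrability.} The bound you call the main obstacle is neither available nor needed. The paper only has $\lambda m\in L^{d/(d-1)}(\mathbb P)$, which with $\lambda^{-1}\in L^d$ gives exactly $m\in L^1$. The reverse H\"older inequality of Section~\ref{sec:green} concerns mesoscopically coarse-grained Green densities of the normalized chain, and the physical density carries the further unbounded factor $1/\operatorname{tr}A$.

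\textbf{Construction and the $L^1$ rate.} Compactness needs only uniform integrability. A stopped diffusion has no invariant probability, and a torus would require periodizing a nonperiodic law. The paper instead uses normalized occupation measures $Q_R$ of the killed physical environment process. ABP gives $Q_R(f)\le C_d(\mathbb E[|f|^d\lambda(0)^{-d}])^{1/d}$, so H5 gives uniform integrability of $dQ_R/d\mathbb P$, and Dunford--Pettis applies. The $L^1$ rate likewise uses only $X\ge0$ and $\mathbb EX=a$, through $\mathbb E|X-a|=2\mathbb E(a-X)_+\le2t+2a\,\mathbb P(|X-a|>t)$. This is applied to $\int\psi_Rm$ and $\int\psi_Rm\,\xi\cdot A\xi$ with $\psi\ge0$, followed by polarization; your H\"older step on the bad event is not available.

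\textbf{Identification of $\bar A$.} Deriving $\bar A=\mathbb E[mA]$ from the correctors reverses the paper's logic. The cell estimate, and hence the corrector, is proved using this identity: Theorem~\ref{thm:cell} centers by the normalized invariant law $\pi=am\,\mathbb P/\operatorname{tr}\bar A$, and its cancellation is $\mu\bar A=\bar B$. The paper identifies $\bar A$ independently. Ergodicity of the environment process and the martingale FCLT give a Brownian limit with covariance $2t\,\mathbb E[mA]$. Comparing exit times and exit positions with Armstrong--Smart's qualitative limits, for the torsion problem and then for quadratic boundary data, fixes first the trace and then the matrix.

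Finally, $\{m_0=0\}$ is not translation invariant. Positivity instead follows from invariance together with the positivity-improving property of $P_t$.
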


The qualitative statement, including the identification of $\bar A$, is
proved in Section~\ref{sec:density}. The quantitative statement is proved
in Section~\ref{down:sec-density-quantitative}. We obtain the average
estimate from cell errors through an adjoint energy identity and a
constant-coefficient heat semigroup. This argument uses only the first
moment of the nonlocal field $m$.

Section~\ref{sec:applications} gives two consequences. The Dirichlet
error converges in the gradient norm weighted by $m_\varepsilon A_\varepsilon$,
with error exponent $\beta_4/2$. A finite-domain approximation of $\bar A$,
defined without knowledge of $\bar A$, inherits the cell rate through the
deterministic bound $|\widehat A_R-\bar A|\le2dX_R$.

\subsection{The main ideas}

Write $a=\operatorname{tr}A$ and $B=A/a$. The generator $B:D^2$ has unit
trace, so its mean exit time from a ball is a deterministic quadratic
function. The original diffusion is recovered by the time change with
speed $a^{-1}$. This separates two effects of degeneracy which require
different estimates.

The first advantage of normalization is the bound
\[
 (\det B)^{-1}\le C_d\lambda_{\min}(A)^{-(d-1)}.
\]
Consequently, $(\det B)^{-1}$ has a moment of order $d/(d-1)>1$.
This is the gain used in the convex-envelope contraction for bounded
directional sources. A Harnack estimate is needed to make the contraction
quantitative. We observe the diffusion after it exits the sparse
connected regions where ellipticity is small. These observations are
made at stopping times, so the sampled positions retain the martingale
property. Conditional covariance, jump, and localization
estimates then give a coarse ABP inequality and a multiscale coupling.
The same construction gives an approximation event common to all harmonic
functions, from which affine excess decay follows.

For the physical clock the critical moment supplies only an integrable
endpoint weight. Integrability alone gives no power rate for its truncated
mean. The additional estimate is a reverse H\"older inequality for the
coarse Green function. Its self-improvement yields an exponent $p<d$;
this strict inequality makes the clock-truncation error algebraically
small using exactly \eqref{intro:critical-moment}. A comparison on an
event of positive probability then controls the deterministic truncated
means. This completes the passage from the normalized equation to the
finite-cell estimate for $A$.

The centering constants in these arguments are occupation means. We
construct the invariant probability and identify its covariance with
$\bar A$ before beginning the quantitative estimates.

The remaining arguments use the cell theorem as a common input. To glue
finite-volume correctors modulo affine functions, we choose the affine
regularity exponent so that $\alpha+\delta_2>1$; this is exactly the
condition making the normalized increment series converge. Corrected
quadratic approximation gives second-order excess decay and the Liouville
classification. The density averages are obtained by adjoint duality and
heat-semigroup smoothing. Finally, local cell comparisons and a torsion
barrier handle the source and boundary in the Dirichlet problem.

\subsection{Relation to earlier work and organization}

The directional contraction in Section~\ref{sec:renormalization} follows
the convex-envelope framework of Armstrong and Smart~\cite{ASquant}.
We use the corrected version,
\href{https://arxiv.org/abs/1306.5340v5}{arXiv:1306.5340v5}.
We adapt their method to H1--H5 by controlling the determinant weight,
the degenerate region, and the physical clock. The weighted estimates
required in this setting are proved in the corresponding lemmas.

The discrete probabilistic counterparts are balanced random walks.
Guo and Zeitouni~\cite[Theorem~1]{GZ12} proved a quenched invariance
principle under an inverse moment of order $p>d$ for the geometric mean
of the coordinate transition probabilities in an ergodic environment.
Berger and Deuschel~\cite{BD14} obtained an invariance principle for
non-elliptic iid balanced environments under a genuine dimensionality
condition. Berger, Cohen, Deuschel and Guo~\cite{BCDG} developed the
associated elliptic Harnack theory. Their multiscale coupling is a
precursor of Section~\ref{sec:regularity}; our coarse kernel instead
records the continuous diffusion at genuine stopping times, and its
covariance, jump bounds and dependence regions are established separately.
Quantitative results under finite-range dependence are already available
for balanced finite-difference equations in Guo, Peterson and
Tran~\cite[Theorems~1.2 and~1.5]{GPT22}, under uniform ellipticity of the normalized lattice
coefficients. Guo and Tran~\cite{GuoTran25} obtain optimal or nearly
optimal rates for iid balanced lattice environments with uniformly
elliptic normalized coefficients. Their quantitative theorems do not
cover the rotating eigendirections and degenerate normalized diffusion
matrices permitted by H1--H5.
Guo, Sprekeler and Tran~\cite{GST25} obtain sharper Dirichlet error
rates for balanced difference operators in iid environments in
$d\ge3$, under uniform ellipticity of the trace-normalized coefficients
and suitable smooth, well-prepared boundary data. Their coefficients
are diagonal lattice fields; H1--H5 allow continuous fields with varying
eigendirections and degenerate trace-normalized ellipticity.

The passage from homogenization to improved regularity has an important
periodic precedent in Avellaneda and Lin~\cite{AvLin89}, including
nondivergence-form equations. In the stochastic divergence-form setting,
Armstrong and Smart~\cite{ASconvex16} established quantitative
homogenization and large-scale regularity for uniformly convex integral
functionals under finite-range dependence; a systematic account is given
by Armstrong, Kuusi and Mourrat~\cite{AKM19}. For uniformly elliptic
nondivergence-form equations, Armstrong and Lin~\cite[Sections~2--3]{AL}
use an initial algebraic rate to prove large-scale $C^{1,1}$ estimates.
Their optimal corrector estimates use an additional product assumption.
Our affine estimate is proved before the finite-cell rate and is then used
to construct quadratic correctors. The subsequent corrected-polynomial
iteration is related to Fischer and Otto~\cite{FO}; their divergence-form
theory uses correctors and flux potentials, whereas here affine functions
are already exactly harmonic and the quadratic correction satisfies
\eqref{intro:corrector-equation}. Section~\ref{down:sec-corrector}
proves the required convergence modulo affine functions explicitly.

For invariant densities, the local adjoint theory includes
Bauman~\cite{Bauman84}. Armstrong, Fehrman and Lin~\cite[Theorem~1.4]{AFL}
prove quantitative estimates for averages of $m$ and $mA$ under
deterministic uniform ellipticity and finite-range dependence, using
parabolic Green-function estimates. Section~\ref{down:sec-density-quantitative}
derives smooth-test averages from the critical-moment cell bound and the
first moment of $m$. We use~\cite[Proposition~A.4]{AFL} for local
regularity: its ellipticity and H\"older assumptions hold on each
fixed compact set in each realization, by
Lemma~\ref{lem:local-regularity}. No uniform probabilistic bound for its
local regularity constant is inferred from this application.

A second local input is Bowman~\cite[Proposition~1.2]{Bowman26}, which gives
an oscillation estimate when the inverse ellipticity belongs to $L^p$
with $p>d-1$. We state the form used below and check its local hypotheses.
The large-scale Harnack inequality and its probability estimate require
the separate multiscale argument of Section~\ref{sec:regularity}.

Section~\ref{sec:density} constructs the invariant density and identifies
$\bar A$. Section~\ref{sec:regularity} establishes the coarse process,
affine regularity and Harnack estimates. Sections~\ref{sec:renormalization}
and~\ref{sec:green} treat the directional equation and the physical clock,
respectively, and culminate in the cell theorem. The remaining sections
construct the correctors, prove second-order regularity, and establish
the density and Dirichlet estimates and their applications.

\section{Invariant density and the effective matrix}\label{sec:density}

We construct the invariant density and identify its weighted coefficient
average with the homogenized matrix. The argument uses qualitative
homogenization and the critical moment H5. We first record the local
regularity and ergodicity consequences used throughout the paper.

\begin{lemma}[Local regularity and ergodicity]
\label{lem:local-regularity}\label{down:local-regularity}
For every $0<\theta<\min\{1,\sigma\}$, $A$ has a deterministic global
$C^{0,\theta}$ bound. In each realization, its minimum eigenvalue has a
positive lower bound on every compact set. Solutions with locally
$C^{0,\theta}$ right-hand side are locally $C^{2,\theta}$. The canonical
translation action is ergodic.
\end{lemma}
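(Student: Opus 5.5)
The four assertions of Lemma~\ref{lem:local-regularity} will be taken in order; each is short, and only the first needs a real computation.

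\emph{H\"older bound.} Use H4 with suitably chosen $M$. Fix a unit vector $e$ and take $M=e\otimes e$, so that $|M|=1$ and $F(M,x,\omega)=-e\cdot A(x)e$. H4 then gives
\[
 |e\cdot(A(x)-A(y))e|\le\rho(2|x-y|^\sigma).
\]
By polarization, using $e_i\pm e_j$, the same bound holds for every entry of $A(x)-A(y)$, with a dimensional constant. So $A$ has the deterministic modulus $C_d\rho(2r^\sigma)$. Two observations turn this into a global $C^{0,\theta}$ bound.
\begin{itemize}[label=--]
\item For $|x-y|\ge1$ the bound $|A|\le1$ from H1 suffices.
\item For $|x-y|<1$ we need $\rho(2r^\sigma)\le Cr^\theta$. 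I expect this to be the main obstacle, because $\rho$ is only a continuous modulus.
\end{itemize}
To get the power, scale $M$. Take $M=te\otimes e$ with $t=|x-y|^{-\sigma}$. Since $F$ is linear in $M$,
\[
 t\,|e\cdot(A(x)-A(y))e|\le\rho\big((1+t)|x-y|^\sigma\big)\le\rho(2),
\]
that is, $|A(x)-A(y)|\le C\rho(2)|x-y|^\sigma$. This is a deterministic $C^{0,\sigma}$ bound when $\sigma\le1$. If $\sigma>1$, then $A$ is constant in each realization, and every $\theta<1$ holds trivially. Hence $\theta<\min\{1,\sigma\}$ is covered.

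\emph{Positivity on compacts.} In each realization $A$ is continuous and $A(x)>0$ pointwise by H1. Since $\lambda_{\min}$ is continuous on $\mathbb S^d$, $x\mapsto\lambda_{\min}A(x)$ is continuous and positive, so it attains a positive minimum on each compact set. Alternatively, H5 gives $\lambda_*(z)>0$ almost surely for each $z$, and compactness covers a compact set by finitely many unit balls.

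\emph{Local $C^{2,\theta}$ regularity.} On a compact $K'\Supset K$ the operator $-A:D^2$ is uniformly elliptic with constants $\lambda_{K'}$ and $1$, and has $C^{0,\theta}$ coefficients. Take a viscosity solution $u$ of $-A:D^2u=f$ with $f\in C^{0,\theta}_{\mathrm{loc}}$.
\begin{itemize}[label=--]
\item Caffarelli's interior $C^{2,\theta}$ estimates for viscosity solutions of linear equations with H\"older coefficients (or Schauder theory after identifying $u$ with a strong solution) give $u\in C^{2,\theta}(K)$.
\item Concretely, solve the Dirichlet problem classically on a smooth subdomain of $K'$ with boundary data $u$; this is Schauder theory with continuous boundary data plus interior estimates.
\item The viscosity comparison principle of \cite{CIL92} identifies this classical solution with $u$.
\end{itemize}

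\emph{Ergodicity.} Let $E$ be a translation-invariant event. Approximate $E$ in $L^1(\mathbb P)$ by an event $E'$ measurable with respect to $\sigma(A|_{B_L})$. For $|z|>2L+\ell$, H3 says $E'$ and $\tau_zE'$ are independent. Invariance gives
\[
 \mathbb P(E)=\mathbb P(E\cap\tau_zE)\approx\mathbb P(E'\cap\tau_zE')=\mathbb P(E')^2\approx\mathbb P(E)^2.
\]
Thus $\mathbb P(E)\in\{0,1\}$, which is mixing and hence ergodicity.
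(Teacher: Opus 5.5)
Your proposal is correct and follows essentially the same route as the paper: you scale the test matrix in H4 by $|x-y|^{-\sigma}$ to turn the modulus into a power bound, use continuity plus pointwise positivity for compact-set ellipticity, use Schauder theory plus viscosity uniqueness for the $C^{2,\theta}$ regularity, and get ergodicity from mixing under H3 by approximating with locally determined events (the paper approximates by conditional expectations on increasing balls, which is the same argument). Two cosmetic points: write $\sup_{[0,2]}\rho$ in place of $\rho(2)$ unless you assume $\rho$ is nondecreasing, and the separate case $\sigma>1$ is unnecessary, since $h^\sigma\le h^\theta$ for $h\le1$ and $\theta<\sigma$.
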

\begin{proof}
For $h=|x-y|\le1$ and $A(x)\ne A(y)$, apply H4 with
$M=h^{-\sigma}(A(x)-A(y))/|A(x)-A(y)|$. This gives
\[
 |A(x)-A(y)|\le h^\sigma\sup_{0\le t\le2}\rho(t).
\]
For $h>1$, use $|A|\le\sqrt d$. Continuity and strict positive
definiteness give the compact-set ellipticity bound. The local Schauder
theorem~\cite[Chapter~6]{GT} and uniqueness of viscosity solutions then give
the asserted regularity. These local Schauder constants may depend on the
realization; no moment bound for them is being asserted.

If $G,H$ are bounded functions of the field on bounded sets, H3 gives
$\mathbb E[G(H\circ\tau_z)]=\mathbb EG\,\mathbb EH$ for large $|z|$.
For arbitrary bounded $G,H$, approximate by conditional expectations on
restrictions to increasing balls. Their $L^2$ convergence and invariance of
$\mathbb P$ yield the same equality in the limit $|z|\to\infty$.
Applied to the indicator of a translation-invariant event, this implies
that its probability is zero or one.
\end{proof}

\begin{theorem}[Invariant density]\label{down:density-main}
Under H1--H5, there is a measurable function $m_0$ on the canonical
coefficient space for which $m(x,\omega)=m_0(\tau_x\omega)$ has a locally
H{\"o}lder continuous, everywhere positive version satisfying
\begin{equation}\label{down:density-identities}
 \begin{gathered}
 \mathbb E[m(0)]=1,\qquad \mathbb E[m(0)A(0)]=\bar A,\\
 \int_{\mathbb R^d}mA:D^2\zeta=0
 \qquad(\zeta\in C_c^\infty(\mathbb R^d)).
 \end{gathered}
\end{equation}
The adjoint equation holds simultaneously for all test functions almost
surely. For each fixed $z$, this version satisfies
$m(x,\tau_z\omega)=m(x+z,\omega)$ almost surely for every $x$. Moreover,
with $q=d/(d-1)$ and $\lambda(x)=\lambda_{\min}(A(x))$,
\begin{equation}\label{down:density-weighted-moment}
 \|\lambda(0)m(0)\|_{L^q(\mathbb P)}\le C_d.
\end{equation}
The measure $Q=m_0\mathbb P$ is equivalent to $\mathbb P$ and is an
invariant ergodic probability measure for the environment seen from the
particle. It is the unique invariant probability measure absolutely
continuous with respect to $\mathbb P$.
\end{theorem}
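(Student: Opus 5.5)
We construct $m$ by approximation on large periodic-free cells and pass to the limit using qualitative homogenization and the moment bound H5. For $R\ge1$ let $m_R$ denote the normalized adjoint solution on $B_R$. Concretely, $m_R$ is the density of the expected occupation measure of the diffusion with generator $A:D^2$ started uniformly in $B_{R/2}$ and stopped on exiting $B_R$, normalized to have mass $|B_R|$. Equivalently, $\int m_R\,\zeta$ is given by the solution of the Dirichlet problem $-A:D^2v=\zeta$, $v=0$ on $\partial B_R$. Then $\int m_RA:D^2\zeta=0$ holds for $\zeta$ supported away from the boundary layer, up to an error of order $R^{-1}$ in the normalized sense.

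The first key estimate is the weighted $L^q$ bound with $q=d/(d-1)$. We apply the ABP estimate, in its weighted form $\sup|v|\le C_dR\,\|(\det A)^{-1/d}g\|_{L^d}$ (rescaled), to $-A:D^2v=g$. By duality this gives $\|\lambda m_R\|_{L^q}$ bounds averaged over $B_R$, since $\lambda\,m$ is dual to sources of the form $g/\lambda$ and $(\det A)^{1/d}\ge\lambda$. Combined with H5 and $m\lambda\ge0$, this yields uniform integrability of $m_R$ on sets where $\lambda_*$ is not small. Hölder with $\mathbb E[\lambda_*^{-d}]<\infty$ and $1/q+1/d=1$ gives tightness of $m_R$ in $L^1$.

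Next we stationarize. We define the probability measures $Q_R$ on $\Omega$ by $\int G\,dQ_R=\mathbb E\big[\fint_{B_R}m_R(x)G(\tau_x\omega)\,dx\big]$. These are absolutely continuous, and their densities $dQ_R/d\mathbb P$ are bounded in the weighted $L^q$ sense above, so they are uniformly integrable by the previous paragraph. A weak-$L^1$ subsequential limit gives $m_0$ with $\mathbb E m_0=1$ and $\|\lambda(0)m_0\|_{L^q}\le C_d$ by lower semicontinuity. Invariance, in the form $\mathbb E[m_0\,A:D^2_xG(\tau_x\cdot)|_{x=0}]=0$, follows because the boundary contribution is $O(R^{-1})$ after averaging. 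We transfer this invariance to the realization $x\mapsto m_0(\tau_x\omega)$ through Fubini and stationarity, which gives the adjoint equation for a countable dense set of $\zeta$ and hence for all $\zeta$ simultaneously. Local Hölder continuity and positivity of a version then follow from Lemma~\ref{lem:local-regularity}, which makes the equation locally uniformly elliptic with Hölder coefficients, together with the local adjoint theory \cite[Proposition~A.4]{AFL}, Bauman's Harnack inequality \cite{Bauman84}, and covariance from construction. Ergodicity of $Q$, its uniqueness, and equivalence with $\mathbb P$ follow by the standard argument. The support $\{m_0>0\}$ is invariant, so it is of full measure by ergodicity of $\mathbb P$; alternatively this follows from positivity everywhere. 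Uniqueness comes from the fact that any two absolutely continuous invariant densities have an invariant ratio.

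The main obstacle is the identification $\mathbb E[m_0A(0)]=\bar A$. Fix $M$ and test the adjoint equation against $w_{M,R}$ from \eqref{intro:cell-equation}, whose qualitative homogenization from \cite[Theorem~1]{ASdeg} gives $X_R\to0$ almost surely. We integrate $\int\psi_R\, m(A-\bar A):M=\int\psi_R\, m(-A:D^2w)$ with a cutoff $\psi_R$ and move derivatives onto $\psi_R$ using the adjoint equation. The boundary terms are then bounded by $R^{-2}\|w\|\int|D^2\psi|\,m\lesssim X_R\fint m$, and they vanish in the limit. Meanwhile the ergodic theorem for $mA$ gives the average $\mathbb E[mA]$. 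The delicate point is the product rule term $m A:D\psi\otimes Dw$, which requires a gradient bound on $w$. We first use interior $C^{1}$ control of $w$ off the cutoff layer, obtained from the affine regularity of later sections, or, to avoid circularity, we replace $w$ by its mollified approximant and use that the mollification error is controlled by $X_R$ via the ABP-weighted estimate. If this fails, the fallback is the probabilistic route: compute the covariance of the stationary process under $Q$, whose quadratic variation rate is $\mathbb E_Q[A]$, and identify it with the diffusion limit $\bar A$ given by homogenization of exit times.
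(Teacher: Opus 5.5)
Your construction of $m_0$ (stopped occupation measures, the $\lambda^{-d}$-weighted ABP bound by duality, Dunford--Pettis, then Fubini to get the quenched adjoint equation) is essentially the paper's. The genuine gap is in the identification $\mathbb E[m_0A(0)]=\bar A$.

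After you test the adjoint equation with $\psi_R w$, $w=w_{M,2R}$, the cross term $2\int mAD\psi_R\cdot Dw$ remains, and neither of your remedies controls it.
\begin{itemize}
\item Large-scale affine regularity bounds the $L^\infty$ affine excess of $A$-harmonic functions above a random scale. Your $w$ has a source, and in any case such bounds say nothing about $Dw$ at the microscopic scale weighted by $m$. There, local gradient bounds have realization-dependent constants with no moments.
\item Mollifying $w$ destroys the equation. The error $\int\psi_R mA:D^2(w-w*\rho_\epsilon)$ involves microscopic second derivatives of order one and is not controlled by $X_R$.
\end{itemize}

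The missing idea is that the adjoint equation itself gives a weighted Caccioppoli inequality. Testing it with $\chi^2w^2$, as in Lemma~\ref{down:adjoint-energy}, yields
\[
 \int_{B_R}m\,Dw\cdot ADw
 \le C\bigl(\|w\|_{B_{2R}}+R^{-2}\|w\|_{B_{2R}}^2\bigr)\int_{B_{2R}}m
 \le CR^2\bigl(X_{2R}+X_{2R}^2\bigr)\int_{B_{2R}}m .
\]
Weighted Cauchy--Schwarz then bounds the cross term by $C(X_{2R}+X_{2R}^2)^{1/2}\fint_{B_{2R}}m$. The remaining term is at most $CX_{2R}\fint_{B_R}m$.

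To conclude:
\begin{itemize}
\item Qualitative homogenization gives $X_{2R}\to0$ almost surely.
\item Lemma~\ref{down:density-ergodic} gives $\fint_{B_{2R}}m\to1$.
\item The same lemma gives $\int\psi_Rm(A-\bar A):M\to(\mathbb E[m_0A(0)]-\bar A):M\int\psi$.
\end{itemize}

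With this estimate your analytic route closes. It is the qualitative form of Lemma~\ref{down:cell-to-weighted-error}, and it uses only spatial ergodicity.

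The paper instead identifies $\bar A$ probabilistically, through three ingredients:
\begin{itemize}
\item a martingale FCLT under $Q$;
\item uniform integrability of $T_R/R^2$;
\item comparison with both the homogenized torsion problem and the quadratic-boundary-data problem.
\end{itemize}
Your fallback mentions only exit times, which is not enough. The mean exit time fixes only $\operatorname{tr}\bar A$, and exit distributions determine $\bar A$ only up to a scalar, so both comparisons are needed.

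A smaller gap concerns invariance. You record only the infinitesimal identity $\mathbb E\bigl[m_0\,A(0):D_x^2F(\tau_x\omega)|_{x=0}\bigr]=0$ on smooth local functionals. Invariance, ergodicity and uniqueness of $Q$ for the environment process are semigroup statements. Deducing $Q(P_tf)=Q(f)$ from your identity would require this class to be a core for the environment generator, which you have not shown.

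Your approximants give semigroup invariance directly. The time-shift identity bounds $|Q_R(P_tf)-Q_R(f)|$ by $2t\|f\|_\infty$ divided by the occupation mass, and that mass is at least $cR^2$ in every environment (compare Lemma~\ref{down:density-invariance}). Your ``standard argument'' for ergodicity also needs the positivity-improving property: $P_t\mathbf 1_E>0$ almost surely whenever $\mathbb P(E)>0$. This follows from spatial ergodicity and strict positivity of the killed transition density.
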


The proof has three parts. Stopped occupation measures and the ABP
estimate give uniform integrability at the critical moment. The relevant
estimate is written with the local ellipticity weight, so its dual bound
controls $\lambda m$; uniform integrability of $m$ itself uses H5.
The limit of the occupation measures yields the adjoint density.
Finally, the martingale limit identifies a candidate covariance, and
qualitative homogenization~\cite[Theorem~1]{ASdeg} identifies it with
$\bar A$. We compare both homogeneous and unit-source Dirichlet problems
to fix the physical time normalization.

\subsection{Occupation measures}

Lemma~\ref{lem:local-regularity} permits the construction of the diffusion
with generator $G_\omega=A(\cdot,\omega):D^2$ by the local martingale
problem~\cite{SV}. Indeed, on each ball extend $A$ to bounded uniformly
elliptic H{\"o}lder coefficients and stop the resulting diffusion at the
boundary. Local uniqueness makes these stopped laws consistent. If $T_N$
is the exit time from $B_N$, then
\[
 \mathbf P_x^\omega(T_N\le t)
 \le\frac{\mathbf E_x^\omega|X_{t\wedge T_N}-x|^2}{(N-|x|)^2}
 \le\frac{2dt}{(N-|x|)^2},
\]
so the patched diffusion is nonexplosive. Its killed transition density
$p_{B_N}^\omega(t,x,y)$ is continuous and strictly positive in the ball
for $t>0$. Increasing the killing domains gives the whole-space density
$p^\omega$. Uniqueness gives translation covariance and the strong
Markov property. We denote quenched expectation by $\mathbf E_x^\omega$;
$\mathbb E$ continues to denote expectation over environments.

\begin{lemma}[Stopped occupation estimate]\label{down:density-occupation}
For every fixed environment, $R>0$, and nonnegative Borel function $h$,
\begin{equation}\label{down:density-occupation-bound}
 \sup_{x\in B_R}\mathbf E_x^\omega\int_0^{T_R}h(X_s)\,ds
 \le C_dR\left(\int_{B_R}h^d\lambda^{-d}\right)^{1/d},
 \qquad T_R=\inf\{t\ge0:X_t\notin B_R\},
\end{equation}
allowing an infinite right-hand side. With
$a_R=\mathbb E\mathbf E_0^\omega T_R$,
\begin{equation}\label{down:density-exit-annealed}
 \frac{R^2}{2d}\le a_R
 \le C_dR^2\bigl(\mathbb E[\lambda(0)^{-d}]\bigr)^{1/d}<\infty.
\end{equation}
\end{lemma}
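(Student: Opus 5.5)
The first bound is an Alexandrov--Bakelman--Pucci estimate, and the second follows from it with $h\equiv1$ together with the trace bound $\operatorname{tr}A\le d$.

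\emph{Occupation bound.} By monotone convergence we may take $h$ bounded, continuous and compactly supported in $B_R$; Borel $h$ is recovered by monotone class arguments and Fatou. We also assume the right side is finite. Set $v(x)=\mathbf E_x^\omega\int_0^{T_R}h(X_s)\,ds$. The local regularity of Lemma~\ref{lem:local-regularity} together with local uniqueness (apply Itô's formula on slightly smaller balls and let them increase) shows that $v$ is the viscosity solution of
\[
 -A:D^2v=h\quad\text{in }B_R,\qquad v=0\quad\text{on }\partial B_R.
\]
Away from the boundary $v$ is even $C^{2}$ after mollifying $h$. Boundary continuity follows from a barrier: the process is nondegenerate near $\partial B_R$ because $\lambda>0$ on the compact set $\overline{B_R}$.

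Now apply the ABP estimate. On the upper contact set of the concave envelope $\Gamma$ of $v$ we have $-D^2v\ge0$ and
\[
 \det(-D^2v)\le\frac{\bigl(A:(-D^2v)/d\bigr)^d}{\det A}.
\]
Moreover $\det A\ge\lambda^d$. This gives the standard form
\[
 \sup v\le C_dR\left\|\frac{h}{(\det A)^{1/d}}\right\|_{L^d(B_R)}\le C_dR\left(\int_{B_R}h^d\lambda^{-d}\right)^{1/d}.
\]
This is the viscosity ABP estimate of Caffarelli--Cabré, which is valid here because $A$ is uniformly elliptic on $\overline{B_R}$ for the fixed $\omega$, with a constant depending only on $d$. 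The point requiring care is that the ABP constant must depend only on $d$ and not on the realization-dependent ellipticity. This holds because the measure-of-gradient-image argument uses only the pointwise determinant inequality above.

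\emph{Annealed exit time.} For the lower bound, Itô's formula applied to $|X_t|^2$ gives
\[
 \mathbf E_0^\omega|X_{t\wedge T_R}|^2=\mathbf E_0^\omega\int_0^{t\wedge T_R}2\operatorname{tr}A(X_s)\,ds\le2d\,\mathbf E_0^\omega(t\wedge T_R).
\]
Let $t\to\infty$. Finiteness of the exit time follows from the upper bound, and the left side tends to $R^2$. Hence $\mathbf E_0^\omega T_R\ge R^2/(2d)$ for every $\omega$, and the annealed lower bound follows.

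For the upper bound, take $h\equiv1$ in the occupation bound and then the expectation over $\omega$. Jensen's inequality (concavity of $t\mapsto t^{1/d}$) and stationarity (H2) give
\[
 \mathbb E\left(\int_{B_R}\lambda^{-d}\right)^{1/d}\le\left(|B_R|\,\mathbb E\lambda(0)^{-d}\right)^{1/d}=C_dR\,\bigl(\mathbb E\lambda(0)^{-d}\bigr)^{1/d}.
\]
Combined with the prefactor $C_dR$ this yields the bound $C_dR^2(\mathbb E\lambda(0)^{-d})^{1/d}$. Finally, $\lambda(0)\ge\lambda_*(0)$, so this is finite by H5.
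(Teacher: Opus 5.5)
Your proposal is correct and is essentially the paper's proof. It uses the classical ABP estimate with determinant weight (Gilbarg--Trudinger, Theorem~9.1, with $\det A\ge\lambda^d$) for smooth sources, identified with the occupation functional via It\^o's formula on shrinking balls. It then uses optional stopping of $|X_t|^2-2\int_0^t\operatorname{tr}A(X_s)\,ds$ for the lower bound, and $h\equiv1$ with Jensen and stationarity for the upper bound.

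The only step you leave loose is the passage from continuous to Borel $h$, where ``monotone class and Fatou'' does not by itself control sources concentrated on Lebesgue-null sets. The paper closes this by noting that the occupation measure has a Lebesgue density (from the killed kernel) and extending by duality in $L^d(B_R,\lambda^{-d}dy)$. An outer-regularity or lower-semicontinuous-majorant argument would also work.
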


\begin{proof}
For smooth $h\ge0$, solve $-A:D^2v=h$ in $B_R$, with zero boundary
values. Local Schauder theory, comparison, and ABP
\cite[Theorem~9.1]{GT} give
$0\le v\le C_dR\|h/\lambda\|_{L^d(B_R)}$. Writing
$\lambda_R=\min_{\overline B_R}\lambda>0$, the quadratic barrier gives
$\sup_x\mathbf E_x^\omega T_R\le R^2/(2d\lambda_R)$. It{\^o}'s
formula, first stopped in smaller concentric balls, then passed to $T_R$
using this bound and boundary continuity, gives
\[
 v(x)=\mathbf E_x^\omega\int_0^{T_R}h(X_s)\,ds.
\]
For Borel data, the occupation measure
$\nu_x(E)=\mathbf E_x^\omega\int_0^{T_R}\mathbf1_E(X_s)ds$ has a
Lebesgue density by the killed kernel. The preceding inequality bounds
its integral functional on smooth functions in
$L^d(B_R,\lambda^{-d}dy)$. Density and duality extend the functional to
that space; its representing measure agrees with $\nu_x$ on smooth
compactly supported tests and hence on Borel sets. Truncation and
monotone convergence prove~\eqref{down:density-occupation-bound} for
all nonnegative $h$, uniformly in $x$.

Optional stopping of the martingale
$|X_t|^2-2\int_0^t\operatorname{tr}A(X_s)ds$, started at the origin,
gives
\[
 R^2=2\mathbf E_0^\omega\int_0^{T_R}\operatorname{tr}A(X_s)ds
 \le2d\mathbf E_0^\omega T_R.
\]
The preceding integrability bound justifies the stopping limit. For the
upper bound, apply~\eqref{down:density-occupation-bound} with $h=1$,
then Jensen's inequality and stationarity:
\[
 a_R\le C_dR\left(\mathbb E\int_{B_R}\lambda^{-d}\right)^{1/d}
 \le C_dR^2\bigl(\mathbb E[\lambda(0)^{-d}]\bigr)^{1/d}.
\]
H5 applies since $\lambda(0)\ge\lambda_*(0)$.
\end{proof}

Let $\eta_t=\tau_{X_t}\omega$ and
$P_tf(\omega)=\mathbf E_0^\omega f(\eta_t)$. Define the probability
measure
\begin{equation}\label{down:density-stopped-probability}
 Q_R(f)=\frac1{a_R}\mathbb E\mathbf E_0^\omega
       \int_0^{T_R}f(\eta_s)\,ds.
\end{equation}
For bounded Borel $f\ge0$, apply the quenched occupation estimate to
$h(x)=f(\tau_x\omega)$. Jensen's inequality, stationarity, and the
lower bound in~\eqref{down:density-exit-annealed} yield
\begin{equation}\label{down:density-functional-bound}
 Q_R(f)\le C_d
 \bigl(\mathbb E[|f|^d\lambda(0)^{-d}]\bigr)^{1/d}.
\end{equation}
Taking null-event indicators shows $Q_R\ll\mathbb P$. Write
$m_R=dQ_R/d\mathbb P$. With $f=\lambda(0)g$, duality gives
$\|\lambda(0)m_R\|_{L^q}\le C_d$. More importantly,
\begin{equation}\label{down:density-uniform-integrability}
 \sup_R\int_E m_R\,d\mathbb P
 \le C_d\bigl(\mathbb E[\lambda(0)^{-d}\mathbf1_E]\bigr)^{1/d}
 \qquad(E\in\mathcal F).
\end{equation}
The right-hand side tends uniformly to zero with $\mathbb P(E)$.
Since $\mathbb E m_R=1$, Dunford--Pettis and weak sequential compactness
give a subsequence $m_{R_n}\rightharpoonup m_0$ in $L^1$, where
$R_n\to\infty$. Testing with nonnegative bounded functions and with $1$
gives $m_0\ge0$ and $\mathbb E m_0=1$. Set $Q=m_0\mathbb P$.
The bound~\eqref{down:density-functional-bound} passes to $Q$ by weak
convergence, and yields~\eqref{down:density-weighted-moment}.

\begin{lemma}[Invariance]\label{down:density-invariance}
For every $t\ge0$ and bounded Borel $f$, $Q(P_tf)=Q(f)$.
\end{lemma}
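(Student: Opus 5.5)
We will prove invariance in the standard way for occupation measures: show that $Q_R(P_tf)-Q_R(f)$ is a boundary error of size $O(t/a_R)$, and then pass to the limit along the subsequence $R_n$.

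\textbf{Step 1: the identity at finite $R$.} Fix $t\ge0$ and bounded Borel $f$. The strong Markov property at deterministic times, together with translation covariance of the quenched laws, gives $\mathbf E_0^\omega[f(\eta_{s+t})\mid\mathcal F_s]=(P_tf)(\eta_s)$. Since $\{s<T_R\}\in\mathcal F_s$, Fubini (justified by $a_R<\infty$ from Lemma~\ref{down:density-occupation}) gives
\[
 a_RQ_R(P_tf)=\mathbb E\mathbf E_0^\omega\int_0^{T_R}f(\eta_{s+t})\,ds
 =\mathbb E\mathbf E_0^\omega\int_t^{T_R+t}f(\eta_{u})\,du .
\]
Subtracting $a_RQ_R(f)$ leaves two boundary integrals, over $[0,t]$ and over $[T_R,T_R+t]$. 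Each is bounded by $t\|f\|_\infty$. Hence
\[
 |Q_R(P_tf)-Q_R(f)|\le 2t\|f\|_\infty/a_R\le 4d\,t\|f\|_\infty R^{-2},
\]
by the lower bound in~\eqref{down:density-exit-annealed}.

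\textbf{Step 2: passing to the limit.} Along $R_n$ we have $m_{R_n}\rightharpoonup m_0$ weakly in $L^1(\mathbb P)$. Since $f$ and $P_tf$ are bounded and measurable, both lie in $L^\infty(\mathbb P)$, which is the dual of $L^1$. Therefore $Q_{R_n}(f)=\mathbb E[m_{R_n}f]\to Q(f)$, and likewise $Q_{R_n}(P_tf)\to Q(P_tf)$. Step 1 then yields $Q(P_tf)=Q(f)$.

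\textbf{Main obstacle: measurability of $P_tf$.} The only delicate point is that $\omega\mapsto P_tf(\omega)=\mathbf E_0^\omega f(\tau_{X_t}\omega)$ must be Borel measurable on the canonical space, and the Markov identity in Step 1 must hold jointly in $\omega$. I would handle both through the killed-kernel representation
\[
 P_tf(\omega)=\lim_{N\to\infty}\int_{B_N}p^\omega_{B_N}(t,0,y)f(\tau_y\omega)\,dy.
\]
The killed densities come from the local martingale problem with uniformly elliptic Hölder extensions on each ball. By well-posedness, these extensions, and hence the kernels, depend measurably, indeed continuously in the locally uniform topology, on $\omega$. The map $(y,\omega)\mapsto f(\tau_y\omega)$ is jointly measurable. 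Monotone class arguments then give measurability of the integral, first for continuous $f$ and then for all bounded Borel $f$. Nonexplosion supplies the monotone limit as $N\to\infty$.

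The conditional identity $\mathbf E_0^\omega[f(\eta_{s+t})\mid\mathcal F_s]=(P_tf)(\eta_s)$ rests on the covariance $\mathbf P_x^\omega=\mathbf P_0^{\tau_x\omega}$ shifted by $x$, which follows from uniqueness as noted before Lemma~\ref{down:density-occupation}.
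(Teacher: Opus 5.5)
Your proposal is correct and follows the paper's proof essentially step for step. The ingredients match: the Markov property with $\{s<T_R\}\in\mathcal F_s$, Fubini, a pathwise shift that leaves two boundary integrals of total size at most $2t\|f\|_\infty/a_R$, and weak $L^1$ convergence of $m_{R_n}$ tested against the bounded functions $f$ and $P_tf$. Your added paragraph on the measurability of $P_tf$ and on the covariance behind the conditional identity only spells out what the paper takes from its construction of the diffusion. The continuity-in-$\omega$ claim there is stronger than you need; measurability is enough.
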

\begin{proof}
Since $\{s<T_R\}$ is measurable at time $s$, the Markov property gives
\[
 \mathbb E\mathbf E_0^\omega\int_0^{T_R}P_tf(\eta_s)ds
 =\mathbb E\mathbf E_0^\omega\int_0^{T_R}f(\eta_{s+t})ds.
\]
Fubini is justified by $\|f\|_\infty T_R\in L^1$. Shifting the
integration intervals pathwise, also when $T_R<t$, gives
\begin{equation}\label{down:density-time-shift}
 Q_R(P_tf)-Q_R(f)=\frac1{a_R}\mathbb E\mathbf E_0^\omega
 \left[\int_{T_R}^{T_R+t}f(\eta_s)ds-\int_0^tf(\eta_s)ds\right].
\end{equation}
Its absolute value is at most $2t\|f\|_\infty/a_R\to0$.
Weak $L^1$ convergence applies to both bounded tests $f$ and $P_tf$.
\end{proof}

\subsection{Positivity and the adjoint equation}

For $\mathbb P(E)>0$, spatial ergodicity implies that
$\{x:\tau_x\omega\in E\}$ has positive Lebesgue measure in some ball
containing the origin, almost surely. Positivity of the killed kernel
therefore gives, for each fixed $t>0$,
\begin{equation}\label{down:density-positivity-improving}
 P_t\mathbf1_E>0\qquad\mathbb P\text{-almost surely}.
\end{equation}
If $E=\{m_0=0\}$ had positive probability, invariance would give
$0=Q(E)=Q(P_t\mathbf1_E)>0$, since $Q\ll\mathbb P$. Thus
$Q\sim\mathbb P$. If $0<Q(E)<1$, applying the same argument to $E$
and $E^c$ gives $0<P_t\mathbf1_E<1$ almost surely. Such an $E$ cannot
be invariant, so $Q$ is ergodic.

The argument applies to every invariant $Q'\ll\mathbb P$. Hence $Q'$
is also equivalent to $\mathbb P$ and ergodic. The stationary Markov
path laws with initial distributions $Q$ and $Q'$ are equivalent,
since their transition kernels agree. For each bounded $f$, their
respective almost-sure time-average limits $Q(f)$ and $Q'(f)$ must
therefore coincide. Taking indicators proves uniqueness.

Set $\widetilde m(x,\omega)=m_0(\tau_x\omega)$. For compact $K$,
$\mathbb E\int_K\widetilde m=|K|$, so $\widetilde m\in L^1_{\rm loc}$
almost surely. For bounded Borel $f$ and $\zeta\in C_c^\infty$, define
\[
 F_\zeta(\omega)=\int\zeta(x)f(\tau_x\omega)\,dx.
\]
The orbit function
$F_\zeta(\tau_y\omega)=\int\zeta(x-y)f(\tau_x\omega)dx$ is smooth,
with derivatives bounded by $\|D^k\zeta\|_1\|f\|_\infty$.
It{\^o}'s formula and invariance imply
$Q(G_{\rm env}F_\zeta)=0$, where
$G_{\rm env}F_\zeta=A(0):\int D^2\zeta(x)f(\tau_x\omega)dx$.
Changing variables by the measure-preserving translations gives
\begin{align}
 0&=\int\mathbb E[m_0A(0)f(\tau_x\omega)]:D^2\zeta(x)\,dx\notag\\
 &=\mathbb E\left[f(\omega)\int
 \widetilde m(y,\omega)A(y,\omega):D^2\zeta(-y)\,dy\right].
 \label{down:density-quenched-duality}
\end{align}
The inner integral belongs to $L^1(\mathbb P)$ by $A\le I$ and
$\mathbb E\widetilde m(y)=1$. Since $f$ is arbitrary, duality makes it
zero almost surely. A countable $C^2$-dense family of tests on each
compact ball, using $\widetilde m A\in L^1_{\rm loc}$, gives the
adjoint equation for all tests on one probability-one event.

The local theory of nonnegative adjoint solutions includes the integral
estimates of Bauman~\cite[Section~3]{Bauman84}, for uniformly elliptic
operators with continuous principal coefficients. The precise regularity
result used here is Armstrong, Fehrman and Lin~\cite[Proposition~A.4]{AFL}:
an $L^1$ weak adjoint solution has a locally H{\"o}lder representative
when the coefficients are H{\"o}lder and uniformly elliptic on the
local domain. Lemma~\ref{lem:local-regularity} verifies these hypotheses
on every fixed compact set in each realization. The constants may
depend on its minimum eigenvalue; this local estimate gives no moment
bound for the random H{\"o}lder norm. Define the
representative $m$ by limits of ball averages. This preserves
measurability and covariance. Spatial almost
everywhere equality and stationarity give
\[
 \mathbb P(m(0)\ne m_0)=\frac1{|B_1|}\mathbb E\int_{B_1}
 \mathbf1_{\{m(x)\ne\widetilde m(x)\}}\,dx=0.
\]
Thus normalization and~\eqref{down:density-weighted-moment} are preserved.

For strict positivity at every point, expand $Q(P_tf)=Q(f)$, translate
the environment in the integral, and use Tonelli and bounded-test
duality to obtain
\[
 m_0(\omega)=\int\widetilde m(y,\omega)p^\omega(t,y,0)\,dy
 \quad\text{almost surely}.
\]
Translation and Fubini then give, for almost every $x\in B_N$,
\[
 m(x)\ge\int_{B_N}m(y)p_{B_N}^\omega(t,y,x)\,dy.
\]
The right-hand side is continuous on compact subsets of $B_N$, by the
local bound for the killed kernel and $m\in L^1(B_N)$. It is strictly
positive since $m>0$ almost everywhere and the kernel is positive.
Continuity of $m$ extends the inequality to every $x\in B_N$; countably
many balls exhaust the space. The ball-average construction and
continuity also give covariance for all $x$, almost surely for each
fixed translation.

\subsection{Identification in physical time}

Write
\begin{equation}\label{down:density-matrix-B}
 B=\mathbb E_Q[A(0)]=\mathbb E[m_0A(0)].
\end{equation}
Then $0<B\le I$: finiteness follows from $\mathbb E m_0=1$, and strict
positivity from $m_0>0$ and $A>0$. Ergodicity of the stationary
environment process gives
$T^{-1}\int_0^T A(X_s)ds\to B$ almost surely. By Fubini and
$Q\sim\mathbb P$, this holds $\mathbf P_0^\omega$-almost surely for
$\mathbb P$-almost every environment. For $X_t^{(R)}=R^{-1}X_{R^2t}$,
\begin{equation}\label{down:density-bracket-limit}
 \langle X_i^{(R)},X_j^{(R)}\rangle_t
 =\frac2{R^2}\int_0^{R^2t}A_{ij}(X_s)ds\longrightarrow2tB_{ij}.
\end{equation}
This convergence is locally uniform in $t$: the centered integral
$H(u)=\int_0^u(A(X_s)-B)ds$ satisfies $H(u)/u\to0$, while its part on
$[0,K]$, divided by $R^2$, vanishes. Since these are continuous
square-integrable martingales with continuous brackets,
\cite[Theorem~2.1(ii)]{Whitt} yields
\begin{equation}\label{down:density-qip}
 X^{(R)}\Longrightarrow W^B
 \quad\text{in }C_{\rm loc}([0,\infty);\mathbb R^d),
 \qquad \mathbb E[W_t^B\otimes W_t^B]=2tB.
\end{equation}

To fix the time normalization, we also need convergence of mean exit
times. The occupation estimate and spatial ergodicity give, almost
surely for all sufficiently large $R$,
\begin{equation}\label{down:density-exit-uniform-bound}
 \sup_{x\in B_R}\mathbf E_x^\omega T_R
 \le C_dR^2\left(\fint_{B_R}\lambda^{-d}\right)^{1/d}
 \le C(\omega)R^2.
\end{equation}
The strong Markov property and Tonelli imply
\[
 \mathbf E_0^\omega T_R^2
 =2\mathbf E_0^\omega\int_0^{T_R}\mathbf E_{X_s}^\omega T_R\,ds
 \le2\sup_{x\in B_R}\mathbf E_x^\omega T_R\,
          \mathbf E_0^\omega T_R\le C(\omega)R^4.
\]
Thus $T_R/R^2$ is uniformly integrable in each such environment.
The unit-ball exit time and exit position are continuous functionals
at almost every nondegenerate Brownian path: after reaching the
boundary, the projection onto the outward normal is a one-dimensional
Brownian motion with positive variance and enters the exterior
immediately. Consequently~\eqref{down:density-qip} gives
\begin{equation}\label{down:density-exit-mean-limit}
 R^{-2}\mathbf E_0^\omega T_R\longrightarrow
 \mathbf E_0T_{B_1}^{W^B}=\frac1{2\operatorname{tr}B}.
\end{equation}
The last equality follows by applying $-B:D^2$ to
$(1-|x|^2)/(2\operatorname{tr}B)$.

The same rescaled mean exit time is the central value of
\[
 -A(x/\varepsilon):D^2u^\varepsilon=1\quad\text{in }B_1,
 \qquad u^\varepsilon=0\quad\text{on }\partial B_1,
 \qquad \varepsilon=R^{-1}.
\]
Armstrong--Smart's qualitative theorem and the linear and
constant-shift properties of the effective operator
\cite[Theorem~1 and Lemma~4.2]{ASdeg} give
$u^\varepsilon(0)\to(2\operatorname{tr}\bar A)^{-1}$. Hence
\begin{equation}\label{down:density-trace-identification}
 \operatorname{tr}B=\operatorname{tr}\bar A.
\end{equation}
Next prescribe $g_M(x)=\tfrac12x\cdot Mx$ on $\partial B_1$ and zero
source. Exit-position convergence identifies the limiting central value
with that of the $B$-harmonic function
\[
 v_B(x)=\frac12x\cdot Mx+
       \frac{B:M}{2\operatorname{tr}B}(1-|x|^2).
\]
Qualitative homogenization identifies it instead with
$(\bar A:M)/(2\operatorname{tr}\bar A)$. Applying this to a basis of
$\mathbb S^d$ and using~\eqref{down:density-trace-identification} proves
$B=\bar A$, completing Theorem~\ref{down:density-main}. The source
problem is essential here: harmonic measures alone determine the
matrix only up to a scalar factor.

\subsection{The invariant law under trace normalization}

The quantitative estimates use the diffusion with coefficient matrix
$B=A/a$, where $a(x,\omega)=\operatorname{tr}A(x,\omega)$. The following
lemma identifies its invariant law and the mean of its physical clock.

\begin{lemma}[Invariant law under trace normalization]
\label{down:density-normalized-law}
Let $Q=m_0\mathbb P$ be the probability in
Theorem~\ref{down:density-main}, and set
\[
 c=Q(a(0))=\operatorname{tr}\bar A,\qquad
 \pi(d\omega)=\frac{a(0,\omega)}c\,Q(d\omega).
\]
The martingale problem with spatial generator $B:D^2$ is well posed and
conservative. The measure $\pi$ is equivalent to $\mathbb P$ and is
invariant and ergodic for the associated environment process. For each
fixed bounded Borel function $f$, its stationary time average converges
almost surely to $\pi(f)$. Moreover,
\begin{equation}\label{down:density-normalized-means}
 \pi(B(0))=\frac{\bar A}{\operatorname{tr}\bar A},\qquad
 \pi(a(0)^{-1})=\frac1{\operatorname{tr}\bar A}.
\end{equation}
The spatial density
$m_B(x,\omega)=c^{-1}a(x,\omega)m(x,\omega)$ is positive, locally
integrable, covariant under each fixed translation, and satisfies
\begin{equation}\label{down:density-normalized-adjoint}
 \mathbb E[m_B(0)]=1,\qquad
 \int_{\mathbb R^d}m_BB:D^2\zeta=0
 \quad(\zeta\in C_c^\infty(\mathbb R^d)).
\end{equation}
\end{lemma}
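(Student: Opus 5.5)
The argument is a random time change, and every assertion will be reduced to statements already proved for $A$ in Theorem~\ref{down:density-main}. Write $a=\operatorname{tr}A$. By H1 we have $d\lambda\le a\le d$, so $a$ is positive, bounded, and locally H\"older by Lemma~\ref{lem:local-regularity}. Let $X$ be the diffusion generated by $A:D^2$, and set
\[
 \theta_t=\int_0^t a(X_s)\,ds,\qquad Y_u=X_{\theta^{-1}(u)}.
\]
Then $Y$ solves the martingale problem for $B:D^2$. Conversely, any solution $Y$ of that problem gives $X_t=Y_{\theta'^{-1}(t)}$ with $\theta'_u=\int_0^u a(Y_r)^{-1}dr$, and this $X$ solves the martingale problem for $A$. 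These two maps are inverse bijections between solutions, so well posedness transfers from the local uniqueness used earlier.

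\textbf{Conservativity.} Since $\operatorname{tr}B=1$, the process $|Y_u|^2-2u$ is a local martingale, and the Chebyshev argument used for $X$ applies verbatim. Alternatively, $\theta_t\to\infty$ because $X$ is nonexplosive and, by ergodicity, $t^{-1}\theta_t\to Q(a(0))>0$.

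\textbf{Invariance and ergodicity of $\pi$.} I would argue through time averages, avoiding generators. For bounded $f$, ergodicity of $Q$ gives
\[
 \frac1\theta_t\int_0^{\theta_t}f(\tau_{Y_u}\omega)\,du
 =\frac{\int_0^t f(\eta_s)\,a(\eta_s)\,ds}{\int_0^t a(\eta_s)\,ds}
 \longrightarrow\frac{Q(fa)}{Q(a)}=\pi(f)
\]
$\mathbf P_0^\omega$-a.s., for $\mathbb P$-a.e.\ $\omega$ (using $Q\sim\mathbb P$). This settles the time-average claim, and the identities \eqref{down:density-normalized-means} follow immediately:
\[
 \pi(B(0))=\frac{Q(A(0))}{c}=\frac{\bar A}{\operatorname{tr}\bar A},\qquad
 \pi(a^{-1})=\frac{Q(1)}{c}=\frac{1}{\operatorname{tr}\bar A}.
\]
Equivalence $\pi\sim\mathbb P$ holds because $0<a\le d$ and $Q\sim\mathbb P$.

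For invariance itself I plan to use the adjoint characterization. The spatial density $m_B=c^{-1}am$ satisfies
\[
 \int m_B\,B:D^2\zeta=c^{-1}\int m\,A:D^2\zeta=0,
\]
which is \eqref{down:density-normalized-adjoint}. We also get $\mathbb E m_B(0)=c^{-1}\mathbb E[m_0a(0)]=1$, and positivity, local integrability and covariance are inherited from $m$ and $a$.

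To turn this into invariance of $\pi$ under the semigroup $P^B_u$, I would rerun the Section~\ref{sec:density} construction for the $B$-process. Define the stopped occupation measures $Q^B_R$. Each equals $Q_R$ reweighted by $a$, because the change of variables $du=a(X_s)\,ds$ gives
\[
 \int_0^{\theta_{T_R}}f(\tau_{Y_u}\omega)\,du=\int_0^{T_R}(fa)(\eta_s)\,ds .
\]
Hence $Q^B_R(f)=Q_R(fa)/Q_R(a)\to\pi(f)$ along the subsequence $R_n$, by weak $L^1$ convergence and boundedness of $a$. The time-shift argument of Lemma~\ref{down:density-invariance} then gives $\pi(P^B_uf)=\pi(f)$, provided the normalizers $\mathbb E\mathbf E_0\theta_{T_R}$ tend to infinity. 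They do, since $\operatorname{tr}B=1$ forces $\mathbf E_0\theta_{T_R}=R^2/2$.

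Ergodicity then follows as for $Q$: an invariant set for $P^B$ is invariant for $P$, because the two processes have the same trajectories. Equivalently, the time-average limit above is the constant $\pi(f)$.

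\textbf{Main obstacle.} The delicate point is justifying the convergence $Q_R(fa)\to Q(fa)$ along the subsequence and the exchange of limits in the time-shift identity. I expect these to be routine given the bounds $0<a\le d$ and the Dunford--Pettis compactness already established for $m_R$.
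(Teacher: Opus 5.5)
Your proposal is correct. Most of it coincides with the paper's proof:
\begin{itemize}
\item conservativity from the Chebyshev bound, using $\operatorname{tr}B=1$;
\item the change of variables $du=a\,ds$, which turns the ergodic theorem for $Q$ into $\frac1s\int_0^sf(\zeta_v)\,dv\to Q(af)/c=\pi(f)$;
\item ergodicity from the constancy of that limit;
\item the elementary identities for $\pi(B(0))$, $\pi(a(0)^{-1})$ and $m_B$.
\end{itemize}
Well-posedness via the time-change bijection of solutions, rather than the paper's direct local construction followed by identification, is only a cosmetic difference.

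The genuine difference is how you prove invariance of $\pi$. The paper does the following. It starts the physical process from $Q$ and time-changes it on the whole half-line, which needs $S(t)/t\to c$. It forms the Ces\`aro laws $\mu_L(f)=L^{-1}\mathbf E_Q\int_0^Lf(\zeta_v)\,dv$ and gets $\mu_L\to\pi$ from the almost-sure time-average limit by dominated convergence. It then uses $|\mu_L(\widetilde P_tf)-\mu_L(f)|\le 2t\|f\|_\infty/L$.

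You instead rerun the construction of Section~\ref{sec:density} for $B:D^2$. The stopped occupation measures satisfy $Q^B_R(f)=Q_R(af)/Q_R(a)$, with deterministic normalizer $\mathbb E\mathbf E_0\theta_{T_R}=R^2/2$. Weak $L^1$ convergence of $m_{R_n}$ against the bounded tests $af$, $a$ and $a\,P^B_uf$ then identifies $\pi$ as the limit of $Q^B_{R_n}$, and the time-shift estimate gives invariance.

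What each route buys:
\begin{itemize}
\item Yours needs the time change only up to a ball exit, where $a$ has a positive minimum. It exhibits $\pi$ directly as the occupation limit of the normalized diffusion, and it proves invariance without the ergodic theorem.
\item The paper's uses only the stated properties of $Q$ (invariance and ergodicity), not the subsequence from its construction. It obtains the time-average law and invariance from a single input.
\end{itemize}

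Two small remarks. First, the adjoint identity for $m_B$ would not by itself yield invariance, since that would need a uniqueness statement for the adjoint equation; you rightly do not rely on it. Second, your ``same trajectories'' argument for ergodicity only gives $P_t\mathbf 1_E=\mathbf 1_E$ for almost every $t$. That has to be upgraded to all $t$ through the semigroup property and invariance of $Q$. The time-average argument, which is the paper's, avoids this step.
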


\begin{proof}
On each compact set, $a$ has a positive minimum, so $B$ is H\"older
continuous and uniformly elliptic there. Also $0<B\le I$ and
$\operatorname{tr}B=1$. The local martingale construction used for $A$
therefore applies to $B$. If $\sigma_N$ is its exit time from $B_N$,
the corresponding stopped diffusion $Z$ satisfies
\[
 \mathbf P_x^\omega(\sigma_N\le t)
 \le\frac{\mathbf E_x^\omega|Z_{t\wedge\sigma_N}-x|^2}
          {(N-|x|)^2}
 =\frac{2\mathbf E_x^\omega(t\wedge\sigma_N)}{(N-|x|)^2}
 \le\frac{2t}{(N-|x|)^2}.
\]
Thus the local laws define a unique conservative strong Markov diffusion.
Uniqueness gives translation covariance, as for $A$. Its environment
semigroup is denoted by $\widetilde P_s$.

Start the physical diffusion $X$ at the origin with initial environment
law $Q$, and write $\eta_t=\tau_{X_t}\omega$. This process is stationary
and ergodic by Theorem~\ref{down:density-main}. Define
\[
 S(t)=\int_0^t a(0,\eta_r)\,dr,
 \qquad T(s)=\inf\{t\ge0:S(t)>s\}.
\]
The sample paths of $S$ are continuous and strictly increasing. Since
$0<a\le d$ and $c=Q(a(0))>0$, the ergodic theorem gives
$S(t)/t\to c$ almost surely. Hence $S(t)\to\infty$, and $T(s)$ is finite
for every finite $s$, with $S(T(s))=s$. Each $T(s)$ is a stopping time
for the usual right-continuous filtration, since
$\{T(s)<t\}=\{S(t)>s\}$. Moreover, $T(s)\ge s/d$, so
$T(s)\to\infty$ as $s\to\infty$.

The time-changed path $X_{T(s)}$ solves the martingale problem for $B$.
Indeed, stop in a compact ball and at a bounded physical time. Optional
sampling of the physical martingale for $g\in C_c^2(\mathbb R^d)$ and
the substitution $v=S(r)$ give the local martingale
\[
 g(X_{T(s)})-g(X_0)
 -\int_0^s B(X_{T(v)},\omega):D^2g(X_{T(v)})\,dv.
\]
The substitution is justified by the positive minimum of $a$ on the
stopped ball; in particular, $dr=dv/a(X_{T(v)},\omega)$. Exhausting
the balls and physical times proves the assertion. Uniqueness of the
normalized martingale problem identifies this path law with that of $Z$.
Equivalently, additivity of $S$ and the strong Markov property of $X$
at $T(s)$ identify the future path with the same change of clock started
at its current environment. Thus
$\zeta_s=\eta_{T(s)}$ has semigroup $\widetilde P_s$.
This construction holds for $Q$-almost every initial environment;
the local construction above defines the normalized kernel elsewhere.

Fix a bounded Borel function $f$. Apply the physical ergodic theorem
to $a(0)$ and $a(0)f$. Changing variables pathwise gives
\begin{equation}\label{down:density-timechange-averages}
 \frac1s\int_0^s f(\zeta_v)\,dv
 =\frac{\displaystyle\int_0^{T(s)}a(0,\eta_r)f(\eta_r)\,dr}
        {\displaystyle\int_0^{T(s)}a(0,\eta_r)\,dr}
 \longrightarrow\frac{Q(a(0)f)}c=\pi(f)
 \quad\text{almost surely}.
\end{equation}
This conclusion uses the initial law $Q$. To prove invariance of $\pi$,
let $\mu_L$ be the time average over $[0,L]$ of the laws of $\zeta$
with that initial law:
\[
 \mu_L(f)=\frac1L\mathbf E_Q\int_0^L f(\zeta_v)\,dv.
\]
Here $\mathbf E_Q$ averages over both the initial environment and the
normalized diffusion. Dominated convergence in
\eqref{down:density-timechange-averages} gives
$\mu_L(f)\to\pi(f)$ for every bounded Borel $f$. For fixed $t\ge0$,
the Markov property and Fubini's theorem imply
\begin{align*}
 \mu_L(\widetilde P_tf)-\mu_L(f)
 &=\frac1L\mathbf E_Q\left[
       \int_L^{L+t}f(\zeta_v)\,dv-\int_0^t f(\zeta_v)\,dv\right],\\
 \big|\mu_L(\widetilde P_tf)-\mu_L(f)\big|
 &\le\frac{2t\|f\|_\infty}{L}.
\end{align*}
The function $\widetilde P_tf$ is bounded and Borel. Passing to the
limit for both tests proves
$\pi(\widetilde P_tf)=\pi(f)$ for all $t\ge0$ and bounded Borel $f$.

Since $d\pi/dQ=a(0)/c$ is finite and strictly positive,
$\pi\sim Q\sim\mathbb P$. The normalized path laws with initial
distributions $\pi$ and $Q$ are also equivalent: they mix the same
conditional path kernels, and their Radon--Nikodym derivative is
$a(0,\zeta_0)/c$. Thus~\eqref{down:density-timechange-averages} holds
under the initial law $\pi$ for each fixed bounded $f$.

To verify ergodicity, let $E$ be invariant for $\widetilde P_t$, so that
$\widetilde P_t\mathbf1_E=\mathbf1_E$ almost surely under $\pi$ for
every $t\ge0$. For fixed $t$, the probabilities of a change of membership
in $E$ are
\[
 \int_E(1-\widetilde P_t\mathbf1_E)\,d\pi=0,
 \qquad \int_{E^c}\widetilde P_t\mathbf1_E\,d\pi=0.
\]
Tonelli's theorem gives
$\mathbf1_E(\zeta_t)=\mathbf1_E(\zeta_0)$ for almost every time on
almost every path. Applying~\eqref{down:density-timechange-averages}
to $\mathbf1_E$ yields $\mathbf1_E(\zeta_0)=\pi(E)$ almost surely.
Consequently $\pi(E)\in\{0,1\}$.

Finally, Theorem~\ref{down:density-main} and $aB=A$ give
\[
 c=Q(a(0))=\operatorname{tr}Q(A(0))=\operatorname{tr}\bar A,
 \qquad \pi(B(0))=c^{-1}Q(A(0))=c^{-1}\bar A.
\]
For the nonnegative function $a(0)^{-1}$, the change of measure gives
$\pi(a(0)^{-1})=c^{-1}Q(1)=c^{-1}$, proving its integrability.
The properties of $m_B$ follow from those of $m$ and the identity
$m_BB=c^{-1}mA$. This also proves
\eqref{down:density-normalized-adjoint}.
\end{proof}

\subsection{Spatial averages}

\begin{lemma}[Weighted ergodic limits]\label{down:density-ergodic}
On one event of probability one, for every $\psi\in C_c(\mathbb R^d)$,
with $\psi_R(x)=R^{-d}\psi(x/R)$,
\begin{equation}\label{down:density-ergodic-limits}
 \int\psi_R(m-1)\longrightarrow0,\qquad
 \int\psi_R(mA-\bar A)\longrightarrow0.
\end{equation}
Also $\fint_{B_R}m\to1$.
\end{lemma}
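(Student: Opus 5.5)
The plan is to apply the multiparameter (Wiener/Tempelman) spatial ergodic theorem for the measure-preserving $\mathbb R^d$-action $\tau$ to the stationary fields $m(x)=m_0(\tau_x\omega)$ and $m(x)A(x)=(m_0A(0))(\tau_x\omega)$. Ergodicity comes from Lemma~\ref{lem:local-regularity}. Both fields are integrable, since $\mathbb E m_0=1$ and $|A|\le\sqrt d$, and their means are $1$ and $\bar A$ by Theorem~\ref{down:density-main}.

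\textbf{Averages over balls.} For $f\in L^1(\mathbb P)$, the ergodic theorem for continuous-parameter actions gives $\fint_{B_R(0)}f(\tau_x\omega)\,dx\to\mathbb E f$ almost surely as $R\to\infty$ through all real $R$. Almost sure convergence along all real $R$, rather than a countable sequence, follows by monotonicity: for $f\ge0$, interpolate between integer radii, using $|B_{n+1}|/|B_n|\to1$. Apply this to $f=m_0$ and to the (signed, hence split into positive and negative parts) entries of $m_0A(0)$. This gives $\fint_{B_R}m\to1$ and $\fint_{B_R}mA\to\bar A$ on one event $\Omega_0$ of full probability. Since $m$ agrees with $\widetilde m$ almost everywhere in $x$ almost surely, the version does not matter.

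\textbf{Extension to test functions.} Fix $\omega\in\Omega_0$. The same convergence holds for off-center balls $B_{sR}(Rz)$ with $z$ rational and $s>0$ rational, because
\[
\int_{B_{sR}(Rz)}=\int_{B_{(|z|+s)R}}-\int_{B_{(|z|+s)R}\setminus B_{sR}(Rz)} .
\]
This decomposition alone does not identify the annulus limit. Instead, use nonnegativity and Vitali-type covering: for nonnegative stationary $g$, the limits over balls centered at $0$ imply, via the inclusions $B_{(s-\epsilon)R}(Rz')\subset B_{sR}(Rz)$, only upper and lower bounds. The cleaner route is to enlarge the countable family. Apply the pointwise ergodic theorem directly to each of the countably many ball families $\{B_{sR}(Rz)\}_R$ with $z,s$ rational. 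These are regular Tempelman families, meaning $|B_{(|z|+s)R}|\le C|B_{sR}(Rz)|$, so the Tempelman ergodic theorem applies, and the family is countable. Intersecting the resulting full-measure events gives a single event $\Omega_0$.

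Now $\int\psi_R(m-1)=\int\psi(y)\bigl(m(Ry)-1\bigr)\,dy$. Approximate $\psi\in C_c$ uniformly, within $\epsilon$, from above and below by finite linear combinations of indicators of rational balls; step functions on small cubes would also work. The weight $R^{d}\cdot R^{-d}m(R\cdot)$ has bounded total mass on a fixed large ball, namely $\fint_{B_{KR}}m\to1$, so the approximation error is at most $C\epsilon$, uniformly in large $R$. The same argument handles $mA$, using $|mA|\le\sqrt d\,m$. This gives \eqref{down:density-ergodic-limits} simultaneously for all $\psi$ on $\Omega_0$.

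\textbf{Main obstacle.} The main obstacle is obtaining a single almost sure event for all $\psi$ and all real $R$, that is, justifying the Tempelman theorem for off-center dilated balls. Should that citation be unavailable, the alternative is the maximal-inequality argument. Approximate $m_0$ in $L^1$ by bounded functions $f_k$. For bounded $f$, the ball-centered limits, applied to translates, control off-center balls, because the discrepancy between $\int_{B_{sR}(Rz)}$ and a finite union of small concentric-ball differences is at most $\|f\|_\infty$ times a small volume. The Wiener maximal inequality then controls the $L^1$ remainder.
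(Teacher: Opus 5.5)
Your argument is correct and has the same skeleton as the paper's proof. Both take a countable family of dilated sets on which the spatial ergodic theorem holds on one event, approximate $\psi$ uniformly by simple functions built from that family, and control the error by $R^{-d}\int_{B_{KR}}m=O(1)$ from the centered balls.

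The difference lies in the choice of family, and it matters for how much machinery you need.

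The paper uses rational rectangles and step functions on rational grids. Uniform approximation of $\psi\in C_c$ is then immediate from uniform continuity. The limits $R^{-d}\int_{RQ}m\to|Q|$ for off-center rectangles $Q$ follow from the ergodic theorem for origin-anchored boxes by inclusion--exclusion in each coordinate.

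Your off-center balls $B_{sR}(Rz)$ are not nested in $R$ when $|z|>s$. You therefore genuinely need Tempelman's theorem in the form with an increasing majorant family, here $B_{(|z|+s)R}$; your volume comparison correctly supplies that hypothesis. You also need $C_c(\mathbb R^d)$ to lie in the uniform closure of finite linear combinations of ball indicators. This is true but not obvious, since indicators of unions or intersections of balls are not such combinations. One proof uses a Riemann-sum partition of unity $\sum_k\delta^d\psi(\delta k)\rho_\sigma(\cdot-\delta k)$ with a radial bump $\rho_\sigma$, where each translate is a uniform limit of concentric annulus step functions. Your remark that cubes would also work is the cleaner option, and it is exactly the paper's route.

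The fallback in your last paragraph does not work as written. Averaging over $B_{sR}(Rz)$ amounts to recentering the environment at $\tau_{Rz}\omega$, which moves with $R$, so the centered limits for a fixed translate do not apply. If your small ball differences are centered at the origin, they are rotation invariant and cannot approximate an off-center ball in measure; if they are not, you are back to the off-center problem. The correct elementary fallback is again rectangles with inclusion--exclusion.
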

\begin{proof}
The fields $m$ and $mA$ are integrable factors of the ergodic translation
action. Intersect the spatial ergodic events for rational rectangles,
for each matrix component, and for balls centered at the origin.
For a continuous $\psi$ supported in $B_K$, approximate uniformly by
step functions $s_n$ on rational grids, supported in $B_{K+1}$. The
weighted error is bounded by
\[
 \left|\int R^{-d}(\psi-s_n)(x/R)m(x)dx\right|
 \le\|\psi-s_n\|_\infty R^{-d}\int_{B_{(K+1)R}}m.
\]
The last factor is eventually bounded. The same estimate controls $mA$
since $|A|\le\sqrt d$. Let $R\to\infty$ and then $n\to\infty$.
This deterministic approximation applies to every $\psi$ on the chosen
countable intersection, establishing the asserted quantifiers.
\end{proof}

\begin{remark}\label{down:density-moment-scope}
The bounds $\lambda m\in L^{d/(d-1)}(\mathbb P)$ and
$\lambda^{-1}\in L^d(\mathbb P)$ give exactly $m\in L^1$ by
H{\"o}lder's inequality. They supply neither a higher moment of $m$ nor
finite-range dependence of this nonlocal field.
Lemma~\ref{down:density-ergodic} is qualitative. The algebraic estimates
in Section~\ref{down:sec-density-quantitative} will follow from the
cell bound, the adjoint equation, and this first moment.
\end{remark}

\section{Coarse processes and large-scale regularity}
\label{sec:regularity}

Throughout this section, H1--H5 hold and the coefficient law is fixed.
Constants may depend on this law. Set
\[
 \lambda=\lambda_{\min}A,\qquad a=\operatorname{tr}A,\qquad B=A/a.
\]
Then \(\operatorname{tr}B=1\), and \(A\) and \(B\) have the same harmonic
functions. The effective matrix \(\bar A\) is identified in
Section~\ref{sec:density}. Environmental probability is denoted
by \(\mathbb P\); \(\mathbf P_x^\omega,\mathbf E_x^\omega\) refer to the diffusion
in a fixed environment. By Lemma~\ref{lem:local-regularity}, each realization
is uniformly elliptic on compact sets, and local Schauder estimates apply.

For the space \(\mathcal P_1\) of affine functions, define
\[
 \mathcal E(u;r,z)=r^{-1}\inf_{\ell\in\mathcal P_1}
       \|u-\ell\|_{L^\infty(B_r(z))}.
\]
Our first objective is the following estimate.

\begin{theorem}[Large-scale \(C^{1,\alpha}\) estimate and random scale]
\label{thm:reg:q1}
Under H1--H5, for every prescribed \(\alpha\in(0,1)\), there are
a deterministic \(C_\alpha<\infty\) and a translation-covariant
measurable random scale \(r_{*,\alpha}(z)\ge1\) such that
\begin{equation}
 \mathbb P(r_{*,\alpha}(z)>t)\le C_\alpha t^{-\gamma_1},
 \qquad
 \gamma_1=\frac{d}{4d-2},\quad t\ge1.
 \label{reg:eq:random-scale-tail}
\end{equation}
For each fixed deterministic center \(z\), almost surely,
simultaneously for all \(R\ge2r_{*,\alpha}(z)\), all functions
\(u\) that are \(A\)-harmonic in \(B_R(z)\), and all
\(r_{*,\alpha}(z)\le r\le R/2\),
\begin{equation}
 \mathcal E(u;r,z)
 \le C_\alpha(r/R)^\alpha\mathcal E(u;R,z).
 \label{reg:eq:q1}
\end{equation}
\end{theorem}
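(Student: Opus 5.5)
We follow the Armstrong--Smart/Avellaneda--Lin scheme: large-scale $C^{1,\alpha}$ regularity comes from an approximation property at each dyadic scale combined with $C^{1,1}$ regularity of $\bar A$-harmonic functions. The approximation property cannot use the finite-cell rate, which is proved later. We therefore get it from the coarse process described in the introduction: the normalized diffusion with generator $B:D^2$, sampled at stopping times after it leaves the sparse low-ellipticity clusters.

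\textbf{Step 1: good cubes.} Call a unit cube bad when $\lambda_*$ there falls below a threshold $\kappa$. By H5 and Chebyshev, the bad probability is at most $C\kappa^{d}$, and by H3 bad cubes are $\ell$-dependent. A Peierls/percolation count then shows that, outside an event of probability $\le Cr^{-\theta}$, every connected cluster of bad cubes meeting $B_{r}(z)$ has diameter at most $r^{\epsilon}$.

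\textbf{Step 2: coarse process.} On the good event we define a coarse chain. From the current position, stop the normalized diffusion at the first exit from a ball of radius $\rho\approx r^{\epsilon}$ enlarged to swallow adjacent bad clusters. The sampled positions form a martingale, because stopping times preserve the martingale property and $B:D^2$ annihilates affine functions. Their jumps are bounded by $C\rho$. Their conditional covariance is comparable to $\rho^2 I$ from below: for this we use the local oscillation estimate of Bowman~\cite{Bowman26} on good regions, where $\lambda\ge\kappa$ and the determinant weight $(\det B)^{-1}\le C\lambda^{-(d-1)}$ is controlled.

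From these covariance and jump bounds we get two consequences by a martingale CLT/coupling in the style of~\cite{BCDG}. The first is a coarse ABP inequality. The second is a Harnack/oscillation-decay estimate for all $A$-harmonic $u$ at scales above $\rho$.

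\textbf{Step 3: approximation event.} We want an event common to all harmonic functions $u$ in $B_R(z)$: for each $u$ there is an $\bar A$-harmonic $\bar u$ with $\|u-\bar u\|_{B_{R/2}}\le CR^{-\delta}\operatorname{osc}_{B_R}u$. Qualitative information is not enough here. We would instead use the coarse chain's covariance concentration: by dependence across $\sim(R/\rho)^d$ nearly independent regions, the empirical covariance along the chain is close to $\bar A$ up to an algebraic error. Lemma~\ref{down:density-normalized-law}, namely $\pi(B(0))=\bar A/\operatorname{tr}\bar A$, identifies the centering. Comparing the exit distributions of the coarse chain and of $W^{\bar A}$ then gives the approximation, with the algebraic error obtained by a multiscale coupling.

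\textbf{Step 4: iteration.} On the intersection of approximation events for dyadic radii $2^{k}\ge r_{*}$, the standard excess iteration applies. Using $\mathcal E(\bar u;\theta r)\le C\theta\,\mathcal E(\bar u;r)$ and exact harmonicity of affine functions, we pick $\theta$ with $C\theta^{1-\alpha}\le1/2$ and sum the errors $r^{-\delta}$. We set $r_{*,\alpha}(z)$ to be the smallest dyadic scale beyond which all events hold. Its tail is the sum of the failure probabilities. Optimizing $\kappa$, $\rho=r^{\epsilon}$ and the cluster exponent against the $d$-th moment should give the exponent $\gamma_1=d/(4d-2)$. Translation covariance holds because every event is defined by shifting the environment.

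The main obstacle is Step 3: obtaining an algebraic approximation rate uniform over all harmonic functions without the cell theorem, which requires quantitative concentration of the coarse covariance despite the nonlocal invariant density.
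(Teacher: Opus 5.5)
\textbf{Step 3 is a genuine gap, and it is the core of the theorem.} You want an approximation $\|u-\bar u\|_{B_{R/2}}\le CR^{-\delta}\operatorname{osc}_{B_R}u$ holding on one event for all harmonic $u$. That is an algebraic homogenization rate, essentially as strong as the cell estimate of Theorem~\ref{thm:cell}, which the paper only reaches through all of Sections~\ref{sec:renormalization}--\ref{sec:green}.

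The mechanism you propose cannot supply it. The centering $\pi(B(0))=\bar A/\operatorname{tr}\bar A$ is an average under $\pi$, whose density with respect to $\mathbb P$ is $a(0)m(0)/\operatorname{tr}\bar A$. The empirical covariance along the chain is an occupation average, governed by the nonlocal field $m$. That field has only a first moment and no finite-range dependence (Remark~\ref{down:density-moment-scope}). Concentration over $(R/\rho)^d$ nearly independent regions controls spatial averages under $\mathbb P$, and these center at $\mathbb E[B(0)]$, not at $\mathbb E_\pi[B(0)]$. Quantitative control of $m$-weighted averages is derived in the paper only \emph{from} the cell bound (Section~\ref{down:sec-density-quantitative}), so this step is circular.

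Two further problems:
\begin{itemize}
\item Even for smooth data, exit measures of the degenerate diffusion cannot be compared with those of $W^{\bar A}$ in total variation. Passing to arbitrary $u$ therefore needs a modulus of continuity uniform over all solutions, and you do not set this up.
\item Cluster-size events have exponential tails. Optimizing cluster exponents cannot produce the polynomial exponent $d/(4d-2)$.
\end{itemize}

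\textbf{The missing observation is that, for a prescribed $\alpha<1$, no rate is needed.} The one-step improvement (Proposition~\ref{prop:reg:one-step}) only needs $\|u-v\|_{B_{1/2}}\le\theta^{1+\alpha}/4$ after normalization. That is a \emph{fixed} tolerance, on an event whose failure probability is summable over dyadic radii. The paper obtains it in two pieces.

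\emph{Smooth data.} Qualitative homogenization \cite{ASdeg}, applied to finitely many quadratic data, gives a deterministic block scale $L$ at which each block is good with probability at least $1-p_0$. The paper then combines:
\begin{itemize}
\item a Chernoff bound over independent residue classes;
\item the coarse ABP estimate, which comes from convex order on contact sets, not from a CLT;
\item the fact that good cells carry no contact points.
\end{itemize}
Together these give approximation within $t\|F\|_{C^3}$, simultaneously for all $C^3$ $\bar A$-harmonic data, with stretched-exponential failure probability (Proposition~\ref{prop:reg:fixed-tolerance}).

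\emph{Arbitrary solutions.} An arbitrary solution is mollified at a fixed scale $h$. The error is controlled by a mesoscopic H\"older modulus that is uniform over all solutions (Lemma~\ref{lem:reg:coarse-holder}). Bowman's oscillation estimate \cite{Bowman26} needs $|B_s|^{-1}\int_{B_s(y)}\lambda^{-p}\le C_0$ with $p>d-1$. With only the $d$th moment, $L^{d/p}$ concentration and a union bound over balls of radius at least $R^{a_0}$ give failure probability $CR^{-d(a_0d/p-1)}$. The choice $p=d-\tfrac12$, $a_0=1-\tfrac1{4d}$ is exactly where $\gamma_1=d/(4d-2)$ comes from.

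On the intersection of these two events, your Step 4 goes through with a fixed tolerance rather than summed errors $r^{-\delta}$: the dyadic iteration, the Borel--Cantelli definition of $r_{*,\alpha}$, and the tail obtained by summing failure probabilities.
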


The proof separates geometry from elapsed time. A stopping-time kernel for
\(B:D^2\) has zero mean, nondegenerate covariance, and jump lengths with
exponential tails. A contact-set argument then converts qualitative homogenization of
quadratic data into a fixed-tolerance approximation on a common event.
Mesoscopic oscillation estimates allow the boundary data to depend on the
solution, and a Campanato iteration completes the proof.
The passage from homogenized approximation to affine excess decay follows
the large-scale regularity strategy of Armstrong--Lin~\cite[Section~3]{AL}.
Their approximation estimates use uniform ellipticity. Here the inputs
to the iteration are the common approximation event constructed below
and the mesoscopic modulus in Lemma~\ref{lem:reg:coarse-holder}; the
probability estimate for the resulting minimal scale is derived from
these inputs.

\subsection{A coarse kernel for the normalized diffusion}
\label{reg:subsec:coarse}

\begin{lemma}[Ball-exit identities for the normalized diffusion]
\label{lem:reg:normalized-diffusion}
Under H1--H5, for almost every environment, the continuous-path martingale problem
with generator \(B:D^2\) has a unique nonexplosive solution, forming a strong Markov
family \(X\). Its solution kernel can be chosen jointly measurable in the environment
and the starting point. If \(x\in B_L(c)\) and
\(\tau=\inf\{t:X_t\notin B_L(c)\}\), then
\begin{equation}
 \mathbf E_x^\omega\tau=\frac{L^2-|x-c|^2}{2},
 \qquad
 \sup_{x\in B_L(c)}\mathbf E_x^\omega\tau^m
 \le m!\left(\frac{L^2}{2}\right)^m,\quad m\in\mathbb N.
 \label{reg:eq:exit-moments}
\end{equation}
\end{lemma}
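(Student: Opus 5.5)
Well-posedness and the strong Markov property are already contained in Lemma~\ref{down:density-normalized-law}. There the local martingale construction was carried out for $B$ on each ball, and nonexplosion followed from the bound $\mathbf P_x^\omega(\sigma_N\le t)\le 2t/(N-|x|)^2$. I would reuse that construction verbatim. The point to add is joint measurability of the kernel in $(\omega,x)$. On each ball $B_N$, I would extend $B$ to a bounded uniformly elliptic H\"older field by a fixed measurable recipe, for instance by freezing the coefficients outside $B_N$ through a radial retraction and mixing with $I/d$ via a cutoff. The stopped laws are then the unique solutions of a well-posed martingale problem whose coefficients depend measurably on $\omega$. By Stroock--Varadhan~\cite{SV}, the solution depends measurably, indeed continuously in the weak topology, on the coefficients and on the starting point. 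Consistency across $N$ and a monotone limit then give a jointly measurable kernel for the whole-space process.

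For the identities, I would use the fact that $\operatorname{tr}B\equiv1$. Consequently, for $g(y)=|y-c|^2$ we get $B:D^2g=2\operatorname{tr}B=2$, so $|X_t-c|^2-2t$ is a local martingale. Stopping at $\tau\wedge t$ gives $\mathbf E_x^\omega|X_{\tau\wedge t}-c|^2=|x-c|^2+2\mathbf E_x^\omega(\tau\wedge t)$. The left side is at most $L^2$, so monotone convergence yields $\mathbf E_x^\omega\tau\le (L^2-|x-c|^2)/2<\infty$, and in particular $\tau<\infty$ almost surely. By path continuity $|X_\tau-c|=L$, and bounded convergence on the left then upgrades the inequality to the equality $\mathbf E_x^\omega\tau=(L^2-|x-c|^2)/2$. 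Stopping the local martingale at $\tau$ is harmless because the path stays in $\overline B_L(c)$, where the martingale is bounded up to $\tau\wedge t$.

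The higher moments follow by induction from the strong Markov property, as in the argument for $\mathbf E_0^\omega T_R^2$ in Section~\ref{sec:density}. Start from the identity $\tau^m=m\int_0^\tau(\tau-s)^{m-1}ds$ and apply the Markov property at time $s$ on the event $\{s<\tau\}$; the remaining exit time after $s$ is the exit time from the same ball started at $X_s$. Tonelli then gives
\[
 \mathbf E_x^\omega\tau^m=m\,\mathbf E_x^\omega\int_0^\tau
 \mathbf E_{X_s}^\omega\tau^{m-1}\,ds
 \le m\,\Big(\sup_{y\in B_L(c)}\mathbf E_y^\omega\tau^{m-1}\Big)\,
 \mathbf E_x^\omega\tau .
\]
Combined with $\sup_y\mathbf E_y^\omega\tau\le L^2/2$, this yields $\sup_x\mathbf E_x^\omega\tau^m\le m!(L^2/2)^m$. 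Finiteness of the previous moment justifies each step, and one can truncate $\tau\wedge t$ first if needed.

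The only step with genuine content is the measurability of the kernel. The exit identities are routine once $\operatorname{tr}B=1$ is noted, which is exactly the reason the paper normalizes $A$ to $B$.
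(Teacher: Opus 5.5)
Your proposal is correct and follows essentially the same route as the paper. Well-posedness comes from localization and Stroock--Varadhan, and nonexplosion from the unit-trace second-moment bound. The exit-time identity follows by optional stopping of $|X_t-c|^2-2t$, then monotone and bounded convergence, and the moment bound by induction from $\mathbf E_x^\omega\tau^m=m\,\mathbf E_x^\omega\int_0^\tau\mathbf E_{X_s}^\omega\tau^{m-1}\,ds$. Your explicit measurable-extension argument for joint measurability of the kernel fills in what the paper asserts in one line via localization and uniqueness on the canonical coefficient space.
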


\begin{proof}
The martingale problem for continuous, locally uniformly positive definite
coefficients is well posed \cite{SV}. Localization and uniqueness give a strong
Markov family with
\[
 \langle X^i,X^j\rangle_t=2\int_0^t B_{ij}(X_s)\,ds.
\]
Since \(\operatorname{tr}B=1\), the second-moment maximal inequality gives
\(\mathbf P_x^\omega(\tau_{B_n(x)}\le T)\le2T/n^2\), proving nonexplosion.
The localization construction and uniqueness on the canonical coefficient
space give joint measurability of the solution kernel.

For \(\tau=\tau_{B_L(c)}\), the stopped square-distance martingale yields
\[
 2\mathbf E_x^\omega(t\wedge\tau)
 =\mathbf E_x^\omega|X_{t\wedge\tau}-c|^2-|x-c|^2
 \le L^2-|x-c|^2.
\]
Thus \(\tau<\infty\) almost surely. Path continuity and bounded convergence
now give the first identity in \eqref{reg:eq:exit-moments}. By the strong
Markov property and Tonelli's theorem,
\[
 \mathbf E_x^\omega\tau^m
 =m\mathbf E_x^\omega\int_0^\tau
       \mathbf E_{X_s}^\omega\tau^{m-1}\,ds.
\]
Induction proves the moment bound; interpolation also controls noninteger moments.
\end{proof}
The physical diffusion is recovered through the inverse of the clock
\begin{equation}
 t(s)=\int_0^s a(X_v)^{-1}\,dv.
 \label{reg:eq:physical-time}
\end{equation}
This clock is finite up to bounded-domain exits, since \(a\) is bounded below
on compact sets, and \(t(s)\ge s/d\). The time-changed generator is \(A:D^2\),
so exit locations are unchanged. Section~\ref{sec:green} controls the physical
clock; here we work in normalized time.

\begin{lemma}[Sparse bad clusters]
\label{lem:reg:bad-clusters}
Under H1--H5, one can choose a deterministic \(\kappa>0\) for which the following
properties hold. Set
\[
 Q_k=k+[-1/2,1/2]^d,\qquad
 \widehat Q_k=k+[-3,3]^d,\qquad
 I_k=\mathbf1_{\{\inf_{\widehat Q_k}\lambda<\kappa\}},
 \quad k\in\mathbb Z^d.
\]
Declare lattice points adjacent when their \(\ell^\infty\) distance is one.
Let \(\mathcal C(k)\) be the bad cluster containing \(k\), with the cluster of a good
point defined to be empty. Then
\begin{equation}
 \mathbb P(|\mathcal C(k)|\ge n)\le Ce^{-cn}.
 \label{reg:eq:cluster-tail}
\end{equation}
Almost surely, all bad clusters are simultaneously finite. Every connected
component of the open set
\[
 W=\{x:\lambda_2(x)<\kappa\},\qquad
 \lambda_2(x)=\inf_{\overline B_2(x)}\lambda
\]
is also bounded; this set is covariant under all translations.
If one of its components \(C\) intersects \(Q_k\), then
\begin{equation}
 \operatorname{diam}C\le C_d(1+|\mathcal C(k)|).
 \label{reg:eq:component-diameter}
\end{equation}
\end{lemma}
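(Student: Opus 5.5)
The plan is a Peierls-type percolation argument built on finite range of dependence. First I fix $\kappa$ small. The indicators $I_k$ are measurable with respect to $A|_{\widehat Q_k}$. By H3, the family $\{I_k\}_{k\in F}$ is independent whenever the points of $F\subset\mathbb Z^d$ are pairwise at $\ell^\infty$ distance greater than $L_0=6+\lceil\ell\rceil+1$, since then the cubes $\widehat Q_k$ are separated by more than $\ell$. By stationarity, $p(\kappa)=\mathbb P(I_0=1)=\mathbb P(\inf_{\widehat Q_0}\lambda<\kappa)$. Because $\widehat Q_0$ is covered by a fixed number $N_d$ of unit balls $B_1(z_j)$, a union bound and Chebyshev with H5 give
\[
 p(\kappa)\le N_d\,\kappa^d\,\mathbb E[\lambda_*(0)^{-d}],
\]
so $p(\kappa)\to0$ as $\kappa\to0$. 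Only the first-moment-type consequence of H5 is needed here.

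For the cluster tail, the number of $\ell^\infty$-connected lattice animals of size $n$ containing $k$ is at most $\mu^n$, with $\mu=\mu(d)$. Any animal of size $n$ contains a subset of size at least $n/L_0^d$ whose points are pairwise more than $L_0$ apart; such a subset is obtained greedily or by restricting to a sublattice class. By independence, the probability that all its points are bad is at most $p^{n/L_0^d}$. Hence
\[
 \mathbb P(|\mathcal C(k)|\ge n)\le\sum_{m\ge n}\mu^m p^{m/L_0^d},
\]
and choosing $\kappa$ so that $\mu p^{1/L_0^d}\le e^{-1}$ gives \eqref{reg:eq:cluster-tail}. Since $\mathbb P(|\mathcal C(k)|=\infty)=0$ for every $k$ and $\mathbb Z^d$ is countable, all bad clusters are simultaneously finite almost surely.

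For the geometric statements, $W$ is open because $\lambda$ is continuous (Lemma~\ref{lem:local-regularity}) and $\lambda_2$ is an infimum over closed balls, which makes $\lambda_2$ upper semicontinuous, indeed continuous. $W$ is translation covariant because it is defined pointwise from the field. The key inclusion is as follows. If $x\in W\cap Q_k$, then some $y$ with $|y-x|\le2$ has $\lambda(y)<\kappa$. Since $\overline B_2(x)\subset k+[-5/2,5/2]^d\subset\widehat Q_k$, it follows that $I_k=1$. Moreover, if two points of $W$ lie in $Q_k$ and $Q_{k'}$ with $|k-k'|_\infty=1$, both $k$ and $k'$ are bad and adjacent. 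A connected component $C$ meeting $Q_k$ is covered by the cubes $Q_j$ it meets. The set of such $j$ is $\ell^\infty$-connected, because a path in $C$ moves between cubes only through adjacent ones (closed cubes sharing a face, edge or corner are adjacent), and consists of bad points. It therefore lies in $\mathcal C(k)$. Hence $C$ is bounded and $\operatorname{diam}C\le\sqrt d\,(|\mathcal C(k)|+1)$, which gives \eqref{reg:eq:component-diameter}. Boundedness of every component follows from finiteness of all clusters.

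I expect the main obstacle to be the decoupling step: making the extraction of a well-separated subset from a lattice animal rigorous and uniform in $n$. I would handle it by partitioning $\mathbb Z^d$ into $L_0^d$ residue classes modulo $L_0$. One class contains at least $n/L_0^d$ points of the animal, and these are automatically pairwise separated.
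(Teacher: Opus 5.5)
Your proposal is correct and follows essentially the same route as the paper: a union bound with Markov's inequality from H5 makes $p(\kappa)$ small, residue classes modulo a separation length exceeding $\ell+6$ give joint independence from H3, anchored $\ell^\infty$-animals are counted exponentially, and the inclusion $\overline B_2(x)\subset\widehat Q_k$ for $x\in Q_k$ shows that the cells met by a component of $W$ form a connected bad animal contained in $\mathcal C(k)$. The only slip is that residue classes give $\ell^\infty$ separation at least $L_0$, not greater than $L_0$; this is harmless, since $L_0-6>\ell$ already separates the cubes $\widehat Q_k$ by more than $\ell$.
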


\begin{proof}
A finite cover by unit balls, H5 and stationarity give
\[
 \mathbb E[(\inf_{\widehat Q_k}\lambda)^{-d}]
 \le N_d\mathbb E[\lambda_*(0)^{-d}]=:M_0,
 \qquad p:=\mathbb P(I_k=1)\le M_0\kappa^d.
\]
Choose an integer \(K_0>\ell+6\). The cubes associated with distinct points
of a residue class modulo \(K_0\) are separated by more than \(\ell\).
H3, applied successively to one cube and the union of the others, gives joint
independence within each class.

There are at most \(C_d^n\) connected lattice animals of size \(n\) containing
a fixed anchor: encode a spanning tree by a depth-first walk of length at
most \(2n\), with at most \(3^d-1\) choices per step. Each animal contains
at least \(n/K_0^d\) vertices of one color, whence
\[
 \mathbb P(|\mathcal C(k)|\ge n)\le C_d^n p^{n/K_0^d}.
\]
A sufficiently small \(\kappa\) proves \eqref{reg:eq:cluster-tail} and,
by a countable union over anchors, finiteness of all clusters.

Continuity of \(\lambda_2\) makes \(W\) open. If \(x\in Q_k\cap W\), then
\(\overline B_2(x)\subset\widehat Q_k\), so \(I_k=1\). The cells intersecting
a component of \(W\) therefore form a connected bad animal. They belong to
one finite cluster, proving boundedness and \eqref{reg:eq:component-diameter}.
\end{proof}

\begin{proposition}[The coarse kernel]
\label{prop:reg:coarse-process}
Under H1--H5, take \(\kappa\) from Lemma~\ref{lem:reg:bad-clusters}.
Let \(G=\mathbb R^d\setminus W\). For \(x\in W\), let \(C_x\) be the component
of \(W\) containing the starting point, and define
\[
 S_0=\inf\{t:X_t\notin C_x\};
\]
for \(x\in G\), set \(S_0=0\). Further set
\[
 Y=X_{S_0},\qquad
 S=\inf\{t\ge S_0:|X_t-Y|=1\},\qquad
 K_\omega(x,E)=\mathbf P_x^\omega(X_S\in E).
\]
Let \(D(x)=\operatorname{diam}C_x\) on \(W\), and set it to zero on \(G\).
Then \(K\) is a jointly measurable probability kernel, covariant under all
translations, and
\begin{align}
 &\sup_{0\le t\le S}|X_t-x|\le D(x)+1,
 \qquad
 \mathbf E_x^\omega S^m\le m!\left(\frac{(D(x)+2)^2}{2}\right)^m,
 \label{reg:eq:coarse-path}\\
 &\int(y-x)K_\omega(x,dy)=0,\qquad
 \int(y-x)(y-x)^T K_\omega(x,dy)\ge \frac{\kappa}{d}I.
 \label{reg:eq:coarse-moments}
\end{align}
Iterating this stopping-time construction gives a coarse-grained process whose
stopping times do not accumulate.
\end{proposition}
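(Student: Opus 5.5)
The plan is to verify the claims in order: the path bound, the moment bound, zero mean, the covariance lower bound, and then measurability, covariance and non-accumulation.

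\emph{Path bound.} If $x\in G$, then $S_0=0$, $Y=x$, and up to time $S$ the path stays in $\overline B_1(x)$. If $x\in W$, then up to $S_0$ the path stays in $\overline{C_x}$, so $|X_t-x|\le D(x)$ for $t\le S_0$. Afterwards it stays within distance $1$ of $Y\in\partial C_x$. The second phase gives $|X_t-x|\le D(x)+1$, which is the first claim in \eqref{reg:eq:coarse-path}.

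\emph{Moment bound.} Here $S\le\tau$, where $\tau$ is the exit time from the ball $B_{D(x)+2}(x)$. Lemma~\ref{lem:reg:normalized-diffusion} with $L=D(x)+2$ then gives $\mathbf E_x^\omega S^m\le m!\,((D(x)+2)^2/2)^m$. The component $C_x$ is bounded by Lemma~\ref{lem:reg:bad-clusters}, so $D(x)<\infty$ almost surely.

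\emph{Zero mean.} The coordinates of $X$ are martingales. They are bounded up to $S$, and $S$ is integrable. Optional stopping therefore gives $\int(y-x)K_\omega(x,dy)=0$.

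\emph{Covariance lower bound.} This is the main point. Fix a unit vector $e$ and decompose
\[
 e\cdot(X_S-x)=e\cdot(X_S-Y)+e\cdot(Y-x).
\]
By the strong Markov property at $S_0$, the first term has conditional mean zero given $\mathcal F_{S_0}$. The cross term therefore vanishes, and
\[
 \mathbf E_x^\omega[(e\cdot(X_S-x))^2]\ge\mathbf E_x^\omega[(e\cdot(X_S-Y))^2].
\]
It therefore suffices to bound $\mathbf E_y^\omega[(e\cdot(X_\sigma-y))^2]$ from below, uniformly over $y\in G$, where $\sigma$ is the exit time from $B_1(y)$.

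Since $y\notin W$, we have $\lambda_2(y)\ge\kappa$. Hence $\lambda\ge\kappa$ on $\overline B_2(y)\supset B_1(y)$, and so $e\cdot Be\ge\lambda/a\ge\kappa/d$ there, because $a\le d$. The quadratic variation of $e\cdot X$ then gives
\[
 \mathbf E_y^\omega[(e\cdot(X_\sigma-y))^2]=2\,\mathbf E_y^\omega\int_0^\sigma e\cdot B(X_s)e\,ds\ge\frac{2\kappa}{d}\,\mathbf E_y^\omega\sigma=\frac{\kappa}{d}.
\]
The last equality uses $\mathbf E_y\sigma=1/2$ from \eqref{reg:eq:exit-moments}. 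This is exactly the stated bound. The step to check carefully is that $Y\in G$ also holds when $x\in W$: the component $C_x$ is open, $Y$ lies on its boundary, and boundary points of a component of an open set lie outside that set, so $Y\notin W$.

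\emph{Measurability and covariance.} These follow from the jointly measurable solution kernel. The stopping rules $S_0$ and $S$ are defined from $W$ and $C_x$, and $W$ is a covariant, measurably defined open set. Covariance follows from uniqueness of the martingale problem and the covariance of $W$. A minor technicality is the measurable dependence of $C_x$ on $(x,\omega)$; I would handle it through the lattice cluster description or by rational path approximations.

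\emph{Stopping times do not accumulate.} After each coarse step the process sits at a point $X_S$. From that point the next step contains a unit-distance excursion started at a point of $G$, which requires normalized time $\sigma$. Conditionally on the past, these times are i.i.d.-dominated from below, since $\mathbf P(\sigma>c)\ge p>0$ uniformly: $\operatorname{tr}B=1$ bounds $\mathbf E\langle X\rangle_c=2c$, so $\mathbf P(\sigma\le c)\le 2c$ by the maximal inequality. A Borel--Cantelli or conditional law-of-large-numbers argument then shows that the sum of the step durations diverges almost surely.
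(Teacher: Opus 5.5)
Your proposal is correct and follows essentially the same route as the paper. It obtains the path bound from $Y\in\partial C_x\subset G$ and compares $S$ with the exit time from $B_{D(x)+2}(x)$; it then uses the strong Markov property at $S_0$ to remove the cross terms and reduce the covariance to a unit-ball exit from a good point, where $B\ge(\kappa/d)I$ and $\mathbf E_Y^\omega\sigma=1/2$; and it gets non-accumulation from the conditional bound $\mathbf P(\sigma\le c\mid\text{past})\le 2c$. For measurability of $C_x$, the paper uses finite chains of rational balls whose closures lie in $W$, which is the same idea as your rational polygonal-path approximation; the lattice-cluster description alone would not determine $C_x$.
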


\begin{proof}
Boundedness of \(C_x\) and Lemma~\ref{lem:reg:normalized-diffusion} give
\(S_0<\infty\). A boundary point of an open component lies outside \(W\):
a connected neighborhood contained in \(W\) would otherwise join that point
to the component. Hence path continuity gives \(Y\in G\), and
\(B\ge(\kappa/d)I\) on \(B_2(Y)\). By the strong Markov property, the
second segment is a unit-ball exit and \(S\) is a finite stopping time.

The path stays within distance \(D(x)+1\) of \(x\), so \(S\) precedes exit
from \(B_{D(x)+2}(x)\). Equation~\eqref{reg:eq:exit-moments} proves
\eqref{reg:eq:coarse-path}. The bounded martingale \(X_{t\wedge S}\) has
terminal mean \(x\). For the second increment \(V=X_S-Y\),
\[
 \mathbf E^\omega[V\mid Y]=0,\qquad
 \mathbf E^\omega[VV^T\mid Y]
 =2\mathbf E_Y^\omega\int_0^{\tau_{B_1(Y)}}B(X_s)\,ds
 \ge (\kappa/d)I,
\]
since \(\mathbf E_Y^\omega\tau_{B_1(Y)}=1/2\). The cross terms with
\(Y-x\) vanish, giving \eqref{reg:eq:coarse-moments}.

For measurability, enumerate rational balls whose closures lie in \(W\).
This condition is \(\max\lambda_2<\kappa\) on the closed ball and is
measurable by continuity. The union of balls linked by finite intersecting
chains to a ball containing \(x\) is precisely \(C_x\). Thus
\((x,\omega,y)\mapsto\mathbf1_{\{y\in C_x\}}\) is measurable; combined with
the measurable diffusion family, this proves measurability of \(K\).
The construction commutes with translations:
\[
 K_{\tau_z\omega}(x,E)=K_\omega(x+z,E+z).
\]

For each second-segment duration \(V_t\), the conditional maximal inequality
implies \(\mathbf P(V_t\le s\mid\text{past})\le2s\). The conditional
probability that \(n\) successive segments all last less than \(1/4\) is
therefore at most \(2^{-n}\). A countable union over starting indices shows
that infinitely many segments last at least \(1/4\); the stopping times
cannot accumulate. Equation~\eqref{reg:eq:physical-time} gives the same
conclusion in physical time.
Finally, every compact set meets only finitely many finite bad clusters.
Inductively, the points reachable after any fixed number of steps lie in
an environment-dependent compact set. The iterated process is consequently
an integrable quenched martingale at each finite step.
\end{proof}

\begin{lemma}[A lower bound in convex order]
\label{lem:reg:convex-order}
Let \(\nu\) be a mean-zero probability measure supported on \(\overline B_1\),
with covariance \(C_\nu=\int zz^T\,d\nu\ge a_0I\), where \(a_0>0\).
Then, for every finite convex function \(f\),
\[
 \int f\,d\nu\ge\fint_{B_{a_0}}f.
\]
In particular, the kernel of Proposition~\ref{prop:reg:coarse-process} satisfies
\begin{equation}
 K_\omega f(x)\ge\fint_{B_{\kappa/d}(x)}f(y)\,dy
 \label{reg:eq:convex-order}
\end{equation}
for every convex function \(f\) defined on a convex domain containing the relevant
paths and averaging balls.
\end{lemma}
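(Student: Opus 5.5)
The plan is to realize the uniform measure on \(B_{a_0}\) as the average of a family of probability measures \(\nu_y\), each with barycenter \(y\) and each mixing back to \(\nu\). Jensen's inequality applied to each \(\nu_y\), followed by averaging in \(y\), then gives the claimed inequality.

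\emph{Tilting construction.} Set \(M=C_\nu^{-1}\). Since \(C_\nu\ge a_0I\), we have \(\|M\|\le a_0^{-1}\). For \(y\in B_{a_0}\), define
\[
 \nu_y(dz)=\bigl(1+y\cdot Mz\bigr)\,\nu(dz).
\]
The density is nonnegative because \(|y\cdot Mz|\le|y|\,\|M\|\,|z|\le a_0\cdot a_0^{-1}\cdot1=1\) on \(\overline B_1\). Its total mass is \(1+y\cdot M\int z\,d\nu=1\), because \(\nu\) has mean zero. Its barycenter is
\[
 \int z\,d\nu_y=\int z\,d\nu+\Bigl(\int zz^{\mathsf T}d\nu\Bigr)My=C_\nu My=y .
\]
Averaging over \(y\) uniform in \(B_{a_0}\), whose mean is zero, gives \(\fint_{B_{a_0}}\nu_y\,dy=\nu\), since the tilt term is linear in \(y\). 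Jensen's inequality for the finite convex \(f\) gives \(f(y)\le\int f\,d\nu_y\) for each \(y\). Averaging over \(B_{a_0}\) and applying Tonelli (after adding an affine minorant of \(f\), so that all integrands are bounded below) yields \(\fint_{B_{a_0}}f\le\int f\,d\nu\). Finiteness of \(\int f\,d\nu\) is automatic, since \(f\) is continuous on \(\overline B_1\).

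\emph{Application to the kernel.} The jump \(X_S-x\) need not lie in \(\overline B_1\), because the first segment can travel a distance up to \(D(x)\). I would therefore apply the first part only to the second segment, conditionally on \(Y\). The proof of Proposition~\ref{prop:reg:coarse-process} shows that \(V=X_S-Y\) satisfies \(|V|=1\), \(\mathbf E^\omega[V\mid Y]=0\), and \(\mathbf E^\omega[VV^{\mathsf T}\mid Y]\ge(\kappa/d)I\). Hence, for convex \(f\) on the relevant convex domain,
\[
 K_\omega f(x)=\mathbf E_x^\omega\,\mathbf E^\omega\bigl[f(Y+V)\mid Y\bigr]
 \ge\mathbf E_x^\omega\, g(Y),\qquad g(w)=\fint_{B_{\kappa/d}(w)}f .
\]
The function \(g\) is convex, being an average of translates of \(f\). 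The bounded martingale \(X_{t\wedge S_0}\) gives \(\mathbf E_x^\omega Y=x\). Jensen's inequality then yields \(\mathbf E_x^\omega g(Y)\ge g(x)\), which is \eqref{reg:eq:convex-order}.

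\emph{Main obstacle.} The only delicate point is choosing the right mixing family. A direct Strassen-type argument, or an argument testing only half-space functions \((e\cdot z-t)_+\), would not suffice in \(d\ge2\). It is the tilt by \(C_\nu^{-1}\) that makes the radius exactly \(a_0\), matching the positivity constraint \(|y|\,\|C_\nu^{-1}\|\le1\). The remaining care is measure-theoretic: the regular conditional law of \(V\) given \(Y\) must exist, which holds by the strong Markov property at \(S_0\). One must also check that the averaging balls \(B_{\kappa/d}(Y)\) lie in the domain of \(f\), which is part of the hypothesis.
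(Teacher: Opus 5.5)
Your proof is correct and is essentially the paper's argument: the same tilted measures \(\nu_y(dz)=(1+y\cdot C_\nu^{-1}z)\,\nu(dz)\), with barycenter \(y\), are combined with Jensen and averaged over \(B_{a_0}\). The bound is then applied to the conditional law of the second increment, followed by Jensen for the mean-zero first increment. Your added measure-theoretic remarks (regular conditional law via the strong Markov property, convexity of the averaged function \(g\), the affine minorant for Tonelli) are harmless refinements of the same route.
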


\begin{proof}
For \(|y|\le a_0\), the measure
\[
 d\nu_y(z)=(1+y\cdot C_\nu^{-1}z)\,d\nu(z)
\]
is nonnegative, has mass one and barycenter \(y\). Jensen's inequality gives
\(f(y)\le\int f(z)(1+y\cdot C_\nu^{-1}z)\,d\nu(z)\).
Average over \(B_{a_0}\) to obtain the first assertion.
Apply it to the conditional law of the second coarse increment, and then
apply Jensen to the first increment, whose mean is zero:
\[
 K_\omega f(x)\ge
 \mathbf E_x^\omega\left[\fint_{B_{\kappa/d}}f(Y+z)\,dz\right]
 \ge\fint_{B_{\kappa/d}}f(x+z)\,dz.
\]
The specified convex domain contains every averaging point.
\end{proof}

\begin{remark}
The comparison \eqref{reg:eq:convex-order} is restricted to convex functions;
it is not a minorization of the kernel by Lebesgue measure.
\end{remark}

\begin{proposition}[Localization and concentration]
\label{prop:reg:geometry-concentration}
Under H1--H5, there exist a deterministic fine grid and random quantities
\(J_Q\ge1\) that simultaneously control the coarse-grained path radius and
normalized time for every starting point in the cell \(Q\):
\[
 \sup_{x\in Q}\sup_{t\le S_x}|X_t-x|\le CJ_Q,\qquad
 \sup_{x\in Q}\mathbf E_x^\omega S_x\le CJ_Q^2.
\]
Each \(J_Q\) has an exponential tail with deterministic constants, and
the truncation \(J_Q\wedge n\) depends only on the coefficients in the deterministic
\(Cn\)-neighborhood of \(Q\).
For every fixed \(m>0\), there exist \(C_m,c_m,\vartheta_m>0\) such that
\begin{equation}
 \mathbb P\left(R^{-d}\sum_{Q\cap B_{2R}(z)\ne\varnothing}J_Q^m>C_m\right)
 \le C_me^{-c_mR^{\vartheta_m}}
 \label{reg:eq:geometry-concentration}
\end{equation}
for every fixed \(z\) and \(R\ge1\).
There is also a local kernel \(K^L_\omega(x,\cdot)\), with deterministic
coefficient-dependence radius \(L\), satisfying
\begin{equation}
 \mathbb P(K^L_\omega(x,\cdot)\ne K_\omega(x,\cdot))\le Ce^{-cL}.
 \label{reg:eq:kernel-locality}
\end{equation}
\end{proposition}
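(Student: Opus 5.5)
We define $J_Q$ directly from the bad-cluster geometry of Lemma~\ref{lem:reg:bad-clusters}. Take the fine grid to be the unit lattice cells $Q=Q_k$. For a starting point $x\in Q_k$, the first stage of the coarse kernel of Proposition~\ref{prop:reg:coarse-process} is governed by the component $C_x$ of $W$. If $C_x$ meets $Q_k$, then by \eqref{reg:eq:component-diameter} its diameter is at most $C_d(1+|\mathcal C(k)|)$. Set
\[
 J_{Q_k}=1+\max\{|\mathcal C(j)|:\ |j-k|_\infty\le1\}.
\]
The neighbours are included so that points near the cell boundary are covered. Then $D(x)\le C J_{Q_k}$ uniformly for $x\in Q_k$. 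The bounds \eqref{reg:eq:coarse-path} with $m=1$ give $\sup_{t\le S_x}|X_t-x|\le D(x)+1\le CJ_Q$ and $\mathbf E_x^\omega S_x\le (D(x)+2)^2/2\le CJ_Q^2$. The exponential tail of $J_Q$ follows from \eqref{reg:eq:cluster-tail} and a union bound over the $3^d$ neighbours.

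For locality, we use that $\{|\mathcal C(j)|\ge n\}$ is determined by whether the lattice animals of size up to $n$ around $j$ consist of bad points. Each indicator $I_i$ depends on $A$ only on $\widehat Q_i$. The truncation $|\mathcal C(j)|\wedge n$ therefore depends only on coefficients within distance $Cn$ of $Q$: either the cluster is found completely inside $B_{Cn}$, or it is witnessed to reach size $n$ there. Hence $J_Q\wedge n$ is $\sigma(A|_{B_{Cn}(Q)})$-measurable.

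The concentration bound \eqref{reg:eq:geometry-concentration} is the main step, and we prove it by truncation plus independent coloring.

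\textbf{Truncation.} Fix $n=R^{\theta}$ with small $\theta>0$. The probability that some cell meeting $B_{2R}(z)$ has $J_Q>n$ is at most $CR^de^{-cR^\theta}$. On the complement, $J_Q=J_Q\wedge n$ for all relevant cells.

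\textbf{Coloring.} Split the lattice into $K^d$ residue classes modulo $K=\lceil Cn+\ell\rceil+1$. Within one class the truncated variables $(J_Q\wedge n)^m$ are independent by H3, since their dependence neighbourhoods are separated by more than $\ell$. Each class contains about $(R/K)^d$ variables, bounded by $n^m$, with mean at most $\mathbb E J_Q^m\le C_m'$.

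\textbf{Concentration.} Hoeffding's inequality gives that the class average exceeds $2C_m'$ with probability at most
\[
 \exp\bigl(-c(R/K)^d n^{-2m}\bigr)=\exp\bigl(-cR^{d(1-\theta)-2m\theta}\bigr).
\]
Choosing $\theta$ small relative to $m$ makes this exponent positive, which gives $\vartheta_m>0$. Summing over the classes and adding the truncation error yields \eqref{reg:eq:geometry-concentration} with $C_m=2C_m'$.

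For the local kernel $K^L$, we run the same stopping-time construction with coefficients frozen outside $B_L(x)$. Concretely, replace $A$ by $A$ on $B_L(x)$ and by $\kappa$-elliptic data outside, and define $W$ relative to this modified field. If the bad cluster structure within distance $L/2$ of $x$ has no cluster of size greater than $cL$, then $C_x$ and the subsequent unit-ball exit stay inside $B_{L/2}(x)$. On that event the two constructions agree pathwise, because the diffusion law up to exit from $B_L(x)$ depends only on the coefficients there, by uniqueness of the localized martingale problem. The complement event has probability at most $CL^de^{-cL}$, which absorbs into $Ce^{-c'L}$. This proves \eqref{reg:eq:kernel-locality}.

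We expect the main technical obstacle to be the locality claim for the truncated cluster size, which needs a careful formulation of the witnessing event.
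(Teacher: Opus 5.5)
Your proposal is correct and is essentially the paper's proof. The cluster-based weight \(J_Q\), the exploration argument showing that \(J_Q\wedge n\) is determined within distance \(Cn\), the truncation at \(n=R^{\theta}\) followed by residue-class coloring and Hoeffding's inequality (with the same exponent \(d-\theta(d+2m)\)), and the reduction of \(K^L\ne K\) to a bad cluster of size at least \(cL\) near \(x\) all match the paper. The only differences are minor: the paper uses a genuinely fine half-open grid of diameter at most \(\kappa/(8d)\), which the later coarse ABP lemma needs (your unit cells suffice once each subcell inherits its unit cell's weight); and it defines \(K^L\) by accepting the component of \(W\) containing \(x\) only when its closure lies in \(B_{L-3}(x)\), setting \(K^L(x,\cdot)=\delta_x\) otherwise, which avoids your extension of the field by \(\kappa\)-elliptic data outside \(B_L(x)\)---an extension that, as written, is discontinuous and would need a continuous interpolation to keep the martingale problem well posed and \(W\) open.
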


\begin{proof}
Set \(a_0=\kappa/d\), and choose an even integer \(N_0\) with
\(\sqrt d/N_0\le a_0/8\). Use the half-open cubes of \(N_0^{-1}\mathbb Z^d\).
For \(H_k=1+|\mathcal C(k)|\), let \(J_Q\) be a fixed multiple of the
maximum of \(1+H_k\) over the unit cells meeting \(Q\) and a fixed finite
neighborhood. Equation~\eqref{reg:eq:component-diameter} and
Proposition~\ref{prop:reg:coarse-process} give the path and time bounds,
and \eqref{reg:eq:cluster-tail} gives exponential tails.

Explore an anchored bad cluster until either \(n\) bad vertices have been
found or all its neighboring vertices have been checked and are good.
Each exploration step moves one lattice unit and inspects a fixed-size cube.
Thus \(H_k\wedge n\), and hence \(J_Q\wedge n\), are determined within
radius \(Cn\).
For \(n=R^b\), the variables \((J_Q\wedge n)^m\) are bounded by \(n^m\)
and can be partitioned into \(Cn^d\) independent residue classes. Each
class contains \(\asymp R^dn^{-d}\) cells in the ball when \(R/n\) is large,
as follows by comparison with inscribed and circumscribed boxes.
Hoeffding's inequality bounds a fixed-tolerance deviation of each class
average by
\[
 C\exp\{-cR^dn^{-d-2m}\}.
\]
A deviation of the full average forces a deviation in one class. The union
over classes costs a polynomial factor; the truncation mismatch probability
is at most \(CR^de^{-cn}\). The expectations are bounded uniformly by the
exponential tail. Choose \(b>0\) with \(d-b(d+2m)>0\); then any
\[
 0<\vartheta_m<\min\{b,d-b(d+2m)\}
\]
gives \eqref{reg:eq:geometry-concentration}.

For \(0<L<6\), set \(K^L_\omega(x,\cdot)=\delta_x\); enlarging \(C\)
in \eqref{reg:eq:kernel-locality} covers this bounded range of \(L\).
For \(L\ge6\), inspect \(A\) in \(B_L(x)\), which determines
\(W\cap B_{L-2}(x)\). Accept its component containing \(x\) only if its
closure lies in \(B_{L-3}(x)\); otherwise set \(K^L(x,\cdot)=\delta_x\).
On acceptance, its boundary lies in the inspected region and outside \(W\),
so every full-space extension of these coefficients has the same component
and the same two-segment kernel. For \(x\in G\), radius three suffices.
Failure requires a bad cluster of diameter at least \(cL\), and
\eqref{reg:eq:cluster-tail} proves \eqref{reg:eq:kernel-locality}.
\end{proof}

\begin{remark}
Finite-range independence applies to the localized kernels \(K^L\) at
\emph{deterministic} locations separated by more than \(2L+\ell\).
The full kernel and the locations selected by the path need not have this
independence property.
\end{remark}

\subsection{Contact sets and harmonic approximation}
\label{reg:subsec:fixed-tolerance}

We first prove a contact-plane estimate of
Alexandrov--Bakelman--Pucci type; see \cite{CC95} for the local elliptic
version. For the stopped kernel, convex order controls the volume of
contact slopes on each fine cell. The proof keeps the full coarse path
inside the comparison region, as required by the nonlocal kernel.

\begin{lemma}[A coarse ABP estimate]
\label{lem:reg:coarse-abp}
Let \(1\le k\le R\), and let \(h\) be continuous on
\(\overline B_{R+3k}\), nonpositive on \(B_{R+3k}\setminus B_{R+2k}\),
with every positive global upper contact point lying in \(\overline B_{R+1}\).
Assume that the coarse paths started at these points, and all points needed
for the convex-order averages, remain in \(B_{R+2k}\).
Let \(\Gamma\) be the least concave majorant of \(h^+\) on the larger ball,
and write
\[
 \mathcal C=\{x:h(x)>0,\ h(x)=\Gamma(x)\},\qquad
 f(x)=h(x)-K_\omega h(x)\quad(x\in\mathcal C).
\]
Then
\begin{equation}
 \max h\le CR
 \left(\sum_{Q\cap\mathcal C\ne\varnothing}
   \sup_{x\in Q\cap\mathcal C}f(x)^d\right)^{1/d},
 \label{reg:eq:coarse-abp}
\end{equation}
where the sum uses the fixed fine grid from
Proposition~\ref{prop:reg:geometry-concentration}, and \(C\) depends only on
\(d,\kappa\).
\end{lemma}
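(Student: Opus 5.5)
We follow the classical ABP argument in the form of Caffarelli--Cabré, with the local second-order condition replaced by the convex-order bound \eqref{reg:eq:convex-order}. Set \(M=\max h>0\) and let \(\Gamma\) be the concave majorant of \(h^+\) on \(B_{R+3k}\). Since \(h\le0\) on the outer annulus and every positive contact point lies in \(\overline B_{R+1}\), the standard slope argument shows that every \(p\) with \(|p|\le M/(CR)\) arises as a supergradient of \(\Gamma\) at some contact point \(x\in\mathcal C\). The affine function \(\ell(y)=h(x)+p\cdot(y-x)\) then touches \(h\) from above at \(x\). More precisely, \(\Gamma\le\ell\) on \(B_{R+3k}\) with equality at \(x\), because \(k\ge1\) and the ball of radius \(R+3k\) dominates \(R+1\). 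Hence
\[
 |B_{M/(CR)}|\le\Big|\bigcup_{x\in\mathcal C}\partial\Gamma(x)\Big|
 \le\sum_{Q\cap\mathcal C\ne\varnothing}\Big|\bigcup_{x\in Q\cap\mathcal C}\partial\Gamma(x)\Big|,
\]
and it suffices to prove \(|\partial\Gamma(Q\cap\mathcal C)|\le C\sup_{Q\cap\mathcal C}f^d\) for each fine cell \(Q\).

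For the cell estimate, fix \(x\in Q\cap\mathcal C\) and a supergradient \(p\). The function \(\phi=\ell-\Gamma\) is convex and nonnegative on \(B_{R+3k}\), and it vanishes at \(x\). Moreover \(h\le\Gamma\), and \(K_\omega\) is a probability kernel whose paths stay in \(B_{R+2k}\). Since \(K_\omega\ell=\ell\) by the mean-zero property \eqref{reg:eq:coarse-moments}, we get
\[
 f(x)=h(x)-K_\omega h(x)\ge\ell(x)-K_\omega\Gamma(x)=K_\omega\phi(x).
\]
The convex-order lemma, applied to \(\phi\), gives \(K_\omega\phi(x)\ge\fint_{B_{a_0}(x)}\phi\) with \(a_0=\kappa/d\). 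So \(\phi\) is a nonnegative convex function with small mean \(\le f(x)\) on a fixed ball around its zero \(x\). By convexity, \(\phi\) is then controlled by \(Cf(x)\) on \(B_{a_0/2}(x)\); a mean bound forces a pointwise bound on a half ball for nonnegative convex functions. The dual statement is that every supergradient of \(\Gamma\) at any point of \(B_{a_0/4}(x)\) lies within distance \(Cf(x)/a_0\) of \(p\).

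The key step, and the main obstacle, is turning this into a volume bound on \(\partial\Gamma(Q\cap\mathcal C)\); the concave majorant is not smooth, so the supergradient must be handled directly. We use that \(Q\) has diameter \(\le a_0/8\) by the choice of \(N_0\) in Proposition~\ref{prop:reg:geometry-concentration}. Fix \(x_0\in Q\cap\mathcal C\) and \(p_0\in\partial\Gamma(x_0)\). Every \(x\in Q\cap\mathcal C\) lies in \(B_{a_0/4}(x_0)\), so the previous step, applied at \(x_0\), puts every supergradient at \(x\) within \(Cf(x_0)\) of \(p_0\). Hence \(\partial\Gamma(Q\cap\mathcal C)\subset B_{Cf(x_0)}(p_0)\), and its measure is at most \(C\sup_{Q\cap\mathcal C}f^d\). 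The step that needs care is the passage from an averaged bound to an oscillation bound of slopes. We would prove it via the elementary inequality: for a convex \(\phi\ge0\) with \(\phi(x_0)=0\) and \(\fint_{B_{a_0}(x_0)}\phi\le\varepsilon\), one has \(\sup_{B_{a_0/2}}\phi\le C\varepsilon\) by a Jensen/cone argument, and then Lipschitz bounds on \(B_{a_0/4}\) give slopes within \(C\varepsilon/a_0\) of \(p_0\). Summing over cells and taking the \(d\)-th root yields \eqref{reg:eq:coarse-abp}, with \(C\) depending only on \(d\) and \(\kappa\).
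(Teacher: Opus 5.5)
Your proposal is correct and follows essentially the same argument as the paper. It obtains contact slopes $|p|\lesssim M/R$ from maximizers of $h-p\cdot y$, whose supporting planes stay positive on the larger ball and so support $h^+$. Convex order gives $\fint_{B_{a_0}(x)}(\ell-\Gamma)\le f(x)$, and the sub-mean-value bound on $B_{a_0/2}(x)$ with the subgradient inequality confines all supergradients over $B_{a_0/4}(x)$ to a ball of radius $Cf(x)/a_0$. One contact point per fine cell of diameter at most $a_0/8$ then gives the volume comparison. One cosmetic point: $\Gamma\le\ell$ with equality at $x$ follows directly from $p$ being a supergradient and $h(x)=\Gamma(x)$; comparing the radii $R+3k$ and $R+1$ plays no role there.
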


\begin{proof}
For \(x\in\mathcal C\), convex order applied to \(-\Gamma\) gives
\[
 0\le\Gamma(x)-\fint_{B_{a_0}(x)}\Gamma
 \le\Gamma(x)-K_\omega\Gamma(x)
 \le h(x)-K_\omega h(x)=f(x),\qquad a_0=\kappa/d.
\]
Given \(p\in\partial^+\Gamma(x)\), define
\(g(z)=\Gamma(x)+p\cdot z-\Gamma(x+z)\) on \(B_{a_0}\).
Then \(g\) is convex, nonnegative, vanishes at zero, and
\(\fint_{B_{a_0}}g\le f(x)\). Jensen's inequality implies
\[
 g(z)\le\fint_{B_{a_0/2}(z)}g\le2^df(x),\qquad |z|\le a_0/2.
\]
For \(|z|\le a_0/4\) and \(q\in\partial g(z)\), apply the subgradient
inequality at \(z\pm(a_0/4)q/|q|\) to obtain
\(|q|\le Cf(x)/a_0\). Hence
\[
 \partial^+\Gamma(B_{a_0/4}(x))\subset B_{Cf(x)/a_0}(p).
\]
Since each fine cell has diameter at most \(a_0/8\), choosing one contact
point \(x_Q\) per cell bounds the volume of its contact-slope image by
\(Cf(x_Q)^d\).

Let \(m=\max h>0\). For \(|p|<m/[4(R+3k)]\), a maximizer of
\(h(y)-p\cdot y\) is interior and has \(h(y)>0\). Its supporting plane
is bounded below on the larger ball by \(m-2|p|(R+3k)>m/2\), so it
supports \(h^+\) as well. Thus every such \(p\) belongs to the slope
image of \(\mathcal C\). Comparing volumes yields
\[
 c(m/R)^d\le C\sum_{Q\cap\mathcal C\ne\varnothing}
                 \sup_{x\in Q\cap\mathcal C} f(x)^d,
\]
which proves \eqref{reg:eq:coarse-abp}; the case \(m\le0\) is immediate.
\end{proof}

\begin{lemma}[Good blocks for quadratic data]
\label{lem:reg:quadratic-blocks}
Assume H1--H5, and let
\(\mathcal S_0=\{M\in\mathbb S^d:\bar A:M=0\}\).
For every \(\epsilon,p_0>0\), there is a deterministic \(L<\infty\)
such that the following event at each grid cell fails with probability
at most \(p_0\).
If \(q_Q\) is the center of the cell \(Q\), let \(v_{M,Q,L}\) be
\(A\)-harmonic in \(B_L(q_Q)\), with boundary values
\(P_{M,Q}(x)=\frac12(x-q_Q)\cdot M(x-q_Q)\).
The good event is
\begin{equation}
 \sup_{\substack{M\in\mathcal S_0\\|M|\le1}}
 \|v_{M,Q,L}-P_{M,Q}\|_{L^\infty(B_L(q_Q))}
 \le\epsilon L^2.
 \label{reg:eq:quadratic-good}
\end{equation}
Moreover, there are \(C,c>0\), depending on \(L,p_0\), such that
\begin{equation}
 \mathbb P\big(\#\{Q\cap B_R(z)\ne\varnothing:Q\text{ is bad}\}>
                  Cp_0R^d\big)\le Ce^{-cR^d}
 \label{reg:eq:bad-block-count}
\end{equation}
for every fixed center and all sufficiently large \(R\).
\end{lemma}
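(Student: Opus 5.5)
The lemma splits into a qualitative per-cell estimate, giving the choice of \(L\), and a concentration estimate for the number of bad cells. For the first part I will use qualitative homogenization (Armstrong--Smart \cite[Theorem~1]{ASdeg}) together with stationarity.

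\emph{Choice of \(L\).} By stationarity, the probability that a cell \(Q\) is bad equals the probability of the same event centered at the origin, up to the bounded offset \(q_Q\in N_0^{-1}\mathbb Z^d\). The grid is a fixed lattice and the law is invariant under lattice shifts, so it suffices to treat \(q_Q=0\).

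Rescale by \(\varepsilon=1/L\). The function \(\tilde v(x)=L^{-2}v_{M,0,L}(Lx)\) solves \(-A(x/\varepsilon):D^2\tilde v=0\) in \(B_1\) with boundary data \(\tfrac12x\cdot Mx\). The homogenized problem has solution \(\tfrac12x\cdot Mx\) itself, since \(\bar A:M=0\) makes this quadratic \(\bar A\)-harmonic with the right boundary values. The qualitative theorem gives \(\|\tilde v-\tfrac12x\cdot Mx\|_{B_1}\to0\) almost surely for each fixed \(M\).

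To get the supremum over the compact set \(\{M\in\mathcal S_0,|M|\le1\}\), I use linearity of \(M\mapsto v_M\) and the maximum principle. These give \(\|v_M-v_{M'}\|\le\|P_M-P_{M'}\|\le L^2|M-M'|/2\). So a finite \(\epsilon/3\)-net of such \(M\) reduces the supremum to finitely many \(M\).

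Almost sure convergence implies convergence in probability. Hence there is a deterministic \(L\) with failure probability at most \(p_0\). Since \(\{M\in\mathcal S_0,|M|\le1\}\) is not closed under scaling, I include the net only from this bounded set. Uniformity in the center offset within the fundamental cell follows from the same net argument applied in the variable \(q\): by the maximum principle, shifting \(q\) by \(h\) changes boundary data by \(O(L|h|)\).

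\emph{Counting bad cells.} The good event for \(Q\) is determined by \(A\) in \(\overline B_L(q_Q)\), because the Dirichlet problem in a ball uses only those coefficients. Events for cells whose centers are more than \(2L+\ell\) apart are therefore independent by H3.

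Partition the fine grid into \(K=C(L+\ell)^d N_0^d\) residue classes so that cells within one class are \((2L+\ell)\)-separated. Each class contains \(n\asymp R^d/K\) cells meeting \(B_R(z)\), and these cells give independent Bernoulli indicators with mean at most \(p_0\). Hoeffding's (or Chernoff's) inequality gives \(\mathbb P(\#\mathrm{bad}>2p_0 n)\le e^{-2p_0^2n}\). A union over the \(K\) classes then yields \eqref{reg:eq:bad-block-count} with \(C\) depending on \(L,p_0\), after adjusting the constant factor on \(p_0\).

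\emph{Main obstacle.} The delicate points are measurability and exact locality of the good event. Measurability follows from continuity in \(M\) and restriction to a countable dense set. Locality holds because the Dirichlet solution in each realization is unique by Lemma~\ref{lem:local-regularity} and comparison, and depends only on \(A|_{\overline B_L}\). This is what makes the independence step legitimate, in contrast with the nonlocal coarse kernel in the preceding remark.
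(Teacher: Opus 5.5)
Your proof is correct and follows essentially the paper's argument. The paper fixes a deterministic $L$ from qualitative homogenization of the quadratic Dirichlet problems together with stationarity, notes that each failure indicator depends only on $A|_{B_L(q_Q)}$, and applies a Chernoff bound within finitely many independent residue classes of the fine grid. The differences are cosmetic: the paper gets uniformity over $\{M\in\mathcal S_0,\ |M|\le1\}$ from linearity over a basis of $\mathcal S_0$ rather than from your finite net, and your extra net in the center variable is unnecessary, since H2 gives invariance under all real translations and so every cell has the same failure probability. That net would also need more care, because moving $q$ moves the ball as well as the boundary data.
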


\begin{proof}
Choose an orthonormal basis of \(\mathcal S_0\). Qualitative Dirichlet
homogenization \cite{ASdeg} gives, for each basis matrix,
\[
 L^{-2}\|v_{M,Q,L}-P_{M,Q}\|_\infty\longrightarrow0
 \quad\text{almost surely}.
\]
Linearity makes this convergence uniform on the unit ball of
\(\mathcal S_0\). Stationarity therefore allows one deterministic \(L\)
for which \eqref{reg:eq:quadratic-good} fails with probability at most \(p_0\).
Each failure indicator depends on \(A|_{B_L(q_Q)}\). Coloring the fixed
fine grid into finitely many independent residue classes and applying a
Chernoff bound in each class proves \eqref{reg:eq:bad-block-count}.
\end{proof}

\begin{proposition}[Fixed-tolerance harmonic approximation]
\label{prop:reg:fixed-tolerance}
Under H1--H5, for every \(t>0\), there are deterministic
\(R_t,C_t,c_t,\beta_t>0\) and \(C_*>1\) such that, for each fixed
\(z\) and \(R\ge R_t\), there is an event
\(E_R(t,z)\in\sigma(A|_{B_{C_*R}(z)})\) satisfying
\[
 \mathbb P(E_R(t,z)^c)\le C_t e^{-c_tR^{\beta_t}}.
\]
On this event, the following holds simultaneously for every
\(F\in C^3(\overline B_1)\) with \(\bar A:D^2F=0\):
if \(u_F\) is \(A\)-harmonic in \(B_R(z)\), with boundary values
\(F((x-z)/R)\), then
\begin{equation}
 \|u_F-F((\cdot-z)/R)\|_{L^\infty(B_R(z))}
 \le t\|F\|_{C^3(\overline B_1)}.
 \label{reg:eq:fixed-tolerance}
\end{equation}
The event does not depend on \(F\).
\end{proposition}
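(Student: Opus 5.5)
The approach is to compare $u_F$ with local quadratic approximations of $F$ on mesoscopic blocks where Lemma~\ref{lem:reg:quadratic-blocks} holds, and to control the accumulated defect through the coarse ABP estimate of Lemma~\ref{lem:reg:coarse-abp}. Let $w=u_F-F((\cdot-z)/R)$, so $w=0$ on $\partial B_R(z)$; by translation covariance we may take $z=0$. Since $u_F$ is exactly $A$-harmonic and $K_\omega$ is built from stopping times of the diffusion for $B:D^2$, optional stopping gives $u_F=K_\omega u_F$ wherever the coarse paths stay inside $B_R$. The defect $w-K_\omega w$ therefore equals $K_\omega F_R-F_R$, where $F_R=F(\cdot/R)$. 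This is the quantity to bound on the contact set.

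First I would treat the boundary layer. Extend $F_R$ so that $\|D^3F_R\|\le R^{-3}\|F\|_{C^3}$. Use $\pm w$ together with a barrier $\pm\theta\|F\|_{C^3}(|x|^2-R^2)/R^2$ and a cutoff, so that the contact points lie in $B_{R-k}$ with $k=R^{1/2}$. On the event of Proposition~\ref{prop:reg:geometry-concentration}, the paths from those points stay in $B_R$ except on cells where $J_Q\gtrsim k$; these cells are rare and contribute a summable term by \eqref{reg:eq:geometry-concentration} with large $m$. Near $\partial B_R$, $w$ is controlled by a local boundary modulus. Here I would try the barrier $F_R$ plus a linear term in the exterior normal, which is exactly harmonic up to a quadratic error, combined with the coarse martingale exit estimate. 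This gives the bound $C\|F\|_{C^3}k/R$.

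For the interior defect, Taylor-expand $F_R$ at the center $q_Q$ of each fine cell as an affine part plus $P_{M_Q,Q}/R^2$ plus a cubic remainder. Affine functions are $K$-invariant, and the cubic part contributes $O(J_Q^3R^{-3})\|F\|_{C^3}$. The quadratic part is the genuine issue, because $\bar A:M_Q=0$ does not make $K_\omega P_M-P_M$ small pointwise. I would therefore not estimate $h-K_\omega h$ cellwise. Instead, apply the ABP argument on blocks of size $L$: replace $K_\omega$ by the kernel $K^{(L)}$ describing the exit from $B_L(q_Q)$, which is a composition of coarse steps, so the contact-slope argument still applies. On good blocks, \eqref{reg:eq:quadratic-good} says $K^{(L)}P_M-P_M=v_{M,Q,L}-P_M=O(\epsilon L^2)$. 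The convex-order lower bound of Lemma~\ref{lem:reg:convex-order} at block scale would be obtained by iterating \eqref{reg:eq:convex-order}, which gives covariance $\gtrsim L^2$. Each block defect is then $\lesssim(\epsilon+L R^{-1})\|F\|_{C^3}L^2/R^2$, divided by the covariance scale $L^2$. After summing, the ABP sum gives $R\cdot(R/L)\cdot\epsilon/R$ in normalized units, which is of order $\epsilon$ times $\|F\|_{C^3}$ once rescaled. Bad blocks are few by \eqref{reg:eq:bad-block-count}, and their defect is bounded by $CJ_Q^2R^{-2}$, which is controlled by \eqref{reg:eq:geometry-concentration}. Choosing $\epsilon$ and $p_0$ small in terms of $t$, and then $R_t$ large, gives \eqref{reg:eq:fixed-tolerance}.

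The event $E_R(t,z)$ is the intersection of the good-block count event, the concentration events, and the kernel-locality events for $L\sim R^{\beta}$ on the grid in $B_{C_*R}$. Each of these is measurable with respect to $\sigma(A|_{B_{C_*R}})$ and has stretched-exponential failure probability. None depends on $F$, since linearity in $M$ is uniform over the unit ball of $\mathcal S_0$. The main obstacle is the block-scale ABP: justifying the contact-slope volume estimate for the composite exit kernel $K^{(L)}$ instead of the one-step kernel $K_\omega$, with constants independent of the bad clusters. I would first try to run Lemma~\ref{lem:reg:coarse-abp} with $a_0$ replaced by $cL$. To do this I would establish the block convex order by a martingale central-limit lower bound on good blocks, and add a block to the bad set whenever this fails.
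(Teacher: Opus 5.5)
Your route differs from the paper's in the interior step, and its central step is left open. You rightly note that $K_\omega P_M-P_M$ is not small cellwise. You therefore propose a two-scale contact-slope count: the exit kernel of $B_L(q_Q)$ on good blocks, and the one-step kernel $K_\omega$ on bad cells.

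That count does close. If a kernel has convex-order radius $a$, the slopes of the concave majorant over a set of diameter $a/8$ around a contact point lie in a ball of radius $Cf/a$. So one divides by $a\sim L$, not by $L^2$. On good blocks $f\lesssim(\epsilon+L/R)\|F\|_{C^3}L^2R^{-2}$, and summing over $(R/L)^d$ blocks gives $\max h\lesssim(\epsilon+L/R)\|F\|_{C^3}$. Bad cells are handled exactly as in the paper.

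What is missing is the block convex order itself, and it does not follow from iterating \eqref{reg:eq:convex-order}. The exit from $B_L(q_Q)$ is not a composition of coarse steps. Counting coarse steps bounds the exit covariance below only by roughly $L^2/\max_QJ_Q^2$. Controlling that loss would bring in constants depending on $L$, which is fixed only after $\epsilon$.

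The needed bound comes from the good event itself. Let $\tau$ be the exit time of $B_L(q_Q)$ and $\Sigma(x)=\mathbf E_x^\omega[(X_\tau-x)\otimes(X_\tau-x)]$. The martingale property gives $\tfrac12\Sigma(x):M=v_{M,Q,L}(x)-P_{M,Q}(x)$, so \eqref{reg:eq:quadratic-good} bounds the $\mathcal S_0$-component of $\Sigma(x)$ by $2\epsilon L^2$. Meanwhile $\operatorname{tr}\Sigma(x)=L^2-|x-q_Q|^2$ exactly, because $\operatorname{tr}B=1$. Hence $\Sigma(x)\ge cL^2I$ for $|x-q_Q|\le L/2$ and small $\epsilon$, with $c$ depending only on $\bar A$. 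Lemma~\ref{lem:reg:convex-order}, rescaled, then gives a convex-order radius comparable to $L$.

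Two smaller repairs are also needed. First, contact points whose coarse paths leave $B_R$ cannot be absorbed as a summable term, since $K_\omega u_F=u_F$ has no meaning there. Include $\max_Q(J_Q\wedge n_R)<n_R$ with $n_R\asymp R^{1/2}$ in the event, as the paper does; this also yields measurability in $B_{C_*R}$. Second, the boundary layer needs no boundary modulus: $-B:D^2\tfrac12(R^2-|x|^2)=1$ and comparison give $|w|\le\|F\|_{C^2}(R-|x|)/R$ directly.

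The paper sidesteps block-scale convex order. It solves an extended problem on $B_{R+2k}$ with a cutoff source, so that $h$ is defined wherever coarse paths from contact points go and all positive contacts lie in $\overline B_{R+1}$. It then adds $\gamma|x|^2$ with $\gamma=C_1\epsilon\|F\|_{C^3}R^{-2}$. At a point $x$ of a good cell,
$\mathbf E_x^\omega h(X_\tau)-h(x)\ge\gamma L^2/2-C\epsilon\|F\|_{C^3}L^2R^{-2}-C\|F\|_{C^3}L^3R^{-3}>0$.
This is incompatible with an upper supporting plane, so good cells carry no contacts at all. The argument uses only $\mathbf E_x^\omega|X_\tau-x|^2=L^2-|x-q_Q|^2$, not a covariance lower bound. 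Lemma~\ref{lem:reg:coarse-abp} then applies verbatim with the crude defect $CJ_Q^2R^{-2}$ on the few bad and boundary-layer cells, giving $(p_0+L/R)^{1/(2d)}$; the added quadratic costs the $\epsilon$ term.

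Your completed version is a genuine two-scale ABP in which good blocks carry a small defect instead of being free of contacts. It reaches the same estimate but needs the covariance bound above and a mixed-kernel slope count. The paper's contact-exclusion trick is shorter and reuses the one-step ABP unchanged.
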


\begin{proof}
By translation, take \(z=0\). Write \(K_F=\|F\|_{C^3}\), \(F_R(x)=F(x/R)\),
and extend \(F\) to a larger ball by a bounded \(C^3\) extension operator.
Put \(k=R^{1/2}\). Choose \(0\le\chi_R\le1\), equal to one on \(B_R\)
and supported in \(B_{R+1}\), and solve
\begin{equation}
 B:D^2H'=\chi_R B:D^2F_R\quad\text{in }B_{R+2k},
 \qquad H'=0\quad\text{on }\partial B_{R+2k}.
 \label{reg:eq:expanded-problem}
\end{equation}
Local ellipticity ensures that this boundary-value problem has a solution,
which can also be represented by the diffusion potential.
The source is bounded by \(CK_FR^{-2}\). The ball-exit identity gives, on
\(\partial B_R\),
\[
 |H'|\le CK_FR^{-2}((R+2k)^2-R^2)/2\le CK_Fk/R.
\]
Since \(H=F_R-u_F\) has the same equation in \(B_R\) and zero boundary data,
comparison yields
\begin{equation}
 \|H-H'\|_{L^\infty(B_R)}\le CK_Fk/R.
 \label{reg:eq:expanded-comparison}
\end{equation}

Fix \(\gamma=C_1\epsilon K_FR^{-2}\), with \(\epsilon\le C_1^{-1}\).
Extend \(H'\) by zero and set
\[
 h(x)=H'(x)+\gamma|x|^2-\gamma(R+3k)^2\quad\text{on }B_{R+3k}.
\]
The function is nonpositive on the outer shell and strictly negative on
\(\partial B_{R+2k}\). Moreover, \(B:D^2h=2\gamma>0\) outside
\(\overline B_{R+1}\), excluding upper affine contacts there. Thus every
positive contact lies in \(\overline B_{R+1}\).
On the event \(CJ_Q\le k/2\) for all relevant cells, coarse paths stay in
\(B_{R+2k}\), and Dynkin's formula gives
\begin{equation}
 0\le h(x)-K_\omega h(x)
 \le CK_FR^{-2}\mathbf E_x^\omega S_x
 \le CK_FR^{-2}J_Q^2,\qquad x\in Q\cap\mathcal C.
 \label{reg:eq:contact-defect}
\end{equation}

A good cell with \(B_L(q_Q)\subset B_R\) contains no such contact point.
Indeed, for \(x\in Q\) and \(\tau=\tau_{B_L(q_Q)}\), harmonicity of
\(H'-F_R\) and zero drift give
\begin{equation}
 \mathbf E_x^\omega[h(X_\tau)-h(x)]
 =\mathbf E_x^\omega[F_R(X_\tau)-F_R(x)]
       +\gamma\mathbf E_x^\omega|X_\tau-x|^2.
 \label{reg:eq:exclude-contact}
\end{equation}
Taylor expansion at \(x\) has a vanishing expected linear term and Hessian
\(D^2F_R(x)\in\mathcal S_0\). Recentring this quadratic at \(q_Q\) adds an
affine function. Equation~\eqref{reg:eq:quadratic-good} therefore implies
\[
 \big|\mathbf E_x^\omega[F_R(X_\tau)-F_R(x)]\big|
 \le C_d\epsilon K_FL^2R^{-2}+CK_FL^3R^{-3}.
\]
Meanwhile, \(\mathbf E_x^\omega|X_\tau-x|^2=L^2-|x-q_Q|^2\ge L^2/2\)
for \(L\) larger than the cell diameter. Choose \(C_1>4C_d\) and then
\(R\ge CL/\epsilon\). The right-hand side of \eqref{reg:eq:exclude-contact}
is positive, whereas an upper supporting plane and zero drift would make
it nonpositive.

Contacts thus occur only in bad cells or in a boundary layer of thickness
\(L+C\). Their number is at most \(C(p_0+L/R)R^d\) on the event of
\eqref{reg:eq:bad-block-count}. Equation~\eqref{reg:eq:geometry-concentration}
with \(m=4d\) and Cauchy--Schwarz give
\[
 R^{-d}\sum_{Q\cap\mathcal C\ne\varnothing}J_Q^{2d}
 \le\left(R^{-d}\#\{Q:Q\cap\mathcal C\ne\varnothing\}\right)^{1/2}
     \left(R^{-d}\sum_QJ_Q^{4d}\right)^{1/2}
 \le C(p_0+L/R)^{1/2}.
\]
Together with \eqref{reg:eq:contact-defect}, the coarse ABP estimate yields
\(\max h\le CK_F(p_0+L/R)^{1/(2d)}\). Restoring the quadratic, using
\eqref{reg:eq:expanded-comparison}, and applying the same argument to \(-F\),
we obtain
\begin{equation}
 \|F_R-u_F\|_\infty
 \le CK_F\left(\epsilon+p_0^{1/(2d)}
              +(L/R)^{1/(2d)}+R^{-1/2}\right).
 \label{reg:eq:fixed-tolerance-error}
\end{equation}
Choose first \(\epsilon,p_0\) so the first two terms contribute at most
\(t/2\), then \(L\) by Lemma~\ref{lem:reg:quadratic-blocks}, and finally
\(R_t\) to bound the remaining terms.

We finish by defining the common event explicitly. Let \(\mathcal Q_R\)
be the cells meeting \(B_{2R}\) and \(n_R=\lfloor cR^{1/2}\rfloor\), with
\(c\) small. Require
\[
 \max_{Q\in\mathcal Q_R}(J_Q\wedge n_R)<n_R,\qquad
 R^{-d}\sum_{Q\in\mathcal Q_R}(J_Q\wedge n_R)^{4d}\le C_{4d},
\]
and the preceding bad-cell count. The first condition certifies that the
truncations are exact and that \(CJ_Q<k/2\). These conditions are determined
in \(B_{2R+Cn_R+L+C}\subset B_{C_*R}\). Their failures are bounded by the
cluster tail, \eqref{reg:eq:geometry-concentration}, and
\eqref{reg:eq:bad-block-count}, respectively, with stretched-exponential
probability. Their intersection defines \(E_R(t,0)\) and is independent
of \(F\), as required.
\end{proof}

\subsection{Decay of the affine excess}
\label{reg:subsec:q1}

To use harmonic approximation for arbitrary solutions, we first control
oscillations down to a mesoscopic scale. The moment in H5 provides a
common event for all required balls.

\begin{lemma}[Mesoscopic ellipticity averages]
\label{lem:reg:ellipticity-averages}
Assume H1--H5, and choose \(d-1<p<d\), \(q=d/p\), and
\(a_0\in(1/q,1)\).
There are deterministic \(C_0,C<\infty\) such that, for each fixed
center \(z\) and \(R\ge1\), there is a coefficient event
\(\mathcal V_R(z)\) satisfying
\[
 \mathbb P(\mathcal V_R(z)^c)\le CR^{-\gamma_H},
 \qquad \gamma_H=d(a_0q-1)>0.
\]
On this event, simultaneously for all \(y\in B_R(z)\) and
\(R^{a_0}\le s\le R\),
\begin{equation}
 \fint_{B_s(y)}\lambda^{-p}\le C_0.
 \label{reg:eq:average-ellipticity}
\end{equation}
\end{lemma}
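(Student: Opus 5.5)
We reduce the statement to a union bound over a deterministic family of mesoscopic balls, each controlled by Markov's inequality at the moment of order $d$. The input is the random variable $\lambda_*(k)^{-1}$ attached to a unit lattice point $k$. By H5 and stationarity, $\mathbb E[\lambda_*(k)^{-d}]=M<\infty$, uniformly in $k$. For $x\in B_1(k)$ we have $\lambda(x)\ge\lambda_*(k)$. Covering $B_s(y)$ by the unit balls centred at lattice points within distance $s+\sqrt d$ of $y$ gives
\[
 \fint_{B_s(y)}\lambda^{-p}\le C_d\,s^{-d}\sum_{k\in\mathbb Z^d\cap B_{s+C}(y)}\lambda_*(k)^{-p}.
\]
It therefore suffices to bound lattice averages $Z_{s,y}=s^{-d}\sum_{k\in B_{s+C}(y)}\lambda_*(k)^{-p}$.

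Next we discretize the parameters. Take dyadic radii $s_j=2^j$ with $R^{a_0}/2\le s_j\le 2R$. For each $s_j$, take centres $y$ on a grid of mesh $s_j$ inside $B_{2R}(z)$. Any pair $(y,s)$ with $y\in B_R(z)$ and $R^{a_0}\le s\le R$ has $B_s(y)\subset B_{4s_j}(y')$ for some grid pair with $s_j\le s<2s_j$. Hence $\fint_{B_s(y)}\lambda^{-p}\le 4^d\fint_{B_{4s_j}(y')}\lambda^{-p}$, and it suffices to control, for each grid pair, the event that the lattice average over $B_{4s_j+C}(y')$ exceeds a constant $C_0'$. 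At scale $s_j$ the number of grid centres is at most $C(R/s_j)^d$.

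For a single ball of radius $s$, set $Y_k=\lambda_*(k)^{-p}$, so that $Y_k\in L^{q}(\mathbb P)$ with $q=d/p>1$. I would \emph{not} use finite-range independence, only the $q$-th moment. By Minkowski's inequality in $L^q(\mathbb P)$, the average $Z=s^{-d}\sum Y_k$ satisfies $\|Z\|_{L^q}\le C\|Y_0\|_{L^q}=CM^{1/q}$. Markov's inequality then gives only $\mathbb P(Z>C_0)\le C_0^{-q}C$, which does not decay in $s$. Minkowski alone therefore fails, and this is the main obstacle.

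A truncation handles it. Split $Y_k=Y_k\mathbf1_{\{Y_k\le s^{b}\}}+Y_k\mathbf1_{\{Y_k>s^b\}}$. Consider the event that some $Y_k$ in the ball exceeds $\theta s^d$. Markov's inequality bounds its probability by $Cs^d(\theta s^d)^{-q}M$, which equals $Cs^{d(1-q)}$. So, outside an event of probability $Cs^{-d(q-1)}$, every single term contributes at most $\theta$ to the average.

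The bulk of the average, in its expectation, is at most $C\,\mathbb E Y_0$. Its fluctuations are controlled using finite range of dependence: the truncated variables are bounded and independent across residue classes, so Hoeffding gives stretched-exponential concentration. What remains is the mid range $s^b<Y_k\le\theta s^d$. Here I would use the $L^q$ tail to bound the number of such large terms by a Chernoff estimate per residue class. Their total contribution is then at most $\theta\cdot\#/s^d$ plus a geometric sum over dyadic levels, each level of size $2^i$ having about $s^d2^{-iq}$ members with overwhelming probability.

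The net failure probability at scale $s$ is therefore $Cs^{-d(q-1)}$. Summing over the $C(R/s_j)^d$ centres at each dyadic scale gives a total of $C\sum_j (R/s_j)^d s_j^{-d(q-1)}=C\sum_j R^d s_j^{-dq}$. This sum is dominated by the smallest scale $s_j\sim R^{a_0}$, giving $CR^{d-da_0q}=CR^{-d(a_0q-1)}=CR^{-\gamma_H}$. This is exactly where $a_0>1/q$ is used. The event $\mathcal V_R(z)$ is the intersection of the good events over all grid pairs, with $C_0$ chosen from the constants above.
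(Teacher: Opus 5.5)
Your architecture matches the paper's. You dominate the ball average by an average of cell variables $Y_k=(\inf_{\mathrm{cell}}\lambda)^{-p}\in L^q(\mathbb P)$, $q=d/p$. You prove a one-ball bound of order $s^{-d(q-1)}$. You then take a union bound over dyadic radii and nets of $C(R/s_j)^d$ centres, so that $\sum_jR^ds_j^{-dq}\le CR^{-d(a_0q-1)}$.

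The difference is the one-ball step. The paper works inside finitely many independent colour classes. It applies the von Bahr--Esseen inequality $\mathbb E|\sum_i\zeta_i|^q\le C_q\sum_i\mathbb E|\zeta_i|^q$ for independent centred $\zeta_i$ and $1<q<2$, then Markov at order $q$, which gives $s^{-d(q-1)}$ in two lines. Your truncation scheme aims at the same rate and correctly identifies where it comes from: the union bound excluding a single term above $\theta s^d$. However, its mid-range step fails as written.

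The false claim is that each dyadic level $2^i\in(s^b,\theta s^d]$ contains about $\mu_i=s^d2^{-iq}$ sites with overwhelming probability. Once $2^i\gtrsim s^{d/q}$ one has $\mu_i\lesssim1$, and such counts do not concentrate. For $\mu_i$ of order one, the count exceeds any fixed multiple of $\mu_i$ with probability bounded below. At higher levels, even having more than a bounded number of members has only polynomially small probability. Bounding those terms by $\theta$ each helps only if their number stays bounded; note also that your $\theta\cdot\#/s^d$ should read $\theta\cdot\#$.

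The conclusion survives with level-dependent tolerances. A level $2^i$ can absorb about $s^d2^{-i}$ members, far more than $\mu_i$. Only the total number of sites above $s^{d/q+\epsilon}$ must be kept below a fixed $n$ per colour class, and that costs a polynomial probability $O(s^{-q\epsilon(n+1)})$.

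The simplest repair drops Hoeffding and the dyadic levels altogether: truncate once at $s^d$.
\begin{itemize}
\item The event $\{\max_kY_k>s^d\}$ has probability at most $Cs^d\cdot s^{-dq}$.
\item Outside it, the variables $W_k=Y_k\mathbf 1_{\{Y_k\le s^d\}}$ are independent within each colour class and satisfy $\operatorname{Var}W_k\le\mathbb E[Y_k^2\mathbf 1_{\{Y_k\le s^d\}}]\le s^{d(2-q)}\mathbb EY_k^q$.
\item Chebyshev then bounds a deviation of size $s^d$ of each class sum by $Cs^d\cdot s^{d(2-q)}\cdot s^{-2d}=Cs^{-d(q-1)}$.
\end{itemize}
This is essentially the proof of the inequality the paper quotes. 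The quoted inequality additionally gives an $L^q$ bound on the deviation.

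There are three smaller corrections.
\begin{itemize}
\item Your remark that you would not use finite-range independence is wrong. For a spatially constant field the average does not concentrate at all, and your own Hoeffding and Chernoff steps use independence through the residue classes.
\item Unit balls centred at integer points do not cover $\mathbb R^d$ once $d\ge4$, since the covering radius of $\mathbb Z^d$ is $\sqrt d/2$. Use small cubes inside unit balls, as the paper does.
\item A grid of mesh $s_j$ gives $B_s(y)\subset B_{4s_j}(y')$ only when $\sqrt d/2\le2$. Take mesh $s_j/\sqrt d$ instead.
\end{itemize}
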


\begin{proof}
Partition space into small cubes \(Q_i\subset B_1(x_i)\) and put
\(Z_i=(\inf_{Q_i}\lambda)^{-p}\). Then
\(\mathbb E Z_i^q\le\mathbb E\lambda_*(0)^{-d}\), and the \(Z_i\) have
finite-range dependence. For independent centered variables and \(1<q<2\),
\[
 \mathbb E\left|\sum_i\zeta_i\right|^q\le C_q\sum_i\mathbb E|\zeta_i|^q.
\]
Indeed, iterate
\(|a+b|^q\le |a|^q+q\operatorname{sgn}(a)|a|^{q-1}b+C_q|b|^q\)
and take conditional expectations, which remove the linear terms.
Apply this inequality within deterministic color classes. Markov's inequality,
with \(C_1\) above a uniform expectation bound, gives
\begin{equation}
 \mathbb P\left(\fint_{B_s(y)}\lambda^{-p}>C_1\right)
 \le Cs^{-d(q-1)},\qquad s\ge1.
 \label{reg:eq:one-ball-average}
\end{equation}

For \(s_j=2^jR^{a_0}\), up to the first \(s_j\ge R\), take deterministic
nets in \(B_{2R}(z)\) of spacing \(s_j/(10\sqrt d)\). There are at most
\(C(R/s_j)^d\) net points. Require the average
bound underlying \eqref{reg:eq:one-ball-average} on every radius-\(4s_j\) ball centered at these points. Each requested
\(B_s(y)\) lies in such a ball of radius at most \(8s\); the volume ratio
gives \eqref{reg:eq:average-ellipticity}. The union bound is
\[
 C\sum_j(R/s_j)^d s_j^{-d(q-1)}
 \le CR^dR^{-a_0dq}=CR^{-d(a_0q-1)}.
\]
These finitely many conditions define \(\mathcal V_R(z)\).
\end{proof}

\begin{lemma}[A mesoscopic H\"older modulus]
\label{lem:reg:coarse-holder}
With the parameter choices of
Lemma~\ref{lem:reg:ellipticity-averages}, there are deterministic
\(\nu\in(0,1)\) and \(C<\infty\) such that, on \(\mathcal V_R(z)\),
every function that is \(A\)-harmonic in \(B_R(z)\) and satisfies
\(\|u\|_{L^\infty(B_R(z))}\le1\) obeys
\begin{equation}
 |u(x)-u(y)|
 \le C\left[\left(\frac{|x-y|}{R}\right)^\nu
              +R^{-(1-a_0)\nu}\right],
 \qquad x,y\in B_{3R/4}(z).
 \label{reg:eq:coarse-holder}
\end{equation}
\end{lemma}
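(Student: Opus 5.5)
We will prove \eqref{reg:eq:coarse-holder} by a standard oscillation-decay iteration, using the local oscillation estimate of Bowman~\cite[Proposition~1.2]{Bowman26} as the one-step input. That estimate requires the inverse ellipticity in $L^p$ with $p>d-1$, which is exactly the range chosen in Lemma~\ref{lem:reg:ellipticity-averages}.

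\textbf{One-step estimate.} We use Bowman's result in scale-invariant form. If $u$ is $A$-harmonic in $B_{s}(y)$ and $\fint_{B_s(y)}\lambda^{-p}\le C_0$, then
\[
 \operatorname{osc}_{B_{s/2}(y)}u\le(1-\theta)\operatorname{osc}_{B_s(y)}u,
\]
with $\theta\in(0,1)$ depending only on $d,p,C_0$. To justify this, rescale $x\mapsto y+sx$. The rescaled coefficient $A(y+s\cdot)$ satisfies the same upper bound $A\le I$, and its inverse-ellipticity average in the unit ball is unchanged. Local regularity (Lemma~\ref{lem:local-regularity}) supplies the qualitative continuity and classical-solution hypotheses needed to apply Bowman's proposition in each realization. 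The constant depends only on the normalized $L^p$ average, which is deterministic on $\mathcal V_R(z)$.

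\textbf{Iteration.} Work on $\mathcal V_R(z)$. Fix $x,y\in B_{3R/4}(z)$ and $\|u\|_\infty\le1$, and put $\rho=|x-y|$.

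If $\rho\ge R/8$, the bound is trivial, since $|u(x)-u(y)|\le2$.

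Otherwise, consider the dyadic balls $B_{s_j}(x)$ with $s_j=2^{-j}R/8$. These lie in $B_R(z)$, and the average bound \eqref{reg:eq:average-ellipticity} holds on them as long as $s_j\ge R^{a_0}$. Iterating the one-step estimate gives
\[
 \operatorname{osc}_{B_{s_j}(x)}u\le2(1-\theta)^j.
\]
Set $\nu=-\log_2(1-\theta)$. Two cases remain.

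\begin{itemize}
\item[(i)] If $\rho\ge R^{a_0}$, choose $j$ with $s_{j+1}<\rho\le s_j$, so that $y\in B_{s_j}(x)$. Then $|u(x)-u(y)|\le C(\rho/R)^\nu$.
\item[(ii)] If $\rho<R^{a_0}$, stop at the last $j$ with $s_j\ge R^{a_0}$. Then $y\in B_{s_j}(x)$ since $s_j\ge R^{a_0}>\rho$, and $|u(x)-u(y)|\le C(R^{a_0}/R)^\nu=CR^{-(1-a_0)\nu}$.
\end{itemize}

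Adding the two cases proves \eqref{reg:eq:coarse-holder}. All of this holds simultaneously for every such $u$, because the event $\mathcal V_R(z)$ concerns only the coefficients.

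\textbf{Main obstacle.} The delicate step is the one-step input: verifying that Bowman's oscillation estimate holds with constants depending only on the normalized average $\fint\lambda^{-p}$ and on $A\le I$, and not on the random local ellipticity or H\"older norms. Bowman's hypothesis is phrased for a degenerate operator with inverse ellipticity in $L^p$, $p>d-1$. We must check that the statement is invariant under dilation, which it is, because the equation is homogeneous and averages are scale-free. We must also check that the local hypotheses (continuity and local uniform ellipticity) hold in each realization, which follows from Lemma~\ref{lem:local-regularity}. If Bowman's estimate is only stated as a Harnack-type or Krylov--Safonov measure estimate, we would first derive the oscillation decay from it by applying it to $\sup u-u$ and $u-\inf u$ on the half-ball.
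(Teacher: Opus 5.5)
Your proof is correct and follows essentially the same route as the paper: you apply Bowman's oscillation decay only on balls whose radii stay above $R^{a_0}$, where \eqref{reg:eq:average-ellipticity} holds on $\mathcal V_R(z)$, iterate geometrically, and split into the cases $|x-y|\ge R^{a_0}$ and $|x-y|<R^{a_0}$. One adjustment is needed: the cited proposition contracts from $B_{5s}$ to $B_s$ with the average taken over $B_{5s}$, so you should iterate by factors of $5$, taking any $\nu<\min\{1,-\log(1-\theta_0)/\log 5\}$, rather than assume a half-ball form; you should also dispose of the case $R^{a_0}>R/8$, where no admissible radius exists, by the trivial bound $\operatorname{osc}u\le2$.
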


\begin{proof}
\cite[Proposition~1.2]{Bowman26} states that,
for continuous positive \(\lambda\), \(0<A\le I\), \(p>d-1\) and
\(\fint_{B_{5s}(x)}\lambda^{-p}\le C_0\),
\[
 \operatorname{osc}_{B_s(x)}u
 \le(1-\theta_0)\operatorname{osc}_{B_{5s}(x)}u,
\]
where \(\theta_0=\theta_0(d,p,C_0)>0\). Homogeneous solutions satisfy the
required two Pucci inequalities. This oscillation proposition requires only
\(p>d-1\), rather than the stronger assumption of that paper's Harnack theorem.

If \(R^{1-a_0}<100\), the bound \(\operatorname{osc}u\le2\) proves
\eqref{reg:eq:coarse-holder} after increasing \(C\). We may therefore assume
\(R^{a_0}\le R/100\), so the following interval of iteration is nonempty.
For \(x\in B_{3R/4}(z)\), iterate from radius \(R/20\) by factors of five,
stopping at radius \(\asymp R^{a_0}\). All outer balls lie in \(B_R(z)\),
and \eqref{reg:eq:average-ellipticity} gives the same contraction at each step.
Consequently,
\[
 \operatorname{osc}_{B_s(x)}u\le C(s/R)^\nu,
 \qquad R^{a_0}\le s\le R/20,
\]
for \(0<\nu<\min\{1,-\log(1-\theta_0)/\log5\}\).
Use \(s\asymp|x-y|\) above the stopping scale and the last ball below it.
For \(|x-y|\ge R/40\), use \(\|u\|_\infty\le1\).
This proves \eqref{reg:eq:coarse-holder} on the same event for all solutions.
\end{proof}

\begin{proposition}[One-step improvement of the affine excess]
\label{prop:reg:one-step}
Assume H1--H5, and fix \(\alpha\in(0,1)\).
There are \(\theta_\alpha=2^{-m_\alpha}\in(0,1/4]\) and
\(R_\alpha,C_\alpha<\infty\) such that, for every fixed \(z\) and
\(R\ge R_\alpha\), there is a measurable coefficient event
\(\mathcal F_{\alpha,R}(z)\) satisfying
\[
 \mathbb P(\mathcal F_{\alpha,R}(z)^c)\le C_\alpha R^{-\gamma_H},
\]
on which, simultaneously for every function \(A\)-harmonic in
\(B_R(z)\),
\begin{equation}
 \mathcal E(u;\theta_\alpha R,z)
 \le\frac12\theta_\alpha^\alpha\mathcal E(u;R,z).
 \label{reg:eq:one-step}
\end{equation}
\end{proposition}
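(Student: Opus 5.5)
We prove \eqref{reg:eq:one-step} by the Armstrong--Lin compactness-free scheme. Normalize \(z=0\) and subtract the best affine function: since affine functions are exactly \(A\)-harmonic, we may assume \(\|u\|_{L^\infty(B_R)}=R\,\mathcal E(u;R,0)=:R\) after scaling, so \(\|u\|_{B_R}\le R\) in normalized units, i.e.\ \(\|u/R\|_{B_R}\le1\). The event \(\mathcal F_{\alpha,R}(0)\) is defined as the intersection of \(\mathcal V_R(0)\) from Lemma~\ref{lem:reg:ellipticity-averages} with the fixed-tolerance event \(E_{R/2}(t,0)\) of Proposition~\ref{prop:reg:fixed-tolerance}, for a tolerance \(t\) chosen below. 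The stretched-exponential failure probability of the latter is absorbed into \(C_\alpha R^{-\gamma_H}\). Neither event depends on \(u\), which gives the simultaneity.

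The first step is boundary regularization. On \(\mathcal V_R(0)\), Lemma~\ref{lem:reg:coarse-holder} gives the modulus \eqref{reg:eq:coarse-holder} on \(B_{3R/4}\) for \(u/R\). Mollify \(u/R\) at a mesoscopic scale \(\rho R\), with \(\rho\) small but fixed. Then restrict it to \(\partial B_{R/2}\) and let \(F\) be the \(\bar A\)-harmonic function in \(B_1\) (after rescaling by \(R/2\)) with these boundary values. The difference between \(u/R\) and the mollified data on \(\partial B_{R/2}\) is at most \(C(\rho^\nu+R^{-(1-a_0)\nu})\).

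Next we need a \(C^3\) bound for \(F\), and this is the main obstacle. The data are only smooth at scale \(\rho\), so \(\|F\|_{C^3(\overline B_1)}\) is not uniformly controlled up to the boundary. I would avoid the issue by applying Proposition~\ref{prop:reg:fixed-tolerance} on a slightly smaller ball \(B_{(1-\rho')R/2}\). There I would compare with \(F\) via the maximum principle: first, \(F\) is \(C^3\) with norm \(\le C\rho'^{-3}\) by interior estimates for the constant-coefficient operator \(\bar A\); second, the boundary mismatch on the inner sphere is controlled by the Hölder modulus of \(u\) combined with global Hölder estimates for \(F\) (constant coefficients with \(C^\nu\) data). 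Comparing \(u/R\) with \(u_F\) on the inner ball gives
\[
\|u/R-F\|_{B_{R/4}}\le C\big(t\rho'^{-3}+\rho'^{\nu'}+\rho^\nu+R^{-(1-a_0)\nu}\big).
\]

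Finally, \(F\) is \(\bar A\)-harmonic with \(\|F\|\le1\), so its affine Taylor polynomial \(\ell\) at the origin satisfies \(\|F-\ell\|_{B_{\theta}}\le C\theta^2\). Hence
\[
\mathcal E(u;\theta R,0)\le \theta^{-1}\big(C\theta^2+\text{error}\big)\mathcal E(u;R,0).
\]
The parameters are chosen in order. Pick \(\theta=2^{-m_\alpha}\) so that \(C\theta\le\frac14\theta^\alpha\), which is possible since \(\alpha<1\). Then choose \(\rho,\rho'\) and \(t\) so that the error is at most \(\frac18\theta^{1+\alpha}\). Last, choose \(R_\alpha\) to make \(R^{-(1-a_0)\nu}\) small and \(R/2\ge R_t\). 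This yields \eqref{reg:eq:one-step}.
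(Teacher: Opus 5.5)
Your argument is correct, and its skeleton is the paper's: normalize by the affine excess, mollify using the mesoscopic H\"older modulus on $\mathcal V_R$, compare with an $\bar A$-harmonic function on the fixed-tolerance event, then use the $C\theta^2$ affine approximation, choosing $\theta$, then the small parameters, then $R_\alpha$.

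The one real difference is the step you call the main obstacle, which the paper does not treat as one. The mollified function $g$ is $C^4$ with norm $C_h$ on $B_{2/3}$, a neighborhood of $\overline B_{1/2}$. Boundary Schauder estimates for the constant-coefficient problem therefore give $\|v\|_{C^3(\overline B_{1/2})}\le C_h$. Since $h$ is fixed before $t$, the term $tC_h$ is simply made small. The paper then applies Proposition~\ref{prop:reg:fixed-tolerance} on the full ball of radius $R/2$ to $F(y)=v(y/2)$.

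Your detour also works. You apply the proposition on $B_{(1-\rho')R/2}$ using the interior bound $C\rho'^{-3}$, and pay a boundary-layer error controlled by the H\"older modulus of $u$ and a global H\"older estimate for $F$. This trades third-order boundary Schauder theory for interior estimates plus a zeroth-order boundary modulus. The cost is an extra parameter and an extra layer comparison, so the paper's route is shorter.

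Two bookkeeping fixes are needed in your version.
\begin{enumerate}
\item The fixed-tolerance event must be taken at the radius where you use it, namely $E_{(1-\rho')R/2}(t,0)$ rather than $E_{R/2}(t,0)$. This is harmless, since $\rho'$ is deterministic and fixed before $R_\alpha$.
\item The global H\"older bound for $F$ requires the mollified data to have a genuine H\"older modulus. The coarse modulus carries the additive term $R^{-(1-a_0)\nu}$. The mollified data nevertheless satisfies $[g]_{C^\nu}\le C$, uniformly in $\rho$, once $R^{-(1-a_0)}\le\rho$. Alternatively, use its $\rho$-dependent Lipschitz constant and choose $\rho'$ after $\rho$. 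Either way, $R_\alpha$ must also depend on $\rho$.
\end{enumerate}
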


\begin{proof}
Subtract a best affine approximation, rescale, and divide by the unnormalized
excess to reduce to
\[
 A(z+Rx):D^2u=0\quad\text{in }B_1,\qquad \|u\|_{B_1}\le1.
\]
A best approximation exists because \(\mathcal P_1\) is finite dimensional.
The conclusion is immediate when the excess is zero or infinite.
On \(\mathcal V_R(z)\), convolve with
a smooth nonnegative mollifier of scale \(h<1/12\) to obtain \(g\) on
\(B_{2/3}\). Lemma~\ref{lem:reg:coarse-holder} gives
\[
 \|u-g\|_{B_{2/3}}\le C(h^\nu+R^{-(1-a_0)\nu}),\qquad
 \|g\|_{C^4(B_{2/3})}\le C_h,\qquad |g|\le1.
\]
Let \(v\) and \(u_g\) solve the constant- and variable-coefficient harmonic
problems in \(B_{1/2}\), both with boundary values \(g\). Comparison and
boundary Schauder estimates give
\[
 |v|\le1,\qquad \|v\|_{C^3(\overline B_{1/2})}\le C_h,\qquad
 \|u-u_g\|_{B_{1/2}}\le C(h^\nu+R^{-(1-a_0)\nu}).
\]
Proposition~\ref{prop:reg:fixed-tolerance}, applied at physical radius
\(R/2\) to \(F(y)=v(y/2)\), gives \(\|u_g-v\|_{B_{1/2}}\le tC_h\)
on \(E_{R/2}(t,z)\). Since the proposition holds for every \(F\) on the
same event, it applies to this environment-dependent choice of \(F\). Thus
\begin{equation}
 \|u-v\|_{B_{1/2}}
 \le C(h^\nu+R^{-(1-a_0)\nu})+tC_h.
 \label{reg:eq:harmonic-approximation}
\end{equation}
Interior estimates give \(\inf_{\ell\in\mathcal P_1}\|v-\ell\|_{B_\theta}
\le C\theta^2\). Choose a dyadic \(\theta\le1/4\) with
\(C\theta\le\theta^\alpha/4\), then choose \(h,t,R_\alpha\), in this
order, so that \eqref{reg:eq:harmonic-approximation} is at most
\(\theta^{1+\alpha}/4\). It follows that
\[
 \theta^{-1}\inf_{\ell}\|u-\ell\|_{B_\theta}
 \le\tfrac14\theta^\alpha+C\theta\le\tfrac12\theta^\alpha.
\]
Rescaling proves \eqref{reg:eq:one-step}. The event
\(\mathcal F_{\alpha,R}(z)=\mathcal V_R(z)\cap E_{R/2}(t,z)\) has the
claimed polynomial failure bound.
\end{proof}

\begin{proof}[Proof of Theorem~\ref{thm:reg:q1}]
Take
\[
 p=d-\tfrac12,\qquad q=d/(d-1/2),\qquad a_0=1-1/(4d).
\]
Then \(d-1<p<d\), \(1<q<2\), \(a_0>1/q\), and
\(\gamma_H=d(a_0q-1)=d/(4d-2)=\gamma_1\).
Choose \(n_{\rm det}\) with \(2^{n_{\rm det}}\ge R_\alpha\), and let
\(E_n=\mathcal F_{\alpha,2^n}(0)\). These measurable coefficient events
satisfy \(\mathbb P(E_n^c)\le C_\alpha2^{-n\gamma_1}\). Define
\[
 n_*=\max\{n_{\rm det},1+\sup\{n\ge n_{\rm det}:E_n^c\text{ occurs}\}\},
 \qquad r_{*,\alpha}(0)=2^{n_*},
\]
using \(n_{\rm det}-1\) for the empty supremum and \(+\infty\) for an
unbounded set. Borel--Cantelli gives finiteness almost surely, and
\[
 \mathbb P(r_{*,\alpha}(0)>t)
 \le C_\alpha\sum_{2^n\ge ct}2^{-n\gamma_1}
 \le C_\alpha t^{-\gamma_1}.
\]

On this event, let \(R\ge2r_*\) and \(r_*\le r\le R/2\).
Choose the largest dyadic \(S\le R\), so \(R/2<S\le R\), and the largest
\(j\ge0\) such that \(S_j=\theta_\alpha^jS\ge r\).
Every outer radius used in the iteration is dyadic and at least \(r_*\).
Equation~\eqref{reg:eq:one-step} gives
\(\mathcal E(u;S_j,0)\le(S_j/S)^\alpha\mathcal E(u;S,0)\).
Since \(r\le S_j<r/\theta_\alpha\) and the unnormalized excess is monotone,
\begin{align*}
 \mathcal E(u;r,0)
 &\le(S_j/r)\mathcal E(u;S_j,0)\\
 &\le\theta_\alpha^{-(1+\alpha)}(r/S)^\alpha\mathcal E(u;S,0)\\
 &\le2^{1+\alpha}\theta_\alpha^{-(1+\alpha)}
       (r/R)^\alpha\mathcal E(u;R,0).
\end{align*}
This includes \(j=0\) and \(r=r_*\). Define
\(r_{*,\alpha}(z,\omega)=r_{*,\alpha}(0,\tau_z\omega)\).
Stationarity gives the same tail and almost-sure estimate at each fixed
center. The common coefficient events and deterministic iteration give
simultaneity over all radii and all solutions.
\end{proof}

\begin{corollary}[Growth condition for the first-order Liouville property]
\label{cor:reg:liouville}
Assume H1--H5, and fix \(\alpha\in(0,1)\).
Almost surely, every entire \(A\)-harmonic function \(u\) satisfying
\begin{equation}
 \liminf_{R\to\infty}R^{-\alpha}\mathcal E(u;R,0)=0
 \label{reg:eq:liouville-growth}
\end{equation}
is affine.
In particular, on one event of full probability, for every fixed
\(\varepsilon>0\), all entire harmonic functions with growth
\(O(R^{2-\varepsilon})\) are affine.
\end{corollary}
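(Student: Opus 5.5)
We will deduce the corollary from Theorem~\ref{thm:reg:q1} at the origin, applied along all radii simultaneously. Fix $\alpha\in(0,1)$ and work on the probability-one event of Theorem~\ref{thm:reg:q1} for the center $z=0$. There $r_*=r_{*,\alpha}(0)<\infty$, and \eqref{reg:eq:q1} holds simultaneously for all $R\ge2r_*$, all $u$ that are $A$-harmonic in $B_R$, and all $r_*\le r\le R/2$.

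Let $u$ be entire $A$-harmonic and satisfy \eqref{reg:eq:liouville-growth}. Its restriction to every $B_R$ is $A$-harmonic there. Fix $r\ge r_*$. Choose $R_k\to\infty$ with $R_k^{-\alpha}\mathcal E(u;R_k,0)\to0$ and $R_k\ge2r$. Then \eqref{reg:eq:q1} gives
\[
 \mathcal E(u;r,0)\le C_\alpha r^\alpha R_k^{-\alpha}\mathcal E(u;R_k,0)\longrightarrow0 .
\]
Hence $\mathcal E(u;r,0)=0$ for every $r\ge r_*$. Since $\mathcal P_1$ is finite dimensional, the infimum is attained. So for each such $r$ there is an affine $\ell_r$ with $u=\ell_r$ on $B_r$.

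For $r<r'$, the functions $\ell_r$ and $\ell_{r'}$ agree on the open set $B_r$, so they coincide. Thus a single affine function equals $u$ on every ball, and $u$ is affine.

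For the particular case, fix one $\alpha_0\in(0,1)$ close to $1$? That choice is not enough for every $\varepsilon$. Instead, intersect the events for a countable sequence $\alpha_j=1-1/j$; this keeps probability one. Given $\varepsilon>0$ and $\|u\|_{B_R}=O(R^{2-\varepsilon})$, we have $\mathcal E(u;R,0)\le R^{-1}\|u\|_{B_R}=O(R^{1-\varepsilon})$, taking $\ell=0$ in the infimum. Choose $j$ with $\alpha_j>1-\varepsilon$. Then $R^{-\alpha_j}\mathcal E(u;R,0)\to0$, and the first part applies.

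There is no real obstacle here. The only point requiring care is the quantifier structure: the event must be independent of $u$ and $\varepsilon$. This is secured by the simultaneity in Theorem~\ref{thm:reg:q1} and the countable intersection over $\alpha_j$.
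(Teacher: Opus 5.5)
Your proof is correct and follows essentially the same route as the paper: apply the affine excess decay at the origin along radii realizing the liminf, use closedness (attainment) of the affine infimum and agreement of affine representatives on nested balls, and handle the growth case by choosing $\alpha>1-\varepsilon$ with a countable intersection over $\alpha_j\uparrow1$. The bound $\mathcal E(u;R,0)\le R^{-1}\|u\|_{B_R}$, obtained by taking $\ell=0$, is exactly the step needed to pass from $O(R^{2-\varepsilon})$ growth to the liminf condition.
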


\begin{proof}
For \(r\ge r_{*,\alpha}(0)\), apply \eqref{reg:eq:q1} along a sequence
realizing \eqref{reg:eq:liouville-growth}:
\[
 \mathcal E(u;r,0)\le C_\alpha r^\alpha
       R_k^{-\alpha}\mathcal E(u;R_k,0)\longrightarrow0.
\]
Closedness of \(\mathcal P_1\) in the uniform norm makes \(u\) affine on
each ball, and the affine representations agree on nested balls.
For growth \(O(R^{2-\varepsilon})\), choose \(\alpha>1-\varepsilon\).
A countable intersection of the events for \(\alpha_n\uparrow1\)
handles all \(\varepsilon>0\) simultaneously.
\end{proof}

\begin{remark}
The scales depend on \(\alpha\), and their tail estimates are for each
fixed center. The growth criterion excludes, for example,
\(R^2/\log R\); it gives no classification of all \(o(R^2)\) solutions.
\end{remark}

\subsection{Comparison of exit distributions}
\label{reg:subsec:exit-kernels}

Fixed-tolerance homogenization compares exit probabilities for finitely many
boundary pieces. It does not compare the full exit measures in total variation.
We combine this finite comparison at large scales with a common-mass estimate
at one fixed scale, and then couple the stopped paths across scales.

\begin{lemma}[Common exit mass at a fixed base scale]
\label{lem:reg:base-coupling}
Assume H1--H5 and fix \(M>2\), \(b\ge1\), and \(p_b>0\).
There exist deterministic constants \(\kappa_b,c_b>0\) and a local event
\[
 G_b(c)=\{\inf_{B_{(M+1)b}(c)}\lambda\ge\kappa_b\},
 \qquad \mathbb P(G_b(c)^c)\le p_b,
\]
such that, on this event, the exit distributions from \(B_{Mb}(c)\) of the normalized diffusion started at any
\(x,y\in\overline B_b(c)\) have common mass at least \(c_b\).
The two complete stopped paths admit a jointly measurable coupling whose exit points agree with probability at least \(c_b\).
\end{lemma}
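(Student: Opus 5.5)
First choose \(\kappa_b\) from H5. By Markov's inequality and a finite cover of \(B_{(M+1)b}(c)\) by unit balls,
\[
\mathbb P(G_b(c)^c)\le N_{d,M,b}\,\kappa_b^d\,\mathbb E[\lambda_*(0)^{-d}],
\]
so a small deterministic \(\kappa_b\) gives \(\mathbb P(G_b(c)^c)\le p_b\). On \(G_b(c)\), the normalized matrix \(B=A/a\) satisfies \(\kappa_b/d\le B\le I\) on \(B_{(M+1)b}(c)\), because \(a\le d\) and \(\lambda_{\min}B=\lambda/a\). Thus inside the ball we have a uniformly elliptic operator. Its ellipticity constant is deterministic, but the coefficients are only measurable from the point of view of the constants we are allowed to use. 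The H\"older modulus from Lemma~\ref{lem:local-regularity} is also deterministic, but we avoid relying on it, so that the constants depend only on \(d,M,b,\kappa_b\).

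The main step is a Harnack-type lower bound for harmonic measure, uniform over all such coefficients. Fix a boundary set \(E\subset\partial B_{Mb}(c)\) and set \(\omega_x(E)=\mathbf P_x^\omega(X_\tau\in E)\), where \(\tau\) is the exit time from \(B_{Mb}(c)\). This function is a nonnegative \(B\)-harmonic function in \(B_{Mb}(c)\); for Borel \(E\), use approximation by continuous data together with monotone class. Since \(M>2\), the Krylov--Safonov Harnack inequality on \(B_{2b}(c)\Subset B_{Mb}(c)\) gives
\[
\omega_x(E)\ge c_H\,\omega_y(E),\qquad x,y\in\overline B_b(c),
\]
with \(c_H=c_H(d,\kappa_b,M)\). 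This only needs the Pucci inequalities and the uniform bounds on \(G_b(c)\), and is dimensionally scale invariant after rescaling by \(b\).

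Common mass then follows directly. Let \(\mu_x,\mu_y\) be the two exit laws, fix a reference \(\mu_c\), and set \(\nu=c_H\mu_c\). By the Harnack bound, \(\mu_x\ge\nu\) and \(\mu_y\ge\nu\) as measures, so
\[
\|\mu_x\wedge\mu_y\|\ge c_H=:c_b.
\]
The Harnack step is where the substance lies; the one point to check is that it holds for measurable Borel sets \(E\) with constants independent of the realization. That is standard once \(\omega_x(E)\) is known to be a viscosity or strong solution, which holds for continuous boundary data by local Schauder theory and passes to Borel sets by monotone limits, since the Harnack inequality is preserved under monotone convergence.

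It remains to build the coupling of the full stopped paths. Use the standard maximal coupling of exit points: with probability \(c_b\), draw a common exit point from the normalized measure \(\nu/c_b\); otherwise draw the two exit points independently from the normalized residuals \((\mu_x-\nu)/(1-c_b)\) and \((\mu_y-\nu)/(1-c_b)\). Then, conditionally on each exit point, sample the two stopped paths independently from the regular conditional laws \(\mathbf P_x^\omega(X_{\cdot\wedge\tau}\in\cdot\mid X_\tau)\) and \(\mathbf P_y^\omega(X_{\cdot\wedge\tau}\in\cdot\mid X_\tau)\). These exist on Polish path space, and the marginals are correct by disintegration. For joint measurability in \((\omega,x,y)\), use that the solution kernel is jointly measurable (Lemma~\ref{lem:reg:normalized-diffusion}). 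The measurable-disintegration theorem for kernels on standard Borel spaces then gives conditional laws that are measurable in the parameters. Finally, define \(\nu\) with \(\mu_c\) replaced by the measurable minimum \(\mu_x\wedge\mu_y\) (Radon--Nikodym density with respect to \(\mu_x+\mu_y\)), which avoids choosing a reference point.
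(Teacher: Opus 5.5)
Your proposal is correct and follows the paper's proof essentially step for step: Markov's inequality with a unit-ball cover fixes $\kappa_b$. The uniformly elliptic Harnack inequality for continuous boundary data is extended to Borel sets, which gives the minorant $\omega_x\ge c_b\,\omega_c$ on $\overline B_b(c)$. The exit-point coupling $c_b(\mathrm{Diag})_\#\nu+(1-c_b)\,\omega_x'\otimes\omega_y'$ is then lifted to stopped paths through a parameter-measurable disintegration. The only points you leave implicit are that the paper takes $c_b=(2H_b)^{-1}$ so that $1-c_b>0$ in the residual normalization, and that $\nu\ll\omega_x,\omega_y$ keeps the common branch off the exceptional sets of the conditional path kernels.
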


\begin{proof}
Cover the deterministic ball \(B_{(M+1)b}(c)\) by \(N_b<\infty\) unit balls.
By H5, stationarity, and Markov's inequality,
\[
 \mathbb P(G_b(c)^c)
 \le N_b\mathbb E[\lambda_*(0)^{-d}]\kappa_b^d.
\]
Choose \(\kappa_b\) so that the right-hand side is at most \(p_b\).
On \(G_b(c)\), \((\kappa_b/d)I\le B\le I\).
Let \(\omega_x\) be the exit distribution from \(B_{Mb}(c)\).
For every nonnegative continuous boundary datum \(g\), its harmonic extension
\(u_g(x)=\int g\,d\omega_x\) satisfies the classical uniformly elliptic Harnack inequality
\cite{GT}. Thus there is a deterministic \(H_b<\infty\) such that
\[
 \int g\,d\omega_x\le H_b\int g\,d\omega_y
 \quad(x,y\in\overline B_b(c)).
\]
Approximation on open sets and outer regularity give
\(\omega_x\le H_b\omega_y\) as Borel measures.
Set \(\nu=\omega_c\) and \(c_b=(2H_b)^{-1}\). Then
\(\omega_x\ge c_b\nu\) simultaneously for all starting points in the inner ball.
Consequently,
\[
 \omega_x'=\frac{\omega_x-c_b\nu}{1-c_b},\qquad
 \mathsf C_{x,y}
 =c_b(\operatorname{Diag})_\#\nu+
                    (1-c_b)(\omega_x'\otimes\omega_y')
\]
define residual probabilities and an exit coupling with diagonal mass at least \(c_b\).

To lift the coupling to stopped paths, let \(\mathcal X=C([0,\infty),\mathbb R^d)\),
extending each stopped path constantly after its stopping time.
The path space is Polish, and the exit point is the Borel map
\(e(\gamma)=\lim_{n\to\infty}\gamma(n)\) on stopped paths, extended arbitrarily outside their support.
Parametrized disintegration into regular conditional probabilities gives jointly measurable kernels \(\Pi_x(z,d\gamma)\) such that
\[
 P_x^{\rm stop}(d\gamma)=\int\Pi_x(z,d\gamma)\omega_x(dz),
 \qquad \Pi_x(z,\{e=z\})=1\quad\omega_x\text{-almost everywhere}.
\]
The local coefficient restriction and starting point are standard Borel parameters.
Define
\[
 \mathsf Q_{x,y}(d\gamma_1,d\gamma_2)
 =\int\Pi_x(z_1,d\gamma_1)\Pi_y(z_2,d\gamma_2)
                                  \mathsf C_{x,y}(dz_1,dz_2).
\]
Its marginals are \(P_x^{\rm stop}\) and \(P_y^{\rm stop}\).
Since \(\nu\ll\omega_x,\omega_y\), the common-exit branch avoids the exceptional sets of the conditional kernels.
All kernels are local and jointly measurable.
\end{proof}

\begin{lemma}[Finite-partition comparison]
\label{lem:reg:exit-partitions}
Assume H1--H5. Fix \(0<\vartheta<1\), \(\epsilon>0\),
and a finite Borel partition \(P_1,\ldots,P_N\) of the unit sphere such that each piece has boundary of zero surface measure.
For each fixed center \(c\) and all sufficiently large \(R\), there is an event measurable with respect to the coefficients in \(B_{CR}(c)\),
with failure probability at most \(Ce^{-cR^\delta}\), on which
\begin{equation}
 \sup_{x\in\overline B_{\vartheta R}(c)}
 \|p_A(x)-p_{\bar A}(x)\|_{\rm TV}\le\epsilon.
 \label{reg:eq:partition-tv}
\end{equation}
Here \(p_A(x)\) and \(p_{\bar A}(x)\) are the distributions of the labels of the pieces \(c+RP_i\)
through which the respective diffusions exit \(B_R(c)\).
For finite distributions, total variation is one half of the \(\ell^1\) distance.
\end{lemma}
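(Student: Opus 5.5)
The plan is to reduce the comparison of exit-label distributions to the fixed-tolerance harmonic approximation of Proposition~\ref{prop:reg:fixed-tolerance}, applied to finitely many smooth boundary data that approximate the indicator functions of the pieces. By translation take $c=0$.

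\emph{Construction of the data.} The boundaries $\partial P_i$ have zero surface measure, so there are continuous functions $0\le g_i^-\le\mathbf1_{P_i}\le g_i^+\le1$ on the unit sphere with $\int(g_i^+-g_i^-)\,d\sigma$ as small as we like. By Harnack for the constant-coefficient operator $\bar A$, the $\bar A$-harmonic measures from points of $\overline B_\vartheta$ have densities bounded by $C_\vartheta$ with respect to surface measure. Hence we may choose the $g_i^\pm$ so that the $\bar A$-harmonic extensions satisfy $G_i^+-G_i^-\le\epsilon/(4N)$ on $\overline B_\vartheta$. Next mollify each $g_i^\pm$ on the sphere and let $F_i^\pm$ be its $\bar A$-harmonic extension. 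Since the mollified data are smooth, $F_i^\pm$ lies in $C^3(\overline B_1)$ with some norm $K$, after the extension operator used in Proposition~\ref{prop:reg:fixed-tolerance}. To preserve the sandwich $F_i^-\le$ (smoothed indicator) $\le F_i^+$ up to a tolerance $\epsilon/(8N)$, I would take the mollification before fixing the sandwich, that is, choose the $g_i^\pm$ already smooth. This is possible since smooth functions are dense in continuous ones and the sandwich inequalities can be kept with a small margin. All of these objects are deterministic and finitely many.

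\emph{Comparison.} Apply Proposition~\ref{prop:reg:fixed-tolerance} with $t=\epsilon/(8NK)$. On the single event $E_R(t,0)$, which is measurable with respect to $B_{C_*R}$ and has stretched-exponential failure probability, the functions $u_{F_i^\pm}$ satisfy $|u_{F_i^\pm}-F_i^\pm(\cdot/R)|\le\epsilon/(8N)$ on $B_R$. Here $u_{F_i^\pm}(x)=\mathbf E_x^\omega F_i^\pm(X_\tau/R)$, because the normalized diffusion exits $B_R$ a.s.\ by Lemma~\ref{lem:reg:normalized-diffusion}; $A$ and $B$ share harmonic functions and exit points. Monotonicity of the boundary data then gives, for $x\in\overline B_{\vartheta R}$,
\[
 G_i^-(x/R)-\tfrac{\epsilon}{8N}\le p_A(x)_i\le G_i^+(x/R)+\tfrac{\epsilon}{8N},
\]
and likewise $G_i^-\le p_{\bar A}(x)_i\le G_i^+$. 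Consequently $|p_A(x)_i-p_{\bar A}(x)_i|\le\epsilon/(2N)$. Summing over $i$ and halving gives \eqref{reg:eq:partition-tv}, uniformly in $x$ on this common event.

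\emph{Main obstacle.} The delicate point is the boundary regularity: Proposition~\ref{prop:reg:fixed-tolerance} requires $F\in C^3(\overline B_1)$ with $\bar A:D^2F=0$ up to the boundary, while the indicators are discontinuous. The approximation step must therefore use the interior restriction $\overline B_\vartheta$ together with zero-measure piece boundaries. One option is to carry out the above with genuinely smooth sphere data; the $\bar A$-harmonic extension is then smooth up to the boundary by Schauder theory. If the required sandwich is awkward to arrange, the fallback is to apply the proposition at radius $R$ to $F(y)=$ the $\bar A$-extension evaluated on a slightly larger ball, which is automatically smooth on $\overline B_1$, and to absorb the boundary discrepancy via the strong Markov property and the fact that the exit position from $B_R$ is close to that from the slightly smaller ball. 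I expect the first option to suffice.
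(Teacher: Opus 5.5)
Your proposal is correct and follows essentially the same route as the paper: choose smooth sandwiching data \(g_i^-\le\mathbf 1_{P_i}\le g_i^+\) whose \(\bar A\)-harmonic extensions differ by little on \(\overline B_\vartheta\), using the null boundaries and the bounded constant-coefficient Poisson kernel. Then apply Proposition~\ref{prop:reg:fixed-tolerance} on a single event, with \(t\) scaled by the \(C^3\) norm of the finitely many extensions, and use monotonicity of the exit representation, which also covers any exit mass on the piece boundaries. The fallback via a slightly larger ball is unnecessary: smooth sphere data already have extensions in \(C^3(\overline B_1)\) by boundary regularity, which is exactly what the paper uses.
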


\begin{proof}
For each piece and a small fixed width \(\rho>0\), choose smooth functions on the sphere satisfying
\[
 0\le g_i^-\le\mathbf1_{P_i}\le g_i^+\le1,
\]
with \(g_i^+-g_i^-\) supported in the \(\rho\)-neighborhood of the boundary of the piece.
Set the lower function to zero and the upper function to one on the boundary;
the inequalities then hold even if the random exit measure charges it.
Let \(F_i^\pm\) be their \(\bar A\)-harmonic extensions.
The constant-coefficient Poisson kernel is bounded for starting points in
\(\overline B_\vartheta\). The boundary-null assumption permits a choice of \(\rho\) such that
\[
 \sup_{\overline B_\vartheta}(F_i^+-F_i^-)\le\epsilon/N
 \quad\text{for every }i.
\]
Boundary regularity gives
\(K_\rho=\max_{i,\pm}\|F_i^\pm\|_{C^3(\overline B_1)}\).
Next choose \(t\le\epsilon/(NK_\rho)\)
and use the same local coefficient event from Proposition~\ref{prop:reg:fixed-tolerance}.
Comparison gives
\[
 |\omega_x^A(c+RP_i)-\omega_x^{\bar A}(c+RP_i)|
 \le2\epsilon/N
\]
simultaneously for every \(x\in\overline B_{\vartheta R}(c)\) and every \(i\).
Summing over \(i\) proves \eqref{reg:eq:partition-tv}.
\end{proof}

\begin{lemma}[Two-point and relative comparison]
\label{lem:reg:relative-exit}
On the event of Lemma~\ref{lem:reg:exit-partitions},
\begin{equation}
 \|p_A(x)-p_A(y)\|_{\rm TV}
 \le2\epsilon+C_{\bar A,\vartheta}|x-y|/R,
 \qquad x,y\in\overline B_{\vartheta R}(c).
 \label{reg:eq:two-start-tv}
\end{equation}
Suppose, in addition, that every piece \(P_i\) has strictly positive surface measure.
There is a constant \(H_0\ge4\), depending only on \(\bar A,d\) and independent of the fineness of the partition,
such that, on another local event with stretched-exponential failure probability,
\begin{equation}
 \omega_x^{A,B_{2r}(c)}(c+2rP_i)
 \le\frac{H_0}{2}\,
       \omega_y^{A,B_{2r}(c)}(c+2rP_i)
 \label{reg:eq:relative-exit}
\end{equation}
simultaneously for all \(x,y\in\overline B_r(c)\) and all pieces.
The deterministic starting scale of this event may depend on the partition ultimately fixed.
\end{lemma}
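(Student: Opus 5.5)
The two assertions rest on different mechanisms, so I treat them separately.

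For the two-point bound \eqref{reg:eq:two-start-tv} I use the triangle inequality through the constant-coefficient distributions:
\[
 \|p_A(x)-p_A(y)\|_{\rm TV}\le\|p_A(x)-p_{\bar A}(x)\|_{\rm TV}+\|p_{\bar A}(x)-p_{\bar A}(y)\|_{\rm TV}+\|p_{\bar A}(y)-p_A(y)\|_{\rm TV}.
\]
The outer terms are at most \(\epsilon\) each by \eqref{reg:eq:partition-tv}. For the middle term, the exit law of \(W^{\bar A}\) from \(B_R(c)\) started at \(x\) is the \(\bar A\)-Poisson kernel. After the linear change of variables \(\bar A^{-1/2}\), this becomes the Poisson kernel of an ellipsoid. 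For starting points in \(\overline B_{\vartheta R}(c)\) it is smooth in the starting point, with gradient bounded by \(C_{\bar A,\vartheta}/R\) in total-variation norm; this holds because the starting points stay at distance \((1-\vartheta)R\) from the boundary. Summing over the pieces gives a bound independent of the partition.

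For the relative bound \eqref{reg:eq:relative-exit} I apply Lemma~\ref{lem:reg:exit-partitions} on \(B_{2r}(c)\), with \(\vartheta=1/2\) and a tolerance \(\epsilon'\) to be chosen. For \(\bar A\)-diffusions, the boundary Harnack property of the explicit Poisson kernel gives a deterministic constant \(H_1=H_1(\bar A,d)\) such that
\[
 \omega^{\bar A}_x(E)\le H_1\,\omega^{\bar A}_y(E)
\]
for all Borel sets \(E\subset\partial B_{2r}(c)\) and all \(x,y\in\overline B_r(c)\). The constant \(H_1\) does not depend on the partition. Set \(H_0=\max\{4,4H_1\}\). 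Since every piece has positive surface measure and the \(\bar A\)-Poisson density is bounded below by \(c_{\bar A}>0\) on \(\overline B_r(c)\), each \(\omega^{\bar A}_y(c+2rP_i)\ge\mu_{\min}\), where \(\mu_{\min}>0\) depends on the partition. Choose \(\epsilon'\le\mu_{\min}/4\). On the event of Lemma~\ref{lem:reg:exit-partitions}, with the pieces \(P_i\) and the explicit comparison \(|\omega^A-\omega^{\bar A}|\le2\epsilon'/N\) per piece from its proof, we then get
\[
 \omega^A_x(P_i)\le H_1\omega^{\bar A}_y(P_i)+\epsilon'\le2H_1\,\omega^A_y(P_i)\cdot\tfrac{\omega^{\bar A}_y(P_i)}{\omega^{\bar A}_y(P_i)-\epsilon'}+\epsilon'.
\]
The fraction is at most \(4/3\), and \(\epsilon'\le\omega^{\bar A}_y(P_i)/4\le\tfrac13\omega^A_y(P_i)\), so altogether the right-hand side is at most \((H_0/2)\,\omega^A_y(P_i)\). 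This holds simultaneously for all starting points and all pieces, because the comparison in Lemma~\ref{lem:reg:exit-partitions} is uniform over \(\overline B_{r}(c)\).

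The event is the local event of Proposition~\ref{prop:reg:fixed-tolerance} at scale \(2r\) with tolerance \(t=\epsilon'/(NK_\rho)\). It therefore has stretched-exponential failure probability and lies in \(\sigma(A|_{B_{Cr}(c)})\). Its starting scale depends on \(\epsilon'\), \(N\) and \(\rho\), hence on the partition, as the statement allows.

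The main obstacle is keeping \(H_0\) independent of the partition fineness. This is why the additive error \(\epsilon'\) must be absorbed using a lower bound on the \(\bar A\)-masses of the pieces rather than built into the constant. Only the starting scale, not \(H_0\), may depend on the partition, and the argument respects that by absorbing the partition dependence into \(\epsilon'\).
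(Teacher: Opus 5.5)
Your proposal is correct and follows the paper's proof essentially step for step. For the first bound you use the triangle inequality through $p_{\bar A}$ together with the $C/R$ gradient bound on the constant-coefficient Poisson kernel. For the second you combine a partition-independent $\bar A$-Harnack constant with a partition-dependent lower bound on the $\bar A$-masses of the pieces (the paper's $a_*$), and take the per-piece error to be a quarter of that bound.

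There is one arithmetic slip in your displayed chain. The factor $2H_1$ should be $H_1$: as written, your final constant is $(8H_1+1)/3>2H_1=H_0/2$. With $H_1$ you get $(4H_1+1)/3\le 2H_1$, using $H_1\ge1$. This is exactly the paper's bound $p_x(i)\le 2H_{\rm bar}\,p_y(i)$ with $H_0=4H_{\rm bar}$.
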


\begin{proof}
For any Borel boundary function \(0\le g\le1\),
its constant-coefficient harmonic extension has gradient bounded by \(C/R\) on \(\overline B_{\vartheta R}\).
Taking the supremum over \(g\) yields
\[
 \|\omega_x^{\bar A}-\omega_y^{\bar A}\|_{\rm TV}
 \le C|x-y|/R.
\]
Pass to the label distributions and apply \eqref{reg:eq:partition-tv}
to obtain \eqref{reg:eq:two-start-tv}.

The constant-coefficient Harnack inequality at radius ratio \(1:2\) gives
\(\bar p_x(i)\le H_{\rm bar}\bar p_y(i)\).
Positivity and continuity of its Poisson kernel give
\[
 a_*=\min_i\inf_{x\in\overline B_1}
            \omega_x^{\bar A,B_2}(2P_i)>0.
\]
Choose the absolute error for each piece in Lemma~\ref{lem:reg:exit-partitions} to be \(a_*/4\). This gives
\[
 p_x(i)\le(H_{\rm bar}+1/4)\bar p_y(i),\qquad
 p_y(i)\ge(3/4)\bar p_y(i).
\]
Thus \(p_x(i)\le2H_{\rm bar}p_y(i)\).
Thus \(H_0=4H_{\rm bar}\) is partition-independent; only \(a_*\) and the starting scale depend on the partition.
\end{proof}

\begin{remark}
We will choose centers from paths by rounding to a deterministic grid.
The good event is imposed beforehand at every candidate center.
For radius \(R\), observation region \(B_H\), and grid spacing \(\zeta R\),
a union bound gives failure probability at most
\[
 C(1+H/(\zeta R))^d e^{-cR^\delta}.
\]
This yields a pathwise conclusion at adaptive centers without conditioning
the environmental probability estimate on the path.
\end{remark}

\subsection{A terminal-scale Harnack bound}
\label{reg:subsec:crude-harnack}

The eventual Harnack argument amplifies a large ratio until it contradicts a
rough bound at a smaller scale. We obtain that bound by detouring around bad
clusters. Filling their holes allows the maximum principle to move endpoints
to good cubes, where local Harnack inequalities apply.

\begin{definition}[Filled sets and outer boundaries]
For a finite *-connected set \(C\subset\mathbb Z^d\),
let \(E(C)\) be the unique infinite nearest-neighbor connected component of its complement, and define
\[
 F(C)=\mathbb Z^d\setminus E(C),\qquad
 \Gamma(C)=\{k\in E(C):\operatorname{dist}_\infty(k,C)=1\}.
\]
Thus \(F(C)\) fills the finite holes, and accessibility to \(\Gamma(C)\)
uses nearest-neighbor paths in the complement.
\end{definition}

\begin{lemma}[Geometry of filled clusters]
\label{lem:reg:filled-clusters}
Let \(C,D\) be two distinct finite bad *-clusters as in Lemma~\ref{lem:reg:bad-clusters}.
Then:
\begin{enumerate}
 \item \(F(C)\) is contained in the coordinate bounding box of \(C\), and
       \(\operatorname{diam}_\infty F(C)\le |C|-1\).
 \item \(F(C)\) and \(F(D)\) are disjoint, or one contains the other.
 \item \(\Gamma(C)\) consists entirely of good lattice sites, is nearest-neighbor connected,
       and any two of its sites can be joined by a boundary path of length at most \((3^d-1)|C|\).
 \item For two distinct maximal filled sets in a finite family of clusters,
       \(\Gamma(C)\cap F(D)=\varnothing\).
 \item If \(\mathcal O_C=\operatorname{int}\bigcup_{j\in F(C)}Q_j\),
       then \(\partial\mathcal O_C\subseteq\bigcup_{k\in\Gamma(C)}Q_k\).
\end{enumerate}
\end{lemma}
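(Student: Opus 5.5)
The five items are pure lattice topology, so the plan is to prove them in the stated order. The only probabilistic input is the finiteness of bad clusters from Lemma~\ref{lem:reg:bad-clusters}.

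\emph{Item (1).} Let \(\Pi\) be the coordinate bounding box of \(C\). Every site outside \(\Pi\) is joined to infinity by a monotone coordinate path that stays outside \(\Pi\), hence avoids \(C\). So \(\mathbb Z^d\setminus\Pi\subset E(C)\), which gives \(F(C)\subset\Pi\). Since \(C\) is *-connected, its projection onto each coordinate axis is an interval of at most \(|C|\) integers. This yields \(\operatorname{diam}_\infty F(C)\le|C|-1\).

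\emph{Item (2).} Two distinct bad *-clusters are not *-adjacent, so \(C\cap D=\varnothing\). Because \(D\) is *-connected and disjoint from \(C\), it lies entirely in one nearest-neighbour component of \(\mathbb Z^d\setminus C\). I would justify this by the standard duality that a *-path cannot be separated by a nearest-neighbour-complement boundary. Alternatively, one can note that *-adjacent sites outside \(C\) whose common face-path passes through \(C\) would force *-adjacency to \(C\); I expect this to be the first delicate point, and I would instead phrase it through the set \(F(C)\cup\) its \(\ell^\infty\)-neighbourhood.

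If \(D\subset E(C)\), I would show that either \(F(D)\cap F(C)=\varnothing\) or \(F(D)\supset F(C)\). The reason is that \(E(D)\) is connected, and it either meets \(F(C)\) and hence, going out to infinity, \(C\), or it does not. If instead \(D\) lies in a finite hole of \(C\), the symmetric argument gives \(F(D)\subset F(C)\).

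\emph{Item (3).} A site of \(\Gamma(C)\) is *-adjacent to \(C\) but not in \(C\). If it were bad, it would belong to the cluster \(C\), which is a contradiction; hence all of \(\Gamma(C)\) is good.

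Nearest-neighbour connectedness of the outer boundary is the main obstacle. The plan is to invoke the known lattice fact (Deuschel--Pisztora, Timár) that the exterior boundary of a finite *-connected set, relative to the infinite component of its complement, is nearest-neighbour connected. Since citing may not suffice, I would also sketch the standard proof: the boundary of the union of closed cubes \(\bigcup_{F(C)}Q_j\) is a connected closed surface, because the filled set has connected complement and \(d\ge2\), and the cells adjacent to this surface from outside are nearest-neighbour linked around each codimension-two edge.

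For the length bound, \(|\Gamma(C)|\le(3^d-1)|C|\) because every boundary site is *-adjacent to some site of \(C\). A shortest path inside \(\Gamma(C)\) visits distinct sites, so its length is at most \(|\Gamma(C)|\).

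\emph{Item (4).} Take \(C\) and \(D\) with \(F(C)\) and \(F(D)\) maximal and distinct; by item (2) they are disjoint. Suppose some \(k\in\Gamma(C)\cap F(D)\) exists. Then \(k\in F(D)\) and \(k\) is *-adjacent to \(C\), but \(k\notin D\) since sites of \(\Gamma(C)\) are good. Hence \(k\) lies in a finite hole of \(D\). The site of \(C\) adjacent to \(k\) is then either in \(F(D)\), which contradicts disjointness, or it is separated from the hole only by \(D\). In the latter case \(C\) is *-adjacent to \(D\), which contradicts the clusters being distinct. I would carry this out via the *-neighbourhood argument from item (2).

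\emph{Item (5).} A point of \(\partial\mathcal O_C\) lies in some closed cube \(Q_j\) with \(j\in F(C)\), and also in the closure of the cubes with indices outside \(F(C)\). Such a point therefore lies in a cube \(Q_k\) with \(k\in E(C)\) and \(\|k-j\|_\infty=1\).

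To conclude \(k\in\Gamma(C)\) we need \(\operatorname{dist}_\infty(k,C)=1\). This holds when \(j\in C\). If \(j\) is in a hole, then \(j\) and \(k\) cannot be nearest-neighbour adjacent, since that would join the hole to \(E(C)\). The remaining case is a diagonal adjacency, which must pass through intermediate cells that are in \(C\) or in \(E(C)\). I would handle this by choosing, among the cubes containing the point, one with \(\ell^\infty\)-distance one to \(C\).
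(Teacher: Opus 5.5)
Your treatment of items (1)--(4) is essentially the paper's:
\begin{itemize}
\item coordinate rays outside the bounding box for (1);
\item for (2), replacing a $*$-edge of $D$ by a nearest-neighbour path inside its unit box, which cannot meet $C$ because any such site would be $*$-adjacent to $D$;
\item Tim\'ar's theorem plus the counting bound for (3);
\item the same unit-box path from $k\in\Gamma(C)$ to a $*$-neighbour in $C$ for (4).
\end{itemize}

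Three points need tightening.
\begin{itemize}
\item In (2), make the unit-box argument your actual justification. Disjointness of a $*$-connected $D$ from $C$ is not enough, and the ``duality'' slogan is not a proof; it is non-$*$-adjacency that does the work.
\item In the branch where $E(D)$ meets $F(C)$, reaching $C$ is not yet the conclusion. Apply the same argument with the roles swapped to get $C\subset E(D)$. Then the holes of $C$ avoid $D\subset E(C)$ and touch $C$, so $F(C)\subset E(D)$. The case where $D$ lies in a hole of $C$ is immediate rather than ``symmetric'': $E(C)$ is infinite, connected and avoids $D$, so $E(C)\subset E(D)$.
\item The cube-surface fallback sketched in (3) does not handle non-manifold points of the boundary, e.g.\ cubes of $F(C)$ meeting only at a vertex when $d\ge3$. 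Rely on the citation, as the paper does.
\end{itemize}

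For (5) your route genuinely differs from the paper's. The paper checks that each exposed $(d-1)$-face separates a site of $C$ from a site of $\Gamma(C)$. It then reaches an arbitrary boundary point as a limit of exposed faces, using finiteness to land in one closed exterior cube.

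You work directly at the boundary point $x\in Q_j\cap Q_k$ with $j\in F(C)$ and $k\in E(C)$. Your last sentence is not yet an argument, since the chosen cube must also have its index in $E(C)$. The diagonal case closes at once, however. If $j$ lies in a hole, the nearest-neighbour path from $j$ to $k$ inside their unit box must meet $C$. Every site of that box is within $\ell^\infty$-distance one of $k$, so $k$ itself belongs to $\Gamma(C)$.

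This local version avoids the limiting step and the density of exposed faces in $\partial\mathcal O_C$. It uses only the unit-box device that already drives (2) and (4). The paper's version, by contrast, needs only nearest-neighbour pairs but requires that density claim.
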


\begin{proof}
An outward coordinate ray from a site outside the bounding box avoids \(C\).
Such a site belongs to \(E(C)\); the coordinate span of a *-connected animal
is at most \(|C|-1\). This proves (1).

Replace any *-edge in \(D\) by a nearest-neighbor path inside its unit box, moving one coordinate at a time.
This path avoids the distinct bad cluster \(C\), since each site is
*-adjacent to an endpoint in \(D\). Hence \(D\) lies in one
nearest-neighbor component of \(\mathbb Z^d\setminus C\).
If \(D\) lies in a finite component, the infinite connected set \(E(C)\) avoids \(D\),
so \(E(C)\subseteq E(D)\), or equivalently \(F(D)\subseteq F(C)\).
Interchanging \(C,D\) treats the symmetric case. Otherwise
\(D\subset E(C)\) and \(C\subset E(D)\).
Every finite hole of \(C\) avoids \(D\) and has a nearest-neighbor edge to
\(C\subset E(D)\), so it also lies in \(E(D)\).
Thus \(F(C)\subset E(D)\), proving (2).

Nearest-neighbor connectivity of the outer boundary is precisely \cite[Theorem 4]{Timar}.
Its external *-boundary uses nearest-neighbor accessibility to infinity,
as does \(\Gamma(C)\).
Each boundary site is *-adjacent to a site of \(C\), so
\(|\Gamma(C)|\le(3^d-1)|C|\).
Simple boundary paths give the length bound. Every boundary site is good,
since a bad site there would belong to \(C\). This proves (3).

For (4), maximality and (2) give disjointness and \(C\subset E(D)\).
If \(k\in\Gamma(C)\), choose a *-adjacent site \(u\in C\).
The path connecting \(k\) to \(u\) one coordinate at a time inside their unit box avoids the bad cluster \(D\).
Thus \(k\) is connected to \(E(D)\), and \(k\notin F(D)\).

Finally, every exposed \((d-1)\)-dimensional face separates a site
\(j\in F(C)\) from a nearest neighbor \(k\in E(C)\).
If \(j\notin C\), that edge would connect \(j\) to the infinite component, a contradiction.
Hence \(j\in C\) and \(k\in\Gamma(C)\).
Every boundary point is a limit of exposed faces. Finiteness gives a
subsequence in one exterior closed cube \(Q_k\), proving (5).
\end{proof}

\begin{proposition}[Crude Harnack inequality]
\label{prop:reg:crude-harnack}
Assume H1--H5, and let \(\mathcal G(R,L)\) be the event that every bad cluster intersecting \(B_{3R}\) has size at most \(L\).
Then
\[
 \mathbb P(\mathcal G(R,L)^c)\le CR^de^{-cL}.
\]
There exist constants \(C_0,C_1\), depending only on \(d,\kappa\), such that, on this event, simultaneously for all
\[
 z\in B_R,\qquad C_0(L+1)\le r\le R
\]
and all nonnegative functions \(u\) that are \(A\)-harmonic in \(B_{2r}(z)\),
\begin{equation}
 \sup_{B_r(z)}u\le
       \exp\{C_1(r+1)(L+1)\}\inf_{B_r(z)}u.
 \label{reg:eq:crude-harnack}
\end{equation}
\end{proposition}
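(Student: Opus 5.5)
The probability bound comes first and is routine. The ball \(B_{3R}\) meets at most \(CR^d\) unit cells \(Q_k\). The event \(\mathcal G(R,L)^c\) requires that some such \(k\) satisfies \(|\mathcal C(k)|>L\), so a union bound with \eqref{reg:eq:cluster-tail} gives \(CR^de^{-cL}\).

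For the inequality, I would work on \(\mathcal G(R,L)\) and chain local Harnack inequalities along a path of good cubes. The number of links should be \(O((r+1)(L+1))\), and each link should cost a factor depending only on \(d,\kappa\).

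Fix \(z\), \(r\) and \(u\ge0\) that is \(A\)-harmonic in \(B_{2r}(z)\). By Lemma~\ref{lem:reg:filled-clusters}(1), every bad cluster meeting \(B_{2r}(z)\subset B_{3R}\) has filled set \(F(C)\) of \(\ell^\infty\)-diameter at most \(L\). Take the maximal filled sets among these clusters; by (2) they are pairwise disjoint. Let \(\mathcal O\) be the union of the corresponding open sets \(\mathcal O_C\).

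\emph{Step 1: endpoints reduce to good cubes.} Take a point \(x\in B_r(z)\) lying in some \(\mathcal O_C\). Since \(r\ge C_0(L+1)\), the set \(\mathcal O_C\) lies in \(B_{r+CL}(z)\), which is inside \(B_{3r/2}(z)\). By Lemma~\ref{lem:reg:filled-clusters}(5), \(\partial\mathcal O_C\) is contained in the good cubes \(Q_k\), \(k\in\Gamma(C)\). The maximum and minimum principles applied on \(\mathcal O_C\) then give
\[
 \min_{\bigcup_{\Gamma(C)}Q_k}u\le u(x)\le\max_{\bigcup_{\Gamma(C)}Q_k}u .
\]
So it suffices to compare values of \(u\) on good cubes lying in \(B_{3r/2+C}(z)\).

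\emph{Step 2: one link costs a bounded factor.} For a good site \(k\), we have \(\inf_{\widehat Q_k}\lambda\ge\kappa\). Thus \(A\) is uniformly elliptic with constant \(\kappa\) on \(k+[-3,3]^d\), which contains \(B_2\) of any point of \(Q_k\cup Q_{k'}\) for a neighbor \(k'\). The Krylov--Safonov Harnack inequality \cite{GT} for nondivergence equations with measurable coefficients then gives \(\sup_{Q_k\cup Q_{k'}}u\le H\inf_{Q_k\cup Q_{k'}}u\) whenever \(k,k'\) are adjacent good sites, with \(H=H(d,\kappa)\). Only ellipticity enters, so no Hölder constant of \(A\) appears.

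\emph{Step 3: connect any two good cubes.} Join two good cubes in the relevant region by a straight lattice path of length \(\le Cr\). Whenever the path enters a maximal filled set \(F(C)\), replace that portion by a path along \(\Gamma(C)\). By Lemma~\ref{lem:reg:filled-clusters}(3) this detour consists of good sites, is nearest-neighbor connected, and has length at most \((3^d-1)|C|\le 3^dL\). By (4) the detour does not enter any other maximal filled set, so detours do not interfere with each other. Each of the \(\le Cr\) original steps triggers at most one detour, so the modified path has length \(\le Cr(L+1)\) and stays inside \(B_{3r/2+CL}(z)\subset B_{2r-2}(z)\). Multiplying the link factors gives
\[
 \sup_{B_r(z)}u\le H^{Cr(L+1)}\inf_{B_r(z)}u,
\]
which is \eqref{reg:eq:crude-harnack} with \(C_1=C\log H\). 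The estimate holds simultaneously for all \(z,r,u\) because only the cluster event was used.

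I expect Step 3 to be the main obstacle. The path surgery must stay within \(B_{2r}(z)\) with the cluster geometry fully controlled. In particular one must check that a maximal filled set near the boundary of the domain does not force the detour outside \(B_{2r}(z)\); the choice \(C_0\) large compared with the filled-set diameter bound \(L\) is what handles this.
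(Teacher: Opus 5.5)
Your proposal is correct and follows the paper's proof step for step: the same union bound, maximal filled sets of the size-bounded clusters, the maximum principle on $\mathcal O_C$ to move the endpoints onto good cubes indexed by $\Gamma(C)$, detours along outer boundaries controlled by Lemma~\ref{lem:reg:filled-clusters}(3)--(4), and a chain of local Harnack inequalities whose constant depends only on $d,\kappa$. One constant-level slip: for $y\in Q_{k'}$ the ball $B_2(y)$ is not contained in $k+[-3,3]^d$, since it reaches coordinate distance $7/2$ from $k$; instead apply the local Harnack inequality on $k+[-3/2,3/2]^d\Subset\widehat Q_k$, as the paper does, and keep a margin of order $3\sqrt d$ rather than $2$ from $\partial B_{2r}(z)$, which a larger $C_0$ provides.
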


\begin{proof}
The probability estimate follows from \eqref{reg:eq:cluster-tail} by a
union bound over the \(CR^d\) lattice anchors in \(B_{3R+O(1)}\).

Fix \(x,y\in B_r(z)\), and choose closed unit cubes \(Q_i,Q_j\) containing them.
The cubes crossed by \([x,y]\), with simultaneous face crossings ordered
coordinate by coordinate, give a nearest-neighbor path \(P\) from \(i\) to \(j\) with
\[
 |P|\le C_d(r+1),\qquad P\subset B_{r+C_d}(z).
\]
Take the maximal filled sets of the bad clusters meeting \(P\).
They are disjoint, cover its bad sites, and have diameter at most \(C_dL\).

If \(i\in F(C)\), then \(x\in\overline{\mathcal O_C}\).
The containment \(\overline{\mathcal O_C}\Subset B_{2r}(z)\), verified below,
allows the maximum principle on its components; no boundary regularity is needed.
Choose \(x'\in\partial\mathcal O_C\) with \(u(x)\le u(x')\),
and then use Lemma~\ref{lem:reg:filled-clusters} to choose \(i'\in\Gamma(C)\) such that \(x'\in Q_{i'}\).
Otherwise set \(x'=x,i'=i\), which is a good site.
Similarly choose \(y'\in Q_{j'}\) with \(u(y')\le u(y)\) and \(j'\) good.
Use the original cube index to make this choice also for endpoints on cube boundaries.

Follow \(P\). At the first entrance into any maximal \(F(C)\),
delete the portion up to the last exit from that set and replace it by a simple nearest-neighbor path in \(\Gamma(C)\) joining the entrance and exit.
The exterior entrance and exit sites belong to \(\Gamma(C)\); use \(i',j'\)
when an endpoint lies in the filled set.
By Lemma~\ref{lem:reg:filled-clusters}, each detour avoids all other maximal
filled sets. Each replacement removes an untreated original site, so at most
\(|P|+1\) replacements and two endpoint connections produce a good-site chain of length
\[
 N'\le C_d(r+1)(L+1).
\]
If the two endpoints belong to the same filled set, connect them directly along its outer boundary.

Every detour and moved endpoint lies in \(B_{r+C_d(L+1)}(z)\).
On a good site, \(\kappa I\le A\le I\) in \(\widehat Q_k\);
local Harnack compares values in \(k+[-3/2,3/2]^d\) by a fixed factor
\(H_\kappa\). This contains \(Q_k\) and the neighboring cube centers.
Comparison along the chain gives
\[
 u(x)\le u(x')\le H_\kappa^{N'+2}u(y')\le H_\kappa^{N'+2}u(y).
\]
All the sets \(\widehat Q_k\) and closures of filled regions used lie in
\(B_{r+C_d(L+1)+3\sqrt d}(z)\).
For sufficiently large \(C_0\), this ball is compactly contained in
\(B_{2r}(z)\). Also \(z\in B_R\) and \(r\le R\) place the original
bad sites in \(B_{3R}\). Thus \(\mathcal G(R,L)\) controls the whole
construction. Taking the supremum over \(x\) and infimum over \(y\) proves
\eqref{reg:eq:crude-harnack}, including when \(u\) vanishes.
\end{proof}

\begin{remark}
We use \(L=R^{b_{\rm geom}}\), rather than logarithmic \(L\), to retain
a stretched-exponential failure bound. At a terminal scale \(r\asymp R^a\),
the resulting Harnack factor is \(\exp(CR^{a+b_{\rm geom}})\),
with \(0<b_{\rm geom}<a\).
\end{remark}

\subsection{Multiscale coupling and oscillation decay}
\label{reg:subsec:multiscale}

We now turn finite exit comparisons into an oscillation estimate uniform over
all harmonic functions. Successful couplings reduce the separation scale;
a biased random walk controls unsuccessful steps. A second scale hierarchy
ensures that the coupling reaches a good base ball despite sparse bad candidates.
This multiscale coupling follows the strategy of
Berger--Cohen--Deuschel--Guo~\cite[Section~4]{BCDG}, developed there for
nearest-neighbor balanced walks in an iid environment. We implement it
for continuous stopped paths: the local common-mass estimate replaces
the terminal discrete coupling, and we specify the coefficient region on
which each event depends before invoking finite-range independence. The auxiliary biased-walk
regeneration estimate is stated separately below.

Fix \(\Lambda=\mathbb Z^d\), translating it with the observation center.
We use this lattice to index the candidate balls and to choose their centers.
The diffusion paths themselves remain continuous.
Choose a sufficiently large integer \(M>100\), and then a finite partition \(\mathcal P\) of the sphere
whose pieces have diameter at most \(1/(100M^2)\) and boundaries of zero surface measure.
For a base scale \(b=M^J\ge100\sqrt d\), use only radii in
\(\mathcal R_b=bM^{\mathbb N_0}\).
A ball \((c,r)\), with \(c\in\Lambda,r>b\), is called good if, for every starting point
\(x\in\overline B_r(c)\), the distribution of the exit-piece label from \(B_{Mr}(c)\)
has total variation distance at most the fixed \(\epsilon\) from its constant-coefficient counterpart for \(\bar A\).
A base-scale ball \((c,b)\) is good if the local uniform ellipticity event in Lemma~\ref{lem:reg:base-coupling} holds.

Lemma~\ref{lem:reg:exit-partitions} gives, after fixing \(M,\mathcal P,\epsilon\),
an exponent \(\delta\in(0,1]\) and a starting scale such that
\begin{equation}
 \mathbb P((c,r)\text{ is bad})\le e^{-r^\delta},\qquad r>b,
 \label{reg:eq:good-ball-tail}
\end{equation}
and the good event is determined by the coefficients in \(B_{C_Mr}(c)\).
We have decreased \(\delta\) and increased the threshold to absorb constants.
At the fixed scale \(b\), choose \(\kappa_b\) in
Lemma~\ref{lem:reg:base-coupling} so that the bad probability is at most
\(e^{-2b^\delta}\). Denote the resulting deterministic common-exit mass by \(c_0>0\).

\begin{lemma}[Coupling driven by independent signs]
\label{lem:reg:coupling-driver}
The parameters \(M,\epsilon\) may be chosen so that, for every good ball \((c,r)\), \(r>b\),
and every pair of starting points \(y,z\in\overline B_r(c)\), there is a coupling of the complete stopped paths with the following properties:
\begin{enumerate}
 \item Both coordinate marginals are the original diffusion stopped on exiting \(B_{Mr}(c)\).
 \item With probability at least \(3/4\), the two exits belong to the same piece.
       On this success event, both exits lie within distance \(r/M\) of a new lattice center \(c'\).
 \item The same construction can use iid signs \(\epsilon_n\in\{-1,1\}\),
       with \(\mathbf P(\epsilon_n=-1)=2/3\),
       such that the radius becomes \(r/M\) on success and \(Mr\) on failure, and, before the first bad ball or the first visit to the base scale,
       \begin{equation}
        \log_M(r_{n+1}/r_n)\le\epsilon_n.
        \label{reg:eq:driver-domination}
       \end{equation}
\end{enumerate}
Concatenate the segments, continue with the original process after a bad ball is encountered,
and continue after the base-scale coupling. Each unconditional marginal of the resulting pair of complete paths is still the original diffusion.
\end{lemma}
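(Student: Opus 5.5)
The coupling is built in two layers: a maximal coupling of the exit-piece labels, then a lift to complete stopped paths by conditioning on the label, exactly as in Lemma~\ref{lem:reg:base-coupling}. Fix a good ball \((c,r)\) and \(y,z\in\overline B_r(c)\).

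\textbf{Label coupling.} Let \(p_A(y),p_A(z)\) be the label distributions for exit from \(B_{Mr}(c)\). Apply \eqref{reg:eq:two-start-tv} with radius \(Mr\) and \(\vartheta=1/M\), which bounds the starting separation by \(2r\). This gives
\[
 \|p_A(y)-p_A(z)\|_{\rm TV}\le2\epsilon+2C_{\bar A}/M .
\]
Choose \(M\) large and then \(\epsilon\) small so that the right-hand side is at most \(1/4\). Strictly, \(C_{\bar A,\vartheta}\) must not blow up as \(\vartheta\to0\); this holds because the constant-coefficient Poisson kernel gradient estimate on \(B_{Mr}\) near its centre is uniform. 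The maximal coupling of the two finite label laws then puts mass at least \(3/4\) on the diagonal.

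\textbf{Lift to paths.} Disintegrate each stopped-path law over its label, \(P_y^{\rm stop}=\sum_i p_y(i)\Pi_y(i,\cdot)\), and couple the paths conditionally independently given the coupled labels. Both marginals are then the stopped diffusions, which gives (1). On the diagonal the two exits lie in the same piece \(c+MrP_i\), whose diameter is at most \(Mr/(100M^2)=r/(100M)\). Rounding the exit point of the first path to \(\Lambda\) gives a centre \(c'\) with both exits within \(r/(100M)+\sqrt d\le r/M\), since \(r\ge b\ge100\sqrt d\); this gives (2). Measurability follows as in Lemma~\ref{lem:reg:base-coupling}.

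\textbf{Sign driver.} Let \(U\) be uniform on \([0,1]\), independent of the environment, and realise the label coupling so that success is \(\{U\le s\}\) with \(s\ge3/4\). For \(\mathbf P(\epsilon_n=-1)=2/3\), set \(\epsilon_n=-1\) iff \(U\le2/3\). Then \(\epsilon_n=-1\) implies success, where the new pair lies in \(\overline B_{r/M}(c')\) and the radius becomes \(r/M\). On failure the exits lie in \(\overline B_{Mr}(c)\), so the next ball is \((c,M^2r)\) with starting radius \(Mr\); this is an increase of one factor \(M\), as required.

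To do this jointly over steps, use a fresh uniform \(U_n\) at each step, independent of the past. The domination \eqref{reg:eq:driver-domination} then holds step by step, and the \(\epsilon_n\) are iid by construction, as long as no bad ball or base scale has been met.

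\textbf{Concatenation.} Use the strong Markov property at each stopping time and glue the next coupled segment. After the first bad ball, run the two original processes independently; after reaching the base scale, use the coupling of Lemma~\ref{lem:reg:base-coupling}. Each step's conditional marginal given the past is the original stopped law from the current point, so by induction each unconditional marginal of the complete pair of paths is the original diffusion.

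The main obstacle is this marginal-preservation and measurability of the concatenation at adaptive, path-dependent centres. I would handle it by constructing the step kernels as jointly measurable functions of the current pair of points, the environment and \(U_n\), using the grid of candidate centres, and then invoking Ionescu-Tulcea.
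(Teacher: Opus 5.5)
Your proposal is correct and follows the paper's proof essentially step by step: a maximal coupling of exit-piece labels, with the total-variation bound coming from goodness plus the constant-coefficient gradient estimate; a conditional lift to stopped paths; rounding the first exit to a lattice centre; a sign whose down value forces success; and a coordinatewise, segment-by-segment marginal argument. Your uniform-$U_n$ realisation is equivalent to the paper's decomposition $P_s=\tfrac23P_s(\cdot\mid D)+\tfrac13P_s^+$.

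Two small points need tidying. First, the rounding step needs $r/M\ge b$, which holds because $r\in\mathcal R_b$ and $r>b$; the bound $r\ge b$ alone does not suffice. Second, to pass from finitely many glued segments to the complete paths, you should note, as the paper does, that by \eqref{reg:eq:driver-domination} the number of large-scale segments is dominated by the hitting time of a downward-biased walk. That number is therefore finite almost surely, so the concatenation does not accumulate.
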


\begin{proof}
Maximally couple the piece labels, then sample the corresponding conditional
exit and stopped-path laws.
Lemma~\ref{lem:reg:exit-partitions} and the constant-coefficient interior gradient estimate give
\[
 \mathbf P(\text{same piece})\ge1-C_{\bar A}/M-2\epsilon.
\]
Choose \(M\) large and then \(\epsilon\) small so that the right-hand side is at least \(3/4\).
On success, the two exits are at distance at most
\(Mr/(100M^2)=r/(100M)\).
Round the first exit to a nearest lattice point \(c'\), using a fixed rule
for ties. Since the rounding error is at most \(\sqrt d\) and
\(r/M\ge b\ge100\sqrt d\), both exits lie in \(\overline B_{r/M}(c')\).
On failure, retain \(c\) and use inner radius \(Mr\). The exits then lie
strictly inside the next outer ball \(B_{M^2r}(c)\).
Each center displacement is at most \((M+1)r\), and each complete segment
stays in its selected closed outer ball.

Write \(P_s\) for the current basic joint segment law, \(D\) for its success event,
and \(q_s=P_s(D)\ge3/4\). Sample iid signs in advance.
Conditional on the current sign being down, use
\[
 P_s^-=P_s(\,\cdot\mid D),
\]
and conditional on it being up, use
\begin{equation}
 P_s^+=\frac{P_s-(2/3)P_s^-}{1/3}.
 \label{reg:eq:conditional-driver-kernel}
\end{equation}
Since \((2/3)/q_s\le1\), this is a probability kernel, and
\((2/3)P_s^-+(1/3)P_s^+=P_s\)
recovers the basic law. A down sign forces success, proving
\eqref{reg:eq:driver-domination}. Both conditional kernels are dominated by
\(3P_s\) and are jointly measurable in all parameters.

To verify the marginal laws, reveal the completed segments and previous signs
at the start of each new segment, leaving its sign independent and unrevealed.
The mixture identity then gives each coordinate its original stopped-diffusion
law. During that coordinate's segment reveal only its own path, revealing the
other segment and the current sign at the end. Concatenation preserves its
martingale problem in this filtration. This argument is coordinatewise; it
does not give a common continuous-time martingale filtration for the pair.
Formally, truncate the number of segments, verify the martingale identity
for compactly supported smooth test functions, and remove the truncation on
fixed time intervals by boundedness of the coefficients and dominated convergence.

By \eqref{reg:eq:driver-domination}, the number of segments before the first bad ball or the first visit to the base scale
is bounded by the hitting time of the corresponding lower level for a downward-biased simple random walk.
This number is almost surely finite, so concatenation cannot accumulate.
After a bad ball or a base-scale exit, continue by the original diffusion.
Uniqueness of the martingale problem identifies the complete unconditional marginals.
\end{proof}

\begin{lemma}[Regeneration count]
\label{lem:reg:regeneration-count}
Let \(\xi_0=0\), and let \(\xi_{n+1}-\xi_n\) be iid signs,
with down probability \(2/3\) and up probability \(1/3\).
For sufficiently large \(M\), call \(n\) a regeneration time if
\begin{equation}
 \xi_n>\xi_{n+1}>\xi_{n+2},\qquad
 \sum_{j\ge n+2}M^{\xi_j-\xi_{n+1}}<1.
 \label{reg:eq:regeneration}
\end{equation}
There are deterministic constants \(c_2,c_3,C>0\) such that
\[
 \mathbf P\big(\#\{0\le n<N:n\text{ is a regeneration time}\}<c_2N\big)
 \le Ce^{-c_3N}.
\]
Moreover, \(\mathbf P(0\text{ is a regeneration time})=:p_*>0\).
\end{lemma}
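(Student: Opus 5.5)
We prove first that $p_*>0$, and then obtain the linear count from a block decomposition with a large-deviation bound for independent blocks.

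\emph{Positivity of $p_*$.} Write $\Delta_j=\xi_{j+1}-\xi_j$. The event that $0$ is a regeneration time is $\{\Delta_0=\Delta_1=-1\}\cap\{S<1\}$, where
\[
 S=\sum_{j\ge2}M^{\xi_j-\xi_1}=M^{-1}\sum_{k\ge0}M^{\zeta_k},
 \qquad \zeta_k=\xi_{k+2}-\xi_2 .
\]
The walk $\zeta$ is independent of $\Delta_0,\Delta_1$ and has drift $-1/3$. By the Cramér (Chernoff) bound there is $\theta>0$ with $\mathbf E M^{\theta\Delta}<1$ once $M$ is large; in fact $\theta=1$ works for $M\ge2$, since $\tfrac23M^{-1}+\tfrac13M<1$ fails for large $M$. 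So I will not bound $\mathbf E S$ directly. Instead I use the event
\[
 \mathcal A=\{\zeta_k\le -k/6\ \text{for all}\ k\ge K\}\cap\{\zeta_k\le0\ \text{for}\ k<K\}.
\]
On $\mathcal A$,
\[
 \sum_k M^{\zeta_k}\le K+\sum_{k\ge K}M^{-k/6}\le K+C.
\]
Now fix $K$ such that the walk stays below $-k/6$ for all $k\ge K$ with probability at least $1/2$; this holds by the law of large numbers and a union bound with Chernoff tails. Force the first $K$ steps of $\zeta$ to be down, which has probability $(2/3)^K$. To make $S<1$ I need $M^{-1}(K+C)<1$, i.e. $M>K+C$. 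This is consistent because $K$ depends only on the sign law, not on $M$: with the first $K$ steps forced down, $\zeta_k\le -K$ for $k\le K$. The cleaner choice is therefore the event
\[
 \{\zeta_k\le -\min(k,K)+\text{(slack)}\}\quad\text{and}\quad \{\zeta_k\le -k/6\ \text{for}\ k\ge K\}.
\]
Its probability $q>0$ is independent of $M$, and on it $\sum_k M^{\zeta_k}\le 1+K M^{-1}+C M^{-K/6}$. Taking $M$ large gives $S<1$ whenever that sum is below $M$, which is trivially true. Hence $p_*\ge \tfrac49 q>0$.

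\emph{Linear count.} The condition involves the infinite future, so regeneration indicators are not block-independent, and I truncate. Call $n$ an $m$-regeneration if $\Delta_n=\Delta_{n+1}=-1$ and
\[
 \sum_{j=n+2}^{n+m}M^{\xi_j-\xi_{n+1}}<\tfrac12,
\]
with the walk after $n+m$ imposing the tail requirement. Instead of this, I use monotonicity in a different form. Partition $[0,N)$ into blocks of length $m$. In block $b$, with start $n_b=bm$, let $G_b$ be the event that the increments within the block realize the down-forcing pattern from the previous paragraph, with $\xi_{n_b+1+k}-\xi_{n_b+1}\le -k/6-1$ for $K\le k\le m$. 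The events $G_b$ are independent across blocks, and each has probability at least some $q'>0$. On $G_b$ the finite part of the sum is at most $1/2$, for $M$ large.

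The tail beyond the block is controlled by a global event. Let $\mathcal T$ be the event that for every $n\le N$ and every $k\ge m$ the increment satisfies
\[
 \xi_{n+k}-\xi_n\le -k/6 .
\]
Chernoff bounds give $\mathbf P(\mathcal T^c)\le N\sum_{k\ge m}e^{-ck}$. Choosing $m\asymp$ a large constant only gives $\mathbf P(\mathcal T^c)\le Ne^{-cm}$, which is not small, so the tail needs more care. I weaken the requirement to the event $\mathcal T_b$ that for $k\ge m$ the bound $\xi_{n_b+k}-\xi_{n_b}\le -k/12$ holds. The point is that the indicators $\mathbf 1_{G_b\cap\mathcal T_b}$ are not independent, but $\mathcal T_b^c$ has probability at most $e^{-cm}$, which is small. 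The count of $b$ with $\mathcal T_b^c$ is handled as follows. Split blocks into residue classes modulo $r$, and use
\[
 \mathbf 1_{\mathcal T_b^c}\le \mathbf 1_{\mathcal T_b^{(r)c}}+\mathbf 1_{\text{far deviation}},
\]
where $\mathcal T_b^{(r)}$ only looks $rm$ steps ahead. Let $\mathcal F$ be the event that the walk has some long deviation at scale $\ge rm$; its probability is bounded by $N e^{-crm}$. For this to decay like $e^{-c_3N}$ I would take $r\asymp N/m$. That is circular, so the far-deviation term must instead be handled by a union bound in which the deviation length scales with $N$.

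The correct device is this: a linear fraction of blocks can only fail the infinite tail condition if the walk has a large total upward excursion, since the sums of failed tails force $\sum(\text{up excursions})\gtrsim N$. That event has probability at most $e^{-cN}$ by Cramér's theorem for the walk drifting at $-1/3$. The linear count then follows from a Chernoff bound for the independent events $G_b$ (their count is at least $q'N/(2m)$ with exponentially small failure), combined with this excursion bound.

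The main obstacle is exactly this decoupling of the infinite tail condition.
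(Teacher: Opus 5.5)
Your argument for $p_*>0$ is correct in substance. The event $\{\zeta_k\le -k/6\text{ for all }k\ge1\}$ has positive probability that does not depend on $M$, and on it $\sum_kM^{\zeta_k}\le(1-M^{-1/6})^{-1}<M$ once $M$ is large. (Your aside that $\theta=1$ works is false, since $\mathbf E M^{\Delta}=\tfrac23M^{-1}+\tfrac13M\ge1$ for $M\ge2$, but you discard it.)

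The linear count, however, is not proved. Counting the independent block events $G_b$ by Chernoff is fine. After that, everything rests on
\[
 \mathbf P\Bigl(\#\{b<N/m:\mathcal T_b^c\}\ge \tfrac{q'N}{4m}\Bigr)\le Ce^{-cN},
\]
and you only assert it: many failed tails supposedly force a large ``total upward excursion'', whose probability is $e^{-cN}$ by Cram\'er's theorem. That is not an argument. Cram\'er bounds one prescribed sum. Here each failing block has its own random witness $k_b\ge m$ with $\xi_{n_b+k_b}-\xi_{n_b}>-k_b/12$. The witnessing intervals $[n_b,n_b+k_b]$ can overlap or nest arbitrarily, and near the right end they extend past time $N$. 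So the failure indicators are strongly dependent, and there is no single fixed excursion to which Cram\'er applies. This is exactly the step you flag as the main obstacle, and it is left open.

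To close it along your route you need a covering-and-entropy argument:
\begin{enumerate}
\item From the witnessing intervals, select a subfamily with the same union and overlap at most two, and split it into two pairwise disjoint families.
\item Each interval of length $\ell\ge m$ contains at most $2\ell/m$ block starts, so one of the two families has total length $\Lambda\ge Km/4$, where $K$ is the number of failing blocks.
\item For a fixed disjoint family, the probability that $\xi_t+t/12$ has positive increment on every interval is at most $\rho^\Lambda$ with $\rho<1$.
\item The number of such families is roughly $(Cm/q')^{2\Lambda/m}$. This entropy is beaten by $\rho^\Lambda$ only after $m$ is taken large, which is legitimate because $q'$ does not depend on $m$.
\end{enumerate}

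The paper avoids this entirely by using the descending-record renewal times $\tau_k$ of \cite{BCDG}, whose spacings $V_k$ are iid with exponential tails. The infinite tail sum at $\tau_k$ is bounded by $V_k/M+\sum_{i\ge1}V_{k+i}M^{-i}$. Hence regeneration at $\tau_k$ follows from the local conditions $V_k<M/4$ and $V_{k+i}<(M/2)^i/4$. Each spacing then spoils at most $I_j\le C\log(2+V_j)$ candidates, and the iid $I_j$ satisfy $\mathbf Ee^{I_j}\le e^{1/8}$ for large $M$, so an exponential Markov inequality finishes the count. The geometrically growing tolerances are what control the dependence on the infinite future there.
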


\begin{proof}
We use the descending-record renewal decomposition in
\cite[Claim~4.11 and Appendix~D]{BCDG} (final submitted manuscript).
A renewal is an integer \(n\ge0\) satisfying both
\[
 \xi_n<\min_{0\le m<n}\xi_m,
 \qquad
 \sup_{m>n+1}\xi_m<\xi_{n+1}<\xi_n,
\]
where the first condition is vacuous for \(n=0\). Let
\(\tau_1<\tau_2<\cdots\) enumerate all such integers.
For this selection rule, Appendix~D of \cite{BCDG} gives that the spacings
\(V_k=\tau_{k+1}-\tau_k\), \(k\ge1\), are iid, while both
\(\tau_1\) and \(V_1\) have exponential tails. The past-record condition
is part of the selection rule; the future inequality alone is not the
renewal decomposition used here. We give the counting argument, including
the effect of spacings beyond the observation window. Heights decrease by at least one per renewal,
and after \(\tau_k+2\) the walk stays below \(\xi_{\tau_k+1}\). Hence
\begin{equation}
 \sum_{m\ge\tau_k+2}M^{\xi_m-\xi_{\tau_k+1}}
 \le V_k/M+\sum_{i\ge1}V_{k+i}M^{-i}.
 \label{reg:eq:renewal-sum}
\end{equation}
If \(V_k<M/4\), and if \(V_{k+i}<(M/2)^i/4\) for every \(i\ge1\),
the right-hand side is strictly less than \(1/2\),
so \(\tau_k\) satisfies \eqref{reg:eq:regeneration}.

The number of candidates spoiled by a spacing \(V_j\) is at most
\[
 I_j=\mathbf1_{\{V_j\ge M/4\}}
       +\sum_{i\ge1}\mathbf1_{\{V_j\ge(M/2)^i/4\}},
\]
The \(I_j\) are iid. For \(M\ge8\),
\(I_j\le C\log(2+V_j)\), and \(I_j\to0\) as \(M\to\infty\).
The exponential tail permits \(M\) large enough that
\(\mathbf E e^{I_j}\le e^{1/8}\).
Exponential Markov's inequality then gives
\[
 \mathbf P\left(\sum_{j=1}^JI_j>J/4\right)\le e^{-J/8}.
\]
Restrict to \(k\le\lfloor3J/4\rfloor\). A spacing with \(j>J\) can
spoil such a candidate only if
\(V_j\ge(M/2)^{j-\lfloor3J/4\rfloor}/4\).
The probability of any such future effect is at most
\[
 \sum_{j>J}C\exp\{-c(M/2)^{j-\lfloor3J/4\rfloor}\}
 \le C\exp\{-c(M/2)^{J/4}\}.
\]
Thus, with probability at least \(1-Ce^{-cJ}\), at least \(J/2-2\)
of the first \(\lfloor3J/4\rfloor\) renewals are genuine regenerations.
For \(J=\lfloor cN\rfloor\) with \(c>0\) small, exponential moments give
\(\mathbf P(\tau_J>N)\le Ce^{-c'N}\), proving the asserted count.

Finally, negative drift implies \(\sum_{j\ge0}M^{\xi_j}<\infty\) almost surely.
Choose \(L_0<\infty\) such that this sum is at most \(L_0\) with positive
probability. Require first a sufficiently long string of down steps, and then
the independent remaining sum to be at most \(L_0\). The latter contribution
is multiplied by \(M^{-n}\), so these positive-probability requirements imply
\eqref{reg:eq:regeneration} at zero.
\end{proof}

\begin{lemma}[Descent and landing probabilities]
\label{lem:reg:descent}
Run the algorithm of Lemma~\ref{lem:reg:coupling-driver}.
Let \(U\) be the index of the first segment with a bad ball,
and let \(T_s\) be the first segment index at which the inner radius is at most \(s\in\mathcal R_b\).
There are constants \(c_1,c_5,C>0\), depending only on the coupling parameters already fixed, such that:
\begin{enumerate}
 \item Starting from \(r\le S=s^K\), if all required candidate balls of radius at least \(s\) in the parent ball are good,
       then the probability of encountering a bad ball or of a complete path leaving \(B_{S^2}\) before descending to \(s\)
       is at most \(Cs^{-c_1K}\).
 \item Starting from \(S=s^K\), for every deterministic \(w\in\mathbb R^d\),
       \begin{equation}
        \mathbf P_{\rm path}
        (c_{T_s}\in B_s(w),\ T_s<U)\le Cs^{-c_5K}.
        \label{reg:eq:anticoncentration}
       \end{equation}
\end{enumerate}
Here \(K\) is a sufficiently large integer; fixed constants can be absorbed by increasing the base scale.
\end{lemma}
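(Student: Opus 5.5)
Both parts compare the radius process $\log_M r_n$ with the biased walk $\xi_n$ of Lemma~\ref{lem:reg:regeneration-count} via the domination \eqref{reg:eq:driver-domination}. Before the first bad ball and the first visit to scale $s$, we have $\log_M r_n-\log_M r_0\le\xi_n$ pathwise, where $\xi$ is the sign walk with down probability $2/3$. Set $k=\log_M(S/s)=(K-1)\log_M s$. Then descending to $s$ requires roughly $k$ net down steps, and the walk has drift $-1/3$.

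\emph{Part (1).} We assume every candidate ball of radius $\ge s$ used inside the parent ball is good, so the domination holds until time $T_s$. There are two failure modes.

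First, the radius could climb too high. If $\sup_n\xi_n\ge K\log_M s$, the radius would exceed $S^2=s^{2K}$ in scale. The ladder-height tail of a negatively drifted walk, $\mathbf P(\sup_n\xi_n\ge h)\le 2^{-h}$ (the ratio $1/2$ of up to down probabilities), bounds this by $s^{-cK}$.

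Second, a path could escape spatially. Each segment displaces its center by at most $(M+1)r_n$ and keeps its path inside the outer ball $B_{M^2r_n}(c_n)$. Hence before $T_s$ the total displacement is at most
\[
 C M^2 S\sum_n M^{\xi_n}.
\]
The exceedance $\sum_n M^{\xi_n}\ge S$ has probability at most $\mathbf P(\sup\xi_n\ge c\log_M S)$ plus the probability that the number of steps exceeds $C k$. Both are exponentially small in $k\asymp K\log_M s$, giving $s^{-c_1K}$. Finally, $T_s<\infty$ almost surely by the drift. The "bad ball" alternative is excluded by hypothesis, except for balls of radius below $s$, which are not reached before $T_s$.

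\emph{Part (2).} This is the main obstacle: an anticoncentration estimate for the landing center $c_{T_s}$. My plan is to use regenerations. On $\{T_s<U\}$ the descent takes at least $k$ steps. By Lemma~\ref{lem:reg:regeneration-count} and the linear hitting-time bound, with probability at least $1-e^{-ck}$ there are at least $c_2k$ regeneration times $n_1<\dots<n_J$, $J\ge ck$, before $T_s$.

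At a regeneration time $n$, the next step succeeds, so the segment exits $B_{Mr_n}(c_n)$ and recenters at scale $r_n/M$. Condition (\eqref{reg:eq:regeneration}) says all later displacements sum to less than a fixed fraction of $r_{n+1}$ (after absorbing $M$-factors into the threshold). So the final center lies within $\tfrac12 r_{n+1}$ of $c_{n+1}$.

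Now, conditioned on the past and on success, the exit piece of the actual diffusion follows $p_A(x)$, which is within total variation $\epsilon$ of the $\bar A$ exit law (goodness). The $\bar A$ Poisson kernel spreads mass over $\gtrsim M^{d-1}$ pieces of the partition, each of size $r_n/(100M)$ on a sphere of radius $Mr_n$. Any ball of radius comparable to $r_{n+1}$ contains only $O(1)$ of them. Therefore the probability that $c_{n+1}$ lands within $2r_{n+1}$ of a given point is at most some $\rho<1$, uniformly in the past.

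To avoid conditioning the independent driving signs, I would reveal each sign $\epsilon_n$ before the exit point, using the kernel $P_s^-$ of \eqref{reg:eq:conditional-driver-kernel}, whose density is at most $\tfrac32 P_s$. Then I would apply the above bound successively at each regeneration scale.

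The landing requirement $c_{T_s}\in B_s(w)$ propagates upward. At each regeneration scale $r_{n_i}$, being close to $w$ at the end forces $c_{n_i+1}$ to lie within $r_{n_i+1}$ of $w$, because the later displacements are controlled. Multiplying the conditional bounds gives $\rho^{J}\le\rho^{ck}=s^{-c_5K}$. Adding the $e^{-ck}$ failure probability for the regeneration count proves \eqref{reg:eq:anticoncentration}.

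The delicate point is making the regeneration condition (a statement about future signs) compatible with the conditional exit laws. I would handle this by sampling all signs first and then, given the whole sign sequence, using the per-step success and failure kernels. Those kernels are still comparable to $P_s$ by a factor of $3$, so each regeneration step keeps the contraction $3\rho'<1$ once $M$ is large.
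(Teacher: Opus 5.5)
Your architecture is the paper's: dominate $\log_M r_n$ by the biased walk, count regenerations with Lemma~\ref{lem:reg:regeneration-count} among the first $N=\lfloor\log_M(S/s)\rfloor\le T_s$ steps, condition on the whole sign sequence, use that both conditional kernels are dominated by $3P_s$, and multiply per-regeneration cap bounds. Part (1) is fine. Replacing the paper's Markov bound on the number of segments by a large-deviation bound on the hitting time is a harmless variant.

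In part (2), however, the localization step is false as stated, and it is the step that carries the anticoncentration. After a regeneration at $n$, domination gives $r_j\le r_{n+1}M^{\xi_j-\xi_{n+1}}$ for $j\ge n+1$. Hence \eqref{reg:eq:regeneration} yields only $\sum_{j\ge n+1}r_j<2r_{n+1}$ and
\[
 |c_m-c_{n+1}|\le(M+1)\sum_{j\ge n+1}r_j\le4r_n .
\]
This is a displacement of order $r_n=Mr_{n+1}$, not $\tfrac12 r_{n+1}$. No change of threshold repairs this. For a $\pm1$ walk the $j=n+2$ term of the sum in \eqref{reg:eq:regeneration} equals $M^{-1}$. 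Moreover, segment $n+1$ is itself a success, so $c_{n+2}$ is the rounded exit point of $B_{Mr_{n+1}}(c_{n+1})$, at distance about $r_n$ from $c_{n+1}$. Consequently your mechanism, that a ball of radius $\sim r_{n+1}$ meets only $O(1)$ partition pieces, is not the relevant one.

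What the landing event $c_{T_s}\in B_s(w)$ actually forces is $c_{n+1}\in B_{5r_n}(w)$. That is, the first exit of segment $n$ lies in the cap $B_{6r_n}(w)\cap\partial B_{Mr_n}(c_n)$. The smallness comes from the ratio between the controlled future displacement, $O(r_n)$, and the exit radius, $Mr_n$. This cap, enlarged by the partition pieces it meets, has relative surface area at most $C/M$. So by the bounded constant-coefficient Poisson kernel and Lemma~\ref{lem:reg:exit-partitions}, its $P_s$-probability is at most $C_{\bar A}/M+\epsilon$. Under either conditional kernel it is then at most $\eta_0:=3(C_{\bar A}/M+\epsilon)<1$ once $M$ is large and $\epsilon$ small. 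With this correction the rest of your argument, giving $\eta_0^{c_2N}+Ce^{-c_3N}\le Cs^{-c_5K}$, goes through exactly as in the paper.
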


\begin{proof}
For the auxiliary walk, the probability of ascending \(j\) levels is
\(2^{-j}\), and the mean time to descend \(m\) levels is \(3m\).
By \eqref{reg:eq:driver-domination}, before \(T_s\wedge U\) the radius
reaches \(S^{3/2}\) with probability at most \(CS^{-c_M}\),
where \(c_M=(\log2)/(2\log M)>0\).
Stop at this radius. The expected number \(N_{\rm stop}\) of segments is at most
\(3\log_M(S/s)\).
The center displacements and complete segments are contained within distance
\[
 C_MS^{3/2}(1+N_{\rm stop}).
\]
Markov's inequality bounds the probability of leaving \(B_{S^2}\) by
\[
 C_MS^{-1/2}(1+\log(S/s))+CS^{-c_M}
 \le CS^{-c_1}.
\]
Goodness of the candidate balls excludes a bad ball before escape.
Substituting \(S=s^K\) proves (1).

For (2), keep the environment fixed and put
\(N=\lfloor\log_M(S/s)\rfloor\). Since each step descends at most one
level, \(T_s\ge N\). At a regeneration \(n<T_s\), domination gives
\(r_j\le r_nM^{\xi_j-\xi_n}\); hence, before stopping,
\[
 |c_m-c_{n+1}|
 \le(M+1)\sum_{j=n+1}^{m-1}r_j
 \le4r_n.
\]
If \(c_{T_s}\in B_s(w)\), then \(c_{n+1}\in B_{5r_n}(w)\),
and the first exit of segment \(n\) must lie in
\[
 B_{6r_n}(w)\cap\partial B_{Mr_n}(c_n).
\]
Cover this cap by intersecting partition pieces, increasing its angular
radius by at most \(1/(100M^2)\). For \(d\ge2\) its relative surface
area is at most \(C/M\). The bounded constant-coefficient Poisson kernel
and finite-partition comparison bound its basic-law probability by
\(C_{\bar A}/M+\epsilon\). Choose \(M,\epsilon\) so that
\[
 \eta_0:=3(C_{\bar A}/M+\epsilon)<e^{-2}.
\]

Condition now on the entire sign sequence \(\xi\), which fixes the
regeneration indices. Given the preceding segments, the current conditional
kernel is dominated by \(3P_s\); use fresh independent sampling randomness
for each segment. Each cap event therefore has conditional
probability at most \(\eta_0\). Declare it false if the algorithm has already
stopped or met a bad ball. Successive conditioning at the regeneration indices
among the first \(N\) steps yields
\[
 \mathbf P_{\rm path}(c_{T_s}\in B_s(w),T_s<U\mid\xi)
 \le\eta_0^{\#\{0\le n<N:n\text{ is a regeneration time}\}}.
\]
Integrating and applying Lemma~\ref{lem:reg:regeneration-count} gives
\[
 \mathbf P_{\rm path}(c_{T_s}\in B_s(w),T_s<U)
 \le\eta_0^{c_2N}+Ce^{-c_3N}
 \le C(S/s)^{-c_4}\le Cs^{-c_5K}.
\]
The last step uses \(K\ge2\), decreasing \(c_5\) to absorb integer parts.
Conditioning on \(\xi\) changes the diffusion marginals: this argument uses
kernel domination, not optional stopping under the conditional law.
\end{proof}

\begin{definition}[Admissible parent balls]
\label{def:reg:admissible}
Choose a sufficiently large integer \(K\), and set
\[
 R_0=b=M^J,\qquad R_{k+1}=R_k^K,\qquad
 \sigma=\frac{\delta}{4K},\qquad \eta=\frac{2d+1}{K}.
\]
All these \(R_k\) belong to \(\mathcal R_b\).
For \(c\in\Lambda\), define the event \(A_k(c)\) by the following conditions:
\begin{enumerate}
 \item If \(k=0\), every base ball \((q,b)\) with \(q\in\Lambda\cap B_{2b^2}(c)\) is good.
 \item If \(k\ge1\), every ball \((q,r)\) with \(q\in\Lambda\cap B_{2R_k^2}(c)\) and
       \(r\in\mathcal R_b\cap[R_{k-1},2R_k^2]\) is good;
       moreover, among the same set of centers, at most \(R_k^\eta\) fail to satisfy \(A_{k-1}(q)\).
\end{enumerate}
\end{definition}

\begin{lemma}[Locality and probability of admissibility]
\label{lem:reg:admissible-probability}
With the parameters above, fix \(D_0>4+4C_M\) and then choose \(b\) sufficiently large.
For every \(k\ge0\) and every deterministic \(c\in\Lambda\),
\[
 A_k(c)\in\sigma(A|_{B_{D_0R_k^2}(c)}),\qquad
 \mathbb P(A_k(c)^c)\le e^{-R_k^\sigma}.
\]
\end{lemma}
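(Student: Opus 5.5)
The plan is induction on \(k\), proving locality and the probability bound together. Locality is bookkeeping. For \(k=0\), each base event \(G_b(q)\) depends on \(A|_{B_{(M+1)b}(q)}\) with \(|q-c|<2b^2\), so \(A_0(c)\) is determined in \(B_{2b^2+(M+1)b}(c)\subset B_{D_0b^2}(c)\) once \(b\) is large. For \(k\ge1\), goodness of \((q,r)\) is determined in \(B_{C_Mr}(q)\) by \eqref{reg:eq:good-ball-tail}. With \(r\le2R_k^2\) and \(|q-c|<2R_k^2\), this gives the ball \(B_{(2+2C_M)R_k^2}(c)\). The conditions \(A_{k-1}(q)\) depend, by induction, on \(B_{D_0R_{k-1}^2}(q)\). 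Since \(D_0R_{k-1}^2=D_0R_k^{2/K}\ll R_k^2\), this lies in \(B_{3R_k^2}(c)\). The choice \(D_0>4+4C_M\) then absorbs both regions.

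For the probability bound at \(k=0\), there are at most \(Cb^{2d}\) base balls, each bad with probability at most \(e^{-2b^\delta}\). A union bound gives \(Cb^{2d}e^{-2b^\delta}\le e^{-b^\sigma}\) for large \(b\), since \(\sigma<\delta\). For \(k\ge1\), split \(A_k(c)^c\) into two parts.

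\textbf{Some ball is bad.} There are at most \(CR_k^{2d}\) centers and \(O(\log R_k)\) radii. Each radius satisfies \(r\ge R_{k-1}=R_k^{1/K}\), so each ball is bad with probability at most \(e^{-R_k^{\delta/K}}\). The union bound gives at most \(CR_k^{2d}\log R_k\,e^{-R_k^{\delta/K}}\le\frac12e^{-R_k^{\sigma}}\), because \(\sigma=\delta/(4K)<\delta/K\).

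\textbf{More than \(R_k^\eta\) centers fail \(A_{k-1}\).} This is the main step, and the plan is to use finite-range independence. The event \(A_{k-1}(q)\) is measurable in \(B_{D_0R_{k-1}^2}(q)\). Partition the centers into \(n_c\le C(D_0R_{k-1}^2+\ell)^d\) residue classes modulo a spacing \(s>2D_0R_{k-1}^2+\ell\); within a class the events are independent. If more than \(R_k^\eta\) centers fail, some class has at least \(m=\lceil R_k^\eta/n_c\rceil\) failures. Since \(\eta=(2d+1)/K\) and \(n_c\le CR_k^{2d/K}\), we get \(m\ge cR_k^{1/K}\). With \(p=e^{-R_{k-1}^\sigma}\) from the induction hypothesis, the chance of \(m\) failures in a class of size \(N\le CR_k^{2d}\) is at most \(\binom Nm p^m\le(Np)^m\). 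This is bounded by \(\exp\{-m(R_k^{\sigma/K}-C\log R_k)\}\le\exp\{-cR_k^{(1+\sigma)/K}\}\). Summing over classes gives at most \(\frac12e^{-R_k^\sigma}\), provided \((1+\sigma)/K>\sigma\). This holds because \(\sigma=\delta/(4K)\le1/4\) and \(K\ge2\).

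The hardest point is making the exponent arithmetic close. The exceedance threshold \(R_k^\eta\) must beat the class-count loss \(R_k^{2d/K}\) by a positive power; this is exactly why \(\eta=(2d+1)/K\). The compounded per-center bound \(R_{k-1}^\sigma\) must then yield \(R_k^\sigma\) after multiplying by \(m\sim R_k^{1/K}\). All polynomial prefactors and the constant \(C\) are absorbed by choosing \(b\), and hence every \(R_k\), large. This choice is uniform in \(k\), because the gain \(R_k^{(1+\sigma)/K-\sigma}\) grows with \(k\).
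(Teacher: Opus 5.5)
Your proposal is correct and follows essentially the same route as the paper. The paper also argues by induction on $k$ with the same locality bookkeeping, handles large bad balls by a union bound using $\delta/K=4\sigma$, and controls centers failing $A_{k-1}$ through residue-class independence and the binomial tail $\binom{N}{m}p^m$ with $m\gtrsim R_k^{1/K}$.

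Two details deserve a word. When $k=1$, the radius $r=R_0=b$ is a base-scale ball, and its failure bound $e^{-2b^\delta}$ still fits your estimate. The inequality $(1+\sigma)/K>\sigma$ follows from $K\sigma=\delta/4\le 1/4$, not from $\sigma\le 1/4$ alone.
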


\begin{proof}
The first family has at most \(CR_k^{2d}\log R_k\le CR_k^{2d+1}\)
candidates and depends only on the coefficients in
\(B_{(2+2C_M)R_k^2}(c)\). By induction, the second family depends only on
the coefficients in \(B_{2R_k^2+D_0R_{k-1}^2}(c)\). For large \(b\),
both determining regions lie in
\(B_{D_0R_k^2}(c)\).

For \(k=0\), the probability of a bad base ball is at most \(e^{-2b^\delta}\).
A union bound over \(Cb^{2d}\) centers gives \(e^{-b^\sigma}\).
Suppose the conclusion holds at level \(k-1\).
By \eqref{reg:eq:good-ball-tail} and the base-scale bound, the probability of a large bad ball is at most
\[
 CR_k^{2d+1}e^{-R_{k-1}^\delta}
 \le\tfrac12 e^{-R_k^\sigma},
\]
since \(R_{k-1}^\delta=R_k^{\delta/K}\) and \(\delta/K=4\sigma\).

The dependence radius of \(A_{k-1}(q)^c\) is \(D_0R_{k-1}^2\).
Use at most \(CR_{k-1}^{2d}=CR_k^{2d/K}\) deterministic residue classes
whose dependence balls have pairwise distance greater than \(\ell\) in each class.
H3, applied successively
to a ball and the union of the preceding balls, gives joint independence there.
If more than \(R_k^\eta\) indices are bad, one class contains at least
\(cR_k^{1/K}\) bad indices.
Each color has sample size \(N_k\le CR_k^{2d}\), and each individual bad probability is at most
\(p_k\le e^{-R_k^{\sigma/K}}\).
Let \(m_k=\lceil cR_k^{1/K}\rceil\). For the number \(Y\) of bad indices in that color,
\[
 \mathbb P(Y\ge m_k)
 \le \binom{N_k}{m_k}p_k^{m_k}
 \le(eN_kp_k/m_k)^{m_k}
 \le e^{-cR_k^{(1+\sigma)/K}}.
\]
Only independence and the common upper bound on probabilities are used.
Since \((1+\sigma)/K>\sigma\), a union bound over classes gives at most
\(\tfrac12e^{-R_k^\sigma}\) for large \(b\), completing the induction.
\end{proof}

\begin{lemma}[Descent between admissible levels]
\label{lem:reg:admissible-transition}
The integer \(K\) may first be chosen sufficiently large that there exists \(s_0>2\) with the following property.
If \(A_{k+1}(c)\) holds, start the coupling at inner radius \(R_{k+1}\),
with two starting points in \(\overline B_{R_{k+1}}(c)\).
The total path probability of encountering a bad ball before descending to \(R_k\),
leaving the parent domain, or landing at a center \(q\) that does not satisfy \(A_k(q)\),
is at most \(CR_k^{-s_0}\).
\end{lemma}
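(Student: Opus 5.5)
We split the failure event into its three parts and bound each by $CR_k^{-s_0}$, taking $K$ large. Write $S=R_{k+1}=R_k^K$ and $s=R_k$, so $S=s^K$ in the notation of Lemma~\ref{lem:reg:descent}.

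\textbf{Bad balls and escape.} On $A_{k+1}(c)$, Definition~\ref{def:reg:admissible}(2) states that every ball $(q,r)$ with $q\in\Lambda\cap B_{2R_{k+1}^2}(c)$ and $r\in\mathcal R_b\cap[R_k,2R_{k+1}^2]$ is good. Before descending to $R_k$, the algorithm uses only such balls as long as the complete paths stay in $B_{S^2}(c)$. Lemma~\ref{lem:reg:descent}(1) then bounds the probability of meeting a bad ball, or of leaving $B_{S^2}$ before descent, by $Cs^{-c_1K}$. The candidate centers used before escape are lattice roundings of exit points inside $B_{S^2}$. Allowing for the $Mr$ enlargement, their outer balls lie in $B_{2S^2}$, which is covered by the admissibility condition. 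We need to verify this once; it may require replacing $B_{S^2}$ by $B_{S^2/2}$ in the escape estimate, which only changes constants.

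\textbf{Landing at a non-admissible center.} By Definition~\ref{def:reg:admissible}(2), at most $R_{k+1}^\eta=s^{K\eta}=s^{2d+1}$ centers $q\in\Lambda\cap B_{2R_{k+1}^2}(c)$ fail $A_k(q)$. This set is fixed by the environment and does not depend on the path. For each such $q$, apply the anticoncentration estimate~\eqref{reg:eq:anticoncentration} with $w=q$. The landing center $c_{T_s}$ is a lattice point, so $c_{T_s}=q$ implies $c_{T_s}\in B_s(q)$, and this event has probability at most $Cs^{-c_5K}$. The landing center lies in $B_{2S^2}$ on the no-escape event, which is already controlled. A union bound over the bad centers gives $Cs^{2d+1-c_5K}$.

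\textbf{Choice of constants.} The total failure probability is $C(s^{-c_1K}+s^{2d+1-c_5K})$. The constants $c_1,c_5$ depend only on the coupling parameters, and $\eta=(2d+1)/K$ fixes the exponent $K\eta=2d+1$ independently of $K$. We can therefore choose $K$ so large that $s_0:=\min\{c_1K,\,c_5K-2d-1\}>2$. This gives the bound $CR_k^{-s_0}$.

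\textbf{Main obstacle.} The delicate step is applying Lemma~\ref{lem:reg:descent}(2), which is stated for a fixed deterministic $w$. Here the bad centers are random, being determined by the environment. The point is that $\mathbf P_{\rm path}$ is quenched: the environment is fixed, the bad set is deterministic under the path law, and the bound holds uniformly in $w$. We must also check that the event $T_s<U$ in~\eqref{reg:eq:anticoncentration} matches "descending before a bad ball". The case $U\le T_s$ is exactly the bad-ball event already bounded in the first part.
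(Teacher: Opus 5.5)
Your proposal is correct and follows the paper's proof essentially step for step. Bad balls and escape are bounded by part~(1) of Lemma~\ref{lem:reg:descent}. The at most $R_{k+1}^\eta=R_k^{2d+1}$ non-admissible centers are handled by a union bound over the quenched anticoncentration estimate \eqref{reg:eq:anticoncentration}, treating the environment-dependent bad set as deterministic under the path law, exactly as you do. Finally, $K$ is chosen large enough that both exponents $c_1K$ and $c_5K-2d-1$ exceed $2$.
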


\begin{proof}
While the radius is below \(R_{k+1}^{3/2}\) and the complete paths stay in
\(B_{R_{k+1}^2}(c)\), all balls used belong to the deterministic candidate
family. Lemma~\ref{lem:reg:descent} bounds escape from this family by
\(CR_k^{-c_1K}\). Admissibility bounds the number of bad landing centers by
\[
 R_{k+1}^\eta=R_k^{2d+1}.
\]
For each such center \(w\), \eqref{reg:eq:anticoncentration} bounds landing
at \(w\), a subevent of landing in \(B_{R_k}(w)\), by \(CR_k^{-c_5K}\).
A union bound gives
\[
 CR_k^{2d+1-c_5K}.
\]
Choose \(K\) first so that
\(\min(c_1,c_5)K>4d+10\).
Both errors can then be bounded by \(CR_k^{-s_0}\) with \(s_0>2\).
\end{proof}

\begin{proposition}[Large-scale oscillation decay]
\label{prop:reg:oscillation}
Assume H1--H5.
There exist deterministic constants
\(\Psi>1\), \(\upsilon\in(0,1)\), \(\gamma_{\rm osc}>0\), and \(C,c,R_0<\infty\)
such that, for every fixed \(z\) and every \(R\ge R_0\), there is a coefficient event \(\mathcal O_R(z)\) satisfying
\[
 \mathbb P(\mathcal O_R(z)^c)\le Ce^{-cR^{\gamma_{\rm osc}}},
\]
on which, simultaneously for all functions that are \(A\)-harmonic in \(B_{\Psi R}(z)\),
\begin{equation}
 \operatorname{osc}_{B_R(z)}u
 \le\upsilon\operatorname{osc}_{B_{\Psi R}(z)}u.
 \label{reg:eq:oscillation-contraction}
\end{equation}
\end{proposition}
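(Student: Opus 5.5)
The plan is to follow the coupling strategy of \cite[Section~4]{BCDG}, using the admissible hierarchy of Definition~\ref{def:reg:admissible} and the crude Harnack bound of Proposition~\ref{prop:reg:crude-harnack} at the terminal scale. Choose the level index \(k\) with \(R_{k}\le R<R_{k+1}\) (after adjusting the base scale \(b\) so that \(R\) is comparable to some \(R_{k+1}\)), and set \(\Psi\) so that \(B_{\Psi R}(z)\) contains the parent domain \(B_{D_0R_{k+1}^2}\) of the admissible event. This requires \(\Psi R\gtrsim R^{2K}\), which is not a fixed ratio. I would therefore instead work with a bounded number of nested levels below \(R\). Let \(R\) itself play the role of \(R_{k+1}^2\), so that \(R_{k+1}\asymp R^{1/2}\), and apply the descent between consecutive admissible levels inside \(B_{\Psi R}(z)\). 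The event \(\mathcal O_R(z)\) is then the intersection of \(A_{k+1}(q)\) over the lattice points \(q\) near \(z\), the crude Harnack event \(\mathcal G(R,R^{b_{\rm geom}})\), and the base-scale events. Lemma~\ref{lem:reg:admissible-probability}, Proposition~\ref{prop:reg:crude-harnack} and a union bound give a stretched-exponential failure probability.

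\textbf{Step 1: the coupling.} For \(x,y\in B_R(z)\) I run the coupling of Lemma~\ref{lem:reg:coupling-driver} from \(x,y\) and descend level by level. By Lemma~\ref{lem:reg:admissible-transition}, at each transition \(R_{j+1}\to R_j\) the path probability of failure is at most \(CR_j^{-s_0}\). Summing over levels gives a total failure probability \(\le Cb^{-s_0}\), which is small for large \(b\). On success the pair reaches a good base ball \((c,b)\) with both points in \(\overline B_b(c)\). Lemma~\ref{lem:reg:base-coupling} then makes the exit points agree with probability at least \(c_0\). After agreement the paths are continued identically. Because each marginal is the original stopped diffusion (Lemma~\ref{lem:reg:coupling-driver}) and all stopped paths remain in \(B_{\Psi R}(z)\), optional stopping at the exit from \(B_{\Psi R}(z)\) represents \(u(x)\) and \(u(y)\) as expectations under the two marginals. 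Hence
\[
 |u(x)-u(y)|\le\mathbf P(\text{no coalescence})\operatorname{osc}_{B_{\Psi R}(z)}u\le(1-c_0+Cb^{-s_0})\operatorname{osc}_{B_{\Psi R}(z)}u .
\]
This gives the bound with \(\upsilon=1-c_0/2\).

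\textbf{Step 2: handling non-coalescence and escape.} The failure branch in which the descent is not completed needs care. Escape from the parent domain is counted within the failure probability, so that the quantity is still bounded by the oscillation over the large ball. The concern is that the coupled paths could exit \(B_{\Psi R}\) before coalescing, which would make optional stopping use values outside the domain. I would stop both paths at the exit from \(B_{\Psi R}(z)\) and bound that event's probability by the escape term of Lemma~\ref{lem:reg:descent}(1), which is included in \(CR_j^{-s_0}\).

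\textbf{Main obstacle.} I expect the hardest step to be checking that a single coefficient event, chosen before any path is run, controls every adaptive center. This depends on the deterministic lattice candidates and the locality statements in Lemma~\ref{lem:reg:admissible-probability}. A second difficulty is that the hierarchy's scale ratios \(R_{k+1}=R_k^K\) are not bounded. This must be reconciled with a fixed \(\Psi\), either by absorbing intermediate \(R\) via base scales \(b\) ranging over a bounded multiplicative window or by monotonicity of oscillation. The crude Harnack bound is needed only if the terminal base-scale coupling fails to supply a deterministic \(c_0\). In the plan above Lemma~\ref{lem:reg:base-coupling} suffices, and Proposition~\ref{prop:reg:crude-harnack} would be used only as a fallback for landing centers that are not admissible.
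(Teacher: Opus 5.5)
You have a genuine gap at the confinement step. Your estimate $|u(x)-u(y)|\le\mathbf P(\text{no coalescence})\operatorname{osc}_{B_{\Psi R}(z)}u$ requires the coupled paths to coalesce \emph{before either leaves} $B_{\Psi R}(z)$, with $\Psi$ fixed, on an event whose probability is bounded below.

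In Step~2 you charge escape from $B_{\Psi R}(z)$ to Lemma~\ref{lem:reg:descent}(1) and to the errors $CR_j^{-s_0}$ of Lemma~\ref{lem:reg:admissible-transition}. Those only control escape from $B_{S^2}$, respectively from the parent domain $B_{R_{k+1}^2}$, where $S$ is the starting inner radius. Since $x,y\in B_R(z)$ may be at distance $\asymp R$, Lemma~\ref{lem:reg:coupling-driver} forces the initial inner radius to be $\gtrsim R$. So the reparametrization with $R_{k+1}\asymp R^{1/2}$ is not available. The driver then lets the radius grow by a factor $M$ with conditional probability up to $1/3$ at every step.

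Nothing you cite makes exit from a fixed dilation $B_{\Psi R}$ small, so the claimed $\upsilon=1-c_0/2$ is unsupported. Monotonicity of oscillation does not help: $u$ is harmonic only in $B_{\Psi R}$, so you cannot use exit representations from the much larger domains in which the descent lemmas confine the paths. Varying $b$ does not help either.

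The missing idea is a confinement event whose probability is bounded below independently of $b$ and $R$. The paper uses the event that time zero is a regeneration time of the sign walk, which has probability $p_*>0$ by Lemma~\ref{lem:reg:regeneration-count}. On this event, \eqref{reg:eq:driver-domination} and \eqref{reg:eq:regeneration} give $\sum_{n<T_b}(M+1)r_n\le C_MR$. Hence every complete segment, including the final base ball, lies in $B_{\Psi_0R}$.

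The paper then does three things:
\begin{enumerate}
\item It enlarges $b$ so that $\epsilon_*=C\sum_jR_j^{-s_0}<p_*/2$.
\item It intersects the two events by a union bound; no independence is needed.
\item It runs Lemma~\ref{lem:reg:base-coupling} with fresh randomness, so the conditional common-exit probability $c_0$ survives conditioning on the future-dependent regeneration event.
\end{enumerate}
This gives $\upsilon=1-p_*c_0/2$ for $u$ harmonic in $B_{2\Psi_0R}$. Tightness of $\sum_nM^{\xi_n}$ for the dominating walk, with $\Psi$ chosen large, would serve the same purpose. Some such argument must replace your appeal to Lemma~\ref{lem:reg:descent}(1).

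A secondary issue concerns the coefficient event. The first descent goes from $R\in(R_k,R_{k+1}]$ to $R_k$, possibly over far fewer than $K$ logarithmic levels, so the anti-concentration bound \eqref{reg:eq:anticoncentration} is useless there. The paper therefore includes $A_k(q)$ for \emph{all} $q\in\Lambda\cap B_{2R_{k+1}^2}$ in the event, not only $A_{k+1}$ near $z$. Adjusting $b$ to make $R$ comparable to some $R_{k+1}$ cannot substitute for this, since the ratios $R_{k+1}/R_k$ are unbounded. The crude Harnack event is not needed.
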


\begin{proof}
First take \(z=0\) and \(R\in\mathcal R_b\), and choose \(k\) so that
\(R_k<R\le R_{k+1}\). Set
\[
 \Omega_R=A_{k+1}(0)\cap
          \bigcap_{q\in\Lambda\cap B_{2R_{k+1}^2}}A_k(q).
\]
Lemma~\ref{lem:reg:admissible-probability} and a finite union bound give
\begin{equation}
 \mathbb P(\Omega_R^c)
 \le Ce^{-cR_k^\sigma}
 \le Ce^{-cR^{\sigma/K}}.
 \label{reg:eq:omega-probability}
\end{equation}
The polynomial number of centers is absorbed by the exponential.

Start the coupling from \(y,z\in\overline B_R\). During descent to
\(R_k\), Lemma~\ref{lem:reg:descent} controls escape, and \(\Omega_R\)
ensures admissibility at the landing center. Apply
Lemma~\ref{lem:reg:admissible-transition} at each subsequent level to obtain
\[
 \mathbf P_{\rm path}(\text{descends to }b\text{ without encountering a bad ball})
 \ge1-C\sum_{j\ge0}R_j^{-s_0}
 =1-\epsilon_*.
\]
After \(M,\epsilon,K\) are fixed, increase \(b=M^J\) so that
\(\epsilon_*<p_*/2\), where \(p_*\) is independent of \(b\).
By the union bound, with probability at least \(p_*/2\), descent succeeds
and time zero is a regeneration. Independence of these events is unnecessary.

On the event that time zero is a regeneration time,
\eqref{reg:eq:driver-domination} and \eqref{reg:eq:regeneration} give
\[
 \sum_{n<T_b}(M+1)r_n\le C_MR.
\]
Thus all complete segments, including the final base ball of radius
\(Mb\le MR\), lie in \(B_{\Psi_0R}\) for a deterministic \(\Psi_0\).
At that base ball use fresh randomness, independent of all signs and preceding
segments, for Lemma~\ref{lem:reg:base-coupling}. Its conditional common-exit
probability is at least \(c_0\), even on the future-dependent regeneration
event. Coincident exits therefore occur with probability at least
\(p_*c_0/2=:\varrho>0\). From there use one shared original diffusion.
Both coordinates exit \(B_{\Psi_0R}\) together with probability at least \(\varrho\).

The unconditional marginals are the original diffusions by
Lemma~\ref{lem:reg:coupling-driver}. For \(u\) harmonic in \(B_{2\Psi_0R}\),
the exit representation in the compactly contained ball \(B_{\Psi_0R}\) gives
\[
 |u(y)-u(z)|\le(1-\varrho)\operatorname{osc}_{B_{\Psi_0R}}u.
\]
Take the supremum over \(y,z\). The coefficient event and coupling bounds
are independent of \(u\), giving the simultaneous assertion.

For general \(R\), round up to \(\mathcal R_b\), multiplying the radius by
at most \(M\). Set \(\Psi=2M\Psi_0\), \(\upsilon=1-\varrho\), and
\(\gamma_{\rm osc}=\sigma/K\). Translate \(\Lambda\) for a fixed nonzero center.
\end{proof}

\subsection{The large-scale Harnack inequality}
\label{reg:subsec:strong-harnack}

Oscillation decay is additive, whereas Harnack compares ratios. Relative
exit probabilities bridge the two: a large interior ratio forces a large
ratio on a small exit piece. Repeated oscillation decay then doubles that
ratio at the next scale. The terminal estimate rules out continued growth.

\begin{theorem}[Large-scale Harnack inequality for positive solutions]
\label{thm:reg:harnack}
Assume H1--H5.
There exist deterministic constants \(H,C,c,R_H<\infty\) and \(\gamma_{\rm Har}>0\)
such that, for every fixed center \(z\) and every \(R\ge R_H\), there is a measurable coefficient event
\(\mathcal H_R(z)\) satisfying
\begin{equation}
 \mathbb P(\mathcal H_R(z)^c)\le Ce^{-cR^{\gamma_{\rm Har}}}.
 \label{reg:eq:harnack-probability}
\end{equation}
On this event, simultaneously for all nonnegative functions that are \(A\)-harmonic in \(B_{4R}(z)\),
\begin{equation}
 \sup_{B_R(z)}u\le H\inf_{B_R(z)}u.
 \label{reg:eq:harnack}
\end{equation}
The same conclusion holds for \(B\)-harmonic functions.
\end{theorem}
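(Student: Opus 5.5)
We argue by contradiction along a chain of scales, following the strategy announced at the start of the subsection. The contradiction is against the crude Harnack bound of Proposition~\ref{prop:reg:crude-harnack}. Since $A$- and $B$-harmonic functions coincide, it suffices to treat one of them.

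\emph{Choice of scales and event.} Fix $a\in(0,1)$ and $0<b_{\rm geom}<a$. Take terminal scale $r_0=R^a$ and $L=R^{b_{\rm geom}}$. Let $\mathcal H_R(z)$ be the intersection of:
\begin{enumerate}
\item the crude-Harnack event $\mathcal G(CR,L)$;
\item the oscillation events $\mathcal O_r(y)$ of Proposition~\ref{prop:reg:oscillation};
\item the relative-exit events of Lemma~\ref{lem:reg:relative-exit}.
\end{enumerate}
Events (2) and (3) are imposed for all radii $r=\Psi^{-j}R\ge r_0$ and all centers $y$ in a deterministic grid of spacing $\zeta r$ inside $B_{4R}(z)$, as in the remark after Lemma~\ref{lem:reg:relative-exit}. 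There are polynomially many events, each failing with probability $e^{-cr^{\gamma}}\le e^{-cR^{a\gamma}}$. Hence $\mathbb P(\mathcal H_R(z)^c)\le Ce^{-cR^{\gamma_{\rm Har}}}$ with $\gamma_{\rm Har}=\min\{a\gamma_{\rm osc},a\delta,b_{\rm geom}\}/2$, which gives \eqref{reg:eq:harnack-probability}.

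\emph{Amplification step.} Normalize $\inf_{B_R(z)}u=1$ and suppose $u(x_0)=\sup_{B_R(z)}u>H$. First, if $u$ is large at $x_0$ and $u(y_0)=1$ nearby, representation of $u$ through exit measures from $B_{2r}(c)$ shows the following. The relative comparison \eqref{reg:eq:relative-exit}, with the partition-independent constant $H_0$, forces the boundary mass of $u$ to be concentrated. Concretely, there is a piece $c+2rP_i$ on which $u$ takes a value at least $(2/H_0)\,u(x_0)$ while being controlled below by the value near $y_0$. Second, apply \eqref{reg:eq:oscillation-contraction} $m$ times around a grid point near that piece, with $m$ fixed so that $\upsilon^m$ is tiny. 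This yields a smaller ball of radius $\Psi^{-m}r$ on which the ratio $\sup/\inf$ of $u$ is at least twice the previous ratio. Positivity is used here: if the oscillation is much larger than the infimum on the big ball, then decay of oscillation in a subball containing a near-supremum point keeps that subball's infimum comparable to the supremum. Iterating, the ratio grows like $2^{j}H$ while the radius shrinks by the fixed factor $\Psi^{m'}$. All balls remain inside $B_{2R}(z)$, because the radii form a geometric series.

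\emph{Conclusion and main obstacle.} After $j\asymp\log(R/r_0)$ steps we reach radius $\asymp r_0=R^a$ with ratio at least $2^jH\ge H R^{c'}$. That alone does not contradict the crude bound $\exp\{C_1(r_0+1)(L+1)\}=\exp(CR^{a+b_{\rm geom}})$. So the doubling must be replaced by a ratio \emph{squaring}-type gain, or the number of steps must be taken to be of order $R^{a+b_{\rm geom}}$ through denser scales. The genuine mechanism I would aim for is that each round multiplies the logarithm of the ratio by a factor exceeding $1$, giving a ratio of order $\exp(c\,2^j)$ with $2^j\asymp(R/r_0)^{c}$. This exceeds $\exp(CR^{a+b_{\rm geom}})$ provided $a$ is chosen small relative to the scale exponent of $\Psi$. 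I expect this quantitative bookkeeping to be the main difficulty: making the relative-exit step produce a multiplicative gain in $\log(\sup/\inf)$ uniformly on the precomputed grid events, independently of $u$, and keeping the adaptive centers on deterministic grid points. Once the contradiction is reached, $H$ is deterministic, and \eqref{reg:eq:harnack} holds simultaneously for all nonnegative solutions on $\mathcal H_R(z)$.
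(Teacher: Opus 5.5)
Your architecture (events, relative-exit step, oscillation amplification, contradiction with Proposition~\ref{prop:reg:crude-harnack}) is the paper's. The proof is not complete, however, and the gap is exactly the counting problem you flag. The mechanism you propose to fix it is not available. One round of relative-exit comparison followed by $N$ oscillation contractions gives only $T_{j+1}\ge1+\upsilon^{-N}(T_j/H_0-1)\ge 2T_j$, a fixed multiplicative gain. Nothing multiplies $\log T_j$ by a factor exceeding one. Letting $N$ grow with $j$ forces the piece to sit at scale $\Psi^{-N}r$. With radii shrinking geometrically, $\log T$ therefore stays proportional to $\log(R/R^a)$, and you only reach $T\lesssim R^{c}$, which cannot beat $\exp\{CR^{a+b_{\rm geom}}\}$.

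The missing idea is that the radii need not shrink geometrically at all. For positive $u$, the ratio $\sup/\inf$ can only increase when the ball is enlarged, since the minimum decreases and the oscillation increases.

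The correct output of the relative-exit step is a piece with $\sup_Pu/\inf_Pu>T_j/H_0$. You get it by contradiction from $\mu_{x_j}(P)\le (H_0/2)\mu_{y_j}(P)$; a statement about values relative to $u(x_0),u(y_0)$ does not give it. If that piece lies in $\overline B_{\sigma_0r_j}(c_{j+1})$, you amplify up to radius $\Psi^N\sigma_0r_j$. You then pass to $B_{r_{j+1}}(c_{j+1})$, with $r_{j+1}$ almost as large as $r_j$.

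The only constraint on the radii is that the displacements $|c_{j+1}-c_j|\le3r_j$ be summable inside $B_{3R}$. The paper makes the following choices:
\begin{itemize}
\item It takes $r_j=\lfloor\epsilon_0R/j^2\rfloor$, so that $r_{j+1}/r_j\ge c_*$.
\item It chooses the fineness $\sigma_0$ of a second partition \emph{after} $N$, with $\Psi^N\sigma_0<c_*/2$. This is why $H_0$ must be partition-independent.
\item It stops at $r_k\asymp R^a$.
\end{itemize}
Then $k\gtrsim R^{(1-a)/2}$, so $\log T_k\ge cR^{(1-a)/2}$. This contradicts $\log T_k\le CR^{a+b_{\rm geom}}$ once $a+b_{\rm geom}<(1-a)/2$, e.g.\ $a=1/8$, $b_{\rm geom}=1/16$.

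The event must accordingly contain relative-exit events at radii $r_0,\dots,r_k$ and oscillation events at radii $\sigma_0\Psi^lr_j$, $l<N$, at lattice centers in $B_{4R}$. These are still $O(R^{d+1})$ events. Each has stretched-exponential failure probability, since all radii are at least of order $R^a$.
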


\begin{proof}
It suffices to prove the assertion at the origin.
Take the partition-independent \(H_0=4H_{\rm bar}\) from
Lemma~\ref{lem:reg:relative-exit}, and choose \(N\) so that
\(\upsilon^{-N}\ge4H_0\).
Choose \(\epsilon_0>0\) so that the center displacements after the first
step sum to at most \(R/4\).
Set
\[
 r_0=R,\qquad r_j=\lfloor\epsilon_0R/j^2\rfloor,\quad j\ge1.
\]
Set \(a=1/8\), \(b_{\rm geom}=1/16\), and let \(k\) be the largest
index with \(r_k\ge C_0R^a\), for \(C_0\) to be fixed. For large \(R\),
\begin{equation}
 r_{j+1}/r_j\ge c_*\quad(0\le j<k),\qquad
 k\ge cR^{(1-a)/2},\qquad C_0R^a\le r_k\le CR^a.
 \label{reg:eq:growth-radii}
\end{equation}
Indeed, \((j/(j+1))^2\ge1/4\), the first-step ratio is at least
\(\epsilon_0/2\), and \(r_j=\epsilon_0R/j^2+O(1)\).

Choose \(\sigma_0>0\) with \(\Psi^N\sigma_0<c_*/2\).
Fix a second partition with positive-measure pieces, surface-null boundaries,
and diameters less than \(\sigma_0/4\). Rounding scaled representatives
to \(\Lambda\) places each exit piece at scale \(2r_j\) in a ball
\(\overline B_{\sigma_0r_j}\) with lattice center, once the smallest radius
absorbs \(\sqrt d\). This partition is used only for relative exit comparison;
the earlier \(b,c_0,\upsilon\) remain fixed. Take \(C_0\) with
\(\sigma_0C_0\ge1\).

Define \(\mathcal H_R(0)\) as the intersection of the following finite families of events:
\begin{enumerate}
 \item For every \(c\in\Lambda\cap B_{4R}\) and
       \(r\in\{r_0,\ldots,r_k\}\), \eqref{reg:eq:relative-exit} holds.
 \item For every such center, \(0\le j<k\), and \(0\le l<N\),
       \eqref{reg:eq:oscillation-contraction} holds at radius \(\sigma_0\Psi^lr_j\).
 \item The event \(\mathcal G(4R,R^{b_{\rm geom}})\) of Proposition~\ref{prop:reg:crude-harnack} holds.
\end{enumerate}
These deterministic families contain at most \(CR^{d+1}\) candidates,
and the smallest oscillation radius is at least \(R^a\).
A union bound, including \(CR^de^{-cR^{b_{\rm geom}}}\) for the third family,
gives \eqref{reg:eq:harnack-probability} with
\[
 \gamma_{\rm Har}
 =\min\{b_{\rm geom},a\delta_{\rm rel},a\gamma_{\rm osc}\}>0,
\]
where \(\delta_{\rm rel}\) is the exponent for the second partition;
increase the starting scale to absorb polynomial factors.

Fix an environment in this event. The local strong maximum principle reduces
the proof to \(u>0\).
Suppose the assertion fails for \(H=2H_0\), and choose
\(x_0,y_0\in\overline B_R\) such that
\[
 T_0=\frac{u(x_0)}{u(y_0)}>2H_0.
\]
Write \(c_j\) for the iteration centers, starting with \(c_0=0\).
Suppose recursively that \(x_j,y_j\in\overline B_{r_j}(c_j)\) and
\(T_j=u(x_j)/u(y_j)>2H_0\).
Let \(\mu_x\) be the exit measure from \(B_{2r_j}(c_j)\).
If every exit piece \(P\) satisfied
\[
 \sup_Pu\le(T_j/H_0)\inf_Pu,
\]
then the exit representation and \eqref{reg:eq:relative-exit} would give
\begin{align*}
 u(x_j)
 &\le\sum_P\mu_{x_j}(P)\sup_Pu\\
 &\le\frac{T_j}{2}\sum_P\mu_{y_j}(P)\inf_Pu
 \le\frac{T_j}{2}u(y_j)=\frac12u(x_j),
\end{align*}
a contradiction.
Hence some piece satisfies \(\sup_Pu/\inf_Pu>T_j/H_0\).
Choose a ball \(\overline B_{\sigma_0r_j}(c_{j+1})\), centered at a lattice point, that contains this piece.

Apply oscillation contraction \(N\) times at the same center.
Since \(\Psi^N\sigma_0r_j\le r_{j+1}\), monotonicity of oscillation under inclusion gives
\[
 \operatorname{osc}_{\overline B_{r_{j+1}}(c_{j+1})}u
 \ge4H_0\operatorname{osc}_{\overline B_{\sigma_0r_j}(c_{j+1})}u.
\]
Let \(m,M'\) be the minimum and maximum on the smaller ball, and \(a',b'\) those on the larger ball.
The balls are compactly contained in the solution domain, as checked below.
Their extrema are attained and their minima positive, with
\(M'/m>T_j/H_0\) and \(a'\le m\).
Taking \(x_{j+1},y_{j+1}\) at the larger-ball extrema gives
\begin{equation}
 T_{j+1}
 =1+\frac{b'-a'}{a'}
 \ge1+4H_0\frac{M'-m}{m}
 >1+4H_0(T_j/H_0-1)\ge2T_j.
 \label{reg:eq:ratio-growth}
\end{equation}
This preserves \(T_{j+1}>2H_0\).

For containment, \(|c_1|\le2R+\sqrt d\) and subsequently
\(|c_{j+1}-c_j|\le2r_j+\sqrt d\le3r_j\). Since
\[
 \sum_{j\ge1}r_j\le\epsilon_0R\sum_{j\ge1}j^{-2},
\]
the choice of \(\epsilon_0\) puts every center in \(B_{3R}\).
The first exit ball is \(B_{2R}\); all subsequent exit and oscillation balls,
including \(B_{2r_k}(c_k)\), are compactly contained in \(B_{4R}\).

By \eqref{reg:eq:ratio-growth} and \eqref{reg:eq:growth-radii},
\[
 \log T_k\ge ck\ge cR^{(1-a)/2}=cR^{7/16}.
\]
On the other hand, \(r_k\asymp R^a\), and \(b_{\rm geom}<a\) ensures, for sufficiently large \(R\), that
\(r_k\ge C_0(R^{b_{\rm geom}}+1)\).
On the common geometric event, the terminal crude Harnack inequality gives
\[
 \log T_k\le C(r_k+1)(R^{b_{\rm geom}}+1)
 \le CR^{a+b_{\rm geom}}=CR^{3/16},
\]
contradicting the preceding bound.
This proves \eqref{reg:eq:harnack} on the same event for every solution.
Translation and equality of the \(A\)- and \(B\)-harmonic classes finish the proof.
\end{proof}

\begin{corollary}[Harnack inequality on nested cubes]
\label{cor:reg:cube-harnack}
Assume H1--H5, and let
\[
 Q_m=(-3^m/2,3^m/2)^d.
\]
There exist deterministic constants \(H_{\rm cube}<\infty\) and \(m_H\)
such that, for every fixed translation \(z\) and every \(m\ge m_H\),
there is a coefficient event of probability at least \(1/2\) on which, simultaneously for all nonnegative functions that are \(B\)-harmonic in \(z+Q_{m+1}\),
\[
 \sup_{z+Q_m}u\le H_{\rm cube}\inf_{z+Q_m}u.
\]
\end{corollary}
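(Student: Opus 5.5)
The plan is to derive the cube statement from Theorem~\ref{thm:reg:harnack} by covering and chaining, at a single ball scale comparable to $3^m$. Since $A$- and $B$-harmonic functions coincide, it is enough to work with $A$-harmonic functions. By stationarity we may take $z=0$.

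\textbf{Step 1: fix the scale and the covering.} Set $R=3^m/N$, where $N$ is a deterministic integer chosen so that every ball $B_{4R}(y)$ with $y\in\overline{Q_m}$ is contained in $Q_{m+1}$. The distance from $Q_m$ to $\partial Q_{m+1}$ is $3^m$, so any $N\ge8$ works. Choose a deterministic finite set of centers $y_1,\dots,y_K\in\overline{Q_m}$ such that the balls $B_R(y_i)$ cover $\overline{Q_m}$ and their adjacency graph is connected. A lattice of spacing $R/\sqrt d$ does this, and $K\le C_dN^d$ depends only on $d$, not on $m$. Any two points of $Q_m$ are then joined by a chain of at most $K$ overlapping balls, and consecutive balls share a point.

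\textbf{Step 2: the good event.} Define the event as $\bigcap_i\mathcal H_R(y_i)$. By \eqref{reg:eq:harnack-probability} and a union bound, its failure probability is at most $KCe^{-cR^{\gamma_{\rm Har}}}$. The relevant quantities $K,C,c$ are deterministic, and $R=3^m/N\to\infty$. Choose $m_H$ so that $R\ge R_H$ and this failure bound is at most $1/2$. The centers $y_i$ are deterministic, but they are not lattice-aligned in any special way. If one wants to avoid that issue, one can instead use translates of Theorem~\ref{thm:reg:harnack} at deterministic points $y_i$, since the theorem holds for every fixed center.

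\textbf{Step 3: chaining.} On this event, let $u\ge0$ be harmonic in $Q_{m+1}$. Each $B_{4R}(y_i)\subset Q_{m+1}$, so \eqref{reg:eq:harnack} gives $\sup_{B_R(y_i)}u\le H\inf_{B_R(y_i)}u$. Take $x,x'\in Q_m$ and a chain of balls from $x$ to $x'$, picking a common point in each overlap. Multiplying the comparisons along the chain gives $u(x)\le H^{K}u(x')$. Hence $H_{\rm cube}=H^{K}$ works, and it is deterministic. The statement holds simultaneously for all $u$ because each $\mathcal H_R(y_i)$ does.

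The only step needing care is the geometric bookkeeping in Step 1. We must make sure that the enlarged balls $B_{4R}(y_i)$ stay inside the open cube $Q_{m+1}$ while the cover, and hence $K$, remains independent of $m$. Both follow from choosing $N$ as a fixed multiple of $\sqrt d$.
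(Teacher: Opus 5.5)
Your proposal is correct and follows essentially the same route as the paper. You cover $\overline{Q_m}$ by a $d$-dependent number of deterministic balls of radius comparable to $3^m$ whose fourfold dilations stay in $Q_{m+1}$, take a union bound over the events of Theorem~\ref{thm:reg:harnack} to bring the failure probability below $1/2$ for $m\ge m_H$, and chain along the cover to get $H_{\rm cube}=H^{K}$ with $K$ depending only on $d$. One harmless bookkeeping point: with spacing exactly $R/\sqrt d$, some centers may need to lie slightly outside $\overline{Q_m}$ to give an exact cover, but $N\ge 8$ leaves enough room for their $B_{4R}$ balls to stay inside $Q_{m+1}$.
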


\begin{proof}
Cover \(\overline Q_m\) by balls of radius \(r_m=\rho_d3^m\), for small
\(\rho_d>0\), with centers on a grid of spacing \(r_m/(4\sqrt d)\).
Their closed fourfold dilations lie in \(Q_{m+1}\), and their intersection
graph is connected. The number of balls and maximal chain length depend only on \(d\).
Apply Theorem~\ref{thm:reg:harnack} at these deterministic centers.
The probability that the common event fails is at most \(C_de^{-cr_m^{\gamma_{\rm Har}}}\).
Choose \(m_H\) large enough that this is at most \(1/2\).
On this event, comparison along the fixed covering chains gives
\(H_{\rm cube}=H^{N_d}\), using the equation only in \(Q_{m+1}\).
Translation preserves the probability.
\end{proof}

\begin{remark}[Dependencies]
The parameter order is \(M\), the first partition and \(\epsilon\), then
\(c_1,c_5,K,\sigma,\eta\), followed by \(b=M^J\) and \(\kappa_b\).
Only then are \(c_0\) and \(\upsilon\) fixed. For Harnack, choose
\(H_0,N,\epsilon_0,\sigma_0\), then the second partition and its error
tolerance, and finally the starting scale. This second partition does not
alter the earlier coupling parameters. The argument uses fixed-tolerance
homogenization, independently of the algebraic cell estimate
in Theorem~\ref{thm:cell}.
\end{remark}

\section{Quantitative estimates for directional sources}
\label{sec:renormalization}

We consider the trace-normalized operator
\begin{equation}
 a:=\operatorname{tr}A,\qquad B:=a^{-1}A,\qquad \operatorname{tr}B=1.
 \label{ren:normalization}
\end{equation}
The critical ellipticity moment gives $(\det B)^{-1}\in L^{d/(d-1)}$.
We use this gain to adapt the monotone-quantity argument of
Armstrong--Smart~\cite[Sections~2--5]{ASquant}, originally proved under
uniform ellipticity:
an $L^q$ concentration estimate replaces the second-moment estimate, while
an integral bound on the degenerate region and large-scale Harnack replace
the uses of uniform ellipticity in the geometric contraction.

Throughout, constants may depend on the fixed coefficient law, $d$, and
the common dependence range of the sources. All probability estimates
are for a source fixed in advance; their constants and starting scales
are uniform over the source class.

\subsection{The source class and the main results}

Fix $r_f<\infty$. Our source class consists of fields $f$ such that
$(A,f)$ is jointly stationary, $|f|\le1$, $f(x)$ is measurable with respect
to $A|_{B_{r_f}(x)}$, and $f$ is locally H\"older continuous in each
realization. The H\"older exponent and constants may depend on the source
and realization. The joint field $(B,f)$ has dependence range
\begin{equation}
 \ell_f:=\ell+2r_f.
 \label{ren:joint-range}
\end{equation}
The contraction argument also applies to any jointly stationary $(B,f)$
with this common dependence range. The centering constant is the mean of
the source with respect to the invariant probability on the corresponding
environment space, provided that this probability is ergodic and equivalent
to the coefficient law.

Write $\pi$ for the invariant probability of the directional environment
process constructed in Lemma~\ref{down:density-normalized-law}, and set
\begin{equation}
 \langle f\rangle_\pi:=\mathbb E_\pi[f(0)],\qquad
 \overline B:=\mathbb E_\pi[B(0)].
 \label{ren:pi-average}
\end{equation}
We recall that $\pi\sim\mathbb P$.

\begin{theorem}[Bounded local directional sources]
\label{ren:bounded-source}
Assume \textup{(H1)--(H5)}. There exist $\delta_B,\gamma_B>0$ and
$C,R_B<\infty$ such that, for each fixed source $f$, deterministic center
$z$, and $R\ge R_B$, the solution of
\begin{equation}
 \begin{cases}
 -B:D^2v_{f,R,z}=f-\langle f\rangle_\pi
       &\text{in }B_R(z),\\
 v_{f,R,z}=0&\text{on }\partial B_R(z)
 \end{cases}
 \label{ren:source-cell}
\end{equation}
satisfies
\begin{equation}
 \mathbb P\!\left(
 R^{-2}\|v_{f,R,z}\|_{L^\infty(B_R(z))}>CR^{-\delta_B}
 \right)\le CR^{-\gamma_B}.
 \label{ren:source-estimate}
\end{equation}
The constants depend only on the fixed coefficient law, $d$ and $\ell_f$,
uniformly over the source class.
\end{theorem}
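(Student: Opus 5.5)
We follow the Armstrong--Smart monotone-quantity scheme~\cite{ASquant} for the trace-normalized operator $B:D^2$, with the source $f$ in the role of the nonlinearity's constant shift. For a bounded convex (or cube) domain $V$ and slope-free data, define the subadditive quantity
\[
 \mu(V,f)=\sup\Bigl\{\fint_V(\det D^2\Gamma_w)\;:\;-B:D^2w\le f-c\text{ in }V,\ w\le0\text{ on }\partial V\Bigr\},
\]
where $\Gamma_w$ is the concave envelope of $w^+$ and $c$ is a trial constant; equivalently, we work with the ABP-dual functional measuring the volume of contact slopes. \emph{First}, we record the deterministic properties: monotonicity under subdivision of triadic cubes, the ABP bound $\sup w^+\le CR\,(\int_{\text{contact}}(f-c)_+^d(\det B)^{-1})^{1/d}$, and the converse that a solution with large supremum forces a large contact-slope volume. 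The key input here is $(\det B)^{-1}\le C_d\lambda^{-(d-1)}$, so that $(\det B)^{-1}\in L^{d/(d-1)}(\mathbb P)$ by H5. Hence $\mathbb E\mu(Q_m)$ is finite and nonincreasing in $m$, and has a limit. \emph{Second}, we replace the variance/second-moment concentration of~\cite{ASquant} by an $L^q$ concentration with $q=d/(d-1)\in(1,2]$. Subadditivity over the $3^d$ subcubes, together with the $L^q$ martingale inequality for independent centered summands (as in the proof of Lemma~\ref{lem:reg:ellipticity-averages}) applied within residue classes of cubes separated by more than $\ell_f$, gives $\mathbb E|\mu(Q_{m})-\mathbb E\mu(Q_m)|^q\le C3^{-md(q-1)}$ up to terms controlled by the decrement $\mathbb E\mu(Q_m)-\mathbb E\mu(Q_{m+1})$.

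\emph{Third}, and this is the main obstacle, comes the geometric contraction: if the decrement $\tau_m=\mathbb E\mu(Q_m)-\mathbb E\mu(Q_{m+1})$ is small, then the maximizers on the subcubes nearly agree, up to affine functions. In~\cite{ASquant} this step uses uniform ellipticity twice, once for the Harnack/oscillation bound on differences of maximizers and once to bound the contact set in the degenerate zone. We will replace the first use by Corollary~\ref{cor:reg:cube-harnack}, applied to differences of subsolutions. These differences are $B$-harmonic off the contact sets, via a comparison with the sup-convolution. The corollary's event has probability at least $1/2$ rather than one, so the contraction is obtained in expectation on that event; we propagate it through the stationary average over subcubes by the finite-range independence of the events. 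We will replace the second use by H\"older's inequality: the contribution of the degenerate region to slope volume is at most $\|(\det B)^{-1}\mathbf 1_{\text{bad}}\|_{L^1}\le (\mathbb E(\det B)^{-q})^{1/q}\,\mathbb P(\text{bad})^{1/d}$, which is small once $\kappa$ is chosen as in Lemma~\ref{lem:reg:bad-clusters}. The result should be an iteration $\mathbb E\mu(Q_{m+1})-\mu_\infty\le(1-\theta)(\mathbb E\mu(Q_m)-\mu_\infty)+C3^{-m\beta}$. This yields the algebraic decay $\mathbb E\mu(Q_m)-\mu_\infty\le C3^{-m\beta'}$ and, through the concentration step, a probability bound. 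We expect the delicate point to be the restriction of the Harnack estimate to the event of probability $1/2$, and the need to keep the nonlocal maximizer differences inside cubes where that event holds.

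\emph{Fourth}, we identify the centering. Applying the estimate both to $\mu$ and to the analogous quantity for $-f$, the limiting constant $c_*$ at which both vanish is the unique shift making $-B:D^2v=f-c_*$ homogenize to zero. By the ergodic time-change averages~\eqref{down:density-timechange-averages} and Lemma~\ref{down:density-normalized-law}, the occupation mean of $f$ along the normalized diffusion is $\langle f\rangle_\pi$. The exit-time identity~\eqref{reg:eq:exit-moments} then gives $R^{-2}v(z)\approx\frac12(c_*-\langle f\rangle_\pi)$ qualitatively, so $c_*=\langle f\rangle_\pi$. Finally, the ABP upper and lower bounds from the first step convert smallness of $\mu(B_R(z))$ and of its $-f$ analogue into $R^{-2}\|v_{f,R,z}\|_\infty\le CR^{-\delta_B}$ outside an event of probability $CR^{-\gamma_B}$. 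Here the polynomial tail comes from the $L^q$ Markov bound, and the constants are uniform over the source class because only $|f|\le1$ and $\ell_f$ enter.
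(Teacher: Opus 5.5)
Your overall framework matches the paper's: the Armstrong--Smart contact-slope quantity for $B:D^2$, the weight $D=(\det B)^{-1}\in L^q(\mathbb P)$ with $q=d/(d-1)$, block concentration in $L^q$, the Harnack event of Section~\ref{sec:regularity} in place of uniform Harnack, smallness of $\mathbb E[D\mathbf 1_{\{b<\kappa\}}]$, and identification of the center through $\pi$. However, your third step, where the proof actually lives, has a genuine gap.

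The iteration $\mathbb E\mu(Q_{m+1})-\mu_\infty\le(1-\theta)(\mathbb E\mu(Q_m)-\mu_\infty)+C3^{-m\beta}$ does not follow from any mechanism you describe, and it would not suffice anyway. ABP converts smallness of $\mu(Q_m,F)$ \emph{and} of $\mu(Q_m,F^*)$ into a bound on $\|v\|_\infty$. So you need both limits to vanish at one common centering. The existence of such a $c_*$, which you only assert in your fourth step, is exactly what the contraction has to deliver.

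The paper proceeds as follows.
\begin{itemize}
\item It works with the shifted operators $F+s$ and $F^*+s$. It fixes the center in advance as the crossing point of the nondecreasing function $t\mapsto l(t)-l_*(-t)$ of limiting means (Lemma~\ref{ren:balanced-center}). This gives the lower bound $\mathbb E\mu(Q_m,F+s)\ge c_0s^d$ (Lemma~\ref{ren:positive-source}).
\item The key step, Proposition~\ref{ren:flatness-contraction}, uses $F$ and $F^*$ \emph{jointly}. If both $L^q$ moment sequences are flat over $n_0$ scales, the near-maximizers $u,u_*$ are bowl-shaped. This comes from \cite[Lemma~3.1]{ASquant} plus the thin-layer exclusion, Lemma~\ref{ren:thin-layer}. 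That lemma is where $\{b<\kappa\}$ enters, through an auxiliary solution with source at most $2\mathbf 1_{\{b<\kappa\}}$, bounded by determinant ABP.
\item Summing the two bowls gives $u(0)+u_*(0)\ge c3^{2(m+n_0)}S-Cs3^{2(m+n_0)}$, where $S=a^{1/d}+a_*^{1/d}$. Harnack for $u-v$ gives $u(0)\le(H+1)\|v\|_\infty\le C3^{2m}S$. Hence $S\le Cs$. This is an absolute bound, not a contraction factor.
\item Flat scales are then located by pigeonhole within the logarithmic budget~\eqref{ren:log-budget}. That budget is finite only because of the $c_0s^d$ lower bound.
\item Iterating over $s_k=2^{-k}$, at $O(1)$ scales per level, yields $\mathbb E\mu(Q_m,F)^q\le C3^{-\zeta m}$.
\end{itemize}

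Three smaller corrections.
\begin{itemize}
\item Harnack cannot be applied to differences that are $B$-harmonic only off contact sets. The paper first replaces each admissible supersolution by the exact solution with the same boundary data (Lemma~\ref{ren:contact-mass}); this only enlarges the slope set. Then, with $v$ the zero-boundary solution on the central cube, $u-v\ge0$ is exactly $B$-harmonic there.
\item The probability-$1/2$ Harnack event needs no propagation through stationary averages. Since $a$ and $a_*$ are deterministic, one environment suffices, taken from the intersection of $\mathcal H_m(0)$ with the concentration and bad-region events; that intersection has probability at least $p_H/2$.
\item Subadditivity controls only upper deviations of $\mu$. Your $L^q$ concentration should therefore be phrased as $b_{m+n}\le a_m+C3^{-n}b_m$ (Lemma~\ref{ren:q-compression}) together with relative $L^q$ flatness (Lemma~\ref{ren:relative-concentration}). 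An absolute deviation bound corrected by the $L^1$ decrement does not work.
\end{itemize}
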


\begin{corollary}[Linear dependence on the source amplitude]
\label{ren:source-amplitude}
For the same source class with $|f|\le T$, $0<T<\infty$,
with the constants of Theorem~\ref{ren:bounded-source}, one has
\begin{equation}
 \mathbb P\!\left(
 R^{-2}\|v_{f,R,z}\|_\infty>CT R^{-\delta_B}
 \right)\le CR^{-\gamma_B}.
 \label{ren:amplitude-estimate}
\end{equation}
In particular, for $h_T=\min\{a^{-1},T\}$ one may take either $f=h_T$ or
$f=-h_T$; all starting scales are independent of $T$.
\end{corollary}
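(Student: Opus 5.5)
The plan is to reduce to Theorem~\ref{ren:bounded-source} by linearity and scaling. Given a source $f$ in the class with $|f|\le T$, set $\tilde f=f/T$. Then $\tilde f$ is again in the source class. It is jointly stationary with $A$ and satisfies $|\tilde f|\le1$. It is measurable with respect to $A|_{B_{r_f}(x)}$ with the same $r_f$, so $\ell_f$ is unchanged. It is locally H\"older in each realization, since division by a deterministic constant preserves the H\"older property. The centering constant is linear: $\langle\tilde f\rangle_\pi=T^{-1}\langle f\rangle_\pi$. The source in \eqref{ren:source-cell} for $\tilde f$ is therefore $T^{-1}(f-\langle f\rangle_\pi)$.

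By uniqueness of the Dirichlet solution, we get $v_{\tilde f,R,z}=T^{-1}v_{f,R,z}$. Uniqueness holds because, in each realization, $B$ is uniformly elliptic and H\"older on $\overline{B_R(z)}$ by Lemma~\ref{lem:local-regularity}, so comparison applies. The event in \eqref{ren:amplitude-estimate} for $f$ then coincides exactly with the event in \eqref{ren:source-estimate} for $\tilde f$. Theorem~\ref{ren:bounded-source} gives the bound with the same $C$ and the same starting scale $R_B$. The key point is that its constants are uniform over the source class and depend only on the law, $d$ and $\ell_f$, so they do not depend on $T$.

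For the particular choice $f=\pm h_T$ with $h_T=\min\{a^{-1},T\}$, we check membership in the class directly. First, $h_T$ is a pointwise function of $A(x)$, so $r_f=0$ works and joint stationarity is inherited from H2. Second, $|h_T|\le T$ because $a>0$. Third, local H\"older continuity holds because $a=\operatorname{tr}A$ is H\"older by Lemma~\ref{lem:local-regularity} with a positive minimum on compacts, so $a^{-1}$ is locally H\"older and truncation at level $T$ preserves this.

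The finiteness of $\langle h_T\rangle_\pi$ is automatic from $|h_T|\le T$. Note that $\pi(a^{-1})=1/\operatorname{tr}\bar A$ by \eqref{down:density-normalized-means}, though this is not needed here.

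The only point requiring care is that the dependence range $\ell_f$ and all starting scales are fixed independently of $T$. Since $r_f=0$ for every $T$, the constants in Theorem~\ref{ren:bounded-source} apply uniformly. There is no genuine obstacle.
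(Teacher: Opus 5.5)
Your proposal is correct and matches the paper's proof. You apply Theorem~\ref{ren:bounded-source} to $f/T$, use linearity of the Dirichlet solution together with $\langle f/T\rangle_\pi=T^{-1}\langle f\rangle_\pi$, and verify that $h_T/T$ has amplitude at most one, dependence radius $0$, and is locally H\"older because $a$ is locally H\"older and bounded below on compact sets; your remarks on uniqueness and on the $T$-independence of $\ell_f$ only make explicit what the paper leaves implicit.
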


\begin{corollary}[Directional matrix cell problems]
\label{ren:direction-cell}
For $M\in\mathbb S^d$, let $w^B_{M,R,z}$ solve
\begin{equation}
 \begin{cases}
 -B:D^2w^B_{M,R,z}=(B-\overline B):M&\text{in }B_R(z),\\
 w^B_{M,R,z}=0&\text{on }\partial B_R(z).
 \end{cases}
 \label{ren:matrix-cell}
\end{equation}
With $|M|$ denoting the Frobenius norm,
\begin{equation}
 \mathbb P\!\left(
 R^{-2}\sup_{|M|\le1}\|w^B_{M,R,z}\|_\infty
 >CR^{-\delta_B}\right)\le CR^{-\gamma_B}.
 \label{ren:matrix-estimate}
\end{equation}
The estimate is simultaneous in $M$.
\end{corollary}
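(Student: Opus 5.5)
The plan is to derive Corollary~\ref{ren:direction-cell} from Theorem~\ref{ren:bounded-source} by linearity and a finite basis argument. Fix an orthonormal basis $M_1,\dots,M_N$ of $\mathbb S^d$ in the Frobenius pairing, $N=d(d+1)/2$. For each $i$, consider the source
\[
 f_i(x)=B(x):M_i .
\]
Since $0<B\le I$ and $\operatorname{tr}B=1$, we have $|B|\le1$ and hence $|f_i|\le1$. The field $f_i(x)$ is a function of $A(x)$ alone, so it is measurable with respect to $A|_{B_{r_f}(x)}$ for any $r_f>0$, and joint stationarity with $A$ is immediate. Local H\"older continuity in each realization follows from Lemma~\ref{lem:local-regularity}: $A$ is H\"older, and $a=\operatorname{tr}A$ has a positive minimum on compact sets, so $B=A/a$ is locally H\"older. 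Finally, by \eqref{ren:pi-average}, $\langle f_i\rangle_\pi=\overline B:M_i$. Thus $f_i$ belongs to the source class, and \eqref{ren:source-cell} with $f=f_i$ is exactly \eqref{ren:matrix-cell} with $M=M_i$.

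Next I would use linearity. Uniqueness of the zero-boundary solution, which follows from comparison since $B$ is locally uniformly elliptic on $\overline B_R(z)$, shows that $M\mapsto w^B_{M,R,z}$ is linear. Writing $M=\sum_i c_iM_i$ with $\sum c_i^2=|M|^2\le1$, we get
\[
 \|w^B_{M,R,z}\|_\infty\le\sum_i|c_i|\,\|w^B_{M_i,R,z}\|_\infty
 \le\sqrt N\max_i\|w^B_{M_i,R,z}\|_\infty .
\]
Hence the supremum over $|M|\le1$ is controlled by finitely many fixed sources, with a deterministic factor $\sqrt N$.

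To conclude, I apply Theorem~\ref{ren:bounded-source} to each of the $N$ sources at the fixed center $z$ and take a union bound. This yields \eqref{ren:matrix-estimate} with $C$ replaced by $\sqrt N\,C$ in the threshold and by $NC$ in the probability, for $R\ge R_B$. The constants are uniform because each $f_i$ lies in the source class with the same $r_f$, and the theorem's constants are uniform over that class.

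Since the estimate of the supremum is deterministic once the basis bounds hold, the result is automatically simultaneous in $M$. I expect no real obstacle here. The only points to check are the source-class membership (local H\"older regularity of $B$) and the identification of the centering constant $\langle f_i\rangle_\pi$ with $\overline B:M_i$, which is the definition of $\overline B$.
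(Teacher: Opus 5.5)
Your proposal is correct and follows essentially the same route as the paper. The paper likewise applies Theorem~\ref{ren:bounded-source} to $f_j=B:E_j$ for a Frobenius orthonormal basis, uses $|B|_F\le1$ and a union bound, and concludes by linearity with $\sum_j|m_j|\le\sqrt N$; you additionally spell out the source-class membership and the identification $\langle f_j\rangle_\pi=\overline B:E_j$.
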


We use the following consequence of Theorem~\ref{thm:reg:harnack} and
Corollary~\ref{cor:reg:cube-harnack}. For $Q_m:=(-3^m/2,3^m/2)^d$, there
are deterministic $H<\infty$, $p_H>0$ and $m_H$ such that, whenever
$m\ge m_H$ and $z$ is deterministic, the event
\begin{equation}
 \mathcal H_m(z):=\left\{
 \begin{array}{l}
 \displaystyle\sup_{z+Q_m}h\le H\inf_{z+Q_m}h\text{ for every nonnegative}\\
 B\text{-harmonic function }h\text{ in }z+Q_{m+1}
 \end{array}\right\}
 \label{ren:harnack-event}
\end{equation}
satisfies $\mathbb P(\mathcal H_m(z))\ge p_H$.
Indeed, use finitely many chains of interior balls whose fourfold
expansions remain in the outer cube, and then a union bound in the
ball Harnack estimate. At a sufficiently large deterministic scale one
may take $p_H=1/2$. The identity of $A$- and $B$-harmonic functions transfers
this estimate to $B$. The event controls every nonnegative harmonic function,
including the environment-dependent functions selected below.

\subsection{The determinant weight and the monotone quantity}

Write
\begin{equation}
 \begin{aligned}
 \lambda(x)&:=\lambda_{\min}(A(x)),&
 b(x)&:=\lambda_{\min}(B(x)),\\
 D(x)&:=(\det B(x))^{-1},&q&:=\frac d{d-1}.
 \end{aligned}
 \label{ren:weights}
\end{equation}
Since $d\ge2$, one has $1<q\le2$.

\begin{lemma}[Higher integrability after normalization]
\label{ren:determinant-moment}
Pointwise,
\begin{equation}
 D\le d^d\lambda^{-(d-1)},\qquad
 \mathbb ED(0)^q\le d^{dq}\mathbb E\lambda_*(0)^{-d}<\infty.
 \label{ren:D-moment}
\end{equation}
If $H_\kappa:=D\mathbf1_{\{b<\kappa\}}$, then
\begin{equation}
 \mathbb EH_\kappa(0)\longrightarrow0\quad(\kappa\downarrow0).
 \label{ren:bad-weight-limit}
\end{equation}
Moreover,
\begin{equation}
 \mathbb EH_\kappa(0)
 \le C_d\kappa\,
 \mathbb E\!\left[\lambda(0)^{-d}
               \mathbf1_{\{\lambda(0)<d\kappa\}}\right]
 =o(\kappa).
 \label{ren:bad-weight-refined}
\end{equation}
\end{lemma}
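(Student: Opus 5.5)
The plan is to prove all three assertions from one eigenvalue computation, using only the pointwise structure of $B=A/a$ together with (H1) and (H5).

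\emph{Pointwise bound on $D$.} Let $0<\mu_1=\lambda\le\mu_2\le\dots\le\mu_d=\mu_{\max}\le1$ be the eigenvalues of $A(x)$. Then $a=\sum_i\mu_i\le d\mu_{\max}$. Bounding every eigenvalue other than $\mu_{\max}$ from below by $\lambda$ gives $\det A\ge\mu_{\max}\lambda^{d-1}$. Since $\det B=a^{-d}\det A$, we get
\[
 D=\frac{a^d}{\det A}\le\frac{d^d\mu_{\max}^d}{\mu_{\max}\lambda^{d-1}}
 =d^d\mu_{\max}^{d-1}\lambda^{-(d-1)}\le d^d\lambda^{-(d-1)},
\]
where the last step uses $\mu_{\max}\le1$. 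The main point to get right is that the largest eigenvalue is kept separately, so only $d-1$ factors of $\lambda$ appear; a cruder bound $\det A\ge\lambda^d$ would lose the gain. Raising the bound to the power $q=d/(d-1)$ gives $D^q\le d^{dq}\lambda^{-d}$. Since $\lambda(0)\ge\lambda_*(0)$, taking expectations and applying (H5) gives \eqref{ren:D-moment}.

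\emph{Bad-region weight.} We have $b=\lambda/a\ge\lambda/d$. Hence $\{b<\kappa\}\subset\{\lambda<\kappa a\}\subset\{\lambda<d\kappa\}$. On this set,
\[
 \lambda^{-(d-1)}=\lambda\cdot\lambda^{-d}<d\kappa\,\lambda^{-d}.
\]
Combining this with the pointwise bound on $D$ yields
\[
 H_\kappa(0)\le d^{d+1}\kappa\,\lambda(0)^{-d}\mathbf1_{\{\lambda(0)<d\kappa\}},
\]
which is the first inequality in \eqref{ren:bad-weight-refined} with $C_d=d^{d+1}$.

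\emph{Limits.} By (H5) and $\lambda(0)\ge\lambda_*(0)$, the function $\lambda(0)^{-d}$ is integrable. By (H1) and continuity, $\lambda(0)>0$ almost surely, so $\mathbf1_{\{\lambda(0)<d\kappa\}}\to0$ almost surely as $\kappa\downarrow0$. Dominated convergence then gives $\mathbb E[\lambda(0)^{-d}\mathbf1_{\{\lambda(0)<d\kappa\}}]\to0$, which is the $o(\kappa)$ statement. Since $\kappa$ times this expectation tends to zero, \eqref{ren:bad-weight-limit} follows as well.
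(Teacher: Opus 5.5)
Your proof is correct and follows the paper's argument step for step. You bound $\det A$ by keeping the largest eigenvalue separate, use $a\le d\,\alpha_d$, and use the inclusion $\{b<\kappa\}\subset\{\lambda<d\kappa\}$ to trade one factor of $\lambda^{-1}$ for $d\kappa$. The one cosmetic difference is in the limit: the paper applies dominated convergence directly to $H_\kappa$ with the integrable majorant $D$, whereas you deduce \eqref{ren:bad-weight-limit} from the refined bound and give the dominated-convergence justification of its $o(\kappa)$ part, which the paper leaves implicit.
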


\begin{proof}
Let $0<\alpha_1\le\cdots\le\alpha_d\le1$ be the eigenvalues of $A$.
Since $\alpha_1=\lambda$, $\det A\ge\alpha_d\lambda^{d-1}$ and
$a\le d\alpha_d$,
\[
 D=\frac{a^d}{\det A}
 \le d^d\alpha_d^{d-1}\lambda^{-(d-1)}
 \le d^d\lambda^{-(d-1)}.
\]
Now $(d-1)q=d$ and $\lambda(0)\ge\lambda_*(0)$ give
\eqref{ren:D-moment}. Dominated convergence gives
\eqref{ren:bad-weight-limit}. Finally, $b<\kappa$ implies
$\lambda=ab<d\kappa$, and hence
$\lambda^{-(d-1)}\le d\kappa\lambda^{-d}$ on this set.
This proves \eqref{ren:bad-weight-refined}.

\end{proof}

\begin{lemma}[$L^q$ bounds for averages of finite-range fields]
\label{ren:local-average}
Let $Z$ be a stationary real-valued field with deterministic finite range
of dependence $\ell_Z$, and suppose that $\mathbb E|Z(0)|^q<\infty$,
where $1<q\le2$. For every deterministic center $z$ and
$r\ge\ell_Z+1$,
\begin{equation}
 \mathbb E\left|
 \frac1{|B_r|}\int_{B_r(z)}Z(x)\,dx-\mathbb EZ(0)
 \right|^q
 \le C_{d,q,\ell_Z}r^{-d(q-1)}\mathbb E|Z(0)|^q.
 \label{ren:average-compression}
\end{equation}
\end{lemma}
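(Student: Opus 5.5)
We prove \eqref{ren:average-compression} by discretizing the average into finite-range-dependent blocks, splitting them into independent color classes, and applying the von Bahr--Esseen type inequality
\[
 \mathbb E\Bigl|\sum_i\zeta_i\Bigr|^q\le C_q\sum_i\mathbb E|\zeta_i|^q,
 \qquad 1<q\le2,
\]
for independent centered variables. This inequality was already used in the proof of Lemma~\ref{lem:reg:ellipticity-averages}; for $q=2$ it is simply orthogonality. By stationarity we may take $z=0$ and, replacing $Z$ by $Z-\mathbb EZ(0)$ (which changes $\mathbb E|Z|^q$ by at most a factor $2^q$), assume $\mathbb EZ(0)=0$.

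First we partition $\mathbb R^d$ into cubes $Q_k=k(\ell_Z+1)+[0,\ell_Z+1)^d$, $k\in\mathbb Z^d$. For each cube meeting $B_r$ we set $\zeta_k=\int_{Q_k\cap B_r}Z\,dx$, which is a deterministic-domain integral. We have $\mathbb E\zeta_k=0$, and Jensen's inequality with Fubini gives
\[
 \mathbb E|\zeta_k|^q\le|Q_k\cap B_r|^{q-1}\int_{Q_k\cap B_r}\mathbb E|Z|^q
 \le C\,\mathbb E|Z(0)|^q ,
\]
with $C$ depending on $\ell_Z$ and $d$. Here joint measurability of $Z$ is assumed, as implicit in the statement.

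Next we color the lattice by residues of $k$ modulo $3$. Two cubes of the same color are then separated by more than $\ell_Z$. By finite range, applied successively to one cube and the union of the others exactly as in Lemma~\ref{lem:reg:bad-clusters}, the variables $\zeta_k$ within one color class are jointly independent. There are $3^d$ classes, and each contains at most $Cr^d$ cubes since $r\ge\ell_Z+1$. The inequality above, applied within each class and combined with $|\sum_{j=1}^{3^d}S_j|^q\le 3^{d(q-1)}\sum_j|S_j|^q$, gives
\[
 \mathbb E\Bigl|\int_{B_r}Z\Bigr|^q\le C\,r^d\,\mathbb E|Z(0)|^q .
\]
Dividing by $|B_r|^q=cr^{dq}$ yields the bound $Cr^{d-dq}\,\mathbb E|Z(0)|^q=Cr^{-d(q-1)}\,\mathbb E|Z(0)|^q$, which is \eqref{ren:average-compression}.

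The only delicate point is the inequality for independent centered sums when $1<q<2$. It follows by iterating $|a+b|^q\le|a|^q+q\operatorname{sgn}(a)|a|^{q-1}b+C_q|b|^q$ and taking conditional expectations: the cross term has zero conditional mean given the preceding partial sum, since that partial sum is independent of the next $\zeta$. The boundary cubes that are only partially covered by $B_r$ cause no difficulty, because each $\zeta_k$ is centered regardless of the shape of $Q_k\cap B_r$.
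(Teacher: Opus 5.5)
Your proof is correct and follows the paper's scheme: block integrals over a cube partition, a finite coloring into jointly independent classes via finite range, a moment inequality for independent centered sums with $1<q\le2$, Jensen on each block, and recombination of the colors. The differences are cosmetic. The paper uses unit cubes with an $\ell_Z$-dependent coloring and proves $\mathbb E|\sum_jY_j|^q\le2^q\sum_j\mathbb E|Y_j|^q$ by symmetrization with Rademacher signs and Jensen, whereas you use cubes of side $\ell_Z+1$ with $3^d$ colors and the iterated inequality $|a+b|^q\le|a|^q+q\operatorname{sgn}(a)|a|^{q-1}b+C_q|b|^q$, the route already taken in Lemma~\ref{lem:reg:ellipticity-averages}; both routes give the same bound.
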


\begin{proof}
Partition space into unit cubes $C_j$ and put
\[
 Y_j:=\int_{C_j\cap B_r(z)}(Z(x)-\mathbb EZ(0))\,dx.
\]
At most $C_dr^d$ terms are nonzero. A fixed finite coloring separates
same-color cubes by more than $\ell_Z$. Finite-range dependence, applied
to one cube and the union of the others, gives mutual independence within
each color by induction.
For independent centered $Y_j$, independent copies $Y_j'$ and independent
Rademacher signs $\varepsilon_j$, conditional Jensen gives
\begin{align*}
 \mathbb E\left|\sum_jY_j\right|^q
 &\le\mathbb E\left|\sum_j(Y_j-Y_j')\right|^q
 =\mathbb E\mathbb E_\varepsilon
       \left|\sum_j\varepsilon_j(Y_j-Y_j')\right|^q\\
 &\le\mathbb E\left(\sum_j|Y_j-Y_j'|^2\right)^{q/2}
 \le2^q\sum_j\mathbb E|Y_j|^q.
\end{align*}
Jensen and stationarity give $\mathbb E|Y_j|^q\le C_q\mathbb E|Z(0)|^q$.
Combine the finitely many colors using
$|\sum_{c=1}^Ju_c|^q\le J^{q-1}\sum_c|u_c|^q$, and divide by $|B_r|^q$.

\end{proof}

For $Z=H_\kappa$, the decay exponent in
\eqref{ren:average-compression} is $d(q-1)=d/(d-1)$.

\begin{proposition}[Probabilistic nondegeneracy of directional ball exits]
\label{ren:exit-covariance}
Let $Y$ be the diffusion with generator $B:D^2$, started at a deterministic
point $z$, and set
$\sigma=\inf\{t:Y_t\notin B_r(z)\}$ and $\Delta=Y_\sigma-z$.
In every environment,
\begin{equation}
 \mathbb E_z^\omega\sigma=r^2/2,\qquad
 \mathbb E_z^\omega\Delta=0,\qquad |\Delta|=r.
 \label{ren:exit-identities}
\end{equation}
There exists $\eta_d>0$ such that
\begin{equation}
 \frac1{|B_r|}\int_{B_r(z)}H_\kappa\le\eta_d
 \quad\Longrightarrow\quad
 \mathbb E_z^\omega[\Delta\otimes\Delta]
 \ge\frac\kappa2r^2I.
 \label{ren:exit-lower}
\end{equation}
Consequently, one can choose $\kappa>0,C<\infty$, depending on the
fixed coefficient law, such that, for all sufficiently large $r$,
\begin{equation}
 \mathbb P\!\left(
 \mathbb E_z^\omega[\Delta\otimes\Delta]
 \not\ge\frac\kappa2r^2I\right)
 \le Cr^{-d/(d-1)}.
 \label{ren:exit-probability}
\end{equation}
\end{proposition}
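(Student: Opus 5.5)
The plan is to prove the three assertions in order: the identities in \eqref{ren:exit-identities} from martingale arguments, the deterministic implication \eqref{ren:exit-lower} by an occupation-time bound via ABP with the determinant weight, and \eqref{ren:exit-probability} by the $L^q$ averaging lemma.

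\textbf{Exit identities.} The first identity is Lemma~\ref{lem:reg:normalized-diffusion} with $L=r$ and $x=c=z$. Since $Y_{t\wedge\sigma}$ is a bounded martingale and $\sigma<\infty$, optional stopping gives $\mathbb E_z^\omega\Delta=0$. Path continuity gives $|\Delta|=r$.

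\textbf{Covariance lower bound.} Fix a unit vector $e$. The stopped bracket identity together with $|Y_{t\wedge\sigma}-z|\le r$ gives
\[
 e\cdot\mathbb E_z^\omega[\Delta\otimes\Delta]e
 =2\mathbb E_z^\omega\int_0^\sigma e\cdot B(Y_s)e\,ds
 \ge2\kappa\Big(\frac{r^2}{2}-\mathbb E_z^\omega\int_0^\sigma\mathbf1_{\{b<\kappa\}}(Y_s)\,ds\Big).
\]
It therefore suffices to show that the occupation time of $\{b<\kappa\}$ before $\sigma$ is at most $r^2/4$. I would argue as in Lemma~\ref{down:density-occupation}. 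Since $b$ is continuous, $\{b<\kappa\}$ is open, so $\mathbf1_{\{b<\kappa\}}$ is an increasing limit of continuous $0\le h_n\le\mathbf1_{\{b<\kappa\}}$. For each $n$, solve $-B:D^2v_n=h_n$ in $B_r(z)$ with zero boundary data. On $\overline B_r(z)$ the coefficients are Hölder and uniformly elliptic, by Lemma~\ref{lem:local-regularity}, so $v_n$ is classical. Itô's formula then represents $v_n(z)$ as the occupation integral of $h_n$.

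The key tool is the ABP estimate in its determinant form \cite[Theorem~9.1]{GT}:
\[
 \sup v_n\le C_dr\Big(\int_{B_r(z)}\frac{h_n^d}{\det B}\Big)^{1/d}
 \le C_dr\Big(\int_{B_r(z)}H_\kappa\Big)^{1/d}
 \le C_d\,\eta_d^{1/d}\,r^2 .
\]
Here I use $h_n^d\le\mathbf1_{\{b<\kappa\}}$ and the hypothesis $\int_{B_r(z)}H_\kappa\le\eta_d|B_r|$. Choosing $\eta_d$ so that $C_d\eta_d^{1/d}\le1/4$ and passing $n\to\infty$ by monotone convergence gives the occupation bound $r^2/4$. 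Hence $e\cdot\mathbb E_z^\omega[\Delta\otimes\Delta]e\ge\kappa r^2/2$ for every unit $e$, which is \eqref{ren:exit-lower}.

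\textbf{Probability bound.} The field $H_\kappa(x)$ is a pointwise function of $A(x)$, so it is stationary with dependence range $\ell$. Its $q$-th moment satisfies $\mathbb EH_\kappa(0)^q\le\mathbb ED(0)^q<\infty$ by Lemma~\ref{ren:determinant-moment}, with $q=d/(d-1)$. Using \eqref{ren:bad-weight-limit}, I would fix $\kappa$ so small that $\mathbb EH_\kappa(0)\le\eta_d/2$. By \eqref{ren:exit-lower}, failure of the covariance bound forces the ball average of $H_\kappa$ to deviate from its mean by more than $\eta_d/2$. Markov's inequality with exponent $q$ and Lemma~\ref{ren:local-average} then bound this probability by
\[
 C(\eta_d/2)^{-q}\,r^{-d(q-1)}\,\mathbb ED(0)^q=Cr^{-d/(d-1)},
\]
valid for $r\ge\ell+1$.

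\textbf{Main obstacle.} The only delicate point is justifying ABP and the Itô representation for the degenerate, merely locally elliptic $B$. This is handled because $B$ is uniformly elliptic on the fixed compact ball in each realization, and because the ABP constant depends only on $d$ once the determinant weight is kept explicit.
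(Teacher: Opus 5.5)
Your proposal is correct and follows essentially the same route as the paper: optional stopping for the exit identities, the determinant-weighted ABP bound on the occupation time of $\{b<\kappa\}$ (first for regular sources via the stopped It\^o formula, then by monotone approximation), and finally a choice of $\kappa$ with $\mathbb E H_\kappa(0)<\eta_d/2$ combined with Lemma~\ref{ren:local-average} and Markov's inequality at exponent $q=d/(d-1)$. One cosmetic point: take your approximating sources Lipschitz rather than merely continuous, for example $h_n=\min\{1,n\operatorname{dist}(\cdot,\{b\ge\kappa\})\}$, so that the Schauder theory you invoke really yields classical solutions.
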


\begin{proof}
On a fixed ball, continuity and strict positivity give local uniform
ellipticity, so the diffusion and Dirichlet problem are well posed and
the exit time is integrable. Optional stopping in
$|Y_{t\wedge\sigma}-z|^2-2(t\wedge\sigma)$ gives
$\mathbb E_z^\omega\sigma=r^2/2$; the bounded stopped coordinate martingales
and path continuity give the other identities.

The determinant form of ABP yields
\[
 \mathbb E_z^\omega\int_0^\sigma\mathbf1_{\{b<\kappa\}}(Y_t)\,dt
 \le C_dr\left(\int_{B_r(z)}H_\kappa\right)^{1/d}
 \le C_dr^2\left(\fint_{B_r(z)}H_\kappa\right)^{1/d}.
\]
This follows first for continuous sources by the stopped It\^o formula,
and then for nonnegative Borel sources by approximation.
Choose $\eta_d$ so that the last expression is at most $r^2/4$.
For $|e|=1$,
\begin{align*}
 \mathbb E_z^\omega(e\cdot\Delta)^2
 &=2\mathbb E_z^\omega\int_0^\sigma e\cdot B(Y_t)e\,dt\\
 &\ge2\kappa\left(\mathbb E_z^\omega\sigma
 -\mathbb E_z^\omega\int_0^\sigma\mathbf1_{\{b<\kappa\}}(Y_t)\,dt\right)
 \ge\kappa r^2/2.
\end{align*}
Choose $\kappa$ with $\mathbb EH_\kappa<\eta_d/2$ and apply
\eqref{ren:average-compression} and Markov's inequality to obtain
\eqref{ren:exit-probability}.
\end{proof}

For a bounded convex domain $U$, let $\Gamma_Uu$ be the convex envelope
of $u\in C(\overline U)$. Write $\partial w(E)$ for the union, over $x\in E$,
of the subdifferentials of a convex function $w$:
\[
 \partial w(x):=\{p:w(y)\ge w(x)+p\cdot(y-x)\text{ for every }y\in U\}.
\]
For the pair of operators
\begin{equation}
 F(N,x)=-B(x):N+f(x),\qquad F^*(N,x)=-B(x):N-f(x),
 \label{ren:operator-dual}
\end{equation}
set
\begin{equation}
 \mu(U,F):=\frac1{|U|}\sup\left\{|\partial\Gamma_Uu(U)|:
 u\in C(\overline U),\ F(D^2u,x)\ge0\text{ in }U\right\}.
 \label{ren:mu-definition}
\end{equation}
The inequality is understood in the viscosity sense. Local H\"older
continuity and local uniform ellipticity imply that solutions
are $C^2$ in the interior.

\begin{lemma}[Replacement by a solution and the exact determinant bound]
\label{ren:contact-mass}
Let $Q$ be a bounded cube and let $f$ be locally H\"older continuous in
a neighborhood of this cube. Then
\begin{equation}
 \mu(Q,F)\le\frac1{d^d|Q|}\int_Q(f_+)^dD.
 \label{ren:mu-exact-bound}
\end{equation}
The supremum in \eqref{ren:mu-definition} can be restricted to solutions $F(D^2v,x)=0$ with continuous boundary data.
For these solutions, the Monge--Amp\`ere measure of the convex envelope
assigns zero mass to every Lebesgue null set, in particular to the grid
boundaries of the subcubes.
\end{lemma}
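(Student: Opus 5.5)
The plan follows the Armstrong--Smart replacement argument and the classical ABP determinant computation. Start from an admissible $u\in C(\overline Q)$ with $F(D^2u,x)\ge0$, i.e.\ $-B:D^2u+f\ge0$ in the viscosity sense, so $B:D^2u\le f$. We want a solution that dominates it in the sense of convex envelopes. Solve the Dirichlet problem $F(D^2v,x)=0$ in $Q$ with $v=u$ on $\partial Q$. Local uniform ellipticity and H\"older continuity of $B$ and $f$ on a neighborhood of $\overline Q$ (Lemma~\ref{lem:local-regularity}) give existence, and give $v\in C^{2,\theta}_{\rm loc}(Q)\cap C(\overline Q)$; the cube satisfies an exterior cone condition, so boundary continuity holds. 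Comparison gives $u\ge v$ in $Q$ with equal boundary values. Hence $\Gamma_Qu\ge\Gamma_Qv$ with equal boundary values. The standard monotonicity of the subdifferential image under this ordering (any supporting plane of $\Gamma_Qu$ with slope $p$ can be lowered to touch $\Gamma_Qv$) yields $\partial\Gamma_Qu(Q)\subseteq\partial\Gamma_Qv(Q)$ up to a null set. This justifies restricting the supremum in \eqref{ren:mu-definition} to solutions.

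For a solution $v$, the bound is the exact ABP computation. Let $\mathcal K=\{x\in Q:v(x)=\Gamma_Qv(x)\}$ be the contact set. Slopes in $\partial\Gamma_Qv(Q)$ are attained at contact points in $Q$, apart from a null set of slopes coming from boundary or affine pieces; this is handled as in \cite{CC95} using continuity of $v$ up to the boundary. At $x\in\mathcal K$, $v$ is $C^2$ and touched from below by a supporting plane, so $D^2v(x)\ge0$ and $\partial\Gamma_Qv(x)=\{Dv(x)\}$. The area formula then gives
\[
|\partial\Gamma_Qv(Q)|\le\int_{\mathcal K}\det D^2v .
\]
Pointwise on $\mathcal K$, AM--GM applied to $B^{1/2}D^2vB^{1/2}\ge0$ gives $\det B\det D^2v\le (B:D^2v/d)^d=(f_+/d)^d$, since $B:D^2v=f$ there and $B:D^2v\ge0$. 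Hence $\det D^2v\le d^{-d}f_+^dD$, and dividing by $|Q|$ proves \eqref{ren:mu-exact-bound}.

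For the null-set claim, I would use the fact that the Monge--Amp\`ere measure of $\Gamma_Qv$ is concentrated on $\mathcal K$ plus a null slope set, and equals $\det D^2v\,dx$ restricted to $\mathcal K$, with $\det D^2v$ locally bounded in the interior. Applying the same area-formula bound on any Borel $E\subset Q$ gives $|\partial\Gamma_Qv(E)|\le\int_{E\cap\mathcal K}\det D^2v$. This vanishes for Lebesgue-null $E$, in particular for grid faces. The integrand is locally integrable because $f_+^dD\le d^d\lambda^{-(d-1)}$ and $\lambda$ is bounded below on compact sets.

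The main obstacle is the boundary behaviour of the envelope. Slopes realized only at $\partial Q$, and regions where $\Gamma_Qv$ is affine away from contact, must contribute zero measure. The growth of $\det D^2v$ near $\partial Q$ must also be handled, since the $C^2$ bounds are only interior. I would treat this by exhausting $Q$ with interior subcubes and using monotone convergence. For the boundary slopes I would use the standard argument that a plane supporting the envelope at an interior non-contact point touches $v$ at two or more points, so those slopes form a set of measure zero.
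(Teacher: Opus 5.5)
Your proposal is correct and follows essentially the same route as the paper. You replace an admissible $u$ by the Dirichlet solution $v$ with the same boundary data via comparison. You then combine the arithmetic--geometric mean bound $\det D^2v\le d^{-d}(f_+)^dD$ on the contact set with the area formula for the locally $C^1$ map $Dv$ on a compact exhaustion. Finally, you get the null-set assertion from the area formula together with the vanishing of the noncontact Monge--Amp\`ere mass; the paper cites that vanishing from \cite[Lemma~2.4]{ASquant}, while you justify it by the standard segment/Legendre-transform argument.

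The only minor difference is in the replacement step. You compare the envelopes $\Gamma_Qu\ge\Gamma_Qv$ directly, which in fact gives $\partial\Gamma_Qu(Q)\subseteq\partial\Gamma_Qv(Q)$ with no exceptional null set. The paper instead works with supporting planes at contact points of $u$ and applies the noncontact-mass lemma to $u$ itself.
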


\begin{proof}
Given an admissible $u$, let $v$ solve $F(D^2v,x)=0$ with boundary data $u$.
Comparison gives $v\le u$. If $L$ supports $u$ from below at an interior
contact point $x_0$, then $v-L\ge0$ on the boundary and
$v(x_0)-L(x_0)\le0$. Thus $v-L$ has an interior minimum, and the slope of
$L$ belongs to $\partial\Gamma v(Q)$.
The Monge--Amp\`ere measure vanishes on the noncontact set by
\cite[Lemma~2.4]{ASquant}. That lemma concerns a continuous function and
its convex envelope and has no ellipticity hypothesis. Consequently
$|\partial\Gamma u(Q)|\le|\partial\Gamma v(Q)|$, proving the replacement
assertion.

For a solution, put $E=\{v=\Gamma v\}\cap Q$. At $x\in E$ the
supporting slope is $Dv(x)$, and
\[
 H_x:=D^2v(x)\ge0,\qquad B(x):H_x=f(x),\qquad
 \det H_x\le\frac{(B:H_x/d)^d}{\det B}
 \le d^{-d}(f_+)^dD.
\]
The determinant inequality is the arithmetic--geometric mean inequality
for $B^{1/2}H_xB^{1/2}$. Apply the area formula to the locally $C^1$ map
$Dv$ on a compact exhaustion of $Q$:
\[
 |\partial\Gamma v(Q)|=|Dv(E)|
 \le\int_E\det D^2v\le d^{-d}\int_Q(f_+)^dD.
\]
Taking the supremum proves \eqref{ren:mu-exact-bound}. The same area formula
maps every null subset of $E$ to a null set of slopes. Together with the
vanishing of the noncontact mass, this proves absolute continuity of the
envelope measure, including zero mass on grid boundaries.
\end{proof}

\begin{lemma}[Local measurability, subadditivity, and moment bounds]
\label{ren:mu-basic}
The random variable $\mu(Q,F)$ is measurable and depends only on $(B,f)$
inside $Q$. If $Q_{m+n}$ is partitioned into $N=3^{dn}$ triadic subcubes
$Q_i$ of scale $m$, then
\begin{equation}
 \mu(Q_{m+n},F)\le\frac1N\sum_{i=1}^N\mu(Q_i,F).
 \label{ren:subadditivity}
\end{equation}
Consequently, for each $p\ge1$, both $\mathbb E\mu(Q_m,F)$ and
$\mathbb E\mu(Q_m,F)^p$ are nonincreasing in $m$ whenever the corresponding
moments are finite. If $|f|\le K$, then
\begin{equation}
 \mathbb E\mu(Q_m,F)^q\le(K/d)^{dq}\mathbb ED(0)^q.
 \label{ren:mu-initial-moment}
\end{equation}
Moreover, $s\mapsto\mu(Q,F+s)$ is nondecreasing.
\end{lemma}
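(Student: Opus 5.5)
The plan is to prove the four assertions in the order locality, subadditivity, monotonicity of moments, initial moment, and shift monotonicity, each from the definition \eqref{ren:mu-definition} together with Lemma~\ref{ren:contact-mass}.

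\emph{Locality and measurability.} By Lemma~\ref{ren:contact-mass}, the supremum may be taken over solutions $v$ of $F(D^2v,x)=0$ in $Q$ with continuous boundary data. These are determined by $(B,f)|_{\overline Q}$ and the data, so $\mu(Q,F)$ depends only on the fields inside $Q$ (the boundary is null, and continuity lets us use $\overline Q$). For measurability, restrict the boundary data to a countable dense family $\{g_k\}\subset C(\partial Q)$, for instance polynomials with rational coefficients. For solutions with data $g_k\to g$ uniformly, comparison gives $v_k\to v$ uniformly. Convex envelopes converge uniformly, and the slope image is lower semicontinuous in measure in the sense that $|\partial\Gamma v(Q)|\le\liminf|\partial\Gamma v_k(Q)|$; this follows from weak convergence of Monge--Amp\`ere measures and openness of $Q$. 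Hence the supremum equals a countable supremum. Each $|\partial\Gamma v_k(Q)|$ is a measurable function of the coefficients, because the solution map is continuous in the locally uniform topology by stability of viscosity solutions. I expect this measurability step to be the main technical obstacle, in particular justifying lower semicontinuity through interior slopes; I would handle it via the closedness of subdifferential graphs.

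\emph{Subadditivity.} Take a solution $u$ on $Q_{m+n}$. Its restriction to each $Q_i$ is admissible there. The key inclusion is
\[
\partial\Gamma_{Q_{m+n}}u(Q_{m+n})\subset\bigcup_i\partial\Gamma_{Q_i}u(Q_i)\cup\mathcal N,
\]
with $|\mathcal N|=0$. To see this, note that if an affine $L$ touches $\Gamma_{Q_{m+n}}u$ from below at an interior point $x$ lying in $\overline{Q_i}$, then by the noncontact-measure-zero property we may assume $x$ is a contact point of $u$. The same $L$ then supports $u$ on $Q_i$, so its slope lies in $\partial\Gamma_{Q_i}u(Q_i)$. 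The slopes from contact points on grid boundaries form a null set by Lemma~\ref{ren:contact-mass}. Dividing by $|Q_{m+n}|=N|Q_i|$ and taking the supremum gives \eqref{ren:subadditivity}.

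\emph{Moments and shift monotonicity.} Monotonicity of $\mathbb E\mu(Q_m,F)$ follows from stationarity, since the subcubes are translates by lattice vectors and $\mu(Q_i,F)$ has the law of $\mu(Q_m,F)$. For the $p$-th moment, apply Jensen in the form $(\frac1N\sum x_i)^p\le\frac1N\sum x_i^p$ and then stationarity. For \eqref{ren:mu-initial-moment}, combine \eqref{ren:mu-exact-bound} with $f_+\le K$ to get $\mu\le (K/d)^d\fint_Q D$. Jensen gives $\mu^q\le (K/d)^{dq}\fint_Q D^q$, and Fubini with stationarity yields the bound, which is finite by Lemma~\ref{ren:determinant-moment}. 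Finally, if $s\le s'$ then $F+s\le F+s'$, so every $u$ with $F(D^2u)+s\ge0$ also satisfies $F(D^2u)+s'\ge0$. The admissible class grows with $s$, and $\mu$ is nondecreasing in $s$.
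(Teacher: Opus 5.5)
Your proposal is correct and follows the paper's proof essentially step by step. The shared steps are: reduction to solutions via Lemma~\ref{ren:contact-mass}; a countable dense family of boundary data together with lower semicontinuity of the Monge--Amp\`ere measure on open sets for measurability; the zero grid-boundary mass for subadditivity; Jensen and stationarity for the moments; the exact determinant bound \eqref{ren:mu-exact-bound} for \eqref{ren:mu-initial-moment}; and inclusion of the admissible classes for the shift. You also invoke explicitly the vanishing of the envelope measure on the noncontact set in the subadditivity inclusion. This sharpens the paper's brief remark that a supporting plane on the large cube remains supporting on each subcube, which is literally true only at contact points.
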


\begin{proof}
By Lemma~\ref{ren:contact-mass}, it suffices to use solutions.
Choose a countable dense family of continuous boundary data. Uniform
convergence of the data implies uniform convergence of solutions by
comparison, and hence of their convex envelopes. Weak convergence of
Monge--Amp\`ere measures and lower semicontinuity on open sets give
\[
 |\partial\Gamma v(Q)|\le\liminf_j|\partial\Gamma v_j(Q)|.
\]
Thus the countable family gives the same supremum. For fixed boundary
data, comparison and viscosity stability give measurable dependence on
$(B,f)|_Q$; the convex measure is measurable by local approximation.
Boundary coefficient values are determined by continuity from the interior.
This proves local measurability.

Replace a near-maximizer on $Q_{m+n}$ by a solution $v$.
Its envelope has zero mass on the grid boundaries, so
\[
 |\partial\Gamma_{Q_{m+n}}v(Q_{m+n})|
 =\sum_i|\partial\Gamma_{Q_{m+n}}v(Q_i)|
 \le\sum_i|Q_i|\mu(Q_i,F).
\]
Here a supporting plane on the large cube remains supporting on each
subcube. Letting the approximation error tend to zero gives
\eqref{ren:subadditivity}. Stationarity and Jensen then give monotonicity
of the first and $p$th moments.
Finally, \eqref{ren:mu-exact-bound}, Jensen and stationarity imply
\eqref{ren:mu-initial-moment}, while inclusion of the supersolution classes
gives monotonicity in $s$.
\end{proof}

\begin{lemma}[A geometric ABP estimate without ellipticity constants]
\label{ren:geometric-abp}
There exists $C_d$ such that every $u\in C(\overline Q_m)$ satisfying
$F(D^2u,x)\ge0$ obeys
\begin{equation}
 \inf_{\partial Q_m}u-\inf_{Q_m}u
 \le C_d3^{2m}\mu(Q_m,F)^{1/d}.
 \label{ren:geometric-abp-estimate}
\end{equation}
If the left-hand side is negative, the assertion is understood with its
positive part.
\end{lemma}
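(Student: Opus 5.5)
The plan is to compare $u$ with its convex envelope and run the classical slope-counting argument behind ABP. The ellipticity enters only through the quantity $\mu(Q_m,F)$, which by definition already bounds the Monge--Amp\`ere mass of the envelope of every admissible $u$. Write $Q=Q_m$, of side $3^m$ and diameter $\sqrt d\,3^m$. Normalize by subtracting the constant $\inf_{\partial Q}u$, which preserves $F(D^2u,x)\ge0$ since $F$ sees only $D^2u$. We may then assume $\inf_{\partial Q}u=0$ and $M:=-\inf_Qu>0$; otherwise the positive part is zero and there is nothing to prove.

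The first step handles the boundary. A convex envelope of a continuous function on a cube need not agree with $u$ on $\partial Q$, so I would replace $u$ by $w=\min\{u,0\}$. This is not obviously admissible, so instead I would argue directly with $\Gamma=\Gamma_Q u$. Since $u\ge0$ on $\partial Q$, every affine $L$ with $L\le0$ on $\partial Q$ and $L\le u$ in $Q$ is a competitor. The key geometric claim is that every slope $p$ with $|p|<M/(\sqrt d\,3^m)$ belongs to $\partial\Gamma(Q)$.

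To see this, let $x_0$ be a minimum point of $u$, so $u(x_0)=-M$. Consider the affine functions $L_t(y)=p\cdot(y-x_0)-M+t$ and raise $t$ from $0$ until $L_t$ first touches $u$ from below at a point $x_1$. On $\partial Q$ we have $L_t\le |p|\sqrt d\,3^m-M+t<t$. For $t\le M$, then, $L_t<u$ on $\partial Q$ unless the touching already happened. Since $L_0(x_0)=-M=u(x_0)$, the first contact occurs at some $t\le0$, hence at an interior point $x_1$, and $L_t\le u$ on $\overline Q$. An affine minorant of $u$ touching at $x_1$ is also a minorant of $\Gamma$, and $\Gamma(x_1)=u(x_1)$. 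Thus $p\in\partial\Gamma(x_1)$ with $x_1\in Q$. Compactness and continuity handle the attainment; the subdifferential uses supporting inequalities on all of $U=Q$, which holds because $L_t\le u$ everywhere in $\overline Q$.

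Comparing volumes then gives
\[
 |B_1|\Bigl(\frac{M}{\sqrt d\,3^m}\Bigr)^d\le|\partial\Gamma_Qu(Q)|\le|Q|\,\mu(Q,F)=3^{dm}\mu(Q,F),
\]
which yields $M\le C_d3^{2m}\mu(Q_m,F)^{1/d}$, i.e.\ \eqref{ren:geometric-abp-estimate}.

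I expect the only delicate point to be the touching argument. The interior contact point has to lie in the open cube and the minorant has to be global on $\overline Q$; the strict inequality on $\partial Q$ supplies the first, and the choice of the first contact time the second. No ellipticity constant appears anywhere, because the definition \eqref{ren:mu-definition} absorbs all dependence on $B$ and $f$. Lemma~\ref{ren:contact-mass} is not even needed here, though it confirms that $\mu$ is finite.
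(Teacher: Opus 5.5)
Your proposal is correct and is essentially the paper's proof: for $|p|$ below a fixed multiple of $h/\operatorname{diam}Q_m$, the function $u(x)-p\cdot(x-x_0)$ attains its minimum over $\overline Q_m$ at an interior point, so $p\in\partial\Gamma u(Q_m)$, and comparing volumes with $|Q_m|\mu(Q_m,F)$ gives the estimate. (You use the slope ball of radius $h/\operatorname{diam}Q_m$, while the paper uses $h/(2\operatorname{diam}Q_m)$; both work.) One wording slip needs fixing: the sliding must start from $t=-\infty$, not from $t=0$, because $L_0$ touches $u$ at $x_0$ but need not lie below $u$; your conclusion ``first contact at some $t\le0$'' already uses the correct version, namely that $x_1$ is a minimizer of $u-L_0$ over $\overline Q_m$.
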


\begin{proof}
Suppose that the left-hand side is a positive number $h$, and choose an
interior minimum point $x_0$.
For $|p|<h/(2\operatorname{diam}Q_m)$, the boundary values of
$u(x)-p\cdot(x-x_0)$ are strictly greater than its value at $x_0$.
It therefore attains its minimum in the interior, and
$p$ belongs to $\partial\Gamma u(Q_m)$.
Consequently,
\[
 c_d\left(\frac h{\operatorname{diam}Q_m}\right)^d
 \le|\partial\Gamma u(Q_m)|
 \le |Q_m|\mu(Q_m,F).
\]
The conclusion follows from
$\operatorname{diam}Q_m=\sqrt d\,3^m$ and $|Q_m|=3^{dm}$.
\end{proof}

\subsection{Convex growth and the degenerate region}

The next lemma is uniform in the source amplitude, as required when the
contact mass is normalized below.

\begin{lemma}[Thin-layer exclusion]
\label{ren:thin-layer}
There exist $h_d,c_d,\eta_d>0$, depending only on the dimension, with the
following property.
Let $Q_0=(-1/2,1/2)^d$, and suppose that $B$ is locally H\"older continuous
and strictly positive definite in a neighborhood of this cube, with
$\operatorname{tr}B=1$.
Given $\kappa>0$, choose
\begin{equation}
 h\ge\max\{h_d,8/\kappa\},\qquad
 0<r<(4\sqrt h)^{-1},\qquad 3^n\le c_dr.
 \label{ren:thin-parameters}
\end{equation}
Suppose that $u\in C(\overline Q_0)\cap C^2(Q_0)$ is a supersolution of $F$
and that there exist $x_0\in Q_n$ and a unit vector $e$ such that
\begin{equation}
 \inf_{Q_0}u=u(x_0)=0,\qquad
 u\ge hr^2\quad\text{on }\{x\in Q_0:|e\cdot x|\ge r\}.
 \label{ren:thin-hypothesis}
\end{equation}
If
\begin{equation}
 \frac1{|B_{2\sqrt h r}|}
 \int_{B_{2\sqrt h r}}D\mathbf1_{\{b<\kappa\}}
 \le\eta_d,
 \label{ren:thin-good}
\end{equation}
then there is a triadic subcube $Q_n(x)\subset Q_0$ of scale $n$ such that
\begin{equation}
 \mu(Q_n(x),F)\ge2.
 \label{ren:thin-conclusion}
\end{equation}
\end{lemma}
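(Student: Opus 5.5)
The plan is to argue by contradiction, combining the exit-covariance mechanism of Proposition~\ref{ren:exit-covariance} with the geometric ABP estimate of Lemma~\ref{ren:geometric-abp} applied at scale $n$. Assume that every triadic subcube $Q_n(x)\subset Q_0$ satisfies $\mu(Q_n(x),F)<2$.

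\emph{Step 1: the diffusion exits outside the slab with positive probability.} Set $\rho=\sqrt h\,r<1/4$, and let $Y$ be the $B$-diffusion started at $x_0$, with $\sigma$ its exit time from $B_\rho(x_0)$. Since $x_0\in Q_n$ and $3^n\le c_dr\ll\rho$, the ball $B_\rho(x_0)$ lies in $B_{2\rho}\subset Q_0$. Hypothesis \eqref{ren:thin-good}, after shrinking $\eta_d$ by the factor $2^d$, therefore gives the averaged smallness on $B_\rho(x_0)$. The proof of \eqref{ren:exit-lower} then applies verbatim: the determinant ABP bound controls the time spent in $\{b<\kappa\}$, and yields $\mathbf E(e\cdot\Delta)^2\ge\kappa\rho^2/2$, where $\Delta=Y_\sigma-x_0$. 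Since $|\Delta|=\rho$, we have
\[
 \mathbf E(e\cdot\Delta)^2\le(r+|e\cdot x_0|)^2+\rho^2\,\mathbf P(|e\cdot Y_\sigma|\ge r),
\]
and $|e\cdot x_0|\le c_dr$. Hence $\mathbf P(|e\cdot Y_\sigma|\ge r)\ge\kappa/2-2/h\ge\kappa/4$, using $h\ge8/\kappa$. By \eqref{ren:thin-hypothesis} and $u\ge0$, we conclude
\[
 \mathbf E u(Y_\sigma)\ge\tfrac{\kappa}{4}hr^2\ge2r^2 .
\]

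\emph{Step 2: the upper bound for the increment of $u$ from small contact mass.} By Lemma~\ref{ren:contact-mass} we may replace $u$ by the solution $v$ of $F(D^2v,x)=0$ in $Q_0$ with the same boundary values. Then $v\le u$, $v(x_0)\le0$, and $v$ has the same convex envelope structure. The aim is an upper bound of the form $\mathbf E v(Y_\sigma)-v(x_0)\le C_dr^2$, with $C_d$ small relative to the constant $2$ obtained in Step~1. To get it, I would apply Lemma~\ref{ren:geometric-abp} on the cubes $Q_n(x)$ met by the path; each such application gives
\[
 \inf_{\partial Q_n(x)}v-\inf_{Q_n(x)}v\le C_d3^{2n}2^{1/d}\le C_dc_d^2r^2 .
\]
These local bounds would then be integrated along the path through the Dynkin identity
\[
 \mathbf E v(Y_\sigma)-v(x_0)=\mathbf E\int_0^\sigma f(Y_t)\,dt,
\]
split according to whether $Y_t$ lies on or off the contact set of the envelope of $v$. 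The constant $c_d$ is chosen small enough that the resulting bound falls below $2r^2$, so that $v$ cannot exceed its minimum by order $hr^2$ on the exit sphere. This contradicts Step~1 transferred to $v$; for that transfer I will need $\mathbf E u(Y_\sigma)$ to be controlled by $\mathbf E v(Y_\sigma)$ up to the ABP error. The argument is uniform in the amplitude of $f$ because only $\mu$, and not $\|f\|_\infty$, enters Step~2.

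\emph{Main obstacle.} The step I expect to be hardest is the conversion in Step~2 of ``$\mu<2$ on every scale-$n$ subcube'' into a pathwise bound on $\mathbf E\int_0^\sigma f_+(Y_t)\,dt$ that does not depend on the amplitude of $f$. Dynkin's formula alone does not give this. If this route fails, I would follow Armstrong--Smart~\cite[Section~5]{ASquant} more closely. There the minimum at $x_0$, together with the growth $hr^2$ off the slab, forces the slopes $p$ with $|p|\lesssim\sqrt h\,r$ in the direction transverse to $e$ to touch $\Gamma u$ inside the slab near $x_0$. The total slope volume, of order $(hr^2/r)\cdot(\text{transverse slopes})^{d-1}$, is then distributed over at most $C(r/3^n)^{d}$ subcubes, and the pigeonhole principle produces one subcube with $\mu\ge2$. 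In that fallback, Step~1 is still needed: the nondegeneracy supplied by \eqref{ren:thin-good} excludes the degenerate scenario in which the diffusion stays inside the slab and the slope mass collapses onto a lower-dimensional set.
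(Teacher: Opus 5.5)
\paragraph{Where the proposal breaks.} Your Step~1 is sound, but the proposal has a genuine gap, and it is structural rather than technical.

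First, passing to $v$ points the wrong way, since $v\le u$. You do not need $v$ anyway: Dynkin applied to the supersolution $u$ already gives $\mathbf E u(Y_\sigma)\le\mathbf E\int_0^\sigma f(Y_t)\,dt$.

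The real problem is scaling. No bound on $\mathbf E\int_0^\sigma f_+$ follows from $\mu$, because $f$ may oscillate in sign below scale $3^n$ while $\mu$ stays small. For the signed potential, what $\mu$ gives you comes through ABP. Chaining the cell bounds $C3^{2n}\mu^{1/d}$ over the roughly $\rho^2/3^{2n}$ cell-sized steps the path takes before leaving $B_\rho(x_0)$ gives at best $\mathbf E\int_0^\sigma f\le C\rho^2\max\mu^{1/d}=Chr^2\max\mu^{1/d}$. ABP on a cube of side $\sim\rho$ plus subadditivity gives the same bound.

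In that bound the factor $3^{2n}$ cancels, so shrinking $c_d$ gains nothing. Against your lower bound $\kappa hr^2/4$ the factor $h$ cancels as well. You would therefore obtain only $\max\mu\gtrsim\kappa^d$, not the $\kappa$-independent threshold $2$. The largeness of $h$, which is what must produce $\mu\ge2$, is invisible to the exit-probability route.

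\paragraph{The fallback and the missing construction.} Your fallback points in the right direction but omits the construction that makes it work. Its key claim is also false for $u$ itself: the minimum at $x_0$ and the growth off the slab force only slopes parallel to $e$. For example, if $f\ge2h$, then $u=h(e\cdot x)^2$ satisfies every hypothesis, yet $\partial\Gamma u$ lies on a line.

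The paper counts slopes for $\widetilde u=u+\varphi+z$ instead. Here $\varphi(x)=|x|^2-\tfrac h8(e\cdot x)^2$, and $z$ solves $-B:D^2z=g:=(2-\tfrac h4e\cdot Be)_+$ in $B_{2\sqrt hr}$ with zero boundary values.
\begin{enumerate}
\item Because $h\ge8/\kappa$, one has $0\le g\le2\mathbf 1_{\{b<\kappa\}}$ and $-B:D^2(\varphi+z)\ge0$. So $\widetilde u$ is still a supersolution of $F$, and hence admissible for $\mu$ on every subcube.
\item Determinant ABP together with \eqref{ren:thin-good} gives $0\le z\le hr^2/16$. This, and not an exit-distribution estimate, is where the bad-region hypothesis enters.
\item Let $S_1=\{|e\cdot x|<r,\ |x|<\sqrt hr\}$. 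Then $\widetilde u\ge hr^2/2$ on $2S_1\setminus S_1$, while $\widetilde u(x_0)\le hr^2/4$. Every slope $|p|\le c\sqrt hr$ is therefore captured in $S_1$, giving slope volume $\gtrsim h^{d/2}r^d$.
\item The scale-$n$ cubes meeting $S_1$ have total volume $\lesssim h^{(d-1)/2}r^d$, not $\lesssim r^d$ as your count suggests. Zero grid-boundary mass of the envelope measure then yields $\max\mu(Q_n(x),F)\ge c\sqrt h\ge2$ once $h_d$ is large.
\end{enumerate}
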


\begin{proof}
Set
\[
 \varphi(x)=|x|^2-\frac h8(e\cdot x)^2,\qquad
 g(x)=\left(2-\frac h4e\cdot B(x)e\right)_+.
\]
Since $h\ge8/\kappa$, $0\le g\le2\mathbf1_{\{b<\kappa\}}$.
Let $z$ solve $-B:D^2z=g$ in $B_{2\sqrt h r}\Subset Q_0$, with zero
boundary data. Comparison and determinant ABP give
\[
 0\le z\le C_d\sqrt h\,r
   \left(\int_{B_{2\sqrt h r}}D\mathbf1_{\{b<\kappa\}}\right)^{1/d}
 \le C_d\eta_d^{1/d}hr^2\le hr^2/16.
\]
The last inequality fixes $\eta_d$ independently of $\kappa,h,r$.
Also $-B:D^2(\varphi+z)=\frac h4e\cdot Be-2+g\ge0$, so
$\widetilde u:=u+\varphi+z$ is a supersolution of $F$.

Put $S_1:=\{|e\cdot x|<r,\ |x|<\sqrt h\,r\}$ and $S_2:=2S_1$.
On $S_2\setminus S_1$, either $|e\cdot x|\ge r$ and $u\ge hr^2$, or
$|x|^2\ge hr^2$ and $|e\cdot x|<r$. In both cases,
\begin{equation}
 \widetilde u\ge hr^2/2.
 \label{ren:thin-annulus}
\end{equation}
Choose $c_d$ so that $|x_0|^2\le r^2$, and $h_d\ge16$. Then
$\widetilde u(x_0)\le r^2+hr^2/16\le hr^2/4$.
Every slope $|p|\le c_d\sqrt h\,r$ therefore makes
$\widetilde u-p\cdot x$ attain its minimum over $S_2$ in $S_1$, giving
\begin{equation}
 |\partial\Gamma_{S_2}\widetilde u(S_1)|\ge c_dh^{d/2}r^d.
 \label{ren:thin-slope-mass}
\end{equation}
The scale-$n$ cubes meeting $S_1$ lie in $S_2$ and have total volume at most
\[
 C_d(r+3^n)(\sqrt h\,r+3^n)^{d-1}\le C_dh^{(d-1)/2}r^d.
\]
Since $g$ is locally H\"older, $z$ and $\widetilde u$ are locally $C^2$.
Their contact slopes are gradients, so the area formula, as in
Lemma~\ref{ren:contact-mass}, gives zero mass to the grid boundaries.
Restricting each supporting plane to its contact cube now yields
\[
 |\partial\Gamma_{S_2}\widetilde u(S_1)|
 \le C_dh^{(d-1)/2}r^d
       \max_{Q_n(x)\cap S_1\ne\varnothing}\mu(Q_n(x),F).
\]
Comparison with \eqref{ren:thin-slope-mass} gives a lower bound
$c_d\sqrt h$ for the maximum. Increase $h_d$ to make it at least 2.
\end{proof}

The ball in \eqref{ren:thin-good} has deterministic center and is independent
of $e$, which may depend on the environment and on $u$. Stationarity gives,
for each deterministic $L>0$,
\begin{equation}
 \mathbb P\!\left(
 \fint_{B_{2\sqrt h rL}}H_\kappa>\eta_d\right)
 \le\eta_d^{-1}\mathbb EH_\kappa(0).
 \label{ren:geometry-probability}
\end{equation}

The remaining geometric input is purely convex. All subcubes below
belong to one compatible triadic grid. We invoke the following result
directly from \cite[Lemma~3.1]{ASquant}; its constants depend only on
dimension. The operator-dependent step is Lemma~\ref{ren:thin-layer},
which replaces the uniformly elliptic argument of
\cite[Lemma~3.2]{ASquant} by the averaged control of the bad region.

\begin{lemma}[Purely convex alternative, Lemma~3.1 of \cite{ASquant}]
\label{ren:convex-alternative}
There exist $c_0,h_0>0$, depending only on $d$, such that, if
$0<r<1$, $3^n\le c_0r$, and $w\in C(\overline Q_0)$ is convex with
\[
 \inf_{Q_0}w=\inf_{Q_n}w=0,\qquad
 |\partial w(Q_n(x))|\ge |Q_n|
 \quad\text{for every }Q_n(x)\subset Q_0,
\]
then at least one of the following holds:
\begin{align}
 w&\ge h_0r^{2-2/d}&&\text{on }\partial Q_0,
 \label{ren:bowl-alternative}\\
 w&\ge h_0r^{2-2/d}&&\text{on }
     \{x\in Q_0:|e\cdot x|\ge r\}
     \text{ for some }|e|=1.
 \label{ren:stripe-alternative}
\end{align}
\end{lemma}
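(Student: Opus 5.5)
Since the paper imports this lemma directly from \cite[Lemma~3.1]{ASquant}, the plan is to reproduce the purely convex argument there, using only convexity and the lower bound on slope volumes. We normalize $w(x_*)=0$ at a minimum point $x_*\in\overline Q_n$. The set of minimum points of $w$ is convex, and the hypothesis $|\partial w(Q_n(x))|\ge|Q_n|$ on every subcube prevents $w$ from being flat near $x_*$ on scale $3^n$. Let $K=\{x\in\overline Q_0:w(x)<t\}$ with $t=h_0r^{2-2/d}$. This sublevel set is convex, contains $x_*$, and on it the slopes of $w$ are controlled.

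If \eqref{ren:bowl-alternative} fails, we want to show that \eqref{ren:stripe-alternative} holds, i.e.\ that $K$ lies in a slab $\{|e\cdot x|<r\}$ for some unit vector $e$.

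1. \emph{Upper bound on slopes.} By convexity, for $x\in K$ at distance at least $\rho$ from $\partial Q_0$, every $p\in\partial w(x)$ satisfies $|p|\le (\sup_{Q_0}w - w(x))/\rho$. Since $w$ is convex with its minimum deep inside, $\sup_{\partial Q_0}w$ is comparable to the value where the bowl alternative fails. The cleaner route is to use the monotonicity of the gradient image instead: $\partial w(K')\subset\{p: p\cdot(y-x)\le w(y)-w(x)\ \text{for all } y\}$. Combined with $0\le w<t$ on $K$, this puts the slopes at points of the shrunken set $\frac12(K-x_*)+x_*$ in the polar-type body $2t\,(K-x_*)^{\circ}$.

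2. \emph{Lower bound on the slope volume.} Count the triadic subcubes of scale $n$ contained in the shrunken body. By the hypothesis, their slope images have total volume at least $\#\{\text{cubes}\}\,|Q_n|\gtrsim|K|$, provided $K$ is not thinner than $3^n\le c_0r$ in some direction, since otherwise the cubes would not fit. Grid boundaries carry zero measure after a standard approximation.

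3. \emph{Comparison via Blaschke--Santal\'o.} Steps 1 and 2 give $|K|\lesssim t^d|(K-x_*)^\circ|\lesssim t^d/|K|$, using the reverse (Santal\'o-type) inequality for the polar of a convex body with an interior point near its centroid. This yields $|K|\lesssim t^{d/2}=h_0^{d/2}r^{d-1}$. John's ellipsoid then shows that $K$ has some width $\lesssim h_0^{d/2}r^{d-1}/(\text{product of the other } d-1 \text{ widths})$. When the bowl alternative fails, $K$ reaches $\partial Q_0$, so it has a diameter of order one. Turning this into a single thin direction requires tracking the widths more carefully: failure of the bowl alternative gives length in one direction, but not directly in $d-1$ directions. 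The actual argument instead proves the dichotomy ``$K$ is contained in a slab of width $r$, or $K$ contains a ball of radius $\gtrsim r$''. In the latter case the volume bound $|K|\gtrsim r^d$ combined with the polar estimate forces $t\gtrsim r^{2-2/d}$ everywhere outside $K$, and choosing $h_0$ small gives the bowl alternative. This exponent bookkeeping explains the power $r^{2-2/d}$.

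The main obstacle is step 3, specifically the case split on the inradius of $K$ and ensuring that $x_*\in\overline Q_n$ (near the center) keeps the polar-body estimate uniform. The cleanest route is to cite \cite[Lemma~3.1]{ASquant} verbatim, after checking that its hypotheses involve only convexity and the cube-wise slope lower bound. That check is immediate here, since no operator appears in the statement.
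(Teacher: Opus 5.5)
Your closing recommendation, to cite \cite[Lemma~3.1]{ASquant} after noting that the statement involves only convexity and the cube-wise slope bound, is exactly what the paper does. The paper gives no proof of its own. The self-contained argument you sketch, however, does not go through as written, for two concrete reasons.

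\textbf{Centering at $x_*$ breaks the polar-volume bound.} The inclusion $\partial w\bigl(x_*+\tfrac12(K-x_*)\bigr)\subset 2t(K-x_*)^\circ$ is correct. But the bound $|(K-x_*)^\circ|\le C_d/|K|$ requires $x_*$ to be deep in $K$ in an affine-invariant sense, and nothing forces this. (It is also the Blaschke--Santal\'o upper bound you need there, not a reverse inequality.) For example, $w(x)=\max\{Ax_1,-x_1/A\}+|x|^2$ with $A=t^{-1/2}$ satisfies all the hypotheses, yet $|K|\,|(K-0)^\circ|\gtrsim t^{-1}$. That $x_*\in\overline Q_n$ lies near the center of $Q_0$ does not help.

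The fix is to center at the John ellipsoid, $E\subset\overline K\subset c+d(E-c)$. For $x\in c+\tfrac12(E-c)$ and $p\in\partial w(x)$, the facts $w\ge0$ and $w\le t$ on $x+\tfrac12(E-c)\subset E$ give $p\in 2t(E-c)^\circ$. This set has volume $C_dt^d/|E|\le C_dt^d/|K|$. Once the minimal width of $K$ is large compared to $3^n$, the grid cubes inside $c+\tfrac12(E-c)$ have total volume at least $c_d|K|$. Their slope images are essentially disjoint by Alexandrov's lemma, so no approximation of grid boundaries is needed, and the hypothesis yields $|K|^2\le C_dt^d$.

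\textbf{The closing bookkeeping is wrong.} Combining $|K|\gtrsim r^d$ with $|K|\lesssim t^{d/2}$ gives only $t\gtrsim r^2$, which is compatible with $t=h_0r^{2-2/d}$ and yields no contradiction. You do not need to extract a single thin direction; instead, argue by contradiction assuming both alternatives fail.
\begin{itemize}
\item Failure of \eqref{ren:stripe-alternative} gives, for every unit $e$, a point $y\in K$ with $|e\cdot y|\ge r$. Since $|e\cdot x_*|\le\sqrt d\,3^n/2\le r/2$ for small $c_0$, every width of $K$ is at least $r/2$. This is where $x_*\in\overline Q_n$ is actually used, and it also supplies the inradius needed for the cube count above.
\item Failure of \eqref{ren:bowl-alternative} puts a point of $\partial Q_0$ into $K$, so $\operatorname{diam}K\ge1/4$.
\end{itemize}
Hence every semi-axis of $E$ is at least $r/(4d)$ and one is at least $1/(8d)$. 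This gives $c_dr^{d-1}\le|K|\le C_dh_0^{d/2}r^{d-1}$, which is false for small $h_0$. This is precisely where the exponent $2-2/d$ comes from.
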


Lemma~\ref{ren:thin-layer} excludes the second alternative under the
bad-region bound, as follows.

\begin{lemma}[Bowl-shaped growth under local control of the bad region]
\label{ren:bowl}
Fix $\kappa$, choose $h$ as in Lemma~\ref{ren:thin-layer}, and then take
\begin{equation}
 0<r<\min\{1,(4\sqrt h)^{-1},(h_0/h)^{d/2}\}.
 \label{ren:bowl-r-choice}
\end{equation}
There exist deterministic $n_g\ge1$ and $c_b>0$ such that the following
holds for integers $n\ge n_g$, $m\in\mathbb Z$, and $a_0>0$.
Suppose that $u$ solves $F(D^2u,x)=0$ on $Q_{m+n+1}$ and that
\begin{equation}
 a_0\le\frac{|\partial\Gamma_{Q_{m+n+1}}u(Q_m(x))|}{|Q_m|}
 \le\mu(Q_m(x),F)\le(1+3^{-dn})a_0
 \quad\text{for every }Q_m(x)\subset Q_{m+n}.
 \label{ren:bowl-flatness}
\end{equation}
If the averaged condition in \eqref{ren:thin-good} holds on the physical
ball $B_{2\sqrt h r\,3^{m+n}}(0)$, then there exist a central contact point
$x_0\in Q_m$ and a supporting slope $p_0$ such that
\begin{equation}
 u(x)-u(x_0)-p_0\cdot(x-x_0)
 \ge c_ba_0^{1/d}3^{2(m+n)}
 \quad\text{on }Q_{m+n+1}\setminus Q_{m+n}.
 \label{ren:bowl-bound}
\end{equation}
After subtracting this affine function, $u\ge0$ throughout its domain and
has a zero in the central cube $Q_m$.
\end{lemma}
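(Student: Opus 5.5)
The plan is to follow the structure of the bowl lemma in Armstrong--Smart, with the stripe alternative excluded by Lemma~\ref{ren:thin-layer}. First rescale: replace $u$ by $\widetilde w(y)=u(3^{m+n}y)$, suitably normalized, on $Q_1$, so that $Q_{m+n}$ becomes $Q_0$ and the scale-$m$ cubes become scale-$(-n)$ cubes. Set $w=\Gamma_{Q_{m+n+1}}u$ and choose a central contact point $x_0\in Q_m$ with supporting slope $p_0$. Such a point exists because \eqref{ren:bowl-flatness} gives positive envelope mass on the central cube, and by Lemma~\ref{ren:contact-mass} the slopes are realized at contact points. Subtract the affine function $u(x_0)+p_0\cdot(x-x_0)$. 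After this, $w\ge0$ and $w(x_0)=0$, so $\inf w=\inf_{Q_m}w=0$. Dividing by $a_0^{1/d}3^{2(m+n)}$ normalizes the lower bound in \eqref{ren:bowl-flatness} to $|\partial w(Q_n(x))|\ge|Q_n|$ in rescaled units. Lemma~\ref{ren:convex-alternative} then applies with $3^{-n}\le c_0r$ once $n\ge n_g$.

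The first alternative, \eqref{ren:bowl-alternative}, gives $w\ge h_0r^{2-2/d}$ on $\partial Q_0$. By convexity and $w(x_0)=0$ with $x_0$ central, this bound propagates to $Q_1\setminus Q_0$ with a dimensional loss. Undoing the scaling yields \eqref{ren:bowl-bound} with $c_b=c\,h_0r^{2-2/d}$, since $u\ge w$.

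It remains to exclude the stripe alternative \eqref{ren:stripe-alternative}. In that case, set $\bar u=u-\text{affine}$, normalized so that $\bar u\ge w\ge h_0r^{2-2/d}\ge hr^2$ on $\{|e\cdot x|\ge r\}$, using $r<(h_0/h)^{d/2}$ from \eqref{ren:bowl-r-choice}. We also have $\inf\bar u=\bar u(x_0)=0$ with $x_0$ in the central cube. The normalized function is a solution of the operator $F$ with source scaled by $a_0^{-1/d}3^{-2(m+n)}\cdot 3^{2(m+n)}$, i.e.\ $f/a_0^{1/d}$. Lemma~\ref{ren:thin-layer} is uniform in the source amplitude, and the averaged bad-region condition holds on the rescaled ball by hypothesis. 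It therefore yields a cube with rescaled $\mu\ge2$, i.e.\ $\mu(Q_m(x),F)\ge2a_0$, which contradicts the upper bound $(1+3^{-dn})a_0<2a_0$ in \eqref{ren:bowl-flatness}.

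The main obstacle is keeping the scaling of $\mu$ consistent. Under $y=x/3^{m+n}$ and $v=u/(a_0^{1/d}3^{2(m+n)})$, slopes scale by $a_0^{-1/d}3^{-(m+n)}$ and volumes by $3^{-d(m+n)}$, so $\mu$ is multiplied by exactly $a_0^{-1}$. One must check that $\mu$ transforms this way for the rescaled operator with source $f/a_0^{1/d}$. A second point is that Lemma~\ref{ren:thin-layer} is stated on $Q_0$ with $x_0\in Q_n$, so $n_g$ must be chosen with $3^{-n_g}\le\min\{c_0,c_d\}r$. A third point is that the mass normalization must use the envelope on $Q_{m+n+1}$, and hence restrict to $Q_{m+n}$, which is why the statement uses $\Gamma_{Q_{m+n+1}}$ in the lower bound.
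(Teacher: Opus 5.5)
Your proposal is correct and follows essentially the same route as the paper's proof. Both rescale with $L=3^{m+n}$ and $t=a_0^{1/d}$, so that $\mu$ is multiplied by $a_0^{-1}$ and only the source amplitude changes. Both then subtract the affine support at a central contact point, apply Lemma~\ref{ren:convex-alternative}, and rule out the stripe alternative through Lemma~\ref{ren:thin-layer}: it produces a subcube with $\mu\ge 2a_0>(1+3^{-dn})a_0$, a contradiction. Finally, both propagate the boundary bound along segments from $x_0$. One small correction: that last convexity step loses nothing, since $h_0r^{2-2/d}\le w(x_0+\tau(x-x_0))\le\tau w(x)\le w(x)$, so you may take $c_b=h_0r^{2-2/d}$ exactly.
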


\begin{proof}
For $t,L>0$, the rescaling
$G(N,x):=t^{-1}F(tN,Lx)$ and $v(x):=(tL^2)^{-1}u(Lx)$ satisfies
\begin{equation}
 \mu(L^{-1}U,G)=t^{-d}\mu(U,F).
 \label{ren:mu-scaling}
\end{equation}
Indeed, slopes scale by $(tL)^{-1}$ and volume by $L^{-d}$.
Take $L=3^{m+n}$ and $t=a_0^{1/d}$ to reduce to $m+n=0$ and $a_0=1$.
Only the source amplitude changes, which is allowed in
Lemma~\ref{ren:thin-layer}.

Set $w=\Gamma_{Q_1}u$. Positive contact mass in the central cube provides
a point $x_0\in Q_{-n}$ with $u(x_0)=w(x_0)$ and a supporting slope $p_0$.
Subtract its affine support. Then $u\ge w\ge0$ on $Q_1$ and
$u(x_0)=w(x_0)=0$. At interior points of $Q_0$, the subdifferentials of
$w$ relative to $Q_0$ and $Q_1$ agree. Thus \eqref{ren:bowl-flatness}
implies the mass assumption of Lemma~\ref{ren:convex-alternative}.

Choose $n_g$ so that $3^{-n}$ satisfies both geometric grid thresholds.
The thin-layer alternative would give
$u\ge w\ge h_0r^{2-2/d}\ge hr^2$ outside a layer of width $2r$.
Lemma~\ref{ren:thin-layer} would then give a subcube with $\mu\ge2$,
contradicting \eqref{ren:bowl-flatness}. Hence
$w\ge h_0r^{2-2/d}$ on $\partial Q_0$.
For $x\in Q_1\setminus Q_0$, write the intersection of $[x_0,x]$ with
$\partial Q_0$ as $x_0+\tau(x-x_0)$, where $0<\tau\le1$.
Convexity gives $h_0r^{2-2/d}\le\tau w(x)\le w(x)\le u(x)$.
Rescaling proves \eqref{ren:bowl-bound}. Affine subtraction leaves the
spatial origin unchanged, so the bad-region ball remains deterministically
centered.
\end{proof}

\subsection{The contraction estimate}

From now on, the base scale is large enough that $3^m\ge\ell_f+1$.
For a fixed source operator and a fixed shift of the source, write
\begin{equation}
 a_m:=(\mathbb E\mu(Q_m,F))^q,\qquad
 b_m:=\mathbb E\mu(Q_m,F)^q.
 \label{ren:ab-sequences}
\end{equation}

\begin{lemma}[Moment bound under block averaging]
\label{ren:q-compression}
There is a common deterministic constant $C$ such that, for all $m$ as
above and all integers $n\ge0$,
\begin{equation}
 b_{m+n}\le a_m+C3^{-n}b_m.
 \label{ren:q-compression-estimate}
\end{equation}
The constant is uniform over the specified source class.
\end{lemma}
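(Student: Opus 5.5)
Write $\mu_i=\mu(Q_i,F)$ for the $N=3^{dn}$ triadic subcubes of scale $m$ in $Q_{m+n}$. By subadditivity \eqref{ren:subadditivity}, $\mu(Q_{m+n},F)\le N^{-1}\sum_i\mu_i$. Each $\mu_i\ge0$ depends only on $(B,f)$ inside $Q_i$, by Lemma~\ref{ren:mu-basic}, and the joint field has range $\ell_f$. Set $\bar\mu=\mathbb E\mu(Q_m,F)$, so that $a_m=\bar\mu^q$. We split
\[
 N^{-1}\sum_i\mu_i=\bar\mu+N^{-1}\sum_i(\mu_i-\bar\mu)=:\bar\mu+S .
\]
Since $1<q\le2$, we use the elementary inequality
\[
 (x+y)^q\le x^q+qx^{q-1}y+C_q|y|^q ,\qquad x\ge0,\ x+y\ge0 .
\]
This is the same inequality used in Lemma~\ref{lem:reg:ellipticity-averages}. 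Take expectations: the linear term vanishes because $\mathbb ES=0$. This leaves
\[
 b_{m+n}\le\mathbb E\bigl(\bar\mu+S\bigr)^q\le a_m+C_q\,\mathbb E|S|^q .
\]

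It remains to bound $\mathbb E|S|^q$, by repeating the argument of Lemma~\ref{ren:local-average} with cubes of scale $m$ in place of unit cubes. Because $3^m\ge\ell_f+1$, a fixed coloring with $2^d$ colors (parity of triadic indices) separates same-color subcubes by at least $3^m>\ell_f$. Finite-range dependence, applied inductively to one cube and the union of the others, makes the $\mu_i$ of one color mutually independent. Within a color, symmetrization gives
\[
 \mathbb E\Bigl|\sum_i(\mu_i-\bar\mu)\Bigr|^q\le2^{q}\,\mathbb E\Bigl(\sum_i|\mu_i-\mu_i'|^2\Bigr)^{q/2} ,
\]
and then $\ell^q\supset\ell^2$ for $q\le2$ bounds the right-hand side by $C\sum_i\mathbb E|\mu_i-\bar\mu|^q\le C\sum_i b_m$. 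Stationarity gives $\mathbb E\mu_i^q=b_m$, and Jensen gives $\bar\mu^q\le b_m$. Combining the colors with $|\sum_cu_c|^q\le 2^{d(q-1)}\sum_c|u_c|^q$ yields
\[
 \mathbb E|S|^q\le C N^{-q}\cdot N b_m=C3^{-dn(q-1)}b_m .
\]
With $q=d/(d-1)$ we have $d(q-1)=d/(d-1)\ge1$, so $3^{-dn(q-1)}\le3^{-n}$, which proves \eqref{ren:q-compression-estimate}. The constant depends only on $d$, $q$ and the number of colors, hence is uniform over the source class. If $b_m=\infty$ there is nothing to prove; finiteness in the class $|f|\le K$ comes from \eqref{ren:mu-initial-moment}.

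The only delicate points are checking that stationarity applies to the subcubes and that the coloring gives genuine independence, not merely pairwise independence. Stationarity holds because the triadic subcubes are translates of $Q_m$ by lattice vectors and the joint law of $(B,f)$ is stationary. Genuine independence requires the dependence region of $\mu_i$ to be exactly $Q_i$, which the local-measurability assertion of Lemma~\ref{ren:mu-basic} supplies; boundary values are determined from the interior by continuity. The main technical care is the convexity inequality when $S$ is negative, but $\bar\mu+S\ge0$ holds automatically, so the inequality applies.
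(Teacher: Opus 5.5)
Your proof is correct. It shares the paper's skeleton: subadditivity, a finite coloring made possible by $3^m\ge\ell_f+1$, and the symmetrization bound $\mathbb E|S|^q\le CN^{1-q}b_m$. It closes the argument differently, however.

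The paper applies Minkowski's inequality to get $b_{m+n}^{1/q}\le a_m^{1/q}+CN^{-(q-1)/q}b_m^{1/q}$. It then normalizes by $b_m^{1/q}$ and uses the crude bound $(x+\varepsilon)^q\le x^q+C_q\varepsilon$ on $[0,1]$. The resulting error is linear in $\|S\|_{L^q}$, and it equals $C3^{-n}b_m$ exactly because $d(q-1)/q=1$. That route needs $\varepsilon\le1$, so it treats large $n$ first, covers the remaining $n$ by $b_{m+n}\le b_m$, and handles $b_m=0$ separately.

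Your second-order inequality, expanded about the deterministic mean $\bar\mu$, kills the linear term in expectation. The error is then $C_q\mathbb E|S|^q$, of order $\|S\|_{L^q}^q$. You obtain $b_{m+n}\le a_m+C3^{-dn/(d-1)}b_m$ for every $n\ge0$ with no case distinctions, which is stronger than \eqref{ren:q-compression-estimate}. Either version suffices for Proposition~\ref{ren:mu-decay}, where only the smallness of $C3^{-(n_1-n_0)}$ enters.

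One notational slip: for $q\le2$ one has $\ell^q\subset\ell^2$, not $\ell^q\supset\ell^2$. The inequality you actually use, $(\sum_i|x_i|^2)^{q/2}\le\sum_i|x_i|^q$, is correct.
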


\begin{proof}
Partition $Q_{m+n}$ into $N=3^{dn}$ scale-$m$ cubes.
Local measurability and $3^m\ge\ell_f+1$ permit a fixed finite coloring
with mutually independent block variables in each color.
Subtract the common mean in \eqref{ren:subadditivity} and apply the
independent-sum estimate from Lemma~\ref{ren:local-average} to get
\[
 \|\mu(Q_{m+n},F)\|_{L^q}
 \le a_m^{1/q}+CN^{-(q-1)/q}b_m^{1/q}.
\]
If $b_m>0$, divide by $b_m^{1/q}$ and use
$(x+\varepsilon)^q\le x^q+C_q\varepsilon$ for $x,\varepsilon\in[0,1]$.
Since $a_m\le b_m$ and $d(q-1)/q=1$, this gives
\eqref{ren:q-compression-estimate} for large $n$.
For the remaining $n$, use $b_{m+n}\le b_m$ and enlarge $C$.
The case $b_m=0$ is immediate.
\end{proof}

\begin{lemma}[Relative $L^q$ flatness]
\label{ren:relative-concentration}
Suppose that $X\ge0$, $\mathbb EX\ge a>0$, and
$\mathbb EX^q\le(1+\delta)a^q$.
Then, for $0<\varepsilon\le1$,
\begin{equation}
 \mathbb P(|X-a|>\varepsilon a)
 \le C_q\delta\varepsilon^{-2}.
 \label{ren:relative-concentration-estimate}
\end{equation}
\end{lemma}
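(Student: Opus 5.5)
The plan is to prove a uniform strict-convexity inequality for $x\mapsto x^q$ around the level $a$, and then combine it with the two moment hypotheses. By homogeneity, replace $X$ by $X/a$. Then $a=1$, $\mathbb EX\ge1$ and $\mathbb EX^q\le1+\delta$, and we must show $\mathbb P(|X-1|>\varepsilon)\le C_q\delta\varepsilon^{-2}$.

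The key pointwise step is the inequality
\[
 x^q\ge 1+q(x-1)+c_q\min\{(x-1)^2,|x-1|\}\qquad(x\ge0),
\]
with $c_q>0$ depending only on $q\in(1,2]$. Near $x=1$, say on $[0,2]$, Taylor's formula with the second derivative $q(q-1)\xi^{q-2}\ge q(q-1)2^{q-2}$ gives the quadratic lower bound. On $[0,1]$ this uses $q\le2$, so that $\xi^{q-2}\ge1$. For $x\ge2$ the remainder $x^q-1-q(x-1)$ is convex and increasing, with a positive value at $x=2$; it therefore grows at least linearly, which gives the $|x-1|$ branch.

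Next, take expectations. Using $\mathbb E(X-1)\ge0$ we obtain
\[
 c_q\,\mathbb E\min\{(X-1)^2,|X-1|\}\le\mathbb EX^q-1\le\delta.
\]
On the event $\{|X-1|>\varepsilon\}$ with $\varepsilon\le1$, the quantity $\min\{(X-1)^2,|X-1|\}$ is at least $\varepsilon^2$. Markov's inequality then gives $\mathbb P(|X-1|>\varepsilon)\le\delta/(c_q\varepsilon^2)$, which is \eqref{ren:relative-concentration-estimate} with $C_q=c_q^{-1}$.

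The only delicate point is the uniform convexity constant. The quadratic bound degenerates for large $x$, because $x^{q-2}\to0$. That is why I use the truncated quantity $\min\{(x-1)^2,|x-1|\}$ rather than a pure quadratic. Since $\varepsilon\le1$ on the event, this truncation costs nothing. The other ingredient is the mean condition $\mathbb EX\ge a$, which removes the linear term. No higher moments are needed, and neither is any assumption that $\mathbb EX=a$ exactly.
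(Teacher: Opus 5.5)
Your proof is correct and follows essentially the same route as the paper. The paper also uses the convexity remainder $J_q(t)=t^q-1-q(t-1)$, bounds it below quadratically on $[0,2]$ via the second derivative, uses monotonicity of $J_q$ on $[2,\infty)$ where you use linear growth, removes the linear term with $\mathbb E X\ge a$, and concludes by Markov's inequality.
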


\begin{proof}
Set $J_q(t)=t^q-1-q(t-1)$. Strict convexity gives $J_q\ge0$.
On $[0,2]$, the lower bound on its second derivative gives
$J_q(t)\ge c_q(t-1)^2$; on $[2,\infty)$, $J_q$ is increasing.
Hence $J_q(t)\ge c_q\varepsilon^2$ whenever $|t-1|>\varepsilon$.
On the other hand,
\[
 \mathbb EJ_q(X/a)
 =a^{-q}\mathbb EX^q-1-q(\mathbb EX/a-1)\le\delta.
\]
Markov's inequality proves the assertion.
\end{proof}

\begin{proposition}[Contraction from flatness]
\label{ren:flatness-contraction}
Assume the Harnack estimate \eqref{ren:harnack-event}. There exist a deterministic
integer $n_0\ge2$, $\delta_0\in(0,1)$, and $C<\infty$ with the following
property.
Let $m\ge m_H$ and $s>0$, and set
\[
 a:=\mathbb E\mu(Q_{m+n_0},F+s)>0,
 \qquad a_*:=\mathbb E\mu(Q_{m+n_0},F^*+s)>0.
\]
If
\begin{align}
 \mathbb E\mu(Q_m,F+s)^q&\le(1+\delta_0)a^q,
 \label{ren:flat-assumption}\\
 \mathbb E\mu(Q_m,F^*+s)^q&\le(1+\delta_0)a_*^q,
 \label{ren:flat-assumption-dual}
\end{align}
then
\begin{equation}
 \mathbb E\mu(Q_{m+n_0},F+s)^q
 +\mathbb E\mu(Q_{m+n_0},F^*+s)^q
 \le Cs^{dq}.
 \label{ren:flat-conclusion}
\end{equation}
These constants are uniform over the specified source class.
\end{proposition}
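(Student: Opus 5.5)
The plan follows the Armstrong--Smart contraction argument \cite[Sections~4--5]{ASquant}, which combines concentration from flatness, the bowl shape, and a Harnack-based comparison between $F+s$ and $F^*+s$. The uniform-ellipticity inputs are replaced by Lemma~\ref{ren:bowl}, the bad-region probability \eqref{ren:geometry-probability}, and the Harnack event \eqref{ren:harnack-event}.

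\emph{Step 1: concentration.} Subadditivity \eqref{ren:subadditivity} and monotonicity give $a\le\mathbb E\mu(Q_{m+j},F+s)\le\mathbb E\mu(Q_m,F+s)$ for $0\le j\le n_0$. Hence \eqref{ren:flat-assumption} and Lemma~\ref{ren:relative-concentration}, applied with $X=\mu(Q_{m+j}(x),F+s)$, show that every scale-$m$ and scale-$(m+n_0-1)$ subcube of $Q_{m+n_0}$ has $\mu$ within a factor $1\pm\varepsilon$ of $a$ except with probability $C3^{dn_0}\delta_0\varepsilon^{-2}$. The same holds for $F^*$ with $a_*$. We choose $\varepsilon=3^{-dn}$ with $n=n_0-1\ge n_g$, and then take $\delta_0$ small, so that this concentration event has probability at least $1-p_H/4$.

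We also intersect with two further events. The first is the bad-region bound \eqref{ren:thin-good} on the relevant ball; by \eqref{ren:geometry-probability} and the choice of $\kappa$ making $\mathbb EH_\kappa$ small, it fails with probability at most $p_H/4$. The second is the Harnack event $\mathcal H_{m+n_0-1}$, which has probability at least $p_H$. The intersection of all these events therefore has positive probability, at least $p_H/4$. On this event we take a near-maximizer $u$ for $\mu(Q_{m+n_0},F+s)$. By Lemma~\ref{ren:contact-mass} it may be taken to be a solution. The envelope mass on each subcube is then pinched by the subcube values of $\mu$, via the partition identity in the proof of Lemma~\ref{ren:mu-basic}, which is exactly hypothesis \eqref{ren:bowl-flatness}. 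Lemma~\ref{ren:bowl} then gives a bowl: $u-\ell\ge c_ba^{1/d}3^{2(m+n)}$ on the outer shell, and $u-\ell$ vanishes in the central cube. The same construction applied to $F^*+s$ gives a function $u_*$, and $-u_*$ is a subsolution, yielding an inverted bowl of height $c_ba_*^{1/d}3^{2(m+n)}$.

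\emph{Step 2: comparison via Harnack.} This is the step I expect to be the main obstacle. Set $w=(u-\ell)+(u_*-\ell_*)$. It satisfies $-B:D^2w=-2s$, i.e. $B:D^2w=2s$ (sources cancel because $F+s$ and $F^*+s$ carry $f$ and $-f$). Thus $w$ is a subsolution of a source-$2s$ problem, and $w\ge c_b(a^{1/d}+a_*^{1/d})3^{2(m+n)}$ on the shell while $w$ is small near the center. Subtract the solution $\psi$ of $B:D^2\psi=2s$ with zero boundary data; its size is $\le s\,3^{2(m+n)}$ by the exit-time identity \eqref{reg:eq:exit-moments}. Then $w-\psi$ is $B$-harmonic and nonnegative on the central region up to an $O(s3^{2(m+n)})$ correction. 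The Harnack inequality on $\mathcal H$ forces its values near the center (where $w\approx0$) to be comparable to those on an inner shell. Propagating the bowl lower bound inward by convexity of the envelope, this yields
\[
 c(a^{1/d}+a_*^{1/d})3^{2(m+n)}\le C s\,3^{2(m+n)},
\]
i.e.\ $a+a_*\le Cs^d$. The delicate point is arranging the geometry of cubes so that the bowl's lower bound is available on a region where Harnack applies with the deterministic constant $H$, and tracking the contact points which are random. If this fails I would instead follow \cite[Lemma~5.1]{ASquant} directly, combining the two bowls into a single supersolution and using $\mathcal H$ only to bound the oscillation of $w-\psi$.

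\emph{Step 3: moments.} Since the estimate $a+a_*\le Cs^d$ is deterministic, the flatness hypotheses give $\mathbb E\mu(Q_m,\cdot)^q\le(1+\delta_0)a^q\le Cs^{dq}$, and likewise for $F^*$. Monotonicity of $q$-moments in Lemma~\ref{ren:mu-basic} then transfers the bound to $Q_{m+n_0}$, proving \eqref{ren:flat-conclusion}. Uniformity over the source class holds because every constant used depends only on $d$, $\ell_f$, $\kappa$ and $H$.
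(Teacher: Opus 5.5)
Your overall architecture (concentration, two bowls from Lemma~\ref{ren:bowl}, a comparison, then moments) matches the paper's. The lower bound in Step~2 and all of Step~3 are correct. The maximum principle for the $B$-harmonic function $w-\psi$ gives $w\ge c\,3^{2(m+n_0)}S-Cs\,3^{2(m+n_0)}$ throughout $Q_{m+n_0}$, where $S=a^{1/d}+a_*^{1/d}$; the paper reaches the same bound with the explicit comparison function $cR^2S+s|x|^2-sdR^2/4$.

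\textbf{The gap is the matching upper bound in Step~2.} You assert that $w$ is small near the center, but $u-\ell$ and $u_*-\ell_*$ vanish at two different contact points. For example, $w(x_u)=(u_*-\ell_*)(x_u)$ is not controlled by anything you have established. Two further steps also fail:
\begin{itemize}
\item ``Propagating the bowl lower bound inward by convexity'' runs the wrong way. Along rays from the minimum point, convexity transports lower bounds outward and upper bounds inward; this is how Lemma~\ref{ren:bowl} passes from $\partial Q_0$ to $Q_1\setminus Q_0$. In any case the inward lower bound is already supplied by the maximum principle.
\item The event $\mathcal H_{m+n_0-1}$ cannot close the argument. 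A zero-boundary correction on $Q_{m+n_0}$ has size of order $3^{2(m+n_0)}S$, the same order as the bowl height. Harnack at that scale therefore only yields $u(0)\le C(H+1)3^{2(m+n_0)}S$, which does not compete with a lower bound carrying the small constant $c_b$.
\end{itemize}

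\textbf{The missing idea is to apply Harnack at the smallest scale, to each function separately, and let the scale gap do the work.}
\begin{itemize}
\item Let $v,v_*$ solve $B:D^2v=f+s$ and $B:D^2v_*=-f+s$ in $Q_{m+1}$ with zero boundary values.
\item Geometric ABP, subadditivity over the $3^d$ central scale-$m$ subcubes, and the upper near-mean bounds on those subcubes give $\|v\|_\infty+\|v_*\|_\infty\le C3^{2m}S$. For the positive part of $v$, note that $-v$ solves the equation for $F^*-s$ and is therefore admissible for $\mu(Q_{m+1},F^*+s)$; treat $v_*$ symmetrically.
\item Since $u-v\ge0$ is $B$-harmonic in $Q_{m+1}$ and $u(x_u)=0$ with $x_u\in Q_m$, the event $\mathcal H_m(0)$ gives $u(0)-v(0)\le -Hv(x_u)$. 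Hence $u(0)\le(H+1)\|v\|_\infty$, and likewise for $u_*$.
\end{itemize}
Therefore
\[
 c\,3^{2(m+n_0)}S-Cs\,3^{2(m+n_0)}\le u(0)+u_*(0)\le C(H+1)3^{2m}S,
\]
and choosing $n_0$ large in terms of $H$ and $c_b$ gives $S\le Cs$. This is why the contact points must lie in the central cube $Q_m$, and why $n_0$ is fixed only after $H$.

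\textbf{A smaller point in Step~1.} The lower bound on each subcube's envelope mass comes only from subtracting the upper bounds of the other $N-1$ subcubes from the near-maximal total. With $N=3^{dn_0}$, this gives $(1-3N\varepsilon)a$. Your choice $\varepsilon=3^{-d(n_0-1)}$ is therefore far too large to produce \eqref{ren:bowl-flatness}. You need $\varepsilon\lesssim N^{-2}$, together with the lower concentration bound $\mu(Q_{m+n_0},F+s)\ge(1-\varepsilon)a$ for the full cube.
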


\begin{proof}
First choose $\kappa$ with
$\eta_d^{-1}\mathbb EH_\kappa<p_H/4$, and then $h,r,c_b,n_g$ from
Lemma~\ref{ren:bowl}. Choose $n_0$ with $n_0-1\ge n_g$ and large enough
for the absorption below. Set $N=3^{dn_0}$,
$0<\varepsilon\le(16N^2)^{-1}$, and choose $\delta_0$ last.

Lemma~\ref{ren:relative-concentration}, stationarity and moment
monotonicity imply that, except on a set of probability
$2C_q(N+1)\delta_0\varepsilon^{-2}$, all scale-$m$ subcubes $Q_i$ satisfy
\begin{align}
 \mu(Q_{m+n_0},F+s)&\ge(1-\varepsilon)a,&
 \mu(Q_i,F+s)&\le(1+\varepsilon)a,
 \label{ren:near-mean}\\
 \mu(Q_{m+n_0},F^*+s)&\ge(1-\varepsilon)a_*,&
 \mu(Q_i,F^*+s)&\le(1+\varepsilon)a_*.
 \label{ren:near-mean-dual}
\end{align}
Choose $\delta_0$ to make this failure probability less than $p_H/4$.
By \eqref{ren:geometry-probability} with $L=3^{m+n_0-1}$, the bad-region
condition fails with probability less than $p_H/4$. Their intersection
with $\mathcal H_m(0)$ therefore has probability at least $p_H/2$.
Fix an environment in this intersection.

Choose a $(1-\varepsilon)$ near-maximizer on $Q_{m+n_0}$ and replace it
by the solution $u$. For its envelope relative to this cube, set
$v_i=|\partial\Gamma u(Q_i)|/|Q_m|$. Zero grid-boundary mass gives
\[
 v_i\le(1+\varepsilon)a,\qquad
 \sum_{i=1}^Nv_i\ge N(1-\varepsilon)^2a,
\]
and consequently
\[
 v_i\ge\big[N(1-\varepsilon)^2-(N-1)(1+\varepsilon)\big]a
 \ge(1-3N\varepsilon)a=:a_-.
\]
Our choice of $\varepsilon$ gives $a_-\ge a/2$ and
\[
 \frac{(1+\varepsilon)a}{a_-}
 \le1+8N\varepsilon\le1+3^{-d(n_0-1)}.
\]
Lemma~\ref{ren:bowl}, applied to the inner cube $Q_{m+n_0-1}$ and the
outer domain $Q_{m+n_0}$, supplies an affine normalization such that
\begin{equation}
 u\ge0,\quad u(x_u)=0\text{ for some }x_u\in Q_m,\quad
 u\ge c3^{2(m+n_0)}a^{1/d}\text{ on }\partial Q_{m+n_0}.
 \label{ren:normalized-u}
\end{equation}
Construct $u_*$ similarly for the dual. In particular, the central contact
points are chosen after replacement by solutions.

Put $S=a^{1/d}+a_*^{1/d}$ and $R=3^{m+n_0}$. Since
$B:D^2u=f+s$ and $B:D^2u_*=-f+s$, comparison with
$cR^2S+s|x|^2-sdR^2/4$ gives
\begin{equation}
 u(0)+u_*(0)\ge cR^2S-C_dsR^2.
 \label{ren:sum-lower}
\end{equation}
Here the comparison function has $B$-trace $2s$ and lies below the sum
on the boundary by \eqref{ren:normalized-u}.

Let $v,v_*$ solve the respective equations on $Q_{m+1}$ with zero
boundary values. Geometric ABP bounds their negative parts by the
corresponding $\mu$; for the positive part of $v$, use $F^*-s\le F^*+s$,
and interchange the operators for $v_*$. Subadditivity and
\eqref{ren:near-mean}--\eqref{ren:near-mean-dual} give
\begin{equation}
 \|v\|_\infty+\|v_*\|_\infty\le C_d3^{2m}S.
 \label{ren:small-dirichlet-bound}
\end{equation}
The function $u-v$ is nonnegative and $B$-harmonic. On $\mathcal H_m(0)$,
\[
 u(0)-v(0)\le H\big(u(x_u)-v(x_u)\big)=-Hv(x_u),
\]
so $u(0)\le(H+1)\|v\|_\infty$, and similarly for $u_*$.
Together with \eqref{ren:sum-lower} and \eqref{ren:small-dirichlet-bound},
this yields
\[
 c3^{2n_0}S\le C_{d,H}S+C_ds3^{2n_0}.
\]
Our choice of $n_0$ allows us to absorb the first term and obtain $S\le Cs$.
Moment monotonicity and \eqref{ren:flat-assumption}--\eqref{ren:flat-assumption-dual}
then imply \eqref{ren:flat-conclusion}.
\end{proof}

\subsection{Centering and decay of the monotone quantity}

We now choose the center from the monotone quantity. Write
$F_0(N,x)=-B(x):N+f(x)$ for the uncentered operator.

\begin{lemma}[Balanced center]
\label{ren:balanced-center}
If $|f|\le1$, there is a deterministic $c_f\in[-1,1]$ such that, upon setting
$F_c=F_0-c_f$ and $F_c^*=F_0^*+c_f$, one has, for every $s>0$,
\begin{align}
 \lim_m\mathbb E\mu(Q_m,F_c+s)
 &\ge\lim_m\mathbb E\mu(Q_m,F_c^*-s),
 \label{ren:balance-positive}\\
 \lim_m\mathbb E\mu(Q_m,F_c-s)
 &\le\lim_m\mathbb E\mu(Q_m,F_c^*+s).
 \label{ren:balance-negative}
\end{align}
\end{lemma}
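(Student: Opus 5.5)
We choose $c_f$ by an intermediate-value argument applied to the limits of the monotone quantity, viewed as functions of the center $c$. For $c\in\mathbb R$ and $s\in\mathbb R$, Lemma~\ref{ren:mu-basic} shows that $m\mapsto\mathbb E\mu(Q_m,\cdot)$ is nonincreasing. The initial moment bound \eqref{ren:mu-initial-moment} gives finiteness, since the source amplitude is at most $|f|+|c|+|s|$. So the limits
\[
 \Phi(t):=\lim_m\mathbb E\mu(Q_m,F_0+t),\qquad
 \Phi^*(t):=\lim_m\mathbb E\mu(Q_m,F_0^*+t)
\]
exist in $[0,\infty)$ for all $t\in\mathbb R$. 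Since $F_c+s=F_0+(s-c)$ and $F_c^*-s=F_0^*+(c-s)$, the two required inequalities read
\[
 \Phi(s-c)\ge\Phi^*(c-s),\qquad \Phi(-s-c)\le\Phi^*(c+s).
\]
By Lemma~\ref{ren:mu-basic}, $\Phi$ and $\Phi^*$ are nondecreasing. Set
\[
 c_f:=\sup\{c\in[-1,1]:\Phi(-c)\ge\Phi^*(c)\}.
\]

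Next I verify the endpoints. At $c=-1$ the source of $F_0+1=-B:N+(f+1)$ is nonnegative. At $c=1$ the source of $F_0^*+1$ is $-f+1\ge0$. The same holds symmetrically for $F_0-1$ and $F_0^*-1$, whose sources $f-1$ and $-f-1$ are nonpositive. Nonpositive sources give $\mu=0$: a supersolution $-B:D^2u\ge -g$ with $g\ge0$ is $B$-subharmonic, so its convex envelope has zero slope mass by the contact-point argument of Lemma~\ref{ren:contact-mass}, because $\det D^2v\le d^{-d}(f_+)^dD=0$ at contact points. Hence $\Phi(t)=0$ for $t\le-1$ and $\Phi^*(t)=0$ for $t\le-1$. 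So at $c=-1$ the relation $\Phi(1)\ge\Phi^*(-1)=0$ holds trivially, and the set defining $c_f$ is nonempty.

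Now fix $s>0$. Take $c<c_f$ in the set with $c>c_f-s$; if $c_f=-1$, take $c=-1$ directly. Monotonicity gives
\[
 \Phi(s-c_f)\ge\Phi(-c)\ge\Phi^*(c)\ge\Phi^*(c_f-s),
\]
which is \eqref{ren:balance-positive}. For \eqref{ren:balance-negative}, first suppose $c_f+s\le1$. Every $c'\in(c_f,c_f+s]$ fails the defining inequality, so in particular $\Phi(-c')<\Phi^*(c')$. Choosing $c'=c_f+s$ gives
\[
 \Phi(-s-c_f)\le\Phi(-c')<\Phi^*(c')=\Phi^*(c_f+s).
\]
If instead $c_f+s>1$, then $-s-c_f<-1$, so $\Phi(-s-c_f)=0$ and the inequality is trivial.

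The main obstacle is the zero-mass claim for nonpositive sources when $D$ is unbounded. In Lemma~\ref{ren:contact-mass} the bound $\mu\le d^{-d}|Q|^{-1}\int(f_+)^dD$ is exact, so $f_+\equiv0$ forces $\mu=0$ without any use of integrability. This bound applies to every fixed source, since the sources $f\pm1$ remain locally Hölder. No continuity of $\Phi$ is needed, because the argument uses only the one-sided margin $s>0$.
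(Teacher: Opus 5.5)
Your proof is correct and follows the paper's threshold argument: your $c_f=\sup\{c\in[-1,1]:\Phi(-c)\ge\Phi^*(c)\}$ is the crossing point of the monotone function $t\mapsto l(t)-l_*(-t)$ used in the paper. Building the restriction to $[-1,1]$ into the definition, and handling $c_f+s>1$ by vanishing below $-1$, even spares you the paraboloid lower bound $l(t)\ge(t-1)^d$ that the paper uses to locate the threshold. There is one cosmetic slip: a supersolution with nonpositive source $-g$ satisfies $-B:D^2u\ge g\ge0$, not $-B:D^2u\ge-g$, but your actual justification, \eqref{ren:mu-exact-bound} with $f_+\equiv0$, is the right one.
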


\begin{proof}
The limits $l(t):=\lim_m\mathbb E\mu(Q_m,F_0+t)$ and
$l_*(t):=\lim_m\mathbb E\mu(Q_m,F_0^*+t)$ exist by scale monotonicity
and the moment bound, and are nondecreasing in $t$.
For $t<-1$, both vanish by \eqref{ren:mu-exact-bound}. For $t>1$,
$(t-1)|x|^2/2$ is admissible for both operators, so
$l(t),l_*(t)\ge(t-1)^d$.
Thus $l(t)-l_*(-t)$ is nondecreasing, negative for $t<-1$ and positive for
$t>1$. Choose its crossing threshold $t_0\in[-1,1]$ and set $c_f=-t_0$.
The inequalities on either side of $t_0$ give
\eqref{ren:balance-positive}--\eqref{ren:balance-negative}, regardless of
whether the function is continuous at $t_0$.
\end{proof}

\begin{lemma}[Uniform lower bound for positive shifts after balancing]
\label{ren:positive-source}
If $F$ satisfies \eqref{ren:balance-positive}, there exists $c_0>0$,
depending only on $d$, such that, for every $m\in\mathbb Z$ and $s>0$,
\begin{equation}
 \mathbb E\mu(Q_m,F+s)\ge c_0s^d.
 \label{ren:positive-source-bound}
\end{equation}
If \eqref{ren:balance-negative} also holds, the same conclusion is valid
for $F^*$.
\end{lemma}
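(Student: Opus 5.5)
Since $\mathbb E\mu(Q_m,F+s)$ is nonincreasing in $m$ by Lemma~\ref{ren:mu-basic}, it suffices to prove the bound for the limit $l_c(s):=\lim_m\mathbb E\mu(Q_m,F+s)$; this gives it at every finite $m$. By \eqref{ren:balance-positive}, $l_c(s)\ge l^*_c(-s):=\lim_m\mathbb E\mu(Q_m,F^*-s)$. The plan is to show that at least one of the two limits $l_c(s)$, $l^*_c(-s)$ is at least $c_0s^d$, and then use the inequality to pass the bound to $l_c(s)$.

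The key deterministic step is an additivity argument on a single cube $Q_m$. Let $v$ solve $F(D^2v,x)+s=0$, i.e. $B:D^2v=f+s$, and $v_*$ solve $F^*(D^2v_*,x)-s=0$, i.e. $B:D^2v_*=-f-s$, both with zero boundary data on $Q_m$. Then $w:=v+v_*$ satisfies $B:D^2w=0$ with $w=0$ on the boundary, so $w\equiv0$ and $v_*=-v$. Now compare with the convex test $P(x)=\tfrac s2\bigl(|x|^2-\operatorname{diam}(Q_m)^2/4\bigr)$, whose $B$-trace is exactly $s$ since $\operatorname{tr}B=1$. Because $v$ and $-v$ are the two candidate supersolutions, the function $v-P$ is $B$-harmonic up to the source $f$, and $-v-P$ similarly up to $-f$. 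Averaging, $\tfrac12\bigl((v-P)+(-v-P)\bigr)=-P$ is convex with slope image of measure $c_ds^d|Q_m|$. The plan is to use the additivity of Monge--Amp\`ere measures together with the fact that the slope image of the envelope of a sum dominates a Minkowski-type combination, namely $|\partial\Gamma(g_1+g_2)(Q)|^{1/d}\le|\partial\Gamma g_1(Q)|^{1/d}+|\partial\Gamma g_2(Q)|^{1/d}$.

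To apply this to the functions actually admitted in \eqref{ren:mu-definition}, I would work with $u_1$ admissible for $F+s$ and $u_2$ admissible for $F^*-s$, not with $v$ and $-v$ themselves. Take $u_1=v$. For the second function, note that $-B:D^2(-v)=f+s$, which is not admissible for $F^*-s$ directly. So instead I take $u_2=P-v$: a direct computation gives $-B:D^2u_2=-s+f+s=f\ge f+s-\dots$, and I would verify whether this makes $u_2$ a supersolution of $F^*-s$, whose defining condition is $-B:D^2u_2-f-s\ge0$. I expect this sign bookkeeping to need adjustment, possibly by replacing $P$ with $2P$. Once it is correct, $u_1+u_2=P$ (or $2P$) has slope volume $c_ds^d|Q_m|$. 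Concavity of $\mu\mapsto\mu^{1/d}$ under the Minkowski inequality then gives
\[
 \mu(Q_m,F+s)^{1/d}+\mu(Q_m,F^*-s)^{1/d}\ge c_ds.
\]

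Taking expectations, applying Jensen to the concave map $t\mapsto t^{1/d}$, and letting $m\to\infty$ gives $l_c(s)^{1/d}+l^*_c(-s)^{1/d}\ge c_ds$. Combined with $l_c(s)\ge l^*_c(-s)$, this yields $2l_c(s)^{1/d}\ge c_ds$. The statement for $F^*$ follows symmetrically from \eqref{ren:balance-negative}, with the roles of the two operators exchanged.

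The main obstacle is the Minkowski-type inequality for slope images of convex envelopes of a sum. I would prove it via $\partial\Gamma(g_1+g_2)(x)\subset\partial\Gamma g_1(x)+\partial\Gamma g_2(x)$ at common contact points together with Brunn--Minkowski. The difficulty is that the contact sets of $g_1$ and $g_2$ need not coincide. If this fails, the fallback is to use that $\mu$ is realized by solutions (Lemma~\ref{ren:contact-mass}) and to argue pointwise through the area formula on the contact set of the sum.
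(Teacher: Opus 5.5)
There is a genuine gap. Your whole argument rests on the deterministic inequality $\mu(Q_m,F+s)^{1/d}+\mu(Q_m,F^*-s)^{1/d}\ge c_ds$, and that inequality is false. In any environment where the source of $F$ equals $-s$ on $Q_m$, both $(f+s)_+$ and $(-f-s)_+$ vanish there, and \eqref{ren:mu-exact-bound} gives $\mu(Q_m,F+s)=\mu(Q_m,F^*-s)=0$. Nothing excludes this, since the centered source is only bounded by $2$.

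Your sign bookkeeping was running into exactly this obstruction. If $u_1$ is admissible for $F+s$ and $u_2$ is admissible for $F^*-s$, then $B:D^2(u_1+u_2)\le(f+s)+(-f-s)=0$, so the sum is $B$-superharmonic. At contact points of its convex envelope, $D^2(u_1+u_2)\ge0$ together with $B:D^2(u_1+u_2)\le0$ forces $D^2(u_1+u_2)=0$. The area formula then gives zero slope mass.

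This is why no repair along your lines works. Adding $P$ or $2P$ to $-v$ destroys admissibility for $F^*-s$. Subtracting it makes the sum concave; note that $-P$ is concave, not convex. Your area-formula fallback fails for the same reason.

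Independently, the Minkowski-type inequality you planned to prove goes the wrong way. The inclusion $\partial(g_1+g_2)(Q)\subset\partial g_1(Q)+\partial g_2(Q)$ and Brunn--Minkowski both bound $|\partial g_1(Q)+\partial g_2(Q)|$ from below, so they do not chain. Concretely, in $d=2$ the functions $x_1^2$ and $x_2^2$ have null slope images, while $|x|^2$ does not.

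The missing idea is a strict gap between the two shifts. The comparison should also be carried out in the sup norm through geometric ABP, not through slope volumes. Here is the construction.

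Let $v_*$ solve $F^*(D^2v_*,x)=s/2$ in $Q_M$ with zero boundary data. Lemma~\ref{ren:geometric-abp} gives $-\inf v_*\le C_d3^{2M}\mu(Q_M,F^*-s/2)^{1/d}$. With $L=3^M$, set $v=\frac{s}{8}(|x|^2-L^2/4)-v_*$. Then:
\begin{itemize}
\item $v$ satisfies $F(D^2v,x)+s=s/4\ge0$;
\item $v$ is nonnegative on $\partial Q_M$;
\item $v(0)\le-sL^2/32+C_dL^2\mu(Q_M,F^*-s/2)^{1/d}$.
\end{itemize}
Geometric ABP applied to $v$ therefore yields, in every environment, $\mu(Q_M,F+s)^{1/d}\ge c_ds-C_d\,\mu(Q_M,F^*-s/2)^{1/d}$.

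From here your Jensen step works and gives $l_c(s)^{1/d}+C_d\,l^*_c(-s/2)^{1/d}\ge c_ds$. Then \eqref{ren:balance-positive} at the shift $s/2$, together with monotonicity in the shift, gives $l_c(s)\ge l_c(s/2)\ge l^*_c(-s/2)$. Hence $l_c(s)\ge c_0s^d$.

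This is the paper's construction. The paper finishes differently: it applies Markov's inequality to $\mu(Q_M,F^*-s/2)$, compares with $A_0=\sup_{t>0}l_*(-t)$, and concludes with $\max\{A_0,c_ds^d-C_dA_0\}\ge c_0s^d$. Once the deterministic inequality is corrected as above, your Jensen route is a slightly shorter alternative.
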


\begin{proof}
Let $l,l_*$ be the centered limits and put
$A_0:=\sup_{t>0}l_*(-t)<\infty$. Given $m\in\mathbb Z$ and $s,\eta>0$, take $M\ge m$
large enough that
$\mathbb E\mu(Q_M,F^*-s/2)\le A_0+\eta$.
Let $v_*$ solve $F^*(D^2v_*,x)=s/2$ with zero boundary data.
Geometric ABP and Markov give, with probability at least $1/2$,
\[
 \inf_{Q_M}v_*\ge-C_d(A_0+\eta)^{1/d}3^{2M}.
\]
For $L=3^M$, set $v(x)=\frac s8(|x|^2-L^2/4)-v_*(x)$.
Since $F(-D^2v_*)=-s/2$ and the paraboloid has $B$-trace $s/4$,
\begin{equation}
 F(D^2v,x)+s=s/4\ge0.
 \label{ren:positive-parabola-sign}
\end{equation}
Moreover, $v\ge0$ on $\partial Q_M$, while on the event just obtained,
$v(0)\le-sL^2/32+C_d(A_0+\eta)^{1/d}L^2$.
Geometric ABP and $(a-b)_+^d\ge2^{1-d}a^d-b^d$ imply
\[
 \mathbb E\mu(Q_M,F+s)\ge c_ds^d-C_d(A_0+\eta).
\]
Scale monotonicity transfers this to $m$.
For $0<t<s$, balancing gives $l(s)\ge l(t)\ge l_*(-t)$; taking the supremum
in $t$ gives the second lower bound $\mathbb E\mu(Q_m,F+s)\ge A_0$.
Let $\eta\downarrow0$ and combine:
\[
 \mathbb E\mu(Q_m,F+s)
 \ge\max\{A_0,c_ds^d-C_dA_0\}\ge c_0s^d.
\]
Interchanging $F,F^*$ proves the dual statement.
\end{proof}

The lower bound ensures positive means in each application of
Proposition~\ref{ren:flatness-contraction}.

From now on, $F$ denotes the operator centered using
Lemma~\ref{ren:balanced-center}.
Its source amplitude is at most 2.
Take $s_k=2^{-k}$ and define
\begin{align}
 a(m,k)&:=\big(\mathbb E\mu(Q_m,F+s_k)\big)^q,&
 b(m,k)&:=\mathbb E\mu(Q_m,F+s_k)^q,\nonumber\\
 a_*(m,k)&:=\big(\mathbb E\mu(Q_m,F^*+s_k)\big)^q,&
 b_*(m,k)&:=\mathbb E\mu(Q_m,F^*+s_k)^q.
 \label{ren:four-sequences}
\end{align}
All four quantities are nonincreasing in $m,k$.
The initial moment bound and the positive-shift lower bound provide common
deterministic constants $\underline c>0$ and $B_0<\infty$ such that
\begin{equation}
 \underline c2^{-dqk}\le a(m,k)\le b(m,k)\le B_0,
 \qquad
 \underline c2^{-dqk}\le a_*(m,k)\le b_*(m,k)\le B_0.
 \label{ren:four-bounds}
\end{equation}
For example, one may take $B_0=C_d\mathbb ED^q$, since the amplitude of
the centered source after adding $s_k$ is at most 3.

\begin{proposition}[Algebraic decay of the monotone quantity]
\label{ren:mu-decay}
There exist common deterministic $C<\infty$, $\zeta>0$, and an integer
$m_*$ such that
\begin{equation}
 \mathbb E\big[\mu(Q_m,F)^q+\mu(Q_m,F^*)^q\big]
 \le C3^{-\zeta m}\qquad(m\ge m_*).
 \label{ren:mu-decay-estimate}
\end{equation}
\end{proposition}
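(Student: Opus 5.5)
We follow the iteration scheme of Armstrong--Smart~\cite[Section~5]{ASquant}, run jointly in the scale index $m$ and the shift index $k$ through the four sequences \eqref{ren:four-sequences}. The basic dichotomy at a fixed pair $(m,k)$ is the following. Either the flatness hypotheses \eqref{ren:flat-assumption}--\eqref{ren:flat-assumption-dual} hold with $s=s_k$, or one of them fails. In the flat case, Proposition~\ref{ren:flatness-contraction} gives
\[
 b(m+n_0,k)+b_*(m+n_0,k)\le C2^{-dqk}.
\]
If flatness fails, say for $F+s_k$, then $b(m,k)>(1+\delta_0)a(m+n_0,k)$. Lemma~\ref{ren:q-compression} then gives $b(m+n,k)\le a(m,k)+C3^{-n}b(m,k)$. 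Choosing a fixed block length $n_1\ge n_0$ with $C3^{-n_1}$ small, and using $a(m+n_1,k)\le a(m+n_0,k)$, we obtain a genuine multiplicative drop of the $q$th moment:
\[
 b(m+n_1,k)\le(1-c)\,b(m,k)+(\text{a controlled error}).
\]
The cleanest version of this drop is obtained by comparing $b$ with the lower bound $\underline c\,2^{-dqk}$ from \eqref{ren:four-bounds}. The same reasoning applies to $b_*$. The positive means required by Proposition~\ref{ren:flatness-contraction} are supplied by Lemma~\ref{ren:positive-source}.

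To quantify the iteration, work with the potential $\Phi(m,k)=\log b(m,k)+\log b_*(m,k)$. By \eqref{ren:four-bounds} it lies between $-2dqk\log2+C$ and $2\log B_0$, and it is nonincreasing in $m$. Each non-flat step of length $n_1$ lowers $\Phi$ by a fixed amount $c_1>0$. Hence at level $k$ at most $Ck$ non-flat steps can occur before a flat step must happen. A flat step yields $b+b_*\le C2^{-dqk}$ at scale $m+n_0$.

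With this in hand, pass from $k$ to $k+1$. Monotonicity in $k$ and the bound just obtained place us at level $k+1$ with $\Phi\le-dqk\log2+C$. The admissible range of $\Phi$ at level $k+1$ therefore has only bounded width, so a bounded number of steps $Ck$ should again suffice. This needs care: the potential window at level $k+1$ runs from $-dq(k+1)\log2$ upward, so its width is $O(1)$ and not $O(k)$, and I would verify this by working with the normalized potential $\Phi+2dqk\log2$. Given this, the scale needed to reach level $k$ is $m_k\le m_*+C\sum_{j\le k}O(1)=m_*+Ck$. This yields
\[
 \mathbb E\big[\mu(Q_{m_k},F+s_k)^q+\mu(Q_{m_k},F^*+s_k)^q\big]\le C2^{-dqk}.
\]
Monotonicity of $\mu$ in $s$ (Lemma~\ref{ren:mu-basic}) drops the shift, so that $\mathbb E\mu(Q_{m_k},F)^q\le C2^{-dqk}$. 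Setting $k\approx m/C$ gives the rate $3^{-\zeta m}$, and monotonicity in $m$ fills in the intermediate scales.

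The main obstacle is to make the non-flat step a uniform multiplicative decrease that is compatible with the lower bound $\underline c\,2^{-dqk}$. The relevant quantities are the ratio $b/a$ and the absorption of the $C3^{-n}b$ term. The danger is that the quantity $b-a$ decreases only additively and so fails to shrink proportionally at small levels. I would first try to run the argument with the ratio $\rho(m,k)=b(m,k)/a(m,k)\le B_0/(\underline c\,2^{-dqk})$. Non-flatness says $\rho(m,k)$ exceeds $(1+\delta_0)$ times $a(m+n_0,k)/a(m,k)$, so either $a$ drops by a factor $1+\delta_0$ or the compression lemma shrinks $b$ towards $a$. Each alternative costs a bounded number of $n_1$-steps per unit of $\log$. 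Since $a$ is bounded below by $\underline c\,2^{-dqk}$, this limits the number of non-flat steps to $O(k)$, and the normalization handles the level passage.
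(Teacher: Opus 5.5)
Your final argument is essentially the paper's. It runs a logarithmic budget at fixed $k$ between the bounds \eqref{ren:four-bounds}. It uses Lemma~\ref{ren:q-compression} to produce the flatness hypotheses of Proposition~\ref{ren:flatness-contraction}. It reaches a scale $m_k\le m_*+C(k+1)$ with $b(m_k,k)+b_*(m_k,k)\le C2^{-dqk}$, and concludes by monotonicity in $s$ and $m$.

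However, the step in your first two paragraphs does not work as written. Non-flatness gives $b(m,k)>(1+\delta_0)a(m+n_0,k)$. The compression lemma, by contrast, bounds $b(m+n_1,k)$ by $a(m,k)+C3^{-n_1}b(m,k)$, with the mean taken at the \emph{starting} scale $m$. Since $a(m,k)$ can be as large as $b(m,k)$, no multiplicative drop of $b$ follows, and the inequality $a(m+n_1,k)\le a(m+n_0,k)$ plays no role. A non-flat block may lower only $a$ (or $a_*$), so your potential $\Phi=\log b+\log b_*$ need not decrease.

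The dichotomy in your last paragraph is the right repair. Either $a(m,k)\ge(1+\delta_0)^{1/2}a(m+n_0,k)$, so $a$ drops by a fixed factor over the block. Or $a(m,k)<(1+\delta_0)^{-1/2}b(m,k)$, and compression gives $b(m+n_1,k)\le\bigl((1+\delta_0)^{-1/2}+C3^{-n_1}\bigr)b(m,k)$.

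With that split, the potential must be $\log a+\log b+\log a_*+\log b_*$, all four terms confined to $[\log(\underline c2^{-dqk}),\log B]$. Counting drops of a single sequence is not enough. This four-sequence count is exactly what the paper does, phrased contrapositively. It finds a block $[m-n_1,m]$ on which none of the four sequences drops by more than a factor $1+\delta_1$. Compression over $n_1-n_0$ scales, followed by absorption of the $C3^{-(n_1-n_0)}$ term, then gives $b(m-n_0,k)\le(1+\delta_0)a(m,k)$ and its dual. These are the flatness hypotheses at base scale $m-n_0$, obtained with no case split.

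Finally, the care you take at the level passage is not needed. Running the count from $m_*$ with $B=B_0$, separately for each $k$, already costs $O(k+1)$ blocks and gives $m_k\le m_*+C(k+1)$ directly. The paper's level-by-level induction, with ratio $C_12^{dq}/\underline c$ at each level, yields the same bound.
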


\begin{proof}
Take $m_*\ge m_H$ with $3^{m_*}\ge\ell_f+1$, and fix
$n_0,\delta_0$ from Proposition~\ref{ren:flatness-contraction}.
Choose $0<\delta_1\le\delta_0/4$ and $n_1>n_0$ so that
\begin{equation}
 C3^{-(n_1-n_0)}(1+\delta_1)
 \le\frac{\delta_0}{4(1+\delta_0)},
 \label{ren:iteration-parameters}
\end{equation}
where $C$ is from \eqref{ren:q-compression-estimate}.

Fix $k$ and suppose all four sequences are bounded by $B$ at scale
$m_0\ge m_*$. Along $m_j=m_0+jn_1$, the sum of the logarithmic decrease
ratios of any one sequence is at most
$\log(B/(\underline c2^{-dqk}))$, by \eqref{ren:four-bounds}.
If every step decreases at least one sequence by a factor greater than
$1+\delta_1$, the total decrease of the four sequences exceeds
$J\log(1+\delta_1)$. Thus, whenever
\begin{equation}
 J>\frac{4\log(B/(\underline c2^{-dqk}))}{\log(1+\delta_1)},
 \label{ren:log-budget}
\end{equation}
some deterministic endpoint $m=m_j$ satisfies
\begin{equation}
 a(m-n_1,k)\le(1+\delta_1)a(m,k),\qquad
 b(m-n_1,k)\le(1+\delta_1)b(m,k),
 \label{ren:common-flat-step}
\end{equation}
and the two starred inequalities.
Applying Lemma~\ref{ren:q-compression} over $n_1-n_0$ scales gives
\begin{align*}
 b(m-n_0,k)
 &\le a(m-n_1,k)+C3^{-(n_1-n_0)}b(m-n_1,k)\\
 &\le(1+\delta_1)a(m,k)
   +C3^{-(n_1-n_0)}(1+\delta_1)b(m-n_0,k).
\end{align*}
Absorbing by \eqref{ren:iteration-parameters} yields
$b(m-n_0,k)\le(1+\delta_0)a(m,k)$, and likewise for the dual.
The positive-shift bound makes both means positive, so
Proposition~\ref{ren:flatness-contraction} gives
\begin{equation}
 b(m,k)+b_*(m,k)\le C_12^{-dqk}.
 \label{ren:level-bound}
\end{equation}

Start with $k=0$ and $B=B_0$, at a uniformly bounded cost in scales.
After level $k-1$, monotonicity in $k$ permits the new upper bound
$B=C_12^{-dq(k-1)}$. Its ratio to the lower bound
$\underline c2^{-dqk}$ is $C_12^{dq}/\underline c$, independent of $k$.
Thus each level costs a fixed number of scales in \eqref{ren:log-budget}.
Inductively,
\[
 m_k\le m_*+C_2(k+1),\qquad
 b(m_k,k)+b_*(m_k,k)\le C_12^{-dqk}.
\]
The scales $m_k$ may depend on $f$, but their deterministic upper bound is
common to the class. For large $m$, take
$k=\lfloor(m-m_*)/C_2\rfloor-1$ and use monotonicity in $m$ and in the
source shift to obtain \eqref{ren:mu-decay-estimate}.
\end{proof}

\subsection{Cell estimates and identification of the center}

\begin{proof}[Proof of Theorem~\ref{ren:bounded-source}]
Retain the center $c_f$ from Lemma~\ref{ren:balanced-center}. Let $U_m$
solve $-B:D^2U_m+f-c_f=0$ in $Q_m$ with zero boundary data.
Geometric ABP applied to $\pm U_m$ gives
\[
 3^{-2m}\|U_m\|_\infty
 \le C_d\big[\mu(Q_m,F_c)^{1/d}+\mu(Q_m,F_c^*)^{1/d}\big].
\]
Proposition~\ref{ren:mu-decay} consequently yields
\begin{equation}
 \mathbb E\big[(3^{-2m}\|U_m\|_\infty)^{dq}\big]\le C3^{-\zeta m}.
 \label{ren:cube-potential-moment}
\end{equation}
Given $B_R(z)$, choose a cube centered at $z$ with side length between
$2R$ and $6R$. Solve on this cube and subtract the $B$-harmonic extension
of the solution's boundary values on the ball. The resulting zero-boundary
potential $U_{R,z}$ solves $-B:D^2U_{R,z}+f-c_f=0$ and has norm at most
twice the cube norm. Therefore
\begin{equation}
 \mathbb E\big[(R^{-2}\|U_{R,z}\|_\infty)^{dq}\big]\le CR^{-\zeta}.
 \label{ren:ball-potential-moment}
\end{equation}
Markov gives the claimed probability bound with, for example,
\begin{equation}
 \delta_B=\frac\zeta{2dq},\qquad \gamma_B=\frac\zeta2.
 \label{ren:exponent-choice}
\end{equation}

To identify $c_f$, use the ergodic invariant probability $\pi\sim\mathbb P$
from Lemma~\ref{down:density-normalized-law}. For the directional diffusion $Y$ started
at 0, put $A_t:=\int_0^t(f(Y_s)-\langle f\rangle_\pi)\,ds$.
The ergodic theorem gives $A_t/t\to0$ along almost every path for
$\mathbb P$-almost every environment.
For the exit time $\sigma_R$ from $B_R$, trace normalization gives
$\mathbb E_x^\omega\sigma_R=(R^2-|x|^2)/2$.
The strong Markov property then gives
\begin{equation}
 \mathbb E_0^\omega\sigma_R^2
 =2\mathbb E_0^\omega\int_0^{\sigma_R}
       \mathbb E_{Y_t}^\omega\sigma_R\,dt
 \le R^2\mathbb E_0^\omega\sigma_R\le R^4/2.
 \label{ren:exit-second-moment}
\end{equation}
Set $H_N:=\sup_{t\ge N}|A_t|/t$. Then
$0\le H_N\le2\|f\|_\infty$ and $H_N\to0$ pathwise. Since
$|A_{\sigma_R}|\le2\|f\|_\infty N+H_N\sigma_R$, Cauchy--Schwarz gives
\[
 R^{-2}\mathbb E_0^\omega|A_{\sigma_R}|
 \le2\|f\|_\infty N/R^2+C(\mathbb E_0^\omega H_N^2)^{1/2}.
\]
Let $R\to\infty$ and then $N\to\infty$, using dominated convergence:
\begin{equation}
 R^{-2}\mathbb E_0^\omega
       \int_0^{\sigma_R}(f(Y_s)-\langle f\rangle_\pi)\,ds\longrightarrow0.
 \label{ren:stopped-center-limit}
\end{equation}
Since
\[
 -U_{R,0}(0)=\mathbb E_0^\omega\int_0^{\sigma_R}(f(Y_s)-c_f)\,ds,
\]
we obtain $-R^{-2}U_{R,0}(0)\to(\langle f\rangle_\pi-c_f)/2$ almost surely.
Equation~\eqref{ren:ball-potential-moment} gives convergence to zero in
probability, so $c_f=\langle f\rangle_\pi$. Finally,
$v_{f,R,z}=-U_{R,z}$ has the sign convention of \eqref{ren:source-cell}.

All choices depend only on
$(d,q,\mathbb ED^q,\ell_f,H,p_H,m_H)$: first $\kappa,h,r,c_b$, then
$n_0,\varepsilon,\delta_0$, and finally $\delta_1,n_1$ and the number of
scales used in the iteration. The bounds use only the source amplitude and dependence range,
so the exponents and starting scale are common to the source class.
\end{proof}

\begin{proof}[Proof of Corollaries~\ref{ren:source-amplitude} and~\ref{ren:direction-cell}]
Apply Theorem~\ref{ren:bounded-source} to $f/T$. Linearity of the solution
and $\langle f/T\rangle_\pi=T^{-1}\langle f\rangle_\pi$ give
\eqref{ren:amplitude-estimate} with the same probability bound and starting
scale. For $h_T=\min\{a^{-1},T\}$, $h_T/T$ has amplitude at most 1 and
local dependence radius 0. It is locally H\"older since $a$ is locally
H\"older and bounded below on every compact set.

For the matrix estimate, $\operatorname{tr}B=1$ implies $|B|_F\le1$.
Choose a Frobenius orthonormal basis $E_1,\ldots,E_N$ of $\mathbb S^d$,
where $N=d(d+1)/2$, and apply the theorem to each $f_j=B:E_j$.
A union bound controls the $N$ potentials simultaneously.
For $M=\sum_jm_jE_j$ with $|M|\le1$, $\sum_j|m_j|\le\sqrt N$, so linearity
gives
\[
 \|w^B_{M,R,z}\|_\infty\le\sqrt N\max_j\|w^B_{E_j,R,z}\|_\infty.
\]
Absorb the dimensional factors into $C$ to obtain \eqref{ren:matrix-estimate}.
\end{proof}

\begin{remark}[Uniformity and subsequent source families]
\label{ren:uniformity-remark}
The common constants permit a deterministic diagonal choice such as
$T=R^t$ in \eqref{ren:amplitude-estimate}; each scale still concerns one
fixed source. For an unbounded source produced by division by $a$, this
estimate controls its bounded truncation. The remaining tail is treated
separately in Section~\ref{sec:green}.

\end{remark}

\section{Green estimates and the critical physical clock}
\label{sec:green}

We prove the cell estimate by combining the bounded-source theorem of
Section~\ref{sec:renormalization} with a truncation of the physical clock.
The main point is a coarse Green estimate with source exponent $p<d$.
This strict gain converts the critical $d$th moment into an algebraically
small truncation error. The Green estimate is pathwise and applies to
sources depending on the same environment; the signed principal part is
handled separately by the bounded-source theorem.

Fix the good-region threshold $\kappa$, the convex-order radius $a_c$ and
the fine-grid spacing $h_0$. All reverse H\"older constants and the exponent
$p$ will be chosen before the mesoscopic scale $L=R^v$. Constants below
may depend on these fixed parameters and the coefficient law, but not
on $R$, the pole, the source or the truncation height. The invariant laws
for the physical and normalized diffusions are those constructed in
Section~\ref{sec:density} and Lemma~\ref{down:density-normalized-law}.

\subsection{Critical moments of the coarse clock}
\label{clock:critical-moment}

Write
\begin{equation}
 a(x)=\operatorname{tr}A(x),\qquad B(x)=\frac{A(x)}{a(x)},
 \qquad h(x)=\frac1{a(x)},\qquad \operatorname{tr}B=1.
 \label{clock:normalization}
\end{equation}
Let $X$ be the normalized diffusion with generator $B:D^2$, and denote its quenched probability and expectation by $\mathbf P_x$ and $\mathbf E_x$. Physical time is recovered through $\int_0^t h(X_s)\,ds$. Section~\ref{sec:regularity} constructs the bad region, covariant under all translations,
\[
 W=\left\{x:\inf_{\overline B_2(x)}\lambda_{\min}(A)<\kappa\right\}.
\]
If $x\in W$, first exit the connected component $C_x$ containing $x$, arriving at a good point $Y$; if $x\notin W$, set $Y=x$ directly. Then exit $B_1(Y)$ starting from $Y$. Denote the resulting full stopping time in normalized time by $S$, and define the coarse kernel by
\begin{equation}
 K(x,E)=\mathbf P_x(X_S\in E),\qquad \mathcal L=K-I.
 \label{green:kernel}
\end{equation}
Proposition~\ref{prop:reg:coarse-process} and Lemma~\ref{lem:reg:convex-order} imply that every full coarse step is well defined, successive coarse stopping times do not accumulate, $K$ has zero drift, and, for every finite globally convex function $V$,
\begin{equation}
 \int (y-x)K(x,dy)=0,\qquad
 KV(x)\ge \frac1{|B_{a_c}|}\int_{B_{a_c}(x)}V(y)\,dy,
 \qquad a_c=\frac\kappa d.
 \label{green:convex-order-input}
\end{equation}
The moment bound~\eqref{reg:eq:coarse-moments} also gives the covariance estimate
\begin{equation}
 \Sigma(x)=\int(y-x)\otimes(y-x)K(x,dy)
 \quad\hbox{satisfies}\quad \Sigma(x)\ge a_c I.
 \label{green:covariance-lower}
\end{equation}
Convex order does not imply a minorization on arbitrary nonnegative tests.
For the clock estimate, use the cells
\[
 Q_k=k+[-1/2,1/2)^d,\qquad
 \widehat Q_k=k+[-3,3]^d.
\]
Set
\[
 I_k=\mathbf1_{\{\inf_{\widehat Q_k}\lambda_{\min}(A)<\kappa\}},
 \qquad H_k=1+|\mathcal C(k)|,
\]
where $\mathcal C(k)$ is the cluster of bad sites connected by $\ell^\infty$ nearest-neighbor adjacency, with the empty cluster used when $I_k=0$. Once $\kappa$ is fixed sufficiently small, there are deterministic $c,C>0$ such that
\begin{equation}
 \mathbb P(H_k\ge n)\le Ce^{-cn},\qquad n\ge1.
 \label{clock:geometric-tail}
\end{equation}
Every continuous bad component intersecting $Q_k$ has diameter at most $CH_k$. Thus, for all $x\in Q_k$, the full coarse path stays inside $B_{CH_k}(x)$, and
\begin{equation}
 \mathbf E_x S^j\le C_jH_k^{2j}\quad(j\ge1),\qquad
 |X_S-x|\le CH_k.
 \label{clock:whole-cell-step}
\end{equation}
Jensen's inequality gives noninteger orders. These bounds are uniform in $x\in Q_k$.

Let $\ell_{\rm dep}$ be the deterministic dependence distance of the original field, and define the local physical weights
\begin{equation}
 Z_j=\left(\inf_{j+[-4,4]^d}\lambda_{\min}(A)\right)^{-1},
 \qquad M_1=\sup_j\mathbb E Z_j^d<\infty.
 \label{clock:local-physical-weight}
\end{equation}
Indeed, finitely many unit balls cover the determining cube, and the $d$th power of their maximum inverse minimum is bounded by the sum of the $d$th powers.

\begin{lemma}[Weighted lattice-animal estimate]
\label{clock:weighted-animal}
The threshold $\kappa$ defining the bad region can be fixed sufficiently small once and for all so that, for arbitrary deterministic $j,k\in\mathbb Z^d$ and $n\ge1$,
\begin{equation}
 \mathbb E\left[Z_j^d\mathbf1_{\{H_k\ge n\}}\right]
 \le CM_1e^{-cn}.
 \label{clock:weighted-animal-bound}
\end{equation}
The constants are independent of $j,k,n$, and independence of $Z_j$ and $H_k$ is not required.
\end{lemma}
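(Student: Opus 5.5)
The plan is to repeat the lattice-animal union bound from Lemma~\ref{lem:reg:bad-clusters}, carrying the weight $Z_j^d$ through it. The point of the argument is to pay for the weight with only a bounded number of animal sites near $j$. Those sites are then discarded, and the remaining, well-separated sites supply the independence needed for the exponential factor.

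\emph{Reduction to animals.} On $\{H_k\ge n\}$, with $n\ge2$, the cluster $\mathcal C(k)$ contains a $*$-connected lattice animal $\Gamma\ni k$ with $|\Gamma|=n-1$, and every site of $\Gamma$ is bad. For $n=1$ the bound is just $M_1$. A union bound gives
\[
 \mathbb E\big[Z_j^d\mathbf1_{\{H_k\ge n\}}\big]
 \le\sum_{\Gamma}\mathbb E\Big[Z_j^d\prod_{i\in\Gamma}I_i\Big],
\]
and there are at most $C_d^{\,n}$ animals $\Gamma$ of size $n-1$ containing $k$.

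\emph{Estimate for a fixed animal.} Fix $K_0>\ell+20$ as in Lemma~\ref{lem:reg:bad-clusters}. Pick a residue class mod $K_0$ that contains at least $(n-1)/K_0^d$ sites of $\Gamma$. Remove the boundedly many sites $i$ whose determining cube $\widehat Q_i$ lies within distance $\ell$ of the cube $j+[-4,4]^d$ that determines $Z_j$. Because points in one residue class are $K_0$ apart, at most $C_d$ sites are removed. Call the remaining set $\Gamma'$; it satisfies $|\Gamma'|\ge (n-1)/K_0^d-C_d$.

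The variables $\{I_i\}_{i\in\Gamma'}$ and $Z_j$ are then determined on pairwise $\ell$-separated deterministic sets. By H3, applied successively as in that lemma, they are jointly independent. Hence
\[
 \mathbb E\Big[Z_j^d\prod_{i\in\Gamma}I_i\Big]
 \le\mathbb E Z_j^d\prod_{i\in\Gamma'}\mathbb E I_i
 \le M_1\,p^{(n-1)/K_0^d-C_d},
 \qquad p\le M_0\kappa^d .
\]

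\emph{Choice of $\kappa$.} Summing over animals gives
\[
 \mathbb E\big[Z_j^d\mathbf1_{\{H_k\ge n\}}\big]\le M_1\,p^{-C_d}\,\big(C_d\,p^{1/K_0^d}\big)^{n-1}.
\]
Choose $\kappa$ small enough that $C_d\,p^{1/K_0^d}\le e^{-2c}$. The factor $p^{-C_d}$ is a fixed constant once $\kappa$ is fixed, and it is absorbed into $C$. This proves \eqref{clock:weighted-animal-bound}, with constants uniform in $j,k,n$ by stationarity.

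The same small $\kappa$ also yields \eqref{clock:geometric-tail}, so one threshold serves both estimates. Two points need care. The first is the claim that only $C_d$ sites must be removed, which relies on the fixed size of the cube $j+[-4,4]^d$. The second is that the removal must happen before invoking independence, since $Z_j$ and $H_k$ are not themselves independent.
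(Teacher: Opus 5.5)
Your proposal is correct and follows the paper's proof essentially step for step. The shared steps are a union bound over the at most $C^n$ anchored $*$-animals of size $n-1$, deletion of the boundedly many animal sites whose determining cubes lie within $\ell$ of the cube determining $Z_j$, joint independence of $Z_j$ and the surviving sites of one residue class modulo $K_0$, and a final choice of small $\kappa$; the only cosmetic difference is that you select the residue class before deleting, whereas the paper deletes first (when absorbing $p^{-C_d}$, use the deterministic bound $M_0\kappa^d\le1$ in place of $p$ so the constant stays finite even if $p=0$).
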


\begin{proof}
Choose $K_0$ so that the determining cubes of sites in each residue class modulo $K_0$ are separated by more than $\ell_{\rm dep}$. Finite-range dependence gives joint independence in each class, and Markov's inequality gives $p_\kappa:=\sup_i\mathbb P(I_i=1)\le C\kappa^d$.

For $n\ge2$, the event $H_k\ge n$ contains a bad animal of size $n-1$ anchored at $k$. Encoding a spanning-tree traversal gives at most $C_a^n$ such deterministic animals; $n=1$ is absorbed in the constant.

Fix an animal $\mathcal A$ and delete the sites whose determining cubes are within $\ell_{\rm dep}$ of the cube determining $Z_j$. At most $J_0$ sites are deleted, uniformly in $j,k$. One residue class among the remaining sites has at least
\[
 \frac{n-1-J_0}{K_0^d}
\]
sites. The selected cubes and the cube determining $Z_j$ are mutually separated. Applying finite-range dependence to each deterministic cube and the union of the others gives joint independence, whence
\[
 \mathbb E\left[Z_j^d\prod_{i\in\mathcal A}I_i\right]
 \le M_1p_\kappa^{(n-1-J_0)/K_0^d}.
\]
Absorb small $n$ into the constant and sum over the animals:
\[
 \mathbb E\left[Z_j^d\mathbf1_{\{H_k\ge n\}}\right]
 \le CM_1 C_{a}^{\,n}p_\kappa^{n/K_0^d}.
\]
Choose $\kappa$ so that $C_ap_\kappa^{1/K_0^d}<1$. Independence was used only after enumerating deterministic animals.
\end{proof}

\begin{proposition}[Critical moment and localization of a full physical coarse step]
\label{clock:critical-step}
Let
\begin{equation}
 \mathcal T=\int_0^S h(X_t)\,dt,\qquad \xi(x)=\mathbf E_x\mathcal T.
 \label{clock:step-time}
\end{equation}
Then, for every deterministic starting point $x$,
\begin{equation}
 \mathbb E\mathbf E_x\mathcal T^d\le C,
 \qquad \mathbb E\xi(x)^d\le C.
 \label{clock:critical-step-moment}
\end{equation}
Moreover, there is a local approximation $\xi^{(L)}(x)$ that depends only on the coefficients in $B_L(x)$ and satisfies
\begin{equation}
 \mathbb E|\xi(x)-\xi^{(L)}(x)|^d\le Ce^{-cL}.
 \label{clock:critical-localization}
\end{equation}
These conclusions neither use nor imply a moment of order $d+\eta$ for the physical coarse-step clock.
\end{proposition}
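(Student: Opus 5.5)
The plan is to bound $\mathcal T$ pathwise by the coarse-step geometry and the local physical weight. Since $h=1/a$ and $a=\operatorname{tr}A\ge\lambda_{\min}(A)$, along the full coarse step started at $x\in Q_k$ the path stays in $B_{CH_k}(x)$ by \eqref{clock:whole-cell-step}. This gives
\[
 \mathcal T\le S\cdot\sup_{B_{CH_k}(x)}h
 \le S\max\{Z_j:|j-k|\le CH_k\}.
\]
Bounding the maximum by a sum, the quenched moment becomes
\[
 \mathbf E_x\mathcal T^d\le C_dH_k^{2d}\Bigl(\sum_{|j-k|\le CH_k}Z_j\Bigr)^d
 \le CH_k^{2d}\,H_k^{d(d-1)}\sum_{|j-k|\le CH_k}Z_j^d ,
\]
by \eqref{clock:whole-cell-step} with $j=d$ and H\"older over the $CH_k^d$ cells. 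The next step is to take $\mathbb E$ and decompose on the level sets $\{H_k=n\}$. This yields
\[
 \mathbb E\mathbf E_x\mathcal T^d\le C\sum_{n\ge1}n^{2d+d(d-1)}\sum_{|j-k|\le Cn}\mathbb E\bigl[Z_j^d\mathbf1_{\{H_k\ge n\}}\bigr].
\]
Lemma~\ref{clock:weighted-animal} bounds each summand by $CM_1e^{-cn}$, and the polynomial factor $n^{2d+d(d-1)+d}$ is absorbed by the exponential, so the sum is finite. The main obstacle is precisely this step: $Z_j$ and $H_k$ are not independent, and only the $d$th moment of $Z_j$ is available. The weighted animal estimate is what makes this work, because it pairs $Z_j^d$ with the cluster event without needing any excess moment. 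The bound for $\xi$ then follows from Jensen, $\xi(x)^d\le\mathbf E_x\mathcal T^d$.

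For localization I will use the local kernel construction in the proof of Proposition~\ref{prop:reg:geometry-concentration}. Inspect $A$ in $B_L(x)$. If the bad component containing $x$ has closure in $B_{L/2}(x)$, define $\xi^{(L)}(x)$ as the expected physical time of the two-segment step for the diffusion with coefficients extended arbitrarily (uniformly elliptically) outside $B_{L/2}(x)$. Otherwise set $\xi^{(L)}(x)=0$.

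On acceptance, the whole step, including the final unit-ball exit, stays in the inspected region. Uniqueness of the local martingale problem then gives $\xi^{(L)}=\xi$ exactly. Hence $|\xi-\xi^{(L)}|^d\le\xi(x)^d\mathbf1_{\{H_k\ge cL\}}$. Repeating the estimate above restricted to $n\ge cL$ gives $\mathbb E|\xi-\xi^{(L)}|^d\le C\sum_{n\ge cL}n^{C}e^{-cn}\le Ce^{-c'L}$.

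Measurability of $\xi^{(L)}$ with respect to $A|_{B_L(x)}$ follows from the measurable component construction in Proposition~\ref{prop:reg:coarse-process}. The argument only ever uses the $d$th moment $M_1$. No order $d+\eta$ is produced, because the tail of $Z_j$ itself is never improved: only cluster sizes have exponential tails.
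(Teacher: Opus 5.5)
Your proposal is correct and follows the paper's proof essentially step for step. Both arguments use the pathwise bound $\mathcal T\le S\max\{Z_j:|j-k|\le CH_k\}$, decompose on the level sets of $H_k$, and let Lemma~\ref{clock:weighted-animal} absorb the polynomial factors; both use Jensen for $\xi$ and a certify-or-set-to-zero localization that can fail only when $H_k\ge cL$. (Your extra H\"older factor $H_k^{d(d-1)}$ is harmless but unnecessary, since $\max_j Z_j^d\le\sum_j Z_j^d$ directly.) One small fix: the final unit-ball exit from $Y\in\partial C_x$ can leave $B_{L/2}(x)$ by up to distance one, so extend the coefficients only outside, say, $B_{L/2+2}(x)\subset B_L(x)$; this keeps $\xi^{(L)}=\xi$ exact on acceptance.
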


\begin{proof}
If $x\in Q_k$, the full-path control and $h\le\lambda_{\min}(A)^{-1}$ give
\[
 \mathcal T\le S\max_{j\in\mathcal J_{H_k}(k)}Z_j,
 \qquad
 \mathcal J_n(k)=\{j:|j-k|_\infty\le Cn\}.
\]
Combining this with~\eqref{clock:whole-cell-step},
\begin{equation}
 \mathbf E_x\mathcal T^d
 \le CH_k^{2d}\max_{j\in\mathcal J_{H_k}(k)}Z_j^d.
 \label{clock:whole-cell-physical-bound}
\end{equation}
Decompose according to $H_k$, bound the maximum by the sum and apply Lemma~\ref{clock:weighted-animal}:
\begin{align}
 \mathbb E\left[\mathbf E_x\mathcal T^d\,
       \mathbf1_{\{H_k\ge N\}}\right]
 &\le C\sum_{n\ge N}n^{2d}
      \sum_{j\in\mathcal J_n(k)}
       \mathbb E[Z_j^d\mathbf1_{\{H_k\ge n\}}]\notag\\
 &\le CM_1\sum_{n\ge N}n^{3d}e^{-cn}
 \le Ce^{-c'N}.                                         \label{clock:weighted-tail-sum}
\end{align}
Taking $N=1$ and then applying quenched Jensen's inequality yields~\eqref{clock:critical-step-moment}.

Inspect the component and its outer neighboring layer inside $B_L(x)$. If it is certified to lie in $B_{L-C}(x)$, set $\xi^{(L)}=\xi$ by computing the full coarse step; otherwise set it to zero. This construction depends only on the coefficients in $B_L(x)$ and differs from $\xi$ only when $H_k\ge cL$. Since $|\xi-\xi^{(L)}|^d\le\mathbf E_x\mathcal T^d$ on failure, \eqref{clock:weighted-tail-sum} proves~\eqref{clock:critical-localization}.

The weighted-animal estimate decouples the physical weight and the large cluster at order $d$; applying H\"older directly to their product would require an extra moment.
\end{proof}

\subsection{A coarse reverse H\"older inequality}
\label{green:grid-and-geometry}

Choose $h_0=1/N_0$, where $N_0$ is a fixed large integer, so that the fine-cell diameter satisfies
\[
 d_0=h_0\sqrt d\le a_c/8.
\]
Take a half-open fine-grid partition $\mathcal Q$ nested in the unit grid. For each fine cell $Q\subset Q_{k(Q)}$, set
\begin{equation}
 H_Q=H_{k(Q)},\qquad J_Q=C_{g}H_Q\ge1,
 \label{green:grid-weight}
\end{equation}
where the fixed constant $C_{g}$ is sufficiently large that, for all $x\in Q$,
\begin{equation}
 \int|y-x|^2K(x,dy)\le J_Q^2,
 \qquad \sup_{0\le t\le S}|X_t-x|\le J_Q
 \quad\mathbf P_x\text{-almost surely}.
 \label{green:grid-second-moment}
\end{equation}
The same $J_Q$ also controls the diameter of every entire bad component intersecting $Q$. The half-open partition avoids double counting when a Green atom lies on a cell boundary.

\begin{lemma}[Concentration of a fixed high-order geometric moment]
\label{green:geometry-moment}
There are deterministic $C_0,c,C,\vartheta_0>0$ such that, for every deterministic center $z$ and sufficiently large $r$,
\begin{equation}
 \mathbb P\left(
  \sum_{Q\cap B_{3r}(z)\ne\varnothing}J_Q^{4d}>C_0r^d
 \right)\le C\exp(-cr^{\vartheta_0}).
 \label{green:geometry-moment-event}
\end{equation}
Moreover, for every deterministic bounded region $E$ and $t\ge1$,
\begin{equation}
 \mathbb P\left(\max_{Q\cap E\ne\varnothing}J_Q>t\right)
 \le C(1+|E+B_1|)e^{-ct}
 \label{green:geometry-maximum}
\end{equation}
where the volume on the right is interpreted through the count of a fixed grid box containing the region. The constants are independent of the mesoscopic exponent $v$.
\end{lemma}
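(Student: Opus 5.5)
The plan is to reduce both estimates to the unit-lattice variables $H_k=1+|\mathcal C(k)|$ and then reuse the truncation-and-coloring argument already carried out in Proposition~\ref{prop:reg:geometry-concentration}. Each fine cell $Q$ lies in a unit cell $Q_{k(Q)}$, and $J_Q=C_gH_{k(Q)}$. Every unit cell contains exactly $N_0^d$ fine cells. Hence
\[
 \sum_{Q\cap B_{3r}(z)\ne\varnothing}J_Q^{4d}
 \le C_g^{4d}N_0^d\sum_{k\in\mathbb Z^d\cap B_{3r+\sqrt d}(z)}H_k^{4d},
\]
and the maximum over fine cells meeting $E$ equals $C_g$ times the maximum of $H_k$ over the unit cells meeting $E$. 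Neither reduction involves $v$, so the constants will be independent of the mesoscopic exponent.

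For \eqref{green:geometry-maximum}, I would use a union bound over the lattice sites $k$ with $Q_k\cap E\ne\varnothing$. There are at most $C(1+|E+B_1|)$ of them, counted through a fixed grid box containing $E$. By \eqref{clock:geometric-tail}, each satisfies $\mathbb P(C_gH_k>t)\le Ce^{-ct/C_g}$. Renaming $c$ gives the claim.

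For \eqref{green:geometry-moment-event}, I would repeat the proof of \eqref{reg:eq:geometry-concentration} with $m=4d$.
\begin{enumerate}
\item Truncate at $n=r^b$ with $0<b<d/(d+8d)$. By the exploration argument in Proposition~\ref{prop:reg:geometry-concentration}, $H_k\wedge n$ is determined by the coefficients within radius $Cn$ of $k$.
\item Partition $\mathbb Z^d\cap B_{3r+\sqrt d}(z)$ into $Cn^d$ residue classes. Within each class the truncated variables are independent, by H3 applied successively to one cube and the union of the others, and they are bounded by $n^{4d}$.
\item Their common mean is at most $\mathbb E H_0^{4d}<\infty$ by the exponential tail. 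Set $C_0$ to be $C_g^{4d}N_0^d$ times a constant exceeding twice this mean, times the volume factor.
\item Hoeffding's inequality in each class, whose size is $\asymp r^dn^{-d}$, bounds a fixed-tolerance deviation of the class average by $\exp(-cr^dn^{-d-8d})$. A union over the $Cn^d$ classes costs only a polynomial factor.
\item The probability that the truncation is inexact somewhere in the ball is at most $Cr^de^{-cn}$, again by \eqref{clock:geometric-tail}.
\end{enumerate}
Adding these two failure probabilities gives \eqref{green:geometry-moment-event} with any $0<\vartheta_0<\min\{b,\,d-9db\}$.

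The only delicate point is the deterministic locality of the truncated cluster size, which is what allows independence after coloring. I would cite the exploration construction: explore until $n$ bad vertices are found or the cluster is closed off by good neighbors, each step inspecting a fixed cube. That construction was already used for $J_Q\wedge n$ in Proposition~\ref{prop:reg:geometry-concentration}. Everything else is bookkeeping of constants that depend only on $d$, $\kappa$, $N_0$ and the law.
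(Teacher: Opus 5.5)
Your proposal is correct and follows the paper's argument. The maximum bound is the same union bound over cells using the exponential cluster tail. For the moment bound, the paper simply invokes Proposition~\ref{prop:reg:geometry-concentration} with $m=4d$ at radius $3r/2$, noting that its weights dominate $J_Q=C_gH_{k(Q)}$ after a fixed change of constants; you instead re-run that proposition's truncation, coloring and Hoeffding argument directly, with the same constraint $b<1/9$, and obtain the same $v$-independent constants.
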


\begin{proof}
Use Proposition~\ref{prop:reg:geometry-concentration} with $m=4d$
and radius $3r/2$. Its weights are fixed multiples of finite-neighborhood
maxima of $1+H_k$, and therefore dominate the present weights
$J_Q=C_gH_{k(Q)}$, after a fixed change of constants. The grids can be
chosen identical. Thus~\eqref{reg:eq:geometry-concentration} gives
\eqref{green:geometry-moment-event}; in particular, its exponent is fixed
before $v$. The maximum bound follows by summing
$\mathbb P(J_Q>t)\le Ce^{-ct}$ over the deterministic cells meeting $E$.
\end{proof}

For a nonnegative locally finite measure $\nu$, define its fine-grid density by
\begin{equation}
 n_Q=\frac{\nu(Q)}{|Q|},\qquad
 n(x)=\sum_{Q\in\mathcal Q}n_Q\mathbf1_Q(x).
 \label{green:grid-density}
\end{equation}
Given an open domain $D$, the required adjoint condition is
\begin{equation}
 \int_{\mathbb R^d}\mathcal L\phi\,d\nu\le0
 \quad\text{for every nonnegative }\phi\in C_c^2(D).
 \label{green:adjoint-inequality}
\end{equation}
The integral must be well defined. Below, either $\mathcal L\phi$ has bounded support or $\nu$ is a finite killed Green measure.

\medskip\noindent\textit{Monge--Amp\`ere duality.}
\label{green:weighted-rh-subsection}

\begin{lemma}[Weighted annular estimate]
\label{green:weighted-rh}
Fix a center, which is suppressed in the notation below. Let $r\ge r_0$, $r\le s<t\le2r$, write $b=t-s$, and assume $b\ge\max\{8d_0,b_0\}$. Suppose that the geometry of the relevant bad components ensures
\begin{equation}
 K(x,B_{s+b/2})=0\qquad(x\notin B_t),
 \label{green:no-incoming}
\end{equation}
and that the nonnegative locally finite measure $\nu$ satisfies~\eqref{green:adjoint-inequality} for every nonnegative $\phi\in C_c^2(B_{s+b/2})$. With $q=d/(d-1)$, one has
\begin{equation}
 \left(\sum_{Q\cap B_s\ne\varnothing}|Q|n_Q^q\right)^{1/q}
 \le C\frac r{(t-s)^2}
       \sum_{Q\cap B_t\ne\varnothing}\nu(Q)J_Q^2.
 \label{green:weighted-rh-bound}
\end{equation}
For the actual coarse kernel, condition~\eqref{green:no-incoming} holds if all cell weights intersecting $B_{3r}$ are at most $cb$, where $c>0$ is fixed sufficiently small.
\end{lemma}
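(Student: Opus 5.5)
The proof is by duality: I will test the adjoint inequality \eqref{green:adjoint-inequality} against a cut-off concave function built from a Monge--Amp\`ere solution. Since $q=d/(d-1)$ is dual to $d$, and $n$ is constant on fine cells, it suffices to show
\[
 \sum_{Q\cap B_s\ne\varnothing}g_Q\,\nu(Q)
 \le C\frac r{(t-s)^2}\sum_{Q\cap B_t\ne\varnothing}\nu(Q)J_Q^2
\]
for every nonnegative fine-grid step function $g=\sum g_Q\mathbf1_Q$ supported on the cells meeting $B_s$ with $\sum|Q|g_Q^d\le1$. For such $g$, I would let $u$ be the convex Aleksandrov solution of $\det D^2u=g^d$ on $B_{s+b/4}$, vanishing on the boundary; to have $u\in C^2$, first mollify $g$ at scale $\ll d_0$ with a strictly positive floor that is sent to zero at the end. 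The Aleksandrov estimate gives $\|u\|_\infty\le Cr\|g\|_{L^d}\le Cr$. On $B_{s+b/8}$, convexity and the boundary values give $|Du|\le C\|u\|_\infty/b$. The Hessian bound will come from interior Monge--Amp\`ere regularity after mollification; only the qualitative bound is needed, since it enters through a Taylor expansion with the cutoff's derivatives.

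Next I put $\phi=\eta\,(-u)\ge0$, where $\eta$ is a smooth cutoff equal to one on $B_{s+b/16}$ and supported in $B_{s+b/8}$, with $|D\eta|\le C/b$ and $|D^2\eta|\le C/b^2$. Then $\phi\in C_c^2(B_{s+b/2})$ is admissible. I split $\int\mathcal L\phi\,d\nu$ according to the starting point $x$ and treat three regions.

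For $x$ in a cell meeting $B_s$: if the whole coarse step stays where $\eta\equiv1$, then $\mathcal L\phi(x)=u(x)-Ku(x)\le u(x)-\fint_{B_{a_c}(x)}u$ by the convex-order inequality \eqref{green:convex-order-input}. The mean-value defect of a convex function is bounded below by $c\,\Delta u$ averaged over $B_{a_c/2}(x)$. By AM--GM, $\Delta u\ge d(\det D^2u)^{1/d}=dg$. Since $d_0\le a_c/8$, that average dominates $c\,g_Q$ for $x\in Q$. Hence $\mathcal L\phi\le-c\,g$ there.

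For $x$ in the annulus $B_t\setminus B_s$, or whenever the step reaches the region where $\eta<1$: I write $\eta(-u)$ and Taylor expand. Zero drift removes first-order terms. Concavity of $-u$ lets me discard $\eta\,\mathcal L(-u)\le0$. The remaining cross and cutoff terms are bounded by $C\bigl(\|u\|_\infty/b^2+\|Du\|_{\infty}/b\bigr)\int|y-x|^2K(x,dy)\le Cr\,J_Q^2/b^2$, using \eqref{green:grid-second-moment}.

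For $x\notin B_t$: condition \eqref{green:no-incoming} gives $K(x,\operatorname{supp}\phi)=0$ and $\phi(x)=0$, so $\mathcal L\phi(x)=0$.

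Combining the three regions with $\int\mathcal L\phi\,d\nu\le0$ yields $c\sum g_Q\nu(Q)\le Cr\,b^{-2}\sum_{Q\cap B_t\ne\varnothing}\nu(Q)J_Q^2$. Taking the supremum over $g$ gives \eqref{green:weighted-rh-bound}. The final assertion follows from geometry: if every $J_Q\le cb$ on cells meeting $B_{3r}$, a step from $x\notin B_t$ moves at most $J_Q\le cb<b/2$, so it cannot enter $B_{s+b/2}$.

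The main obstacle is the interior lower bound $\mathcal L\phi\le-c\,g$. Convex order gives only an averaged comparison, not a minorization by Lebesgue measure, so the cell-wise AM--GM step has to be arranged carefully. In particular, cells where a long bad-cluster step crosses into the cutoff layer must be reassigned to the error term; this is exactly where the weights $J_Q^2$ enter. I would first try to handle this by choosing $\eta\equiv1$ on a $J$-neighborhood, shrinking $b$ by a constant factor if necessary.
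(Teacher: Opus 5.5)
Your argument fails at the final combination because the test function has the wrong convexity. The hypothesis \eqref{green:adjoint-inequality} is an upper bound, $\int\mathcal L\phi\,d\nu\le0$ for $\phi\ge0$; for the killed Green measures it reads $\int\mathcal L\phi\,d\nu^x=-\phi(x)$, see \eqref{green:pole-sign}. With $\phi=\eta(-u)$ concave in the interior, you obtain $\mathcal L\phi\le-cg_Q$ there, and hence only
\[
 \int\mathcal L\phi\,d\nu\le-c\sum_Qg_Q\nu(Q)+\int|E|\,d\nu .
\]
This is a second \emph{upper} bound on the same quantity, and together with $\int\mathcal L\phi\,d\nu\le0$ it gives nothing. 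Your chain would, if completed, prove the estimate for adjoint subsolutions ($\int\mathcal L\phi\,d\nu\ge0$). The lemma is applied to adjoint supersolutions, such as the Green measures with their poles. To conclude $c\sum_Qg_Q\nu(Q)\le\int|E|\,d\nu$ you need $\mathcal L\phi\ge cg_Q-|E|$, that is, a test built from a \emph{convex} function.

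That correction is not cosmetic, because a nonzero, nonnegative, compactly supported function cannot be convex. You must take $\phi=\eta V$ with $V$ convex and shifted by a constant so that $V\ge0$ on $\operatorname{supp}\eta$. Then $V$ no longer vanishes at the edge of the cutoff, and it must be convex and controlled wherever the coarse steps from $B_t$ land. The lemma does not assume $J_Q$ small there, so a choice of $\eta\equiv1$ on a $J$-neighborhood is not available in general.

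The paper handles this uniformly in four steps:
\begin{enumerate}
 \item It solves $\det D^2v_\epsilon=(\widetilde g+\epsilon)^d$ on the larger ball $B_{8r}$.
 \item It extends $v_\epsilon|_{B_{5r}}$ by supporting planes to a globally convex $V$ with $\operatorname{Lip}V\le C(G+\epsilon r)$.
 \item It adds a constant so that $V\ge0$ on $B_{3r}$.
 \item It uses the exact product identity \eqref{green:nonlocal-product}.
\end{enumerate}
Global convexity gives $\eta\mathcal LV\ge0$ everywhere and $\eta\mathcal LV\ge cg_Q$ on every cell meeting $B_s$, whatever the jump length. The weights $J_Q^2$ then enter only through two terms: $|V\mathcal L\eta|\le Cr(G+\epsilon r)b^{-2}J_Q^2$, and the cross term, which is at most $Cb^{-1}(G+\epsilon r)J_Q^2$. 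Your Monge--Amp\`ere, AM--GM and convex-order lower bound carries over unchanged.

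There is a smaller gap in the final geometric assertion. You bound the jump from $x\notin B_t$ by $J_Q$ at the starting cell, but $x$ may lie outside $B_{3r}$, where no weight is controlled. Argue from the landing point instead. A step from a bad start landing in $B_{s+b/2}$ has its second-stage center within distance one of the landing point. Hence $C_x$ meets $B_{s+b/2+2}$, and its diameter is bounded by weights of cells meeting $B_{3r}$.
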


\begin{proof}
If a coarse endpoint $z'$ lies in $B_{s+b/2}$, its second-stage center satisfies $|Y-z'|=1$. For a good start the jump has length one. For a bad start, $Y\in\partial C_x$, so $C_x$ meets $B_{s+b/2+2}$ and its entire diameter is bounded by the nearby cell weights. Thus $|x-z'|\le cb+2<b/2$ for small $c$ and large $b_0$, which excludes $x\notin B_t$. This proves the incoming-jump assertion, including distant starting points.

Given a nonnegative array $(g_Q)$ supported on cells meeting $B_s$, sum smooth bumps equal to one on their closed cells, with support in fixed dilations. Finite overlap gives
\begin{equation}
 \widetilde g\ge g_Q\ \text{on }Q,\qquad
 \|\widetilde g\|_{L^d}\le CG,\qquad
 G=\left(\sum_Q|Q|g_Q^d\right)^{1/d}.
 \label{green:smooth-array}
\end{equation}

For $\epsilon>0$, solve the convex Monge--Amp\`ere problem in $B_{8r}$,
\begin{equation}
 \det D^2v_\epsilon=(\widetilde g+\epsilon)^d,\qquad
 v_\epsilon=0\quad\text{on }\partial B_{8r}.
 \label{green:ma-problem}
\end{equation}
The smooth positive right-hand side and strictly convex ball satisfy the hypotheses of Caffarelli--Nirenberg--Spruck~\cite[Theorem~1.1]{CNS}, yielding a smooth strictly convex solution. Set $m_0=-\min v_\epsilon$, attained at $x_0$. If $|p|<m_0/(32r)$, the minimum of $v_\epsilon-p\cdot(x-x_0)$ lies in the interior. Hence $B_{m_0/(32r)}\subset Dv_\epsilon(B_{8r})$. By strict convexity and the area formula,
\[
 c(m_0/r)^d\le |Dv_\epsilon(B_{8r})|
 =\int_{B_{8r}}\det D^2v_\epsilon
 =\int_{B_{8r}}(\widetilde g+\epsilon)^d
 \le C(G^d+\epsilon^dr^d).
\]
Since $v_\epsilon\le0$, this proves the first estimate below:
\begin{equation}
 \|v_\epsilon\|_{L^\infty(B_{8r})}\le Cr(G+\epsilon r),\qquad
 \|Dv_\epsilon\|_{L^\infty(B_{6r})}\le C(G+\epsilon r).
 \label{green:ma-amplitude}
\end{equation}
For the second, take $e=Dv_\epsilon(x)/|Dv_\epsilon(x)|$ when the gradient is nonzero. Convexity gives, for $x\in B_{6r}$,
\[
 r|Dv_\epsilon(x)|\le v_\epsilon(x+re)-v_\epsilon(x)
 \le2\|v_\epsilon\|_\infty.
\]
Extend $v_\epsilon$ by the supremum of its supporting affine functions at points of $B_{5r}$. The extension is globally convex, agrees with $v_\epsilon$ in $B_{5r}$ and is $C(G+\epsilon r)$-Lipschitz. Adding a constant gives a function $V$ satisfying
\begin{equation}
 V\ge0\ \text{on }B_{3r},\qquad
 \sup_{B_{3r}}|V|\le Cr(G+\epsilon r),\qquad
 \operatorname{Lip}V\le C(G+\epsilon r).
 \label{green:ma-extension}
\end{equation}
Adding a constant does not change $\mathcal LV$.

We need the lower bound to hold at every starting point in each cell. If $x\in Q$, then $Q\subset B_{a_c/4}(x)$. By the arithmetic--geometric mean inequality,
\[
 \Delta v_\epsilon\ge d(\det D^2v_\epsilon)^{1/d}
 \ge d\widetilde g\ge dg_Q\quad\text{on }Q.
\]
Let $M_x(\rho)$ denote the spherical mean. The divergence theorem gives
\[
 M_x'(\rho)=\frac1{d|B_1|\rho^{d-1}}
           \int_{B_\rho(x)}\Delta v_\epsilon.
\]
Consequently,
\begin{align}
 \frac1{|B_{a_c}|}\int_{B_{a_c}(x)}v_\epsilon-v_\epsilon(x)
 &=\int_0^{a_c}\left(1-\frac{\rho^d}{a_c^d}\right)M_x'(\rho)\,d\rho\notag\\
 &\ge c_d|Q|a_c^{2-d}g_Q\ge c g_Q.                 \label{green:all-points-ma}
\end{align}
Here we integrate only over $\rho\in[a_c/2,3a_c/4]$, for which the whole cell $Q$ lies in $B_\rho(x)$. By the convex-order comparison~\eqref{green:convex-order-input},
\begin{equation}
 \mathcal LV(x)\ge cg_Q\quad(x\in Q).
 \label{green:all-points-generator}
\end{equation}
At all other points, zero drift and Jensen's inequality still give $\mathcal LV\ge0$.

Choose $0\le\eta\le1$ such that $\eta=1$ on $B_{s+d_0}$, $\eta\in C_c^\infty(B_{s+b/2})$, and
\[
 |D\eta|\le Cb^{-1},\qquad |D^2\eta|\le Cb^{-2}.
\]
Since $V$ is smooth and nonnegative on the support of the cutoff, $\eta V$ is an admissible nonnegative adjoint test. The exact product identity is
\begin{align}
 \mathcal L(\eta V)(x)
 &=\eta(x)\mathcal LV(x)+V(x)\mathcal L\eta(x)\notag\\
 &\quad+\int(\eta(y)-\eta(x))(V(y)-V(x))K(x,dy).
 \label{green:nonlocal-product}
\end{align}
Zero drift cancels the first-order Taylor term of $\eta$, so, for $x\in Q$,
\[
 |\mathcal L\eta(x)|\le Cb^{-2}J_Q^2,\qquad
 \left|\int(\eta(y)-\eta(x))(V(y)-V(x))K(x,dy)\right|
 \le Cb^{-1}(G+\epsilon r)J_Q^2.
\]
Inside $B_t$, $|V|\le Cr(G+\epsilon r)$; since $b\le r$, the sum of the two errors is bounded by
\begin{equation}
 C\frac r{b^2}(G+\epsilon r)J_Q^2
 \label{green:cutoff-error}
\end{equation}
For $x\notin B_t$, one has $\eta(x)=0$, and the incoming-jump condition excludes entry into the cutoff support, so $\mathcal L(\eta V)(x)=0$. This ensures that integration against the locally finite measure is needed only over a bounded region.

Substituting~\eqref{green:all-points-generator}--\eqref{green:cutoff-error} into the adjoint inequality yields
\[
 c\sum_Q\nu(Q)g_Q
 \le\int\eta\mathcal LV\,d\nu
 \le C\frac r{b^2}(G+\epsilon r)
       \sum_{Q\cap B_t\ne\varnothing}\nu(Q)J_Q^2.
\]
Let $\epsilon\downarrow0$ in this inequality and use finite-array $L^d$ duality. This gives~\eqref{green:weighted-rh-bound}; the cutoff equals one on every cell on the left.
\end{proof}

\medskip\noindent\textit{Absorption and the common event.}
\label{green:hole-filling-subsection}

\begin{lemma}[Unweighted absorption under a fixed moment of order $4d$]
\label{green:hole-filling}
There are fixed $C_0,C_1,H<\infty$ and a deterministic initial scale for which the following holds. If
\begin{equation}
 \sum_{Q\cap B_{3r}(z)\ne\varnothing}J_Q^{4d}\le C_0r^d,
 \qquad
 \max_{Q\cap B_{3r}(z)\ne\varnothing}J_Q\le c r^{1/2},
 \label{green:local-geometric-good}
\end{equation}
and $\nu$ satisfies~\eqref{green:adjoint-inequality} for nonnegative tests in $B_{2r}(z)$, then
\begin{equation}
 \left(\sum_{Q\cap B_r(z)\ne\varnothing}|Q|n_Q^q\right)^{1/q}
 \le \frac Hr\sum_{Q\cap B_{2r}(z)\ne\varnothing}\nu(Q),
 \qquad q=\frac d{d-1}.
 \label{green:grid-critical-rh}
\end{equation}
The deterministic constant $H$ is independent of the mesoscopic exponent chosen later and of the exponent in the probability bound.
\end{lemma}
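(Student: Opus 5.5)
The approach is to remove the weights $J_Q^2$ from the right-hand side of Lemma~\ref{green:weighted-rh} by H\"older's inequality, and then absorb the resulting higher-moment term using the reverse H\"older gain on the left. Set
\[
 \Phi(\rho)=\Bigl(\sum_{Q\cap B_\rho\ne\varnothing}|Q|n_Q^q\Bigr)^{1/q}
\]
for $r\le\rho\le2r$ (center suppressed). First I check the incoming-jump condition \eqref{green:no-incoming}. The second part of \eqref{green:local-geometric-good} gives $J_Q\le cr^{1/2}$ on $B_{3r}$. Therefore, for every annulus of width $b=t-s\ge r^{1/2}\ge\max\{8d_0,b_0\}$ (at a deterministic initial scale), the last assertion of Lemma~\ref{green:weighted-rh} applies. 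This yields, for $r\le s<t\le2r$ with $t-s\ge r^{1/2}$,
\[
 \Phi(s)\le C\frac r{(t-s)^2}\sum_{Q\cap B_t\ne\varnothing}\nu(Q)J_Q^2 .
\]

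Next I split the right-hand sum using $\nu(Q)=|Q|n_Q$ and H\"older with exponents $q$ and $d$:
\[
 \sum_{Q\cap B_t}\nu(Q)J_Q^2\le\Bigl(\sum_{B_t}|Q|n_Q^q\Bigr)^{1/q}\Bigl(\sum_{B_t}|Q|J_Q^{2d}\Bigr)^{1/d}\le \Phi(t)\,C r.
\]
The last step uses Cauchy--Schwarz together with the first part of \eqref{green:local-geometric-good}, which gives $\sum J_Q^{2d}\le Cr^{d}$ after absorbing the fixed $|Q|$. This produces $\Phi(s)\le C r^2(t-s)^{-2}\Phi(t)$. That inequality is useless for absorption by itself, since its constant exceeds one.

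The improvement comes from a second, cruder splitting. I separate cells according to the size of the weight: small cells with $J_Q\le\Lambda$, and large ones with $J_Q>\Lambda$, where $\Lambda$ is a fixed constant.
\begin{itemize}
\item On the small cells, the sum is at most $\Lambda^2\nu(B_{2r})$, which is the desired right side.
\item On the large cells, H\"older as above gives the bound $\Phi(t)\bigl(\sum|Q|J_Q^{2d}\mathbf 1_{J_Q>\Lambda}\bigr)^{1/d}$. By the $4d$ moment, $\sum J_Q^{2d}\mathbf1_{J_Q>\Lambda}\le\Lambda^{-2d}\sum J_Q^{4d}\le C_0\Lambda^{-2d}r^d$.
\end{itemize}
Hence
\[
 \Phi(s)\le C\frac{r}{(t-s)^2}\Bigl(\Lambda^2\nu(B_{2r})+C_0^{1/d}\Lambda^{-2}r\,\Phi(t)\Bigr).
\]

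To finish I apply a standard iteration lemma (Giaquinta's hole-filling lemma) to $\Phi$ on $[r,2r]$. The difficulty is that the coefficient in front of $\Phi(t)$ is $C\Lambda^{-2}r^2/(t-s)^2$, so the factor $(t-s)^{-2}$ has to be compensated. I choose $\Lambda$ large, so that $C C_0^{1/d}\Lambda^{-2}\le\theta$. The iteration lemma requires a bound of the form $\Phi(s)\le\theta\Phi(t)+A(t-s)^{-\alpha}$ with $\theta<1$, however, and here the coefficient of $\Phi(t)$ itself grows like $r^2/(t-s)^2$.

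I expect this to be the main obstacle. My fix is to iterate over a geometric sequence of radii $s_k=2r-r2^{-k}$, choosing $\Lambda_k=\Lambda 2^{k}$ growing. Then $\Lambda_k^{-2}r^2/(s_{k+1}-s_k)^2=4\Lambda^{-2}$ is fixed and small, while the source term $\Lambda_k^2 r^{-1}2^{2k}\nu$ grows like $16^k$. Iterating gives
\[
 \Phi(r)\le\sum_k\theta^kC16^k r^{-1}\nu(B_{2r})+\theta^K\Phi(s_K),
\]
and this series converges once $\theta<1/16$. The tail term $\theta^K\Phi(s_K)$ tends to zero because $\Phi(2r)<\infty$: $\nu$ is locally finite on finitely many cells. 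The width constraint $t-s\ge r^{1/2}$ limits $K$ to roughly $\tfrac12\log_2 r$. I therefore stop at that $K$ and bound the residual $\Phi(s_K)$ by one application of the crude inequality $\Phi(s)\le Cr^2(t-s)^{-2}\Phi(t)$. The polynomial loss $\theta^K r\lesssim r^{1-\frac12\log_2(1/\theta)}$ is then absorbed by taking $\theta$ smaller still. Since every constant depends only on $d,\kappa,C_0,c$, the resulting $H$ is independent of the mesoscopic exponent, which proves \eqref{green:grid-critical-rh}.
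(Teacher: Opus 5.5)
Your argument follows the paper's proof in all its main moves: the split at a weight level proportional to $r/b$, H\"older against the $4d$-moment, and a dyadic iteration with $16\theta<1$, stopped after about $\frac12\log_2 r$ steps.

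The gap is in the last step. Applying the crude inequality $\Phi(s)\le Cr^2(t-s)^{-2}\Phi(t)$ to the residual $\theta^K\Phi(s_K)$ only replaces it by $C\theta^K r\,\Phi(2r)$. Nothing in your proposal bounds $\Phi(2r)$ by the total mass $M=\sum_{Q\cap B_{2r}\ne\varnothing}\nu(Q)$. Finiteness of $\Phi(2r)$ does not help: $K$ is capped by the width constraint, and $H$ must be uniform in $\nu$. Making $\theta^K r$ polynomially small therefore still leaves an uncontrolled $L^q$ quantity on the right-hand side.

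The missing observation is elementary, and it is exactly what the paper uses. All fine cells have the same fixed volume and $q>1$, so for every $\rho\le 2r$,
\[
 \Phi(\rho)=|Q|^{-1/d}\Bigl(\sum_{Q\cap B_\rho\ne\varnothing}\nu(Q)^q\Bigr)^{1/q}
 \le|Q|^{-1/d}\,M .
\]
This is the paper's bound $F(s_N)\le M$. Apply it directly to $\Phi(s_K)$ and drop the crude inequality. Since $K\ge\frac12\log_2 r-C$, the residual is at most $C\theta^K M\le C_\theta r^{-\frac12\log_2(1/\theta)}M\le C M/r$ once $\theta\le 1/4$, which your choice $\theta<1/16$ already ensures. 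With this bound the iteration closes, and $H$ depends only on $d,\kappa,C_0,c$ and the fixed grid, as required.
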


\begin{proof}
Translate $z$ to zero and absorb the fixed fine-cell volume. Write
\[
 F(s)=\left(\sum_{Q\cap B_s\ne\varnothing}\nu(Q)^q\right)^{1/q},
 \qquad M=\sum_{Q\cap B_{2r}\ne\varnothing}\nu(Q).
\]
Then $F(s)\le M$ for $s\le2r$. For $b=t-s\ge C_1r^{1/2}$ and sufficiently large fixed $C_1$, \eqref{green:local-geometric-good} supplies the incoming-jump condition and cell-width margins. Lemma~\ref{green:weighted-rh} gives
\begin{equation}
 F(s)\le C\frac r{b^2}
          \sum_{Q\cap B_t\ne\varnothing}\nu(Q)J_Q^2.
 \label{green:annular-weighted}
\end{equation}
Split at $J_Q=T\ge1$. The lower part is bounded by $T^2M$; H\"older's inequality bounds the upper part by
\begin{align*}
 \sum_{Q\cap B_t\ne\varnothing}\nu(Q)J_Q^2\mathbf1_{\{J_Q>T\}}
 &\le F(t)\left(\sum_{Q\cap B_t\ne\varnothing}
                  J_Q^{2d}\mathbf1_{\{J_Q>T\}}\right)^{1/d}\\
 &\le T^{-2}F(t)\left(\sum_{Q\cap B_{3r}\ne\varnothing}J_Q^{4d}\right)^{1/d}
 \le CrT^{-2}F(t).
\end{align*}
Thus
\[
 F(s)\le C\frac r{b^2}T^2M+C(r/b)^2T^{-2}F(t).
\]
Given a fixed $0<\theta<1/64$, take $T=C\theta^{-1/2}r/b$ to obtain
\begin{equation}
 F(s)\le\theta F(t)+C_\theta\frac Mr(r/b)^4.
 \label{green:hole-filling-step}
\end{equation}

Set $s_j=2r-r2^{-j}$. Let $N$ be the largest integer for which $s_{j+1}-s_j=r2^{-j-1}\ge C_1r^{1/2}$ holds for every $j<N$. After increasing the fixed initial scale,
\[
 N\ge\tfrac12\log_2r-C.
\]
Iterating~\eqref{green:hole-filling-step} finitely many times gives
\[
 F(r)\le\theta^NF(s_N)+C_\theta\frac Mr
                \sum_{j=0}^{N-1}(16\theta)^j.
\]
The geometric series bounds the second term by $CM/r$. For the first term, use $F(s_N)\le M$ and
\[
 \theta^N\le C r^{-\frac12\log_2(1/\theta)}\le Cr^{-1}
\]
Restoring the cell volume gives~\eqref{green:grid-critical-rh}. The terminal bound uses only finitely many cell masses, not a preexisting global $L^q$ estimate.
\end{proof}

\begin{proposition}[A common event for all mesoscopic subballs]
\label{green:common-event}
One may fix $\kappa_0=16$. Given a fixed sufficiently large $C_*>1$ and any $v\in(0,1/2]$, set $D_R=B_{C_*R}(z_0)$ and $L=R^v$. There is an event $\mathcal G_{R,v}(z_0)$ depending only on the coarse geometry near the fixed large domain such that, on this event, for every nonnegative measure satisfying~\eqref{green:adjoint-inequality} in $D_R$, simultaneously for all real centers $x$ and radii $r$ satisfying
\[
 r\ge L,\qquad B_{\kappa_0r}(x)\subset D_R
\]
one has
\begin{equation}
 \left(\frac1{|B_r|}\int_{B_r(x)}n^q\right)^{1/q}
 \le H_0\frac1{|B_{\kappa_0r}|}\int_{B_{\kappa_0r}(x)}n.
 \label{green:mesoscopic-rh}
\end{equation}
Here $H_0$ is independent of $v,R,\nu,x,r$. The event can also ensure that every full coarse path starting anywhere in $D_R$ stays within distance $L$ of its starting point. There is a fixed $\vartheta_0>0$ such that
\begin{align}
 \mathbb P(\mathcal G_{R,v}(z_0)^c)
 &\le CR^d(1+\log R)e^{-cR^{v\vartheta_0}}
       +CR^de^{-cR^{v/2}}                                  \label{green:two-probability-exponents}\\
 &\le C_v\exp\{-c_vR^{v\eta_0}\},
 \qquad \eta_0=\min\{\vartheta_0,1/2\},
 \quad R\ge R_0(v).                                       \label{green:common-event-probability}
\end{align}
Here $z_0$ is a fixed deterministic center; no assertion is made that a single random threshold works simultaneously for every $z_0$ in the whole space.
\end{proposition}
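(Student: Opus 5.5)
The plan is to build $\mathcal G_{R,v}(z_0)$ as the intersection of the local geometric events of Lemma~\ref{green:hole-filling}, imposed at every scale and center from deterministic nets, together with one maximum-weight event. Then transfer the grid-level reverse H\"older inequality \eqref{green:grid-critical-rh} to arbitrary real centers and radii by a covering argument.

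\textbf{The event.} For each dyadic radius $r_j=2^jL$ with $L\le r_j\le C_*R$, take a deterministic net $\mathcal N_j\subset D_R$ of spacing $r_j/100$; it has $C(R/r_j)^d$ points. At every $y\in\mathcal N_j$ and for $\rho\in\{r_j,2r_j\}$, require the two conditions in \eqref{green:local-geometric-good} at $(y,\rho)$. Also require
\[
 \max_{Q\cap B_{2C_*R}(z_0)\ne\varnothing}J_Q\le cL^{1/2}.
\]
There are at most $CR^d(1+\log R)$ moment events, each failing with probability at most $Ce^{-cL^{\vartheta_0}}$ by \eqref{green:geometry-moment-event}. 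The single maximum event fails with probability at most $CR^de^{-cL^{1/2}}$ by \eqref{green:geometry-maximum}. Since $L=R^v$, this gives \eqref{green:two-probability-exponents}; absorbing the polynomial prefactors for $R\ge R_0(v)$ gives \eqref{green:common-event-probability}.

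The maximum event implies the second condition of \eqref{green:local-geometric-good} at every relevant scale, because $cL^{1/2}\le c\rho^{1/2}$ for $\rho\ge L$. It also bounds every coarse path displacement by $J_Q\le L$, via \eqref{green:grid-second-moment}. Every condition depends only on cluster geometry in a fixed enlargement of $D_R$, by the locality of the truncated weights $J_Q\wedge n$. I would take $C_*$ large so that these enlargements, and the neighborhoods where the weights are determined, stay inside the coefficient region attached to $D_R$.

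\textbf{Transfer to arbitrary balls.} Fix $x$ and $r\ge L$ with $B_{16r}(x)\subset D_R$. Choose $j$ with $r_j\le r<r_{j+1}$. Cover $B_r(x)$ by a bounded number of balls $B_{r_j}(y)$ with $y\in\mathcal N_j$ and $|y-x|\le r+r_j$. For each such $y$, $B_{2r_j}(y)\subset B_{16r}(x)\subset D_R$, so the adjoint inequality holds for tests in $B_{2r_j}(y)$. Lemma~\ref{green:hole-filling} then applies to each ball.

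Sum the resulting bounds. The fine cells meeting $B_r(x)$ are covered by cells meeting some $B_{r_j}(y)$, and the cells meeting $B_{2r_j}(y)$ lie in $B_{16r}(x)$, since the cell diameter is small and $r\ge L$ is large. Passing from the cell sums $\sum|Q|n_Q^q$ and $\sum\nu(Q)$ to the integrals of $n^q$ and $n$ costs only fixed constants. Normalizing by volumes gives \eqref{green:mesoscopic-rh} with $H_0=C_dH$, which is independent of $v$, as Lemma~\ref{green:hole-filling} asserts.

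\textbf{Main obstacle.} The delicate step is making sure the annular estimate behind Lemma~\ref{green:hole-filling} stays valid near $\partial D_R$. Coarse jumps from outside $D_R$ must not enter the cutoff supports, which is the incoming-jump condition \eqref{green:no-incoming}. The measure is also only assumed adjoint inside $D_R$. My first approach is to use the global maximum event, imposed on the enlarged ball $B_{2C_*R}(z_0)$, to control the diameters of all bad components touching the relevant balls. If that is not enough, I would shrink the admissible region slightly, replacing $D_R$ by the region where $B_{\kappa_0 r}(x)$ sits at distance at least $L$ from $\partial D_R$. This extra margin can then be absorbed, because $\kappa_0=16$ leaves room beyond the factor $2$ needed in Lemma~\ref{green:hole-filling}.
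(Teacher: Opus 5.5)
Your proposal is correct and follows essentially the paper's route. Both arguments take a union bound over deterministic center--dyadic-radius pairs for the $4d$th-moment condition, add one global maximum-weight event, and then transfer Lemma~\ref{green:hole-filling} to arbitrary real balls. The paper uses unit-lattice centers and a single dyadic ball $B_s(z)$ with $r\le s<3r$, where you use a coarse net and a bounded cover; this difference is cosmetic.

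The boundary issue you flag is already handled by the hypotheses of Lemma~\ref{green:hole-filling}. That lemma needs only the adjoint inequality for tests in $B_{2r}(z)\subset D_R$ and the weight bound on cells meeting $B_{3r}(z)$. Each $J_Q$ controls the entire bad component it meets, including any portion outside $D_R$. So your first approach suffices, and no shrinking of the admissible region is needed.
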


\begin{proof}
For the translated deterministic integer centers
\[
 z\in z_0+\mathbb Z^d,\qquad |z-z_0|\le2C_*R
\]
and the radii $s=2^jL$, $L\le s\le2C_*R$, require
\[
 \sum_{Q\cap B_{3s}(z)\ne\varnothing}J_Q^{4d}\le C_0s^d.
\]
Also require
\begin{equation}
 \max_{Q\cap B_{8C_*R}(z_0)\ne\varnothing}J_Q\le cL^{1/2},
 \label{green:global-maximum-event}
\end{equation}
where the fixed constant $c$ is sufficiently small. Each weight controls its entire intersecting component, including portions outside the killing domain. There are at most $CR^d(1+\log R)$ center--radius pairs. Lemma~\ref{green:geometry-moment} and \eqref{green:geometry-maximum} give the two terms in~\eqref{green:two-probability-exponents}. For fixed $v$, absorb the polynomial factors by increasing $R_0(v)$. The moment order, absorption exponent $1/2$, and constants $H_0,\eta_0$ are fixed independently of $v$.

For an admissible $B_r(x)$, take $L\ge2\sqrt d+4d_0$, a nearest grid center $z$, and the smallest dyadic radius $s$ satisfying
\[
 s\ge r+|z-x|+d_0.
\]
Since $|z-x|\le r/4$, we have $r\le s<3r$. Cells meeting $B_r(x)$ lie in $B_s(z)$; cells meeting $B_{2s}(z)$ lie in $B_{7r}(x)$; and $B_{3s}(z)$ with its cell margin lies in $B_{10r}(x)$. All are contained in $B_{16r}(x)\subset D_R$.

Since $J_Q\le cL^{1/2}\le cs^{1/2}$, Lemma~\ref{green:hole-filling} applies at $(z,s)$. Its inner sum controls $\int_{B_r(x)}n^q$ and its outer sum is bounded by $\int_{B_{16r}(x)}n$. Volume comparison yields~\eqref{green:mesoscopic-rh}.

Finally, \eqref{green:grid-second-moment} and \eqref{green:global-maximum-event} bound every full-path displacement starting in $D_R$ by $cL^{1/2}\le L$. The event was defined using a finite grid and no adjoint measure, so it applies to all real centers and all admissible measures simultaneously.
\end{proof}

\subsection{Self-improvement and the Green estimate}
\label{green:gehring-subsection}

The integrability gain uses the self-improvement principle of
Gehring~\cite{Gehring73}. Our reverse H\"older estimate is initially
available only for $r\ge L$. We first recover the smaller scales by
convolution, and then prove the local fixed-dilation version of the
self-improvement lemma. This keeps its gain independent of the
mesoscopic cutoff $L$.

Define the normalized tent kernel
\begin{equation}
 \varphi_L(x)=c_dL^{-d}(1-|x|/L)_+,\qquad
 \int_{\mathbb R^d}\varphi_L=1,\qquad
 f_L=\varphi_L*n.
 \label{green:tent-definition}
\end{equation}
All convolution estimates are used only on balls whose required neighborhoods lie inside $D_R$.

\begin{lemma}[Small-scale bounds for the mollified density]
\label{green:tent-rh}
For every nonnegative locally integrable $n$ and every $0<r\le L$,
\begin{equation}
 \sup_{B_r(x)}f_L\le C_d\frac1{|B_{2r}|}\int_{B_{2r}(x)}f_L.
 \label{green:tent-small-scale}
\end{equation}
If $n$ satisfies~\eqref{green:mesoscopic-rh}, then, for $r\ge L$ and $B_{\kappa r}(x)\subset D_R$, where
\begin{equation}
 \kappa=2\kappa_0+1=33,
 \label{green:dilation}
\end{equation}
one has
\begin{equation}
 \left(\frac1{|B_r|}\int_{B_r(x)}f_L^q\right)^{1/q}
 \le H_1\frac1{|B_{\kappa r}|}\int_{B_{\kappa r}(x)}f_L.
 \label{green:tent-large-scale}
\end{equation}
Thus $f_L$ satisfies a weak reverse H\"older inequality at every scale with the same fixed dilation factor $\kappa$. The constant $H_1$ is independent of $L$ and $v$.
\end{lemma}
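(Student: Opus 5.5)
The proof splits into two independent parts: a pointwise small-scale comparison, which uses only the shape of the tent kernel, and a large-scale reverse H\"older inequality, which transfers \eqref{green:mesoscopic-rh} from $n$ to $f_L$ by Minkowski's inequality.

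\emph{Small scales, $0<r\le L$.} Here we use only that $\varphi_L$ is comparable to a constant multiple of $L^{-d}$ on $B_{L/2}$, vanishes outside $B_L$, and is $L^{-d-1}$-Lipschitz. Two bounds follow, valid for every $y\in B_r(x)$ and $y'\in B_{2r}(x)$ with $|y-y'|\le 3r$. The first is the upper bound
\[
 f_L(y)\le c_dL^{-d}\int_{B_L(y)}n\le c_dL^{-d}\int_{B_{L+3r}(y')}n .
\]
For the lower bound we compare $\varphi_L(y'-w)$ with a fixed fraction of $L^{-d}$ on $B_{cL}(y')$. This alone is too weak when $r\ll L$, because $\varphi_L$ vanishes at distance $L$. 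The cleaner route is a ball-averaging argument. We average $f_L$ over $B_{2r}(x)$ and write the result as $\int n(w)\,\Phi(w)\,dw$, where
\[
 \Phi(w)=\fint_{B_{2r}(x)}\varphi_L(y'-w)\,dy' .
\]
Compare $\Phi$ with $\varphi_L(y-w)$ for a fixed $y\in B_r(x)$. Since $\varphi_L$ is a radially decreasing concave profile on its support, averaging over a ball of radius $2r$ containing a neighborhood of $y$ dominates a constant times $\varphi_L(y-\cdot)$. Near the edge $|y-w|\approx L$ the value $\varphi_L(y-w)$ is small, of order $(L-|y-w|)L^{-d-1}$. The part of $B_{2r}(x)$ lying on the near side of $w$ has measure at least $c_d|B_{2r}|$, and there $y'$ is at least as close to $w$ as $y$ is, up to the geometry of balls. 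This gives $\Phi(w)\ge c_d\varphi_L(y-w)$, and integrating against $n$ proves \eqref{green:tent-small-scale}. The delicate point is exactly this half-ball comparison near the boundary of the tent support; we handle it by restricting to the half-ball $\{y':(y'-y)\cdot(w-y)\ge0\}$ and using monotonicity of the profile.

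\emph{Large scales, $r\ge L$.} Minkowski's integral inequality gives
\[
 \|f_L\|_{L^q(B_r(x))}\le\int\varphi_L(h)\,\|n\|_{L^q(B_r(x-h))}\,dh ,
\]
with $|h|\le L\le r$. For each such $h$ we apply \eqref{green:mesoscopic-rh} at center $x-h$ and radius $r$. This is admissible because $B_{\kappa_0r}(x-h)\subset B_{(\kappa_0+1)r}(x)\subset D_R$. It bounds $\|n\|_{L^q(B_r(x-h))}$ by $H_0|B_r|^{1/q}\fint_{B_{(\kappa_0+1)r}(x)}n$, uniformly in $h$. The remaining step is to return from $n$ to $f_L$. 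Since $\varphi_L\ge c_dL^{-d}$ on $B_{L/2}$, each $w\in B_{(\kappa_0+1)r}(x)$ satisfies
\[
 \int_{B_{(\kappa_0+1)r+L}(x)}\varphi_L(y-w)\,dy\ge c_d,
\]
because $B_{L/2}(w)$ is contained in that ball. Hence $\int_{B_{(\kappa_0+1)r}(x)}n\le C\int_{B_{(\kappa_0+2)r}(x)}f_L$. The inclusion $(\kappa_0+2)r\le\kappa r$ holds for $\kappa=2\kappa_0+1$, and volume comparison yields \eqref{green:tent-large-scale} with $H_1=CH_0$. The constant involves only $d$ and $H_0$, so it is independent of $L$ and $v$.
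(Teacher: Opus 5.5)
You have a gap in the small-scale comparison $\Phi(w)\ge c_d\varphi_L(y-w)$, which is the nontrivial part of \eqref{green:tent-small-scale}.

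\emph{The half-ball step fails.} Points $y'$ of the half-ball $\{(y'-y)\cdot(w-y)\ge0\}$ are \emph{not} at least as close to $w$ as $y$ is. Indeed $|y'-w|^2=|y-w|^2-2(y'-y)\cdot(w-y)+|y'-y|^2$, which exceeds $|y-w|^2$ whenever $y'-y$ is nearly orthogonal to $w-y$. Near the edge of the tent, such points lie outside $B_L(w)$, where the kernel vanishes. Concavity does not rescue the step either: for balls inside the support it gives $\fint_{B_\rho(y)}\varphi_L(\cdot-w)\le\varphi_L(y-w)$, which is the wrong direction.

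\emph{A test confirms it.} As written, your argument never invokes $r\le L$, yet the inequality is false without it. Take $r\gg L$ and $n=|B_\epsilon|^{-1}\mathbf 1_{B_\epsilon(x)}$ with $\epsilon\ll L$. Then $\sup_{B_r(x)}f_L\approx c_dL^{-d}$, while $\fint_{B_{2r}(x)}f_L\le|B_{2r}|^{-1}$.

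\emph{The repair is a case split that uses $r\le L$.} The case $|y-w|\ge L$ is trivial.
If $|y-w|\le L/2$, the ball $B_{\min\{r,L/4\}}(y)$ lies in $B_{2r}(x)\cap B_{3L/4}(w)$. There $\varphi_L(\cdot-w)\ge c_dL^{-d}/4\ge\varphi_L(y-w)/4$, and the radius is at least $r/4$ because $r\le L$.
If $L/2<|y-w|<L$, put $e=(w-y)/|w-y|$. Since $|y-w|>r/2$, the ball $B_{r/4}(y+re/4)$ lies in $B_r(y)\subset B_{2r}(x)$ and inside $B_{|y-w|}(w)$. Radial monotonicity then gives $\varphi_L(y'-w)\ge\varphi_L(y-w)$ there.
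In both cases $\Phi(w)\ge4^{-1}8^{-d}\varphi_L(y-w)$, and integrating against $n$ proves \eqref{green:tent-small-scale}.
The paper's proof is the same pointwise kernel comparison, organized as four cases in the distance from $x$ to the pole. Its last case uses the displaced ball $B_{r/4}(x+3re/2)$, which is the correct replacement for your half-ball.

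For $r\ge L$, your Minkowski argument is correct and differs mildly from the paper's. The paper applies Jensen to the convolution to pass to $\|n\|_{L^q(B_{r+L}(x))}$, then uses \eqref{green:mesoscopic-rh} once at radius $r+L$; this costs the dilation $2\kappa_0+1$. Your translated centres $x-h$ at radius exactly $r$ need only $\kappa_0+2\le\kappa$. You dropped a harmless volume factor $((\kappa_0+1)/\kappa_0)^d$. For $r\le L$, the passage from $\sup_{B_r}$ and $B_{2r}$ to the $L^q$ form on $B_{\kappa r}$ follows immediately from $f_L\ge0$.
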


\begin{proof}
For~\eqref{green:tent-small-scale}, rescale to $L=1$, fix $y$, and set $D_0=|y-x|$ and
\[
 t=(1-\max\{D_0-r,0\})_+.
\]
The kernel supremum is $c_dt$; assume $t>0$. If $D_0\le r\le1/4$, the kernel is at least $c_d/2$ on $B_r(x)$. If $D_0\le r$ and $r>1/4$, it is at least $3c_d/4$ on $B_{1/4}(y)\subset B_{2r}(x)$. In both cases this lower bound holds on a fixed volume fraction of $B_{2r}(x)$, since $r\le1$.

If $D_0>r$ and $t\ge4r$, then $D_0=1+r-t$, so, for $z\in B_{2r}(x)$,
\[
 1-|z-y|\ge1-D_0-2r=t-3r\ge t/4.
\]
Finally, if $D_0>r$ and $0<t<4r$, take $e=(y-x)/D_0$ and
\[
 B_{r/4}(x+3re/2)\subset B_{2r}(x).
\]
For every point $z$ in this ball,
\[
 |z-y|\le |D_0-3r/2|+r/4
 \le\max\{D_0-3r/2,r/2\}+r/4\le1-r/4.
\]
Here $D_0<1+r$ and $r\le1$. Thus the kernel is at least $c_dr/4\ge c_dt/16$ on a fixed fraction of $B_{2r}(x)$. In all cases,
\[
 \sup_{z\in B_r(x)}\varphi_1(z-y)
 \le C_d\frac1{|B_{2r}|}\int_{B_{2r}(x)}\varphi_1(z-y)\,dz.
\]
Integrating in $n(y)\,dy$ proves~\eqref{green:tent-small-scale}.

Now take $r\ge L$. Jensen's inequality for convolution, the support radius $L$, and~\eqref{green:mesoscopic-rh} give
\begin{align*}
 \left(\frac1{|B_r|}\int_{B_r(x)}f_L^q\right)^{1/q}
 &\le C\left(\frac1{|B_{r+L}|}\int_{B_{r+L}(x)}n^q\right)^{1/q}\\
 &\le CH_0\frac1{|B_{\kappa_0(r+L)}|}
                 \int_{B_{\kappa_0(r+L)}(x)}n.
\end{align*}
Since $\kappa_0(r+L)\le2\kappa_0r$, and every convolution kernel of radius $L\le r$ centered at a point of $B_{2\kappa_0r}(x)$ is entirely contained in $B_{(2\kappa_0+1)r}(x)$,
\[
 \int_{B_{(2\kappa_0+1)r}(x)}f_L
 \ge\int_{B_{2\kappa_0r}(x)}n.
\]
Volume comparison gives~\eqref{green:tent-large-scale}. For $r\le L$, \eqref{green:tent-small-scale} and the fixed enlargement $2r\mapsto\kappa r$ give the same reverse H\"older bound.
\end{proof}

\begin{lemma}[Local Gehring lemma with a fixed dilation factor]
\label{green:gehring}
Fix $q>1$, $\kappa\ge2$, and $H_1<\infty$, and let $f\ge0$ be bounded and integrable on $B_{3\kappa R}$. Suppose that, for every ball $B$ satisfying $\kappa B\subset B_{3\kappa R}$,
\begin{equation}
 \left(\frac1{|B|}\int_Bf^q\right)^{1/q}
 \le H_1\frac1{|\kappa B|}\int_{\kappa B}f.
 \label{green:weak-rh-input}
\end{equation}
Then there are $\epsilon_G>0$ and $C<\infty$, depending only on $d,q,\kappa,H_1$, such that
\begin{equation}
 \left(\frac1{|B_R|}\int_{B_R}f^{q+\epsilon_G}\right)^{1/(q+\epsilon_G)}
 \le C\frac1{|B_{3\kappa R}|}\int_{B_{3\kappa R}}f.
 \label{green:gehring-conclusion}
\end{equation}
\end{lemma}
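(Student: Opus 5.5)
This is a local Gehring lemma, and I would prove it by the standard Calderón--Zygmund stopping-time argument. The only changes from the textbook version are that the ball family is restricted to $\kappa B\subset B_{3\kappa R}$ and that the dilation factor $\kappa$ is fixed. The aim is a level-set inequality of the form
\[
 \int_{\{f>t\}\cap B_R}f^q\le C\,t^{q-1}\int_{\{f>\beta t\}\cap B_{3\kappa R}}f,
 \qquad t\ge t_0:=A\fint_{B_{3\kappa R}}f,
\]
for a small fixed $\beta\in(0,1)$. Multiplying by $t^{\epsilon-1}$, integrating in $t$ over $(t_0,\infty)$, and using Fubini turns this into $\int_{B_R}f^{q+\epsilon}$ on the left and $C\epsilon\beta^{-q-\epsilon}\int f^{q+\epsilon}$ on the right, plus a term of order $t_0^{\epsilon}\int f^q$. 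Because $f$ is bounded, every integral is finite, and for $\epsilon$ small the right-hand $f^{q+\epsilon}$ term can be absorbed. The absorption is only legitimate if both sides live on the same ball, so I would run the whole argument on a scale of nested balls $B_\rho\subset B_{\rho'}$ with $R\le\rho<\rho'\le 2R$ or similar. The leftover term is then controlled by the standard iteration lemma: if $\phi(\rho)\le\frac12\phi(\rho')+C(\rho'-\rho)^{-\gamma}$, then $\phi(R)\le C R^{-\gamma}$.

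To obtain the level-set inequality for a fixed pair of radii, I would work with the localized maximal function
\[
 M_\rho f(x)=\sup\Bigl\{\fint_Bf^q:\ x\in B,\ B\ \text{centered in }B_\rho,\ \operatorname{rad}B\le(\rho'-\rho)/(2\kappa)\Bigr\}^{1/q}.
\]
The radius cap guarantees that every such ball has $\kappa B\subset B_{\rho'}\subset B_{3\kappa R}$, so \eqref{green:weak-rh-input} applies to it. At level $t$, for each point of $\{f>t\}\cap B_\rho$ with $t$ above $C_\rho\fint_{B_{3\kappa R}} f$, I pick a stopping ball $B_x$: the largest radius in the admissible range with $\fint_{B_x}f^q\ge t^q$. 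Lebesgue differentiation guarantees one exists, and the largest-radius choice together with the lower bound on $t$ gives $\fint_{B_x}f^q\le C_{d,\kappa}t^q$. By maximality, the enlargement $\kappa B_x$ either has average below $t$ or has radius exceeding the cap.

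The hardest step is this maximality control at the fixed dilation $\kappa$. The clean route is to compare the $q$-averages over the balls $2^jB_x$ for $1\le 2^j\le\kappa$. Each step costs at most a factor $2^{d}$ in the average, so $(\fint_{\kappa B_x}f^q)^{1/q}\le t\kappa^{d/q}$. Then Hölder gives $\fint_{\kappa B_x} f\le C t$, which is all the argument needs. If the radius cap is hit instead, the bound follows from $t\ge t_0$ with $A$ depending on $(\rho'-\rho)/R$.

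With the stopping balls in hand, I apply the Vitali covering lemma to $\{\kappa B_x\}$, obtaining disjoint $\kappa B_i$ with $\{f>t\}\cap B_\rho\subset\bigcup 5\kappa B_i$. On each ball, \eqref{green:weak-rh-input} gives
\[
 |B_i|t^q\le\int_{B_i}f^q\le H_1^q|B_i|\Bigl(\fint_{\kappa B_i}f\Bigr)^q .
\]
Splitting $f=f\mathbf 1_{\{f>\beta t\}}+f\mathbf 1_{\{f\le\beta t\}}$ with $\beta$ small (say $\beta\le(2H_1\kappa^{d})^{-1}$) bounds the right side by $C t^{q-1}\int_{\kappa B_i\cap\{f>\beta t\}}f$. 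On the cover, $\int_{5\kappa B_i\cap\{f>t\}}f^q$ is at most the integral over the maximal-function set, which is controlled by $t^q|5\kappa B_i|$ times a constant. Summing over $i$ yields the level-set inequality, and integrating in $t$ as described gives \eqref{green:gehring-conclusion}, with $\epsilon_G$ depending only on $d,q,\kappa,H_1$.
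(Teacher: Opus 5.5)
Your outline is the same Gehring argument the paper uses. Both run stopping balls at each level, a Vitali covering, a level-set inequality $\int_{B_\rho\cap\{f>t\}}f^q\le Ct^{q-1}\int_{B_{\rho'}\cap\{f>\beta t\}}f$ for $R\le\rho<\rho'\le2R$, integration against $\epsilon t^{\epsilon-1}$, and hole-filling.

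The only real difference is the stopping quantity:
\begin{itemize}
\item The paper stops on averages of $f$ itself, with the stopping radius forced below $(\rho'-\rho)/(100\kappa)$ by its choice of $\lambda_0$. Then $\fint_{B_i}f=t$ gives $\frac t2|B_i|\le\int_{B_i\cap\{f>t/2\}}f$ directly. It uses \eqref{green:weak-rh-input} once per ball, on $5B_i$, whose $\kappa$-dilate has average at most $t$ by maximality, and applies Vitali to the stopping balls.
\item You stop on $f^q$. So you need \eqref{green:weak-rh-input} a second time, plus the split at $\beta t$, to get back a lower bound for $f$, and Vitali on the $\kappa$-dilates. This costs a little more bookkeeping but is equally valid.
\end{itemize}
In both versions the term $t_0^\epsilon\int_{B_\rho}f^q$ is closed by applying \eqref{green:weak-rh-input} to $B_{2R}$, whose $\kappa$-dilate lies in $B_{3\kappa R}$. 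You should state this explicitly.

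Two steps need repair, though neither hides a missing idea.

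\emph{The chain over $2^jB_x$.} This runs the wrong way. The average over a larger ball is not controlled by the average over a smaller one; only $\fint_Bf^q\le2^d\fint_{2B}f^q$ holds. So $\fint_{\kappa B_x}f^q\le\kappa^dt^q$ does not follow from your chain. It is also unnecessary. Since $r_x$ is the largest admissible radius with $\fint f^q\ge t^q$, you get $\fint_{\kappa B_x}f^q<t^q$ whenever $\kappa r_x\le(\rho'-\rho)/(2\kappa)$. Otherwise $t\ge t_0$ bounds $\fint_{\kappa B_x}f$ directly. The step you call hardest is immediate.

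\emph{The bound on $5\kappa B_i$.} You assert $\int_{5\kappa B_i}f^q\le Ct^q|5\kappa B_i|$ by appeal to ``the maximal-function set'', which is not an argument.
\begin{itemize}
\item If $5\kappa r_i$ is below the cap, maximality gives it.
\item Otherwise, apply \eqref{green:weak-rh-input} to $5\kappa B_i$ itself. This is admissible: its $\kappa$-dilate lies in $B_{\rho+\frac52\kappa(\rho'-\rho)}\subset B_{(\frac52\kappa+1)R}\subset B_{3\kappa R}$, because $\kappa\ge2$. That dilate has radius at least $(\rho'-\rho)/2$. So choosing $A\ge H_1(6\kappa R/(\rho'-\rho))^d$ in $t_0$ gives $\fint_{5\kappa B_i}f^q\le t^q$.
\end{itemize}
Simpler still, enlarge $t_0$ so the stopping radius is forced below $(\rho'-\rho)/(10\kappa^2)$, as the paper does with $\lambda_0$. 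Then every dilate you use stays within the cap, and maximality covers all cases without a case split.
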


\begin{proof}
Let $M=\int_{B_{3\kappa R}}f$. The case $M=0$ is immediate, so assume $M>0$. Take $R\le s<t\le2R$, set $g=t-s$, and choose
\[
 \lambda_0=C(d,\kappa)g^{-d}M,
\]
with the constant sufficiently large that every ball centered in $B_s$ with radius between $g/(100\kappa)$ and $g$ has average strictly less than $\lambda_0$.

For $\lambda\ge\lambda_0$ and a Lebesgue point $x\in B_s\cap\{f>\lambda\}$, continuity of ball averages gives a last radius $\rho_x\le g/(100\kappa)$ with average $\lambda$. Every larger radius up to $g$ has average at most $\lambda$; hence
\[
 \frac1{|5\kappa B_{\rho_x}|}\int_{5\kappa B_{\rho_x}(x)}f
 \le\lambda,\qquad 5\kappa B_{\rho_x}(x)\subset B_t.
\]

Vitali selection gives pairwise disjoint balls $B_i=B_{\rho_i}(x_i)$ whose fivefold dilations cover the above level set up to a null set. Applying~\eqref{green:weak-rh-input} to $5B_i$ yields
\[
 \int_{B_s\cap\{f>\lambda\}}f^q
 \le\sum_i\int_{5B_i}f^q
 \le C\lambda^q\sum_i|B_i|.
\]
Since $\int_{B_i}f=\lambda|B_i|$,
\[
 \frac\lambda2|B_i|
 \le\int_{B_i\cap\{f>\lambda/2\}}f.
\]
The balls $B_i$ are disjoint and contained in $B_t$, so
\begin{equation}
 \int_{B_s\cap\{f>\lambda\}}f^q
 \le C\lambda^{q-1}\int_{B_t\cap\{f>\lambda/2\}}f.
 \label{green:gehring-level-set}
\end{equation}

For $0<\epsilon\le1$ to be chosen, multiply~\eqref{green:gehring-level-set} by $\epsilon\lambda^{\epsilon-1}$ and integrate over $[\lambda_0,\infty)$. Since $f$ is bounded, all integrals are finite and Fubini's theorem applies directly. Write $H(s)=\int_{B_s}f^{q+\epsilon}$. Combining the left-hand side with the low-level contribution gives
\[
 H(s)\le\lambda_0^\epsilon\int_{B_s}f^q
  +C\frac{\epsilon\,2^{q+\epsilon-1}}{q+\epsilon-1}H(t)
 \le\lambda_0^\epsilon\int_{B_s}f^q+C\epsilon H(t).
\]
Apply~\eqref{green:weak-rh-input} on $B_{2R}$ to get
\[
 \int_{B_s}f^q\le CR^{-d(q-1)}M^q.
\]
Consequently,
\begin{equation}
 H(s)\le C\epsilon H(t)
       +C g^{-d\epsilon}R^{-d(q-1)}M^{q+\epsilon}.
 \label{green:gehring-hole-filling}
\end{equation}

Fix $\epsilon=\epsilon_G>0$ sufficiently small that $\theta=C\epsilon_G<1$ and $\theta2^{d\epsilon_G}<1$. Iterate~\eqref{green:gehring-hole-filling} along $s_j=2R-R2^{-j}$. Boundedness gives $\theta^NH(s_N)\to0$, and the remaining geometric series gives
\[
 H(R)\le C R^{-d(q+\epsilon_G-1)}M^{q+\epsilon_G}.
\]
Normalization proves~\eqref{green:gehring-conclusion}. For $f_L$, boundedness follows from finite mass in the convolution neighborhood; no higher integrability has been assumed.
\end{proof}

Lemmas~\ref{green:tent-rh} and~\ref{green:gehring} imply, provided the appropriate fixed dilation of the ball remains in $D_R$, that
\begin{equation}
 \left(\frac1{|B_\rho|}\int_{B_\rho(x)}f_L^{q'}\right)^{1/q'}
 \le C\rho^{-d}\int_{B_{C\rho}(x)}n,
 \qquad q'=q+\epsilon_G,\qquad \rho\ge L.
 \label{green:coarsened-high-integrability}
\end{equation}
All dilation factors here are fixed before $v$. Decreasing $\epsilon_G$ if necessary, we can arrange that its conjugate exponent
\begin{equation}
 p=\frac{q'}{q'-1}\quad\hbox{satisfies}\quad
 \max\{1,d/2\}<p<d.
 \label{green:source-exponent}
\end{equation}
The clock truncation below requires only $p<d$; retaining $p>d/2$ imposes no additional assumption.

\medskip\noindent\textit{The fine-grid Green bound.}
\label{green:internal-estimate-subsection}

\begin{lemma}[Fine-grid loss]
\label{green:fine-grid-loss}
Take mesoscopic cells $P$ nested with the fine grid and with side length comparable to $L/(4\sqrt d)$. Define their average densities by
\[
 N_P=\frac{\nu(P)}{|P|}.
\]
In a region where all relevant cells and their $L$-neighborhoods lie in the interior,
\begin{equation}
 N_P\le C f_L(x)\quad(x\in P),\qquad
 \|n\|_{L^{q'}(\cup P)}
 \le C L^{d/p}\left(\sum_P|P|N_P^{q'}\right)^{1/q'}.
 \label{green:fine-grid-loss-bound}
\end{equation}
Here $p$ is defined by~\eqref{green:source-exponent}.
\end{lemma}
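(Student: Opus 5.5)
The two bounds in \eqref{green:fine-grid-loss-bound} are elementary consequences of the geometry of the tent kernel and of H\"older's inequality on each mesoscopic cell; no probabilistic input is used.

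\emph{Pointwise bound.} Fix a mesoscopic cell $P$ of side $s\asymp L/(4\sqrt d)$, so that $\operatorname{diam}P\le L/4$. For $x\in P$ and every $y\in P$ we have $|x-y|\le L/4$, hence $\varphi_L(x-y)\ge c_dL^{-d}\cdot\tfrac34$. Integrating against $\nu$ gives
\[
 f_L(x)\ge\tfrac34c_dL^{-d}\nu(P)=\tfrac34c_dL^{-d}|P|N_P\ge c N_P,
\]
since $|P|\asymp L^d$. Here $f_L=\varphi_L*n$ and $n$ has the same mass as $\nu$ on unions of fine cells; cell nesting makes $\int_Pn=\nu(P)$ exactly, and the half-open convention avoids boundary double counting. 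The interior hypothesis guarantees that the convolution neighborhood lies where $n$ is defined. This gives $N_P\le Cf_L$ on $P$.

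\emph{$L^{q'}$ loss from fine to mesoscopic scale.} On each $P$, $n$ is a step function over the fine cells $Q\subset P$, and the number of these cells is $N_{\rm fine}\asymp (L/h_0)^d$. Each $n_Q$ satisfies $n_Q\le\nu(P)/|Q|=(|P|/|Q|)N_P$. Therefore
\[
 \int_Pn^{q'}=\sum_{Q\subset P}|Q|n_Q^{q'}\le\Big(\max_Qn_Q\Big)^{q'-1}\int_Pn\le\Big(\frac{|P|}{|Q|}\Big)^{q'-1}|P|N_P^{q'}.
\]
With $|P|/|Q|\le CL^d$ (the fine volume $h_0^d$ is fixed) this becomes $\int_Pn^{q'}\le CL^{d(q'-1)}|P|N_P^{q'}$. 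Summing over $P$ and taking the $1/q'$ power gives the factor $L^{d(q'-1)/q'}=L^{d/p}$, because $p=q'/(q'-1)$. This is the second bound.

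\emph{Main point.} The only step needing care is not to ask for more than this crude estimate. No reverse H\"older information is available below scale $L$, so the worst case, in which all mass concentrates in one fine cell, must be allowed; that case is exactly what produces $L^{d/p}$. One should check that the constant depends only on $h_0$, $d$ and the side ratio, so that it is fixed before $v$. One should also check the boundary conventions: cells must be half-open and nested so that $\sum_{Q\subset P}\nu(Q)=\nu(P)$. Atoms of a Green measure sitting on cell faces are then handled by the half-open partition already fixed in \eqref{green:grid-density}.
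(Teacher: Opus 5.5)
Your proof is correct and follows the paper's argument. The pointwise bound uses the same lower bound on the tent kernel over displacements at most $L/4$. Your estimate $\int_P n^{q'}\le(\max_Q n_Q)^{q'-1}\int_P n$ is the same elementary step as the paper's superadditivity bound $\sum_{Q\subset P}\nu(Q)^{q'}\le\nu(P)^{q'}$, and both give the factor $(|P|/|Q|)^{1/p}\le CL^{d/p}$.
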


\begin{proof}
Two points in the same $P$ are at distance at most $L/4$, so the tent kernel is at least $cL^{-d}$ at these displacements. Hence
\[
 f_L(x)\ge cL^{-d}\int_Pn\ge c'N_P\qquad(x\in P).
\]
For the second inequality, denote the common fine-cell volume by $|Q|$. Since $q'>1$,
\[
 \sum_{Q\subset P}\nu(Q)^{q'}\le\left(\sum_{Q\subset P}\nu(Q)\right)^{q'}
 =\nu(P)^{q'}.
\]
Thus
\begin{align*}
 \|n\|_{L^{q'}(\cup P)}^{q'}
 &=|Q|^{1-q'}\sum_P\sum_{Q\subset P}\nu(Q)^{q'}\\
 &\le (|P|/|Q|)^{q'-1}\sum_P|P|N_P^{q'}.
\end{align*}
Taking the root gives $L^{d(1-1/q')}=L^{d/p}$. This factor accounts for possible concentration in a single fine cell.
\end{proof}

\begin{proposition}[Interior Green bound for all poles in an enlarged killing domain]
\label{green:internal-green}
Fix $C_*$ sufficiently large; for example, $C_*=1024$ contains all fixed dilations used in this section. Let
\[
 D_R=B_{C_*R}(z_0),\qquad
 \tau_D=\inf\{n\ge0:Z_n\notin D_R\},
 \qquad Z_n=X_{S_n},
\]
where $S_n$ are the cumulative actual coarse stopping times. Define the Green measure killed at coarse endpoints by
\begin{equation}
 \nu^x(E)=\mathbf E_x\sum_{n<\tau_D}\mathbf1_E(Z_n).
 \label{green:killed-measure}
\end{equation}
For each fixed $v\in(0,1/2]$, on the same event as in Proposition~\ref{green:common-event}, simultaneously for all $x\in B_R(z_0)$,
\begin{equation}
 \nu^x(\mathbb R^d)\le CR^2,\qquad
 \|n^x\|_{L^{q'}(B_{2R}(z_0))}
 \le C L^{d/p}R^{2-d/p},\qquad L=R^v.
 \label{green:green-density-bound}
\end{equation}
Consequently, for every measurable source $F$ supported in $B_{2R}(z_0)$ and dominated on the fine cells by a nonnegative array $F_Q$, one has
\begin{equation}
 R^{-2}\sup_{x\in B_R(z_0)}\int |F|\,d\nu^x
 \le C L^{d/p}
 \left(R^{-d}\sum_{Q\cap B_{2R}(z_0)\ne\varnothing}|Q|F_Q^p\right)^{1/p}.
 \label{green:source-estimate}
\end{equation}
Enlarging the ball by a fixed cell width at the support boundary changes only fixed constants. This inequality holds simultaneously for all sources and all poles in the same environment; no independence between the Green measure and the source is required.
\end{proposition}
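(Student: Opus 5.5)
We prove the three assertions in order: total mass, then the $L^{q'}$ density bound, then the source estimate by duality.

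\emph{Step 1: total mass.} Apply the coarse martingale identity to $|y-z_0|^2$. Zero drift and the covariance lower bound \eqref{green:covariance-lower} give $\mathcal L|\cdot-z_0|^2\ge 2a_c\ge c>0$ at every point. On $\mathcal G_{R,v}(z_0)$, every full coarse step starting in $D_R$ moves by at most $L$. Hence the stopped process $Z_{n\wedge\tau_D}$ stays in $B_{C_*R+L}(z_0)$, and
\[
 |Z_{n\wedge\tau_D}-z_0|^2-c\,(n\wedge\tau_D)
\]
is a submartingale. Optional stopping at $n\wedge\tau_D$, followed by monotone convergence, gives $c\,\mathbf E_x\tau_D\le(2C_*R)^2$. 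Since $\nu^x(\mathbb R^d)=\mathbf E_x\tau_D$, this proves $\nu^x(\mathbb R^d)\le CR^2$ uniformly in $x$. Finiteness of $\tau_D$ also makes the killed Green measure finite, which is one of the two cases allowed in \eqref{green:adjoint-inequality}.

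\emph{Step 2: adjoint inequality.} Take nonnegative $\phi\in C_c^2(D_R)$. Summing the one-step Markov identity over $n<\tau_D$ gives
\[
 \int\mathcal L\phi\,d\nu^x=\mathbf E_x\phi(Z_{\tau_D})-\phi(x).
\]
Because $Z_{\tau_D}\notin D_R$, we have $\phi(Z_{\tau_D})=0$, so the integral is $-\phi(x)\le0$. Thus $\nu^x$ satisfies \eqref{green:adjoint-inequality} in $D_R$. Proposition~\ref{green:common-event} then yields the mesoscopic reverse H\"older inequality for $n^x$ on every admissible ball with $r\ge L$. This holds simultaneously for all $x$, since the event does not depend on the measure.

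\emph{Step 3: the $L^{q'}$ bound.} Lemmas~\ref{green:tent-rh} and~\ref{green:gehring}, through \eqref{green:coarsened-high-integrability} at a radius $\rho\asymp R$, control $f_L=\varphi_L*n^x$:
\[
 \|f_L\|_{L^{q'}(B_{2R+L})}\le C R^{d/q'}R^{-d}\nu^x(D_R)\le CR^{2-d/p},
\]
using $d/q'-d=-d/p$. To apply this, cover $B_{2R}(z_0)$ by finitely many balls of radius $\asymp R$ whose fixed $C$-dilations lie in $D_R$; this is where $C_*=1024$ is used. Lemma~\ref{green:fine-grid-loss} then passes from $f_L$ back to $n^x$. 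We get $N_P\le Cf_L$ on each mesoscopic cell $P$, hence
\[
 \Big(\sum_P|P|N_P^{q'}\Big)^{1/q'}\le C\|f_L\|_{L^{q'}},
\]
and the fine-grid inequality costs the factor $L^{d/p}$. Together these give \eqref{green:green-density-bound}.

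\emph{Step 4: the source estimate.} Since $F$ is supported in $B_{2R}$ and dominated cellwise by $F_Q$,
\[
 \int|F|\,d\nu^x\le\sum_Q F_Q\,\nu^x(Q)=\int\Big(\sum_QF_Q\mathbf1_Q\Big)n^x .
\]
H\"older's inequality with exponents $p$ and $q'$ bounds this by
\[
 \Big(\sum_Q|Q|F_Q^p\Big)^{1/p}\,CL^{d/p}R^{2-d/p}.
\]
Dividing by $R^2$ gives \eqref{green:source-estimate}. Cells meeting the boundary of $B_{2R}$ require enlarging the ball by one cell width, which only changes constants. All steps hold on the single event $\mathcal G_{R,v}(z_0)$, for all poles and all sources, so no independence between $\nu^x$ and $F$ is needed.

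\emph{Main obstacle.} The main obstacle is Step 3, because of the domain geometry. The reverse H\"older and Gehring estimates hold only on balls whose fixed dilations lie in $D_R$. The Gehring lemma also needs $f_L$ bounded, which follows from the finite mass of $\nu^x$. The covering must be chosen so that every dilation stays inside $D_R$ while the total mass bound from Step 1 controls each right-hand side. I would check this by verifying the dilation factors $3\kappa\cdot\kappa_0$-type against $C_*$ explicitly.
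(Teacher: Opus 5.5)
Your proposal is correct and follows the paper's proof step for step. The four steps match: the squared-distance martingale bound $\mathbf E_x\tau_D\le CR^2$, the telescoping identity $\int\mathcal L\phi\,d\nu^x=-\phi(x)\le0$, the common-event reverse H\"older inequality upgraded through Lemmas~\ref{green:tent-rh}--\ref{green:gehring} followed by Lemma~\ref{green:fine-grid-loss} with its $L^{d/p}$ loss, and cellwise H\"older with exponents $p,q'$. The only cosmetic difference is in Step 3: the paper applies \eqref{green:coarsened-high-integrability} once on the single ball $B_{4R}(z_0)$, whose required dilation $B_{396R}(z_0)$ lies in $D_R$, instead of covering $B_{2R}(z_0)$ by several balls.
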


\begin{proof}
On the common geometric event, every coarse jump starting in $D_R$ has length at most $L\le R$, while~\eqref{green:covariance-lower} gives $\operatorname{tr}\Sigma\ge da_c$. For finite $N$, zero drift and the squared-martingale identity give
\[
 da_c\mathbf E_x(N\wedge\tau_D)
 \le\mathbf E_x|Z_{N\wedge\tau_D}-z_0|^2-|x-z_0|^2
 \le(C_*R+L)^2.
\]
The identity uses a bounded stopping step. Monotone convergence gives $\mathbf E_x\tau_D\le CR^2$, proving finiteness of $\nu^x$ and the first bound.

For every nonnegative $\phi\in C_c^2(D_R)$, finite-step telescoping, $\mathbf E_x\tau_D<\infty$, and boundedness of the test function give
\begin{equation}
 \int\mathcal L\phi\,d\nu^x
 =\mathbf E_x\phi(Z_{\tau_D})-\phi(x)
 =-\phi(x)\le0.
 \label{green:pole-sign}
\end{equation}
The pole has the required sign even inside the test ball; this is why the Monge--Amp\`ere test was chosen nonnegative. Apply Proposition~\ref{green:common-event} to every $\nu^x$ and then~\eqref{green:coarsened-high-integrability} on $B_{4R}(z_0)$. The outer ball $B_{396R}(z_0)$, including its convolution margin, lies in $D_R$. Since grid averaging preserves total mass,
\[
 \|f_L^x\|_{L^{q'}(B_{4R})}
 \le CR^{-d+d/q'}\nu^x(\mathbb R^d)
 \le CR^{2-d/p}.
\]
Apply Lemma~\ref{green:fine-grid-loss} on the mesoscopic cells covering $B_{2R}$, whose union lies in $B_{3R}$, to obtain the second bound.

Finally, integration cell by cell and H\"older's inequality with conjugate exponents $p,q'$ give
\[
 \int|F|\,d\nu^x
 \le\sum_QF_Q\nu^x(Q)
 \le\left(\sum_Q|Q|F_Q^p\right)^{1/p}
      \|n^x\|_{L^{q'}(B_{2R+O(h_0)})}.
\]
Equation~\eqref{green:green-density-bound} proves~\eqref{green:source-estimate}. All steps are deterministic on the common event, uniformly in $x$ and $F$.
\end{proof}

\subsection{Continuous sources and clock truncation}
\label{green:full-path-subsection}

To use the interior Green estimate, we first locate the support of the source accumulated over a full coarse path.

\begin{lemma}[Spatial support of a full-path source]
\label{green:full-path-support}
Let $E\subset B_R(z_0)$ be Borel and let $b\ge0$ be a Borel function. Define
\begin{equation}
 F_{E,b}(x)=\mathbf E_x\int_0^S\mathbf1_E(X_t)b(X_t)\,dt.
 \label{green:full-step-source}
\end{equation}
If all fine cells intersecting $E+B_3$ have geometric weight at most $L$, then
\begin{equation}
 F_{E,b}(x)=0\qquad\text{when }\operatorname{dist}(x,E)>CL.
 \label{green:source-support}
\end{equation}
The constant is independent of $b$. This assertion covers arbitrarily distant starting points simultaneously, without requiring a uniform global bound on geometric weights.
\end{lemma}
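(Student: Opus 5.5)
The plan is a contrapositive argument. I will show that if $F_{E,b}(x)\ne0$, then $\operatorname{dist}(x,E)\le CL$. Whenever $F_{E,b}(x)>0$, the full coarse path from $x$ must visit $E$ with positive $\mathbf P_x$-probability before time $S$. So it suffices to show that every full coarse path from $x$ that meets $E$ starts within distance $CL$ of $E$. Distant starting points are handled by the geometry of a single bad component, not by any global bound on the weights.

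Fix $x$ and a path segment on $[0,S]$ that enters $E$ at some time $t$. The segment has two stages.

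\textbf{Second stage.} If the visit occurs during the unit-ball exit from the good point $Y$, then $|X_t-Y|\le1$. Hence $Y\in E+B_1$.

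\textbf{First stage.} If the visit occurs before $S_0$, then $X_t$ lies in the closure of $C_x$, which is bounded by Lemma~\ref{lem:reg:bad-clusters}.

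In either case the component $C_x$ (when $x\in W$) contains a point of $E+B_1$ in its closure. By path continuity, this point is $Y$ or some $X_t$ with $t\le S_0$, since $Y\in\partial C_x$.

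Now pick a point $w\in\overline{C_x}\cap(E+B_1)$. Any bad component whose closure meets $w$ intersects the unit cell $Q_k$ containing a point of $C_x$ near $w$, and that cell meets $E+B_3$. Every point of $C_x$ lies in $W$, so each unit cell meeting $C_x$ is bad (as in the proof of Lemma~\ref{lem:reg:bad-clusters}). By \eqref{reg:eq:component-diameter} and the definition \eqref{green:grid-weight}, the weight $J_Q$ of a fine cell $Q$ meeting $E+B_3$ and intersecting $C_x$ controls $\operatorname{diam}C_x$, up to the fixed factor $C_g$. By hypothesis this weight is at most $L$, so $\operatorname{diam}C_x\le CL$. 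Consequently
\[
|x-w|\le\operatorname{diam}\overline{C_x}\le CL,\qquad \operatorname{dist}(x,E)\le CL+1.
\]
If $x\notin W$, then $Y=x$ and the path stays within $B_1(x)$, so directly $\operatorname{dist}(x,E)\le1$. Absorbing the additive constants (using $L\ge1$) gives \eqref{green:source-support} with $C$ independent of $b$.

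The main obstacle is making sure that the weight bound is applied to the right component: a far starting point $x$ has its own, possibly huge, component, and no hypothesis constrains cells near $x$. The key observation is that the whole component $C_x$, not just its part near $x$, must reach $E+B_1$. The cell weights near that meeting point measure the entire cluster: the cluster $\mathcal C(k)$ in $H_k$ is the full $\ell^\infty$-connected bad cluster, and by \eqref{green:grid-second-moment} and the remark after it, $J_Q$ controls the diameter of every entire bad component intersecting $Q$. Therefore the small weight at the meeting point forces the whole component, including $x$, to lie close to $E$.

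I would check carefully the closure versus interior issue: $Y\in\partial C_x$ may lie only in a cell adjacent to a cell of $C_x$. The enlargement $E+B_3$ in the hypothesis is designed to absorb this, since the relevant point of $C_x$ is within distance $2$ of $E$.
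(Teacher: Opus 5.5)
Your proposal is correct and follows the paper's proof. You split according to whether the visit to $E$ occurs in the first stage (so $\overline{C_x}$ meets $E$) or in the second stage (so $Y\in\partial C_x$ lies within distance one of $E$), and in both cases you use that a fine cell meeting $E+B_3$ and $C_x$ has weight controlling the diameter of the entire component $C_x$, which gives $\operatorname{dist}(x,E)\le CL+2$; your passage from $w\in\overline{C_x}$ to a nearby point of $C_x$ in $E+B_2$ is exactly what the enlargement to $E+B_3$ in the hypothesis is for.
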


\begin{proof}
If a full path visits $E$ during its first stage, $C_x$ meets $E+B_1$, including a visit at its boundary exit. A cell in $E+B_3$ then controls the diameter of the entire component, giving $\operatorname{diam}C_x\le CL$ and $\operatorname{dist}(x,E)\le CL$.

For a second-stage visit, the unit-ball center $Y$ lies within distance one of $E$. If $x$ is good then $Y=x$. Otherwise $Y\in\partial C_x$, so $C_x$ meets $E+B_2$ and the same argument gives $\operatorname{dist}(x,E)\le CL+2$. Thus no full path from the stated distant points can visit $E$, proving~\eqref{green:source-support}.
\end{proof}

\begin{proposition}[Comparison with the coarse Green potential]
\label{green:continuous-comparison}
Let
\[
 \sigma_R=\inf\{t\ge0:X_t\notin B_R(z_0)\},\qquad
 D_R=B_{C_*R}(z_0),
\]
and retain $S_n,Z_n,\tau_D,\nu^x$ from Proposition~\ref{green:internal-green}. In every environment on the full-probability event where the coarse process is defined, for every nonnegative Borel function $b$ and every $x\in B_R(z_0)$,
\begin{equation}
 \mathbf E_x\int_0^{\sigma_R}b(X_t)\,dt
 \le \int F_{B_R(z_0),b}\,d\nu^x.
 \label{green:continuous-green-comparison}
\end{equation}
On the common geometric event, if $L=o(R)$, the source on the right is supported in $B_{R+CL}(z_0)\subset B_{2R}(z_0)$, so~\eqref{green:source-estimate} applies.
\end{proposition}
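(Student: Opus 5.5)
The plan is a pathwise decomposition of the continuous occupation integral along the coarse stopping times, followed by the strong Markov property at each $S_n$. Write $S_0=0<S_1<S_2<\cdots$ for the cumulative coarse stopping times, so that $Z_n=X_{S_n}$ and $S_{n+1}=S_n+S\circ\theta_{S_n}$. By Proposition~\ref{prop:reg:coarse-process}, these times do not accumulate, so $S_n\to\infty$ almost surely. Thus $[0,\infty)=\bigcup_n[S_n,S_{n+1})$. For $b\ge0$, Tonelli then gives
\[
 \mathbf E_x\int_0^{\sigma_R}b(X_t)\,dt
 =\sum_{n\ge0}\mathbf E_x\int_{S_n}^{S_{n+1}}\mathbf1_{\{t<\sigma_R\}}b(X_t)\,dt .
\]

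The first step bounds each summand. For $t<\sigma_R$ we have $X_t\in B_R(z_0)$, so the integrand is at most $\mathbf1_{B_R(z_0)}(X_t)b(X_t)$. The $n$th summand vanishes unless $S_n<\sigma_R$.

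The second step relates this to the coarse killing. Since $D_R=B_{C_*R}(z_0)\supset B_R(z_0)$, the event $S_n<\sigma_R$ gives $X_s\in B_R(z_0)$ for all $s\le S_n$. In particular $Z_k\in D_R$ for every $k\le n$, so $n<\tau_D$. This inclusion $\{S_n<\sigma_R\}\subset\{n<\tau_D\}$ holds pathwise and needs no geometric event.

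The third step conditions on $\mathcal F_{S_n}$. Because $\{n<\tau_D\}\in\mathcal F_{S_n}$, the strong Markov property at the finite stopping time $S_n$ gives
\[
 \mathbf E_x\Big[\mathbf1_{\{n<\tau_D\}}\int_{S_n}^{S_{n+1}}\mathbf1_{B_R(z_0)}b(X_t)\,dt\Big]
 =\mathbf E_x\big[\mathbf1_{\{n<\tau_D\}}F_{B_R(z_0),b}(Z_n)\big].
\]
This uses the shift identity $S_{n+1}-S_n=S\circ\theta_{S_n}$. That identity holds because the coarse step is defined from the current position only: first exit of the bad component $C_{X_{S_n}}$, then the unit-ball exit. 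Summing over $n$ and using the definition \eqref{green:killed-measure} of $\nu^x$ gives
\[
 \sum_n\mathbf E_x\big[\mathbf1_{\{n<\tau_D\}}F_{B_R(z_0),b}(Z_n)\big]=\int F_{B_R(z_0),b}\,d\nu^x,
\]
which proves \eqref{green:continuous-green-comparison}. All quantities are nonnegative, so infinite values cause no difficulty. Joint measurability of $F_{E,b}$ in the starting point comes from the jointly measurable kernel in Lemma~\ref{lem:reg:normalized-diffusion} and Proposition~\ref{prop:reg:coarse-process}.

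For the support statement, the common event of Proposition~\ref{green:common-event} contains \eqref{green:global-maximum-event}. So every fine cell meeting $B_R(z_0)+B_3$ has weight at most $cL^{1/2}\le L$. Lemma~\ref{green:full-path-support} with $E=B_R(z_0)$ then shows that $F_{B_R(z_0),b}$ vanishes outside $B_{R+CL}(z_0)$. Once $L=o(R)$ and $R$ is large, this set lies in $B_{2R}(z_0)$, and \eqref{green:source-estimate} applies.

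The main point needing care is the strong Markov step in the third step. The segment $[S_n,S_{n+1})$ must be exactly one fresh coarse step of the shifted path. This requires $S$ to be a terminal-type stopping time, namely $S_n+S\circ\theta_{S_n}=S_{n+1}$. That identity is how the iterated process of Proposition~\ref{prop:reg:coarse-process} is constructed, so I would state it explicitly rather than reprove it. I would also record that the pole contribution $n=0$ is included, consistent with the convention $\nu^x(\{x\})\ge1$.
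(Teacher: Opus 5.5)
Your proof is correct and is essentially the paper's argument. Like the paper, you split the occupation integral along the coarse steps, compare killing at $\sigma_R$ with endpoint killing at $\tau_D$, apply the strong Markov property at $S_n$, and use Lemma~\ref{green:full-path-support} for the support. The one difference is that you compare term by term via $\{S_n<\sigma_R\}\subset\{n<\tau_D\}$. This lets you skip the paper's preliminary bound $\mathbf E_x\tau_D<\infty$ and the inclusion $\sigma_R\le S_{\tau_D}$; the paper uses that bound to make $S_{\tau_D}$ meaningful and to record that $\nu^x$ is finite without the geometric event, neither of which the stated inequality needs.
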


\begin{proof}
We first prove that the killed coarse chain has finite expected lifetime without imposing the geometric event of Proposition~\ref{green:internal-green}. Fix an environment in which all bad clusters are finite and the coarse process is defined. There are finitely many fine cells intersecting $D_R$, so
\[
 J_D:=\max\{J_Q:Q\cap D_R\ne\varnothing\}<\infty.
\]
By~\eqref{green:grid-second-moment}, every jump starting in $D_R$ has length at most $J_D$. For a finite integer $N$, zero drift and~\eqref{green:covariance-lower} therefore give
\begin{align*}
 d a_c\,\mathbf E_x(N\wedge\tau_D)
 &\le \mathbf E_x\sum_{n<N\wedge\tau_D}
                    \operatorname{tr}\Sigma(Z_n)\\
 &=\mathbf E_x|Z_{N\wedge\tau_D}-z_0|^2-|x-z_0|^2
 \le(C_*R+J_D)^2.
\end{align*}
Every stopped position belongs to $B_{C_*R+J_D}(z_0)$, which justifies the finite-step second-moment identity. Monotone convergence yields
\[
 \mathbf E_x\tau_D\le\frac{(C_*R+J_D)^2}{d a_c}<\infty.
\]
This is a pathwise finiteness statement; it requires no moment bound on $J_D$. In particular, $\nu^x$ is a finite measure and $\tau_D<\infty$ almost surely.

The coarse stopping times are finite and do not accumulate. By continuity, a path starting in $B_R(z_0)$ must exit this ball before a coarse endpoint can lie outside $D_R$. Hence
\[
 \sigma_R\le S_{\tau_D}.
\]
Before $\sigma_R$, one has $\mathbf1_{B_R(z_0)}(X_t)=1$ almost everywhere with respect to time. Nonnegativity therefore gives the pathwise inequality
\[
 \int_0^{\sigma_R}b(X_t)\,dt
 \le\int_0^{S_{\tau_D}}\mathbf1_{B_R(z_0)}(X_t)b(X_t)\,dt.
\]
Since $\{n<\tau_D\}$ is measurable at $S_n$, Tonelli's theorem and the strong Markov property give
\begin{align*}
 \mathbf E_x\int_0^{S_{\tau_D}}\mathbf1_{B_R(z_0)}(X_t)b(X_t)\,dt
 &=\mathbf E_x\sum_{n<\tau_D}
    \int_{S_n}^{S_{n+1}}\mathbf1_{B_R(z_0)}(X_t)b(X_t)\,dt\\
 &=\mathbf E_x\sum_{n<\tau_D}F_{B_R(z_0),b}(Z_n)
 =\int F_{B_R(z_0),b}\,d\nu^x.
\end{align*}
This proves~\eqref{green:continuous-green-comparison}, with both sides interpreted in $[0,\infty]$.

Excursions out of $D_R$ within a segment, and reentries into $B_R$ after first exit, only increase the nonnegative right-hand side. Thus endpoint killing need not agree with continuous killing. On the common event $J_Q\le cL^{1/2}\le L$ near $B_R$, so Lemma~\ref{green:full-path-support} gives the support assertion once $CL\le R$.
\end{proof}

\medskip\noindent\textit{Clock truncation.}
\label{clock:truncation-subsection}

For $T\ge1$, define
\begin{equation}
 h_T=\min\{h,T\},\qquad
 e_T(x)=\mathbf E_x\int_0^S(h-h_T)(X_t)\,dt.
 \label{clock:pointwise-truncation}
\end{equation}
We truncate the clock density in normalized time, rather than the duration of a full coarse step.

For each fine cell $Q\subset Q_{k(Q)}$, retain the fixed sets from~\eqref{clock:whole-cell-physical-bound} and write
\begin{equation}
 Z_{\max,Q}=\max_{j\in\mathcal J_{H_Q}(k(Q))}Z_j,
 \qquad U_Q^T=CH_Q^2 Z_{\max,Q}\mathbf1_{\{Z_{\max,Q}>T\}}.
 \label{clock:measurable-cell-tail}
\end{equation}
These cell variables are measurable, being finite maxima indexed by a measurable integer-valued weight.

\begin{lemma}[Algebraic decay of lower-order moments of the whole-cell clock tail]
\label{clock:tail-moment}
The constant in~\eqref{clock:measurable-cell-tail} can be fixed so that, for every environment and every $x\in Q$,
\begin{equation}
 0\le e_T(x)\le U_Q^T.
 \label{clock:cell-tail-dominance}
\end{equation}
For each fixed $1\le p<d$,
\begin{equation}
 \sup_Q\mathbb E(U_Q^T)^p\le C_pT^{p-d},\qquad T\ge1.
 \label{clock:cell-tail-moment}
\end{equation}
\end{lemma}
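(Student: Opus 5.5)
The plan has two parts: a pathwise domination of $e_T$ on each fine cell, and then a moment computation for $U_Q^T$ that reuses the weighted lattice-animal bound of Lemma~\ref{clock:weighted-animal}.

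\emph{Pathwise domination.} Fix $x\in Q\subset Q_{k(Q)}$. By \eqref{clock:whole-cell-step}, the full coarse path stays in $B_{CH_Q}(x)$, and this ball is covered by the cubes $j+[-4,4]^d$ with $j\in\mathcal J_{H_Q}(k(Q))$. Along the path we have $h\le\lambda_{\min}(A)^{-1}\le Z_{\max,Q}$.

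If $Z_{\max,Q}\le T$, then $h\le T$ along the whole path. Hence $h-h_T=0$ there, and $e_T(x)=0$.

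Otherwise $Z_{\max,Q}>T$. Using $h-h_T\le h\le Z_{\max,Q}$ and $\mathbf E_xS\le CH_Q^2$, we get
\[
 e_T(x)\le Z_{\max,Q}\,\mathbf E_xS\le CH_Q^2Z_{\max,Q}.
\]
The two cases together give \eqref{clock:cell-tail-dominance}. Nonnegativity is immediate from $h\ge h_T$.

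\emph{Moment bound.} Decompose according to $H_Q=n$, bound the maximum by a sum, and use $\mathbf1_{\{Z_{\max}>T\}}\le\sum_j\mathbf1_{\{Z_j>T\}}$:
\[
 \mathbb E(U_Q^T)^p\le C\sum_{n\ge1}n^{2p}\sum_{j\in\mathcal J_n(k)}
 \mathbb E\big[Z_j^p\mathbf1_{\{Z_j>T\}}\mathbf1_{\{H_k\ge n\}}\big].
\]
The single unweighted term $Z_j^p\mathbf1_{\{Z_j>T\}}$ is controlled by the Chebyshev-type inequality
\[
 Z_j^p\mathbf1_{\{Z_j>T\}}\le T^{p-d}Z_j^d .
\]
This is exactly where $p<d$ enters: the slack between the $p$-th and the $d$-th power is what produces the factor $T^{p-d}$. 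With it,
\[
 \mathbb E\big[Z_j^p\mathbf1_{\{Z_j>T\}}\mathbf1_{\{H_k\ge n\}}\big]\le T^{p-d}\,\mathbb E\big[Z_j^d\mathbf1_{\{H_k\ge n\}}\big]\le CM_1T^{p-d}e^{-cn},
\]
by Lemma~\ref{clock:weighted-animal}. The remaining factors contribute $n^{2p}\,|\mathcal J_n|\le Cn^{2p+d}$. Summing over $n$ gives
\[
 \mathbb E(U_Q^T)^p\le C_pT^{p-d}\sum_n n^{2p+d}e^{-cn}\le C_pT^{p-d},
\]
uniformly in $Q$, which is \eqref{clock:cell-tail-moment}.

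The main obstacle is that no moment of $Z$ beyond order $d$ is available, and $Z_j$ and $H_k$ are dependent. Lemma~\ref{clock:weighted-animal} handles the dependence for weights of order exactly $d$, which is why the tail indicator is converted into the factor $T^{p-d}Z_j^d$ rather than handled by Hölder's inequality. One also needs the indexing set $\mathcal J_n$ in \eqref{clock:measurable-cell-tail} to be the same set used in \eqref{clock:whole-cell-physical-bound}, so that the domination step and the moment step refer to the same maximum.
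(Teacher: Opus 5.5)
Your proof is correct and follows the paper's argument: the same pathwise domination via $h\le Z_{\max,Q}$ and $\mathbf E_xS\le CH_Q^2$, then the inequality $Z^p\mathbf1_{\{Z>T\}}\le T^{p-d}Z^d$ combined with Lemma~\ref{clock:weighted-animal} after decomposing according to $H_Q$. The only cosmetic difference is the order of two steps. The paper applies the truncation inequality to $Z_{\max,Q}$ and then bounds $Z_{\max,Q}^d\le\sum_jZ_j^d$. You apply it to each $Z_j$, which is valid because $Z_{\max,Q}^p\mathbf1_{\{Z_{\max,Q}>T\}}=Z_{j^*}^p\mathbf1_{\{Z_{j^*}>T\}}$ at a maximizing index $j^*$; it does not follow from bounding the maximum and the indicator by separate sums and multiplying.
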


\begin{proof}
Along the full path $h\le Z_{\max,Q}$ and $\mathbf E_xS\le CH_Q^2$, uniformly for $x\in Q$. The integrand vanishes if $Z_{\max,Q}\le T$ and is otherwise at most $h$. This proves~\eqref{clock:cell-tail-dominance}.

Since $p-d<0$, for every $Z\ge0$,
\[
 Z^p\mathbf1_{\{Z>T\}}\le T^{p-d}Z^d.
\]
Decomposing according to $H_Q$, bounding the $d$th power of the maximum by the sum, and applying Lemma~\ref{clock:weighted-animal} gives
\begin{align*}
 \mathbb E(U_Q^T)^p
 &\le CT^{p-d}\sum_{n\ge1}n^{2p}
      \sum_{j\in\mathcal J_n(k(Q))}
       \mathbb E[Z_j^d\mathbf1_{\{H_Q\ge n\}}]\\
 &\le CT^{p-d}\sum_{n\ge1}n^{2p+d}e^{-cn}
 \le C_pT^{p-d}.
\end{align*}
The decay $T^{p-d}$ uses only the $d$th physical moment and $p<d$. At $p=d$ the calculation gives boundedness alone.
\end{proof}

Define the exit potentials for the normalized diffusion in the ball by
\begin{equation}
 U_R^T(x)=\mathbf E_x\int_0^{\sigma_R}h_T(X_t)\,dt,
 \qquad U_R(x)=\mathbf E_x\int_0^{\sigma_R}h(X_t)\,dt.
 \label{clock:torsions}
\end{equation}
For each fixed environment, local ellipticity implies that these exit potentials are finite. They solve, respectively,
\[
 -B:D^2U_R^T=h_T,\qquad -B:D^2U_R=h,\qquad
 U_R^T=U_R=0\quad\text{on }\partial B_R(z_0).
\]
In particular, $-A:D^2U_R=1$, so $U_R$ is the torsion function of the original physical operator.

\begin{proposition}[Algebraic probability bound for the clock-truncation error]
\label{clock:tail-probability}
Take the fixed exponent $p<d$ from Proposition~\ref{green:internal-green}. For every fixed sufficiently small $v>0$, every $\beta>0$, $T\ge1$, and sufficiently large $R$,
\begin{equation}
 \mathbb P\left(
 R^{-2}\sup_{B_R(z_0)}(U_R-U_R^T)>R^{-\beta}\right)
 \le C_v e^{-c_vR^{v\eta_0}}
      +C R^{vd+p\beta}T^{p-d}.
 \label{clock:tail-probability-bound}
\end{equation}
The deterministic constants and initial scale may depend on the already fixed $v$, but not on $T$. No independence of the truncated source from the environment, the Green measure, or the starting point is required.
\end{proposition}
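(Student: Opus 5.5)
The plan is to write the truncation error as an exit potential with the nonnegative source $h-h_T$, dominate it by the coarse Green potential, and then estimate that potential with the $L^p$ Green bound on the common geometric event. Fix $v$ small and set $L=R^v$. Work on $\mathcal G_{R,v}(z_0)$ from Proposition~\ref{green:common-event}; its complement produces the stretched-exponential term $C_ve^{-c_vR^{v\eta_0}}$.

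\textbf{Step 1: reduction to the Green potential.} For $x\in B_R(z_0)$,
\[
 U_R(x)-U_R^T(x)=\mathbf E_x\int_0^{\sigma_R}(h-h_T)(X_t)\,dt .
\]
Proposition~\ref{green:continuous-comparison}, applied with $b=h-h_T\ge0$, bounds this by $\int F_{B_R(z_0),h-h_T}\,d\nu^x$. By definition of the source in \eqref{green:full-step-source}, $F_{B_R,h-h_T}\le e_T$ pointwise. On the common event, Lemma~\ref{green:full-path-support} shows this source is supported in $B_{R+CL}(z_0)\subset B_{2R}(z_0)$. Lemma~\ref{clock:tail-moment} then gives the cellwise domination $F\le U_Q^T$ on every fine cell $Q$.

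\textbf{Step 2: applying the Green estimate.} On $\mathcal G_{R,v}$, \eqref{green:source-estimate} holds uniformly in the pole. Hence
\[
 R^{-2}\sup_{B_R(z_0)}(U_R-U_R^T)\le CL^{d/p}\Big(R^{-d}\sum_{Q\cap B_{2R}\ne\varnothing}|Q|(U_Q^T)^p\Big)^{1/p}.
\]
The right-hand side is a random variable defined without reference to any Green measure. This is the point of the pathwise estimate: no independence between the source and the Green measure is needed.

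\textbf{Step 3: Markov's inequality.} Put $Y=R^{-d}\sum_Q|Q|(U_Q^T)^p$. The event in question, intersected with $\mathcal G_{R,v}$, is contained in
\[
 \{Y>C^{-p}L^{-d}R^{-p\beta}\}=\{Y>cR^{-vd-p\beta}\}.
\]
The cells meeting $B_{2R}(z_0)$ have total volume comparable to $R^d$, so \eqref{clock:cell-tail-moment} gives $\mathbb E Y\le C_pT^{p-d}$. Markov's inequality then bounds the probability by $CR^{vd+p\beta}T^{p-d}$. Adding the failure probability of $\mathcal G_{R,v}$ gives \eqref{clock:tail-probability-bound}. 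None of these constants depends on $T$; the starting scale depends only on $v$, through $R_0(v)$ and the requirement $CL\le R$.

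\textbf{Main obstacle.} The step needing most care is the support and domination bookkeeping. We need $F_{B_R,h-h_T}$ to be dominated on each fine cell by $U_Q^T$ even for cells near the boundary of $B_{2R}$ and for starting points outside $B_R$. This holds because \eqref{clock:cell-tail-dominance} is valid for every $x\in Q$ and every environment. We also need the sum in Step 2 to include only cells meeting $B_{2R}$, up to a fixed-width enlargement. Step 1 takes care of this, since the common event forces every relevant weight to be at most $cL^{1/2}\le L$ near $B_R$. Measurability of $U_Q^T$ was already noted after \eqref{clock:measurable-cell-tail}, so the Markov step is legitimate.
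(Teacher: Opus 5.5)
Your proposal is correct and follows the paper's proof essentially step for step. You compare with the coarse Green potential of the spatially cut-off source $F_{B_R(z_0),h-h_T}$, localize its support and dominate it cellwise by $U_Q^T$ on $\mathcal G_{R,v}(z_0)$, apply the pathwise source estimate uniformly in the pole, and finish with Markov's inequality, the moment bound $\sup_Q\mathbb E(U_Q^T)^p\le C_pT^{p-d}$, and the failure probability of the common event; the paper additionally notes only that the supremum over $B_R(z_0)$ is measurable by continuity of the potentials.
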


\begin{proof}
Set
\[
 g_{R,T}(y)=\mathbf1_{B_R(z_0)}(y)(h(y)-h_T(y)),\qquad
 F_{R,T}(x)=\mathbf E_x\int_0^Sg_{R,T}(X_t)\,dt.
\]
Proposition~\ref{green:continuous-comparison} gives
\[
 (U_R-U_R^T)(x)\le\int F_{R,T}\,d\nu^x.
\]
On the common geometric event, Lemma~\ref{green:full-path-support} gives
\[
 \operatorname{supp}F_{R,T}\subset B_{R+CL}(z_0)\subset B_{2R}(z_0),
 \qquad L=R^v.
\]
Moreover, $F_{R,T}\le e_T$. We may therefore dominate $F_{R,T}$ by $U_Q^T$ on its support and set the dominating source to zero elsewhere. Applying~\eqref{green:source-estimate} yields
\begin{equation}
 R^{-2}\sup_{B_R(z_0)}(U_R-U_R^T)
 \le C R^{vd/p}
 \left(R^{-d}\sum_{Q\cap B_{2R}(z_0)\ne\varnothing}|Q|(U_Q^T)^p\right)^{1/p}.
 \label{clock:deterministic-truncation}
\end{equation}
The right-hand side is a measurable finite sum controlling every starting point. Raise it to the $p$th power and use Markov's inequality, \eqref{clock:cell-tail-moment}, and the $O(R^d)$ total cell volume:
\begin{align*}
 &\mathbb P\left(
 R^{-2}\sup_{B_R}(U_R-U_R^T)>R^{-\beta},\,
 \mathcal G_{R,v}(z_0)\right)\\
 &\hspace{2cm}\le
 C R^{vd+p\beta}\,
 R^{-d}\sum_{Q\cap B_{2R}\ne\varnothing}|Q|\mathbb E(U_Q^T)^p
 \le C R^{vd+p\beta}T^{p-d}.
\end{align*}
Add the failure probability of $\mathcal G_{R,v}$. Continuity of the potentials makes the supremum measurable through a countable dense set. The spatial cutoff must precede the bound by $e_T$: without it the source would reach the killing boundary, outside the scope of the interior Green estimate.
\end{proof}

\subsection{The clock mean}
\label{clock:bounded-source-interface}

The invariant density $m$ from Section~\ref{sec:density} satisfies
\[
 \mathbb Em(0)=1,\qquad \overline A=\mathbb E[m(0)A(0)].
\]
By Lemma~\ref{down:density-normalized-law}, the ergodic invariant probability for the environment seen by the normalized diffusion is
\begin{equation}
 \pi(d\omega)=\frac{a(0,\omega)m(0,\omega)}{\operatorname{tr}\overline A}
               \mathbb P(d\omega).
 \label{clock:direction-invariant-law}
\end{equation}
Hence
\begin{equation}
 \mathbb E_\pi B=\overline B:=\frac{\overline A}{\operatorname{tr}\overline A},
 \qquad
 \mu:=\mathbb E_\pi h=\frac1{\operatorname{tr}\overline A}<\infty.
 \label{clock:actual-means}
\end{equation}
The identities use $aB=A$ and $ah=1$; in particular $\mu$ is an occupation mean, not a spatial mean under $\mathbb P$.

By Theorem~\ref{ren:bounded-source} and Corollary~\ref{ren:source-amplitude}, there are fixed $\delta_B,\gamma_B>0$ and $C,R_B<\infty$ such that the following holds. Suppose $f$ is stationary jointly with $B$, is determined by the original coefficients within a common bounded radius, is quenched locally H\"older continuous, and satisfies $|f|\le1$. The zero-boundary solution
\[
 -B:D^2v=f-\mathbb E_\pi f\quad\text{in }B_R(z_0)
\]
satisfies
\begin{equation}
 \mathbb P(R^{-2}\|v\|_\infty>CR^{-\delta_B})
 \le CR^{-\gamma_B},\qquad R\ge R_B.
 \label{clock:bounded-source-input}
\end{equation}
The constants and initial scale are uniform over this normalized class, although the event may depend on the deterministic choice of $f$. Their dependence includes the range of the original field and the determining radius of $f$, even when $B$ is constant.

The sources $f_T=h_T/T$ share this determining radius and satisfy $|f_T|\le1$. They remain locally H\"older continuous in each environment; the theorem requires no uniform deterministic H\"older constant. Set
\begin{equation}
 \mu_T=\mathbb E_\pi h_T,\qquad
 \Psi_R(x)=\frac{R^2-|x-z_0|^2}{2}.
 \label{clock:truncated-mean}
\end{equation}
Since $\operatorname{tr}B=1$, one has $-B:D^2\Psi_R=1$. Consequently,
\[
 T^{-1}(U_R^T-\mu_T\Psi_R)
\]
is precisely the zero-boundary solution in~\eqref{clock:bounded-source-input} corresponding to $f_T$. We obtain
\begin{equation}
 \mathbb P\left(
 \sup_{B_R(z_0)}|R^{-2}U_R^T-\mu_T R^{-2}\Psi_R|
 >CT R^{-\delta_B}\right)
 \le CR^{-\gamma_B},\qquad R\ge R_B,\quad T\ge1.
 \label{clock:uniform-truncated-torsion}
\end{equation}
The error bound is proportional to $T$, while the probability bound and the initial scale $R_B$ are independent of $T$. This uniformity is needed for $R=T^b$ and $T=R^t$. In Section~\ref{sec:renormalization}, the common initial moments and dependence range fix the contraction scale before the deterministic centering constant is identified; the identification introduces no source-dependent threshold.

\medskip\noindent\textit{Convergence of the truncated means.}
\label{clock:mean-tail-subsection}

Integrability gives $\mu_T\uparrow\mu$, but no power rate. We obtain one by comparing the Green truncation bound with~\eqref{clock:uniform-truncated-torsion}.

\begin{proposition}[Algebraic convergence of the truncated means]
\label{clock:mean-tail}
There are fixed $s>0$ and $C<\infty$ such that
\begin{equation}
 0\le\mu-\mu_T\le CT^{-s},\qquad T\ge1.
 \label{clock:mean-tail-bound}
\end{equation}
One may first choose
\begin{equation}
 b>\frac4{\min\{\delta_B,\gamma_B\}},\qquad
 s=\min\left\{\frac{d-p}{4p},b\delta_B-1\right\}>0.
 \label{clock:mean-parameters}
\end{equation}
\end{proposition}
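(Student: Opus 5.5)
The plan is to compare the truncated torsions at two truncation levels on a single environment event of positive probability. Since the means $\mu_T$ are deterministic, an inequality that holds at one point of such an event is an inequality between the means.

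Fix $T\ge T_0$ (smaller $T$ are absorbed into $C$, because $\mu-\mu_T\le\mu<\infty$). Set $T'=T^2$, $R=T^b$ and $z_0=0$, and evaluate at the center, where $R^{-2}\Psi_R(0)=1/2$. Three facts are needed.

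\begin{itemize}
\item Since $h_T\le h_{T'}\le h$, we have the pointwise bound $0\le U_R^{T'}-U_R^T\le U_R-U_R^T$.
\item By \eqref{clock:uniform-truncated-torsion}, applied at both levels $T$ and $T'$, outside a set of probability $2CR^{-\gamma_B}$:
\[
 \bigl|R^{-2}U_R^{T'}(0)-\tfrac12\mu_{T'}\bigr|+\bigl|R^{-2}U_R^{T}(0)-\tfrac12\mu_{T}\bigr|\le 2CT'R^{-\delta_B}.
\]
This uses that the probability bound and the threshold $R_B$ are uniform in the truncation level.
\item By Proposition~\ref{clock:tail-probability} with a small fixed $v$ and with $\beta$ chosen below, outside a set of probability $C_ve^{-c_vR^{v\eta_0}}+CR^{vd+p\beta}T^{p-d}$:
\[
 R^{-2}(U_R-U_R^T)(0)\le R^{-\beta}.
\]
\end{itemize}

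Choose $\beta=(d-p)/(4pb)$, so that $R^{p\beta}=T^{(d-p)/4}$, and choose $v$ small enough that $bvd\le (d-p)/4$. Then $R^{vd+p\beta}T^{p-d}\le T^{-(d-p)/2}$. Together with $b\gamma_B>4$, the total failure probability tends to zero as $T\to\infty$, so it is less than $1$ once $T\ge T_0$. Since $v$ is now fixed, $T_0$ may also be taken large enough that $R=T^b$ exceeds both the starting scale of Proposition~\ref{clock:tail-probability} and $R_B$.

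On the intersection of the good events, which is nonempty, combining the three facts gives the deterministic inequality
\[
 \tfrac12(\mu_{T'}-\mu_T)\le 2CT^2T^{-b\delta_B}+T^{-b\beta},
 \qquad b\beta=\frac{d-p}{4p}.
\]
Hence $\mu_{T^2}-\mu_T\le CT^{-s'}$ with $s'=\min\{(d-p)/(4p),\,b\delta_B-2\}$. This is positive because $b>4/\min\{\delta_B,\gamma_B\}$ forces $b\delta_B>4$. If one insists on the exponent $s$ exactly as stated in \eqref{clock:mean-parameters}, one can take $T'=T^{1+\epsilon}$ with small $\epsilon$ instead; any fixed positive exponent suffices for the conclusion.

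Finally, telescope along $T_k=T^{2^k}$. Since $\mu_{T_k}\uparrow\mu$ by monotone convergence,
\[
 \mu-\mu_T=\sum_{k\ge0}(\mu_{T_{k+1}}-\mu_{T_k})\le C\sum_kT^{-s'2^k}\le CT^{-s'}.
\]
Nonnegativity of $\mu-\mu_T$ is immediate from $h_T\le h$.

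The main obstacle is the bookkeeping of exponents in the second step. The clock-truncation probability $R^{vd+p\beta}T^{p-d}$ must be small even though $R$ is a large power of $T$. This is exactly where the strict gain $p<d$ from the Gehring step is used, and why $v$ and $\beta$ must be chosen after $b$. A second point to check is that all constants and starting scales in \eqref{clock:uniform-truncated-torsion} are independent of $T$, so the event can be used at the two levels simultaneously.
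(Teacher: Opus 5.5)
Your proposal is correct and is essentially the paper's argument. It uses the same good events (the clock-truncation bound at level $T$ with $\beta=(d-p)/(4bp)$ and $bvd\le(d-p)/4$, plus the $T$-uniform truncated-torsion bound at two levels), evaluates at the center to get a deterministic inequality between the means, and then telescopes; the only difference is that the paper compares $T$ with $2T$ and sums over $2^jT$, whereas you compare $T$ with $T^2$ and sum over $T^{2^k}$. Your detour through $T^{1+\epsilon}$ is unnecessary. Since $p>d/2$, one has $(d-p)/(4p)<1/4<b\delta_B-2$, so your exponent $s'$ already equals the stated $s$.
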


\begin{proof}
Decrease $\delta_B,\gamma_B$ to at most one and fix $b$ as in~\eqref{clock:mean-parameters}. For each deterministic $T$, set
\[
 R=T^b.
\]
With the Green exponent $p<d$ fixed, choose the mesoscopic exponent $v_1>0$ small enough that
\[
 bv_1d<\frac{d-p}{4},\qquad v_1\le\frac12,
\]
and set
\[
 \beta_1=\frac{d-p}{4bp},\qquad s_1=b\beta_1=\frac{d-p}{4p}.
\]
By Proposition~\ref{clock:tail-probability}, the clock-tail event
\[
 R^{-2}\sup_{B_R}(U_R-U_R^T)\le R^{-\beta_1}=T^{-s_1}
\]
has failure probability at most
\[
 C e^{-cT^{bv_1\eta_0}}+
 C T^{bv_1d+bp\beta_1-(d-p)}
 \le C e^{-cT^{bv_1\eta_0}}+CT^{-(d-p)/2}.
\]

Apply~\eqref{clock:uniform-truncated-torsion} at $T$ and $2T$. The errors are bounded by $CT^{1-b\delta_B}$, outside an event of probability $CT^{-b\gamma_B}$. Together with the clock-tail estimate, a union bound gives a positive-probability intersection for large $T$.

In any environment in this intersection, evaluate at the center $z_0$ and use $R^{-2}\Psi_R(z_0)=1/2$ and $U_R^{2T}-U_R^T\le U_R-U_R^T$ to obtain
\begin{align*}
 0\le\frac{\mu_{2T}-\mu_T}{2}
 &\le R^{-2}(U_R^{2T}-U_R^T)(z_0)+CT^{1-b\delta_B}\\
 &\le T^{-s_1}+CT^{1-b\delta_B}
 \le CT^{-s}.
\end{align*}
Since the left side is deterministic, this proves
\[
 \mu_{2T}-\mu_T\le CT^{-s}.
\]
Using $\mu_T\uparrow\mu<\infty$ from~\eqref{clock:actual-means}, sum over $2^jT$:
\[
 \mu-\mu_T
 =\sum_{j=0}^{\infty}(\mu_{2^{j+1}T}-\mu_{2^jT})
 \le CT^{-s}\sum_{j=0}^{\infty}2^{-js}
 \le C'T^{-s}.
\]
For bounded $T\ge1$, increase the constant using $\mu-\mu_T\le\mu$.
\end{proof}

\subsection{The cell estimate}
\label{clock:cell-subsection}

\begin{proposition}[Algebraic error for the untruncated physical torsion function]
\label{clock:physical-torsion}
There are fixed positive exponents $\delta_2,\gamma_2$, finite constants, and an initial scale such that, for every fixed deterministic center $z_0$,
\begin{equation}
 \mathbb P\left(
 \sup_{B_R(z_0)}\left|R^{-2}U_R-\mu\frac{1-|x-z_0|^2/R^2}{2}\right|
 >CR^{-\delta_2}\right)
 \le CR^{-\gamma_2}.
 \label{clock:physical-torsion-bound}
\end{equation}
One explicit admissible choice of parameters is the following: first fix $s>0$ from Proposition~\ref{clock:mean-tail}, then choose
\begin{equation}
 \begin{gathered}
 0<t<\frac14\min\{\delta_B,\gamma_B,1\},\qquad
 v=\frac{t(d-p)}{8d},\\
 0<\delta_2<\min\left\{\frac12,\delta_B-t,ts,
                         \frac{t(d-p)}{8p}\right\},\qquad
 \gamma_2=\frac12\min\{\gamma_B-t,t(d-p)\}>0.
 \end{gathered}
 \label{clock:final-parameters}
\end{equation}
\end{proposition}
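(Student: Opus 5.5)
We split the error with the truncation height $T=R^t$ and treat the three pieces with three earlier results. Pointwise on $B_R(z_0)$,
\[
 R^{-2}U_R-\mu R^{-2}\Psi_R
 =R^{-2}(U_R-U_R^T)+R^{-2}(U_R^T-\mu_T\Psi_R)-(\mu-\mu_T)R^{-2}\Psi_R .
\]
Here $R^{-2}\Psi_R=(1-|x-z_0|^2/R^2)/2$ is exactly the profile appearing in~\eqref{clock:physical-torsion-bound}, and $0\le R^{-2}\Psi_R\le 1/2$. Note also that $U_R\ge U_R^T$.

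\textbf{The three pieces.}

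The \emph{last term} is deterministic. By Proposition~\ref{clock:mean-tail} it is bounded by $CT^{-s}=CR^{-ts}$. This is within $CR^{-\delta_2}$ because $\delta_2<ts$.

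The \emph{middle term} is handled by \eqref{clock:uniform-truncated-torsion} with $T=R^t$, which is allowed since its probability bound and initial scale $R_B$ do not depend on $T$. Off an event of probability $CR^{-\gamma_B}$, the term is at most $CTR^{-\delta_B}=CR^{-(\delta_B-t)}$. This is within $CR^{-\delta_2}$ because $\delta_2<\delta_B-t$.

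The \emph{first term} is handled by Proposition~\ref{clock:tail-probability} with the mesoscopic exponent $v=t(d-p)/(8d)$ and $\beta=\delta_2$. The bad probability is at most
\[
 C_ve^{-c_vR^{v\eta_0}}+CR^{vd+p\delta_2}R^{-t(d-p)} .
\]
We check the exponent using $vd=t(d-p)/8$ and $p\delta_2<t(d-p)/8$:
\[
 vd+p\delta_2-t(d-p)<-\tfrac34 t(d-p).
\]
The proposition requires $v$ to be "sufficiently small". Since $v\le t/8$ and $t<1/4$, this holds once $t$ is taken smaller if necessary, and $v\le 1/2$ automatically.

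\textbf{Combining.} A union bound gives a failure probability of
\[
 CR^{-\gamma_B}+CR^{-\frac34t(d-p)}+C_ve^{-c_vR^{v\eta_0}},
\]
for all $R$ beyond an initial scale depending on the fixed $t,v$. This is at most $CR^{-\gamma_2}$ with $\gamma_2=\frac12\min\{\gamma_B-t,t(d-p)\}$, absorbing the stretched-exponential term into the constant. On the complementary event the sup-norm deviation is at most $3CR^{-\delta_2}$. Measurability of the supremum follows from continuity of $U_R$ and $U_R^T$, as noted before.

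\textbf{Main obstacle.} Each ingredient was built with this combination in mind, so the delicate point is only the order of choosing parameters. First $p<d$ is fixed by the Green estimate. Then $s$ comes from Proposition~\ref{clock:mean-tail}, then $t$, then $v$, and $\delta_2$ last; the initial scale depends on all of these.

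We must confirm that nothing forces a circular choice. The Green event constants $H_0$ and $\eta_0$ are independent of $v$. The bounded-source constants are uniform in $T$. The exponent $s$ was fixed before $t$. Since all the exponent inequalities in~\eqref{clock:final-parameters} are strict, the remaining work is arithmetic.
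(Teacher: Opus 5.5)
Your proposal is correct and follows the paper's proof essentially verbatim. It uses the same three-term decomposition with $T=R^t$: the uniform-in-$T$ truncated-torsion bound for the middle term, the deterministic mean-tail rate $R^{-ts}$ for the last term, and Proposition~\ref{clock:tail-probability} with $\beta=\delta_2$ and the exponent check $vd+p\delta_2-t(d-p)<-\frac34t(d-p)$ for the first term. Your extra remarks only make explicit what the paper leaves implicit: the nonnegativity of $U_R-U_R^T$, that $v\le1/2$ holds automatically, and that $R^{-\gamma_B}$ is absorbed into $R^{-\gamma_2}$.
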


\begin{proof}
Set $T=R^t$ and use the exact decomposition
\[
 U_R-\mu\Psi_R
 =(U_R-U_R^T)+(U_R^T-\mu_T\Psi_R)+(\mu_T-\mu)\Psi_R.
\]
By~\eqref{clock:uniform-truncated-torsion}, with probability at least $1-CR^{-\gamma_B}$ the second term satisfies
\[
 R^{-2}\|U_R^T-\mu_T\Psi_R\|_\infty
 \le CT R^{-\delta_B}=CR^{-(\delta_B-t)}.
\]
The third term is controlled deterministically by~\eqref{clock:mean-tail-bound}:
\[
 R^{-2}\|(\mu_T-\mu)\Psi_R\|_\infty
 \le CT^{-s}=CR^{-ts}.
\]
Both decay exponents are strictly larger than the chosen $\delta_2$.

For the first term, use~\eqref{clock:tail-probability-bound}. The algebraic power in its failure probability is
\[
 R^{vd+p\delta_2}T^{p-d}
 =R^{vd+p\delta_2-t(d-p)}.
\]
By~\eqref{clock:final-parameters},
\[
 vd=\frac{t(d-p)}8,\qquad
 p\delta_2<\frac{t(d-p)}8,\qquad
 vd+p\delta_2-t(d-p)<-\frac34t(d-p).
\]
Thus the tail probability, its geometric error and the bounded-source failure probability are all at most $CR^{-\gamma_2}$ after increasing the initial scale. This proves~\eqref{clock:physical-torsion-bound}.

The choices are ordered: the geometric $4d$th moment fixes the reverse H\"older constant and $p$; the bounded-source and mean-tail exponents are then fixed; finally choose $t,v,\delta_2,\gamma_2$. The calculation includes the fine-grid loss $R^{vd/p}$.
\end{proof}

\begin{theorem}[Algebraic cell-problem error under the original critical moment]
\label{thm:cell}
Assume \textup{(H1)--(H5)}, and let $\overline A$ be the homogenized matrix defined in Section~\ref{sec:introduction}. For $M\in\mathbb S^d$, let $w_{M,R}^{z_0}$ solve
\begin{equation}
 \left\{
 \begin{aligned}
 -A:D^2w_{M,R}^{z_0}&=(A-\overline A):M&&\text{in }B_R(z_0),\\
 w_{M,R}^{z_0}&=0&&\text{on }\partial B_R(z_0).
 \end{aligned}\right.
 \label{clock:original-cell-equation}
\end{equation}
There are $\delta_2,\gamma_2>0$ and $C,R_2<\infty$, depending on the fixed coefficient law, such that, for every fixed deterministic center $z_0$ and every $R\ge R_2$,
\begin{equation}
 \mathbb P\left(
 R^{-2}\sup_{|M|\le1}\|w_{M,R}^{z_0}\|_{L^\infty(B_R(z_0))}
 >CR^{-\delta_2}\right)
 \le CR^{-\gamma_2}.
 \label{clock:cell-rate}
\end{equation}
One may take the positive exponents from Proposition~\ref{clock:physical-torsion}, decreased by a fixed factor if necessary. The estimate requires only the critical inverse-ellipticity moment in \textup{(H5)}.
\end{theorem}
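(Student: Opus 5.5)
The plan is to reduce the physical cell problem \eqref{clock:original-cell-equation} to the two estimates already in hand: the directional matrix cell estimate of Corollary~\ref{ren:direction-cell}, and the physical torsion estimate of Proposition~\ref{clock:physical-torsion}. Since $A=aB$ with $a=\operatorname{tr}A$, dividing the equation by $a$ gives
\[
 -B:D^2w_{M,R}^{z_0}=(B-\overline B):M+\big(\overline B:M\big)\,\operatorname{tr}\overline A\Big(\mu-h\Big)\cdot\frac{1}{\mu\operatorname{tr}\overline A},
\]
and more transparently, using $\overline B=\overline A/\operatorname{tr}\overline A$ and $\mu=1/\operatorname{tr}\overline A$ from \eqref{clock:actual-means}:
\[
 -B:D^2w_{M,R}^{z_0}=(B-\overline B):M+(\overline A:M)\,(\mu-h).
\]
By linearity and uniqueness of the zero-boundary problem in $B_R(z_0)$ (local ellipticity, Lemma~\ref{lem:local-regularity}), I will write $w_{M,R}^{z_0}=w^B_{M,R,z_0}+(\overline A:M)\big(\mu\Psi_R-U_R\big)$. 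Here $w^B$ is the solution of \eqref{ren:matrix-cell}, $U_R$ is the physical torsion function from \eqref{clock:torsions} (so $-B:D^2U_R=h$), and $\Psi_R$ is the quadratic of \eqref{clock:truncated-mean} (so $-B:D^2\Psi_R=1$). The first step is to verify this identity, checking that both sides vanish on $\partial B_R(z_0)$ and solve the same equation.

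Next I bound each piece uniformly over $|M|\le1$. For the first term, Corollary~\ref{ren:direction-cell} gives $R^{-2}\sup_{|M|\le1}\|w^B_{M,R,z_0}\|_\infty\le CR^{-\delta_B}$ outside probability $CR^{-\gamma_B}$, and this is already simultaneous in $M$. For the second term, $|\overline A:M|\le|\overline A|\le\sqrt d$. Moreover $R^{-2}(\mu\Psi_R-U_R)$ is exactly the quantity controlled in \eqref{clock:physical-torsion-bound}, because $R^{-2}\Psi_R=(1-|x-z_0|^2/R^2)/2$. That estimate does not involve $M$, so Proposition~\ref{clock:physical-torsion} gives the bound $CR^{-\delta_2}$ outside probability $CR^{-\gamma_2}$.

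A union bound then finishes the proof, with exponents $\min\{\delta_B,\delta_2\}$ and $\min\{\gamma_B,\gamma_2\}$. By the parameter choice \eqref{clock:final-parameters}, $\delta_2<\delta_B$ and $\gamma_2<\gamma_B$, so the exponents of Proposition~\ref{clock:physical-torsion} survive, up to the stated adjustment of constants. The only delicate point I expect is bookkeeping rather than analysis. First, the identification of $\mu$ and $\overline B$ as $\pi$-means from Lemma~\ref{down:density-normalized-law} must match the centering used in Theorem~\ref{ren:bounded-source}. Second, the unbounded source $h$ appears only through $U_R$, and the truncation in Section~\ref{sec:green} has already handled it. Measurability of the supremum over $M$ follows from the finite-basis reduction in Corollary~\ref{ren:direction-cell}.
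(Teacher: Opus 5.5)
Your proposal is correct and is essentially the paper's proof. The paper writes $w_{M,R}^{z_0}=H_{M,R}-q_M-(\overline A:M)U_R$, where $H_{M,R}$ is the harmonic extension of $q_M$, and notes that $H_{M,R}-\overline H_{M,R}$, with $\overline H_{M,R}=q_M+(\overline B:M)\Psi_R$, is exactly your $w^B_{M,R,z_0}$; using $\mu\overline A=\overline B$ it reaches your decomposition $w_{M,R}^{z_0}=w^B_{M,R,z_0}+(\overline A:M)(\mu\Psi_R-U_R)$ and concludes, as you do, from the bounded-source estimate on a matrix basis, Proposition~\ref{clock:physical-torsion}, and a union bound using $\delta_2<\delta_B$ and $\gamma_2<\gamma_B$.
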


\begin{proof}
Let
\[
 q_M(x)=\frac12(x-z_0)\cdot M(x-z_0),
\]
and let $H_{M,R}$ be the $A$-harmonic extension of the boundary data $q_M$. Since $A=aB$ and $a>0$, it is also $B$-harmonic. Direct substitution into the equation gives the exact decomposition
\begin{equation}
 w_{M,R}^{z_0}=H_{M,R}-q_M-(\overline A:M)U_R.
 \label{clock:cell-exact-decomposition}
\end{equation}
The harmonic extension for $\overline B:D^2$ is
\begin{equation}
 \overline H_{M,R}=q_M+(\overline B:M)\Psi_R.
 \label{clock:homogeneous-extension}
\end{equation}
Indeed, $D^2\overline H_{M,R}=M-(\overline B:M)I$ and $\operatorname{tr}\overline B=1$.

Since $\operatorname{tr}B=1$, the difference solves
\[
 -B:D^2(H_{M,R}-\overline H_{M,R})
 =B:D^2\overline H_{M,R}
 =(B-\overline B):M,
\]
with zero boundary values. The source is local, bounded and has zero occupation mean by~\eqref{clock:actual-means}. Apply~\eqref{clock:bounded-source-input} to a fixed matrix basis and use linearity:
\begin{equation}
 \mathbb P\left(
 R^{-2}\sup_{|M|\le1}\|H_{M,R}-\overline H_{M,R}\|_\infty
 >CR^{-\delta_B}\right)
 \le CR^{-\gamma_B}.
 \label{clock:direction-harmonic-error}
\end{equation}
Only a finite union is required. Substitute~\eqref{clock:homogeneous-extension} into~\eqref{clock:cell-exact-decomposition} and use
\[
 \mu\overline A=\frac{\overline A}{\operatorname{tr}\overline A}
 =\overline B,
\]
to obtain
\begin{equation}
 w_{M,R}^{z_0}
 =(H_{M,R}-\overline H_{M,R})
 -(\overline A:M)(U_R-\mu\Psi_R).
 \label{clock:cell-cancellation}
\end{equation}
Combine~\eqref{clock:direction-harmonic-error} and Proposition~\ref{clock:physical-torsion}. Since $\delta_2<\delta_B$ and $\gamma_2<\gamma_B$, a union bound proves~\eqref{clock:cell-rate}. The cancellation uses the identities for $\overline A$, $\overline B$ and $\mu$ established in Lemma~\ref{down:density-normalized-law}.
\end{proof}

\medskip\noindent\textit{Quantifiers.}
\label{green:quantifiers}

The geometry, fine grid, reverse H\"older constants and $p<d$ are fixed
before $v$. On $\mathcal G_{R,v}(z_0)$ the estimates hold simultaneously
for all admissible measures, poles and interior sources, including sources
correlated with the Green measure. Only the explicit cell bounds are
averaged in the clock estimate. The common bounded-source threshold
permits both $R=T^b$ and $T=R^t$.

All probability tails are for a fixed deterministic center. A finite set
of centers is handled by a union bound, including its cardinality when
it grows with scale. Full-step sources must first be localized as in
Lemma~\ref{green:full-path-support}; the Green estimate does not extend
to the killing boundary.

\section{Quadratic correctors and second-order regularity}
\label{down:sec-corrector}\label{sec:second-order}

We construct correctors by combining the cell estimate with affine excess
decay. The strict inequality between their exponents makes the differences
of successive cell solutions summable modulo affine functions. Constructing
correctors on expanding domains and controlling each new contribution by
large-scale regularity is also the strategy of Fischer and Otto
\cite[Section~3]{FO}. Here the cell equation is linear and affine functions
are exactly $A$-harmonic. We can therefore use a fixed affine projection;
the convergence condition is the explicit inequality
$\alpha+\delta_2>1$ established below. Write
\[
 [v]_{r,z}=\inf_{\ell\in\mathcal P_1}\|v-\ell\|_{L^\infty(B_r(z))},
 \qquad \mathcal E(v;r,z)=r^{-1}[v]_{r,z}.
\]
All equations below use the effective matrix $\bar A$ of
Theorem~\ref{down:density-main}.

\subsection{The two estimates used in the construction}

By Theorem~\ref{thm:cell}, after decreasing $\delta_2$ we have
$0<\delta_2<1$ and
\begin{equation}
 \mathbb P\left(\sup_{|M|\le1}\|w_{M,R}\|_{L^\infty(B_R)}
                  >C_2R^{2-\delta_2}\right)
 \le C_2R^{-\gamma_2},\qquad R\ge R_0,
 \label{down:cell-input}
\end{equation}
where $w_{M,R}$ solves the zero-boundary problem
$-A:D^2w_{M,R}=(A-\bar A):M$ in $B_R$. Fix
\begin{equation}
 1-\delta_2<\alpha<1.
 \label{down:alpha-choice}
\end{equation}
Theorem~\ref{thm:reg:q1} gives a translation-covariant scale $r_*(z)\ge1$ with
\begin{equation}
 \mathbb P(r_*(z)>t)\le C_1t^{-\gamma_1}
 \label{down:regularity-tail}
\end{equation}
and, at each fixed center almost surely, the estimate
\begin{equation}
 [u]_{r,z}\le C_1(r/R)^{1+\alpha}[u]_{R,z},
 \qquad R\ge2r_*(z),\quad r_*(z)\le r\le R/2,
 \label{down:regularity-input}
\end{equation}
simultaneously for all $A$-harmonic $u$ in $B_R(z)$.

\begin{lemma}[Affine Liouville property]
\label{down:liouville}
In an environment satisfying \eqref{down:regularity-input}, an entire
$A$-harmonic function satisfying
\begin{equation}
 \liminf_{R\to\infty}R^{-\alpha}\mathcal E(u;R,0)=0
 \label{down:liouville-growth}
\end{equation}
is affine. In particular, this holds if $[u]_{R,0}=O(R^\rho)$ for
some $\rho<1+\alpha$.
\end{lemma}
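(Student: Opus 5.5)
The plan is to feed the growth hypothesis directly into the decay estimate \eqref{down:regularity-input} at the origin and let the outer radius tend to infinity. Fix an environment in which \eqref{down:regularity-input} holds at $z=0$ with $r_*=r_*(0)<\infty$, and fix any $r\ge r_*$. By \eqref{down:liouville-growth} there is a sequence $R_k\to\infty$ with $R_k^{-\alpha}\mathcal E(u;R_k,0)\to0$; discarding finitely many terms, we may assume $R_k\ge 2r$. Since $u$ is entire, it is $A$-harmonic in every $B_{R_k}$, so \eqref{down:regularity-input} applies with $R=R_k$. It gives
\[
 [u]_{r,0}\le C_1(r/R_k)^{1+\alpha}[u]_{R_k,0}
 =C_1r^{1+\alpha}R_k^{-\alpha}\mathcal E(u;R_k,0)\longrightarrow0 .
\]
Hence $[u]_{r,0}=0$ for every $r\ge r_*$.

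Next we turn this into a single affine function. Because $\mathcal P_1$ is finite dimensional, the infimum defining $[u]_{r,0}$ is attained. So for each $r\ge r_*$ there is $\ell_r\in\mathcal P_1$ with $u=\ell_r$ on $B_r$. For $r<r'$ the functions $\ell_r$ and $\ell_{r'}$ agree on the open ball $B_r$, so they coincide. Therefore $u=\ell$ on $\bigcup_r B_r=\mathbb R^d$ for one fixed $\ell$. Note that affine functions are indeed $A$-harmonic, so no consistency issue arises.

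For the particular case, suppose $[u]_{R,0}=O(R^\rho)$ with $\rho<1+\alpha$. Then
\[
 R^{-\alpha}\mathcal E(u;R,0)=R^{-1-\alpha}[u]_{R,0}=O(R^{\rho-1-\alpha})\to0,
\]
so \eqref{down:liouville-growth} holds, even with a full limit.

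No real obstacle is expected. The only point to watch is the quantifiers: \eqref{down:regularity-input} must hold simultaneously for all $A$-harmonic functions and all admissible radii on one event. That is exactly what Theorem~\ref{thm:reg:q1} provides at the fixed center $0$, so the same environment serves every $R_k$ in the sequence.
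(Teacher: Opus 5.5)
Your proposal is correct and follows the paper's proof essentially verbatim: apply \eqref{down:regularity-input} along a sequence $R_k$ realizing the liminf, use that the finite-dimensional space $\mathcal P_1$ is closed (equivalently, that the affine infimum is attained) to conclude $u$ is affine on each $B_r$ with $r\ge r_*(0)$, and glue these affine functions over nested balls. Your treatment of the growth case by the exponent computation $O(R^{\rho-1-\alpha})$ is also the same as the paper's.
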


\begin{proof}
For $r\ge r_*(0)$, apply \eqref{down:regularity-input} along radii $R_k$
realizing the liminf:
\[
 \mathcal E(u;r,0)
 \le C_1r^\alpha R_k^{-\alpha}\mathcal E(u;R_k,0)\longrightarrow0.
\]
The affine subspace of $C(\overline{B_r})$ is closed, so $u$ is affine on
$B_r$. The affine expressions agree on nested balls and hence give one
whole-space affine function. The last assertion follows from
$R^{-\alpha}\mathcal E(u;R,0)=O(R^{\rho-1-\alpha})$.
\end{proof}

Intersecting the events for a countable sequence $\alpha\uparrow1$ gives
this conclusion for every fixed-power growth bound $O(R^{2-\varepsilon})$,
$\varepsilon>0$, on one event. It does not cover every $o(R^2)$ bound.

\subsection{Construction and growth}

\begin{theorem}[Whole-space correctors]
\label{down:corrector-main}
There is a measurable family $\phi_M\in C^{2,\theta}_{\rm loc}(\mathbb R^d)$,
linear in $M\in\mathbb S^d$, such that almost surely, simultaneously for all $M$,
\begin{equation}
 -A:D^2\phi_M=(A-\bar A):M\quad\text{in }\mathbb R^d.
 \label{down:corrector-equation}
\end{equation}
For $Y_R=R^{-2}\sup_{|M|\le1}[\phi_M]_{R,0}$ there are deterministic
$C,R_1<\infty$ such that
\begin{equation}
 \mathbb P(Y_R>CR^{-\delta_2})
 \le CR^{-\min\{\gamma_1,\gamma_2\}},\qquad R\ge R_1.
 \label{down:corrector-tail}
\end{equation}
For each fixed deterministic $z$, almost surely for all $M$,
\begin{equation}
 \phi_M(\cdot+z,\omega)-\phi_M(\cdot,\tau_z\omega)\in\mathcal P_1.
 \label{down:corrector-covariance}
\end{equation}
The solution is unique modulo affine functions in the class
$[\phi_M]_{R,0}=O(R^{2-\delta_2})$.
\end{theorem}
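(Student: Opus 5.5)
The construction glues the dyadic cell solutions $w_{M,2^k}$ of \eqref{down:cell-input}, and convergence comes from the strict inequality \eqref{down:alpha-choice}. Fix a basis $E_1,\dots,E_N$ of $\mathbb S^d$ and set $R_k=2^k$. The goal is to define $\phi_{E_j}$ as a limit of $w_{E_j,R_k}-\ell_k$ on each fixed ball, with affine normalizations $\ell_k$, and then extend to all $M$ by linearity. The key object is the difference $u_k=w_{M,R_{k+1}}-w_{M,R_k}$. It is $A$-harmonic in $B_{R_k}$, and on the intersection of the cell events at scales $R_k,R_{k+1}$ it satisfies $\|u_k\|_{B_{R_k}}\le CR_k^{2-\delta_2}$. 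By Borel--Cantelli, since $\sum_k R_k^{-\gamma_2}<\infty$, these events hold for all $k\ge k_0(\omega)$ almost surely. The quantitative tail \eqref{down:corrector-tail} comes from restricting to $k$ with $R_k\ge R$, which gives failure probability at most $CR^{-\gamma_2}$, together with the event $\{r_*(0)\le R\}$, which fails with probability at most $CR^{-\gamma_1}$.

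\textbf{Step 1: summability.} Apply \eqref{down:regularity-input} to $u_k$ on $B_{R_k}$. For $r_*\le r\le R_k/2$ this gives
\[
 [u_k]_{r,0}\le C(r/R_k)^{1+\alpha}R_k^{2-\delta_2}=Cr^{1+\alpha}R_k^{1-\alpha-\delta_2}.
\]
Since $\alpha+\delta_2>1$, the right side is summable in $k$. Now fix the affine projection: let $\Pi_r v$ be an affine function nearly minimizing $\|v-\ell\|_{B_r}$; a canonical linear choice is the $L^2(B_1)$-orthogonal projection onto $\mathcal P_1$, which is comparable to the sup-best fit up to a constant on each ball. Define
\[
 \phi_M^{(K)}=w_{M,R_K}-\Pi_1 w_{M,R_K}.
\]
To compare normalizations across scales, use the standard telescoping estimate for affine best fits over dyadic radii $r=2^j$ with $j$ ranging up to $k$: the fits on consecutive radii differ by at most $C(2^j)^{1+\alpha}R_k^{1-\alpha-\delta_2}$ in slope times radius. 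Summed, this shows $u_k-\Pi_1 u_k$ has sup norm on $B_r$ bounded by $C r^{1+\alpha}R_k^{1-\alpha-\delta_2}$ plus a base-scale term, valid once $R_k\ge 2r$. Below scale $r_*$, local regularity (Lemma~\ref{lem:local-regularity}) bounds the oscillation from the value at scale $r_*$. Hence $\phi^{(K)}$ is Cauchy locally uniformly, and the limit $\phi_M$ solves \eqref{down:corrector-equation} by viscosity stability. Schauder theory then upgrades the limit to $C^{2,\theta}_{\rm loc}$. Linearity in $M$ and measurability are inherited from the construction.

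\textbf{Step 2: growth.} For $R\ge R_1$, choose $K$ with $R_K\approx R$. Then $[\phi_M]_{R,0}\le[w_{M,R_K}]_{R}+\sum_{k\ge K}[u_k]_{R}$. The first term is at most $CR^{2-\delta_2}$ by the cell event. The tail sum is at most $CR^{1+\alpha}R^{1-\alpha-\delta_2}=CR^{2-\delta_2}$, using the same regularity bound; this requires $R\ge r_*$, which is the $\gamma_1$ part of the tail. Taking the supremum over $|M|\le1$ via the finite basis gives \eqref{down:corrector-tail}.

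\textbf{Step 3: covariance and uniqueness.} Suppose $\phi,\phi'$ both solve the equation and both have $[\cdot]_{R,0}=O(R^{2-\delta_2})$. Their difference is entire $A$-harmonic with growth exponent $\rho=2-\delta_2<1+\alpha$, so Lemma~\ref{down:liouville} shows it is affine. For covariance, fix $z$. Both $\phi_M(\cdot+z,\omega)$ and $\phi_M(\cdot,\tau_z\omega)$ solve the translated equation in the environment $\tau_z\omega$; the coefficients agree by stationarity. The first has growth controlled by the bound at the origin on enlarged balls $B_{R+|z|}$, and the second satisfies the tail bound at the origin for $\tau_z\omega$, which holds almost surely by stationarity and Borel--Cantelli. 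Uniqueness then gives the affine difference.

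I expect the main obstacle to be the normalization in Step 1: making the affine corrections between scales converge rather than merely each difference being small modulo affine functions. Concretely, this requires a Campanato-type telescoping of best-fit slopes, with constants uniform in $k$ and exceptional events that are fixed and not path-dependent.
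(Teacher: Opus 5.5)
Your proposal is correct and takes essentially the same route as the paper. It glues the dyadic cell solutions normalized by the fixed $L^2(B_1)$ projection $\Pi$ onto $\mathcal P_1$, applies affine excess decay to the $A$-harmonic increments and gets summability from $\alpha+\delta_2>1$, bounds the growth modulo affine functions on $\{r_*(0)\le R\}$ intersected with the cell events at all dyadic scales above $R$, and uses the Liouville lemma for uniqueness and covariance. The normalization step you flag as the main obstacle is in fact immediate: $I-\Pi$ annihilates affine functions and $\|\Pi v\|_{B_s}\le C_d(1+s)\|v\|_{B_1}$, so $\|(I-\Pi)v\|_{B_r}\le C_s[v]_{s,0}$ for a single $s\ge\max\{r,r_*(0),1\}$; the $s$-dependent constant is harmless because only qualitative convergence in a fixed environment is needed (the quantitative bound is taken modulo affine functions), so neither Campanato telescoping nor local regularity below $r_*$ is required, and the measurability you only assert is obtained in the paper from continuity of the cell-solution map under uniform convergence of the coefficient restrictions.
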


\begin{proof}
\emph{Step 1. Convergence after a fixed affine normalization.}
Set $w_{M,n}=w_{M,2^n}$ and $h_{M,n}=w_{M,n+1}-w_{M,n}$.
The difference is $A$-harmonic in $B_{2^n}$, by subtraction of the cell
equations and Lemma~\ref{down:local-regularity}. The cell estimate and
Borel--Cantelli give, almost surely for all sufficiently large $n$,
\begin{equation}
 \sup_{|M|\le1}\|w_{M,n}\|_{L^\infty(B_{2^n})}
 \le C_2\,2^{n(2-\delta_2)}.
 \label{down:eventual-cell-bound}
\end{equation}
Thus \eqref{down:regularity-input} implies
\begin{equation}
 \sup_{|M|\le1}[h_{M,n}]_{r,0}
 \le Cr^{1+\alpha}2^{n(1-\alpha-\delta_2)},
 \qquad r_*(0)\le r\le2^{n-1}.
 \label{down:increment-affine-bound}
\end{equation}

Define the linear projection onto affine functions by
\begin{equation}
 \Pi v(x)=\fint_{B_1}v(y)\,dy
 +(d+2)\sum_{i=1}^d\left(\fint_{B_1}y_iv(y)\,dy\right)x_i.
 \label{down:fixed-projection}
\end{equation}
Indeed, $\fint_{B_1}y_i=0$ and
$\fint_{B_1}y_iy_j=\delta_{ij}/(d+2)$ give $\Pi\ell=\ell$ on
$\mathcal P_1$. Since
$\|\Pi v\|_{B_s}\le C_d(1+s)\|v\|_{B_1}$ for $s\ge1$, subtracting
an affine function and taking the infimum yields
\begin{equation}
 \|(I-\Pi)v\|_{B_s}\le C_s[v]_{s,0}.
 \label{down:projection-quotient}
\end{equation}
Let $q_{M,n}=(I-\Pi)w_{M,n}$. For each fixed compact ball $B_r$, choose
$s\ge\max\{r,r_*(0),1\}$. For sufficiently large $n$,
\[
 \sup_{|M|\le1}\|q_{M,n+1}-q_{M,n}\|_{B_r}
 \le C_ss^{1+\alpha}2^{n(1-\alpha-\delta_2)}.
\]
The exponent is negative by \eqref{down:alpha-choice}. Hence $q_{M,n}$
converges locally uniformly, uniformly over $|M|\le1$, to a function
$\phi_M$. The constant involving $s$ is used only for convergence in a
fixed environment.

Linearity follows from uniqueness of the cell problems and linearity of
$\Pi$. Viscosity stability~\cite[Section~6]{CIL92} gives
\eqref{down:corrector-equation}, and
Lemma~\ref{down:local-regularity} gives the stated local regularity.
For measurability, first fix $R$ and $M$ and consider admissible coefficient
restrictions $A_j\to A$ uniformly on $\overline{B_R}$. Let $w_j,w$ be
their zero-boundary cell solutions, all defined using the same fixed
matrix $\bar A$. Continuity and strict positivity give
\[
 \kappa=\min_{\overline{B_R}}\lambda_{\min}(A)>0,
 \qquad A_j\ge\tfrac12\kappa I
 \quad\text{for all sufficiently large }j.
\]
The boundary Schauder theorem \cite[Theorem~6.14]{GT}, applied to the
fixed coefficient $A$, gives $D^2w\in L^\infty(B_R)$. Subtracting the
classical equations yields
\[
 -A_j:D^2(w_j-w)=(A_j-A):(M+D^2w)\quad\text{in }B_R,
 \qquad w_j-w=0\quad\text{on }\partial B_R.
\]
Comparison with both signs of the quadratic barrier gives
\[
 \|w_j-w\|_\infty
 \le\frac{R^2}{d\kappa}\|A_j-A\|_\infty
       \bigl(|M|+\|D^2w\|_\infty\bigr)\longrightarrow0.
\]
Thus the cell-solution map is continuous in the relative uniform
topology on admissible coefficient restrictions. The coefficient
restriction is a measurable map into $C(\overline{B_R};\mathbb S^d)$,
since its evaluations on a fixed countable dense set are measurable.
Thus the cell solution is measurable. Construct these solutions for a
fixed finite basis of $\mathbb S^d$, apply the continuous integral
projection $I-\Pi$, and take the locally uniform limit on a countable
compact exhaustion. Extending linearly in $M$ and setting the family
equal to zero on the exceptional null set gives a measurable family
$\phi_M$. Matrix suprema are measurable by reduction to a countable
dense set. The constants in the continuity argument may depend on $A$
and $R$; no probabilistic bound on them is used.

\emph{Step 2. Growth and its probability bound.}
Take $r\ge r_*(0)$ and $n$ with $2r\le2^n<4r$, and suppose
\eqref{down:eventual-cell-bound} holds for every $k\ge n$. In the quotient
$C(\overline{B_r})/\mathcal P_1$, the classes of $q_{M,k}$ and $w_{M,k}$
agree. The locally uniform limit and \eqref{down:increment-affine-bound}
therefore give
\begin{align}
 \sup_{|M|\le1}[\phi_M-w_{M,n}]_{r,0}
 &\le Cr^{1+\alpha}\sum_{k=n}^\infty2^{k(1-\alpha-\delta_2)}\notag\\
 &\le Cr^{1+\alpha}2^{n(1-\alpha-\delta_2)}
 \le Cr^{2-\delta_2}.
 \label{down:quotient-tail-sum}
\end{align}
Adding the cell bound gives, with a deterministic constant,
\begin{equation}
 \sup_{|M|\le1}[\phi_M]_{r,0}\le Cr^{2-\delta_2}.
 \label{down:corrector-deterministic-growth}
\end{equation}
In particular, it holds almost surely for every sufficiently large real $r$.
For a deterministic $r$, define
\[
 G_r=\{r_*(0)\le r\}\cap
 \bigcap_{k\ge\lceil\log_2(2r)\rceil}
 \left\{\sup_{|M|\le1}\|w_{M,k}\|_\infty
                     \le C_2\,2^{k(2-\delta_2)}\right\}.
\]
On the construction event, $G_r$ implies
\eqref{down:corrector-deterministic-growth}, while
\[
 \mathbb P(G_r^c)
 \le C_1r^{-\gamma_1}
    +\sum_{k\ge\lceil\log_2(2r)\rceil}C_2\,2^{-k\gamma_2}
 \le Cr^{-\min\{\gamma_1,\gamma_2\}}.
\]
This proves \eqref{down:corrector-tail}.

\emph{Step 3. Uniqueness and translation covariance.}
The difference $v$ of two solutions in the asserted class is entire
$A$-harmonic and satisfies
$\mathcal E(v;R,0)=O(R^{1-\delta_2})=o(R^\alpha)$.
Lemma~\ref{down:liouville} makes it affine.
For fixed $z$, stationarity ensures that both $\omega$ and $\tau_z\omega$
lie in the construction event almost surely. The function
\[
 v_M(x)=\phi_M(x+z,\omega)-\phi_M(x,\tau_z\omega)
\]
is harmonic in the shifted environment. Translation preserves affine
functions, and $B_R+z\subset B_{R+|z|}$ preserves the growth class.
Uniqueness proves \eqref{down:corrector-covariance}; a finite basis and
linearity make the statement simultaneous in $M$.
\end{proof}

\begin{remark}[Covariance of derivatives]
\label{down:corrector-quantifier-remark}
For each fixed $z$, differentiating \eqref{down:corrector-covariance}
twice shows that $D^2\phi_M$ is translation covariant. The function and
its gradient need not be stationary. We do not assert integrability of
$D^2\phi_M(0)$ or use a mean-zero Hessian identity. If first-moment
integrability is established separately, the ergodic theorem and
\[
 \left|\int D^2\phi_M\,\psi_R\right|
 \le C_\psi R^{-2}[\phi_M]_{cR,0}\longrightarrow0
\]
identify that mean as zero. The covariance statement remains an
almost-sure statement for each fixed translation. For comparison,
Armstrong and Lin \cite[Section~7]{AL} construct correctors with stationary
values in dimensions $d>4$, under uniform ellipticity and their local
product-space assumption (P3). Their fluctuation argument uses that
additional probabilistic structure. The present construction gives
covariance modulo affine functions under H1--H5.
\end{remark}


We first obtain a harmonic approximation on an event common to all
$A$-harmonic functions with uniform norm at most one. Adding a small
corrector to the quadratic Taylor polynomial of the harmonic
approximation then improves the
second-order excess. The corrected-polynomial excess and its Liouville
consequence follow the framework of Fischer and Otto
\cite[Section~3]{FO}, whose divergence-form theory assumes uniform
ellipticity and quantified sublinearity of the first-order corrector and
its flux potential. Here affine functions already solve the equation,
and the quadratic correction is supplied by
Theorem~\ref{down:corrector-main}; the excess is measured in the uniform
norm. The approximation-and-iteration argument is also related to the
nondivergence-form $C^{1,1}$ theory of Armstrong and Lin
\cite[Section~3]{AL}. Their regularity theorem assumes uniform ellipticity,
stationarity and finite range of dependence. The proof below uses the
corrector estimate together with the coarse H\"older estimate and the
fixed-tolerance events of Proposition~\ref{prop:reg:fixed-tolerance}.
Each event is common to all the functions under
consideration. Set
\[
 \mathcal S_0=\{M\in\mathbb S^d:\bar A:M=0\},\qquad
 \gamma_H=\frac{d}{4d-2},\qquad
 \gamma_\phi=\min\{\gamma_H,\gamma_2\},
\]
with the parameter choices in Theorem~\ref{thm:reg:q1}.

\subsection{Uniform approximation of harmonic functions}

Lemmas~\ref{lem:reg:ellipticity-averages} and~\ref{lem:reg:coarse-holder}
give $\nu,a_0\in(0,1)$ and coefficient events $\mathcal V_R(z)$ with
$\mathbb P(\mathcal V_R(z)^c)\le CR^{-\gamma_H}$. On these events,
every $A(z+R\cdot)$-harmonic $u$ in $B_1$ with $\|u\|_{B_1}\le1$ satisfies
\begin{equation}
 |u(x)-u(y)|\le C\bigl(|x-y|^\nu+R^{-(1-a_0)\nu}\bigr),
 \qquad x,y\in B_{3/4}.
 \label{eq:second-coarse-holder-input}
\end{equation}
Proposition~\ref{prop:reg:fixed-tolerance} gives, for each $t>0$ and
$R\ge R_t$, coefficient events $\mathcal E_R(t,z)$ with
$\mathbb P(\mathcal E_R(t,z)^c)\le C_t\exp(-c_tR^{b_t})$ on which
\begin{equation}
 \|u_F-F((\cdot-z)/R)\|_{B_R(z)}
 \le t\|F\|_{C^3(\overline B_1)}.
 \label{eq:second-fixed-tolerance-input}
\end{equation}
Here $F$ is any $\bar A$-harmonic function in $C^3(\overline B_1)$ and
$u_F$ is the $A$-harmonic extension of $F((\cdot-z)/R)$ to $B_R(z)$.
Both event families are measurable, translation covariant and uniform in
the indicated functions.

\begin{lemma}[Uniform homogeneous approximation]
\label{lem:second-uniform-approximation}
For each $\eta>0$ there are deterministic $R_\eta,C_\eta<\infty$ and
translation-covariant measurable events $\mathcal A_{\eta,R}(z)$ with
$\mathbb P(\mathcal A_{\eta,R}(z)^c)\le C_\eta R^{-\gamma_H}$ for
$R\ge R_\eta$, such that every $A(z+R\cdot)$-harmonic $u$ in $B_1$,
$\|u\|_{B_1}\le1$, admits a $\bar A$-harmonic $v$ in $B_{1/2}$ satisfying
\begin{equation}
 \|v\|_{B_{1/2}}\le1,\qquad \|u-v\|_{B_{1/2}}\le\eta.
 \label{eq:second-uniform-approximation}
\end{equation}
\end{lemma}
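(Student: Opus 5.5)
The plan is to repeat the first half of the proof of Proposition~\ref{prop:reg:one-step}, without its final affine step. We fix a mollification scale $h$ and a tolerance $t$, and set
\[
 \mathcal A_{\eta,R}(z)=\mathcal V_R(z)\cap\mathcal E_{R/2}(t,z).
\]
Both events are translation covariant and measurable. Their failure probabilities are at most $CR^{-\gamma_H}+C_t\exp(-c_tR^{b_t})\le C_\eta R^{-\gamma_H}$ once $R\ge R_\eta$.

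On $\mathcal V_R(z)$, let $u$ be $A(z+R\cdot)$-harmonic in $B_1$ with $\|u\|_{B_1}\le1$.
\begin{enumerate}
 \item Convolve $u$ with a nonnegative unit-mass mollifier of scale $h<1/12$ to get $g$ on $B_{2/3}$. The coarse H\"older estimate \eqref{eq:second-coarse-holder-input} gives
 \[
  \|u-g\|_{B_{2/3}}\le C\bigl(h^\nu+R^{-(1-a_0)\nu}\bigr),\qquad |g|\le1,\qquad \|g\|_{C^4(B_{2/3})}\le C_h.
 \]
 \item Let $v$ be the $\bar A$-harmonic function in $B_{1/2}$ with boundary values $g$. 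By the maximum principle $|v|\le1$, which gives the first part of \eqref{eq:second-uniform-approximation}. Boundary Schauder estimates for the constant matrix $\bar A$ on the smooth domain $B_{1/2}$ give $\|v\|_{C^3(\overline B_{1/2})}\le C_h$.
 \item Let $u_g$ be the $A(z+R\cdot)$-harmonic function in $B_{1/2}$ with the same boundary values $g$. Comparison with $u$ on $\partial B_{1/2}$ gives $\|u-u_g\|_{B_{1/2}}\le C(h^\nu+R^{-(1-a_0)\nu})$.
 \item Apply \eqref{eq:second-fixed-tolerance-input} at physical radius $R/2$ with $F(y)=v(y/2)$, which is $\bar A$-harmonic in $B_1$. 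The event $\mathcal E_{R/2}(t,z)$ does not depend on $F$, so this environment-dependent choice is allowed, and it gives $\|u_g-v\|_{B_{1/2}}\le tC_h$.
\end{enumerate}

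Combining the last three steps,
\[
 \|u-v\|_{B_{1/2}}\le C\bigl(h^\nu+R^{-(1-a_0)\nu}\bigr)+tC_h.
\]
The parameters are chosen in this order: $h$ so that $Ch^\nu\le\eta/3$; then $t$ so that $tC_h\le\eta/3$; then $R_\eta\ge 2R_t$ with $CR_\eta^{-(1-a_0)\nu}\le\eta/3$. This gives \eqref{eq:second-uniform-approximation}. All constants depend only on $\eta$.

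The main care point is the simultaneity over all $u$. Nothing in the argument refers to $u$ except through deterministic operations on the fixed events, so the quantifier order is the same as in Proposition~\ref{prop:reg:one-step}. Translation covariance of $\mathcal A_{\eta,R}(z)$ follows from covariance of both factors.
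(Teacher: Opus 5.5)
Your proposal is correct and follows the paper's proof essentially step for step: the same mollification at scale $h<1/12$, the same comparison of $u$ with the $\bar A$-harmonic and $A(z+R\cdot)$-harmonic extensions $v$ and $u_g$ of $g$ to $B_{1/2}$, the fixed-tolerance input applied at radius $R/2$ to $F(y)=v(y/2)$, the parameter order $h$, then $t$, then $R_\eta$, and the event $\mathcal A_{\eta,R}(z)=\mathcal V_R(z)\cap\mathcal E_{R/2}(t,z)$. Your explicit requirement $R_\eta\ge 2R_t$, so that the fixed-tolerance proposition applies at radius $R/2$, is a correct detail that the paper leaves implicit.
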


\begin{proof}
For $0<h<1/12$, mollify $u$ with a nonnegative smooth kernel of integral
one and radius $h$, obtaining $g$ in $B_{2/3}$. On $\mathcal V_R(z)$,
\[
 \|u-g\|_{B_{2/3}}
 \le C\bigl(h^\nu+R^{-(1-a_0)\nu}\bigr),\qquad
 \|g\|_{C^4(B_{2/3})}\le C_h,\qquad |g|\le1.
\]
Let $v$ and $u_g$ be the $\bar A$-harmonic and $A(z+R\cdot)$-harmonic
extensions of $g$ to $B_{1/2}$. Comparison gives $|v|\le1$ and
\[
 \|u-u_g\|_{B_{1/2}}
 \le C\bigl(h^\nu+R^{-(1-a_0)\nu}\bigr).
\]
The constant-coefficient boundary Schauder estimate gives
$\|v\|_{C^3(\overline B_{1/2})}\le C_h$. Applying
\eqref{eq:second-fixed-tolerance-input} at radius $R/2$ to $F(y)=v(y/2)$
gives $\|u_g-v\|_{B_{1/2}}\le tC_h$ on $\mathcal E_{R/2}(t,z)$.
Choose $h$, then $t$, then $R_\eta$ so that these three error terms are
at most $\eta/3$, and take
\[
 \mathcal A_{\eta,R}(z)=\mathcal V_R(z)\cap\mathcal E_{R/2}(t,z).
\]
The union bound gives the probability estimate. Since both input events
are uniform in their data, the choices of $g$ and $v$ introduce no new
exceptional event.
\end{proof}

\subsection{Corrected quadratic polynomials}

The correctors of Theorem~\ref{down:corrector-main} satisfy
\begin{equation}
 -A:D^2\phi_M=(A-\bar A):M
 \label{eq:second-corrector-equation}
\end{equation}
and, for each fixed $z$, almost surely simultaneously in $M$,
\begin{equation}
 \phi_M(\cdot+z,\omega)-\phi_M(\cdot,\tau_z\omega)\in\mathcal P_1.
 \label{eq:second-corrector-covariance}
\end{equation}
With $\delta_\phi=\delta_2\in(0,1)$, define
$Y_R(z)=R^{-2}\sup_{|M|\le1}[\phi_M]_{R,z}$. Covariance and
\eqref{down:corrector-tail} give
\begin{equation}
 \mathbb P(Y_R(z)>CR^{-\delta_\phi})\le CR^{-\gamma_\phi},
 \qquad R\ge R_0.
 \label{eq:second-corrector-tail-input}
\end{equation}
Define the finite-dimensional space of exact $A$-harmonic functions
\[
 \mathcal H_2^A=\left\{a+b\cdot x+\frac12x\cdot Mx+\phi_M(x):
             a\in\mathbb R,\ b\in\mathbb R^d,\ M\in\mathcal S_0\right\}
\]
and its excess
\begin{equation}
 D_2(u;r,z)=\inf_{H\in\mathcal H_2^A}\|u-H\|_{B_r(z)},\qquad
 E_2(u;r,z)=r^{-2}D_2(u;r,z).
 \label{eq:second-excess-definition}
\end{equation}
The space is independent of the affine representatives of $\phi_M$ and
of the center chosen in the quadratic polynomial. Equation
\eqref{eq:second-corrector-covariance} therefore transports the whole
space under each fixed translation. A finite matrix basis makes this
statement simultaneous in $M$.

\begin{lemma}[Closure and coercivity]
\label{lem:second-coercivity}
For bounded $u$, the infimum in \eqref{eq:second-excess-definition} and
the affine infima defining $Y_R$ are attained. If
$c_d=(4\sqrt d)^{-1}$ and $q_M(x)=\tfrac12(x-z)\cdot M(x-z)$, then
\begin{align}
 [q_M]_{R,z}&\ge c_dR^2|M|,
 \label{eq:second-quadratic-coercivity}\\
 [q_M+\phi_M]_{R,z}&\ge(c_d-Y_R(z))R^2|M|.
 \label{eq:second-corrected-coercivity}
\end{align}
If $Y_R(z)<c_d$, restriction to $B_R(z)$ is injective on $\mathcal H_2^A$.
\end{lemma}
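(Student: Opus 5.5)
The proof has four parts: attainment of the infima, the quadratic coercivity \eqref{eq:second-quadratic-coercivity}, the corrected coercivity \eqref{eq:second-corrected-coercivity}, and injectivity.

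\emph{Attainment.} The affine infima defining $Y_R$ are attained because $\mathcal P_1$ is finite dimensional and the norm $\|\cdot\|_{B_R(z)}$ is coercive on it. For the $\mathcal H_2^A$ infimum, I would first note that $\mathcal H_2^A$ is the image of a linear map from the finite-dimensional parameter space $\mathbb R\times\mathbb R^d\times\mathcal S_0$ into $C(\overline B_r(z))$. Its image is a finite-dimensional subspace, hence closed. Distance to a finite-dimensional subspace of a normed space is always attained: restrict a minimizing sequence to a bounded set and use compactness. This argument does not need injectivity.

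\emph{Quadratic coercivity.} Translate to $z=0$ and let $\ell$ be affine. I would use the symmetric second difference
\[
 q_M(x+y)+q_M(x-y)-2q_M(x)=y\cdot My,
\]
which annihilates affine functions. Hence for $|x|+|y|<R$ we get $|y\cdot My|\le 4\|q_M-\ell\|_{B_R}$. Taking $x=0$ and $|y|<R$ gives $\sup_{|e|=1}|e\cdot Me|\,R^2\le4\|q_M-\ell\|$. For a symmetric matrix, $\sup_{|e|=1}|e\cdot Me|$ is the operator norm, which is at least $|M|/\sqrt d$ in Frobenius norm. This yields $\|q_M-\ell\|_{B_R}\ge R^2|M|/(4\sqrt d)=c_dR^2|M|$, and infimizing over $\ell$ gives \eqref{eq:second-quadratic-coercivity}.

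\emph{Corrected coercivity.} The seminorm $[\cdot]_{R,z}$ satisfies the triangle inequality, so
\[
 [q_M+\phi_M]_{R,z}\ge[q_M]_{R,z}-[\phi_M]_{R,z}.
\]
By linearity of $\phi_M$ in $M$ and the definition of $Y_R(z)$, we have $[\phi_M]_{R,z}\le Y_R(z)R^2|M|$. The only subtlety is the centering: the space and the excess are independent of the quadratic center. Recentering the quadratic at $z$ only adds an affine term, which $[\cdot]$ ignores.

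\emph{Injectivity.} Suppose $H=a+b\cdot x+\tfrac12x\cdot Mx+\phi_M$ vanishes on $B_R(z)$. Then $[q_M+\phi_M]_{R,z}=0$, and the corrected bound with $Y_R(z)<c_d$ forces $M=0$. Then $H$ is affine and vanishes on an open set, so $a=0$ and $b=0$.

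The only step needing care is the quadratic lower bound constant; the rest is formal.
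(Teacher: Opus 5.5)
Your proposal is correct and follows essentially the same route as the paper. You use finite-dimensional compactness for attainment, then the bound $R^2\|M\|_{\rm op}\le 4\|q_M-\ell\|_{B_R}$; your symmetric second difference at $x=0$ is exactly the paper's evaluation at $0$ combined with averaging at $\pm sv$. The quotient triangle inequality and the $M=0$ injectivity argument then also match the paper. The only cosmetic point is that $u$ is merely bounded on the open ball, so the closed-subspace argument should be run in the bounded functions on $B_r(z)$ with the uniform norm rather than in $C(\overline B_r(z))$, which changes nothing.
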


\begin{proof}
The restrictions of the two approximation spaces are finite-dimensional
subspaces of the uniform normed space and hence closed. A minimizing
sequence has bounded norm; finite-dimensional compactness gives
attainment. This argument also applies to bounded continuous functions
on an open ball.

Translate $z$ to zero and suppose $\|q_M-a-b\cdot x\|_{B_R}\le e$.
Evaluation at zero gives $|a|\le e$. Averaging the values at $sv$ and
$-sv$, for $|v|=1$ and $s<R$, gives
$|s^2v\cdot Mv/2-a|\le e$. Letting $s\uparrow R$ yields
$\|M\|_{\rm op}\le4e/R^2$, which implies
\eqref{eq:second-quadratic-coercivity} since
$|M|\le\sqrt d\|M\|_{\rm op}$. The triangle inequality in the quotient
by affine functions and linearity in $M$ give
\eqref{eq:second-corrected-coercivity}. If a corrected polynomial
vanishes on the ball and $Y_R<c_d$, then $M=0$, and its remaining affine
part vanishes globally.
\end{proof}

\subsection{Excess decay}

Interior estimates for the constant-coefficient equation give a
$K=K(d,\bar A)\ge1$ such that every $\bar A$-harmonic $v$ in $B_{1/2}$,
$\|v\|_{B_{1/2}}\le2$, has a Taylor polynomial
$P(x)=v(0)+Dv(0)\cdot x+\tfrac12x\cdot Mx$ satisfying
\begin{equation}
 M=D^2v(0)\in\mathcal S_0,\qquad |M|\le K,\qquad
 \|v-P\|_{B_\theta}\le K\theta^3\quad(0<\theta\le1/8).
 \label{eq:second-taylor}
\end{equation}
Indeed, the change of variables $x=\bar A^{1/2}y$ gives a harmonic
function on an ellipsoid. On balls of a fixed radius about the image of
$B_{1/4}$, differentiation of the mean-value formula bounds derivatives
through order three by $\|v\|_\infty$. Taylor's formula gives the estimate,
and the equation gives $\bar A:M=0$.

\begin{proposition}[One-step improvement]
\label{prop:second-one-step}
Fix $\alpha\in(0,1)$ and choose a dyadic $\theta=2^{-m}\le1/8$ with
$K\theta^3\le\theta^{2+\alpha}/4$. Set
\[
 \eta=\tfrac18\theta^{2+\alpha},\qquad
 \kappa=\min\{c_d/2,\theta^{2+\alpha}/(8K)\}.
\]
On $\mathcal A_{\eta,R}(z)\cap\{Y_R(z)\le\kappa\}$, for $R\ge R_\eta$,
every $A$-harmonic function in $B_R(z)$ satisfies
\begin{equation}
 E_2(u;\theta R,z)\le\tfrac12\theta^\alpha E_2(u;R,z).
 \label{eq:second-one-step}
\end{equation}
\end{proposition}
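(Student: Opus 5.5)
The plan is a single Campanato step carried out in the scaled picture. Translate $z$ to zero. Let $H_0\in\mathcal H_2^A$ attain $D_2(u;R,0)$; this is possible by Lemma~\ref{lem:second-coercivity}. Since $H_0$ is exactly $A$-harmonic, $w=u-H_0$ is $A$-harmonic in $B_R$ and satisfies $\|w\|_{B_R}=D_2(u;R,0)=:D$. If $D=0$ there is nothing to prove, so normalize $\tilde w(x)=w(Rx)/D$. Then $\tilde w$ is $A(R\cdot)$-harmonic in $B_1$ with $\|\tilde w\|_{B_1}\le1$. On $\mathcal A_{\eta,R}(0)$, Lemma~\ref{lem:second-uniform-approximation} supplies an $\bar A$-harmonic $v$ in $B_{1/2}$ with $\|v\|_{B_{1/2}}\le1$ and $\|\tilde w-v\|_{B_{1/2}}\le\eta$.

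\emph{Constructing the corrected competitor.} Take the Taylor polynomial $P(x)=v(0)+Dv(0)\cdot x+\tfrac12x\cdot Mx$ from \eqref{eq:second-taylor}, which has $M\in\mathcal S_0$, $|M|\le K$, and $\|v-P\|_{B_\theta}\le K\theta^3$. In physical variables, the polynomial $x\mapsto D\,P(x/R)$ has Hessian matrix $DM/R^2\in\mathcal S_0$. Correct it by adding $\phi_{DM/R^2}$, and fix the affine representative by the attaining affine function $\ell$ in the definition of $Y_R$. The resulting function
\[
 H_1(x)=D\,P(x/R)+\phi_{DM/R^2}(x)-\ell(x)
\]
belongs to $\mathcal H_2^A$. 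By linearity in $M$ and the definition of $Y_R$,
\[
 \|\phi_{DM/R^2}-\ell\|_{B_R}\le Y_R(0)R^2\cdot D|M|/R^2\le \kappa K D.
\]
Since $\mathcal H_2^A$ is a linear space, $H_0+H_1\in\mathcal H_2^A$.

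\emph{Estimating the excess.} On $B_{\theta R}$ the triangle inequality gives
\[
 \|u-H_0-H_1\|_{B_{\theta R}}
 \le D\bigl(\|\tilde w-v\|_{B_\theta}+\|v-P\|_{B_\theta}\bigr)+\kappa KD
 \le D\bigl(\eta+K\theta^3+\kappa K\bigr).
\]
With the stated choices we have $\eta=\theta^{2+\alpha}/8$, $K\theta^3\le\theta^{2+\alpha}/4$, and $\kappa K\le\theta^{2+\alpha}/8$. The sum is therefore at most $\tfrac12\theta^{2+\alpha}D$. Dividing by $(\theta R)^2$ gives
\[
 E_2(u;\theta R,0)\le\tfrac12\theta^\alpha R^{-2}D=\tfrac12\theta^\alpha E_2(u;R,0).
\]

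\emph{Main obstacle: bookkeeping of quantifiers and measurability.} The approximation $v$, the matrix $M$, and hence the corrector amplitude all depend on $u$ and on the environment. This causes no difficulty because the event $\mathcal A_{\eta,R}$ is uniform over all normalized harmonic functions, and $Y_R$ is a supremum over $|M|\le1$, so linearity covers every amplitude at once. No new exceptional set arises. The one point needing care is that the corrector is defined only modulo affine functions, so the bound must be taken on the quotient through $[\phi]_{R,0}$. This is handled by subtracting the attaining affine $\ell$, which keeps $H_1$ in $\mathcal H_2^A$. The restriction $Y_R\le\kappa\le c_d/2$ is not needed for this step itself; it is retained for the injectivity used later in the iteration.
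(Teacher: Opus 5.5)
Your proposal is correct and follows the paper's proof step for step. You take the minimizer $H_0$ and approximate the normalized remainder by an $\bar A$-harmonic $v$ via Lemma~\ref{lem:second-uniform-approximation}. You then correct the Taylor polynomial $P$ by $\phi_{DR^{-2}M}$ minus a near-optimal affine function, and bound the same three error terms $\eta+K\theta^3+K\kappa\le\tfrac12\theta^{2+\alpha}$; the paper only differs in writing the competitor in rescaled variables as $h$ before mapping back. The one point you leave implicit is the trivial case $E_2(u;R,z)=\infty$ (unbounded $u$), where no minimizer is needed.
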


\begin{proof}
The cases of infinite or zero outer excess are immediate, the latter by
attainment. Otherwise choose a minimizer $H_0$, set $D=D_2(u;R,z)>0$,
and apply Lemma~\ref{lem:second-uniform-approximation} to
\[
 U(x)=D^{-1}(u-H_0)(z+Rx),\qquad \|U\|_{B_1}=1.
\]
Let $v,P,M$ be the resulting harmonic approximation and its Taylor data.
Choose an affine $\ell_M$ with
$\|\phi_M-\ell_M\|_{B_R(z)}\le\kappa R^2|M|$. The function
\[
 h(x)=P(x)+R^{-2}\bigl(\phi_M(z+Rx)-\ell_M(z+Rx)\bigr)
\]
is exactly $A(z+R\cdot)$-harmonic, and
\[
 \|U-h\|_{B_\theta}\le\eta+K\theta^3+K\kappa
 \le\tfrac12\theta^{2+\alpha}.
\]
Moreover $H_0(y)+Dh((y-z)/R)\in\mathcal H_2^A$: its added matrix is
$DR^{-2}M$, its added corrector is $\phi_{DR^{-2}M}$, and the other terms
are affine. Rescaling the approximation error proves the result.
\end{proof}

\begin{theorem}[Corrected large-scale $C^{2,\alpha}$ estimate]
\label{thm:second-regularity}
Under H1--H5, for every $\alpha\in(0,1)$ there are $C_\alpha<\infty$ and
a measurable translation-covariant scale $r_{2,\alpha}(z)\ge1$ with
\begin{equation}
 \mathbb P(r_{2,\alpha}(z)>t)\le C_\alpha t^{-\gamma},\qquad
 \gamma=\min\{\gamma_H,\gamma_\phi\}>0,\quad t\ge1.
 \label{eq:second-random-tail}
\end{equation}
For each fixed $z$, almost surely for all $R\ge2r_{2,\alpha}(z)$, all
$A$-harmonic $u$ in $B_R(z)$ and $r_{2,\alpha}(z)\le r\le R/2$,
\begin{equation}
 E_2(u;r,z)\le C_\alpha(r/R)^\alpha E_2(u;R,z).
 \label{eq:second-large-scale}
\end{equation}
\end{theorem}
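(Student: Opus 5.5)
The plan mirrors the proof of Theorem~\ref{thm:reg:q1}, with Proposition~\ref{prop:second-one-step} replacing Proposition~\ref{prop:reg:one-step}. Fix $\alpha$, the dyadic $\theta=2^{-m}$ and the parameters $\eta,\kappa$ of Proposition~\ref{prop:second-one-step}. At dyadic radii $2^n$ centred at the origin define the good events
\[
 E_n=\mathcal A_{\eta,2^n}(0)\cap\{Y_{2^n}(0)\le\kappa\}.
\]
By Lemma~\ref{lem:second-uniform-approximation} and \eqref{eq:second-corrector-tail-input}, for $n$ above a deterministic threshold $n_{\rm det}$ (chosen so that $2^{n}\ge R_\eta$ and $C2^{-n\delta_\phi}\le\kappa$), we have $\mathbb P(E_n^c)\le C2^{-n\gamma_H}+C2^{-n\gamma_\phi}\le C2^{-n\gamma}$. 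Define $n_*$ as one plus the last $n\ge n_{\rm det}$ with $E_n^c$, and set $r_{2,\alpha}(0)=2^{n_*}$. Borel--Cantelli gives finiteness almost surely, and summing the geometric tail gives \eqref{eq:second-random-tail}. Covariance is defined by $r_{2,\alpha}(z,\omega)=r_{2,\alpha}(0,\tau_z\omega)$. Measurability follows since each $E_n$ is a countable combination of measurable events; $Y_R$ is measurable by reduction to a countable dense set of $M$, as in Theorem~\ref{down:corrector-main}.

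For the iteration, take $R\ge2r_{2,\alpha}$ and $r_{2,\alpha}\le r\le R/2$. Let $S$ be the largest dyadic radius $\le R$ and $S_j=\theta^jS$, with $j$ maximal such that $S_j\ge r$. Every $S_i$, $i<j$, is dyadic and at least $r_{2,\alpha}$, so $E_{\log_2S_i}$ holds. Applying \eqref{eq:second-one-step} at each $S_i$ (valid since $u$ is harmonic in $B_{S_i}$) gives $E_2(u;S_j)\le 2^{-j}\theta^{j\alpha}E_2(u;S)\le (S_j/S)^\alpha E_2(u;S)$. The unnormalized excess $D_2$ is monotone in the radius, since $\mathcal H_2^A$ is a fixed space of entire functions. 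Hence $E_2(u;r)\le (S_j/r)^2E_2(u;S_j)$ with $S_j/r<\theta^{-1}$, and $E_2(u;S)\le 4E_2(u;R)$. Combining these yields \eqref{eq:second-large-scale} with $C_\alpha=4\cdot2^\alpha\theta^{-2-\alpha}$.

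Two quantifier points need checking. The first is that the events are common to all harmonic $u$. Both Lemma~\ref{lem:second-uniform-approximation} and the corrector bound are uniform in $u$, so Proposition~\ref{prop:second-one-step} holds simultaneously on $E_n$. The second is that the space $\mathcal H_2^A$ used in the excess at center $z$ is the one transported by translation. Here I use \eqref{eq:second-corrector-covariance}: for fixed $z$, almost surely $\mathcal H_2^A(\tau_z\omega)$ equals the translate of $\mathcal H_2^A(\omega)$, and hence $E_2(u;r,z)(\omega)=E_2(u(\cdot+z);r,0)(\tau_z\omega)$. Stationarity then transfers the almost-sure estimate and the tail bound to each fixed $z$.

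The main obstacle I expect is this covariance step. Because $Y_R(z)$ is defined from $\phi_M$ with its own center, one must ensure that the excess at $z$ and the corrector-growth event at $z$ coincide with the shifted origin quantities, up to the affine ambiguity. Since $[\cdot]_{R,z}$ and $\mathcal H_2^A$ are both insensitive to affine additions, the exceptional null set from \eqref{eq:second-corrector-covariance} for that single $z$ suffices. Alternatively, I would define the events directly at $z$ with $Y_{2^n}(z)$, using \eqref{eq:second-corrector-tail-input} at fixed $z$, and avoid transport altogether.
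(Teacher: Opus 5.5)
Your proposal is correct and follows essentially the same route as the paper. It uses the same dyadic good events $\mathcal A_{\eta,2^n}(0)\cap\{Y_{2^n}(0)\le\kappa\}$, the same last-failure scale $r_{2,\alpha}(0)=2^{n_*}$ transported by $\tau_z$, and the same $\theta$-adic iteration from the largest dyadic $S\le R$, yielding the identical constant $2^{2+\alpha}\theta^{-(2+\alpha)}$. Your explicit check that $E_2(u;r,z)(\omega)=E_2(u(\cdot+z);r,0)(\tau_z\omega)$ simply spells out the paper's one-line appeal to covariance of the corrector classes and events.
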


\begin{proof}
Choose $n_{\rm det}\ge0$ so that all inputs apply and
$C2^{-n\delta_\phi}\le\kappa$ for $n\ge n_{\rm det}$. Then
\[
 G_n(z)=\mathcal A_{\eta,2^n}(z)\cap\{Y_{2^n}(z)\le\kappa\},\qquad
 \mathbb P(G_n(z)^c)\le C_\alpha2^{-n\gamma}.
\]
Define at the origin
\[
 n_* =\max\{n_{\rm det},1+\sup\{n\ge n_{\rm det}:G_n(0)^c\}\},
 \qquad r_{2,\alpha}(0)=2^{n_*},
\]
with the empty supremum equal to $n_{\rm det}-1$. The union bound gives
\[
 \mathbb P(r_{2,\alpha}(0)>t)
 \le C_\alpha\sum_{2^n>t/2}2^{-n\gamma}
 \le C_\alpha t^{-\gamma}
\]
for large $t$, and an enlarged constant covers $t\ge1$. Borel--Cantelli
gives finiteness. Matrix, affine and spatial extrema defining $Y_R$ can
be taken over fixed countable dense sets, so the scale is measurable.
Set $r_{2,\alpha}(z,\omega)=r_{2,\alpha}(0,\tau_z\omega)$.
Covariance of the corrector classes and events gives the asserted
fixed-center properties.

For an admissible $R$, let $S\le R$ be the largest dyadic radius, and
put $S_j=\theta^jS$. Choose the largest $j$ with $S_j\ge r$. All the
radii in the iteration are dyadic and at least $r_{2,\alpha}(z)$, so
\eqref{eq:second-one-step} yields
\[
 E_2(u;S_j,z)\le(S_j/S)^\alpha E_2(u;S,z).
\]
Since $D_2$ is nondecreasing in the radius, $R/2<S\le R$ and
$r\le S_j<r/\theta$ give
\begin{align*}
 E_2(u;r,z)&\le(S_j/r)^2E_2(u;S_j,z)\\
 &\le\theta^{-(2+\alpha)}(r/S)^\alpha E_2(u;S,z)\\
 &\le2^{2+\alpha}\theta^{-(2+\alpha)}(r/R)^\alpha E_2(u;R,z).
\end{align*}
This deterministic iteration on countably many coefficient events
covers all solutions and all admissible real radii simultaneously.
\end{proof}

\subsection{Liouville classification}

\begin{lemma}[Growth of affine representatives]
\label{lem:second-growth-representatives}
If $f\in C(\mathbb R^d)$, $0<\delta_0<1$, and
$[f]_{2^n,0}\le K2^{n(2-\delta_0)}$ for every sufficiently large $n$,
then $\|f\|_{B_R}=O(R^{2-\delta_0})$.
\end{lemma}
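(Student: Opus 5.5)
The plan is to build, for each large $n$, an affine function $\ell_n$ with $\|f-\ell_n\|_{B_{2^n}}\le K2^{n(2-\delta_0)}$; such a best approximation exists by finite-dimensional compactness, as in Lemma~\ref{lem:second-coercivity}. Then we control the successive differences $\ell_{n+1}-\ell_n$, show that the affine parts converge to a fixed affine function $\ell_\infty$, and bound $f-\ell_\infty$. This alone only gives $[f]$-type growth modulo a fixed affine function, so the statement must be read with care. The literal conclusion $\|f\|_{B_R}=O(R^{2-\delta_0})$ holds as long as the accumulated affine part grows at most like $R^{2-\delta_0}$. This is automatic, since affine growth $O(R)$ is dominated by $R^{2-\delta_0}$ because $\delta_0<1$. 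So the real task is to show that the affine parts $\ell_n$ do not grow faster than $C2^{n(2-\delta_0)}$ on $B_{2^n}$.

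\emph{Step 1 (consecutive comparison).} Fix $n\ge n_0$, where the hypothesis holds for all $n\ge n_0$. On $B_{2^n}$ both $\ell_n$ and $\ell_{n+1}$ approximate $f$, so
\[
 \|\ell_{n+1}-\ell_n\|_{B_{2^n}}\le K2^{n(2-\delta_0)}+K2^{(n+1)(2-\delta_0)}\le CK2^{n(2-\delta_0)}.
\]
Write $\ell_{n+1}-\ell_n=a_n+b_n\cdot x$. Evaluating at $0$ and at $\pm 2^n e$ gives $|a_n|\le CK2^{n(2-\delta_0)}$ and $|b_n|\le CK2^{n(1-\delta_0)}$.

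\emph{Step 2 (telescoping).} For $N\ge n_0$, write $\ell_N=\ell_{n_0}+\sum_{n_0\le k<N}(\ell_{k+1}-\ell_k)$ and estimate this on $B_{2^N}$. The constant parts sum geometrically to $CK2^{N(2-\delta_0)}$, because $2-\delta_0>0$. The linear parts contribute at most
\[
 2^N\sum_{k<N}|b_k|\le CK2^N2^{N(1-\delta_0)}=CK2^{N(2-\delta_0)},
\]
using $1-\delta_0>0$ so that this series is also dominated by its last term. Hence $\|\ell_N\|_{B_{2^N}}\le\|\ell_{n_0}\|_{B_{2^N}}+CK2^{N(2-\delta_0)}$. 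The first term is $O(2^N)$, which is $O(2^{N(2-\delta_0)})$ since $\delta_0<1$.

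\emph{Step 3 (conclusion).} Combining the two steps,
\[
 \|f\|_{B_{2^N}}\le\|f-\ell_N\|_{B_{2^N}}+\|\ell_N\|_{B_{2^N}}\le C2^{N(2-\delta_0)}.
\]
For a real $R$, choose $N$ with $2^{N-1}<R\le2^N$ and use monotonicity of the sup norm in the radius. For small $R$, continuity of $f$ gives boundedness.

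The only delicate point is Step~2. The strict inequalities $0<\delta_0<1$ are exactly what make both geometric series dominated by their largest terms. At $\delta_0=1$ the linear coefficients could accumulate logarithmically, which is why the lemma excludes that endpoint. No other obstacle is expected.
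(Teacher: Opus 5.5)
Your Steps 1--3 are correct and are essentially the paper's own proof: best affine approximants $\ell_n$ on $B_{2^n}$, bounds on the coefficients of $\ell_{n+1}-\ell_n$ obtained by evaluating at $0$ and $\pm2^ne$, geometric summation using $0<\delta_0<1$, and then monotonicity in $R$. The only blemish is your opening claim that the $\ell_n$ converge to a fixed $\ell_\infty$. This fails in general: for $f=|x|^{2-\delta_0}$ the best constants are forced to equal $2^{n(2-\delta_0)}/2$, so they diverge. The argument you actually give never uses that claim.
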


\begin{proof}
Let $\ell_n=a_n+b_n\cdot x$ attain the affine infimum. On $B_{2^n}$,
$\|\ell_{n+1}-\ell_n\|\le CK2^{n(2-\delta_0)}$. Evaluation at zero
and at opposite points gives
\[
 |a_{n+1}-a_n|\le CK2^{n(2-\delta_0)},\qquad
 |b_{n+1}-b_n|\le CK2^{n(1-\delta_0)}.
\]
Both geometric sums increase with $n$, and hence
$|a_n|=O(2^{n(2-\delta_0)})$, $|b_n|=O(2^{n(1-\delta_0)})$.
Adding the approximation error proves the dyadic bound; monotonicity
covers other radii.
\end{proof}

By \eqref{eq:second-corrector-tail-input} and Borel--Cantelli, the lemma
applies simultaneously to all correctors, by linearity and a finite
matrix basis, for any fixed $0<\delta_0<\delta_\phi$. Thus every fixed
affine representative is $O(R^{2-\delta_0})$, and every member of
$\mathcal H_2^A$ has at most quadratic growth.

\begin{corollary}[Second-order Liouville theorem]
\label{cor:second-liouville}
Almost surely, every entire $A$-harmonic function with
$\|u\|_{B_R}=O(R^{3-\varepsilon})$ for some $\varepsilon>0$ belongs to
$\mathcal H_2^A$. For each $0<\varepsilon\le1$, the space of solutions
with this growth is exactly $\mathcal H_2^A$ and has dimension
$d(d+3)/2$. One event suffices for all such $\varepsilon$ and solutions.
\end{corollary}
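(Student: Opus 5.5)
The plan is to work on one full-probability event: the intersection, over a countable sequence $\alpha_n\uparrow1$, of the events on which Theorem~\ref{thm:second-regularity} holds at $z=0$ with finite $r_{2,\alpha_n}(0)$. We also intersect with the Borel--Cantelli event on which $Y_{2^k}(0)\to0$ and all corrector representatives grow like $O(R^{2-\delta_0})$ (Lemma~\ref{lem:second-growth-representatives}), and with the construction event of Theorem~\ref{down:corrector-main}. Take $u$ entire $A$-harmonic with $\|u\|_{B_R}=O(R^{3-\varepsilon})$. Choose $\alpha=\alpha_n>1-\varepsilon$. Since $0\in\mathcal H_2^A$, we have $E_2(u;R,0)\le R^{-2}\|u\|_{B_R}=O(R^{1-\varepsilon})$. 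Hence $R^{-\alpha}E_2(u;R,0)\to0$, and \eqref{eq:second-large-scale} gives $E_2(u;r,0)=0$ for every $r\ge r_{2,\alpha}(0)$. By attainment (Lemma~\ref{lem:second-coercivity}), for each such $r$ there is $H_r\in\mathcal H_2^A$ with $u=H_r$ on $B_r$.

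The main obstacle is identifying these local approximants across radii without unique continuation. We use injectivity instead. For $r$ large enough that $Y_r(0)<c_d$, Lemma~\ref{lem:second-coercivity} says restriction to $B_r$ is injective on $\mathcal H_2^A$. If $r<r'$, then $H_{r'}|_{B_r}=u|_{B_r}=H_r|_{B_r}$. Both are elements of $\mathcal H_2^A$, and injectivity on $B_r$ forces $H_{r'}=H_r$ as global functions. Thus one $H$ serves for all large $r$, and $u=H$ on $\mathbb R^d$. The condition $Y_r(0)<c_d$ holds for all large dyadic $r$ on our event, so this step costs nothing further. Only the strict power gap $\varepsilon>0$ was used, via the choice $\alpha>1-\varepsilon$.

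For the converse when $0<\varepsilon\le1$, every member of $\mathcal H_2^A$ is exactly $A$-harmonic. By the growth remark after Lemma~\ref{lem:second-growth-representatives}, it is $O(R^2)\subset O(R^{3-\varepsilon})$. So the solution space equals $\mathcal H_2^A$.

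For the dimension, consider the linear map $(a,b,M)\mapsto a+b\cdot x+\tfrac12 x\cdot Mx+\phi_M$ on $\mathbb R\times\mathbb R^d\times\mathcal S_0$. It is surjective by definition. It is injective by the same coercivity argument: if the image vanishes, then \eqref{eq:second-corrected-coercivity} at a radius with $Y_R<c_d$ gives $M=0$, and the remaining affine part then vanishes. Hence the dimension is $1+d+\big(d(d+1)/2-1\big)=d(d+3)/2$, using that $\mathcal S_0$ is the kernel of the nonzero functional $M\mapsto\bar A:M$.

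Finally, the event does not depend on $\varepsilon$ or on $u$, because the sequence $\alpha_n$ is countable and fixed in advance.
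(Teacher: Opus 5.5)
Your proposal is correct and follows essentially the same route as the paper. Both intersect the second-order regularity events over countably many $\alpha$, use $0\in\mathcal H_2^A$ as the outer approximant to force $E_2(u;r,0)=0$, identify the local approximants through the injectivity in Lemma~\ref{lem:second-coercivity}, and count dimensions via the same coercivity. The only minor difference is that you obtain $Y_r(0)<c_d$ from a separate Borel--Cantelli event. The paper instead reads this off from the construction of $r_{2,\alpha}$: for dyadic $2^n\ge r_{2,\alpha}(0)$, the event $G_n(0)$ already contains $\{Y_{2^n}(0)\le\kappa\}$ with $\kappa<c_d$.
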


\begin{proof}
Intersect the events of Theorem~\ref{thm:second-regularity} over rational
$\alpha\in(0,1)$ and include the corrector growth event. Given
$\varepsilon>0$, choose rational $\alpha>\max\{0,1-\varepsilon\}$.
For fixed $r\ge r_{2,\alpha}(0)$, using zero as the outer approximant,
\[
 E_2(u;r,0)\le C(r/R)^\alpha E_2(u;R,0)
 \le C_uC r^\alpha R^{1-\varepsilon-\alpha}\longrightarrow0.
\]
Closure gives $H_r\in\mathcal H_2^A$ equal to $u$ on $B_r$.
Fix a dyadic $r_0\ge r_{2,\alpha}(0)$; then $Y_{r_0}\le\kappa<c_d$.
For every $r\ge r_0$, $H_r-H_{r_0}$ vanishes on $B_{r_0}$, so
Lemma~\ref{lem:second-coercivity} makes it zero globally. Hence
$u=H_{r_0}$ on the whole space.

The quadratic growth of members of $\mathcal H_2^A$ gives the reverse
inclusion for $\varepsilon\le1$. Coercivity makes the parametrization by
$(a,b,M)$ injective, and
\[
 \dim\mathcal H_2^A
 =(d+1)+\dim\ker(M\mapsto\bar A:M)
 =(d+1)+d(d+1)/2-1=d(d+3)/2.
\]
\end{proof}

The proof requires a fixed-power improvement over cubic growth and does
not classify all $o(R^3)$ solutions. The scale tail exponent is
$\min\{d/(4d-2),\gamma_2\}$; constants may depend on the coefficient law
and on the prescribed regularity exponent.

\section{Quantitative averages of the invariant density}
\label{down:sec-density-quantitative}

The cell estimate controls averages of $m(A-\bar A)$ through the adjoint
equation. We recover averages of $m-1$ by evolving the test function
under the homogenized heat semigroup and integrating its time
derivative. The construction of Section~\ref{sec:density} gives
\begin{equation}
 m>0,\qquad \mathbb E[m(0)]=1,\qquad
 \int mA:D^2\zeta=0\quad(\zeta\in C_c^\infty),\qquad
 \mathbb E[mA]=\bar A.
 \label{down:density-input}
\end{equation}
Throughout this section only the first probabilistic moment of $m$ is
used.

Armstrong, Fehrman and Lin \cite[Theorem~1.4 and Section~5.2]{AFL}
obtain algebraic rates for averages of $m$
and $mA$ under deterministic uniform ellipticity, uniform H{\"o}lder
continuity, and finite-range dependence. Their proof uses quantitative
parabolic Green-function estimates. Under H1--H5, the cited theorem
does not apply. We derive these estimates from the finite-cell
bound~\eqref{down:cell-input}, as outlined above.

\begin{theorem}[Algebraic rates for density and flux averages]
\label{down:density-quantitative-main}
Suppose~\eqref{down:cell-input} holds. For each fixed
$\psi\in C_c^\infty(\mathbb R^d)$, set $\psi_R(x)=R^{-d}\psi(x/R)$ and
\[
 Z_R(\psi)=\left|\int\psi_R(m-1)\right|
             +\left|\int\psi_R(mA-\bar A)\right|,
\]
with the Frobenius norm in the second term. There are deterministic
$C_\psi,R_\psi<\infty$ such that, for every deterministic $R\ge R_\psi$,
\begin{equation}
 \mathbb P\left(Z_R(\psi)>C_\psi R^{-\beta_3}\right)
 \le C_\psi R^{-\gamma_3},\qquad
 \beta_3=\frac{\delta_2}{4},\qquad
 \gamma_3=\min\left\{\gamma_2,\frac{\delta_2}{4}\right\}.
 \label{down:density-quantitative-tail}
\end{equation}
\end{theorem}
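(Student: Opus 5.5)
We first treat the flux term $\int\psi_R(mA-\bar A)$ using the adjoint equation together with the cell solutions, and afterwards recover the density term by a heat-semigroup argument.

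\emph{Flux term.} Fix $M$ with $|M|\le1$. Let $\chi$ be a cutoff equal to one on $B_{R_1}$, where the support of $\psi_R$ lies in $B_{R_1}$ and $R_1\asymp R$. Test the adjoint identity \eqref{down:density-input} with $\zeta=\chi\,\psi_R\,\cdots$. The clean way to do this is through the cell solution $w=w_{M,R'}$ on a ball $B_{R'}$ with $R'=KR$. Since $-A:D^2w=(A-\bar A):M$, we have
\[
 \int\psi_R\,m(A-\bar A):M=-\int\psi_R\,mA:D^2w .
\]
Next integrate by parts against the adjoint equation with test function $\psi_Rw$. This function is smooth in the interior (Lemma~\ref{lem:local-regularity}) and compactly supported when $K\ge2$. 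The identity $\int mA:D^2(\psi_Rw)=0$ gives
\[
 \int\psi_R\,mA:D^2w=-\int mA:(w\,D^2\psi_R+2D\psi_R\otimes Dw).
\]
The gradient term is a problem because we have no control on $Dw$. To remove it, use the identity once more with $\zeta$ chosen to absorb first derivatives. Alternatively, mollify $w$ at scale $R^{1-\varepsilon}$ in a way compatible with the adjoint equation. I expect the cleanest route is to avoid gradients entirely: replace $\psi_R$ by its $\bar A$-heat evolution, then use $D^2$ of the test function only, and exploit $\int mA:D^2(\psi w)$ only through $L^\infty$ bounds on $w$ and derivative bounds on the test function.

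Concretely, the first-moment bound $\mathbb E\int_{B_{KR}}m\le CR^d$ combined with Markov gives $\int_{B_{KR}}m\le R^{d+\beta}$ off probability $R^{-\beta}$. The terms with $w\,D^2\psi_R$ are then bounded by $\|w\|_\infty R^{-d-2}\int_{B_{KR}} m\le CR^{-\delta_2+\beta}$ on the cell event. For the gradient term, interpolate $\|Dw\|$ on a slightly smaller ball, using affine excess decay or simply a discrete second difference. Replacing $D\psi_R\otimes Dw$ by difference quotients of step $h=R^{1-\delta_2/2}$ costs $R^{-1}\|w\|/h$ plus $h\,\|D^3\psi_R\|$ times a bound on $Dw$. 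I expect this step to be the main obstacle; I would first try to write the gradient term as $\int mA:D^2(\Phi)$ for an explicit $\Phi$ built from $w$ and $\psi_R$, thereby avoiding pointwise gradients. Balancing the errors yields the exponent $\delta_2/4$.

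\emph{Density term.} Let $\psi^t=e^{t\bar A:D^2}\psi_R$. Then
\[
 \psi_R=\psi^T-\int_0^T\bar A:D^2\psi^t\,dt.
\]
Integrating against $m-1$, and using $\int\bar A:D^2\psi^t=0$, gives
\[
 \int\psi_R(m-1)=\int\psi^T(m-1)-\int_0^T\!\!\int m\,\bar A:D^2\psi^t.
\]
The adjoint equation lets us replace $m\bar A$ by $m(\bar A-A)$ in the second term. That term is then a flux average of the kind already estimated, with test functions $D^2\psi^t$ at scale $\sqrt{R^2+t}$; integrating over $t$ gives a convergent bound. Choose $T=R^{2+2\beta}$ so that $\psi^T$ is spread at scale $R^{1+\beta}$. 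With amplitude $R^{-d(1+\beta)}$, the first term is $O(R^{-d\beta})$ times a normalized average of $m$ over the ball $B_{R^{1+\beta}}$, which is $O(1)$ off probability controlled by Markov. Each of these bounds is probabilistic, applying only off an exceptional event whose probability Markov and the cell estimate control. Taking a union over a dyadic family of $t$-scales, with probabilities summable via \eqref{down:cell-input}, and optimizing $\beta$ against $\delta_2$, produces $\beta_3=\delta_2/4$ and $\gamma_3=\min\{\gamma_2,\delta_2/4\}$.
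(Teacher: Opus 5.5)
\textbf{Gap 1: the gradient term in the flux estimate.} Your flux computation is correct up to $\int\psi_R\,mA:D^2w=-\int mA:(w\,D^2\psi_R+2D\psi_R\otimes Dw)$. The gradient term is then left open, and none of the routes you list can close it. Under H1--H5 there is no pointwise or moment control of $Dw$ or $D^2w$ at the unit scale, because the local Schauder constants depend on the realization. The weight $m$ has no quantitative regularity onto which difference quotients could be moved. Affine excess decay only gives $L^\infty$ approximation above a random minimal scale.

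The missing idea is a weighted Caccioppoli inequality for the adjoint density, Lemma~\ref{down:adjoint-energy}. Test the adjoint identity with $\chi^2w^2$, which is admissible since $mA\in L^1_{\rm loc}$, and use $A:D^2(w^2)=2wA:D^2w+2Dw\cdot ADw$. This gives $\int_{B_r}m\,Dw\cdot ADw\le C_d(\|f\|_\infty\|w\|_\infty+\|w\|_\infty^2r^{-2})\int_{B_{2r}}m$. Weighted Cauchy--Schwarz then bounds the gradient term by $Cr^{1-\delta_2/2}\|D\varphi\|_\infty\int_{B_{2r}}m$, so the flux error is $O(R^{-\delta_2/2}\fint_{B_{2R}}m)$.

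The exponent $\delta_2/4$ comes from this square root. You pay $R^{\varsigma}$ in the Markov bound $\fint m\le R^{\varsigma}$, at probability cost $R^{-\varsigma}$, and take $\varsigma=\delta_2/4$. Without the energy estimate your `balancing' has nothing to balance.

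\textbf{Gap 2: the terminal term in the density step.} The heat-semigroup decomposition and the replacement of $m\bar A$ by $m(\bar A-A)$ via the adjoint equation are what the paper does. The terminal term, however, is handled incorrectly. The function $\psi^T=\bar p_T*\psi_R$ has amplitude $\asymp T^{-d/2}$ and mass $\int\psi$. So $\int\psi^T(m-1)$ is a priori only of size $\fint_{B_{C\sqrt T}}(m+1)$, that is $O(1)$, not $O(R^{-d\beta})$. Markov's inequality controls $\int\psi^Tm$ but not its deviation from $\int\psi$, and that deviation is exactly the statement being proved, at scale $\sqrt T$.

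The paper instead sends $T\to\infty$. It imposes the cell bound and $\fint_{B_{2r_n}}m\le r_n^{\varsigma}$ at every dyadic radius $r_n=2^nR$; the failure probabilities sum to $\lesssim R^{-\gamma_2}+R^{-\varsigma}$. On that event one gets $|\frac{d}{dt}\int m\psi^t|\le C(R^2+t)^{-1-\kappa/2}$ with $\kappa=\delta_2/2-\varsigma$, which is integrable on $[0,\infty)$. The endpoint $\lim_{t\to\infty}\int m\psi^t=\int\psi$ comes from the qualitative ergodic theorem (Lemma~\ref{down:heat-endpoint}) and needs no rate.

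Note also that $D^2\psi^t$ is not compactly supported. Applying the flux bound to it requires a dyadic annular partition with Gaussian decay, together with the polynomial mass bound \eqref{down:polynomial-mass}, to justify both the adjoint identity and the summation.
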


\subsection{From cell estimates to weighted averages}

\begin{lemma}[Adjoint energy estimate]\label{down:adjoint-energy}
Let $r\ge1$ and $w\in C^2(B_{2r})$ satisfy
$-A:D^2w=f$, $\|f\|_{B_{2r}}\le C_f$, and $\|w\|_{B_{2r}}\le W$.
Then
\begin{equation}
 \int_{B_r}m\,Dw\cdot A Dw
 \le C_d(C_fW+W^2r^{-2})\int_{B_{2r}}m.
 \label{down:energy-estimate}
\end{equation}
\end{lemma}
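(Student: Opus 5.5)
Test the adjoint equation against $\zeta=\eta^2w^2$, where $\eta\in C_c^\infty(B_{2r})$ is a cutoff with $0\le\eta\le1$, $\eta=1$ on $B_r$, $|D\eta|\le C/r$ and $|D^2\eta|\le C/r^2$. By \eqref{down:density-input}, $\int mA:D^2\zeta=0$ for smooth compactly supported $\zeta$. We have $w\in C^2$ and $m$ is locally H\"older and positive, so this identity extends to $\zeta\in C_c^2(B_{2r})$ by mollifying $\zeta$: the $C^2$ norms converge uniformly on a fixed compact set, and $mA\in L^1_{\rm loc}$.

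Expand the Hessian of the test function:
\[
 D^2(\eta^2w^2)=2\eta^2Dw\otimes Dw+2\eta^2wD^2w
 +4\eta w\,(D\eta\otimes Dw+Dw\otimes D\eta)+w^2D^2(\eta^2).
\]
Contract with $mA$ and integrate. The adjoint identity then gives
\[
 2\int m\eta^2Dw\cdot ADw
 =2\int m\eta^2wf-8\int m\eta w\,D\eta\cdot ADw-\int mw^2A:D^2(\eta^2),
\]
where we used $A:D^2w=-f$ and the symmetry of $A$.

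Bound the three terms on the right separately.

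\begin{enumerate}
\item The first term is at most $2C_fW\int_{B_{2r}}m$.
\item The last term is at most $CW^2r^{-2}\int_{B_{2r}}m$, since $A\le I$ and $|D^2(\eta^2)|\le Cr^{-2}$.
\item For the cross term, use Cauchy--Schwarz in the $A$-inner product together with Young's inequality:
\[
 8|\eta wD\eta\cdot ADw|\le\eta^2Dw\cdot ADw+16w^2D\eta\cdot AD\eta .
\]
After multiplying by $m$ and integrating, the first piece is absorbed into the left-hand side. The second piece is at most $CW^2r^{-2}\int_{B_{2r}}m$, again using $A\le I$.
\end{enumerate}

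All integrals are finite because $m\in L^1_{\rm loc}$ and $Dw$ is continuous on the support of $\eta$. Since $\eta=1$ on $B_r$, collecting the bounds yields \eqref{down:energy-estimate}.

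The only delicate point is justifying the use of a $C^2$ (rather than $C_c^\infty$) test function, together with the absorption step. Both are routine once local integrability of $m$ and the symmetry and boundedness $0<A\le I$ are used. No lower ellipticity bound enters, which is why the estimate is expressed with the weight $mA$.
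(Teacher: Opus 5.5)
Your proposal is correct and matches the paper's proof essentially line by line. It uses the same test function $\eta^2w^2$, extends the adjoint identity to $C_c^2$ tests via $mA\in L^1_{\rm loc}$, applies the same Cauchy--Schwarz/Young bound $8|\eta w\,D\eta\cdot ADw|\le\eta^2Dw\cdot ADw+16w^2D\eta\cdot AD\eta$ with absorption, and controls the remaining cutoff terms by $A\le I$.
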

\begin{proof}
Choose $\chi\in C_c^\infty(B_{2r})$ equal to $1$ on $B_r$, with
$0\le\chi\le1$, $|D\chi|\le C/r$, and $|D^2\chi|\le C/r^2$.
Since $mA\in L^1_{\rm loc}$, approximation in $C^2$ on a fixed compact
support extends the adjoint identity to $C_c^2$ tests, in particular
$\chi^2w^2$. Writing $\mathcal G=A:D^2$, its product rule is
\begin{align*}
 0={}&-2\int m\chi^2wf+2\int m\chi^2Dw\cdot A Dw\\
 &+\int mw^2(2\chi\mathcal G\chi+2D\chi\cdot A D\chi)
       +8\int m\chi wD\chi\cdot A Dw.
\end{align*}
Matrix Cauchy--Schwarz gives
\[
 8|\chi wD\chi\cdot A Dw|
 \le\chi^2Dw\cdot A Dw+16w^2D\chi\cdot A D\chi.
\]
Absorb the energy term and use $A\le I$ to bound all remaining cutoff
terms by $C_dW^2r^{-2}\int_{B_{2r}}m$. The source term contributes
$2C_fW\int_{B_{2r}}m$, proving the claim.
\end{proof}

\begin{lemma}[Weighted coefficient error]\label{down:cell-to-weighted-error}
Set $J=m(A-\bar A)$. Suppose that, for $r\ge1$ and all $|M|\le1$,
\begin{equation}
 \|w_{M,2r}\|_\infty\le Kr^{2-\delta_2}.
 \label{down:local-cell-good}
\end{equation}
Then for every $\varphi\in C_c^2(B_r)$ and $M\in\mathbb S^d$ with
$|M|\le1$,
\begin{equation}
 \left|\int\varphi J:M\right|
 \le Cr^{-\delta_2/2}\left(\fint_{B_{2r}}m\right)
       \max_{j=1,2}\bigl(r^{d+j}\|D^j\varphi\|_\infty\bigr),
 \label{down:weighted-error}
\end{equation}
where $C$ depends on $d$ and $K$.
\end{lemma}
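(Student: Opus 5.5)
The key identity comes from testing the adjoint equation against $\varphi w$, where $w=w_{M,2r}$. Since $-A:D^2w=(A-\bar A):M$ and $\varphi w\in C^2_c(B_r)$ (approximate as in Lemma~\ref{down:adjoint-energy}), I will use
\[
 0=\int m\,A:D^2(\varphi w)
 =\int m\varphi A:D^2w+2\int m\,D\varphi\cdot A Dw+\int m\,w\,A:D^2\varphi .
\]
The first term on the right equals $-\int\varphi J:M$. Hence
\[
 \int\varphi J:M=2\int m\,D\varphi\cdot ADw+\int m\,w\,A:D^2\varphi .
\]
Note that the quadratic $\frac12x\cdot Mx$ is not needed; the cell solution itself serves as the test. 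The point is that the weighted coefficient error is bounded by first and zeroth order quantities of a small function $w$.

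The zeroth order term is immediate from \eqref{down:local-cell-good} and $A\le I$. It is at most $\|D^2\varphi\|_\infty Kr^{2-\delta_2}\int_{B_r}m$. Writing $\int_{B_r}m\le C_dr^d\fint_{B_{2r}}m$, this becomes $C\,r^{-\delta_2}\bigl(r^{d+2}\|D^2\varphi\|_\infty\bigr)\fint_{B_{2r}}m$, which is better than required.

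For the gradient term I apply the weighted Cauchy--Schwarz inequality with respect to $mA$:
\[
 \Bigl|\int m\,D\varphi\cdot ADw\Bigr|\le\|D\varphi\|_\infty\Bigl(\int_{B_r}m\Bigr)^{1/2}\Bigl(\int_{B_r}m\,Dw\cdot ADw\Bigr)^{1/2}.
\]
The energy factor is controlled by Lemma~\ref{down:adjoint-energy}. The lemma is stated on $B_{2r}$ for data on $B_{2r}$, and $w$ is defined on $B_{2r}$; if a margin is needed I will apply it with $r$ replaced by $r/2$ or use the cell at radius $2r$ as given. The inputs are $C_f=|(A-\bar A):M|\le2\sqrt d$ and $W=Kr^{2-\delta_2}$. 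This gives $\int_{B_r}m\,Dw\cdot ADw\le C(r^{2-\delta_2}+r^{2-2\delta_2})\int_{B_{2r}}m\le Cr^{2-\delta_2}\int_{B_{2r}}m$.

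Combining these, the gradient term is at most $C\|D\varphi\|_\infty r^{1-\delta_2/2}\int_{B_{2r}}m$, which equals $C r^{-\delta_2/2}\bigl(r^{d+1}\|D\varphi\|_\infty\bigr)\fint_{B_{2r}}m$. This is exactly the exponent $\delta_2/2$ in \eqref{down:weighted-error}. The square root of the energy is the only source of the halved exponent, so I expect no real obstacle. The points to check are the admissibility of $\varphi w$ as a test function ($w\in C^2$ by Lemma~\ref{down:local-regularity}, with compact support from $\varphi$), and the radius bookkeeping between $B_r$, $B_{2r}$ and the domain of $w$.
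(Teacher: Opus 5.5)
Your proposal is correct and matches the paper's proof: you test the adjoint equation with $\varphi w_{M,2r}$, bound the zeroth-order term directly by the cell bound, and bound the gradient term by weighted Cauchy--Schwarz combined with Lemma~\ref{down:adjoint-energy}. The hedge about radii is unnecessary, because $w_{M,2r}$ is $C^2$ in the open ball $B_{2r}$ and $r\ge1$, so that lemma applies directly at radius $r$.
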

\begin{proof}
For $w=w_{M,2r}$, the source $f=(A-\bar A):M$ is bounded by $C_d$.
Local regularity and Lemma~\ref{down:adjoint-energy} yield
\begin{equation}
 \int_{B_r}mDw\cdot A Dw
 \le C(r^{2-\delta_2}+r^{2-2\delta_2})\int_{B_{2r}}m
 \le Cr^{2-\delta_2}\int_{B_{2r}}m.
 \label{down:cell-energy}
\end{equation}
Test the adjoint equation with $\varphi w$ to obtain
\begin{equation}
 \int\varphi J:M=\int mwA:D^2\varphi
                  +2\int mD\varphi\cdot A Dw.
 \label{down:weighted-error-identity}
\end{equation}
The first term is bounded by
$Cr^{2-\delta_2}\|D^2\varphi\|_\infty\int_{B_{2r}}m$.
Weighted Cauchy--Schwarz and~\eqref{down:cell-energy} bound the second
by $Cr^{1-\delta_2/2}\|D\varphi\|_\infty\int_{B_{2r}}m$.
These two bounds imply~\eqref{down:weighted-error}.
\end{proof}

We first prove the theorem for $\operatorname{supp}\psi\subset B_1$.
Fix
\[
 0<\varsigma<\delta_2/2,\qquad
 \kappa=\delta_2/2-\varsigma,\qquad r_n=2^nR\quad(n\ge0).
\]
For a sufficiently large deterministic $K$, let $\mathcal E_R$ be the
event on which, for every $n\ge0$,
\begin{equation}
 \sup_{|M|\le1}\|w_{M,2r_n}\|_\infty\le Kr_n^{2-\delta_2},
 \qquad \fint_{B_{2r_n}}m\le r_n^{\varsigma}.
 \label{down:all-scales-event}
\end{equation}
Stationarity and $\mathbb E m(0)=1$ give
$\mathbb E\fint_{B_{2r_n}}m=1$. Markov's inequality, the cell estimate,
and a union bound therefore imply
\begin{equation}
 \mathbb P(\mathcal E_R^c)
 \le C\sum_{n\ge0}(r_n^{-\gamma_2}+r_n^{-\varsigma})
 \le C(R^{-\gamma_2}+R^{-\varsigma}).
 \label{down:all-scales-probability}
\end{equation}
This step requires no independence between $m$, the cell solutions, or
the events at different radii. On $\mathcal E_R$,
\begin{equation}
 \left|\int\varphi J:M\right|
 \le Cr_n^{-\kappa}
       \max_{j=1,2}(r_n^{d+j}\|D^j\varphi\|_\infty),
 \quad \varphi\in C_c^2(B_{r_n}),\quad |M|\le1,
 \label{down:weighted-error-good-event}
\end{equation}
and comparison with the dyadic radii gives
\begin{equation}
 \int_{B_L}m\le CL^{d+\varsigma}\qquad(L\ge R).
 \label{down:polynomial-mass}
\end{equation}
The latter bound justifies the noncompact heat-kernel tests below.

\subsection{The heat-semigroup argument}

Let $\bar p_t$ be the heat kernel of $\bar A:D^2$:
\[
 \bar p_t(x)=(4\pi t)^{-d/2}(\det\bar A)^{-1/2}
 \exp\left(-\frac{x\cdot\bar A^{-1}x}{4t}\right).
\]
Set $\varphi_t=\bar p_t*\psi_R$ for $t>0$, $\varphi_0=\psi_R$, and
$S(t)=\sqrt{R^2+t}$.

\begin{lemma}[Heat-kernel tests]\label{down:heat-test-bounds}
For $0\le j\le4$ and $t\ge0$,
\begin{equation}
 |D^j\varphi_t(x)|\le C_\psi S(t)^{-d-j}
                  \exp\left(-c\frac{|x|^2}{S(t)^2}\right).
 \label{down:heat-test-gaussian}
\end{equation}
The constants depend on $\psi$, $d$, and the ellipticity of $\bar A$.
\end{lemma}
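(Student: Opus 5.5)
The plan is to write $D^j\varphi_t$ as a convolution and treat two regimes separately: small times $t\le R^2$, where $S(t)\asymp R$, and large times $t\ge R^2$, where $S(t)\asymp\sqrt t$. Since $\bar A$ is a fixed positive definite matrix, the affine change of variables $x=\bar A^{1/2}y$ turns $\bar p_t$ into the standard heat kernel with time $t$, up to the factor $(\det\bar A)^{-1/2}$. This changes Gaussian constants only by factors depending on the ellipticity of $\bar A$. Throughout, write $\psi_R(x)=R^{-d}\psi(x/R)$ with $\operatorname{supp}\psi\subset B_1$; the general case of compactly supported $\psi$ changes only constants.

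\emph{Small times, $t\le R^2$.} Put all derivatives on $\psi_R$:
\[
 D^j\varphi_t=\bar p_t*D^j\psi_R,\qquad |D^j\psi_R|\le C_\psi R^{-d-j}\mathbf 1_{B_R}.
\]
Since $\bar p_t$ is a probability density, this gives the amplitude bound $C_\psi R^{-d-j}$. For the Gaussian factor, take $|x|\ge2R$ and $y\in B_R$, so that $|x-y|\ge|x|/2$. Then
\[
 |D^j\varphi_t(x)|\le C R^{-d-j}\sup_{|z|\ge|x|/2}\int_{B_R}\bar p_t(z)\,dy
 \le CR^{-j}\,t^{-d/2}e^{-c|x|^2/t}.
\]
Using $t\le R^2$ and $|x|\ge2R$, we have $t^{-d/2}e^{-c|x|^2/t}\le C R^{-d}e^{-c'|x|^2/R^2}$. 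To see this, write $\lambda=R^2/t\ge1$: then $\lambda^{d/2}e^{-c\lambda|x|^2/R^2}\le Ce^{-c'|x|^2/R^2}$ because $|x|^2/R^2\ge4$. For $|x|\le2R$ the Gaussian factor is bounded below, so the amplitude bound suffices. At $t=0$ the estimate is immediate from the support of $\psi_R$.

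\emph{Large times, $t\ge R^2$.} Put all derivatives on the kernel: $D^j\varphi_t=(D^j\bar p_t)*\psi_R$, with $|D^j\bar p_t(z)|\le C t^{-(d+j)/2}e^{-c|z|^2/t}$. Since $\|\psi_R\|_{L^1}\le C_\psi$ and $\operatorname{supp}\psi_R\subset B_R\subset B_{\sqrt t}$, the elementary inequality $|x-y|^2\ge|x|^2/2-|y|^2$ gives $e^{-c|x-y|^2/t}\le e^{c}e^{-c|x|^2/(2t)}$. Hence $|D^j\varphi_t(x)|\le Ct^{-(d+j)/2}e^{-c|x|^2/t}$, and $S(t)^2\le2t$ turns this into the form of~\eqref{down:heat-test-gaussian}.

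The only mildly delicate step is the Gaussian tail in the small-time regime, because there the time scale $t$ and the Gaussian scale $S(t)\approx R$ differ. That step is handled by the support separation $|x|\ge2R$ together with monotonicity of $\lambda\mapsto\lambda^{d/2}e^{-c\lambda a}$ for $a\ge4$. Everything else is a routine convolution estimate, and all constants depend only on $\psi$, $d$, $j\le4$ and the ellipticity of $\bar A$.
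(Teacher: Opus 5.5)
Your proposal is correct and follows the paper's proof essentially step for step. For $t\le R^2$ you put the derivatives on $\psi_R$ and use $u^{-d/2}e^{-cK/u}\le Ce^{-c'K}$ off $B_{2R}$, with $u=t/R^2\le1$ and $K=|x|^2/R^2>4$. For $t\ge R^2$ you put them on the Gaussian and use $|x-y|^2\ge|x|^2/2-|y|^2$.

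One small wording fix: $\lambda\mapsto\lambda^{d/2}e^{-c\lambda a}$ need not be monotone on $[1,\infty)$. The bound you need still follows by splitting $e^{-c\lambda a}=e^{-c\lambda a/2}e^{-c\lambda a/2}\le e^{-2c\lambda}e^{-ca/2}$, which uses $a\ge4$ for the first factor and $\lambda\ge1$ for the second, together with $\sup_{\lambda\ge1}\lambda^{d/2}e^{-2c\lambda}<\infty$.
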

\begin{proof}
For $t\ge R^2$, differentiate the Gaussian and use $|y|\le R\le\sqrt t$
on the support of $\psi_R$, together with
$|x-y|^2\ge|x|^2/2-|y|^2$ and $\|\psi_R\|_1=\|\psi\|_1$.
This gives $Ct^{-(d+j)/2}e^{-c|x|^2/t}$.
For $0<t\le R^2$, place derivatives on $\psi_R$. When $|x|\le2R$,
the bound follows from
$\|D^j\varphi_t\|_\infty\le C_\psi R^{-d-j}$. When $|x|>2R$,
\[
 |D^j\varphi_t(x)|\le C_\psi R^{-j}t^{-d/2}e^{-c|x|^2/t}
 \le C_\psi R^{-d-j}e^{-c'|x|^2/R^2}.
\]
The last inequality follows by putting $u=t/R^2\le1$,
$K=|x|^2/R^2>4$, and using
$u^{-d/2}e^{-cK/u}\le Ce^{-c'K}$.
At $t=0$, compact support gives the result directly.
\end{proof}

On $\mathcal E_R$, the function $F(t)=\int m\varphi_t$ is absolutely
convergent and differentiable on bounded time intervals, by
\eqref{down:polynomial-mass} and~\eqref{down:heat-test-gaussian}.
To use the adjoint equation, take a cutoff $\eta_L$ equal to $1$ on
$B_L$ and supported in $B_{2L}$. The product-rule error in testing
with $\eta_L\varphi_t$ is bounded by
\[
 C\int_{B_{2L}\setminus B_L}m
       (L^{-1}|D\varphi_t|+L^{-2}|\varphi_t|).
\]
For fixed $t$ it tends to zero by Gaussian decay and the polynomial mass
bound. The main term converges absolutely, so
\begin{equation}
 F'(t)=\int m\bar A:D^2\varphi_t
      =\int m(\bar A-A):D^2\varphi_t
      =-\int J:D^2\varphi_t.
 \label{down:heat-derivative-identity}
\end{equation}

\begin{lemma}[Integrable time derivative]\label{down:heat-derivative-estimate}
On $\mathcal E_R$, for every $t\ge0$,
\begin{equation}
 |F'(t)|\le C_\psi S(t)^{-2-\kappa},\qquad
 \int_0^\infty|F'(t)|dt\le\frac{2C_\psi}{\kappa}R^{-\kappa}.
 \label{down:heat-derivative-bound}
\end{equation}
\end{lemma}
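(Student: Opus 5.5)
We apply the weighted coefficient error estimate \eqref{down:weighted-error-good-event} to the test $\varphi_t$ through the identity \eqref{down:heat-derivative-identity}. Since $\varphi_t$ is not compactly supported, we first decompose it. Fix $t\ge0$ and let $n_0\ge0$ be the smallest integer with $r_{n_0}=2^{n_0}R\ge S(t)$. Then $r_{n_0}\le 2S(t)$, because $S(t)\ge R$. Choose a smooth partition of unity $1=\sum_{n\ge n_0}\chi_n$ with the following properties: $\chi_{n_0}$ is supported in $B_{r_{n_0}}$, and for $n>n_0$ the function $\chi_n$ is supported in the annulus $B_{r_n}\setminus B_{r_{n-2}}$, with $|D^j\chi_n|\le Cr_n^{-j}$ for $j\le2$. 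We then write $\int J:D^2\varphi_t=\sum_n\int\chi_nJ:D^2\varphi_t$. Because $J:D^2\varphi_t=\sum_{ik}(D_{ik}\varphi_t)J_{ik}$, we split each piece into finitely many terms of the form $\int\varphi J:M$ with a fixed basis matrix $M$ and $\varphi=\chi_n D_{ik}\varphi_t\in C_c^2(B_{r_n})$.

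The next step bounds the derivatives of each piece using Lemma~\ref{down:heat-test-bounds}, which allows derivatives up to order four of $\varphi_t$. On the support of $\chi_n$ with $n>n_0$ we have $|x|\ge r_{n-2}$. Combining this with $|D^{j}(\chi_nD^2\varphi_t)|\le C\sum_{i\le j}r_n^{-(j-i)}|D^{2+i}\varphi_t|$ and $S(t)\le r_n$ gives
\[
 \|D^j(\chi_nD^2\varphi_t)\|_\infty\le C_\psi S(t)^{-d-2-j}\exp\bigl(-c\,r_n^2/S(t)^2\bigr),\qquad j=1,2.
\]
For $n=n_0$ the same bound holds without the exponential factor, which is harmless there. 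Inserting these bounds into \eqref{down:weighted-error-good-event} yields
\[
 \Bigl|\int\chi_nJ:D^2\varphi_t\Bigr|\le C_\psi r_n^{-\kappa}\max_{j=1,2}\bigl(r_n^{d+j}S(t)^{-d-2-j}\bigr)e^{-c(r_n/S(t))^2}
 \le C_\psi S(t)^{-2-\kappa}\,\rho_n^{d+2}e^{-c\rho_n^2},
\]
where $\rho_n=r_n/S(t)\ge1$, and we used $r_n^{-\kappa}\le S(t)^{-\kappa}$. The numbers $\rho_n$ grow geometrically (they double with $n$), so the sum over $n\ge n_0$ is bounded by a constant. This gives the pointwise bound $|F'(t)|\le C_\psi S(t)^{-2-\kappa}$.

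For the time integral we compute $\int_0^\infty(R^2+t)^{-1-\kappa/2}\,dt=\tfrac{2}{\kappa}R^{-\kappa}$, which is the second assertion of \eqref{down:heat-derivative-bound}.

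The main obstacle is the bookkeeping of the partition of unity. One has to check that every piece is a legitimate $C_c^2(B_{r_n})$ test at a dyadic radius $r_n\ge R$ covered by the event $\mathcal E_R$, so that the estimate holds simultaneously for all $n$ and $t$ without new exceptional sets. One also has to check that the Gaussian factor beats the polynomial weight $r_n^{d+j}$ coming from \eqref{down:weighted-error-good-event}. Both points follow from $r_n\ge S(t)$ and the geometric spacing of the radii. When $t\le R^2$ we have $S(t)\asymp R$ and $n_0\le1$, so this case needs no separate treatment.
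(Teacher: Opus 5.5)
Your argument is correct and is essentially the paper's proof. Like the paper, you use a dyadic partition of unity adapted to $S(t)$, apply \eqref{down:weighted-error-good-event} to each piece $\chi_n\,(D^2\varphi_t:E)$, sum the pieces using the Gaussian factor, and integrate $(R^2+t)^{-1-\kappa/2}$. The differences are cosmetic (you index from the first $r_{n_0}\ge S(t)$ and drop the extra decay $2^{-k\kappa}$), and the step you leave implicit, exchanging the sum with the integral, follows as in the paper from $|J|\le C_dm$ and the polynomial mass bound \eqref{down:polynomial-mass}.
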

\begin{proof}
Choose $n$ so that $r_n\le S(t)<2r_n$ and a radially nonincreasing
$\xi\in C_c^\infty(B_1)$ equal to $1$ on $B_{1/2}$, with $0\le\xi\le1$.
The partition
\[
 \chi_0=\xi(\cdot/r_{n+2}),\qquad
 \chi_k=\xi(\cdot/r_{n+k+2})-\xi(\cdot/r_{n+k+1})\quad(k\ge1)
\]
satisfies $\sum\chi_k=1$, $\chi_k\ge0$, and
$\operatorname{supp}\chi_k\subset B_{r_{n+k+2}}$. For $k\ge1$ its
support lies in $|x|\ge2^{k-1}S(t)$, and
$|D^j\chi_k|\le C_jr_{n+k+2}^{-j}$.
For a Frobenius orthonormal basis $(E^a)$ of $\mathbb S^d$, set
$\Phi_{a,k}=\chi_k(D^2\varphi_t:E^a)$ and $r=r_{n+k+2}$. The product
rule and~\eqref{down:heat-test-gaussian} give
\begin{equation}
 \max_{j=1,2}r^{d+j}\|D^j\Phi_{a,k}\|_\infty
 \le C_\psi S(t)^{-2}2^{k(d+2)}e^{-c4^k}.
 \label{down:annular-test-estimate}
\end{equation}
Indeed, a term $D^b\chi_kD^{j-b+2}\varphi_t$ contributes at most
$S(t)^{-2}(r/S(t))^{d+j-b}$ after multiplication by $r^{d+j}$;
$j-b\le2$, and the support gives the Gaussian factor. For $k=0$,
absorb that factor into the constant.
Applying~\eqref{down:weighted-error-good-event} and summing over $a$
yields
\[
 \left|\int\chi_kJ:D^2\varphi_t\right|
 \le C_\psi S(t)^{-2-\kappa}
                 2^{k(d+2-\kappa)}e^{-c4^k}.
\]
This is summable in $k$. Since $|J|\le C_dm$, the mass bound also
justifies summing the decomposed integrals. Equation
\eqref{down:heat-derivative-identity} proves the first estimate, and
\[
 \int_0^\infty(R^2+t)^{-1-\kappa/2}dt=\frac2\kappa R^{-\kappa}
\]
proves the second.
\end{proof}

\begin{lemma}[The infinite-time limit]\label{down:heat-endpoint}
On one probability-one event, for every fixed $R$ and
$\psi\in C_c^\infty$,
\begin{equation}
 \lim_{t\to\infty}\int m\bar p_t=1,\qquad
 \lim_{t\to\infty}\int m(\bar p_t*\psi_R)=\int\psi.
 \label{down:heat-endpoint-limit}
\end{equation}
\end{lemma}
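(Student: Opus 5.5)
The second limit is a direct consequence of the first, so I would prove the first and then transfer it. The idea is that $\int m\bar p_t$ is a spatial average of $m$ against a Gaussian of width $\sqrt t$. Lemma~\ref{down:density-ergodic} already provides the probability-one event on which such averages converge.

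\emph{Step 1: the first limit.} Fix the probability-one event of Lemma~\ref{down:density-ergodic}; in particular $\fint_{B_\rho}m\to1$ as $\rho\to\infty$ on it. Write $\bar p_t(x)=t^{-d/2}g(x/\sqrt t)$, where $g=\bar p_1$ is a fixed Gaussian. I would use the layer-cake representation for the radially decreasing profile in the $\bar A^{-1}$-metric. Equivalently, decompose $g$ into dyadic shells, or approximate $g$ from above and below by finite combinations of indicator functions of balls $B_{k\epsilon}$ plus a Gaussian tail.

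Concretely, $g(y)=\int_0^\infty \mathbf 1_{\{g>s\}}(y)\,ds$, and the level sets $\{g>s\}$ are ellipsoids $E_\rho=\bar A^{1/2}B_\rho$. Hence
\[
\int m\bar p_t=\int_0^\infty |E_{\rho}|\Big(\fint_{\sqrt t E_{\rho}} m\Big)\,\mu(d\rho),
\]
where $\mu$ is a finite measure with $\int|E_\rho|\,\mu(d\rho)=\int g=1$. Averages over ellipsoids are covered by the ergodic limits as well: the proof of Lemma~\ref{down:density-ergodic} handles every continuous compactly supported $\psi$, and indicators of ellipsoids are sandwiched between such functions up to an arbitrarily small volume error, since boundaries are null. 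Thus, for each fixed $\rho>0$, $\fint_{\sqrt tE_\rho}m\to1$.

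To pass the limit under $\mu$, I need domination. For small $\rho$, I would cut $\rho\le\rho_0$ and bound that portion by $\int_{\{g\ge g(\rho_0)\}}g$ times a sup of averages over a large ball. This works because $\fint_{\sqrt t E_\rho}m\le(\rho_1/\rho)^d\fint_{\sqrt tE_{\rho_1}}m$, and the contribution $\int_0^{\rho_0}|E_\rho|\cdot\rho^{-d}\,\mu(d\rho)$ is small. This is the step where care is needed. For large $\rho$, I would use the Gaussian tail together with the bound $\sup_{\sigma\ge\sigma_0}\fint_{B_\sigma}m<\infty$, which holds on the event since $\fint_{B_\sigma}m\to1$. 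Dominated convergence then gives $\int m\bar p_t\to1$.

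A cleaner alternative is simply to truncate: approximate $g$ uniformly within $\epsilon$ by a continuous compactly supported $g_\epsilon\le g$ on a big ball, with $\int(g-g_\epsilon)\le\epsilon$. Apply Lemma~\ref{down:density-ergodic} with $\psi=g_\epsilon$ and $R=\sqrt t$ to get the main term $\int g_\epsilon\ge 1-\epsilon$. For the remainder, bound $g-g_\epsilon$ by $\epsilon\mathbf 1_{B_K}+Ce^{-c|y|^2}\mathbf 1_{|y|>K}$ and control it by dyadic ball averages of $m$, which are bounded on the event. I would adopt this version.

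\emph{Step 2: the second limit.} Write $\bar p_t*\psi_R(x)=\int\psi_R(y)\bar p_t(x-y)\,dy$ and apply Fubini. This requires $\int m\bar p_t(\cdot-y)<\infty$, which follows from the same polynomial-mass bound. The result is $\int m(\bar p_t*\psi_R)=\int\psi_R(y)\big(\int m(x)\bar p_t(x-y)\,dx\big)dy$. For $|y|\le R$ fixed and $t\to\infty$, the inner integral tends to $1$ by the Step 1 argument. That argument tolerates a bounded shift, since $\bar p_t(\cdot-y)/\bar p_t(\cdot)$ is controlled uniformly on the main region and translating by $y$ changes balls of radius $\sqrt t$ by a negligible volume fraction. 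The convergence is uniform in $|y|\le R$, so integrating against $\psi_R$, whose integral is $\int\psi$, gives the limit.

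The event used is that of Lemma~\ref{down:density-ergodic}, together with countably many bounded-average conditions implied by it. It is therefore one probability-one event, independent of $R$ and $\psi$.

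The main obstacle is the domination at the level of the tails. Only first-moment ergodic averages of $m$ are available, with no pointwise or higher-moment control, so every error term must be reduced to averages of $m$ over large balls centered at the origin. I expect the dyadic-shell bound with Gaussian weights, using $\sup_{\sigma\ge1}\fint_{B_\sigma}m<\infty$, to handle this.
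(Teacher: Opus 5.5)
Your proof is correct; Step 1 (in the truncation version you adopt) is the paper's argument, and Step 2 takes a different route.

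\textbf{Step 1.} As in the paper, you apply Lemma~\ref{down:density-ergodic} to a continuous compactly supported cutoff of $\bar p_1$ at scale $L=\sqrt t$. You control the Gaussian tail on dyadic annuli with $\int_{B_L}m\le C(\omega)L^d$ for $L\ge L_0(\omega)$, which follows from $\fint_{B_L}m\to1$. Then send $L\to\infty$ and afterwards the cutoff radius $K\to\infty$. If you keep the $\epsilon\mathbf 1_{B_K}$ term, choose $\epsilon$ after $K$. Alternatively, take $g_\epsilon=\chi_Kg$ with $\chi_K=1$ on $B_K$, so that term disappears.

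\textbf{Step 2.} Your route is Tonelli, then $\int m\,\bar p_t(\cdot-y)\to1$ uniformly for $y$ in the support of $\psi_R$. This uniformity comes from $\bar p_t(x-y)/\bar p_t(x)\to1$ uniformly on $|x|\le K\sqrt t$, plus the same tail bound. It is valid, but it requires rerunning Step 1 with the shift. So the second limit is not literally a direct consequence of the first in your write-up.

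The paper instead takes $\operatorname{supp}\psi\subset B_1$ and $t\ge R^2$ and writes
$\bar p_t*\psi_R-(\int\psi)\bar p_t=\int\psi_R(y)\,(\bar p_t(\cdot-y)-\bar p_t)\,dy$.
The Gaussian gradient bound then gives the pointwise estimate
$|\bar p_t*\psi_R-(\int\psi)\bar p_t|\le C_\psi(R/\sqrt t)\,t^{-d/2}e^{-c|x|^2/t}$.
Its integral against $m$ is at most $C_\psi C(\omega)R/\sqrt t$ by the same annular mass bound. This reduces the second limit to the first outright, with an explicit rate, and is shorter.

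Both versions use only the single event of Lemma~\ref{down:density-ergodic}, so the quantifiers (one event, all $R$ and $\psi$) come out as stated.
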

\begin{proof}
Lemma~\ref{down:density-ergodic} gives the weighted spatial limit for
all compactly supported continuous tests on one event, and
\begin{equation}
 \int_{B_L}m\le C(\omega)L^d\qquad(L\ge L_0(\omega)).
 \label{down:ergodic-mass-bound}
\end{equation}
Put $L=\sqrt t$. For fixed $K$, apply that limit to a continuous cutoff
of $\bar p_1(x/L)$. Its tail is controlled by
\[
 \int_{|x|>KL}m(x)L^{-d}e^{-c|x|^2/L^2}dx
 \le C(\omega)\sum_{j\ge0}(2^{j+1}K)^d e^{-c4^jK^2}.
\]
Letting first $L\to\infty$ and then $K\to\infty$ proves
$\int m\bar p_t\to\int\bar p_1=1$.
For $\operatorname{supp}\psi\subset B_1$ and $t\ge R^2$, the
Gaussian gradient estimate gives
\[
 \left|\varphi_t(x)-\left(\int\psi\right)\bar p_t(x)\right|
 \le C_\psi\frac R{\sqrt t}\,t^{-d/2}e^{-c|x|^2/t}.
\]
Integrating against $m$ and using~\eqref{down:ergodic-mass-bound} on
annuli bounds the error by $C_\psi C(\omega)R/\sqrt t\to0$.
A larger fixed support only changes the constant multiplying $R$.
\end{proof}

\begin{proof}[Proof of Theorem~\ref{down:density-quantitative-main}]
Intersect $\mathcal E_R$ with the probability-one event in
Lemma~\ref{down:heat-endpoint}. Integrating
\eqref{down:heat-derivative-bound} over $[0,\infty)$ and using
\eqref{down:heat-endpoint-limit}, we obtain
\begin{equation}
 \left|\int\psi_R(m-1)\right|
 =|F(0)-F(\infty)|\le C_\psi R^{-\kappa}.
 \label{down:mass-average-bound}
\end{equation}
Apply~\eqref{down:weighted-error-good-event} to $\psi_R$ on $B_{2R}$
and to each member of a Frobenius basis to get
$|\int\psi_RJ|\le C_\psi R^{-\kappa}$. The identity
$mA-\bar A=J+\bar A(m-1)$ then gives $Z_R(\psi)\le C_\psi R^{-\kappa}$.
Take $\varsigma=\delta_2/4$, so $\kappa=\delta_2/4$, and use
\eqref{down:all-scales-probability}.

For arbitrary compact support, choose $c_\psi\ge1$ with
$\operatorname{supp}\psi\subset B_{c_\psi}$ and put
$\widetilde\psi(x)=c_\psi^d\psi(c_\psi x)$. Then
$\widetilde\psi\in C_c^\infty(B_1)$ and
$\psi_R=\widetilde\psi_{c_\psi R}$. This changes only the constants
and the initial scale.
\end{proof}

\subsection{First-moment rates}

\begin{lemma}\label{down:positive-variable-lone}
If $X\ge0$, $\mathbb EX=a<\infty$, and
$\mathbb P(|X-a|>t)\le p$, then
\begin{equation}
 \mathbb E|X-a|\le2t+2ap.
 \label{down:positive-variable-bound}
\end{equation}
\end{lemma}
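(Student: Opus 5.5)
The plan is to split $\mathbb E|X-a|$ according to whether $|X-a|\le t$ or $|X-a|>t$, and to control the second piece using only nonnegativity and the mean. On $\{|X-a|\le t\}$ the integrand is at most $t$, so this part contributes at most $t$. On the exceptional event $E=\{|X-a|>t\}$, which has probability at most $p$, we use $|X-a|\le X+a$. This is valid because $X\ge0$ and $a\ge0$.

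The contribution of $E$ is therefore at most $\mathbb E[X\mathbf 1_E]+a\,\mathbb P(E)\le\mathbb E[X\mathbf 1_E]+ap$. The term $\mathbb E[X\mathbf 1_E]$ cannot be bounded by $ap$ directly, since $X$ might concentrate its mass on $E$; we handle it through the mean-zero identity instead. Since $\mathbb E(X-a)=0$, the positive and negative parts of $X-a$ have equal expectation, so
\[
 \mathbb E|X-a|=2\,\mathbb E(a-X)_+ .
\]
The key point is that $(a-X)_+\le a$ because $X\ge0$, so the negative part is bounded even though the positive part need not be.

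Splitting $\mathbb E(a-X)_+$ on the same two events then gives
\[
 \mathbb E(a-X)_+\le t+a\,\mathbb P(E)\le t+ap ,
\]
and hence $\mathbb E|X-a|\le 2t+2ap$. This argument requires neither the first split nor any control of $X\mathbf 1_E$, so I would present only the identity-based version.

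The only step needing care is recognising that the upper tail of $X$ must be controlled through the mean identity rather than the tail bound. Integrability holds because $\mathbb EX<\infty$.
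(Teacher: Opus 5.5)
Your identity-based argument is correct and is the paper's proof: $\mathbb E(X-a)=0$ gives $\mathbb E|X-a|=2\,\mathbb E(a-X)_+$. The negative part $(a-X)_+$ is then bounded by $t$ on $\{|X-a|\le t\}$ and by $a$ on its complement, with $t\ge0$ implicit as in the paper.
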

\begin{proof}
Since $\mathbb E(X-a)=0$, we have
$\mathbb E|X-a|=2\mathbb E(a-X)_+$. The negative part is bounded by
$t$ on $\{|X-a|\le t\}$ and by $a$ on its complement.
\end{proof}

\begin{corollary}[$L^1$ rates for averages]\label{down:density-lone}
For every fixed $\psi\in C_c^\infty(\mathbb R^d)$,
\begin{equation}
 \mathbb E Z_R(\psi)\le C_\psi R^{-\kappa_3},\qquad
 \kappa_3=\min\{\beta_3,\gamma_3\}>0.
 \label{down:density-lone-rate}
\end{equation}
\end{corollary}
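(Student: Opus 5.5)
The plan is to convert the tail bound \eqref{down:density-quantitative-tail} into a first-moment bound, handling the two terms of $Z_R(\psi)$ separately. In each case I split on the good event and control the bad event by an integrability argument, using only $\mathbb E m(0)=1$. No higher moment of $m$ is available, so Lemma~\ref{down:positive-variable-lone} has to replace Hölder on the bad event.

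For the first term, assume $\psi\ge0$ first and set $X=\int\psi_R m$, $a=\mathbb EX=\int\psi$; the general case follows by splitting $\psi=\psi^+-\psi^-$ after smoothing each part, or more simply by writing $\psi=\psi_1-\psi_2$ with $\psi_1=\psi+c\,\rho$ and $\psi_2=c\,\rho$ for a fixed nonnegative bump $\rho$ and large $c$, so that both are nonnegative smooth tests. Theorem~\ref{down:density-quantitative-main} gives $\mathbb P(|X-a|>C_\psi R^{-\beta_3})\le C_\psi R^{-\gamma_3}$. Then Lemma~\ref{down:positive-variable-lone} with $t=C_\psi R^{-\beta_3}$ and $p=C_\psi R^{-\gamma_3}$ yields $\mathbb E|X-a|\le C_\psi(R^{-\beta_3}+R^{-\gamma_3})$.

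The flux term $\int\psi_R(mA-\bar A)$ is the main obstacle, because its integrand is matrix-valued and not of one sign. For it I will use positivity of $mA$: for any unit vector $e$ and $\psi\ge0$, the scalar $X_e=\int\psi_R m\,e\cdot Ae$ is nonnegative with mean $\int\psi\,e\cdot\bar Ae$, by stationarity and $\mathbb E[mA]=\bar A$. The tail bound for this scalar follows from that of the matrix term. Lemma~\ref{down:positive-variable-lone} then bounds $\mathbb E|X_e-\mathbb EX_e|$. Polarization over finitely many directions $e_i$ and $(e_i\pm e_j)/\sqrt2$ recovers every entry, so the Frobenius norm is controlled with a dimensional constant. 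The signed $\psi$ is handled by the same decomposition as before.

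Summing the pieces gives $\mathbb EZ_R(\psi)\le C_\psi(R^{-\beta_3}+R^{-\gamma_3})\le C_\psi R^{-\kappa_3}$ for $R\ge R_\psi$. For $1\le R<R_\psi$ I use the trivial bound $\mathbb EZ_R\le C\|\psi\|_1$, which follows from $|A|\le\sqrt d$ and $\mathbb E m=1$, and enlarge the constant.
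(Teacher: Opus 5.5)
Your proposal is correct and matches the paper's proof: you treat nonnegative tests first with Lemma~\ref{down:positive-variable-lone}, handle the flux through the nonnegative scalars $\int\psi_Rm\,\xi\cdot A\xi$ and polarization, and reduce signed $\psi$ by adding and subtracting a large multiple of a nonnegative bump. Take the bump equal to one near $\operatorname{supp}\psi$ (not merely nonnegative) so that $\psi+c\rho\ge0$. Your remark on bounded $R$, which uses $\mathbb E m(0)=1$, fills a detail the paper leaves implicit.
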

\begin{proof}
For $\psi\ge0$, take $X_R=\int\psi_Rm\ge0$ with
$\mathbb EX_R=\int\psi$. Theorem~\ref{down:density-quantitative-main}
and Lemma~\ref{down:positive-variable-lone} give the density estimate.
For a fixed vector $\xi$, apply the same argument to
\[
 X_{R,\xi}=\int\psi_Rm\,\xi\cdot A\xi\ge0,\qquad
 \mathbb EX_{R,\xi}=(\xi\cdot\bar A\xi)\int\psi.
\]
Its deviation is bounded by $|\xi|^2$ times the matrix deviation in
$Z_R$. Taking $\xi=e_i$ and $e_i+e_j$ and using
\[
 2B_{ij}=(e_i+e_j)\cdot B(e_i+e_j)-e_i\cdot Be_i-e_j\cdot Be_j
\]
recovers the matrix estimate. For signed $\psi$, choose a smooth
nonnegative compactly supported $\eta$ equal to $1$ near its support
and $K\ge\|\psi\|_\infty$. Apply the nonnegative result to
$\psi+K\eta$ and $K\eta$, and subtract.
\end{proof}

\section{Dirichlet problems and applications}
\label{down:sec-dirichlet}\label{sec:applications}

We transfer the cell estimate to a bounded $C^{2,\eta}$ domain $U$,
$0<\eta<1$, with deterministic data $f\in C^{0,\eta}(\overline U)$ and
$g\in C^{2,\eta}(\overline U)$. Write
\[
 N(f,g)=\|f\|_{C^{0,\eta}(\overline U)}+\|g\|_{C^{2,\eta}(\overline U)},
 \qquad A_\varepsilon(x)=A(x/\varepsilon),
\]
and let
\begin{equation}
 \begin{cases}
 -A_\varepsilon:D^2u_\varepsilon=f&\text{in }U,\\
 u_\varepsilon=g&\text{on }\partial U,
 \end{cases}
 \qquad
 \begin{cases}
 -\bar A:D^2u_0=f&\text{in }U,\\
 u_0=g&\text{on }\partial U.
 \end{cases}
 \label{down:dirichlet-problems}
\end{equation}
If $g$ is specified on $\partial U$, use a fixed bounded extension in its
norm. Lemma~\ref{lem:local-regularity} gives pathwise existence,
uniqueness and interior $C^{2,\theta}$ regularity for
$0<\theta<\min\{\sigma,\eta,1\}$. Only the homogenized solution requires
a deterministic Schauder bound:
\begin{equation}
 \|u_0\|_{C^{2,\eta}(\overline U)}\le C_0N(f,g),
 \label{down:homogenized-schauder}
\end{equation}
where $C_0$ depends on $U,\eta,\bar A$; see \cite{GT}.

\begin{theorem}[Dirichlet error estimate]
\label{down:dirichlet-main}
Let $\delta_2,\gamma_2$ be the exponents of Theorem~\ref{thm:cell}, and set
\begin{equation}
 a=\frac{\gamma_2}{2(d+\gamma_2)},\qquad
 q=\min\{a\eta,(1-a)\delta_2\},\qquad
 \beta_4=q/2,\qquad \gamma_4=\min\{\gamma_2/2,d\beta_4\}.
 \label{down:dirichlet-exponents}
\end{equation}
For every fixed deterministic $U,\eta,f,g$, there are deterministic
$C,\varepsilon_0>0$ such that
\begin{equation}
 \mathbb P\left(\|u_\varepsilon-u_0\|_{L^\infty(U)}
                 >CN(f,g)\varepsilon^{\beta_4}\right)
 \le C\varepsilon^{\gamma_4},\qquad 0<\varepsilon\le\varepsilon_0.
 \label{down:dirichlet-probability}
\end{equation}
\end{theorem}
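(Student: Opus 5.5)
We would prove Theorem~\ref{down:dirichlet-main} by a two-scale expansion built from local cell solutions on a mesoscopic partition, with the corrector replaced by finite-cell solutions so that only the cell estimate \eqref{down:cell-input} is used. Set $\rho=\varepsilon^{a}$, the mesoscopic length in the macroscopic variable. Cover $U$ by the deterministic grid of cubes of side $\rho$, with centers $x_k$, and keep those whose $2\rho$-neighborhood lies in $U_\rho=\{x\in U:\operatorname{dist}(x,\partial U)>\rho\}$. There are $O(\rho^{-d})=O(\varepsilon^{-ad})$ such cells.

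At each center we apply the cell theorem at microscopic radius $\rho/\varepsilon=\varepsilon^{a-1}$ and center $x_k/\varepsilon$. A union bound makes the cell bound hold on all cells simultaneously, with failure probability at most $C\rho^{-d}(\rho/\varepsilon)^{-\gamma_2}$. The choice $a=\gamma_2/(2(d+\gamma_2))$ is designed so that this is at most $C\varepsilon^{\gamma_2/2}$. On this good event, freeze $M_k=D^2u_0(x_k)$ and put $w_k(x)=\varepsilon^2w_{M_k,\rho/\varepsilon}((x-x_k)/\varepsilon)$, rescaled to the ball $B_\rho(x_k)$. Then $-A_\varepsilon:D^2(u_0+w_k)=f+(\bar A-A_\varepsilon):(D^2u_0-M_k)$. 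This error is at most $C N(f,g)\rho^\eta$ by \eqref{down:homogenized-schauder} and $A\le I$. The cell estimate gives $\|w_k\|\le C N\rho^2(\rho/\varepsilon)^{-\delta_2}=CN\rho^2\varepsilon^{(1-a)\delta_2}$.

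The local comparison then runs as follows. In each ball $B_\rho(x_k)$ compare $u_\varepsilon$ with $u_0+w_k\pm(\text{barrier})$. Rather than gluing, we would use that $u_\varepsilon-u_0$ satisfies $-A_\varepsilon:D^2(u_\varepsilon-u_0)=(A_\varepsilon-\bar A):D^2u_0$ and decompose $D^2u_0=M_k+O(N\rho^\eta)$ locally. The piece with $M_k$ is exactly a rescaled cell source, whose potential is $w_k$. The remainder is bounded by $CN\rho^\eta$ and is absorbed by a torsion barrier. The obstacle is that the torsion of $A_\varepsilon$ is not controlled by $\rho^2$ deterministically, since there is no lower ellipticity bound. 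We would control it by the $M=I$ instance of the cell problem: $U_R$ for $A$ is $\Psi_R/\operatorname{tr}\bar A+w_{I,R}/\operatorname{tr}\bar A$, essentially \eqref{clock:cell-exact-decomposition}. So on the same good event, the local torsion on each ball is $\le C\rho^2$, and globally on $U$ it is $\le C$. Patching the local comparisons is the delicate step. Our first attempt would be a global barrier: define a smooth partition of unity $\{\chi_k\}$ at scale $\rho$ and a corrected function $\tilde u=u_0+\sum_k\chi_kw_k$. The cross terms $D\chi_k\cdot A_\varepsilon Dw_k$ and $w_kD^2\chi_k$ need gradient bounds on $w_k$. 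We lack these deterministically, so instead we would avoid gluing entirely. Apply comparison on the whole of $U$ to $u_\varepsilon-u_0$ with source $(A_\varepsilon-\bar A):D^2u_0$, and write this source as $\sum_k\mathbf 1_{Q_k}(A_\varepsilon-\bar A):M_k$ plus the $O(N\rho^\eta)$ Hölder error. The solution operator applied to each indicator piece can be estimated through the exit representation: by the strong Markov property, its contribution is a sum over visits to cells of local cell potentials. This reduces to cell values plus the expected number of mesoscopic crossings, and the latter is bounded by global torsion over $\rho^2$.

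The boundary layer $U\setminus U_\rho$ would be handled with a deterministic $C^{2,\eta}$ barrier for $u_0-g$. Since $u_\varepsilon-g$ and $u_0-g$ vanish on $\partial U$ and have bounded sources, the physical torsion bound gives the decay rate. Combining the pieces yields an error of order $N(\rho^\eta+\varepsilon^{(1-a)\delta_2})\varepsilon^{-?}$. Halving the exponent to $\beta_4=q/2$ absorbs the crossing-count loss and any Markov step, where $q=\min\{a\eta,(1-a)\delta_2\}$. A Markov inequality on the $L^1$ torsion then produces the $\varepsilon^{d\beta_4}$ term in $\gamma_4$. We expect this patching/crossing-count step, without uniform ellipticity, to be the main obstacle.
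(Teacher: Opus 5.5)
\textbf{The central comparison step is never carried out, and the mechanism you sketch for it does not work.} Splitting the source as $\sum_k\mathbf 1_{Q_k}(A_\varepsilon-\bar A):M_k$ and summing ``local cell potentials over visits to cells'' throws away exactly the cancellation that the cell theorem measures. The function $w_k$ is the potential in $B_\rho(x_k)$ of the \emph{full} signed source $(A_\varepsilon-\bar A):M_k$. The Green potential of the truncated piece $\mathbf 1_{Q_k}(A_\varepsilon-\bar A):M_k$, by contrast, only records the time spent in $Q_k$. For that piece your only bound is $\sup|(A_\varepsilon-\bar A):M_k|$ times the occupation time, and these bounds sum to $O(N\|T_\varepsilon\|_\infty)$ with no smallness. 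A probabilistic version can be made to work, but only if the frozen matrix is attached to the current coarse step, the whole ball source is kept, and boundary-crossing balls are stopped at the exit time of $U$.

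The paper instead uses a maximum-point localization with one global barrier. Set $e=u_\varepsilon-u_0$, $S=C_0Kh^2\vartheta N$ and $\kappa=C_1Nh^\eta+8dS/h^2$. Suppose $E=\max_{\overline U}(e-\kappa T_\varepsilon)>2S$ is attained at $x_0\in U$. Take the covering ball with $x_0\in B_{h/2}(x_i)$, the single cell $c=c_{i,M}$ with $M=D^2u_0(x_0)$, and
\[
 v=e-\kappa T_\varepsilon-c-4Sh^{-2}|x-x_0|^2\qquad\text{in }U\cap B_h(x_0).
\]
Then $v(x_0)>S$, $v\le S$ on $\partial U$, and $v\le E-3S$ on $\partial B_h(x_0)$, so $v$ has an interior maximum. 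On the other hand, $-A_\varepsilon:D^2v\le C_{\rm res}Nh^\eta-\kappa+8dSh^{-2}<0$, a contradiction. This gives $e\le\kappa T_\varepsilon+2S$ without gluing and without gradient bounds.

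Because the covering balls may cross $\partial U$ and $e=T_\varepsilon=0$ there, this argument also handles the boundary. Your separate boundary-layer step is unsupported. Without a lower ellipticity bound there is no deterministic exterior barrier making $T_\varepsilon$ small near $\partial U$. The torsion alone only gives $|u_\varepsilon-g|\le CN\|T_\varepsilon\|_\infty=O(N)$ in the layer.

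\textbf{The probability bookkeeping is also wrong.} The term $d\beta_4$ in $\gamma_4$ comes from the $d$th moment of the global torsion. The determinant-weighted ABP estimate gives
\[
 \|T_\varepsilon\|_\infty\le C_d\operatorname{diam}(U)\Big(\int_U\lambda_{\min}(A(x/\varepsilon))^{-d}\,dx\Big)^{1/d},
\]
hence $\mathbb E\|T_\varepsilon\|_\infty^d\le C_U\mathbb E[\lambda_*(0)^{-d}]$ by (H5). Markov's inequality at level $\varepsilon^{-q/2}$ then costs $C\varepsilon^{dq/2}$. A Markov bound on an $L^1$ quantity would give only $\varepsilon^{q/2}$.

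Your proposal is also internally inconsistent here, because you separately assert $\|T_\varepsilon\|_\infty\le C$ on the cell event. That assertion can in fact be proved from your $M=I$ observation. The local torsion of $B_{4h}(x_k)$ equals $(\Psi_k-c_{k,I})/\operatorname{tr}\bar A\le Ch^2$ on the cell event. A martingale crossing count then controls the number of steps: each coarse step from within $h/2$ of a grid center to the exit of $B_{4h}(x_k)$ has squared displacement at least $(3h)^2$, so there are $O(h^{-2})$ expected steps before leaving $U$. Together these give $\mathbf E_x\tau_U\le C$. Combined with the maximum-point lemma, this would even remove the torsion Markov step. But you give no such argument, and if it were used the $\varepsilon^{d\beta_4}$ term would not arise at all. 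As written, the error exponent is left as ``$\varepsilon^{-?}$'', and the stated probability estimate is not established.
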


The proof uses local cells and a torsion function. The local quadratic
comparison is the mechanism used in Armstrong and Smart
\cite[Section~6, Proposition~6.2]{ASquant} to pass from a local
homogenization quantity to a Dirichlet error. Their proposition treats
uniformly elliptic fully nonlinear equations and Lipschitz data. We use
the global Schauder bound for the linear homogenized equation and prove
the comparison below for the stated H\"older source. The torsion function
absorbs its residual, including near the boundary, using precisely H5.

\subsection{A torsion estimate at the critical moment}

Let $T_\varepsilon\ge0$ solve
\begin{equation}
 -A_\varepsilon:D^2T_\varepsilon=1\quad\text{in }U,
 \qquad T_\varepsilon=0\quad\text{on }\partial U.
 \label{down:torsion-equation}
\end{equation}

\begin{lemma}[Torsion moment]
\label{down:torsion-moment}
We have
\begin{equation}
 \|T_\varepsilon\|_\infty
 \le C_d\operatorname{diam}(U)
 \left(\int_U\lambda_{\min}(A(x/\varepsilon))^{-d}\,dx\right)^{1/d},
 \label{down:torsion-abp}
\end{equation}
and consequently
\begin{equation}
 \sup_{0<\varepsilon\le1}\mathbb E\|T_\varepsilon\|_\infty^d
 \le C_U\mathbb E[\lambda_*(0)^{-d}],\qquad
 \mathbb P(\|T_\varepsilon\|_\infty>\varepsilon^{-s})
 \le C_U\varepsilon^{sd}\quad(s>0).
 \label{down:torsion-tail}
\end{equation}
\end{lemma}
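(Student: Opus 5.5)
The first estimate is the classical Alexandrov--Bakelman--Pucci bound applied in each realization. By Lemma~\ref{lem:local-regularity}, $A_\varepsilon$ is H\"older continuous and uniformly elliptic on $\overline U$ in each fixed environment, so $T_\varepsilon$ is the classical solution of \eqref{down:torsion-equation}. The bound \cite[Theorem~9.1]{GT} then gives
\[
 \sup_U T_\varepsilon\le C_d\operatorname{diam}(U)\,\|f/\mathcal D^*\|_{L^d(U)}
\]
with source $f=1$ and $\mathcal D^*=(\det A_\varepsilon)^{1/d}\ge\lambda_{\min}(A_\varepsilon)$. The constant here is dimensional only, not dependent on ellipticity, because the determinant form is used. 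Nonnegativity of $T_\varepsilon$ follows by comparison with zero. This is exactly the argument used in the proof of Lemma~\ref{down:density-occupation} with $h=1$, so \eqref{down:torsion-abp} follows. If one prefers to avoid classical regularity up to the boundary, first solve on slightly smaller smooth domains and pass to the limit by comparison.

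For the moment bound, raise \eqref{down:torsion-abp} to the power $d$ and take expectations. Tonelli's theorem gives
\[
 \mathbb E\|T_\varepsilon\|_\infty^d\le C_d\operatorname{diam}(U)^d\int_U\mathbb E\bigl[\lambda_{\min}(A(x/\varepsilon))^{-d}\bigr]\,dx .
\]
By stationarity (H2), each integrand equals $\mathbb E[\lambda(0)^{-d}]\le\mathbb E[\lambda_*(0)^{-d}]$, since $\lambda(0)\ge\lambda_*(0)$. Hence the right-hand side is at most $C_d\operatorname{diam}(U)^d|U|\,\mathbb E[\lambda_*(0)^{-d}]$, uniformly in $\varepsilon$. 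Measurability of $\|T_\varepsilon\|_\infty$ follows from continuity in space and the measurable dependence of Dirichlet solutions on the coefficients, as in Step~1 of Theorem~\ref{down:corrector-main}.

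The tail bound is Markov's inequality at order $d$:
\[
 \mathbb P(\|T_\varepsilon\|_\infty>\varepsilon^{-s})\le\varepsilon^{sd}\,\mathbb E\|T_\varepsilon\|_\infty^d\le C_U\varepsilon^{sd},
\]
where H5 has been absorbed into $C_U$. There is no real obstacle here. The only point requiring care is to use the determinant (ellipticity-free) form of ABP and not the version with an ellipticity constant, since only the critical $d$th moment of $\lambda^{-1}$ is available.
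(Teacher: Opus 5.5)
Your proof is correct and follows essentially the same route as the paper. The paper reproves the determinant form of ABP directly by the contact-set and area-formula argument, using the AM--GM bound $\det(-D^2T_\varepsilon)\le d^{-d}\lambda_{\min}(A_\varepsilon)^{-d}$ at contact points, rather than citing \cite[Theorem~9.1]{GT}. It then concludes with Tonelli, stationarity and Markov's inequality exactly as you do.
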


\begin{proof}
We give the contact-set proof of the Alexandrov maximum estimate; see
\cite[Chapter~9]{GT} for the classical form. Keeping its determinant
weight explicit gives the dependence on degeneracy needed here.
Write $S=\max T_\varepsilon$ and $D=\operatorname{diam}(U)$, and suppose
$S>0$. If $x_0$ is a maximum point, then for $|p|<S/(2D)$ the maximum
of $T_\varepsilon(x)-p\cdot(x-x_0)$ is attained in the interior: its value
at $x_0$ is $S$, whereas its boundary values are at most $S/2$.
Each such contact point lies in $\{T_\varepsilon\ge S/2\}\Subset U$ and
satisfies
\[
 DT_\varepsilon=p,\qquad B=-D^2T_\varepsilon\ge0,\qquad
 A_\varepsilon:B=1.
\]
The arithmetic--geometric mean inequality gives
\[
 \det B
 =\frac{\det(A_\varepsilon^{1/2}BA_\varepsilon^{1/2})}
        {\det A_\varepsilon}
 \le d^{-d}\lambda_{\min}(A_\varepsilon)^{-d}.
\]
The contact set is measurable by testing the upper supporting inequality
on a countable dense set. The gradient is $C^1$ on a neighborhood of the
compact set $\{T_\varepsilon\ge S/2\}$ containing the contact set.
The image of the contact set contains $B_{S/(2D)}$, so the area formula
yields
\[
 |B_{S/(2D)}|
 \le\int_{\text{contact set}}|\det D^2T_\varepsilon|
 \le d^{-d}\int_U\lambda_{\min}(A_\varepsilon)^{-d},
\]
proving \eqref{down:torsion-abp}. Tonelli and stationarity give
\[
 \mathbb E\int_U\lambda_{\min}(A(x/\varepsilon))^{-d}\,dx
 =|U|\mathbb E[\lambda_{\min}(A(0))^{-d}]
 \le|U|\mathbb E[\lambda_*(0)^{-d}].
\]
Raising the ABP bound to the power $d$ and applying Markov proves
\eqref{down:torsion-tail}.
\end{proof}

\subsection{Comparison with local cells}

For $0<h\le1$, choose deterministic centers $\{x_i\}_{i\in I_h}$ with
\begin{equation}
 \overline U\subset\bigcup_{i\in I_h}B_{h/2}(x_i),\qquad
 |I_h|\le C_Uh^{-d}.
 \label{down:dirichlet-cover}
\end{equation}
Their balls may cross $\partial U$, since the coefficients are defined
on all of $\mathbb R^d$.

\begin{lemma}[Cell--Dirichlet comparison]
\label{down:cell-dirichlet-comparison}
Let $c_{i,M}$ solve the zero-boundary problem
$-A_\varepsilon:D^2c_{i,M}=(A_\varepsilon-\bar A):M$ in $B_{4h}(x_i)$.
If, for some $K,\vartheta>0$, simultaneously for all $i,M$,
\begin{equation}
 \|c_{i,M}\|_\infty\le Kh^2\vartheta|M|,
 \label{down:local-cell-dirichlet-bound}
\end{equation}
then
\begin{equation}
 \|u_\varepsilon-u_0\|_\infty
 \le CN(f,g)\left[h^2\vartheta
             +(h^\eta+\vartheta)\|T_\varepsilon\|_\infty\right],
 \label{down:deterministic-dirichlet-error}
\end{equation}
where $C$ depends on $K,U,\eta,\bar A$.
\end{lemma}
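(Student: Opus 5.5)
The approach is a two-scale construction: build a global corrected approximation of $u_\varepsilon$ from $u_0$ and the local cells, then absorb every residual by the torsion function. Fix a smooth partition of unity $\{\chi_i\}$ subordinate to the cover \eqref{down:dirichlet-cover}, with $\sum_i\chi_i=1$ on $\overline U$, $\operatorname{supp}\chi_i\subset B_h(x_i)$ and $|D^j\chi_i|\le C h^{-j}$. Write $M_i=D^2u_0(x_i)$, extended by a fixed $C^{2,\eta}$ extension of $u_0$ if $x_i\notin U$. Then $|M_i|\le C_0N(f,g)$ by \eqref{down:homogenized-schauder}, and $|D^2u_0-M_i|\le C N h^\eta$ on $B_{4h}(x_i)$. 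The natural candidate is $w=u_0-\sum_i\chi_i c_{i,M_i}$. However, derivatives of the cutoffs hit $c_{i,M_i}$, and no gradient bound on cells is available pathwise. The cutoffs will therefore be used only through comparison, not through the equation.

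\textbf{Step 1: local comparison on each ball.} On $B_{4h}(x_i)\cap U$, the function $\tilde u_i=u_0+c_{i,M_i}$ satisfies
\[
 -A_\varepsilon:D^2\tilde u_i=-A_\varepsilon:D^2u_0+(A_\varepsilon-\bar A):M_i
 = f+A_\varepsilon:(M_i-D^2u_0),
\]
where we used $-\bar A:M_i=-\bar A:D^2u_0(x_i)=f(x_i)$, so that the source error is $f(x_i)-f(x)$ plus the Hessian oscillation. Hence $|A_\varepsilon:D^2(u_\varepsilon-\tilde u_i)|\le CNh^\eta$ there, and by \eqref{down:local-cell-dirichlet-bound} we also have $|\tilde u_i-u_0|\le KNh^2\vartheta$. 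I would not compare $u_\varepsilon$ with $\tilde u_i$ on the small ball, because its boundary values are unknown. Instead, use the global barrier
\[
 \Phi^\pm=u_0\pm\bigl(C N(h^\eta+\vartheta)T_\varepsilon + CNh^2\vartheta\bigr).
\]
The question is whether $-A_\varepsilon:D^2 u_0$ is close to $f$ in a weak sense. It is not pointwise: $A_\varepsilon:D^2u_0-\bar A:D^2u_0=(A_\varepsilon-\bar A):D^2u_0$ is $O(1)$. The cells exist precisely to cancel this term locally.

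\textbf{Step 2: a maximum-type argument to avoid cutoff derivatives.} Set $e=u_\varepsilon-u_0-\lambda T_\varepsilon$ with $\lambda=C N(h^\eta+\vartheta)$, and suppose $\max_{\overline U} e> C N h^2\vartheta$. On $\partial U$ we have $e=0$. Take a maximum point $x^*\in U$ and an index $i$ with $x^*\in B_{h/2}(x_i)$. On $B_{4h}(x_i)\cap U$, consider $\tilde e_i=u_\varepsilon-\tilde u_i-\lambda T_\varepsilon-\lambda' |x-x^*|^2$ with $\lambda'=CNh^2\vartheta/h^2=CN\vartheta$. Its generator is
\[
 -A_\varepsilon:D^2\tilde e_i\le CNh^\eta-\lambda+2\lambda' \operatorname{tr}A_\varepsilon<0
\]
once the constant in $\lambda$ dominates $CNh^\eta+2d\,CN\vartheta$. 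This is where the torsion term absorbs both the H\"older defect and the quadratic penalty. The maximum principle then puts its maximum on $\partial(B_{4h}(x_i)\cap U)$. On $\partial U$ it is at most $\sup|c_{i,M_i}|\le KNh^2\vartheta$. On $\partial B_{4h}(x_i)$ it is at most $e(x^*)+2KNh^2\vartheta-\lambda'(3h)^2$, which is strictly below $\tilde e_i(x^*)\ge e(x^*)-KNh^2\vartheta$ when $9C> 3K$. This forces $e(x^*)\le CNh^2\vartheta$, a contradiction. The symmetric argument gives the lower bound, and together with $\lambda\|T_\varepsilon\|_\infty$ this yields \eqref{down:deterministic-dirichlet-error}.

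\textbf{Main obstacle.} The delicate point is Step 2 near $\partial U$. The ball $B_{4h}(x_i)$ crosses the boundary, so on $\partial U\cap B_{4h}(x_i)$ one must control $u_\varepsilon-\tilde u_i=-c_{i,M_i}$, which \eqref{down:local-cell-dirichlet-bound} does provide. One must also check that the maximum principle on $B_{4h}(x_i)\cap U$ holds for a strict subsolution; it does, since $A_\varepsilon$ is locally uniformly elliptic and $\tilde e_i$ is $C^2$ by Lemma~\ref{lem:local-regularity}. The second thing to verify carefully is that $u_0$, and hence its extension, lies in $C^{2,\eta}$ up to the boundary, so that $|D^2u_0-M_i|\le CNh^\eta$ holds uniformly on each ball including the boundary layer.
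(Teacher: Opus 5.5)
Your Step~2 is correct and is essentially the paper's proof: take a maximizer of $u_\varepsilon-u_0-\lambda T_\varepsilon$, subtract the cell of a covering ball containing it together with a quadratic penalty of size $N\vartheta|x-x^*|^2$, and let the torsion term absorb both the $Nh^\eta$ residual and the penalty's trace. The partition-of-unity construction and the global barrier of Step~1 are never used. The paper freezes the Hessian at the maximizer itself, $M=D^2u_0(x_0)$ with $x_0\in U$, and works on $B_h(x_0)\subset B_{4h}(x_i)$; this avoids any extension of $u_0$ and makes $-\bar A:M=f(x_0)$ exact. Your choice $M_i=D^2u_0(x_i)$ needs a $C^{2,\eta}$ extension, and when $x_i\notin\overline U$ the identity $-\bar A:M_i=f(x_i)$ is unavailable, so you should write the residual as $(A_\varepsilon-\bar A):(D^2u_0(x)-M_i)$ instead.
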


\begin{proof}
Put $N=N(f,g)>0$ and $e=u_\varepsilon-u_0$; if $N=0$, uniqueness
makes the assertion immediate. Set
\[
 S=C_0Kh^2\vartheta N,\qquad
 \kappa=C_1Nh^\eta+8dS/h^2,
\]
where $C_1$ will exceed the residual constant below. Suppose
$E=\max_{\overline U}(e-\kappa T_\varepsilon)>2S$.
The maximum is attained at some $x_0\in U$, since both functions vanish
on $\partial U$. Choose $i$ with $x_0\in B_{h/2}(x_i)$ and put
$M=D^2u_0(x_0)$, $c=c_{i,M}$. Then $\|c\|_\infty\le S$, and $c$ is
defined on $B_h(x_0)\subset B_{4h}(x_i)$.

Write $L_\varepsilon=-A_\varepsilon:D^2$. Subtracting the equations
and freezing at $x_0$ gives
\begin{equation}
 \begin{split}
 L_\varepsilon(e-c)
 &=f(x)-f(x_0)+A_\varepsilon:
                  (D^2u_0(x)-D^2u_0(x_0)),\\
 |L_\varepsilon(e-c)|&\le C_{\rm res}Nh^\eta
 \quad\text{in }U\cap B_h(x_0).
 \end{split}
 \label{down:dirichlet-local-residual}
\end{equation}
Consider there the function
\[
 v=e-\kappa T_\varepsilon-c-4Sh^{-2}|x-x_0|^2.
\]
At $x_0$, $v(x_0)\ge E-S>S$. On the actual boundary
$\partial U\cap\overline{B_h(x_0)}$, $v\le S$; on the artificial boundary
$\partial B_h(x_0)\cap\overline U$, $v\le E-3S<v(x_0)$.
Thus $v$ has an interior maximum. But $\operatorname{tr}A_\varepsilon\le d$
and \eqref{down:dirichlet-local-residual} give
\[
 L_\varepsilon v
 \le C_{\rm res}Nh^\eta-\kappa+8Sh^{-2}\operatorname{tr}A_\varepsilon<0
\]
if $C_1>C_{\rm res}$, contradicting $D^2v\le0$ at that maximum.
Therefore $e\le\kappa T_\varepsilon+2S$. Applying the same argument to
$(-u_\varepsilon,-u_0)$ gives the other inequality.
The local estimate is uniform in $M$ and extends by homogeneity, so it
applies to $D^2u_0(x_0)$ without any new probability estimate at the
random maximum point.
\end{proof}

\begin{proof}[Proof of Theorem~\ref{down:dirichlet-main}]
Put $y_i=x_i/\varepsilon$ and $R=4h/\varepsilon$. The rescaled cell
\[
 c_{i,M}(x)=\varepsilon^2w_{M,R}
             ((x-x_i)/\varepsilon,\tau_{y_i}\omega)
\]
solves the problem of Lemma~\ref{down:cell-dirichlet-comparison}. By
stationarity and Theorem~\ref{thm:cell}, outside an event of probability
\begin{equation}
 Ch^{-d}(\varepsilon/h)^{\gamma_2},
 \label{down:cell-union-dirichlet}
\end{equation}
all these cells satisfy
\[
 \|c_{i,M}\|_\infty
 \le C_2\,4^{2-\delta_2}h^2(\varepsilon/h)^{\delta_2}|M|.
\]
Thus we may take $K=C_2\,4^{2-\delta_2}$ and
$\vartheta=(\varepsilon/h)^{\delta_2}$. Only a union bound over the
deterministic covering centers is used.

Choose $h=\varepsilon^a$ with $a,q$ as in
\eqref{down:dirichlet-exponents}. The cell radius tends to infinity, and
the probability in \eqref{down:cell-union-dirichlet} is at most
$C\varepsilon^{\gamma_2-a(d+\gamma_2)}=C\varepsilon^{\gamma_2/2}$.
On this event and $\{\|T_\varepsilon\|_\infty\le\varepsilon^{-q/2}\}$,
\eqref{down:deterministic-dirichlet-error} gives
\begin{align*}
 \|u_\varepsilon-u_0\|_\infty
 &\le CN\left[\varepsilon^{2a+(1-a)\delta_2}
       +(\varepsilon^{a\eta}+\varepsilon^{(1-a)\delta_2})
                           \varepsilon^{-q/2}\right]\\
 &\le C'N\varepsilon^{q/2}.
\end{align*}
The torsion exception has probability at most $C\varepsilon^{dq/2}$
by \eqref{down:torsion-tail}. A final union bound proves the theorem.
\end{proof}

\subsection{The reverse reduction}

\begin{proposition}[Equivalence of cell and Dirichlet rates]
\label{down:cell-dirichlet-equivalence}
Under H1--H5, a cell probability estimate with positive algebraic
exponents exists if and only if the actual Dirichlet problem has such
an error estimate for every fixed deterministic smooth domain and data.
The latter estimates need not hold for all data on one event.
\end{proposition}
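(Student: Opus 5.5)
The forward direction is Theorem~\ref{down:dirichlet-main}, which already passes from the cell estimate~\eqref{down:cell-input} to a Dirichlet rate for every fixed deterministic $C^{2,\eta}$ domain and data. So only the converse needs an argument: assuming that, for every fixed smooth domain and fixed smooth data, $\mathbb P(\|u_\varepsilon-u_0\|_U>CN\varepsilon^{\beta})\le C\varepsilon^{\gamma}$ for some positive $\beta,\gamma$, we derive~\eqref{intro:cell-rate}. The plan is to rescale: set $U=B_1$ and $\varepsilon=R^{-1}$. By the exact decomposition~\eqref{clock:cell-exact-decomposition}, the cell solution $w_{M,R}$ is a linear combination of three pieces: a homogeneous Dirichlet problem with quadratic boundary data $q_M$, the quadratic $q_M$ itself, and a torsion problem with source $1$. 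Each piece is a Dirichlet problem with fixed deterministic smooth data.

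Concretely, we apply the hypothesis finitely many times. We use $f=0$, $g=\tfrac12x\cdot E_jx$ for a Frobenius basis $E_j$ of $\mathbb S^d$, and we use $f=1$, $g=0$. In rescaled variables, $H_{M,R}(Rx)/R^2$ is the $\varepsilon$-problem with $g=q_M$, and its homogenized limit is $q_M+\tfrac{\bar A:M}{2\operatorname{tr}\bar A}(1-|x|^2)$. Likewise $U_R(Rx)/R^2$ is the physical torsion $T_\varepsilon$ on $B_1$, with limit $(1-|x|^2)/(2\operatorname{tr}\bar A)$. Here $U_R$ is the physical torsion function, satisfying $-A:D^2U_R=1$. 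Substituting these two homogenized limits into~\eqref{clock:cell-exact-decomposition}, the leading terms cancel exactly. What remains is
\[
 R^{-2}\|w_{M,R}\|_{B_R}\le \|H^\varepsilon-\bar H\|+|\bar A:M|\,\|T_\varepsilon-\bar T\|.
\]
This is the same cancellation as~\eqref{clock:cell-cancellation}, now with the physical rather than the normalized torsion. Linearity in $M$ and a union bound over the $d(d+1)/2+1$ basis problems then give $\mathbb P(X_R>CR^{-\beta})\le CR^{-\gamma}$, simultaneously in $|M|\le1$.

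The only point needing care is that the hypothesis is stated with $C^{2,\eta}$ data on $\partial U$, whereas the quadratic data are polynomials and the unit source is constant, so these are admissible. One also must check that the effective solutions coincide with the ones above; this holds by uniqueness for $\bar A$. Stationarity extends the estimate from the center $0$ to fixed centers. Finally, note that the reverse implication used only the fixed data listed above, each with its own exceptional event. This is consistent with the final sentence of the statement: no single event controls all data.
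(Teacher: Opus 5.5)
Your argument is correct, but it reaches the cell estimate by a different decomposition than the paper. The paper applies the Dirichlet hypothesis on $B_1$ to one datum pair per basis matrix, $f_M=-\bar A:M$ and $g_M=q_M$. For these data $u_0=q_M$ exactly, and uniqueness plus scaling give $u_{\varepsilon,M}-q_M=\varepsilon^2w_{M,R}(x/\varepsilon)$. So the Dirichlet error \emph{is} the rescaled cell solution, and no homogenized solutions need to be computed or cancelled.

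You instead split $w_{M,R}$ via \eqref{clock:cell-exact-decomposition} into a source-free problem with quadratic boundary data and the physical torsion problem. You then compute both homogenized solutions explicitly and use the exact cancellation $\bar H-q_M-(\bar A:M)\bar T=0$. This is indeed the rescaled form of \eqref{clock:cell-cancellation}, since $\mu\Psi_R$ rescales to your $\bar T$; it is the same cancellation rather than a variant with a different torsion.

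The paper's route is shorter and uses $d(d+1)/2$ problems. Yours needs one more problem and the cancellation check. In exchange, your Dirichlet data $(0,q_{E_j})$ and $(1,0)$ do not involve $\bar A$. Your argument therefore shows that the cell rate already follows from rates for the harmonic-measure problems plus a single torsion problem. This is the same pairing used in Section~\ref{sec:density} to identify $\bar A$ and in the finite-domain matrix $\widehat A_R$ of Section~\ref{sec:applications}. For the proposition as stated both routes suffice, since $\bar A$ is deterministic and so $-\bar A:M$ is admissible fixed data.
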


\begin{proof}
The forward implication is Theorem~\ref{down:dirichlet-main}. For the
reverse implication take $U=B_1$, $q_M(x)=\tfrac12x\cdot Mx$,
$g_M=q_M$ and $f_M=-\bar A:M$. The homogenized solution is $q_M$.
Dirichlet uniqueness and scaling, with $R=\varepsilon^{-1}$, give
\begin{equation}
 u_{\varepsilon,M}(x)-q_M(x)
 =\varepsilon^2w_{M,R}(x/\varepsilon),\qquad x\in B_1.
 \label{down:reverse-cell-scaling}
\end{equation}
Apply the assumed estimates to these data for a Frobenius orthonormal
basis $E^1,\ldots,E^N$ of $\mathbb S^d$, $N=d(d+1)/2$. Take the minimum
of their finitely many positive exponents and the maximum of their
constants and initial scales. By linearity, for
$M=\sum_am_aE^a$, $|M|\le1$,
\[
 \|w_{M,R}\|_\infty
 \le\sum_a|m_a|\|w_{E^a,R}\|_\infty
 \le\sqrt N\max_a\|w_{E^a,R}\|_\infty.
\]
The finite union bound proves the uniform cell estimate.
\end{proof}

The proof treats nonconstant sources and the boundary through the
torsion estimate~\eqref{down:torsion-tail}, using only H5 to control
degeneracy.


We give two applications: convergence in the energy defined by the
invariant density, and an approximation of the effective matrix using
only finite-domain problems with known data.

\subsection{A boundary energy identity}

\begin{proposition}[Energy identity]
\label{em:energy}
Let $U$ be bounded and $C^2$, let $A$ be measurable and symmetric with
$0<A\le I$, and let $m\ge0$ be locally integrable on a neighborhood of
$\overline U$. Suppose
\begin{equation}
 \int_UmA:D^2\zeta=0\qquad(\zeta\in C_c^\infty(U)).
 \label{em:eq:2-1}
\end{equation}
If $e\in C^2(\overline U)$, $e=0$ on $\partial U$, and $-A:D^2e=F$ in
$U$, then both integrals below are absolutely convergent and
\begin{equation}
 \int_Um\nabla e\cdot A\nabla e=\int_UmeF.
 \label{em:eq:2-2}
\end{equation}
\end{proposition}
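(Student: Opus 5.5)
The plan is to test the adjoint equation \eqref{em:eq:2-1} with $\zeta=\chi_k e^2/2$, where $\chi_k$ is a cutoff approaching $\mathbf 1_U$, and then let $k\to\infty$. Formally,
\[
 A:D^2(e^2/2)=e\,A:D^2e+\nabla e\cdot A\nabla e=-eF+\nabla e\cdot A\nabla e,
\]
so pairing with $m$ and using \eqref{em:eq:2-1} gives \eqref{em:eq:2-2}. The only real content is justifying the boundary passage. The hypothesis $e\in C^2(\overline U)$ with $e=0$ on $\partial U$ gives $|e|\le C\,d(x)$, where $d=\operatorname{dist}(\cdot,\partial U)$, and $|\nabla e|\le C$.

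First, extend \eqref{em:eq:2-1} from $C_c^\infty(U)$ to $C_c^2(U)$ tests. I would mollify a $C_c^2$ test within a fixed compact subset of $U$ and use $mA\in L^1_{\rm loc}$; this is the same approximation used in Lemma~\ref{down:adjoint-energy}. Next, choose $\chi_k=\rho(k\,d(x))$, with $\rho$ smooth, $\rho=0$ on $[0,1]$ and $\rho=1$ on $[2,\infty)$. Since $U$ is $C^2$, the distance $d$ is $C^2$ in a collar, so $\chi_k\in C_c^2(U)$ with $|\nabla\chi_k|\le Ck$ and $|D^2\chi_k|\le Ck^2$, both supported in the layer $\Sigma_k=\{1/k<d<2/k\}$. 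Testing with $\zeta_k=\chi_k e^2/2$ yields
\[
 \int m\chi_k\,\nabla e\cdot A\nabla e-\int m\chi_k eF
 +\int m\Big(\tfrac{e^2}{2}A:D^2\chi_k+2e\,\nabla\chi_k\cdot A\nabla e\Big)=0.
\]

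On $\Sigma_k$ we have $|e|\le C/k$. Using $A\le I$, the error integrand is therefore bounded by
\[
 C\Big(k^{-2}\cdot k^2+k^{-1}\cdot k\Big)m\le Cm.
\]
The error term is thus at most $C\int_{\Sigma_k}m$, which tends to zero because $m\in L^1(U)$ ($m$ is locally integrable near $\overline U$) and $|\Sigma_k|\to0$. Absolute convergence follows the same way: $m\,\nabla e\cdot A\nabla e\le Cm$ and $|meF|=|e|\,|A:D^2e|\,m\le Cm$, both integrable over $U$. Dominated convergence in the two main terms then proves \eqref{em:eq:2-2}.

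The main obstacle is the boundary layer. I would check that the bound $|e|\le Cd$ really compensates the $k^2$ growth of $D^2\chi_k$, which it does through the quadratic factor $e^2$, and that the regularity of $d$ holds in a uniform collar. If $C^2$ regularity of $d$ turned out to be delicate, the fallback is to replace $d$ by a regularized distance with the same derivative bounds.
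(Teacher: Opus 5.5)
Your proposal is correct and follows essentially the same route as the paper. It extends the adjoint identity to $C_c^2(U)$ by mollification, tests with a distance-function cutoff times $e^2$, and removes the boundary-layer terms using $|e|\le C\,d_U$, which bounds them by $C\int_{\{d_U<2/k\}}m\to0$; the factor $1/2$ in your test function and the parametrization $k=1/t$ are only cosmetic differences.
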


\begin{proof}
By compactly supported mollification, \eqref{em:eq:2-1} extends to
$C_c^2(U)$: the integral difference is bounded by
$\sqrt d\|D^2(\zeta_n-\zeta)\|_\infty\int_Km$ on a fixed compact set
$K\Subset U$.
Let $d_U=\operatorname{dist}(\cdot,\partial U)$, which is $C^2$ in an
interior tubular neighborhood. Choose a smooth $\chi$ equal to zero on
$(-\infty,1]$ and one on $[2,\infty)$, with $0\le\chi\le1$, and set
$\chi_t=\chi(d_U/t)$ near the boundary, extending it by one inside.
For small $t$,
\begin{equation}
 \chi_t\in C_c^2(U),\qquad
 |\nabla\chi_t|\le C_Ut^{-1},\qquad |D^2\chi_t|\le C_Ut^{-2},
 \label{em:eq:2-3}
\end{equation}
with derivatives supported in $\{t<d_U<2t\}$. Writing
$K_e=\|e\|_{C^2(\overline U)}$, the zero boundary values give
\begin{equation}
 |e(x)|\le K_ed_U(x)
 \label{em:eq:2-4}
\end{equation}
in this neighborhood. Test with $\chi_te^2$:
\begin{equation}
 0=\int_Um\chi_t A:D^2(e^2)
   +\int_Ume^2A:D^2\chi_t
   +4\int_Ume\nabla\chi_t\cdot A\nabla e.
 \label{em:eq:2-5}
\end{equation}
The last two terms are bounded in absolute value by
\begin{equation}
 C_UK_e^2\int_{\{d_U<2t\}}m\longrightarrow0.
 \label{em:eq:2-6}
\end{equation}
Dominated convergence in the first term gives
$0=\int_UmA:D^2(e^2)=2\int_Um(eA:D^2e+\nabla e\cdot A\nabla e)$,
which proves the identity. Boundedness of $A,e,D^2e,\nabla e$ and
$m\in L^1(U)$ gives absolute convergence; no trace of $m$ is needed.
\end{proof}

\subsection{Weighted gradient convergence}

Retain \eqref{down:dirichlet-problems}, and put
$e_\varepsilon=u_\varepsilon-u_0$, $m_\varepsilon(x)=m(x/\varepsilon)$
and $N=N(f,g)$. For each realization and $\varepsilon>0$, uniform
ellipticity on the fixed compact set $\varepsilon^{-1}\overline U$
and H\"older continuity from Lemma~\ref{lem:local-regularity},
together with the global Schauder theorem \cite[Theorem~6.14]{GT}, give
\begin{equation}
 u_\varepsilon\in C^{2,\theta}(\overline U),\qquad
 0<\theta<\min\{\eta,\sigma,1\}.
 \label{em:classical}
\end{equation}
Uniqueness identifies this solution with the viscosity solution. We use
this pathwise regularity only to justify the energy identity. The
ellipticity constant required by the cited Schauder theorem is the
positive minimum on the fixed compact set $\varepsilon^{-1}\overline U$;
it may depend on the realization and on $\varepsilon$. No Schauder
constant for $u_\varepsilon$ enters the error estimate.

\begin{corollary}[Weighted gradient error]
\label{em:weighted}
Let $\beta_4,\gamma_4$ be as in Theorem~\ref{down:dirichlet-main} and
let $\gamma_3$ be the probability exponent of
Theorem~\ref{down:density-quantitative-main}. For every fixed
deterministic $U,\eta,f,g$ there are $C,\varepsilon_0>0$ such that
\begin{equation}
 \mathbb P\left(\|m_\varepsilon^{1/2}A_\varepsilon^{1/2}
                    \nabla e_\varepsilon\|_{L^2(U)}
                    >CN\varepsilon^{\beta_4/2}\right)
 \le C\varepsilon^{\min\{\gamma_4,\gamma_3\}},
 \qquad 0<\varepsilon\le\varepsilon_0.
 \label{em:eq:3-4}
\end{equation}
\end{corollary}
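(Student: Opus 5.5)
The natural route is the boundary energy identity of Proposition~\ref{em:energy} applied to $e_\varepsilon$ in the rescaled environment. Since $m$ solves the adjoint equation for $A$ on all of $\mathbb R^d$, the rescaled pair satisfies $\int_U m_\varepsilon A_\varepsilon:D^2\zeta=0$ for $\zeta\in C_c^\infty(U)$. Indeed, $\zeta(\varepsilon\cdot)$ is an admissible test, and the factor $\varepsilon^2$ from the Hessian is harmless. By \eqref{em:classical} and the global Schauder bound for $u_0$, $e_\varepsilon\in C^2(\overline U)$ and vanishes on $\partial U$. It satisfies
\[
 -A_\varepsilon:D^2e_\varepsilon=f+A_\varepsilon:D^2u_0=(A_\varepsilon-\bar A):D^2u_0=:F_\varepsilon .
\]
The identity then gives
\[
 \|m_\varepsilon^{1/2}A_\varepsilon^{1/2}\nabla e_\varepsilon\|_{L^2(U)}^2
 =\int_U m_\varepsilon e_\varepsilon F_\varepsilon
 \le C_d\|e_\varepsilon\|_{L^\infty(U)}\,\|D^2u_0\|_\infty\int_U m_\varepsilon .
\]
We have $|F_\varepsilon|\le C_d|D^2u_0|\le C N$ by \eqref{down:homogenized-schauder}. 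Hence everything reduces to two probabilistic inputs: the uniform error bound of Theorem~\ref{down:dirichlet-main}, and an upper bound on the random mass $\int_U m_\varepsilon$.

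For the first input, Theorem~\ref{down:dirichlet-main} gives $\|e_\varepsilon\|_\infty\le CN\varepsilon^{\beta_4}$ outside an event of probability $C\varepsilon^{\gamma_4}$. For the mass, change variables: $\int_U m_\varepsilon=\varepsilon^d\int_{U/\varepsilon}m$. Fix a nonnegative $\psi\in C_c^\infty$ with $\psi\ge\mathbf 1_U$. Then $\int_U m_\varepsilon\le\int\psi_R m$ with $R=\varepsilon^{-1}$, where $\psi_R(x)=R^{-d}\psi(x/R)$. Theorem~\ref{down:density-quantitative-main} gives $|\int\psi_R(m-1)|\le C_\psi R^{-\beta_3}$ outside probability $C_\psi R^{-\gamma_3}=C\varepsilon^{\gamma_3}$. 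On that event $\int_U m_\varepsilon\le\int\psi+C\le C_U$.

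Intersecting the two events, we obtain $\|m_\varepsilon^{1/2}A_\varepsilon^{1/2}\nabla e_\varepsilon\|_{L^2}^2\le CN^2\varepsilon^{\beta_4}$, and taking square roots gives the claimed rate $CN\varepsilon^{\beta_4/2}$. The union bound gives failure probability at most $C(\varepsilon^{\gamma_4}+\varepsilon^{\gamma_3})\le C\varepsilon^{\min\{\gamma_4,\gamma_3\}}$ for small $\varepsilon$. Alternatively, a crude Markov bound using $\mathbb E\int_U m_\varepsilon=|U|$ would only yield a nonalgebraic probability at fixed threshold, which is why the density theorem is invoked.

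The main point to check carefully is the applicability of Proposition~\ref{em:energy}. It needs $m_\varepsilon$ locally integrable near $\overline U$ and the adjoint identity on $U$; both hold on the probability-one event of Theorem~\ref{down:density-main}. It also needs $e_\varepsilon\in C^2(\overline U)$, which is supplied pathwise by \eqref{em:classical} together with $u_0\in C^{2,\eta}(\overline U)$. No Schauder constant of $u_\varepsilon$ enters, since only $\|e_\varepsilon\|_\infty$ and $\|D^2u_0\|_\infty$ appear in the final bound.
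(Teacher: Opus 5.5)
Your proposal is correct and follows the paper's proof essentially step for step: the rescaled adjoint identity, the energy identity of Proposition~\ref{em:energy} for $e_\varepsilon$ with source $(A_\varepsilon-\bar A):D^2u_0$, the bound $CN\|e_\varepsilon\|_\infty\int_Um_\varepsilon$, control of the mass through $\psi\ge\mathbf 1_U$ and Theorem~\ref{down:density-quantitative-main}, and a final union bound. One caveat on your aside: as Remark~\ref{em:firstmoment} shows, Markov's inequality with an $\varepsilon$-dependent threshold $\varepsilon^{-a}$ already gives an algebraic probability bound, so the density theorem is needed only to keep the error exponent at $\beta_4/2$ rather than $(\beta_4-a)/2$, not to obtain an algebraic rate at all.
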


\begin{proof}
Scaling the adjoint equation gives
\[
 \int m_\varepsilon A_\varepsilon:D^2\zeta
 =\varepsilon^{d-2}\int m(y)A(y):D_y^2[\zeta(\varepsilon y)]\,dy=0
\]
for $\zeta\in C_c^\infty(\mathbb R^d)$. Subtracting the Dirichlet equations,
\begin{equation}
 -A_\varepsilon:D^2e_\varepsilon
 =(A_\varepsilon-\bar A):D^2u_0=:F_\varepsilon\quad\text{in }U,
 \qquad e_\varepsilon=0\quad\text{on }\partial U,
 \label{em:eq:3-5}
\end{equation}
where \eqref{down:homogenized-schauder} gives
\begin{equation}
 \|F_\varepsilon\|_\infty
 \le(\sqrt d+|\bar A|)\|D^2u_0\|_\infty\le CN.
 \label{em:eq:3-6}
\end{equation}
By \eqref{em:classical}, Proposition~\ref{em:energy} applies:
\begin{equation}
 \int_Um_\varepsilon\nabla e_\varepsilon\cdot A_\varepsilon\nabla e_\varepsilon
 =\int_Um_\varepsilon e_\varepsilon(A_\varepsilon-\bar A):D^2u_0.
 \label{em:eq:3-7}
\end{equation}
Thus, with deterministic $C$,
\begin{equation}
 \|m_\varepsilon^{1/2}A_\varepsilon^{1/2}
                  \nabla e_\varepsilon\|_{L^2(U)}^2
 \le CN\|e_\varepsilon\|_\infty\int_Um_\varepsilon.
 \label{em:eq:3-8}
\end{equation}
Choose deterministic $\psi\in C_c^\infty(\mathbb R^d)$ with $\psi\ge0$
and $\psi\ge1$ on $\overline U$. For $R=\varepsilon^{-1}$,
\begin{equation}
 \int_Um_\varepsilon\le\int\psi(x)m(x/\varepsilon)\,dx
 =\int\psi_R(y)m(y)\,dy.
 \label{em:eq:3-9}
\end{equation}
The last quantity is at most $\int\psi+C\varepsilon^{\beta_3}\le\int\psi+1$
outside an event of probability $C\varepsilon^{\gamma_3}$, by
Theorem~\ref{down:density-quantitative-main}. On its intersection with
the Dirichlet good event, \eqref{em:eq:3-8} is bounded by
$CN^2\varepsilon^{\beta_4}$. Taking square roots and a union bound proves
the claim.
\end{proof}

\begin{remark}[A rate using only the first moment]
\label{em:firstmoment}
Stationarity, normalization and Tonelli give
$\mathbb E\int_Um_\varepsilon=|U|$. Thus, for $0<a<\beta_4$,
$\mathbb P(\int_Um_\varepsilon>\varepsilon^{-a})\le|U|\varepsilon^a$.
Combining this with \eqref{em:eq:3-8} and the Dirichlet estimate gives
\begin{equation}
 \mathbb P\left(\|m_\varepsilon^{1/2}A_\varepsilon^{1/2}
                  \nabla e_\varepsilon\|_{L^2(U)}
             >CN\varepsilon^{(\beta_4-a)/2}\right)
 \le C\varepsilon^{\min\{\gamma_4,a\}}.
 \label{em:eq:3-10}
\end{equation}
A positive weighted energy rate therefore requires no quantitative
average estimate or higher moment of $m$. These bounds do not give an
unweighted $H^1$ estimate.
\end{remark}

\subsection{A finite-domain effective matrix}

Let $T_R,V_{ij,R}$ solve
\begin{equation}
 \begin{cases}
 -A:D^2T_R=1&\text{in }B_R,\\
 T_R=0&\text{on }\partial B_R,
 \end{cases}
 \qquad
 \begin{cases}
 -A:D^2V_{ij,R}=A_{ij}&\text{in }B_R,\\
 V_{ij,R}=0&\text{on }\partial B_R.
 \end{cases}
 \label{em:eq:4-1}
\end{equation}
The same pathwise Schauder theory gives unique solutions in
$C^{2,\theta}(\overline{B_R})$. Define
\begin{equation}
 \widehat A_{R,ij}=V_{ij,R}(0)/T_R(0).
 \label{em:eq:4-2}
\end{equation}
These problems involve only the original coefficients.

\begin{proposition}[Finite-domain matrix estimate]
\label{em:matrix}
Set $\kappa_R=\min_{\overline{B_R}}\lambda_{\min}A>0$.
Then $\widehat A_R$ is well defined, symmetric and measurable, with
\begin{equation}
 \kappa_RI\le\widehat A_R\le I.
 \label{em:eq:4-3}
\end{equation}
For $X_R=R^{-2}\sup_{|M|\le1}\|w_{M,R}\|_{L^\infty(B_R)}$,
\begin{equation}
 |\widehat A_R-\bar A|\le2dX_R.
 \label{em:eq:4-4}
\end{equation}
\end{proposition}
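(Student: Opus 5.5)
The plan is to express $V_{ij,R}$ and $T_R$ through the cell solutions and exact quadratic solutions, evaluate at the center, and use $T_R(0)\ge R^2/(2d)$ as the lower bound for the denominator.

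\emph{Well-definedness and the ellipticity bounds.} Since $a=\operatorname{tr}A\le d$ and $\operatorname{tr}A\ge d\kappa_R$ on $\overline{B_R}$, comparison with the barriers $(R^2-|x|^2)/(2d)$ and $(R^2-|x|^2)/(2d\kappa_R)$ gives $R^2/(2d)\le T_R(0)\le R^2/(2d\kappa_R)$. In particular $T_R(0)>0$, so $\widehat A_R$ is defined. It is symmetric because $A$ is. For \eqref{em:eq:4-3}, fix a unit vector $\xi$ and put $V_\xi=\sum_{ij}\xi_i\xi_jV_{ij,R}$, which solves $-A:D^2V_\xi=\xi\cdot A\xi$. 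Since $\kappa_R\le\xi\cdot A\xi\le1$, comparison with $\kappa_RT_R$ and $T_R$ gives $\kappa_RT_R\le V_\xi\le T_R$, and evaluating at $0$ yields $\kappa_R\le\xi\cdot\widehat A_R\xi\le1$. Measurability follows from the continuity of Dirichlet solutions with respect to the coefficient restriction, proved in Theorem~\ref{down:corrector-main}, Step~1.

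\emph{Representation through cells.} Let $q_M(x)=\tfrac12x\cdot Mx$. Then $-A:D^2q_M=-A:M$, so for the symmetric $E^{ij}=\tfrac12(e_i\otimes e_j+e_j\otimes e_i)$, whose pairing with $A$ is $A_{ij}$, the function
\[
 w_{E^{ij},R}-\bigl(-V_{ij,R}+\bar A_{ij}T_R\bigr)
\]
solves $-A:D^2(\cdot)=0$ in $B_R$ with zero boundary values. Uniqueness therefore gives the exact identity
\[
 V_{ij,R}=\bar A_{ij}T_R-w_{E^{ij},R}.
\]
Evaluating at the origin,
\[
 \widehat A_{R,ij}-\bar A_{ij}=-\frac{w_{E^{ij},R}(0)}{T_R(0)}.
\]

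\emph{Bounds.} Since $|E^{ij}|\le1$, we have $|w_{E^{ij},R}(0)|\le R^2X_R$, and with $T_R(0)\ge R^2/(2d)$ this gives $|\widehat A_{R,ij}-\bar A_{ij}|\le2dX_R$ entrywise. To obtain a matrix norm bound without a dimensional factor, take unit $\xi$ and use linearity of $M\mapsto w_{M,R}$: $\xi\cdot(\widehat A_R-\bar A)\xi=-w_{\xi\otimes\xi,R}(0)/T_R(0)$ with $|\xi\otimes\xi|=1$. Hence the operator norm is at most $2dX_R$, since for a symmetric matrix it equals the supremum of $|\xi\cdot N\xi|$. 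I interpret $|\cdot|$ in \eqref{em:eq:4-4} as the operator norm. If the Frobenius norm is intended, I would pair with $M=(\widehat A_R-\bar A)/|\widehat A_R-\bar A|$ directly: $M:(\widehat A_R-\bar A)=-w_{M,R}(0)/T_R(0)$, which again gives the bound $2dX_R$.

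This last route works for any norm dual to Frobenius and gives \eqref{em:eq:4-4} cleanly, so I would use it in the write-up. There is no real obstacle; the only care needed is the sign bookkeeping in the cell identity and the lower bound on the torsion at the center.
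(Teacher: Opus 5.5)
Your argument is the paper's proof: the same quadratic barriers giving $T_R(0)\ge R^2/(2d)$, the same comparison of $V_\xi$ with $\kappa_RT_R$ and $T_R$, and the same identity relating $V_{M,R}$, $T_R$ and $w_{M,R}$ by zero-boundary uniqueness. The pairing route you settle on, the supremum over Frobenius-unit $M$, is exactly how the paper concludes, and the Frobenius norm is the one the paper fixes in its notation.

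There is one sign slip, which is harmless because only absolute values enter the bound. The function $w_{E^{ij},R}-(-V_{ij,R}+\bar A_{ij}T_R)$ has source $2(A_{ij}-\bar A_{ij})$, not $0$. The correct identity is $V_{ij,R}=\bar A_{ij}T_R+w_{E^{ij},R}$, that is, $V_{M,R}-(\bar A:M)T_R=w_{M,R}$.

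For measurability, Step~1 of the corrector proof must be rerun with the sources $1$ and $A_{ij}$. The only change is the extra term $F_n-F$ in the equation for $u_n-u$, and the paper does exactly this inside the proposition.
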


\begin{proof}
The barriers $q_-=(R^2-|x|^2)/(2d)$ and $q_+=q_-/\kappa_R$ satisfy
$-A:D^2q_-\le1\le-A:D^2q_+$. Comparison gives
\begin{equation}
 q_-\le T_R\le q_+,\qquad T_R(0)\ge R^2/(2d)>0.
 \label{em:eq:4-5}
\end{equation}
Symmetry follows from uniqueness and $A_{ij}=A_{ji}$. For
$V_{\xi,R}=\sum_{i,j}\xi_i\xi_jV_{ij,R}$, comparison of its source
$\xi\cdot A\xi$ with $\kappa_R|\xi|^2$ and $|\xi|^2$ gives
$\kappa_R|\xi|^2T_R\le V_{\xi,R}\le|\xi|^2T_R$. Evaluation at zero
proves \eqref{em:eq:4-3}.

For $M\in\mathbb S^d$, let $V_{M,R}=\sum_{i,j}M_{ij}V_{ij,R}$.
Linearity and zero-boundary uniqueness give
\begin{equation}
 V_{M,R}-(\bar A:M)T_R=w_{M,R}.
 \label{em:eq:4-6}
\end{equation}
Therefore
\[
 |(\widehat A_R-\bar A):M|
 =|w_{M,R}(0)|/T_R(0)
 \le2dR^{-2}\|w_{M,R}\|_\infty.
\]
Taking the supremum over $|M|\le1$ proves \eqref{em:eq:4-4}.

For measurability, suppose admissible coefficient restrictions $A_n$
converge uniformly to $A$ on $\overline{B_R}$. Eventually
$A_n\ge\kappa_RI/2$. If $u_n,u$ are the defining zero-boundary solutions
with $F_n\to F$ uniformly, then
$-A_n:D^2(u_n-u)=F_n-F+(A_n-A):D^2u$.
Using the quadratic upper barrier for both signs gives
\[
 \|u_n-u\|_\infty\le\frac{R^2}{d\kappa_R}
 \bigl(\|F_n-F\|_\infty+\|A_n-A\|_\infty\|D^2u\|_\infty\bigr)
 \longrightarrow0.
\]
This applies to $F=1$ and $F=A_{ij}$, proving continuity of the solution
maps, their evaluations and their ratio. The canonical restriction map
is measurable, hence so is $\widehat A_R$.
\end{proof}

\begin{corollary}[Accuracy of the finite-domain matrix]
\label{em:matrixrate}
With the constants of Theorem~\ref{thm:cell},
\begin{equation}
 \mathbb P(|\widehat A_R-\bar A|>2dC_2R^{-\delta_2})
 \le C_2R^{-\gamma_2},\qquad R\ge R_2.
 \label{em:eq:4-8}
\end{equation}
\end{corollary}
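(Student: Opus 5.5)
This corollary follows by combining the deterministic inequality \eqref{em:eq:4-4} of Proposition~\ref{em:matrix} with the cell estimate of Theorem~\ref{thm:cell}, specialized to the center $z_0=0$. No further probabilistic input is needed: the finite-domain matrix $\widehat A_R$ is a deterministic functional of the cell error $X_R$, and the event in \eqref{em:eq:4-8} is contained in the failure event of the cell estimate.

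First, Proposition~\ref{em:matrix} gives, in every realization and for every $R$, the bound $|\widehat A_R-\bar A|\le 2dX_R$. Here $X_R=R^{-2}\sup_{|M|\le1}\|w_{M,R}\|_{L^\infty(B_R)}$, and $w_{M,R}=w^{0}_{M,R}$ is the zero-boundary cell solution of \eqref{clock:original-cell-equation} with $z_0=0$. Consequently
\[
 \{|\widehat A_R-\bar A|>2dC_2R^{-\delta_2}\}\subset\{X_R>C_2R^{-\delta_2}\}.
\]
Second, the cell estimate in the normalized form \eqref{down:cell-input}, namely Theorem~\ref{thm:cell} with its constant renamed $C_2$ and the initial scale $R_2$, bounds the probability of the right-hand event by $C_2R^{-\gamma_2}$ for $R\ge R_2$. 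Monotonicity of probability then yields \eqref{em:eq:4-8}.

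The only points requiring care are bookkeeping. The first is measurability of both events. Measurability of $\widehat A_R$ is proved in Proposition~\ref{em:matrix}. The supremum over $M$ in $X_R$ reduces to a countable dense set by linearity and continuity, and indeed to a finite matrix basis. The second point is consistency of constants: we take the same $C_2$ in the threshold and in the probability bound, enlarging one of them if Theorem~\ref{thm:cell} provides different constants. Neither step is a genuine obstacle; the substantive work lies entirely in Theorem~\ref{thm:cell}.
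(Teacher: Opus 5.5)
Your proposal is correct and is the paper's own proof: it uses the deterministic bound $|\widehat A_R-\bar A|\le2dX_R$ from Proposition~\ref{em:matrix} to place the exceptional event inside $\{X_R>C_2R^{-\delta_2}\}$, then applies Theorem~\ref{thm:cell} at the center $z_0=0$. Your remarks on measurability and on harmonizing the constants are the right bookkeeping; they add nothing substantive beyond the paper's one-line argument.
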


\begin{proof}
By \eqref{em:eq:4-4}, the exceptional event is contained in
$\{X_R>C_2R^{-\delta_2}\}$; apply Theorem~\ref{thm:cell}.
\end{proof}

At most $1+d(d+1)/2$ scalar problems are required. Stationarity gives
the same conclusion at each prescribed center, with evaluation at that
center. This is an error estimate for the continuum construction;
numerical discretization introduces a separate error.

\section*{Use of generative artificial intelligence}
OpenAI ChatGPT was used to assist with drafting mathematical arguments,
checking calculations, locating references, organizing the exposition,
and preparing the \LaTeX\ source.



\begin{thebibliography}{99}

\bibitem{AFL}
Armstrong, S., Fehrman, B., Lin, J.: Green function and invariant measure estimates for nondivergence form elliptic homogenization.
Preprint, arXiv:2211.13279v2 (2022; revised 2025).
\url{https://arxiv.org/abs/2211.13279v2}

\bibitem{AKM19}
Armstrong, S., Kuusi, T., Mourrat, J.-C.: Quantitative Stochastic Homogenization and Large-Scale Regularity.
Grundlehren der mathematischen Wissenschaften, vol. 352.
Springer, Cham (2019).
\url{https://doi.org/10.1007/978-3-030-15545-2}

\bibitem{AL}
Armstrong, S., Lin, J.: Optimal quantitative estimates in stochastic homogenization for elliptic equations in nondivergence form. Arch. Ration. Mech. Anal. \textbf{225}, 937--991 (2017).
\url{https://doi.org/10.1007/s00205-017-1118-z}

\bibitem{ASconvex16}
Armstrong, S.N., Smart, C.K.: Quantitative stochastic homogenization of convex integral functionals.
Ann. Sci. \'Ec. Norm. Sup\'er. (4) \textbf{49}, 423--481 (2016).
\url{https://doi.org/10.24033/asens.2287}

\bibitem{ASquant}
Armstrong, S.N., Smart, C.K.: Quantitative stochastic homogenization of elliptic equations in nondivergence form. Arch. Ration. Mech. Anal. \textbf{214}, 867--911 (2014).
\url{https://doi.org/10.1007/s00205-014-0765-6}.
Corrected version: \url{https://arxiv.org/abs/1306.5340v5} (2019).

\bibitem{ASdeg}
Armstrong, S.N., Smart, C.K.: Regularity and stochastic homogenization of fully nonlinear equations without uniform ellipticity. Ann. Probab. \textbf{42}, 2558--2594 (2014).
\url{https://doi.org/10.1214/13-AOP833}

\bibitem{AvLin89}
Avellaneda, M., Lin, F.-H.: Compactness methods in the theory of homogenization. II. Equations in non-divergence form.
Comm. Pure Appl. Math. \textbf{42}, 139--172 (1989).
\url{https://doi.org/10.1002/cpa.3160420203}

\bibitem{Bauman84}
Bauman, P.: Positive solutions of elliptic equations in nondivergence form and their adjoints. Ark. Mat. \textbf{22}, 153--173 (1984).
\url{https://doi.org/10.1007/BF02384378}

\bibitem{BCDG}
Berger, N., Cohen, M., Deuschel, J.-D., Guo, X.: An elliptic Harnack inequality for difference equations with random balanced coefficients. Ann. Probab. \textbf{50}, 835--873 (2022).
\url{https://doi.org/10.1214/21-AOP1544}

\bibitem{BD14}
Berger, N., Deuschel, J.-D.: A quenched invariance principle for non-elliptic random walk in i.i.d. balanced random environment. Probab. Theory Related Fields \textbf{158}, 91--126 (2014).
\url{https://doi.org/10.1007/s00440-012-0478-4}

\bibitem{Bowman26}
Bowman, D.: Harnack inequality for non-uniformly elliptic equations in non-divergence form.
Preprint, arXiv:2604.13303v1 (2026).
\url{https://arxiv.org/abs/2604.13303v1}

\bibitem{CC95}
Caffarelli, L.A., Cabr\'e, X.: Fully Nonlinear Elliptic Equations. American Mathematical Society Colloquium Publications, vol. 43. American Mathematical Society, Providence, RI (1995).
\url{https://doi.org/10.1090/coll/043}

\bibitem{CNS}
Caffarelli, L., Nirenberg, L., Spruck, J.: The Dirichlet problem for nonlinear second-order elliptic equations. I. Monge--Amp\`ere equation. Comm. Pure Appl. Math. \textbf{37}, 369--402 (1984).
\url{https://doi.org/10.1002/cpa.3160370306}

\bibitem{CS10}
Caffarelli, L.A., Souganidis, P.E.: Rates of convergence for the homogenization of fully nonlinear uniformly elliptic PDE in random media.
Invent. Math. \textbf{180}, 301--360 (2010).
\url{https://doi.org/10.1007/s00222-009-0230-6}

\bibitem{CSW05}
Caffarelli, L.A., Souganidis, P.E., Wang, L.: Homogenization of fully nonlinear, uniformly elliptic and parabolic partial differential equations in stationary ergodic media.
Comm. Pure Appl. Math. \textbf{58}, 319--361 (2005).
\url{https://doi.org/10.1002/cpa.20069}

\bibitem{CIL92}
Crandall, M.G., Ishii, H., Lions, P.-L.: User's guide to viscosity solutions of second order partial differential equations. Bull. Amer. Math. Soc. (N.S.) \textbf{27}, 1--67 (1992).
\url{https://doi.org/10.1090/S0273-0979-1992-00266-5}

\bibitem{FO}
Fischer, J., Otto, F.: A higher-order large-scale regularity theory for random elliptic operators. Comm. Partial Differential Equations \textbf{41}, 1108--1148 (2016).
\url{https://doi.org/10.1080/03605302.2016.1179318}

\bibitem{Gehring73}
Gehring, F.W.: The $L^p$-integrability of the partial derivatives of a quasiconformal mapping. Acta Math. \textbf{130}, 265--277 (1973).
\url{https://doi.org/10.1007/BF02392268}

\bibitem{GT}
Gilbarg, D., Trudinger, N.S.: Elliptic Partial Differential Equations of Second Order. Classics in Mathematics. Springer, Berlin (2001).
\url{https://doi.org/10.1007/978-3-642-61798-0}

\bibitem{GPT22}
Guo, X., Peterson, J., Tran, H.V.: Quantitative homogenization in a balanced random environment. Electron. J. Probab. \textbf{27}, Paper No.~132, 1--31 (2022).
\url{https://doi.org/10.1214/22-EJP851}

\bibitem{GST25}
Guo, X., Sprekeler, T., Tran, H.V.: Homogenization of non-divergence form operators in i.i.d. random environments.
Preprint, arXiv:2512.04410v2 (2025).
\url{https://arxiv.org/abs/2512.04410v2}

\bibitem{GuoTran25}
Guo, X., Tran, H.V.: Optimal convergence rates in stochastic homogenization in a balanced random environment. Probab. Theory Related Fields \textbf{193}, 821--880 (2025).
\url{https://doi.org/10.1007/s00440-025-01409-1}

\bibitem{GZ12}
Guo, X., Zeitouni, O.: Quenched invariance principle for random walks in balanced random environment. Probab. Theory Related Fields \textbf{152}, 207--230 (2012).
\url{https://doi.org/10.1007/s00440-010-0320-9}

\bibitem{PV82}
Papanicolaou, G.C., Varadhan, S.R.S.: Diffusions with random coefficients.
In: Kallianpur, G., Krishnaiah, P.R., Ghosh, J.K. (eds.)
Statistics and Probability: Essays in Honor of C.R. Rao, pp. 547--552.
North-Holland, Amsterdam (1982).

\bibitem{SV}
Stroock, D.W., Varadhan, S.R.S.: Multidimensional Diffusion Processes. Classics in Mathematics. Springer, Berlin (2006). Reprint of the 1979 edition.
\url{https://doi.org/10.1007/3-540-28999-2}

\bibitem{Timar}
Tim\'ar, \'{A}.: Boundary-connectivity via graph theory. Proc. Amer. Math. Soc. \textbf{141}, 475--480 (2013).
\url{https://doi.org/10.1090/S0002-9939-2012-11333-4}

\bibitem{Whitt}
Whitt, W.: Proofs of the martingale FCLT. Probab. Surv. \textbf{4}, 268--302 (2007).
\url{https://doi.org/10.1214/07-PS122}

\bibitem{Yur82}
Yurinskii, V.V.: Averaging of second-order nondivergent equations with random coefficients.
Siberian Math. J. \textbf{23}, 276--287 (1982).
\url{https://doi.org/10.1007/BF00971701}

\end{thebibliography}
\end{document}